\numberwithin{equation}{section} 
\newcommand{\bigO}{\mathcal{O}}
\pgfplotsset{every axis/.append style={
                    axis x line=middle,    
                    axis y line=middle,    
                    axis line style={->}, 
                    ymajorticks=false,
                    xtick={-1,1},                     
                    }}
\tikzset{>=stealth}
\newcommand{\1}{\mathbf{1}}
\newcommand{\C}{\mathbb{C}}
\newcommand{\E}{\mathds{E}}
\newcommand{\h}{h_N}
\renewcommand{\i}{\mathbf{i}}
\newcommand{\N}{\mathbb{N}}
\renewcommand{\O}{\mathcal{O}}
\renewcommand{\o}{o}
\newcommand{\X}{\mathrm{X}}
\renewcommand{\P}{\mathds{P}}
\newcommand{\q}{\mathbb{Q}}
\newcommand{\R}{\mathbb{R}}
\renewcommand{\S}{[-1,1]}
\newcommand{\supp}{\operatorname{supp}}
\newcommand{\tr}{\operatorname{Tr}}
\newcommand{\U}{\mathscr{U}}
\newtheorem{theorem}{Theorem}[section]
\newtheorem{definition}[theorem]{Definition}
\newtheorem{proposition}[theorem]{Proposition}
\newtheorem{corollary}[theorem]{Corollary}
\newtheorem{lemma}[theorem]{Lemma}
\newtheorem{assumption}[theorem]{Assumptions}
\theoremstyle{definition} \newtheorem{remark}{Remark}[section]
\title{How much can the eigenvalues of a random Hermitian matrix fluctuate?}
\date{}
\author{T. Claeys, B. Fahs, G. Lambert and C. Webb}
\begin{document}

\maketitle

\begin{abstract}
The goal of this article is to study how much the eigenvalues of large Hermitian random matrices deviate from certain deterministic locations -- or in other words, to investigate optimal rigidity estimates for the eigenvalues. We do this in the setting of one-cut regular unitary invariant ensembles of random Hermitian matrices -- the Gaussian Unitary Ensemble being the prime example of such an ensemble.

Our approach to this question combines extreme value theory of log-correlated stochastic processes, and in particular the theory of multiplicative chaos, with asymptotic analysis of large Hankel determinants with Fisher-Hartwig symbols of various types, such as merging jump singularities, size-dependent impurities, and jump singularities approaching the edge of the spectrum. In addition to optimal rigidity estimates, our approach sheds light on the fractal geometry of the eigenvalue counting function.
\end{abstract}

\tableofcontents

\clearpage


\noindent
{\bf MSC 2010 subject classifications:} Primary 60B20. Secondary 35Q15, 47B35, 60G15, 60G57

\noindent
{\bf Keywords: } Gaussian Multiplicative Chaos, Eigenvalue rigidity, Gaussian Unitary Ensemble,  Riemann--Hilbert asymptotics, Fisher--Hartwig singularities of jump type.

\section{Introduction and statement of results}\label{sec:intro}

A fundamental problem in the study of large random matrices is to describe the asymptotic behavior of the eigenvalues of a random matrix as its size tends to infinity. The most classical result of this flavor, due to Wigner, states that the empirical distribution of the eigenvalues of any Wigner matrix\footnote{A Hermitian Wigner matrix is a random (Hermitian) matrix whose entries are independent up to the symmetry constraint, and have zero expectation as well as constant variance.}  converges, after an appropriate normalization, almost surely to the semi-circle law as the size of the matrix tends to infinity  --  see e.g. \cite[Chapter 2]{AGZ}.

Nowadays, it is well known that the empirical distribution can be approximated by the semi-circle law not only on the macroscopic scale, namely the setting of Wigner's theorem, but also on all mesoscopic scales. This is known as the local law and is a far-reaching generalization of Wigner's theorem. Moreover, it is  a key tool for proving universality of local statistics, see \cite{ESY11, BEYY16}. 

Local laws hold for a very large class of random matrix models ranging from generalized Wigner matrices and $\beta$--ensembles, to adjacency matrices of random graphs, see e.g. \cite{AEK17,  BEY14, EKYY13a} for further references.
An important feature of a local law is that it allows to obtain estimates for the maximal fluctuations of eigenvalues -- such results are known as rigidity estimates, see e.g. \cite[Theorem 2.4]{BEY14}. In this article, we strengthen the available estimates for the fluctuations of eigenvalues for a class of random matrices known as unitary invariant one-cut regular random Hermitian matrices. More precisely, we establish optimal upper and lower bounds for these maximal fluctuations.

Before describing our model and results in full detail, let us focus on the Gaussian Unitary Ensemble (GUE), which is the most basic example of a one-cut regular unitary invariant Hermitian ensemble, and also of a Wigner matrix. In our normalization convention, the entries of an $N\times N$ GUE random matrix are i.i.d. centered Gaussians random variables (complex above the diagonal and real on the diagonal) with variance $\frac{1}{4N}$. In this setting, Wigner's theorem states that the empirical measure of the (ordered) eigenvalues $\lambda_1\leq ...\leq \lambda_N$ of such a matrix converges to the semi-circle law $d\mu_{{\rm sc}}(x)=\frac{2}{\pi}\sqrt{1-x^2} dx$ supported on the interval $[-1,1]$.

Our main result is concerned with the fluctuations of the eigenvalues $\lambda_j$ around the quantiles of $\mu_{{\rm sc}}$: to describe this, we introduce the quantiles $\kappa_j\in(-1,1]$ through the relation
\begin{equation*}
\int_{-1}^{\kappa_j}d\mu_{{\rm sc}}= \frac{j}{N},  \qquad j=1,\dots,N.
\end{equation*}
With this notation in hand, we can state our main result (Theorem \ref{thm:rigidity} below) in the setting of the GUE: for any $\epsilon>0$
\begin{equation}\label{eq:GUErig}
\lim_{N\to\infty}\P\left(1-\epsilon<\max_{j=1,\dots,N}\left\{\frac{2N}{\log N}\sqrt{1-\kappa_j^2}|\lambda_j-\kappa_j|\right\} <1+\epsilon\right)=1.
\end{equation}	
The interpretation of the result is that with high probability, bulk eigenvalues can not fluctuate around the corresponding quantile more than (a constant times) $\log N/N$, but again with high probability, there do exist eigenvalues which fluctuate this much.	

Before turning to describing our results more precisely, let us briefly compare \eqref{eq:GUErig} to existing results about the fluctuations of the eigenvalues in the setting of the GUE, as well as offer a short discussion of our approach to the problem of rigidity estimates. Gustavsson \cite{Gustavsson} studied the typical deviation of an individual eigenvalue from its quantile and found that for any $\epsilon>0$ and for any $j\in (\epsilon N,(1-\epsilon)N)$, 
\begin{equation}\label{BulkGus}
2\sqrt{2}\sqrt{1-\kappa_j^2}\frac{N}{\sqrt{\log N}}(\lambda_j-\kappa_j)\to \mathcal N(0,1),
\end{equation}
in distribution as $N\to \infty$, where $\mathcal N(0,1)$ denotes a standard Gaussian law. This result states that  an eigenvalue typically  fluctuates on the scale $\sqrt{\log N}/N$, and not on the scale $\log N/N$, which according to \eqref{eq:GUErig} describes the maximal fluctuations. Note that \eqref{BulkGus} can be viewed as a lower bound for the maximal  fluctuations -- so from this point of view, \eqref{eq:GUErig} can be seen as an improvement of such a lower bound.

Concerning upper bounds for the fluctuations, we already mentioned that there exist results concerning rigidity estimates in various settings. The first results in this direction are due to  
Erd\H{o}s, Schlein and Yau, see e.g. \cite[Theorem 2.1]{ESY09}, which concerns Hermitian Wigner matrices, and gives large deviations estimates for the number of eigenvalues in suitable intervals. For a more direct comparison to \eqref{eq:GUErig}, we refer to \cite[Theorem 2.2]{EYY}, which holds in the setting of generalized Wigner matrices (including the GUE) and states that there exist constants $\alpha>\alpha'>0$ and $C,c>0$ such that
\begin{equation}\label{EYY}
\P\left( \max_{j=1,\dots,N}\left\{\sqrt{1-\kappa_j^2}|\lambda_j-\kappa_j|\right\}\ge \frac{(\log N)^{\alpha \log\log N}}{N} \right) \le C \exp\left(- c (\log N)^{\alpha' \log \log N} \right),
\end{equation}
which should be understood as stating that with overwhelming probability, the fluctuations are no larger than $(\log N)^{\alpha \log\log N}/N$. Thus \eqref{eq:GUErig} is an improvement of both \eqref{BulkGus} and \eqref{EYY} in that we identify the optimal order of the fluctuations, and actually go even further by proving a precise limit theorem.

Our approach to the proof of \eqref{eq:GUErig} is very different from those of \eqref{BulkGus} and \eqref{EYY},  and involves a combination of techniques. Our starting point in the proof of our rigidity estimates is note that such estimates are equivalent to precise asymptotics for the (centered) eigenvalue counting function -- a random function which counts the number of eigenvalues to the left of a given point $x$. A well known central limit theorem involving eigenvalues of random matrices, due to Johansson \cite[Theorem 2.4]{Johansson}, can be formulated as stating that in the large $N$ limit, this centered eigenvalue counting function behaves like a log-correlated stochastic process, namely a stochastic process whose covariance has a logarithmic singularity on the diagonal, and our problem is closely related to the extreme value theory of such stochastic processes.

While similar observations have been made and advantageously applied in the setting of random unitary matrices as well as  for circular $\beta$-ensembles \cite{ABB16,CMN16,HKC01}, we  offer a novel approach to the extreme value theory of  (asymptotically  Gaussian) log-correlated processes, which we believe to be of independent interest, as log-correlated processes appear in various branches of modern probability theory and mathematical physics, ranging from lattice models in statistical mechanics and probabilistic number theory to random geometry and constructive conformal field theory \cite{Kenyon,FK,Najnudel,SW16,AJKS,Sheffield,KRV}. In particular, when studying the extrema of the eigenvalue counting function, we make use of certain random fractal measures, described by the theory of multiplicative chaos. 

To make the connection to the theory of multiplicative chaos, we apply recent ideas introduced in \cite{LOS18}, which in our setting require understanding asymptotics of large Hankel determinants whose symbols have what are known as Fisher-Hartwig singularities. While such objects have been under intensive study in the past years \cite{ItsKrasovsky, DIK11, CF16, BCI16, Charlier}, not all of the estimates we require are available in the literature. In particular, we need to consider the situation where we have two merging jump singularities, a jump singularity approaching an edge, and a situation where we have two merging jump singularities in addition to a dimension-dependent impurity. We believe that the asymptotics for these Hankel determinants are also of independent interest.

We now turn to discussing more precisely the setting and statements of our main results.

\subsection{Main results on eigenvalue rigidity} \label{rigidity_results}
In this section, we describe in detail the ensembles of random Hermitian matrices which we study, and we state our main results concerning rigidity of the associated eigenvalues. We consider the ordered eigenvalues $\lambda_1\leq \lambda_2\leq \cdots\leq \lambda_N$ of a random Hermitian matrix $\mathbf M$, whose law is absolutely continuous with respect to the Lebesgue measure on the space of $N\times N$ Hermitian matrices, and has a density proportional to $e^{-N\tr V(\mathbf M)}=e^{- N\sum_{j=1}^N V(\lambda_j)}$. 
This class is known as unitary invariant ensembles of Hermitian random matrices.
We require that the confining potential $V$ is real--analytic and that it satisfies the growth condition
\begin{equation} \label{V}
\lim_{|x| \to\infty} \frac{V(x)}{\log |x|} = +\infty . 
\end{equation}
It is well--known that the joint law of the ordered eigenvalues of $\mathbf M$  has a density given by
\begin{equation} \label{Z}
\frac{1}{Z_N} \prod_{1\le i<j \le N} \big| \lambda_i  - \lambda_j\big|^2  \prod_{1\le j\le N}e^{-N V(\lambda_j)} , 
\end{equation}
where $0<Z_N<\infty$ is a normalization constant which depends on $V$, see e.g.\ \cite{Deift}. To connect to our discussion concerning the GUE in the previous section, we mention that with $V(x)=2x^2$, this produces precisely the probability distribution of the GUE and its eigenvalues.

Moreover, it is also known (see e.g. \cite[Section 2.6]{AGZ} and \cite{DKML98} as general references for the statements below) that there exists an \emph{equilibrium measure} $\mu_V$,  in the sense that for any bounded continuous test function $f$, 
\begin{equation} \label{lln}
 \frac{1}{N}\tr f(\mathbf{M}) =\frac{1}{N} \sum_{1\le j\le N} f(\lambda_j)  \ \to\ \int f d\mu_V ,
\end{equation}
in probability as $N\to\infty$. This equilibrium measure  $\mu_V$ is the unique minimizer of the functional
\[
\mu\mapsto \int_{\R\times \R} \log|x-y|^{-1}{d\mu(x)d\mu(y)}+\int_\R V(x){d\mu(x)}, 
\]
 over all Borel probability measures on $\R$ and, if $V$ is smooth, it is characterized uniquely by the Euler--Lagrange conditions: there exists a constant $\ell=\ell_V\in \R$ such that 
\begin{align}&\label{eq:el1}
2\int \log|x-y|d\mu_V(y)-V(x)=-\ell&&\text{for } x\in \mathrm{supp}(\mu_V),\\
&\label{eq:el2}
2\int \log|x-y|d\mu_V(y)-V(x)\leq -\ell && \text{for } x\notin \mathrm{supp}(\mu_V).
\end{align}

Moreover for real analytic $V$ satisfying the growth condition \eqref{V}, $\mu_V$ has  a compact support which consists of a union of finitely many  closed intervals. Some aspects of the asymptotic behavior of unitary invariant ensembles of random Hermitian matrices depend significantly on the number of these intervals (see e.g. \cite{DKMVZ2}), and we will focus on the simplest situation, namely the one-interval, or one-cut, case. 
By scaling and shifting the confining potential $V$, we can always assume that this interval is $\S$. To keep our approach as simple as possible, we assume that the  equilibrium measure is  \emph{regular}, which is a technical condition which holds for generic confining potentials \cite{KM}. Another simplifying assumption we make is to assume that in addition to $V$ being real analytic, it actually has an analytic continuation into some infinite horizontal strip. The precise assumptions are as follows.
 
 \begin{assumption}[One--cut regular potential] \label{ass:regular} 
Let $\epsilon>0$, and let
\begin{equation} \label{DefDomV}
\mathcal V=\{z:|\Im z|<\epsilon\}
\end{equation} 
be a strip in the complex plane. We assume that
$V$ is real on $\mathbb R$,  analytic on $\mathcal V$, that $V$ satisfies the growth condition \eqref{V}, and more generally that
\begin{equation}\label{eq:Vgrowth}
\frac{\Re(V(z))}{\log |z|}\to \infty,
\end{equation}
as $|z|\to \infty$, uniformly in $\mathcal V.$ Moreover the inequality \eqref{eq:el2} is strict and the associated equilibrium measure $\mu_V$ has support on $\S$ and it takes the form
\begin{equation}\label{eq:muV}
d \mu_V(x)=\psi_V(x)\sqrt{1-x^2}dx ,
\end{equation}
where the function $\psi_V>0$ on $\S$.
\end{assumption}
 We mention here that with some effort, one could presumably use ideas from \cite{DKMVZ2} to dispense of the regularity assumption and with further effort, possibly replace the assumption of analyticity in a strip by real analyticity, but we choose not to go into these technical issues in this article.

For instance, the GUE, which corresponds to the potential $V(x) = 2x^2$, satisfies Assumptions \ref{ass:regular} with the equilibrium measure being the Wigner semicircle law, $\psi_{V}(x)=2/\pi$. More generally,  it is known that  Assumptions~\ref{ass:regular} hold if in addition to satisfying the conditions concerning analyticity in $\mathcal V$ and the growth rate \eqref{eq:Vgrowth}, the confining potential $V$  is strictly convex on $\R$ and suitably normalized -- see  e.g. \cite[Theorem 11.2.3]{PS11}.

In analogy to the quantiles of the semi-circle law in the case of the GUE, we now define the quantiles $\varkappa_j\in (-1,1]$ of the equilibrium measure by 
\begin{equation} \label{quantile}
\int_{-1}^{\varkappa_j} d\mu_V = \frac{j}{N} , \qquad j=1,\dots, N. 
\end{equation}

We are now ready to state our main result, which we prove in Section \ref{sec:rigidity}.
\begin{theorem} \label{thm:rigidity}
Let $\lambda_1\leq \cdots\leq \lambda_N$ be the ordered eigenvalues in the ensemble \eqref{Z}, with $V$ satisfying Assumptions \ref{ass:regular}, and let $(\varkappa_j)_{j=1}^N$ be as in \eqref{quantile}.
Then for any $\epsilon>0$, 
\begin{equation} \label{rigidity}
\lim_{N\to\infty} \P \left[  \frac{1-\epsilon}{\pi} \frac{\log N}{N} \le \max_{  j = 1, \dots , N} \Big\{  \psi_V(\varkappa_j) \sqrt{1-\varkappa_j^2} |\lambda_j-\varkappa_j|  \Big\}\le \frac{1+\epsilon}{\pi} \frac{\log N}{N}\right]  =1. 
\end{equation}
\end{theorem}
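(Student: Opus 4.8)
The strategy is to recast \eqref{rigidity} as a statement about the maximum of the centred eigenvalue counting function, and then to control that maximum by combining the extreme value theory of log-correlated fields --- in the form of Gaussian multiplicative chaos --- with Riemann--Hilbert asymptotics for Hankel determinants carrying Fisher--Hartwig singularities of jump type. \emph{Step 1 (reduction to the counting function).} Write $N_N(x)=\#\{j:\lambda_j\le x\}$, $F_V(x)=\int_{-1}^x d\mu_V$ (so that $F_V'(x)=\psi_V(x)\sqrt{1-x^2}$ and $F_V(\varkappa_j)=j/N$), and set $h_N(x)=N_N(x)-NF_V(x)$. Since $N_N(\lambda_j)=j$ almost surely, $F_V(\varkappa_j)-F_V(\lambda_j)=\tfrac1N h_N(\lambda_j)$, so by the mean value theorem $\psi_V(\xi_j)\sqrt{1-\xi_j^2}\,|\lambda_j-\varkappa_j|=\tfrac1N|h_N(\lambda_j)|$ for some $\xi_j$ between $\lambda_j$ and $\varkappa_j$. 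On the complement of an a priori rigidity event (for instance the one behind \eqref{EYY}) we have $|\lambda_j-\varkappa_j|=o(1)$ uniformly, hence for bulk indices $\psi_V(\xi_j)\sqrt{1-\xi_j^2}=(1+o(1))\psi_V(\varkappa_j)\sqrt{1-\varkappa_j^2}$, while the near-edge indices contribute only $\O(N^{-1}(\log N)^{\O(1)})$, because there the weight $\sqrt{1-\varkappa_j^2}$ is small enough to absorb the larger (Tracy--Widom–scale $N^{-2/3}j^{-1/3}$) fluctuations. As $h_N$ jumps up by one at each eigenvalue and decreases in between, $\max_j|h_N(\lambda_j)|=\max\{\sup_x h_N(x),\,\sup_x(-h_N(x))\}+\O(1)$. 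Thus \eqref{rigidity} reduces to showing that, with probability tending to one, $\sup_x h_N(x)$ and $\sup_x(-h_N(x))$ both lie below $(1+\epsilon)\pi^{-1}\log N$, and at least one of them exceeds $(1-\epsilon)\pi^{-1}\log N$.

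\emph{Step 2 (log-correlated structure and the Hankel input).} After the change of variables $x=F_V^{-1}(s)$, Johansson's central limit theorem for the counting function (in the strengthened, $N$-dependent-scale form alluded to in the introduction) shows that $h_N$ behaves like a Gaussian log-correlated field with covariance $-\tfrac1{2\pi^2}\log|s-t|+(\text{continuous})$; in particular $\mathrm{Var}(h_N(x))=\tfrac1{2\pi^2}\log N+\O(1)$ uniformly in the bulk. The quantitative device is the large-$N$ asymptotics of joint exponential moments: because $e^{\gamma N_N(x)}=\prod_j(e^{\gamma})^{\1_{\lambda_j\le x}}$, the quantity $\mathbb E\big[\exp\big(\sum_k\gamma_k h_N(x_k)\big)\big]$ is a ratio of Hankel determinants whose symbol is $e^{-NV}$ times a product of jump Fisher--Hartwig singularities at the $x_k$. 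A single jump in the bulk is classical and yields $\mathbb E[e^{\gamma h_N(x)}]=(1+o(1))\,C_\gamma(x)\,\exp\big(\tfrac{\gamma^2}{4\pi^2}\log N\big)$ for every real $\gamma$, uniformly for $x$ in the bulk. What is genuinely new, and is the principal obstacle, is to obtain such asymptotics uniformly when (i) two jumps merge at rate $1/N$, (ii) a jump approaches $\pm1$, and (iii) there is in addition an $N$-dependent root-type impurity arising from the tilting of Step 4 --- exactly the Hankel asymptotics the paper develops via the Deift--Zhou steepest descent analysis.

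\emph{Step 3 (upper bound).} Since $h_N$ decreases between eigenvalues, one checks that $\{\sup_x h_N(x)\ge m\}$ forces $h_N(\varkappa_j)\ge m$ for some $j$; hence, with $m=\lceil(1+\epsilon)\pi^{-1}\log N\rceil$, the exponential Chebyshev inequality and a union bound give
\[
\mathbb P\!\left(\sup_x h_N(x)\ge m\right)\;\le\;\sum_{j=1}^N \mathbb P\big(h_N(\varkappa_j)\ge m\big)\;\le\;N\,\sup_{j}\,\inf_{\gamma>0}\,e^{-\gamma m}\,\mathbb E\big[e^{\gamma h_N(\varkappa_j)}\big].
\]
Inserting the single-jump asymptotics of Step 2 uniformly in $j$ (the near-edge quantiles handled by the jump-near-the-edge asymptotics, or separately by Tracy--Widom tail bounds) and optimising over $\gamma$ --- the optimiser being $\gamma^\ast\sim 2\pi^2 m/\log N$ --- we get $\mathbb P(h_N(\varkappa_j)\ge m)\le N^{-(1+\epsilon)^2+o(1)}$, so the whole sum is $\le N^{1-(1+\epsilon)^2+o(1)}\to0$. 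Applying the same argument to $-h_N$ (i.e.\ to eigenvalues lying too far to the right) gives $\sup_x(-h_N(x))\le(1+\epsilon)\pi^{-1}\log N$ with probability tending to one.

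\emph{Step 4 (lower bound via multiplicative chaos).} Fix $\gamma<\gamma_c:=2\pi$ and, on a fixed compact bulk interval $A$, consider the random measures $d\mu_\gamma^N(x)=e^{\gamma h_N(x)-\frac{\gamma^2}{2}\mathbb E[h_N(x)^2]}\,dx$. The merging-jump Hankel asymptotics of Step 2 allow one to compute the moments $\mathbb E[\mu_\gamma^N(A)^k]$ ($k=2$ suffices when $\gamma<\pi\sqrt2$, and a good-event truncation extends convergence to the whole subcritical range $\gamma<2\pi$), whence $\mu_\gamma^N$ converges to a non-trivial Gaussian multiplicative chaos measure $\mu_\gamma$, which satisfies $\mu_\gamma(A)>0$ almost surely. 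Since a subcritical chaos charges only its set of $\gamma$-thick points, the same asymptotics give $\mathbb E\big[\mu_\gamma^N\big(\{x:h_N(x)<(\gamma-\delta)\tfrac1{2\pi^2}\log N\}\big)\big]\to0$ for every $\delta>0$; combined with the fact that $\mu_\gamma^N(A)$ stays bounded away from $0$ in probability, this produces, with probability tending to one, a point $x\in A$ with $h_N(x)\ge(\gamma-\delta)\tfrac1{2\pi^2}\log N$. Letting $\gamma\uparrow2\pi$ and $\delta\downarrow0$ gives $\sup_x h_N(x)\ge(1-\epsilon)\pi^{-1}\log N$ with probability tending to one; together with Steps 1 and 3 this proves \eqref{rigidity}. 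The crux throughout is Step 2: the three non-standard Hankel asymptotics must be precise and uniform enough both to push the chaos construction all the way up to the critical parameter $\gamma_c=2\pi$ --- which is what pins down the constant $1/\pi$ in \eqref{rigidity} --- and to control the eigenvalues near the spectral edge.
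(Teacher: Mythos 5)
Your proposal follows the paper's route faithfully: reduce to $\max_x h_N$ and $\min_x (-h_N)$, prove the upper bound by an exponential Chebyshev / union-bound argument exploiting the Fisher--Hartwig Hankel asymptotics for a single jump uniformly up to the edge, and extract the lower bound by constructing a subcritical Gaussian multiplicative chaos measure from $h_N$ (pushing $\gamma$ up to the critical value) and using the thick-point property. Two small slips worth noting: the $N$-dependent Hankel ``impurity'' you describe as root-type is in fact a \emph{smooth} analytic function (the Poisson-kernel tilt $w_{\eta,x}$), not a root singularity; and the a priori rigidity event you invoke cannot be taken directly from the cited Wigner-matrix estimate \eqref{EYY}, which does not apply to invariant ensembles --- the paper bootstraps it internally from the $h_N$-maximum bound together with a separate near-edge moment estimate (Lemma~\ref{lem:MB}) plus Tracy--Widom convergence of the extreme eigenvalues, and your near-edge handwave should be replaced by that argument.
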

We mention here that the maximum above is unlikely to be realized very close to the edge of the spectrum: see e.g. our Proposition \ref{prop:rigedge} below. Thus, the lower bound from Theorem \ref{thm:rigidity} should be viewed more as a statement concerning bulk eigenvalues with \eqref{rigidity} being a convenient way of stating the result.

Let us briefly elaborate on the comparison between Theorem \ref{thm:rigidity} and Gustavsson's result described in \eqref{BulkGus}. Gustavsson actually proved a stronger result than that of \eqref{BulkGus}: he proved that if we look at a fixed number of eigenvalues of the form $\lambda_{\lfloor a_1 N\rfloor },...,\lambda_{\lfloor a_k N\rfloor}$ with $a_i\in(0,1)$ distinct, then the fluctuations (normalized as in \eqref{BulkGus}), converge to i.i.d. standard Gaussians. Heuristically, this suggests that perhaps Theorem \ref{thm:rigidity} can be understood by treating $\frac{\pi \sqrt{2}N}{\sqrt{\log N}}\psi_V(\varkappa_j)\sqrt{1-\varkappa_j^2}(\lambda_j-\varkappa_j)$ as $N$ i.i.d. standard Gaussians. A basic fact from extreme value theory is that the maximum of $N$ i.i.d. standard Gaussians behaves like $\sqrt{2\log N}$, so this reasoning would indeed seem to give a heuristic justification for Theorem \ref{thm:rigidity}. Unfortunately, such a simple argument can not be turned into a proof. 
More precisely, the lack of independence plays no role in our proof of the upper bound, but to obtain the lower bound, one needs to take into account the correlations between the eigenvalues. The accurate picture comes from viewing the fluctuations of the eigenvalues as  behaving asymptotically like a logarithmically correlated Gaussian field for which  we can study the extremal behavior using the theory of Gaussian multiplicative chaos (GMC) that we describe in Section~\ref{gmc_results} and in further detail in Section~\ref{sect:background}.

The connection to logarithmically correlated Gaussian fields comes through the centered eigenvalue counting function 
\begin{equation} \label{h}
\h(x)  = \sqrt{2} \pi \bigg( \sum_{1\le j\le N} \1_{\lambda_j \le x}  - N \int_{-1}^x d\mu_V\bigg), \qquad  x\in \R .   
\end{equation}
The ad hoc normalization by $\sqrt{2} \pi$ is to be consistent with the convention used in  the GMC literature.
The connection between $h_N$ and the maximal fluctuations of the eigenvalues comes from the following observation: due to monotonicity properties, we have 
\[
\max_{x\in\R} \h(x)  = \max_{j=1,\dots , N}  \h(\lambda_j)  .
\]
For $N$ large, a simple calculation shows that 
\[
\h(\lambda_j) = \sqrt{2}\pi N\int_{\lambda_j}^{\varkappa_j} d\mu_V \approx  \sqrt{2}\pi N\psi_V(\varkappa_j)\sqrt{1-\varkappa_j^2}(\varkappa_j-\lambda_j)
\]  
provided that $\displaystyle\max_{j=1,\dots , N}|\varkappa_j-\lambda_j|$ is small (which we expect to be true with high probability), so at least heuristically,  we obtain with high probability that 
\begin{equation} \label{max_h}
\max_{x\in\R} \h(x)  \approx \sqrt{2}\pi N\max_{j=1,\dots,N}\bigg\{\psi_V(\varkappa_j)\sqrt{1-\varkappa_j^2} (\varkappa_j  - \lambda_j)\bigg\} . 
\end{equation}
A similar approximation holds for  $\min_{x\in\R} \h(x)$. Hence, in order to prove Theorem~\ref{thm:rigidity}, it is sufficient to obtain  precise upper and lower bounds for the random variables $\max_{x\in\R} \h(x)$ and $\min_{x\in\R} \h(x)$.  The following theorem, proven in Section \ref{sect:evcf}, gives such bounds.

\begin{theorem} \label{thm:max2}
Under the same conditions as in Theorem \ref{thm:rigidity}, we have that for any $\delta>0$, 
\[
\lim_{N\to\infty}\P\left[ (1-\delta) \sqrt{2}\log N  \le \max_{x\in\R}\hspace{-.1cm}\big\{\pm h_N(x) \big\} \le (1+\delta)\sqrt{2} \log N \right] =1  .
\]
\end{theorem}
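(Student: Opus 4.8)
\medskip
\noindent\textbf{Proof plan.}
It suffices to treat $\max_{x\in\R}\h(x)$: the bound for $\min_{x\in\R}\h(x)=-\max_{x\in\R}\big(-\h(x)\big)$ follows by applying the $\max$-statement to the reflected potential $\widetilde V(x)=V(-x)$, which again satisfies Assumptions \ref{ass:regular} and for which $-\h$ is, up to a harmless left limit, the centered counting function \eqref{h}. The starting observation is deterministic: between consecutive eigenvalues $\h$ is strictly decreasing, so with high probability $\max_{x\in\R}\h(x)=\max_{1\le j\le N}\h(\lambda_j)=\sqrt2\,\pi\max_{1\le j\le N}\big(j-N\textstyle\int_{-1}^{\lambda_j}d\mu_V\big)$. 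Writing $\mathcal N_t$ for the number of eigenvalues in $(-\infty,t]$ and introducing, for a level $u>0$, the \emph{deterministic} points $t_j=t_j(u)$ defined by $N\int_{-1}^{t_j}d\mu_V=j-\tfrac{u}{\sqrt2\,\pi}$ (only indices with $j\gtrsim\log N$ matter, since $\h(\lambda_j)\le\sqrt2\,\pi j$), one has the inclusion $\{\max_{x}\h(x)>u\}\subseteq\bigcup_j\{\mathcal N_{t_j}\ge j\}=\bigcup_j\{\mathcal N_{t_j}-\E\mathcal N_{t_j}\ge\tfrac{u}{\sqrt2\,\pi}\}$. Everything is thereby reduced to controlling $\h$ at deterministic points, and the analytic input for this is the asymptotics of the Laplace transform $\E[e^{\gamma\h(t)}]$, which for real $\gamma$ is (a deterministic prefactor times) a Hankel determinant whose symbol carries a single Fisher--Hartwig jump at $t$; the corresponding asymptotics give, for every fixed real $\gamma$ and uniformly for $t$ in the bulk, $\E[e^{\gamma\h(t)}]=(c_V(t)+o(1))\,N^{\gamma^2/2}$.

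For the \emph{upper bound} I would take $\gamma=(1+\delta)\sqrt2$ and $u=(1+\delta)\sqrt2\,\log N$. The exponential Chebyshev inequality and the single-jump asymptotics give $\P[\mathcal N_{t_j}-\E\mathcal N_{t_j}\ge\tfrac{u}{\sqrt2\,\pi}]\le C\,e^{-\gamma u}N^{\gamma^2/2}$ for each bulk index; summing over the at most $N$ indices and using $-\gamma u+\tfrac{\gamma^2}{2}\log N=-(1+\delta)^2\log N$ yields $\P[\max_{x}\h(x)>u]\le C\,N^{1-(1+\delta)^2}=C\,N^{-2\delta-\delta^2}\to0$. The finitely many near-edge indices (for which $t_j$ lies within $N^{-2/3+\epsilon}$ of $\pm1$) are handled by the asymptotics for a jump singularity approaching the edge, or more directly by Proposition \ref{prop:rigedge}, which already shows the maximum is with high probability not attained near the spectral edge. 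Note that, as promised, no correlation structure of the eigenvalues is used here.

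For the \emph{lower bound} the logarithmic correlations must enter. For $0<\gamma<\sqrt2$ I would study the random measures on $\S$ given by $\nu_N^\gamma(dx)=\dfrac{e^{\gamma\h(x)}}{\E[e^{\gamma\h(x)}]}\,dx\asymp N^{-\gamma^2/2}e^{\gamma\h(x)}\,dx$, and show that for each such $\gamma$ they converge in distribution to a nontrivial Gaussian multiplicative chaos measure $\nu^\gamma$ built from the limiting logarithmically correlated Gaussian field in Johansson's central limit theorem for $\h$. First moments are immediate from the single-jump asymptotics. The second moment $\E[\nu_N^\gamma(f)^2]$ requires $\E[e^{\gamma(\h(x)+\h(y))}]$, a Hankel determinant with \emph{two} Fisher--Hartwig jumps at $x$ and $y$, and the crucial point is its asymptotics \emph{uniformly as $y\to x$}, i.e.\ for two merging jump singularities: this gives $\dfrac{\E[e^{\gamma(\h(x)+\h(y))}]}{\E[e^{\gamma\h(x)}]\,\E[e^{\gamma\h(y)}]}\asymp\min\{|x-y|,N^{-1}\}^{-\gamma^2}$, which for $\gamma<1$ is integrable on $\S^2$ and yields $L^2$-convergence outright. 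For $1\le\gamma<\sqrt2$ one instead proves convergence of fractional moments of order $p<2/\gamma^2$ via a truncation of the approximating field, using the higher-order merging-jump asymptotics together with the size-dependent (dimension-dependent) impurity Hankel asymptotics; since the limit $\nu^\gamma$ is a genuine subcritical GMC measure, $\nu^\gamma(\S)>0$ almost surely, and hence $\nu_N^\gamma(\S)\ge\eta$ with probability $\ge1-o_\eta(1)$.

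Finally one extracts a large value of $\h$ from the positivity of $\nu_N^\gamma(\S)$, using that $\nu_N^\gamma$ lives, to leading order, on the set of ``$\gamma$-thick points'' $\{x:\h(x)\gtrsim\gamma\log N\}$: for $\theta<\gamma$,
\[
\E\big[\nu_N^\gamma\big(\{x\in\S:\h(x)<\theta\log N\}\big)\big]\asymp\int_{\S}N^{-\gamma^2/2}\,\E\big[e^{\gamma\h(x)}\1_{\h(x)<\theta\log N}\big]\,dx=N^{-(\gamma-\theta)^2/2+o(1)}\longrightarrow0,
\]
where the truncated moment $\E[e^{\gamma\h(x)}\1_{\h(x)<\theta\log N}]=N^{\gamma^2/2-(\gamma-\theta)^2/2+o(1)}$ comes from the single-jump asymptotics (valid for all real parameters) via the moderate deviation bounds $\P[\h(x)>r\log N]\le N^{-r^2/2+o(1)}$ and a layer-cake decomposition. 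Combined with $\nu_N^\gamma(\S)\ge\eta$, this forces $\nu_N^\gamma(\{x:\h(x)\ge\theta\log N\})>0$ with high probability, hence $\max_{x\in\R}\h(x)\ge\theta\log N$; letting $\theta\uparrow\gamma\uparrow\sqrt2$ gives $\P[\max_{x\in\R}\h(x)\ge(1-\delta)\sqrt2\,\log N]\to1$. The main obstacle is this lower bound, specifically the GMC convergence \emph{up to the critical parameter}: as $\gamma\uparrow\sqrt2$ one is forced to work with fractional moments of order barely above $1$, which is precisely the regime demanding the new Hankel determinant asymptotics (merging jumps, and merging jumps with a size-dependent impurity) that are not in the literature, together with the moderate-deviation control pinning $\nu_N^\gamma$ to the thick points; by comparison the upper bound and the reduction to deterministic points are soft.
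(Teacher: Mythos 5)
Your proposal is correct and follows essentially the paper's architecture: an upper bound by exponential Chebyshev at deterministic quantile-shifted points, using single-jump Hankel asymptotics uniformly up to a $N^{-2/3}$-neighbourhood of the edge, and a lower bound via convergence of the GMC measure built from $h_N$ (requiring merging-jump Hankel asymptotics) followed by a thick-point argument. Two cautions are in order. First, the two-point moment bound should read $\max\{|x-y|, N^{-1}\}^{-\gamma^2}$ (equivalently $\min\{N^{\gamma^2},|x-y|^{-\gamma^2}\}$), not $\min\{|x-y|,N^{-1}\}^{-\gamma^2}$; your integrability claim for $\gamma<1$ is unaffected, but as written the formula is an overestimate whose integral over $\S^2$ actually grows like $N^{\gamma^2}$. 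Second, do not route the near-edge indices through Proposition \ref{prop:rigedge}: its proof invokes Lemma \ref{lem:rig}, which itself uses Theorem \ref{thm:max2}, so that is circular. Your other suggestion --- the jump-at-the-edge asymptotics of Theorem \ref{th:Hankel3}, which the paper packages into the uniform one-point estimates of Lemma \ref{lem:1p} and the sub-edge maximum bound of Lemma \ref{lem:MB} --- is the non-circular one. The remaining deviations from the paper are cosmetic: the paper's upper bound applies Markov to $\mu_N^\gamma(\Omega)$ at a slightly subcritical $\gamma$ after exploiting the piecewise structure of $h_N$ (Proposition \ref{prop:cont}) rather than a pointwise union bound, and the lower-bound truncation is implemented with the barrier events $A_{\ell,L}(x)$ of \eqref{eventA} instead of fractional moments, but both carry the same analytic content.
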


\begin{remark}\label{rem:kolmo}
As a slightly different kind of interpretation of this result, recall that the Kolmogorov-Smirnov distance of two probability measures $\mu$ and $\nu$ on $\R$ is defined by $d_K(\mu,\nu)=\sup_{x\in \R}|\mu(-\infty,x]-\nu(-\infty,x]|$. Thus if we write $\mu^{(N)}=\frac{1}{N}\sum_{j=1}^N\delta_{\lambda_j}$ for the empirical distribution of the eigenvalues, we see that $\mu^{(N)}(-\infty,x]-\mu_V(-\infty,x]=\frac{1}{\sqrt{2}\pi N} \h(x)$, so Theorem \ref{thm:max2} can be reformulated as  
\[
\lim_{N\to\infty}\P\left[ \frac{(1-\delta)}{\pi} \frac{\log N}{N}  \le d_K(\mu^{(N)},\mu_V) \le \frac{(1+\delta)}{\pi}\frac{ \log N}{N} \right] =1  
\]	
for any $\delta>0$. 

The rate of convergence of the empirical distribution to the equilibrium measure has been studied e.g. in the setting of Wigner matrices -- see \cite{GT}, where an upper bound of the form $(\log N)^b/N$  with a unexplicit constant $b>0$ was obtained for the Kolmogorov distance. So in the setting of the GUE, Theorem \ref{thm:max2} can be seen as an improvement of \cite[Theorem 1.1]{GT} in that we establish the precise rate as well as identify the correct multiplicative constant.     	 \hfill $\blacksquare$ 
\end{remark}

The proof of Theorem~\ref{thm:max2} relies on the  log-correlated structure of the random field $\h$ and the connection to the theory of multiplicative chaos that we explain in the next section.

\subsection{Main results on Gaussian multiplicative chaos} \label{gmc_results}

In this section, we state our main results concerning multiplicative chaos and the fractal geometry of the log-correlated field $\h$.

As mentioned earlier, the connection between $\h$ and the theory of log-correlated fields can be seen from Johansson's CLT \cite[Theorem 2.4]{Johansson} for the linear statistics of eigenvalues. 
Namely, for compactly supported smooth functions $f$, as $N\to\infty$, 
\begin{equation}\label{eq:CLT}
\frac{-1}{\sqrt{2}\pi}\int_\R f'(x)\h(x)dx = 
\sum_{j=1}^N f(\lambda_j)-N\int_\R f d\mu_V
\ \Rightarrow\ \mathcal N\big(0,\sigma(f)^2\big),
\end{equation}
where the variance is given by\footnote{Usually, e.g. in \cite{Johansson}, the variance is given in a slightly different form. For a discussion about the equivalence of various representations of it, see Appendix \ref{app:logcor}.}
\begin{equation}\label{eq:cov}
\sigma(f)^2=\iint_{\S^2}f'(x)f'(y)\frac{\Sigma(x,y)}{2\pi^2}dxdy,\qquad 
\Sigma(x,y):=\log\left| \frac{1-xy+\sqrt{1-x^2}\sqrt{1-y^2}}{x-y} \right|.
\end{equation}

For our purposes, the most important  observation is that \eqref{eq:CLT}  and \eqref{eq:cov}  suggest that in the sense of generalized functions, as $N\to\infty$, $\h$ converges to a Gaussian process with covariance kernel $\Sigma$. 
Since $\Sigma(x,y)$ has a logarithmic singularity on the diagonal $x=y$, this means that any possible limit of $\h$ can not be an honest function, as the relevant Gaussian process would have an infinite variance. This is reflected in Theorem \ref{thm:max2} in that the maximum of $h_N$ blows up as $N\to\infty$, while an honest function would have a finite maximum.  See also Figure \ref{fig:simu1} for an illustration of what a realization of $\h$ looks like, and in particular, how it is a rough-looking object which will not have a meaningful limit as $N\to\infty$, at least in the sense of say continuous functions. Moreover, note that $\sqrt{2}\log 6000\approx 12.3$, so the global maximum and minimum in the figure seem to agree quite well with Theorem \ref{thm:max2}.
\begin{figure}
	\centering
		\includegraphics[width=0.9\textwidth]{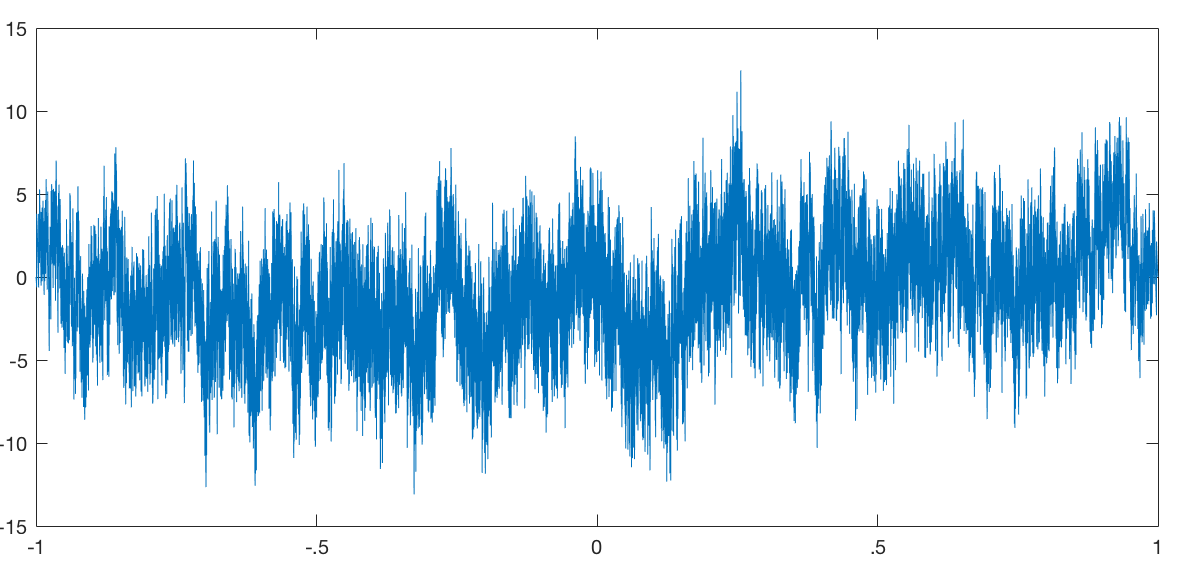} 
	\caption{A realization of $\h$ for $N=6000$ in the setting of the GUE.}\label{fig:simu1}
\end{figure}

\medskip

As is done in \cite{HKC01,FKS16} for related objects, one can make sense of the limiting object as a random element of a suitable space of generalized functions. Such stochastic processes, which are random generalized functions whose correlation kernels have a logarithmic singularity on the diagonal, are termed log-correlated fields. 
While questions about extrema are a priori ill-posed for generalized functions, tools have been developed for studying the extrema of log-correlated fields. For instance, if one smooths the field by convolving with a smooth bump function, then with a modified second moment method, one can extract asymptotics of the extrema of the smoothed field in the limit where the width of the bump function vanishes. We refer the interested reader to \cite{Kistler} and references therein for more details about such questions. 

One expects that some modification of this second moment method could work for studying the extrema of $\h$, which can be seen as another type of approximation of the log-correlated field. In related models, namely looking at similar questions for random unitary matrices and the Riemann zeta function in the neighborhood of a random point on the critical line, this type of approach was originally suggested in \cite{FHK12}, see also \cite{FK}, and has indeed been carried out in many related problems: in \cite{ABB16,CMN16,PZ16} this is done when studying the extrema of the eigenvalue counting function and a closely related object, namely the logarithm of the absolute value of the characteristic polynomial\footnote{The ``close relation" comes from noting that the logarithm of the absolute value of the characteristic polynomial is the real part of the logarithm of the characteristic polynomial, while the eigenvalue counting function can be viewed as the imaginary part of it -- if one chooses the branch of the logarithm suitably.} of the Circular Unitary Ensemble and the Circular $\beta$ Ensemble. This has also been done in the setting of unitary invariant ensembles of random Hermitian matrices (without the assumption of there being only one cut) in the work \cite{LP18}, where the authors consider the maximum of the logarithm of the characteristic polynomial. In \cite{HP18}, the authors study the extrema of the ``counting function" of the sine-$\beta$ process on a large interval. 

Instead of a modified second moment method, we have decided to take a different approach which reveals more of the fractal geometric nature of $\h$.  What we do is to establish a connection to a family of random fractal measures described by the theory of multiplicative chaos introduced by Kahane \cite{Kahane}, and leverage properties of these measures to get information about e.g. the extrema of $\h$. While such an approach is in a sense intuitively understood in the multiplicative chaos community, we are unaware of a reference  where such a study is carried out. We thus hope that also from this point of view, our work will be of independent interest as we expect that it can be generalized to other probabilistic models where log-correlated fields arise. In particular, some of the results of Section~\ref{sect:LB} have already been used in \cite{L19a,L19b}.

In the setting of log-correlated fields (and on an intuitive level) the main insight of Kahane was that if $X$ is some Gaussian log-correlated field, then one can construct from it a family of random measures, formally written as $e^{\gamma X(x)-\frac{\gamma^2}{2}\mathds E X(x)^2}dx$ for $\gamma\in \R$, and the fractal properties of these measures determine properties of extreme values of $X$, or more precisely, extreme values of say convolution approximations of $X$. We direct the reader to \cite[Section 4]{RV14} for precise statements of this flavor and to \cite{Berestycki15,RV14} for a general introduction to the subject. The rigorous construction of these measures goes through approximating the random generalized function e.g.\ by a convolution with a bump function, then exponentiating, normalizing, and removing the approximation. One might hope that $\h$ can serve as a sufficiently good approximation to the log-correlated field in order to carry out this approach. This is not a very far fetched idea, as similar arguments already exist in the literature, see e.g. \cite{FK} for original conjectures, \cite{LOS18,Webb15,Webb18} for related studies in the setting of random unitary matrices and the sine process, and \cite{BWW18} for a connection between the absolute value of the characteristic polynomial of random Hermitian matrices and multiplicative chaos. See also \cite{CZ, CN19,L19a,L19b} for further recent studies connecting multiplicative chaos to random matrix theory and $\beta$-ensembles.

To state our main theorem regarding the connection between $\h$ and multiplicative chaos, we introduce the following notation: for $\gamma\in \R$, as a measure on $[-1,1]$, let
\begin{equation}\label{def:muN}
d\mu_N^\gamma(x)=\frac{e^{\gamma h_N(x)}}{\mathds E e^{\gamma h_N(x)}}dx.
\end{equation}
We define the limiting object slightly informally here -- for a more precise discussion on how to construct it, see Section \ref{sect:gmc}. Let $X$ be the centered Gaussian field on $(-1,1)$ with covariance kernel $\Sigma$ given by \eqref{eq:cov}. Then write  for each $\gamma\in \R$
\begin{equation}\label{def:mu}
d\mu^\gamma(x)= `` \frac{ e^{\gamma X(x)}}{\E e^{\gamma X(x) }}dx"
\end{equation}
for the associated multiplicative chaos measure -- the quotation marks indicate that the proper definition of the measure requires a renormalization procedure as described above and in further detail in Section \ref{sect:gmc}. With this notation, our main result concerning multiplicative chaos and $\h$, proven in Section \ref{sect:gmc}, is the following.
\begin{theorem}\label{th:gmc}
Let $V$ satisfy Assumptions \ref{ass:regular}. Then for $\gamma\in (-\sqrt{2},\sqrt{2})$, as $N\to\infty$, the sequence of random measures $\mu_N^\gamma$ defined in \eqref{def:muN} converges in law, with respect to the topology of weak convergence, to the random measure $\mu^\gamma$. 
\end{theorem}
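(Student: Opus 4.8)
The plan is to adapt the strategy of \cite{LOS18}: reduce the convergence of the random measures to large-$N$ asymptotics of a family of exponential moments of $h_N$, recognise these as normalised Hankel determinants with Fisher--Hartwig jump singularities, and combine a second-moment argument in the $L^2$-phase with a truncation argument in the remaining subcritical range. First, since $\S$ is compact and $\E\mu_N^\gamma(\S)=2$ for every $N$, the family $\{\mu_N^\gamma\}$ is tight for the weak topology on finite measures, so it is enough to prove convergence of all finite-dimensional distributions; by linearity this reduces to showing $\mu_N^\gamma(f):=\int f\,d\mu_N^\gamma\to\mu^\gamma(f)$ in law for every Lipschitz $f\ge 0$. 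The quantities that control $\mu_N^\gamma(f)$ are the mixed exponential moments
\[
\mathcal M_N^{(k)}(x_1,\dots,x_k):=\E\left[\prod_{i=1}^{k}\frac{e^{\gamma h_N(x_i)}}{\E\,e^{\gamma h_N(x_i)}}\right],\qquad \E\bigl[\mu_N^\gamma(f)^k\bigr]=\int_{\S^k}\prod_{i=1}^k f(x_i)\,\mathcal M_N^{(k)}(x_1,\dots,x_k)\,dx_1\cdots dx_k .
\]
Writing $e^{\gamma h_N(x_i)}=e^{-\sqrt2\pi N\gamma\int_{-1}^{x_i}d\mu_V}\prod_{j=1}^{N}e^{\sqrt2\pi\gamma\,\1_{\lambda_j\le x_i}}$ and integrating against the density \eqref{Z}, one sees that $\E\bigl[\prod_i e^{\gamma h_N(x_i)}\bigr]$ equals, up to the explicit deterministic prefactor, a Hankel determinant whose symbol is $e^{-NV}$ times a product of pure-jump Fisher--Hartwig factors at $x_1,\dots,x_k$.

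The analytic heart is then the large-$N$ behaviour of these determinants. Two statements are required. First, for fixed distinct $x_1,\dots,x_k\in(-1,1)$,
\[
\mathcal M_N^{(k)}(x_1,\dots,x_k)\ \longrightarrow\ \exp\Bigl(\gamma^2\!\!\sum_{1\le i<j\le k}\!\!\Sigma(x_i,x_j)\Bigr),
\]
which is exactly the normalised $k$-point moment of $e^{\gamma X}$ for the field $X$ with covariance $\Sigma$ from \eqref{eq:cov}. Second, one needs a \emph{uniform} near-diagonal bound $\mathcal M_N^{(k)}(x_1,\dots,x_k)\le C_{k,\delta}\prod_{i<j}|x_i-x_j|^{-\gamma^2}$, valid for all $N$ and all $x_i\in[-1+\delta,1-\delta]$; this requires understanding the determinants as jump singularities merge, on separations of order at least $1/N$ (smaller separations being absorbed into the normalisation), together with a separate estimate when a singularity approaches $\pm1$. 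Since $\Sigma(x,y)=-\log|x-y|+g(x)+o(1)$ as $y\to x$ with $g$ bounded on compacts of $(-1,1)$, this bound reproduces the logarithmic singularity of the limiting covariance and renders the moment integrals absolutely convergent exactly in the subcritical regime.

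With these inputs, the $L^2$-phase $\gamma^2<1$ is short: $\iint_{\S^2}|x-y|^{-\gamma^2}\,dx\,dy<\infty$, so the first moment is $\E[\mu_N^\gamma(f)]=\int f\,dx$ and, by dominated convergence, $\E[\mu_N^\gamma(f)^2]\to\iint f(x)f(y)e^{\gamma^2\Sigma(x,y)}\,dx\,dy=\E[\mu^\gamma(f)^2]$; comparing $\mu_N^\gamma$ with a mollification of $X$ at scale $N^{-1}$ and invoking the classical $L^2$-theory of Gaussian multiplicative chaos then identifies the limit with $\mu^\gamma$. For $1\le\gamma^2<2$ the second moment diverges and one argues by truncation in the spirit of Berestycki: restrict $\mu_N^\gamma$ to the event that the dyadic increments of $h_N$ stay below a barrier of slope $t$, run the method of moments on the truncated measure (whose moments are now finite, using the two inputs above together with a Girsanov tilt of the Gaussian comparison field to absorb the barrier events and the accompanying size-dependent perturbation of the symbol), pass to the limit in $N$ for fixed $t$, and finally let $t\to\infty$, bounding the truncation error by a uniform-in-$N$ upper-tail estimate for $h_N$ that itself follows from the one-singularity Hankel asymptotics. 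Together these cover all $\gamma\in(-\sqrt2,\sqrt2)$, which is Theorem~\ref{th:gmc}.

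I expect the main obstacle to be the asymptotic input of the second paragraph: one needs asymptotics of Hankel determinants with an arbitrary number of Fisher--Hartwig jump singularities allowed to coalesce with one another and to approach the spectral edge, sharp enough to recover the Gaussian answer and uniform enough to give the near-diagonal bound --- and, as noted in the introduction, these are not available in the literature in the required generality. The route is a Deift--Zhou steepest-descent analysis of the associated Riemann--Hilbert problem, with confluent-hypergeometric local parametrices at the merging singularities and a modified Airy parametrix at the edge, the size-dependent impurity of the truncation step calling for one further local model. Granting these asymptotics, the probabilistic part is a technical but by now fairly standard adaptation of the multiplicative chaos machinery.
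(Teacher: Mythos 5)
Your proposal captures the broad strategy — reduce the convergence to Hankel-determinant asymptotics with Fisher--Hartwig jump singularities, treat the $L^2$-phase directly, and handle $1\le\gamma^2<2$ by a barrier truncation in the spirit of Berestycki — and you correctly identify both the role of the near-diagonal bound $\prod_{i<j}|x_i-x_j|^{-\gamma^2}$ and the fact that the required determinant asymptotics (merging singularities, singularity near the edge, $N$-dependent perturbation $w$) are not in the literature. However, there is a genuine gap in how you propose to organise the probabilistic argument, and it lands precisely on the hardest part of the analysis.

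You propose to "run the method of moments on the truncated measure," and for the $L^2$-phase you work with $k$-point quantities $\mathcal M_N^{(k)}(x_1,\dots,x_k)$. This requires Hankel determinant asymptotics in which $k$ jump singularities can merge pairwise (and simultaneously), uniformly over compacts, for every $k$. Those asymptotics are substantially harder than the two-singularity case and are not developed in the paper; the paper explicitly avoids needing them. The paper's probabilistic framework (Assumptions~\ref{A:exp_moment} and Theorem~\ref{thm:chaos}) is designed so that every exponential-moment estimate involves \emph{at most two} unsmoothed values $e^{\gamma h_N(x)}$, $e^{\gamma h_N(y)}$, while all other contributions — the barrier events and any auxiliary Laplace-transform factors — are encoded in a \emph{smoothed} linear statistic, which in Hankel language is a bounded $N$-dependent analytic perturbation $w$ of the symbol (Theorem~\ref{th:Hankel1}), not a new singularity. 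The truncation argument is then a modified \emph{second}-moment ($L^2$) estimate on the barrier-restricted measure (Lemma~\ref{prop:L2}), not a method of moments: one compares $\int f\,\1_{A_{\ell,M_N}}\,d\mu_N^\gamma$ with $\int f\,\1_{A_{\ell,L}}\,d\widetilde\mu_N^\gamma$ in $L^2(\P_N)$, which only involves two-point exponential moments and Girsanov-type estimates for the conditional law given two exponential weights. Consequently, only two merging Fisher--Hartwig singularities ever occur (Theorem~\ref{th:Hankel2}), and one singularity approaching the edge occurs in establishing the one-point bounds (Theorem~\ref{th:Hankel3}).

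A secondary point: in the $L^2$-phase, showing $\E[\mu_N^\gamma(f)]\to\E[\mu^\gamma(f)]$ and $\E[\mu_N^\gamma(f)^2]\to\E[\mu^\gamma(f)^2]$ does not by itself identify the limit in law; one has to pass through the smoothed field $X_\eta$ (at an $N$-independent scale $\eta$), show $L^2$-closeness of $\mu_N^\gamma(f)$ to $\int f\,e^{\gamma X_\eta - \frac{\gamma^2}{2}\E X_\eta^2}\,dx$ uniformly in $N$ as $\eta\to 0$, and then invoke the Gaussian construction. You gesture at this ("comparing $\mu_N^\gamma$ with a mollification of $X$ at scale $N^{-1}$"), but the correct comparison is to a mollification at a scale $\eta$ that is eventually sent to zero \emph{after} $N\to\infty$, together with the barrier truncation to avoid divergent second moments when $\gamma^2\ge 1$. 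This is why in the paper the $L^2$-phase is not treated separately; Lemmas~\ref{lem:barrier}--\ref{lem:cvg} run a single argument for all $\gamma\in(0,\sqrt{2d})$. In short, the proposal as written would require substantially more Riemann--Hilbert input than is actually available or needed; the paper's chosen decomposition is what makes the two-singularity asymptotics sufficient.
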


\begin{remark}
Let us point out here that we expect that with a very similar argument as our proof of Theorem \ref{th:gmc}, one should be able to prove a corresponding result for $\h$ replaced by $x\mapsto \sqrt{2}\sum_{j=1}^N \log |x-\lambda_j|-\sqrt{2}N\int \log |x-\lambda|d\mu_V(\lambda)$ and for say $\gamma\in(0,\sqrt{2})$ -- thus extending a result of \cite{BWW18}. Moreover, we expect that a modification of our approach to Theorem \ref{thm:max2} should then yield precise estimates for $\max_{x\in \R}(\sqrt{2}\sum_{j=1}^N \log |x-\lambda_j|-\sqrt{2}N\int \log |x-\lambda|d\mu_V(\lambda))$, though as mentioned, such results have already been obtained in \cite{LP18}.  \hfill $\blacksquare$
\end{remark}

\begin{remark}
The value $\gamma=\gamma_c:=\sqrt{2}$ is critical in the sense that if $|\gamma| >\sqrt{2}$, then $\mu_N^\gamma$ converges in distribution to the zero measure as $N\to+\infty$ -- this follows from Remark~\ref{rk:supercritical} below. \hfill $\blacksquare$ 
\end{remark}

Concerning the proof of Theorem \ref{th:gmc}, we point out that since the stochastic process $\h$ is not Gaussian and $\mu_N^\gamma$ is not a martingale, the proof is more involved than those needed for simply constructing $\mu^\gamma$ -- for an elegant one, see \cite{Berestycki15}. Our approach here is to apply recent ideas developed in \cite{LOS18} for proving in general how an object that is only asymptotically a Gaussian log-correlated field, such as $\h$, can give rise to (Gaussian) multiplicative chaos. This method relies on estimates for exponential moments of $\h$ and related quantities. These on the other hand can be formulated in terms of asymptotics of certain Hankel determinants, and establishing these estimates is the most technical part of this paper. We will state the relevant asymptotics of Hankel determinants in Section \ref{sec:hankel_intro} and make the connection to exponential moments in Section \ref{sec:special}, but before doing this, we discuss some consequences of Theorem \ref{th:gmc} and give a heuristic reasoning to why it implies Theorem \ref{thm:max2}.

The connection between the measures $\mu_N^\gamma$ and the extreme values of the field $\h$ comes through the notion of thick points. Intuitively, a thick point is a point where the  field takes an abnormally large value: one where it is of the order of its variance instead of the order of its standard deviation. 
In our setting, the variance of $\h$ is roughly $\log N$ in the bulk for $N$ large, and  we define for $\gamma>0$ the set of $\gamma$-thick points as 
\begin{equation}\label{eq:thick}
\mathscr{T}_N^{\pm\gamma}  = \left\{  x \in [-1,1]:  \pm \h(x)\ge \gamma \log N\right\}.
\end{equation}
The idea behind this definition is that the measure $\mu_N^\gamma$ essentially lives on the set $\mathscr T_N^\gamma$. In particular, these points are so sparse that in the limit $\mu^\gamma$ lives on a fractal set.  Moreover, this heuristic and the fact that $\mu^\gamma$ is a non-trivial random measure if and only if $|\gamma|<\sqrt{2}$ gives strong evidence that the leading order of the maximum of $\h$ should be $\sqrt{2} \log N$ when $N$ is large.See Figure \ref{fig:simu2} for what $\mu_N^\gamma$ looks like for various $\gamma$; note in particular the fractal-like or strongly fluctuating behavior of the density,  as well as the fact that  $\mu_N^\gamma$  concentrates on the extrema of the field $\h$ as $\gamma$ increases.

In the following theorem, which we prove in Section \ref{sec:special}, we provide precise estimates for the size of $\mathscr T_N^\gamma$.

\begin{theorem}\label{th:TP}
Under Assumptions \ref{ass:regular},  for any $\gamma\in (-\sqrt{2},\sqrt{2})\setminus\{0\}$,
\[
\lim_{N\to \infty}\frac{\log |\mathscr T_N^\gamma|}{\log N}=-\frac{\gamma^2}{2},
\]
where the convergence is in probability.
\end{theorem}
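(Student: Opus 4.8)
By the symmetry $h_N \leftrightarrow -h_N$ it suffices to treat $\gamma\in(0,\sqrt 2)$. I would prove a matching upper and lower bound for $\log|\mathscr T_N^\gamma|/\log N$. The only genuinely hard ingredients are imported from earlier in the paper: the exponential--moment estimates for $h_N$ from Section~\ref{sec:special} (consequences of the Hankel determinant asymptotics with a jump Fisher--Hartwig singularity, including the regime where the jump approaches the edge), which give $c\,N^{\beta^2/2}\le \E e^{\beta h_N(x)}\le C\,N^{\beta^2/2}$ uniformly for $\beta$ in a compact subset of $\R$ and $x$ in a fixed sub-interval $[-1+\delta,1-\delta]$, together with $\E e^{\beta h_N(x)}\le C N^{\beta^2/2}$ uniformly on all of $[-1,1]$ (the edge handled by the ``jump approaching an edge'' asymptotics; cf.\ also Proposition~\ref{prop:rigedge}); and the nontriviality of the limiting multiplicative chaos $\mu^\gamma$ contained in Theorem~\ref{th:gmc}. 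Granting these, the rest is a soft first/second-moment argument of the type used in the multifractal analysis of thick points of log-correlated fields.

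\emph{Upper bound.} For $a\in(0,\sqrt 2)$ I would bound $\E|\mathscr T_N^a|=\int_{-1}^1\P(h_N(x)\ge a\log N)\,dx$ by a Chernoff estimate: $\P(h_N(x)\ge a\log N)\le N^{-a^2}\,\E e^{a h_N(x)}\le C N^{-a^2/2}$ uniformly in $x\in[-1,1]$. Hence $\E|\mathscr T_N^a|\le C' N^{-a^2/2}$, and Markov's inequality gives $|\mathscr T_N^a|\le N^{-a^2/2+\epsilon}$ with probability tending to $1$, for every $\epsilon>0$. Taking $a=\gamma$ yields $\limsup_N \log|\mathscr T_N^\gamma|/\log N\le -\gamma^2/2$ in probability, and the same estimate, applied at other values of $a$, will be reused in the lower bound.

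\emph{Lower bound.} Here the plan is to transfer the nontriviality of the chaos through the ``partition function'' $W_N:=\int_{-1}^1 e^{\gamma h_N(x)}\,dx$. First, $\log W_N/\log N\to\gamma^2/2$ in probability: the upper direction follows from $\E W_N\le C N^{\gamma^2/2}$ and Markov, while $W_N\ge\int_{-1+\delta}^{1-\delta}e^{\gamma h_N}\,dx\ge c\,N^{\gamma^2/2}\,\mu_N^\gamma([-1+\delta,1-\delta])$, and Theorem~\ref{th:gmc} together with $\mu^\gamma([-1+\delta,1-\delta])>0$ a.s.\ forces $\mu_N^\gamma([-1+\delta,1-\delta])\ge c_\delta>0$ with high probability. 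Next, decompose $W_N=\sum_k\int_{\{k\eta\log N\le h_N(x)<(k+1)\eta\log N\}}e^{\gamma h_N(x)}\,dx$ into shells of log-height $\eta$. On a shell at level $k\eta>0$ one has $\int_{\mathrm{shell}}e^{\gamma h_N}\le N^{\gamma(k+1)\eta}\,|\mathscr T_N^{k\eta}|\le N^{\gamma(k+1)\eta-(k\eta)^2/2+\epsilon}$ with high probability (by the first-moment bound), and the exponent $-\tfrac12(k\eta-\gamma)^2+\tfrac{\gamma^2}{2}+\gamma\eta+\epsilon$ is strictly below $\gamma^2/2$ once $|k\eta-\gamma|$ exceeds $\sqrt{2(\gamma\eta+\epsilon)}$; summing the (finitely many, or geometrically decaying) such shells shows they contribute $o(W_N)$. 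Hence the shells with $k\eta\in[\gamma-\kappa,\gamma+\kappa]$, $\kappa\asymp\sqrt{\gamma\eta+\epsilon}+\eta$, carry at least $W_N/2\ge c\,N^{\gamma^2/2}$ of the mass; but on their union $h_N<(\gamma+\kappa)\log N$ and $h_N\ge(\gamma-\kappa)\log N$, so $c\,N^{\gamma^2/2}\le N^{\gamma(\gamma+\kappa)}\,|\mathscr T_N^{\gamma-\kappa}|$, i.e.\ $|\mathscr T_N^{\gamma-\kappa}|\ge c\,N^{-\gamma^2/2-\gamma\kappa}$ with high probability. Letting $\eta,\epsilon\to0$ (so $\kappa\to0$) and applying this with $\gamma$ replaced by $\gamma+\kappa$, one obtains $\liminf_N\log|\mathscr T_N^\gamma|/\log N\ge-\gamma^2/2$ in probability, which together with the upper bound proves the theorem.

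\emph{Main obstacle.} Internally to this proof, the delicate point is the bookkeeping in the shell decomposition: one needs the exponential-moment asymptotics to be uniform over a compact range of the Fisher--Hartwig parameter and uniform in $x$ up to and including the spectral edge, so that the finitely many level-$k\eta$ first-moment estimates can be intersected and the shell sum controlled; the edge, where the pointwise variance of $h_N$ degenerates, is the usual source of friction and is exactly why the ``jump approaching the edge'' Hankel asymptotics are required. But the true difficulty lies upstream, in establishing those Hankel asymptotics and Theorem~\ref{th:gmc}; once they are in hand, Theorem~\ref{th:TP} follows by the elementary argument above.
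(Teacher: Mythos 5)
Your proof is correct, and it diverges from the paper's proof of Proposition~\ref{prop:TP} in the lower bound, so a brief comparison is warranted. Both arguments ultimately rest on the same imported ingredients, namely the uniform exponential-moment estimates of Lemma~\ref{lem:1p} (which rely on Theorem~\ref{th:Hankel3} for the edge) and the nondegeneracy of the chaos limit from Theorem~\ref{th:gmc}. For the upper bound on $|\mathscr T_N^\gamma|$ the two proofs are essentially the same Markov argument; you apply Chernoff to $\E|\mathscr T_N^\gamma|$ directly, while the paper routes the same bound through the deterministic inequality $\mu_N^\gamma(\Omega)\ge R_\gamma^{-1}N^{\gamma^2/2}|\mathscr T_N^\gamma|$ and tightness of $\mu_N^\gamma(\Omega)$; yours is marginally more elementary. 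For the lower bound the routes genuinely differ: the paper runs a contradiction argument through Lemma~\ref{lem:supp}, showing that if $|A\cap\mathscr T_N^{\gamma-\delta'}|$ were too small then the chaos mass $\mu_N^\gamma(A)$ would be small, which contradicts $\mu_N^\gamma(A)\Rightarrow\zeta_A^\gamma>0$; you instead pin down the order $N^{\gamma^2/2}$ of the unnormalized partition function $W_N=\int e^{\gamma h_N}\,dx$, run a shell decomposition by the level of $h_N$, and use the first-moment Chernoff bounds to show that all shells with $|k\eta-\gamma|>\sqrt{2(\gamma\eta+\epsilon)}$ carry $o(W_N)$ mass, forcing $|\mathscr T_N^{\gamma-\kappa}|\gg N^{-\gamma^2/2-\gamma\kappa}$ and then finishing by shifting $\gamma$ and sending $\kappa\to0$. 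Your shell decomposition is in effect an inline proof of (a version of) Lemma~\ref{lem:supp}; the paper reuses that lemma, which it needs anyway for the maximum (Theorem~\ref{thm:LB}), so its proof is more compact, while yours is more self-contained, closer to the standard multifractal-spectrum argument for log-correlated fields, and makes the parabolic spectrum $\gamma\mapsto\gamma^2/2$ more visible at the cost of slightly heavier bookkeeping. The only point worth adding to your write-up is the observation, which is trivial, that the negative shells (where $h_N<0$) contribute only $O(1)=o(W_N)$ to the sum.
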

\begin{remark}
As we already explained, this result implies directly the lower bound in Theorem~\ref{thm:max2}. 
Indeed, if we set $\gamma=\sqrt{2}(1-\delta)$, the above result implies that the probability of $\mathscr{T}_N^{\pm\gamma}$ being non-empty converges to $1$ as $N\to\infty$. But it is clear from the definition that
$\mathscr{T}_N^{\pm\gamma}$ is non-empty if and only if $\max_{x\in\mathbb R} (\pm h_N(x))\geq (1-\delta)\sqrt{2}\log N$.
Similarly, if we would know that the probability of having $\gamma$-thick points for $\pm\gamma>\sqrt{2}(1+\delta)$ tends to $0$ as $N\to\infty$, this would imply the upper bound in Theorem \ref{th:gmc}. \hfill $\blacksquare$
\end{remark}

\begin{figure}[h]
\centering
\includegraphics[width=1\textwidth]{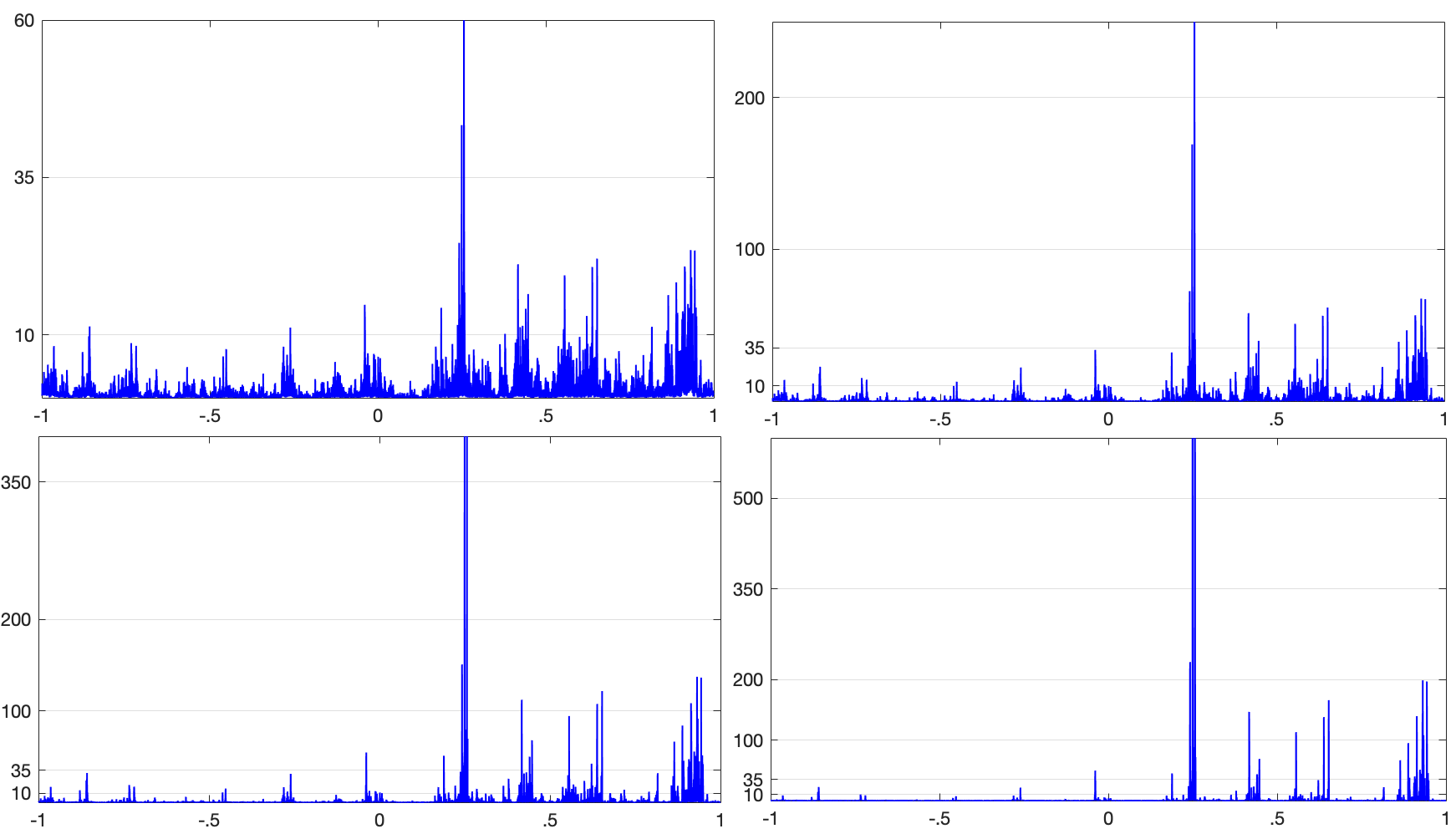} 
	\caption{A realization of the density of the measure $\mu_N^\gamma$ in the setting of the GUE for $N=6000$, $\gamma=0.4$ (top left), $\gamma=0.6$ (top right), $\gamma=.8$ (bottom left) and $\gamma=1.2$ (bottom right). The realization of the field $\h$ is the same as in Figure \ref{fig:simu1}.}\label{fig:simu2}
\end{figure}

In the physics literature \cite{FB08,FHK12,FLR12,FK}, it has been described that when interpreting the eigenvalue counting function $\h$ as
a \emph{random energy landscape}, the associated free energy exhibits a weak form of freezing, i.e. above the critical temperature $\gamma_c=\sqrt{2}$, the free energy becomes linear in $\gamma$. The asymptotics of the free energy are closely related to the asymptotics for the size of the set of $\gamma$-thick points, \cite{ABB16}. We demonstrate in Section~\ref{sect:freezing} that a proof of such behavior follows rather easily from Theorem \ref{th:gmc}. More precisely, we have the following result.

\begin{corollary} \label{thm:freezing}
Under Assumptions \ref{ass:regular},  for any $\gamma>0$, 
\[
\lim_{N\to\infty}\frac{1}{\log N}\log\left( \int_{-1}^1 e^{\gamma \h(x)} dx \right)  = \begin{cases}
\gamma^2/2 &\text{if } \gamma \le \sqrt{2} \\
\sqrt{2}\gamma -1 &\text{if } \gamma \ge \sqrt{2}
\end{cases},
\]
where the convergence is in probability.
\end{corollary}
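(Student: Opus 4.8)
The plan is to derive the corollary from Theorem~\ref{th:TP} (which handles the lower bound and most of the upper bound) and from Theorem~\ref{thm:max2} (which lets us truncate the integral near its top), by slicing $I_N:=\int_{-1}^1 e^{\gamma h_N(x)}\,dx$ according to the super-level sets $\mathscr T_N^{\gamma'}=\{x\in[-1,1]:h_N(x)\ge\gamma'\log N\}$. First I would record the elementary identity
\[
F(\gamma):=\max_{0\le\gamma'\le\sqrt{2}}\Big(\gamma\gamma'-\tfrac{(\gamma')^2}{2}\Big)=\begin{cases}\gamma^2/2,&\gamma\le\sqrt{2},\\ \sqrt{2}\gamma-1,&\gamma\ge\sqrt{2},\end{cases}
\]
the unconstrained maximiser being $\gamma'=\gamma$, so that the goal becomes $\tfrac{1}{\log N}\log I_N\to F(\gamma)$ in probability. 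I will use repeatedly the obvious monotonicity $\mathscr T_N^{\gamma'}\subseteq\mathscr T_N^{\gamma''}$ for $\gamma'\ge\gamma''$.

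For the lower bound: since $e^{\gamma h_N(x)}\ge N^{\gamma\gamma'}$ on $\mathscr T_N^{\gamma'}$, we get $I_N\ge N^{\gamma\gamma'}|\mathscr T_N^{\gamma'}|$ for each fixed $\gamma'\in(0,\sqrt{2})$, hence $\tfrac{1}{\log N}\log I_N\ge\gamma\gamma'+\tfrac{1}{\log N}\log|\mathscr T_N^{\gamma'}|$, and by Theorem~\ref{th:TP} the last term tends to $-(\gamma')^2/2$ in probability. Optimising over $\gamma'\in(0,\sqrt{2})$ (take $\gamma'=\gamma$ if $\gamma<\sqrt{2}$, and $\gamma'\uparrow\sqrt{2}$ if $\gamma\ge\sqrt{2}$) yields, for every $\eta>0$, $\P[\tfrac{1}{\log N}\log I_N>F(\gamma)-\eta]\to1$.

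For the upper bound: the part $\int_{\{x:\,h_N(x)\le0\}}e^{\gamma h_N(x)}\,dx$ is at most $2$. For the remainder I would fix small $\epsilon_0,\delta,\eta>0$ with $\epsilon_0<\sqrt{2}$, put $\gamma'_k=k\epsilon_0$ and $K=\lceil\sqrt{2}(1+\delta)/\epsilon_0\rceil$, and work on the event $\{\max_x h_N(x)\le\sqrt{2}(1+\delta)\log N\}$, which has probability tending to $1$ by Theorem~\ref{thm:max2}. On this event the layers $\{h_N\in[\gamma'_k\log N,\gamma'_{k+1}\log N)\}$, $0\le k\le K$, cover $\{h_N>0\}$, so $I_N\le 2+\sum_{k=0}^{K}N^{\gamma\gamma'_{k+1}}|\mathscr T_N^{\gamma'_k}|$. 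Applying Theorem~\ref{th:TP} at the finitely many grid points in $(0,\sqrt{2})$ and at the auxiliary point $\sqrt{2}-\epsilon_0$, one has with probability tending to $1$, simultaneously, $|\mathscr T_N^{\gamma'_k}|\le N^{-(\gamma'_k)^2/2+\eta}$ for $\gamma'_k\in[0,\sqrt{2})$ (the case $\gamma'_0=0$ being trivial since $|\mathscr T_N^{0}|\le 2$), and, by monotonicity, $|\mathscr T_N^{\gamma'_k}|\le|\mathscr T_N^{\sqrt{2}-\epsilon_0}|\le N^{-(\sqrt{2}-\epsilon_0)^2/2+\eta}$ for $\gamma'_k\ge\sqrt{2}$. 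Substituting, and using $\max_k\big(\gamma\gamma'_k-(\gamma'_k)^2/2\big)\le F(\gamma)$ for the first family and $\sqrt{2}\gamma-1\le F(\gamma)$ for the second, every term is at most $N^{F(\gamma)+r(\epsilon_0,\delta,\eta)}$ with $r(\epsilon_0,\delta,\eta)\to0$ as $(\epsilon_0,\delta,\eta)\to0$; summing the $O(\epsilon_0^{-1})$ terms gives $I_N\le 2+O(\epsilon_0^{-1})\,N^{F(\gamma)+r}$ with probability tending to $1$. Hence, for every $\eta'>0$, choosing $\epsilon_0,\delta,\eta$ small enough, $\P[\tfrac{1}{\log N}\log I_N\le F(\gamma)+\eta']\to1$. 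Together with the lower bound this gives convergence in probability to $F(\gamma)$.

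As for difficulty: the substantive work is already inside Theorems~\ref{th:gmc}, \ref{th:TP} and \ref{thm:max2}, so the corollary itself is soft. The two points needing a little care are (i) turning the pointwise-in-$\gamma'$ statement of Theorem~\ref{th:TP} into a bound valid across the continuum of thresholds in the slicing, which is why I discretise and invoke monotonicity of $\gamma'\mapsto|\mathscr T_N^{\gamma'}|$; and (ii) the window $\gamma'\in[\sqrt{2},\sqrt{2}(1+\delta)]$ not covered by Theorem~\ref{th:TP}, where I truncate with Theorem~\ref{thm:max2} and then bound by the value at $\gamma'=\sqrt{2}-\epsilon_0$. No special treatment of the spectral edge is needed here, since Theorems~\ref{th:TP} and \ref{thm:max2} already concern all of $[-1,1]$. (For $\gamma<\sqrt{2}$ one could alternatively argue directly from Theorem~\ref{th:gmc} by writing $I_N=\int_{-1}^1 \mathds{E}e^{\gamma h_N(x)}\,d\mu_N^\gamma(x)$ with $\mathds{E}e^{\gamma h_N(x)}=N^{\gamma^2/2+o(1)}$ in the bulk, but that route would still need the Hankel-determinant asymptotics to control the contribution near $\pm1$.)
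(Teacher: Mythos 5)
Your proof is correct, and it differs from the paper's in an organizational way worth noting. The paper proves a general freezing result (Proposition~\ref{prop:freezing}) in two separate cases: for $\gamma<\gamma_*$ it argues directly from GMC convergence, sandwiching $\int_\Omega e^{\gamma X}\,dx$ between $C^{-1}\epsilon_N^{-\gamma^2/2}\mu_N^\gamma(A)$ and $R\,\epsilon_N^{-\gamma^2/2}\mu_N^\gamma(\Omega)$ via the one-point exponential moment bounds (Assumptions~\ref{ass:1p}) and then invoking convergence of the total mass to an a.s.\ positive, finite random variable; for $\gamma\ge\gamma_*$ it slices into thick-point layers $\mathscr T_N^{\gamma_*-\delta_k}$ and truncates at $\mathscr T_N^{\gamma_*+\delta}$ using Assumption~\ref{ass:max}, then applies the thick-point size estimate (Proposition~\ref{prop:TP}) on each slice — essentially your argument. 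You instead run a single unified slicing argument for all $\gamma>0$ directly from the publicly stated Theorems~\ref{th:TP} and~\ref{thm:max2}, which is cleaner pedagogically at the cost of being slightly less direct in the subcritical regime (you go through the thick-point estimate, which itself is derived from GMC convergence, rather than from GMC convergence directly). Both routes feed on the same core inputs — the GMC convergence of Theorem~\ref{th:gmc}, the exponential moment bounds from Theorem~\ref{th:Hankel3}, and the regularity of $h_N$ used to control the maximum — and neither is substantially shorter than the other. Your care with discretising the continuum of thresholds and with the window $[\sqrt{2},\sqrt{2}(1+\delta)]$ not covered by Theorem~\ref{th:TP} is exactly the right thing to attend to, and your parenthetical at the end correctly identifies the paper's alternative subcritical argument and its dependence on the Hankel-determinant asymptotics for the edge.
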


To conclude this section, we mention that there is in fact a technically simpler approach to the lower bound of Theorem \ref{thm:max2}. The idea of this alternative approach is that a smoothed version of $\h$ can be used to obtain nearly the correct lower bound for the maximum of $\h$. Thus we can carry out the program of controlling extrema through multiplicative chaos for this smoothed version of the field. This is technically less demanding as the required Hankel determinant estimates are simpler to obtain. Such an approach does not yield optimal estimates for example for the thick points, and thus does not reveal  the full fractal geometric properties of $\h$, but for questions about extrema, we feel that this approach is preferable. 

More precisely, we define for arbitrary $z\in \C_+=\{w\in \C: \Im w>0\}$ the harmonic extension of $\h(x)$ as follows,
\begin{equation}\label{eq:hcomplex}
\h(z)=\sqrt{2}\Im\left(\sum_{j=1}^N \log(z-\lambda_j)-N\int\log(z-t)d\mu_V(t)\right),
\end{equation}
where the choice of the branch of the logarithm is chosen such that the cut is on the negative imaginary axis -- see Section \ref{sec:special} for details. We define the mesoscopic scale $\epsilon_N=N^{ -1+\alpha}$  for $\alpha\in(0,1)$ and a sequence of random measures on $[-1,1]$ by 
\begin{equation}\label{def:muNtilde}
d\widetilde \mu_N^{\gamma}(x)=\frac{e^{\gamma \h(x+\i \epsilon_N)}}{\mathds E e^{\gamma \h(x+\i \epsilon_N)}}dx.
\end{equation}
The counterpart to Theorem \ref{th:gmc}, which we will prove in Section \ref{sect:proof2}, is the following.
\begin{theorem}\label{th:gmc2}
Let $V$ satisfy Assumptions \ref{ass:regular}. Then for $\gamma\in (-\sqrt{2},\sqrt{2})$ and $\alpha\in(0,1)$, as $N\to\infty$, the sequence of random measures $\widetilde\mu_N^\gamma$ converges in law, with respect to the topology of weak convergence, to the random measure $\mu^\gamma$ from \eqref{def:mu}.
\end{theorem}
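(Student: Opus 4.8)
The plan is to follow the approach of \cite{LOS18}: reduce the convergence of $\widetilde\mu_N^\gamma$ to precise asymptotics for the joint exponential moments of the smoothed field $x\mapsto \h(x+\i\epsilon_N)$, show that these asymptotics coincide with those of a mollification at scale $\epsilon_N$ of the Gaussian field $X$ with covariance $\Sigma$ from \eqref{eq:cov}, and then invoke a general transfer principle to conclude that $\widetilde\mu_N^\gamma$ has the same weak limit in law as the Gaussian multiplicative chaos approximations, namely $\mu^\gamma$ from \eqref{def:mu}. The point of working with the regularized field rather than with $\h$ itself is that the Hankel determinant asymptotics needed here only involve Fisher--Hartwig symbols whose jump-type singularities are smeared on the mesoscopic scale $\epsilon_N=N^{-1+\alpha}$; in this regime the symbol is real-analytic in a fixed neighbourhood of $\S$, the Riemann--Hilbert analysis is the standard one-cut analysis, and none of the delicate merging-singularity or edge estimates of Section \ref{sec:hankel_intro} that underlie Theorem \ref{th:gmc} are required.

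\textbf{Step 1 (exponential moments as Hankel determinants).} Writing the harmonic extension \eqref{eq:hcomplex} as a linear eigenvalue statistic, for $x\in(-1,1)$ one has $\h(x+\i\epsilon_N)=\sum_j f_{x}(\lambda_j)-N\int f_{x}\,d\mu_V$ with $f_{x}(t)=\sqrt2\,\Im\log(x+\i\epsilon_N-t)$, a bounded smooth function varying on scale $\epsilon_N$ near $t=x$. For parameters $u_1,\dots,u_k\in\R$ (and, where convenient, in a small complex neighbourhood of the origin) and points $x_1,\dots,x_k$, the exponential moment $\E\exp\big(\sum_\ell u_\ell\,\h(x_\ell+\i\epsilon_N)\big)$ equals the ratio of partition functions $Z_N[V-\tfrac1N\sum_\ell u_\ell f_{x_\ell}]/Z_N[V]$, i.e. a Hankel determinant whose symbol carries the pure-imaginary Fisher--Hartwig factors $\exp\!\big(\sqrt2\,u_\ell\,\Im\log(x_\ell+\i\epsilon_N-t)\big)$. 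I would then extract the large-$N$ asymptotics of this determinant, tracking carefully the dependence on $\epsilon_N$, to obtain
\[
\log\E\exp\Big(\sum_\ell u_\ell\,\h(x_\ell+\i\epsilon_N)\Big)=\tfrac12\sum_{\ell,m}u_\ell u_m\,\Sigma_{\epsilon_N}(x_\ell,x_m)+o(1),
\]
where $\Sigma_{\epsilon_N}$ is the covariance of the mollified field $X_{\epsilon_N}$; it satisfies $\Sigma_{\epsilon_N}(x,y)=\Sigma(x,y)+o(1)$ when $|x-y|\gg\epsilon_N$ and $\Sigma_{\epsilon_N}(x,x)=(1-\alpha)\log N+O(1)$. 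The essential requirement is that the $o(1)$ be uniform over the $x_\ell$ in a fixed compact subset of $(-1,1)$, including configurations in which some of the $x_\ell$ coalesce on scales between $\epsilon_N$ and $1$; the contribution of $x_\ell$ near $\pm1$ is handled separately by a cruder argument showing that $\widetilde\mu_N^\gamma$ carries negligible mass near the edges, so that it suffices to prove convergence for the measures restricted to $[-1+\eta,1-\eta]$ and then let $\eta\to0$.

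\textbf{Step 2 (from matched moments to convergence of measures).} With Step 1 in hand, $\h(\cdot+\i\epsilon_N)$ satisfies the hypotheses of the transfer principle of \cite{LOS18}: a sequence of fields whose joint exponential moments agree, up to $o(1)$, with those of $X_{\epsilon_N}$ gives rise, after the normalization \eqref{def:muNtilde}, to random measures converging in law to $\mu^\gamma$ whenever $|\gamma|<\sqrt2$. I would recall the mechanism: the first moment of $\widetilde\mu_N^\gamma$ is exactly Lebesgue measure on $\S$, so the family is tight; for $\gamma^2<1$ the two-point version of Step 1 yields $\E\,\widetilde\mu_N^\gamma(f)^2\to\iint f(x)f(y)\,e^{\gamma^2\Sigma(x,y)}\,dx\,dy=\E\,\mu^\gamma(f)^2$ (the integral converging precisely because $\gamma^2<1$), and an $L^2$-approximation argument comparing $\widetilde\mu_N^\gamma$ to the Gaussian approximation $\mu_{\epsilon_N}^\gamma$ identifies the limit; for $1\le\gamma^2<2$ one truncates the measure to the region $\{\h(\cdot+\i\epsilon_N)\le\alpha\log(1/\epsilon_N)\}$ for a suitable $\gamma<\alpha<\sqrt2$, the one-point estimate of Step 1 showing that the removed mass has vanishing expectation while the truncated measures have uniformly bounded second moments, which reduces matters to an $L^2$ computation exactly as in the $\epsilon\to0$ construction of $\mu^\gamma$. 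Since these arguments are carried out abstractly in \cite{LOS18}, the content specific to this paper is entirely contained in Step 1.

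\textbf{Main obstacle.} The crux is Step 1: establishing the Hankel determinant asymptotics with pure-imaginary Fisher--Hartwig data smeared at mesoscopic distance $\epsilon_N$, with errors uniform over the locations of the singularities --- in particular uniform as two singularities approach each other on the scale $\epsilon_N$ --- and with adequate control towards the spectral edge. Compared with the analysis underlying Theorem \ref{th:gmc}, however, this is the benign case: the relevant local parametrices are the standard Airy and Bessel building blocks evaluated away from any coalescence limit, and the dependence on $\epsilon_N$ enters only through explicit elementary prefactors, so no genuinely new special-function analysis is needed.
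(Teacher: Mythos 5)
Your proposal takes essentially the same route as the paper: write the joint exponential moments of the smoothed field $\h(\cdot+\i\epsilon_N)$ as Hankel determinant ratios, establish asymptotics matching those of the Gaussian mollification $X_{\epsilon_N}$ with uniformity over all mesoscopic separations $\eta_j\geq\epsilon_N$ and over singularity locations in compact subsets of $(-1,1)$ (the paper's Theorem \ref{th:Hankel1} with $\gamma_1=\gamma_2=0$, which is the benign case where the symbol $e^{w-NV}$ is analytic in $\mathcal S_N$ and one can even work with a single lens), and then invoke the transfer principle of \cite{LOS18} (the paper's Theorem \ref{thm:LOS}), followed by the extension from $C_c(\Omega)$ to bounded test functions. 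One small inaccuracy: the relevant local parametrices at the soft edges $\pm1$ are Airy, not Bessel, but this does not affect your argument.
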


\subsection{Asymptotics of Hankel determinants}\label{sec:hankel_intro}

{
As discussed in Section \ref{gmc_results}, our proofs of Theorem \ref{th:gmc} and Theorem \ref{th:gmc2} (and hence the proof of Theorem \ref{thm:rigidity}) rely on asymptotics of exponential moments of the field $\h$. These can be expressed as Hankel determinants. We will shortly state the precise connection between Hankel determinants and random matrices, but the relevant exponential moment estimates will be addressed in Section \ref{sec:special}. 
In this section, we will state our results on asymptotics of Hankel determinants and briefly review the background to such questions.

For an integrable function $f:\R\to \R$ with fast enough decay at infinity, the $N\times N$ Hankel determinant with symbol $f$ is
\begin{equation}\label{eq:Hankel}
D_N(f)=\det\left(\int_\R \lambda^{i+j}f(\lambda)d\lambda\right)_{i,j=0}^{N-1}.
\end{equation}
The connection to the unitary invariant random matrix ensembles defined in \eqref{Z} comes from Heine's identity (see e.g. \cite[Proposition 3.8]{Deift}), which implies that for functions $w:\R\to \R$ with some mild regularity and not growing too rapidly at infinity and for $V:\R\to \R$ continuous satisfying the condition \eqref{V},
one has
\begin{equation}\label{eq:Heine}
\mathds E e^{\sum_{j=1}^N w(\lambda_j)}=\frac{1}{Z_N}D_N(e^{w-NV}),
\end{equation}
where the expectation is taken according to \eqref{Z}.
As mentioned, applying the approach of \cite{LOS18} for the proof of Theorem \ref{th:gmc} requires (among other things) exponential moment estimates for $h_N$. This corresponds to functions $w$ which have discontinuities.

Large $N$ asymptotics of Hankel determinants with such discontinuous symbols have been studied in quite some detail. They are a special case of symbols with  Fisher-Hartwig singularities. Hankel and Toeplitz determinants with Fisher-Hartwig singularities have a long history, and have (besides their prominent role in random matrix theory) a variety of applications in statistical physics, in particular the 2d Ising model \cite{WMTB} and the impenetrable Bose gas \cite{VT, FF, FFGW}. We refer to \cite{DIK-Ising} for a historical overview.
A general real-valued symbol with Fisher-Hartwig singularities can be written as 
\[
f_{\mathrm{FH}}(\lambda)=\prod_{j=1}^k |\lambda-x_j|^{2\alpha_j}e^{\sqrt{2}\pi\gamma_j \mathbf{1}_{\{\lambda\leq x_j\}}} e^{w(\lambda)}e^{-NV(\lambda)},
\]
where the pairwise distinct singularities $x_j$ and the parameters $\gamma_j$ are real, $\alpha_j>-1/2$ for all $j$, and where $w$ is sufficiently smooth.
In what follows, we will assume $w$ for simplicity to be real analytic.

If the points $x_j\in(-1,1)$, the parameters $\alpha_j> -\frac{1}{2}$ and $\gamma_j\in \R$, and the function $w$ are independent of $N$, and if $V$ satisfies Assumptions \ref{ass:regular}, the asymptotics for $D_N(f_{\mathrm{FH}})$ have been established recently by Charlier in \cite{Charlier} using Riemann-Hilbert (RH) methods pioneered by Deift, Its, and Krasovsky in the setting of Toeplitz determinants with Fisher-Hartwig singularities \cite{DIK11}, and we will use these results in the cases $k=1$ and $k=2$ with $\alpha_1=\alpha_2=0$. 
In addition, to apply the approach of \cite{LOS18} in the proofs of Theorem \ref{th:gmc} and Theorem \ref{th:gmc2}, we also need estimates for the Hankel determinants associated to $f_{\rm FH}$ in situations where the singularities $x_1, x_2$ depend on $N$ and in particular, may merge as $N\to\infty$, and also in situations where $w$ depends on $N$ and may develop a singularity in the limit $N\to\infty$. Moreover, to obtain the upper bound in Theorem \ref{thm:max2}, we also need an estimate in the case where $k=1$ and the singularity $x_1$ tends to the boundary $\pm 1$ as $N\to\infty$. While analogous questions have been studied to some degree -- see e.g. \cite{CK} for Toeplitz determinants and \cite{CF16} for Hankel determinants with two merging singularities, \cite{BWW18, CIK} for smooth symbols which develop singularities in the large $N$ limit, and \cite{XuZhao, BCI16} for symbols with singularities which approach the edge of the spectrum -- none of these works quite apply in our setting. For this reason, a large part of this article is concerned with deriving such asymptotics.
Our methods could be used to prove rather general results (for instance for symbols with an arbitrary number of singularities, but just two merging or one approaching the edge; both jump and root type singularities) for Hankel determinants, but we choose to restrict to the simplest situations that are sufficient for our purposes, in order to limit the technicality and length of the paper.

Let us introduce some further notation. For points $x_1,x_2\in (-1,1)$, $\gamma_1,\gamma_2\in \R$, and any function $w:\R\to \R$ with not too bad growth at infinity, we write 
\begin{equation}\label{eq:FH}
D_N(x_1,x_2;\gamma_1,\gamma_2;w)=\det\left(\int_\R \lambda^{i+j}e^{\sqrt{2}\pi\gamma_1
\mathbf{1}_{\{\lambda\leq x_1\}}+\sqrt{2}\pi\gamma_2\mathbf{1}_{\{\lambda\leq x_2\}}}e^{w(\lambda)-NV(\lambda)}d\lambda\right)_{i,j=0}^{N-1}.
\end{equation}
We will also need notation for the case where there is only a single singularity: we write
\[
D_N(x;\gamma;w)=\det\left(\int_\R \lambda^{i+j}e^{\sqrt{2}\pi\gamma\mathbf{1}_{\{\lambda\leq x\}}}e^{w(\lambda)-NV(\lambda)}d\lambda\right)_{i,j=0}^{N-1}.
\]

To describe what we require of the function $w$, we introduce a suitable domain. Given $0<\alpha<2/3$, we set $\epsilon_N=N^{-1+\alpha}$ and $\delta_N=N^{-\alpha/2}$, and we define the domain $\mathcal S_N$ as follows,
\begin{equation}\label{domain}
\mathcal S_N:= \big(\big\{ z\in\C : |\Re z| \le 1  - 3\delta_N,   |\Im z | <\epsilon_N/2  \big\} \cup
\big\{ z\in\C : |\Re z| \ge 1 - 3\delta_N,   |\Im z | <3\delta_N  \big\}\big).
\end{equation}

Our first result describes how to compare $D_N(x_1,x_2;\gamma_1,\gamma_2;w)$ to  $D_N(x_1,x_2;\gamma_1,\gamma_2;0)$, when $w$ may depend on $N$.
\begin{theorem}\label{th:Hankel1}
Let $\gamma_1,\gamma_2\in \R$ and let $-1<x_1<x_2<1$.
Let $V$ satisfy Assumptions \ref{ass:regular}.
 Assume that $w=w_N$ is a sequence of functions which are real valued on $\R$, analytic  and uniformly bounded on a domain $\mathcal S_N$ as in  \eqref{domain}. 
As $N\to\infty$, we have
\[\begin{aligned}
\log D_N(x_1,x_2;\gamma_1,\gamma_2;w_N)
&=\log D_N(x_1,x_2;\gamma_1,\gamma_2;0)\\
&\quad+N\int w d\mu_V+\frac{1}{2}  \sigma(w)^2
+\sum_{j=1}^2\frac{\gamma_j}{\sqrt{2}}\sqrt{1-x_j^2}\mathcal U w(x_j)+o(1),
\end{aligned}\]
where the error is uniform in $(x_1,x_2)$ in any fixed compact subset of $(-1,1)^2$ and for any fixed $R>0$, it is uniform in $w$ in the set $\{\xi:\mathcal S_N\to \C \ \text{analytic},\ \xi(\R)\subset \R,\ \sup_{z\in \mathcal S_N}|\xi(z)|\leq R\}$.   Here $\sigma(w)^2$ is as in \eqref{eq:CLT} and $\mathcal Uw$ denotes the finite Hilbert transform$:$
\begin{equation} \label{U_transform}
(\mathcal Uw)(x)=\frac{1}{\pi}\mathrm{P.V.}\int_{-1}^1 \frac{w(t)}{x-t}\frac{dt}{\sqrt{1-t^2}},
\end{equation}
with $\mathrm{P.V.}\int$ the Cauchy principal value integral.
\end{theorem}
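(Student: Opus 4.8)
The plan is to reduce the statement to the asymptotics of the expectation of the linear statistic $\sum_k w_N(\lambda_k)$ in the Fisher--Hartwig deformed ensemble, and to evaluate that expectation by Riemann--Hilbert analysis. For $t\in[0,1]$, let $\E_t$ denote expectation with respect to the (symmetrized) probability measure on $\R^N$ proportional to $\prod_{i<j}(\lambda_i-\lambda_j)^2\,e^{\sum_k[\sqrt2\pi\gamma_1\mathbf 1_{\{\lambda_k\le x_1\}}+\sqrt2\pi\gamma_2\mathbf 1_{\{\lambda_k\le x_2\}}+tw_N(\lambda_k)-NV(\lambda_k)]}$; by the Andréief/Heine identity this normalization constant is exactly $N!\,D_N(x_1,x_2;\gamma_1,\gamma_2;tw_N)>0$, and differentiating in $t$ gives the differential identity
\[
\frac{d}{dt}\log D_N(x_1,x_2;\gamma_1,\gamma_2;tw_N)=\E_t\Big[\sum_{k=1}^N w_N(\lambda_k)\Big].
\]
Integrating over $t\in[0,1]$, it therefore suffices to prove that, with the stated uniformity in $t\in[0,1]$, in $(x_1,x_2)$ in compact subsets of $\{-1<x_1<x_2<1\}$, and in $w_N$ with $\sup_{\mathcal S_N}|w_N|\le R$,
\[
\E_t\Big[\sum_{k=1}^N w_N(\lambda_k)\Big]=N\int w_N\,d\mu_V+\sum_{j=1}^2\frac{\gamma_j}{\sqrt2}\sqrt{1-x_j^2}\,\mathcal Uw_N(x_j)+t\,\sigma(w_N)^2+o(1),
\]
since $\int_0^1 t\,dt=\tfrac12$ produces the factor in front of $\sigma(w_N)^2$ and the other two terms are $t$-independent.

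\textbf{Extracting the linear statistic from the Riemann--Hilbert solution.} Since $w_N$ is analytic on $\mathcal S_N$ and the probability of an eigenvalue leaving a fixed neighbourhood of $\S$ is exponentially small, one can represent $\E_t[\sum_k w_N(\lambda_k)]$ as $\frac{1}{2\pi\i}\oint_\Gamma w_N(z)\,W_N^{(t)}(z)\,dz+o(1)$, where $\Gamma\subset\mathcal S_N$ encircles $\S$ and $W_N^{(t)}(z)=\E_t\big[\sum_k (z-\lambda_k)^{-1}\big]$ is the mean Stieltjes transform of the deformed ensemble. The core input is then the large-$N$ asymptotics of $W_N^{(t)}(z)$ (equivalently, of the Christoffel--Darboux kernel on the diagonal) uniformly for $z\in\Gamma$. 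This is obtained from the Deift--Zhou steepest descent analysis of the Riemann--Hilbert problem for the orthogonal polynomials associated to the weight $e^{\sqrt2\pi\gamma_1\mathbf 1_{\{\cdot\le x_1\}}+\sqrt2\pi\gamma_2\mathbf 1_{\{\cdot\le x_2\}}+tw_N-NV}$: this is exactly Charlier's analysis \cite{Charlier} for two fixed jump singularities (with confluent hypergeometric parametrices at $x_1,x_2$), carried out here with the additional analytic term $tw_N$ in the exponent. Crucially, $tw_N$ perturbs the potential only at order $1/N$, so it does not change the support or the leading order equilibrium measure; it only modifies the $g$-function, the outer parametrix, and the norming constants by quantities that are linear in $w_N$ and controlled, via Cauchy estimates, by $\sup_{\mathcal S_N}|w_N|$. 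The outcome is an expansion
\[
W_N^{(t)}(z)=N\int\frac{d\mu_V(\lambda)}{z-\lambda}+t\,w_{N,1}(z)+\sum_{j=1}^2\gamma_j\,r_j(z;x_j)+o(1)
\]
on $\Gamma$, where $\Gamma$ stays at distance $\gg N^{-1}$ from $\S$ in the bulk and $\gg N^{-2/3}$ from $\pm1$ (possible since $0<\alpha<2/3$), so that all local parametrix contributions are subleading.

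\textbf{Identifying the correction terms.} Inserting this expansion into the contour integral, the leading term gives $N\int w_N\,d\mu_V$ by residues. For the $O(1)$ terms, the cleanest way to see the answer is the (formal) Gaussian computation: by Johansson's CLT the linear statistics $\sum_k u(\lambda_k)$ and $\sum_k v(\lambda_k)$ are jointly asymptotically Gaussian with covariance $\sigma(u,v)=\iint u'(x)v'(y)\frac{\Sigma(x,y)}{2\pi^2}\,dx\,dy$ and (for $\beta=2$) vanishing mean correction, so perturbing the weight by $v=\sqrt2\pi\gamma_1\mathbf 1_{\{\cdot\le x_1\}}+\sqrt2\pi\gamma_2\mathbf 1_{\{\cdot\le x_2\}}+tw_N$ shifts $\E[\sum_k w_N(\lambda_k)]$ by $\sigma(w_N,v)=t\,\sigma(w_N)^2+\sum_{j=1}^2\sigma\big(w_N,\sqrt2\pi\gamma_j\mathbf 1_{\{\cdot\le x_j\}}\big)$. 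Using $\partial_y\mathbf 1_{\{y\le x_j\}}=-\delta_{x_j}$, integrating by parts (the boundary terms vanish because $\Sigma(\pm1,y)=0$ for $y\in(-1,1)$), and the identity $\partial_x\Sigma(x,y)=\frac{\sqrt{1-y^2}}{\sqrt{1-x^2}\,(y-x)}$ (checked directly from \eqref{eq:cov}, e.g.\ with $x=\cos\phi$, $y=\cos\theta$), one finds $\sigma\big(w_N,\sqrt2\pi\gamma_j\mathbf 1_{\{\cdot\le x_j\}}\big)=\frac{\gamma_j}{\sqrt2}\sqrt{1-x_j^2}\,\mathcal Uw_N(x_j)$. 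This reproduces precisely the asserted expansion of $\E_t[\sum_k w_N(\lambda_k)]$; in the rigorous argument these two identifications amount to computing $\frac{1}{2\pi\i}\oint_\Gamma w_N(z)w_{N,1}(z)\,dz=\sigma(w_N)^2$ and $\frac{1}{2\pi\i}\oint_\Gamma w_N(z)\gamma_j r_j(z;x_j)\,dz=\frac{\gamma_j}{\sqrt2}\sqrt{1-x_j^2}\mathcal Uw_N(x_j)$ from the explicit outer parametrix.

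\textbf{Main obstacle.} The bulk of the work is the Riemann--Hilbert step: establishing the expansion of $W_N^{(t)}(z)$ (equivalently of the Christoffel--Darboux kernel) for the two-jump Fisher--Hartwig ensemble with an $N$-dependent, only uniformly bounded analytic potential perturbation $tw_N$, with uniformity in $t\in[0,1]$, in the positions $x_1,x_2$, and in $w_N$. Concretely one must redo Charlier's steepest descent while tracking the $w_N$-dependence through the modified equilibrium problem, the Szeg\H{o}-type function, the global parametrix and the local confluent hypergeometric parametrices at $x_1,x_2$, and verify that every norming constant and error bound depends on $w_N$ only through $\sup_{\mathcal S_N}|w_N|$, so that the $o(1)$ errors are genuinely uniform over the stated class of $w_N$. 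Once this is in place, the differential identity and the elementary computations above finish the proof.
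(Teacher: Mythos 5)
Your overall strategy matches the paper's proof: deform from $w=0$ to $w=w_N$ via the parameter $t$, use the differential identity
\[
\frac{d}{dt}\log D_N(x_1,x_2;\gamma_1,\gamma_2;tw_N)=\E_t\Big[\sum_k w_N(\lambda_k)\Big]
=\frac{1}{2\pi i}\int_\R \big[Y^{-1}Y'\big]_{21}(\lambda;t)\,w_N(\lambda)\,h_t(\lambda)\,d\lambda,
\]
which is exactly Proposition~\ref{le:di1}, then deform to a contour encircling $[-1,1]$ and insert RH asymptotics for $Y$. Your identification of the $O(1)$ correction terms is also correct --- the residue calculation for $\partial_\lambda\log D_\gamma$ and the identity $\sigma(w_N,\sqrt2\pi\gamma_j\mathbf 1_{(-\infty,x_j]})=\frac{\gamma_j}{\sqrt2}\sqrt{1-x_j^2}\,\mathcal Uw_N(x_j)$ are consistent with \eqref{cov3}.

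There is, however, a genuine gap: you restrict the stated uniformity to compact subsets of $\{-1<x_1<x_2<1\}$ and base the whole RH step on ``Charlier's analysis for two fixed jump singularities (with confluent hypergeometric parametrices at $x_1,x_2$).'' This construction requires disjoint disks around $x_1$ and $x_2$. Since $w_N$ is only controlled inside $\mathcal S_N$, these disks must have radius at most of order $\epsilon_N=N^{-1+\alpha}$, and once $|x_1-x_2|\lesssim\epsilon_N$ they cannot be made disjoint, so the separated-case analysis breaks down. The theorem, as stated and as used in the paper (e.g.\ in verifying the exponential-moment conditions \eqref{A1} and \eqref{A41}, where the two jump locations may coincide), asserts uniformity in $(x_1,x_2)$ in compact subsets of $(-1,1)^2$ --- including the diagonal $x_1=x_2$. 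The paper achieves this by splitting the proof into two regimes: when $|x_1-x_2|\geq\epsilon_N/8$ it uses the two-confluent-hypergeometric-parametrix construction you describe (Section~\ref{section: RHseparated}), but when $|x_1-x_2|<\epsilon_N/8$ it uses a single local parametrix in a disk $B(x_1,\widehat\epsilon/4)$ containing both singularities, built from the Painlev\'e~V model RH problem $\widehat\Psi$ of \cite{CK} (Section~\ref{sec:merging} and Lemmas~\ref{le:psihatasy}, \ref{pr:P}), with the uniform small-norm estimate Proposition~\ref{prop:R} replacing Proposition~\ref{prop:R2}. Without this merging-regime parametrix, the $o(1)$ error cannot be shown uniform as $x_2-x_1\to0$, so your argument proves only a weaker statement than Theorem~\ref{th:Hankel1}. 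The remainder of your plan --- tracking the $w_N$-dependence through the outer parametrix and norming constants, using shrinking disks compatible with $\mathcal S_N$, and Lemma~\ref{lemma:boundw}-type bounds --- is consistent with the paper's Sections~\ref{section: diffid}--\ref{section: RHseparated}, but you should flag that the error analysis is not one plug-in application of \cite{Charlier}: the $N$-dependent jump contour $\Sigma_R$ (because the disks shrink) requires the contour-deformation argument of Proposition~\ref{prop:R} / \ref{prop:R2} rather than the standard fixed-contour small-norm lemma.
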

This result will be proven in Section \ref{section: RHseparated} by relying on the RH analysis from Section \ref{sec:merging} and Section \ref{section: RHseparated}.

Our next step is to compare $D_N(x_1,x_2;\gamma_1,\gamma_2;0)$ to $D_N(x_1;\gamma_1+\gamma_2;0)$. Together these two steps will be sufficient for carrying out the program of \cite{LOS18}. This comparison is highly non-trivial in that the asymptotic behavior of the Hankel determinants $D_N(x_1,x_2;\gamma_1,\gamma_2;0)$ as the singularities $x_1$ and $x_2$ merge  is subtle, and a sharp transition in the nature of the asymptotics takes place when $x_2-x_1$ is of order $1/N$. On this scale, the asymptotics are described in terms of a Painlev\'e V transcendent.
A similar phenomenon was observed in \cite{CK} for Toeplitz determinants, in a more general setting where general Fisher-Hartwig singularities of root and jump type are allowed.
Another related result was obtained in \cite{CF16} for Hankel determinants with merging root type singularities.
We do not need the precise details of this Painlev\'e transition, but we do need good control of the behavior of the Hankel determinants in this regime.

We will prove the following result in Section \ref{sec:merging}.
\begin{theorem}\label{th:Hankel2}
Let $\gamma_1,\gamma_2\in \R$ and let $-1<x_1<x_2<1$.
Let $V$ satisfy Assumptions \ref{ass:regular}.  Then, as $N\to\infty$, we have
\begin{equation}\label{eq:asDn1}
\begin{aligned}
\log D_N(x_1,x_2;\gamma_1,\gamma_2;0) &=\log D_N(x_1;\gamma_1+\gamma_2;0)\\
& +\sqrt{2}\pi\gamma_2 N\int_{x_1}^{x_2}d\mu_V-\gamma_1\gamma_2\max\{0,\log (|x_1-x_2|N) \} +\mathcal O(1), 
\end{aligned}
\end{equation}
where the error term is uniform for $-1+\delta<x_1<x_2<1-\delta$, $0<x_2-x_1<\delta$ for $\delta$ sufficiently small.
\end{theorem}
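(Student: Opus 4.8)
\textbf{Proof proposal for Theorem \ref{th:Hankel2}.}

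The plan is to perform a Riemann--Hilbert (RH) analysis of the orthogonal polynomials associated to the weight
\[
w_N(\lambda)=e^{\sqrt{2}\pi\gamma_1\mathbf 1_{\{\lambda\le x_1\}}+\sqrt{2}\pi\gamma_2\mathbf 1_{\{\lambda\le x_2\}}-NV(\lambda)}
\]
and to extract the ratio $D_N(x_1,x_2;\gamma_1,\gamma_2;0)/D_N(x_1;\gamma_1+\gamma_2;0)$ by differentiating $\log D_N$ with respect to a well-chosen deformation parameter. Concretely, I would interpolate between the two-singularity configuration and the single (merged) singularity of combined strength $\gamma_1+\gamma_2$: fix $x_1$, and let $x_2=x_2(s)$ move from its given value down to $x_1$, or equivalently introduce a parameter that slides $x_2\to x_1$. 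Using the standard identity expressing $\partial_s \log D_N$ in terms of the RH solution $Y$ (equivalently, in terms of the recurrence coefficients / the kernel evaluated at the jump points), the problem reduces to controlling $Y$ uniformly as the two jump points get within distance $\mathcal O(1/N)$ of each other, and then integrating the resulting differential identity in $s$.

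The RH steps are the usual Deift--Zhou chain: (i) normalize at infinity using the $g$-function built from $\mu_V$ (here Assumptions \ref{ass:regular} guarantee the one-cut regular structure, so the $g$-function and the $\phi$-function behave as in \cite{Charlier}); (ii) open lenses around $[-1,1]$; (iii) install the global parametrix, which now carries two jump-type Fisher--Hartwig factors; (iv) build local parametrices at the soft edges $\pm1$ out of Airy functions, and --- this is the crucial point --- a local parametrix in a shrinking disk of radius $\asymp\delta$ (with $|x_1-x_2|\ll\delta$) containing \emph{both} $x_1$ and $x_2$. On the scale $|x_1-x_2|\asymp 1/N$ this merged local parametrix cannot be given by confluent hypergeometric functions for a single singularity; instead it is built from a model RH problem whose solution is governed by a Painlev\'e V transcendent, exactly as in the Toeplitz analogue \cite{CK} and the root-type Hankel analogue \cite{CF16}. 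I would quote/adapt the relevant model problem, noting that we do \emph{not} need the fine structure of the Painlev\'e V solution --- only that the associated $\sigma$-function (or the relevant Hamiltonian) is bounded, with bounded derivative, uniformly in the self-similar variable $t=2\pi i N(x_2-x_1)\cdot(\text{local density factor})$ ranging over $(0,\infty)$, together with its known asymptotics $\sigma(t)\sim$ const as $t\to\infty$ and the logarithmic behavior $\sim -\gamma_1\gamma_2\log t$ as $t\to0$. These boundedness and asymptotic facts are what produce the term $-\gamma_1\gamma_2\max\{0,\log(|x_1-x_2|N)\}$ after integrating the differential identity: for $|x_1-x_2|\gg 1/N$ the two singularities are effectively separated and one picks up $-\gamma_1\gamma_2\log(|x_1-x_2|N)$ from the "overlap" of the two confluent-hypergeometric parametrices (this is the standard two-singularity interaction term, cf.\ \cite{Charlier,DIK11}), while for $|x_1-x_2|\lesssim 1/N$ the Painlev\'e regime caps this contribution at $\mathcal O(1)$, giving the $\max\{0,\cdot\}$. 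The "drift" term $\sqrt{2}\pi\gamma_2 N\int_{x_1}^{x_2}d\mu_V$ comes from the change in the $g$-function / the explicit $x_2$-dependence of the global parametrix as $x_2$ moves, i.e.\ from the contribution of the equilibrium measure between the two jump points; this is a direct computation with the differential identity.

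The main obstacle I anticipate is precisely the construction and uniform control of the merged local parametrix across the \emph{entire} merging regime $0<x_2-x_1<\delta$, stitching together three regimes --- $x_2-x_1\asymp 1$ (two well-separated confluent hypergeometric parametrices), $x_2-x_1\asymp 1/N$ (one Painlev\'e V parametrix), and the crossover --- with matching that is uniform enough to make the small-norm argument for the error matrix $R=Y(\text{parametrix})^{-1}$ work with an estimate $\|R-I\|=\mathcal O(1/(N|x_1-x_2|)+\delta)$ or similar. One must also verify solvability of the Painlev\'e V model RH problem for all real $\gamma_1,\gamma_2$ (no spurious poles); since $\gamma_1,\gamma_2$ are real the relevant symbol is of modulus one on the real line and positivity/winding arguments as in \cite{CK} should give this, but it needs to be checked. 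Once the uniform estimate on $R$ is in hand, plugging into the differential identity and integrating in $s$ from the separated end down to $x_2=x_1$ yields \eqref{eq:asDn1} with the stated uniform $\mathcal O(1)$ error; the uniformity for $-1+\delta<x_1<x_2<1-\delta$ follows because all parametrix constructions are uniform on compact subsets of the bulk, the edge parametrices at $\pm1$ being untouched by the merging.
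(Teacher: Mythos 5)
Your proposal follows the same overall strategy as the paper: differentiate $\log D_N$ in the separation variable $y=x_2-x_1$, do a Deift--Zhou analysis with a Painlev\'e~V local parametrix in a disk containing both singularities, integrate the differential identity, and let the large- and small-$s$ asymptotics of the Painlev\'e~V $\sigma$-function produce the terms in~\eqref{eq:asDn1}. Two of the details you sketch, however, are off in ways that matter if one were to carry this out.

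First, the asymptotics you attribute to $\sigma$ are incorrect. For the Jimbo--Miwa--Okamoto $\sigma$-form appearing here, $\sigma(s)$ does \emph{not} behave like $-\gamma_1\gamma_2\log|s|$ as $s\to 0$ and it is \emph{not} constant as $s\to\infty$: the relevant asymptotics are $\sigma(s)=\frac{(\gamma_1+\gamma_2)^2}{4}+\mathcal O(|s\log|s||)$ as $s\to 0$ and $\sigma(s)=-\frac{\gamma_1-\gamma_2}{2\sqrt 2}|s|+\frac{(\gamma_1-\gamma_2)^2}{4}+\mathcal O(|s|^{-1})$ as $s\to-i\infty$ (see \eqref{eq:sigmaasy1}--\eqref{eq:sigmaasy2}). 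The logarithm in $-\gamma_1\gamma_2\log(N|x_1-x_2|)$ does not come from $\sigma$ itself; it emerges when you multiply by $\lambda_u'(x_1+u)\sim 1/u$ in the differential identity and integrate the constant $\frac{(\gamma_1-\gamma_2)^2}{4}-\frac{(\gamma_1+\gamma_2)^2}{4}=-\gamma_1\gamma_2$ over the region $C/N<u<y$. Likewise, the drift term $\sqrt{2}\pi\gamma_2 N\int_{x_1}^{x_2}d\mu_V$ is produced by the \emph{linear-in-$|s|$} part of $\sigma$'s large-$|s|$ expansion integrated against $\lambda_u'$, not directly by ``the change in the $g$-function.''

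Second, the ``stitching three regimes'' difficulty you anticipate is not present in the paper's approach and you should not introduce it: for all $0<y<\widehat\epsilon/8$ the paper uses a \emph{single} local parametrix built from the Painlev\'e~V model problem $\widehat\Psi$ in one shrinking disk of radius $\widehat\epsilon/4$ containing both $x_1,x_2$. Uniformity across the entire merging regime is then a consequence of the uniform $\zeta\to\infty$ asymptotics of $\Psi(\zeta;s)$ as $s$ ranges over $-i\mathbb R_+$ (proved in Lemma~\ref{le:psihatasy}, drawing on results of \cite{CK}); no matching of confluent-hypergeometric and Painlev\'e~V parametrices is needed. If you instead tried to construct separate confluent-hypergeometric parametrices for intermediate $y$ and then glue them to the Painlev\'e regime, you would create exactly the matching headache you describe, which the paper's choice avoids.
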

\begin{remark}
Like in \cite{CK} and \cite{CF16}, the $\mathcal O(1)$ term can be expressed explicitly in terms of a solution $\sigma$ to the so-called Jimbo-Miwa-Okamoto $\sigma$-form of the Painlev\'e V equation.\hfill $\blacksquare$
\end{remark}

Finally, to obtain the upper bound for Theorem \ref{thm:max2}, we need to consider a situation with only one singularity, namely $D_N(x;\gamma;0)$, but in which we allow $x$ to approach the edge $\pm 1$ as $N\to\infty$. The precise result is the following, and will be proven in Section \ref{section: RHedge}.

\begin{theorem}\label{th:Hankel3}
Let $\Gamma>0$. There exists a constant $m>0$ such that
\[
\log \frac{D_N(x;\gamma;0)}{D_N(x;0;0)}=\sqrt{2}\pi\gamma N\int_{-1}^xd\mu_V(\xi) + \frac{\gamma^2}{2}\log N+\frac{3\gamma^2}{4}\log(1-x^2)+ \mathcal O(1),
\]
as $N\to\infty$, with the error term uniform for all $|x|\le 1- m N^{-2/3}$ and all $\gamma \in [-\Gamma, \Gamma]$. 
\end{theorem}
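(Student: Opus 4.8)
The plan is to carry out a Riemann--Hilbert analysis of the orthogonal polynomials associated to the weight $e^{\sqrt2\pi\gamma\mathbf 1_{\{\lambda\le x\}}}e^{-NV(\lambda)}$ and to differentiate the logarithm of the Hankel determinant in the parameter $\gamma$, following the classical Deift--Its--Krasovsky strategy as adapted to Hankel determinants by Charlier \cite{Charlier, DIK11}. The key point is that everything must be done with explicit control on the dependence on the location $x$ of the single jump, which is now allowed to run up to distance $mN^{-2/3}$ from the hard-ish edge $\pm1$. Since $D_N(x;0;0)=Z_N/N!$ is $x$-independent, it suffices to control $\partial_\gamma\log D_N(x;\gamma;0)$, which by a standard identity equals a combination of the recurrence coefficients / the $(1,1)$ and $(1,2)$ entries of the residue at infinity of the RH solution $Y$, evaluated against the jump of the symbol at $x$; concretely it reduces to an integral over the real line of the diagonal of the Christoffel--Darboux kernel against $\mathbf 1_{\{\lambda\le x\}}$, or equivalently to $\partial_\gamma$ of a suitable ``differential identity'' expressed through the local parametrix data at $x$.

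First I would set up the global parametrix: solve the outer RH problem with the extra Fisher--Hartwig jump of ``size'' $\gamma$ at $x$, which contributes a Szeg\H{o}-type function with a factor behaving like $(\,\cdot\,-x)^{\mathrm{i}\beta}$-type exponent (here purely a jump, $\beta$ proportional to $\gamma$) together with the usual $(1-z^2)^{-1/4}$ structure from the one-cut equilibrium measure under Assumptions \ref{ass:regular}. Next I would install the standard local parametrices: Airy parametrices at the two soft edges $\pm1$, and a confluent-hypergeometric (or Bessel-type) parametrix in a shrinking disc around $x$ whose radius must be taken of order the distance to the nearest edge. The delicate regime is precisely when $x$ is within $O(N^{-2/3})$ of an edge, where the disc around $x$ and the Airy disc around the edge start to overlap; there one must either merge the two parametrices into a single model problem or, more economically, choose the radius of the $x$-disc as a fixed fraction of $1-|x|$ and track carefully that the matching error on its boundary is $o(1)$ uniformly — this is where the constant $m$ enters, guaranteeing we stay on the ``bulk side'' where the jump singularity at $x$ is still resolved by a local model distinct from the Airy one. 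The error matrix then satisfies a small-norm RH problem with jump close to the identity, uniformly in $x$ in the stated range and in $\gamma\in[-\Gamma,\Gamma]$.

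The main obstacle I expect is exactly this uniformity as $x\to\pm1$: controlling the local parametrix at $x$ and its matching to the outer parametrix when the relevant conformal map degenerates near the edge, and extracting from it the precise coefficient $\tfrac{\gamma^2}{2}\log N$ together with the $x$-dependent correction $\tfrac{3\gamma^2}{4}\log(1-x^2)$. The $\log N$ term is the usual Fisher--Hartwig contribution $\beta^2\log N$ with $\beta=\gamma/\sqrt2$ (up to the normalization conventions fixing the $\sqrt2\pi$ factors), coming from integrating $\partial_\gamma$ of the jump against the local behaviour of the CD kernel near a bulk point where $\psi_V(x)\sqrt{1-x^2}>0$; the extra $\log(1-x^2)$ piece arises because the effective ``hard distance'' from the jump to the edges, which controls the size of the local parametrix disc and the Szeg\H{o} function evaluated at $x$, scales like $1-x^2$. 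Concretely I would: (i) write $\partial_\gamma\log D_N = \sqrt2\pi N\int_{-\infty}^{x}\! \rho_N$ plus lower-order, where $\rho_N$ is the one-point correlation density, then split the integral using the already-known bulk and edge asymptotics of $\rho_N$ from the RH analysis; (ii) integrate back over $\gamma$ from $0$ to the target value, noting that the $\gamma$-dependence of the outer parametrix feeds back into $\rho_N$ and produces the quadratic-in-$\gamma$ terms; (iii) check that all steps are uniform for $|x|\le 1-mN^{-2/3}$ by monitoring the conformal maps and the norms of the error RH problem, absorbing everything not displayed into the $\mathcal O(1)$. I would lean on Theorem \ref{th:Hankel2} and the $k=1$ case of Charlier's fixed-singularity asymptotics as a consistency check in the bulk, and on the Airy-edge asymptotics of the equilibrium density to handle the transition zone; the genuinely new input is the uniform-in-$x$ bookkeeping near the edge, which is where the bulk of the work in Section \ref{section: RHedge} will go.
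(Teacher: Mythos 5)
Your outline correctly identifies the central technical obstruction (uniform control of the local parametrix as $x\to\pm1$), and one of the two options you describe for resolving it — building a \emph{single} model RH problem that encodes both the Airy edge and the jump singularity — is exactly what the paper does. But you present the second option, keeping separate parametrices with the $x$-disc shrinking like a fixed fraction of $1-|x|$, as the preferable ``more economical'' route, and you claim the resulting matching error is ``$o(1)$ uniformly.'' That claim does not hold in the full range of the theorem. The confluent-hypergeometric matching error on the boundary of a disc of radius $\rho\sim c(1-x)$ is of order $1/|Nf|$, and since the conformal map has derivative $\sim\sqrt{1-x}$ near the edge, on $\partial B(x,\rho)$ one gets $|Nf|\sim c\,N(1-x)^{3/2}$, i.e.\ the matching error is of order $(c\,N(1-x)^{3/2})^{-1}$. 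For $1-x$ of order exactly $mN^{-2/3}$, this is $\asymp m^{-3/2}$: bounded, and as small as you like for $m$ large, but \emph{not} tending to $0$ as $N\to\infty$. So the shrinking-disc approach fails to deliver an error that vanishes uniformly, and one would then have to argue separately that an error bounded away from zero suffices; you do not do this, and it is far from obvious that the resulting small-norm estimate is compatible with extracting the $\tfrac{\gamma^2}{2}\log N$ and $\tfrac{3\gamma^2}{4}\log(1-x^2)$ terms up to only $\mathcal O(1)$.

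The paper's approach differs in both the local construction and the choice of differential identity. For the local parametrix it uses a \emph{fixed-size} disc $B(1,\delta)$ containing both $1$ and $x$, and inside it builds a genuinely new model problem $\Phi=\Phi(\cdot;u)$ (Section \ref{sect:paremetrix1}) depending on a large parameter $u=u_{N,x}\asymp N^{2/3}(1-x)$; within the model problem there is a further internal RH analysis (global model $M$, Airy parametrix near $0$, confluent-hypergeometric parametrix near $-1$, error matrix $\Xi=I+\mathcal O(u^{-3/2})$). The net matching error on $\partial B(1,\delta)$ is then $\mathcal O(|1-x|^{-1}N^{-1})=\mathcal O(m^{-1}N^{-1/3})$, which \emph{is} $o(1)$ uniformly. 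For the bookkeeping, the paper differentiates in $x$ (the confluent differential identity \eqref{eq:diffidconfluent} of Proposition \ref{pr:di}) and integrates from $1-\epsilon$ to $x$, matching to Charlier's fixed-singularity result at $x=1-\epsilon$; you propose to differentiate in $\gamma$ instead. The $\gamma$-route is not wrong in principle — both are legitimate deformations — but with the $\gamma$-identity the main term is the full counting integral of order $N$, and integrating back in $\gamma$ one must in particular track an $O(1)$-accurate evaluation of $\int_{-\infty}^{x}K_N(\lambda,\lambda)\,d\lambda$ uniformly in $x$ near the edge, which is exactly the kind of estimate that only a uniformly $o(1)$ small-norm error lets you obtain cleanly.

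In short: the concrete gap is your claim that choosing the $x$-disc radius proportional to $1-|x|$ yields a uniformly $o(1)$ matching error across $|x|\le 1-mN^{-2/3}$; it yields only $O(m^{-3/2})$. You should commit to the merged model RH problem (your first option), in which case your route — with either the $\gamma$- or $x$-differential identity — becomes viable, and your heuristics for the $\frac{\gamma^2}{2}\log N$ and $\frac{3\gamma^2}{4}\log(1-x^2)$ contributions are then in the right spirit.
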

This concludes our preliminary discussion of Hankel determinants and we move on to an outline of the remainder of the article. 

\subsection{Outline of the article and acknowledgements}\label{sec:ackno}

In Section \ref{sect:gmc}, we first review the classical theory of Gaussian Multiplicative Chaos and the main result of \cite{LOS18}, then in a rather general setting, show that a random field, satisfying suitable exponential moment assumptions, can be used to construct a multiplicative chaos measure. Using this general approach, we prove Theorem \ref{th:gmc} and Theorem \ref{th:gmc2} assuming the results from Section \ref{sec:hankel_intro}. 

In Section \ref{sect:LB} and Section \ref{sec:rigidity}, we discuss the applications of Theorems \ref{th:gmc} and \ref{th:gmc2}  to eigenvalue statistics.  Namely, in Section \ref{sect:LB}, we prove our estimates for the extrema of $\h$, that is  Theorem \ref{thm:max2}, and in Section \ref{sec:rigidity}, we prove our optimal rigidity result given by Theorem \ref{thm:rigidity}. 

The latter half of the article, Section \ref{section: diffid} to Section \ref{section: RHedge}, is where we carry out our RH analysis to prove the results stated in Section \ref{sec:hankel_intro}: Theorem \ref{th:Hankel2} is proven in Section \ref{sec:merging}, Theorem \ref{th:Hankel1} is proven in Section \ref{section: RHseparated}, and Theorem \ref{th:Hankel3} is proven in Section \ref{section: RHedge} -- Section \ref{section: diffid} and Section \ref{sec:model} cover preliminary results needed for these proofs.

We also record some auxiliary material concerning the log-correlated Gaussian field relevant for our purposes in Appendix \ref{app:logcor} as well as some model RH problems in Appendices \ref{section:appendixAiry} and \ref{section:appendixCFH}.

\medskip

In the following, $C>0$ is a constant which does not depend on the dimension~$N$ and may  vary from line to line. Sometimes,  we may write $C_\epsilon>0$ to emphasize that this constant depends for instance on the parameter $\epsilon$. We write $A_N=\mathcal O(B_N)$ to indicate that there exists a constant $C$ independent of $N$ such that for large enough $N$, $|A_N|\leq C|B_N|$. If we allow $C$ to depend on some further parameter say $\epsilon>0$, we may write $A_N=\mathcal O_\epsilon(B_N)$. We will also make use of the closely related Vinogradov notation $A_N\ll B_N$ to indicate that there exists a constant $C>0$ independent of $N$ such that $0\leq A_N\leq C B_N$. If we allow this constant to depend on a further parameter, say $\epsilon$, we may again write $A_N\ll_\epsilon B_N$.

\medskip

{\bf Acknowledgements:}
T.C. was supported by 
 the Fonds de la Recherche Scientifique-FNRS
under EOS project O013018F.
G. L. was supported by the University of Zurich grant FK-17-112 and by the Swiss National Science Foundation  Ambizione  grant S-71114-05-01. 
 C. W. was supported by the Academy of Finland grant
308123. B.F. was supported by the G\"oran Gustafsson Foundation (UU/KTH) and by the Leverhulme Trust research programme grant RPG-2018-260.

\section{Gaussian Multiplicative Chaos} \label{sect:gmc}

In this section,  we prove our results concerning multiplicative chaos and the eigenvalue counting function as described in Section \ref{gmc_results}, assuming our results concerning asymptotics of Hankel determinants from Section \ref{sec:hankel_intro}. Before going into the actual proofs, we pick up our general discussion on log-correlated fields and multiplicative chaos from Section \ref{gmc_results}. As we hope that our approach to proving convergence to multiplicative chaos will find applications in other models as well, we offer a proof of convergence to multiplicative chaos which is valid for general log-correlated fields -- in particular, we will not restrict ourselves to one spatial dimension. What we prove is that any object for which suitable exponential moment estimates can be established (in our case this follows from the results of Section \ref{sec:hankel_intro}), converges to a multiplicative chaos measure. In Section \ref{sect:LB}, we prove that under similar assumptions, we have estimates for the thick points, the maximum of the field, etc. 
 
Before going into proofs, to be presented in Sections \ref{sect:main}--\ref{sec:special}, we will elaborate on our discussion in Section \ref{gmc_results}, and now rigorously review some of the standard theory of multiplicative chaos. After that, in Section~\ref{sec:asyGaus} we will describe some recent ideas from \cite{LOS18} which provide a tool for proving that a random process, not necessarily Gaussian, can give rise to a Gaussian multiplicative chaos measure if the process satisfies certain exponential moment estimates. We then move on to Sections \ref{sect:main}--\ref{sect:extension}, where we prove our main result about convergence to multiplicative chaos under certain assumptions on exponential moments. Finally in Section \ref{sec:special}, we specialize to the case of relevance to random matrices, namely prove Theorem \ref{th:gmc} and Theorem \ref{th:gmc2} using our general results and exponential moment estimates following from the results of Section \ref{sec:hankel_intro}.

We will use the following notation throughout our discussion: for $\Omega\subset \R^d$ open, we write $C_c(\Omega)$ for the space of continuous functions with compact support inside of $\Omega$ and $C_c^\infty(\Omega)$ for the space of infinitely differentiable functions with compact support in $\Omega$. We will also briefly mention the standard $L^2$-based Sobolev spaces of negative regularity. More precisely, if $\mathcal S'(\R^d)$ denotes the space of tempered distributions\footnote{Recall that for tempered distributions, the Fourier transform is a well defined operation. Our convention for the Fourier transform is that if $\varphi\in \mathcal S(\R^d)$ -- the Schwartz space of smooth functions with rapid decay -- then $\widehat \varphi(\xi)=\int_{\R^d}e^{-2\pi i x\cdot \xi}\varphi(x)dx$.} then we write for any $\epsilon>0$
\[
H^{-\epsilon}(\R^d)=\left\{\varphi \in \mathcal S'(\R^d): \|\varphi\|_{H^{-\epsilon}(\R^d)}^2:= \int_{\R^d}(1+|\xi|^2)^{-\epsilon}|\widehat \varphi(\xi)|^2d\xi<\infty\right\}.
\] 
It is a standard fact that $H^{-\epsilon}(\R^d)$ is in fact a separable Hilbert space when one defines the inner product in the natural way. We will find it convenient to abuse notation slightly and write for a generalized function $\varphi\in H^{-\epsilon}(\R^d)$ and an honest function $f\in H^{\epsilon}(\R^d)$, $\int \varphi(x)f(x)dx$ to denote the dual pairing of $\varphi$ and $f$, despite pointwise values $\varphi(x)$ having no meaning.

\subsection{Gaussian multiplicative chaos background} \label{sect:background} 
In this section we discuss rigorously what log-correlated fields are and what multiplicative chaos is. For a more extensive discussion of the subject, we direct the reader to the comprehensive review \cite{RV14} and the concise construction of multiplicative chaos measures in \cite{Berestycki15}. As mentioned, we wish to keep our discussion on a rather general level in hope of its use in other applications, but readers wishing for concreteness should keep in mind that our main goal is to consider a log-correlated field on the interval $\Omega=(-1,1)$ with a covariance of the form 
\begin{equation}\label{eq:Cspecial}
\Sigma(x,y)=\log\left( \frac{1-xy+\sqrt{1-x^2}\sqrt{1-y^2}}{|x-y|} \right)
\end{equation}
 and we wish to define for certain values of $\gamma\in \R$, a measure $\mu^\gamma$, which  in some sense is proportional to the ``exponential" of $\gamma$ times such a field.

We now discuss briefly what log-correlated fields are -- the discussion follows \cite[Section 2]{JSW18} to some degree. Let $\Omega\subset \R^d$ be an open set and for simplicity, let us assume that it is simply connected, bounded and has a smooth boundary. Let us assume further that we  are given a symmetric positive definite kernel $\Sigma$ on $\Omega\times \Omega$ of the form:
\begin{equation}\label{eq:logcorC}
\Sigma(x,y)=\log |x-y|^{-1}+g_\Sigma(x,y),
\end{equation}
where $g_\Sigma$  is continuous, bounded from above and say in $L^2(\Omega\times \Omega)$ -- note that we are not requiring boundedness from below or that $g_\Sigma$ extends continuously to $\overline{\Omega\times\Omega}$. 
We also point out that our assumptions cover in particular the case of the special $\Sigma$ from \eqref{eq:Cspecial}.

Making use of the Karhunen-Lo\`eve expansion, it is proven in \cite[Proposition 2.3]{JSW18} that for any $\epsilon>0$, there exists a centered Gaussian process on $H^{-\epsilon}(\R^d)$  with covariance kernel $\Sigma$ given by \eqref{eq:logcorC}. In fact, by standard properties of Gaussian processes, there is a unique Gaussian measure on $H^{-\epsilon}(\R^d)$ with such a covariance kernel. This Gaussian process is what is referred to as a log-correlated field with covariance kernel $\Sigma$ and we will write $\X$ for such an object. 

We mention that while in general, the Karhunen-Lo\`eve expansion is slightly abstract (it follows from applying the spectral theorem to the compact operator with integral kernel $\Sigma$ acting on $L^2(\R^d)$), the log-correlated field with covariance \eqref{eq:Cspecial} can be constructed  explicitly: mimicking e.g. the proof of \cite[Proposition 2.3]{JSW18}, one can show that  the series
\begin{equation}\label{eq:series}
\sum_{k=0}^\infty \frac{\sqrt{2}\xi_k}{\sqrt{k+1}}U_{k}(x)\sqrt{1-x^2}\mathbf{1}_{(-1,1)}(x),
\end{equation}
where $\xi_k$ are i.i.d. standard Gaussians and $U_{k}$ the $k$th Chebyshev polynomial of the second kind,\footnote{That is, the unique polynomial satisfying $U_k(\cos\theta)=\frac{\sin (k+1)\theta}{\sin \theta}$ for all $\theta\in[0,\pi]$.} converges in $H^{-\epsilon}(\R)$ for any $\epsilon>0$ and the covariance kernel of the resulting process is given by \eqref{eq:Cspecial} -- for further discussion about this field, see Appendix \ref{sec:cov}.

\medskip

Having understood what log-correlated fields are, as well as having seen a concrete series representation for the one most relevant to us, let us move on to constructing multiplicative chaos measures. As repeatedly mentioned, these are objects that can be formally written as
$\mu^\gamma(dx)=``e^{\gamma \X(x)}/\E e^{\gamma \X(x)}dx."$
There are two immediate problems with making sense of such an object: 
the first one being that one can not naively exponentiate an element of $H^{-\epsilon}(\R^d)$, which is after all a generalized function. The second is that even formally $\E e^{\gamma \X(x)}=+\infty$ for any $\gamma \neq 0$ for a Gaussian process with a covariance of the form \eqref{eq:logcorC}. In fact, these two issues \emph{compensate each other} and one should think formally of $\E e^{\gamma \X(x)}$ as the correct way to renormalize $e^{\gamma \X(x)}$.
Then, the rigorous construction of the random measure $\mu^\gamma$ consists in approximating $\X$ by a continuous function $\X_\epsilon$ and taking a limit as $\epsilon\to0$ which corresponds to removing the approximation.

There are various ways to approximate a log-correlated field $\X$ by a continuous function. One option is to simply truncate the Karhunen-Lo\`eve expansion. While such an approximation is useful for some purposes (see e.g. our proof of Proposition \ref{pr:gmcprops} below), for proving the existence of a non-trivial limiting object, another useful approximation is to smooth the field $\X$ by convolving it with a smooth bump function. More precisely, one can check that if $\varphi:\R^d\to [0,\infty)$ is a radially symmetric, compactly supported $C^\infty$-function with mean one $\int_{\R^d}\varphi(x)=1$, and if we write $\varphi_\epsilon(x)=\epsilon^{-d}\varphi(x/\epsilon)$, then for each $\epsilon>0$
\begin{equation}\label{eq:Xeps}
\X_\epsilon(x)=(\X*\varphi_\epsilon)(x)\mathbf 1_{\Omega}(x)
\end{equation}
defines almost surely a random continuous function on $\Omega$. More importantly, these assumptions imply for example that 
$\displaystyle\lim_{\epsilon\to 0}\E \X_\epsilon(x)\X_\epsilon(y)=\Sigma(x,y)$ for $x\neq y$ and 
\begin{equation} \label{cov_estimate}
\E \X_\epsilon(x)\X_\delta(y)=\log \frac{1}{\max(|x-y|,\epsilon,\delta)}+\mathcal O(1)
\end{equation}
where the implied constant is independent of $\epsilon,\delta>0$ and is uniform for all $x,y$ in any given compact subset of $\Omega$ -- see e.g. \cite[Proposition 2.7]{JSW18}.

As mentioned above, $\X_\epsilon$ is almost surely continuous, which implies that we can exponentiate $\X_\epsilon$ to construct a  well defined random measure $\mu^\gamma_\epsilon(dx) = e^{\gamma \X_\epsilon(x)} / \E e^{\gamma \X_\epsilon(x)} dx$. 
Moreover, it is proved\footnote{We will carry out a very similar argument in the coming sections, so we will not elaborate further on the ideas of \cite{Berestycki15} here. Also we mention that in \cite{Berestycki15}, instead of considering integrals against compactly supported continuous functions, the author considers the measure of an arbitrary set. This is a cosmetic difference and the reader will have no trouble translating the proofs of \cite{Berestycki15} to our setting.} in \cite{Berestycki15}, 
using a modified second moment method, that for any $\gamma\in(-\sqrt{2d},\sqrt{2d})$,\footnote{We mention here that this regime is known as the sub-critical regime. For $|\gamma|\geq \sqrt{2d}$, the limiting measure constructed in this way is the trivial zero  measure. There is a way to construct non-trivial limiting measures in this regime through other kinds of renormalization schemes, but we will not discuss this further here.} any continuous function $f$ with compact support in $\Omega$, and any sequence $\epsilon_n$ tending to zero as $n\to\infty$, $\int_\Omega e^{\gamma \X_{\epsilon_n}(x)-\frac{\gamma^2}{2}\E\X_{\epsilon_n}(x)^2}f(x)dx$ is a Cauchy sequence in $L^1(\P)$.
Thus $\int f d\mu^\gamma_\epsilon = \int_\Omega e^{\gamma \X_{\epsilon}(x)-\frac{\gamma^2}{2}\E\X_{\epsilon}(x)^2}f(x)dx$ converges in $L^1(\P)$ to a random variable which turns out to be independent of the mollifying function $\varphi$. We denote the limiting random variable by $\int f d\mu^\gamma$, since through a standard argument in measure theory, one can show that this convergence implies that there exists a unique random measure $\mu^\gamma$ for which the above notation makes sense.\footnote{For readers wishing to understand what this says about the convergence of $\mu^\gamma_\epsilon$ to $\mu^\gamma$,
we suggest \cite[Chapter 4]{Kallenberg17} as a general reference for convergence questions of random measures.}  This random measure $\mu^\gamma$ is our multiplicative chaos measure associated to the log-correlated field $\X$, and it gives rigorous meaning to $``e^{\gamma \X(x)}/\E e^{\gamma \X(x)}dx"$ for $\gamma\in(-\sqrt{2d},\sqrt{2d})$.

Before moving to constructing multiplicative chaos from asymptotically Gaussian processes,  let us record some basic properties of $\mu^\gamma$ that are well known to experts on the topic, yet do not exist in the literature exactly in our setting.

\begin{proposition}\label{pr:gmcprops}
For $\gamma\in(-\sqrt{2d},\sqrt{2d})$ and any non-negative $f\in  L^1(\Omega)$ with $\|f\|_{L^1(\Omega)}\neq 0$, $\int_\Omega f(x)d\mu^\gamma(x)$ is almost surely finite and almost surely positive. 
\end{proposition}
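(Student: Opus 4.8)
The plan is to establish both the almost sure finiteness and the almost sure positivity of $\int_\Omega f\,d\mu^\gamma$ for non-negative $f\in L^1(\Omega)$ with $\|f\|_{L^1(\Omega)}\neq 0$, by reducing to the already-discussed construction of $\mu^\gamma$ for compactly supported continuous test functions. For finiteness, first I would approximate $f$ from below by an increasing sequence of non-negative functions $f_n\in C_c(\Omega)$ with $f_n\uparrow f$ a.e.\ (possible since $\Omega$ is open and $f\in L^1$). By monotone convergence for the random measure $\mu^\gamma$, $\int f_n\,d\mu^\gamma \uparrow \int f\,d\mu^\gamma$ almost surely. Since $\mathds E\int f_n\,d\mu^\gamma=\int f_n(x)\,dx\le \|f\|_{L^1(\Omega)}<\infty$ by the normalization built into the construction (the measures $\mu^\gamma_\epsilon$ have expectation equal to Lebesgue measure, and this passes to the limit), Fatou's lemma gives $\mathds E\int f\,d\mu^\gamma\le \|f\|_{L^1(\Omega)}<\infty$, hence $\int f\,d\mu^\gamma<\infty$ almost surely. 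This handles the finiteness claim.

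For positivity, the key structural input is the \emph{scaling/locality} and \emph{ergodicity-type} properties of Gaussian multiplicative chaos. The cleanest route is to use the well-known fact that $\mu^\gamma(A)>0$ almost surely for any fixed open set $A\subset\Omega$ of positive Lebesgue measure --- equivalently that the event $\{\mu^\gamma(A)=0\}$ has probability zero. Granting this, since $\|f\|_{L^1(\Omega)}\neq 0$ there is a point of Lebesgue density of $\{f>0\}$, hence for some $c>0$ the open set $A=\{f>c\}$ (or an open subset of it) has positive Lebesgue measure, so $\int f\,d\mu^\gamma\ge c\,\mu^\gamma(A)>0$ almost surely. To prove $\mu^\gamma(A)>0$ a.s., I would invoke the standard argument: the total mass $\mu^\gamma(A)$ satisfies a Kahane-type zero-one law. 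Concretely, one uses the decomposition of the log-correlated field $\X$ on a small ball $B\subset A$ as a sum $\X = \X_1 + \X_2$ where $\X_1$ has a nice covariance structure (e.g.\ exact scaling, or simply a smooth field) and $\X_2$ is a scaled independent copy; then $\mu^\gamma(B)$ satisfies a distributional fixed-point relation forcing $\mathds P(\mu^\gamma(B)=0)\in\{0,1\}$, and since $\mathds E\mu^\gamma(B) = |B|>0$ we must have $\mathds P(\mu^\gamma(B)=0)<1$, hence $=0$. Alternatively, and perhaps more self-containedly given the concrete series representation \eqref{eq:series}, one can argue via a second-moment / Paley--Zygmund bound: one shows $\mathds E[\mu^\gamma_\epsilon(B)^2]\le C\,(\mathds E\mu^\gamma_\epsilon(B))^2$ uniformly in $\epsilon$ using the covariance estimate \eqref{cov_estimate} and the sub-criticality $|\gamma|<\sqrt{2d}$, deduce that $\mu^\gamma(B)$ has a positive second moment and hence by Paley--Zygmund $\mathds P(\mu^\gamma(B)>0)>0$, and then upgrade this to probability one using the independence present in the Karhunen--Lo\`eve expansion: the event $\{\mu^\gamma(B)>0\}$ is, up to null sets, measurable with respect to the tail $\sigma$-algebra of the i.i.d.\ coefficients $(\xi_k)$ (adding finitely many low-frequency modes only multiplies the measure by an a.s.\ positive continuous function), so Kolmogorov's zero-one law applies.

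I expect the \textbf{main obstacle} to be the positivity statement, specifically making the zero-one law argument rigorous in the generality stated (arbitrary bounded simply connected $\Omega$ with a covariance of the form \eqref{eq:logcorC}, not just the exactly-scaling case). The subtlety is that $g_\Sigma$ is only assumed continuous and bounded above, not below, so one cannot directly write $\X$ as a smooth field plus a standard scaling field globally; one must localize to a small ball where such a comparison is available, which is exactly where the precise asymptotics \eqref{cov_estimate} and the standard comparison lemmas for GMC (e.g.\ Kahane's inequality) enter. A convenient way to sidestep delicacies is to note that the result only needs to be proved for the specific field of interest, where the explicit series \eqref{eq:series} makes the tail-triviality argument transparent; I would therefore present the Paley--Zygmund-plus-zero-one-law route as the main line of proof, and remark that it extends verbatim to the general setting using Kahane's convexity inequality to dominate the general log-correlated field below by a standard one on small balls.
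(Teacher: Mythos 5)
Your proposal takes essentially the same route as the paper: finiteness by approximating $f$ from below by compactly supported continuous functions and applying monotone convergence plus the first-moment bound, and positivity via Kolmogorov's zero--one law applied to the Karhunen--Lo\`eve coefficients $(\xi_k)$ (the paper's ``route (b)'' in your terminology; your ``route (a)'' via Kahane's decomposition is what the paper avoids precisely for the reasons you flag). However, your intermediate Paley--Zygmund step is both unnecessary and, as stated, incorrect over part of the claimed range: the uniform-in-$\epsilon$ second-moment bound $\E[\mu^\gamma_\epsilon(B)^2]\leq C(\E\mu^\gamma_\epsilon(B))^2$ is false once $\gamma^2\geq d$ (outside the $L^2$-phase the second moment blows up), so Paley--Zygmund does not give you $\P(\mu^\gamma(B)>0)>0$ in the whole subcritical regime $\gamma^2<2d$. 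This is easy to fix and the paper does so: the zero--one law already forces $\P\left(\int_\Omega f\,d\mu^\gamma=0\right)\in\{0,1\}$, and since $\E\int_\Omega f\,d\mu^\gamma=\int_\Omega f(x)\,dx>0$ (inherited from the $L^1(\P)$-convergence in the construction), the value $1$ is excluded; no second-moment input is needed.

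One more point you gloss over, which the paper treats carefully by citing \cite[Section 5]{Berestycki15}: the measure $\mu^\gamma$ is defined as the limit of the \emph{convolution} approximations $\mu^\gamma_\epsilon$, while the zero--one argument requires that $\int_\Omega e^{\gamma\X^{(n)}(x)-\frac{\gamma^2}{2}\E\X^{(n)}(x)^2}f(x)\,dx$ (the \emph{Karhunen--Lo\`eve martingale}) converges almost surely to that same limit. A priori these two approximation schemes could produce different limiting random variables, and identifying them is exactly what makes $\{\int f\,d\mu^\gamma>0\}$ a tail event of $(\xi_k)$. Your heuristic that ``adding finitely many low-frequency modes only multiplies the measure by an a.s.\ positive continuous function'' is the right intuition, but the rigorous identification of the two limits is a nontrivial uniqueness statement for GMC, not something to assume.
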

\begin{proof}
We begin by mentioning that by standard approximation arguments, it is sufficient to prove the result for $f\in L^1(\Omega)\cap C(\Omega)$. We assume  this from now on.

Almost sure finiteness is  a straightforward first moment estimate, which we present for the convenience of the reader. If $f$ is compactly supported and non-negative, almost sure finiteness is a trivial consequence of the $L^1(\P)$-convergence in the construction of $\mu^\gamma$: one has 
\begin{equation}\label{eq:Ebound}
\E\left[ \int_\Omega f(x)d\mu^\gamma(x)\right] \leq \int_\Omega f(x)dx<\infty.
\end{equation}
Let $g_k\in C_c(\Omega)$ be an increasing sequence of non-negative functions such that $g_k(x)=1$ for $d(x,\partial \Omega)\geq \frac{1}{k}$. Then for non-negative $f\in  L^1(\Omega)$, we have by the monotone convergence theorem $\int_\Omega f(x)d\mu^\gamma(x)=\lim_{k\to\infty}\int_\Omega g_k(x)f(x)d\mu^\gamma(x)$ and another application of the monotone convergence theorem (now applied to $\P$) combined with \eqref{eq:Ebound}  yields 
\[
\E\left[\int_\Omega f(x)d\mu^\gamma(x) \right]=\lim_{k\to\infty}\E\left[ \int_\Omega g_k(x)f(x)d\mu^\gamma(dx) \right] 
\leq \int_\Omega f(x)dx.
\]
Thus for each non-negative $f\in L^1(\Omega)$, $\E \int_\Omega f(x)d\mu^\gamma(x)<\infty$ which implies that $\int_\Omega f(x)d\mu^\gamma(x)$ is almost surely finite.

\medskip

Proving almost sure positivity is a little more involved. This is quite similar to a uniqueness argument given in \cite[Section 5]{Berestycki15} so we shall only sketch the proof and direct the interested reader to \cite{Berestycki15}. 
Applying the Karhunen-Lo\`eve expansion, we can decompose $\X(x)=\sum_{k=1}^\infty \sqrt{\lambda_k} \xi_k e_k(x)$, where $\xi_k$ are i.i.d. standard Gaussians, $\lambda_k>0$ are the positive eigenvalues of the integral operator with kernel $\Sigma$ and $e_k\in L^2(\Omega)$ are the corresponding (normalized) eigenfunctions:  $\int_\Omega \Sigma(x,y)e_k(y)dy=\lambda_k e_k(x)$. 

Due to our assumptions on the kernel $\Sigma$, namely \eqref{eq:logcorC}, the eigenequation implies that $e_k$ are continuous on $\Omega$. Thus $\X^{(n)}(x):=\sum_{k=1}^n \sqrt{\lambda_k }\xi_k e_k(x)$  defines a random continuous function on $\Omega$. Since $\xi_k$ are  i.i.d. standard Gaussians, for any non-negative $f\in C_c(\Omega)$, $\int_\Omega e^{\gamma \X^{(n)}(x)-\frac{\gamma^2}{2}\E \X^{(n)}(x)^2}f(x)dx$ is a non-negative martingale with respect to the filtration generated by $\xi_1, \xi_2, \dots$. The martingale convergence theorem implies that  $\int_\Omega e^{\gamma \X^{(n)}(x)-\frac{\gamma^2}{2}\E \X^{(n)}(x)^2}f(x)dx$ converges almost surely to a random variable as $n\to\infty$. In \cite[Section 5]{Berestycki15} it is proven that this random variable actually is $\int_\Omega f d\mu^\gamma $.

Almost sure positivity now follows from the Kolmogorov 0-1 law: for non-negative $f\in C_c(\Omega)$, the event that $\int_\Omega f d\mu^\gamma $ is positive is  a tail event -- it does not depend on the value of $\xi_1,...,\xi_n$ for any fixed $n$. Thus by Kolmogorov's 0-1 law, the probability that $\int_\Omega f d\mu^\gamma=0$ is either zero or one. By the $L^1(\P)$-convergence proven in \cite{Berestycki15},
we have that $\E \int_\Omega f d\mu^\gamma =\int_\Omega f(x)dx$, so  by our 0-1 argument, we conclude that for any non-trivial non-negative function $f\in  C_c(\Omega)$, $\int_\Omega f d\mu^\gamma >0$ almost surely.
\end{proof}

This concludes our review of the standard theory of multiplicative chaos.
In the next section, we turn to discussing under which assumptions and how to prove that a sequence of non-Gaussian random fields defined on possibly different probability spaces 
can give rise to a multiplicative chaos measure.

\subsection{Constructing multiplicative chaos from asymptotically Gaussian processes}\label{sec:asyGaus}

In this section we review some basic results from \cite{LOS18} about how to prove that non-Gaussian processes  which approximate a log-correlated field in a specific sense explained below, can give rise to a multiplicative chaos measure. Our interest lies in the eigenvalue counting function $\h$, but unfortunately, the results of \cite{LOS18} are not directly applicable to this case due to the highly non-trivial local structure of $\h$ -- a problem that we address  in Section \ref{sect:main}. Nevertheless, our approach will rely heavily on \cite{LOS18}, and we review now the basic ideas from there.

Another unfortunate fact is that in the type of questions we shall study, the notation becomes slightly heavy quite rapidly. We will try to alleviate this problem slightly by viewing the random variable $\X$ as a ``canonical process" or ``canonical function" and studying it under different probability measures $\P_N$. The picture to keep in mind for $\P_N$ is the law of the random $N\times N$ matrix and $\X= \h$   is a sequence of functions with some mild regularity.

 We will also suppose that under  $\P_N$,  the law of this canonical process $\X$ approximates the law of a Gaussian log-correlated field. We collect in the following definition some of our main assumptions.

\begin{definition}[Notational conventions and assumptions]\label{def:EN}
Let $\P$ denote the law of the log-correlated Gaussian field $\X$ with covariance $\Sigma$ from \eqref{eq:logcorC} --  $\E$  denotes the expectation with respect to  $\P$. We also assume that for each $N\geq 1$, we have some probability measure $\P_N$ $($similarly $\E_N$ denotes the expectation with respect to $\P_N)$ under which the canonical process $\X$ is a random function which is integrable, bounded from above $($though possibly with a random upper bound depending on $N)$, upper semi-continuous on $\Omega$ and satisfies for any  $\alpha>0$, $x\in \Omega$, and $f\in C_c^\infty(\Omega)$
\[
\E_N e^{\alpha\X(x)}<\infty \qquad \text{and} \qquad \E_N e^{\int \X(x)f(x)dx}<\infty . 
\]
Moreover, the law of $\X$ under $\P_N$ approximates the law of $\X$ under $\P$ in the sense that for each $f\in C^\infty_c(\Omega)$, the law of $\int_\Omega f(x)\X(x)dx$ under $\P_N$ converges to that of it under $\P$.
\end{definition}

In this language, we wish to prove that under $\P_N$, 
$\int_\Omega f(x)e^{\gamma \X(x)}/ \E_N e^{\gamma \X(x)} dx$ 
converges in law to $\int_\Omega f d\mu^\gamma$. The basic idea is to mimic the proof of existence of multiplicative chaos, but to use suitable exponential moment estimates to prove that we can approximate the Gaussian fields by the asymptotically Gaussian ones. As we saw in Section \ref{sect:background}, a central role is played by convolution approximations of the log-correlated Gaussian field $\X$, so our first ingredient is to establish that if we regularized the canonical process using a fixed mollifier, then  $\X_\epsilon$ converges to a Gaussian process as $N\to+\infty$ in a strong enough sense.
Such a result has been proven in \cite[Lemma 2.2]{LOS18}.

\begin{lemma}[\cite{LOS18}]\label{le:wcvg}
Let $\varphi\in C_c^\infty(\R^d)$ be a non-negative, rotation invariant function satisfying $\int_\Omega \varphi(x)dx=1$, and for each $\epsilon>0$ define $\varphi_\epsilon(x)=\epsilon^{-d}\varphi(x/\epsilon)$, and  let $\X_\epsilon=\X*\varphi_\epsilon$.
In addition to the assumptions of Definition \ref{def:EN}, assume that for each fixed $\epsilon>0$ and $\gamma\in \R$, as $N\to\infty$,
\begin{equation}\label{eq:1expmom}
\E_N e^{\gamma \X_\epsilon(x)}=(1+o(1))\E e^{\gamma \X_\epsilon(x)},
\end{equation}
where the implied constant is uniform in $x$ in any compact subset of $\Omega$. Then for any fixed $f\in C_c(\Omega)$ and $\epsilon>0$, as $N\to\infty$, the law of $\int_\Omega f(x)\frac{e^{\gamma \X_\epsilon(x)}}{\E_N e^{\gamma \X_\epsilon(x)}}dx$ under $\P_N$ tends to the law of $\int_\Omega f(x)\frac{e^{\gamma \X_\epsilon(x)}}{\E e^{\gamma \X_\epsilon(x)}}dx$  under $\P$.  
\end{lemma}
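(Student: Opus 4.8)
The plan is to upgrade the convergence of one–dimensional distributions provided by Definition \ref{def:EN} to convergence in law of the nonlinear functional $\X_\epsilon\mapsto\int_\Omega f(x)\,e^{\gamma\X_\epsilon(x)}/\E_N e^{\gamma\X_\epsilon(x)}\,dx$. The point is that, for a \emph{fixed} mollification scale $\epsilon$, the smoothed field $\X_\epsilon=\X*\varphi_\epsilon$ is an honest random continuous function, and the functional above depends continuously on it; so it suffices to prove that $\X_\epsilon$, viewed as a random element of $C(K)$ for a fixed compact $K\supset\supp f$, converges in law under $\P_N$ to the Gaussian field $\X_\epsilon$ under $\P$, and then feed this into a continuous–mapping argument that also absorbs the deterministic normalisation $x\mapsto\E_N e^{\gamma\X_\epsilon(x)}$. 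Throughout I fix $\epsilon$ small enough that $\supp\varphi_\epsilon(x-\cdot)\subset\Omega$ for all $x\in K$.

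First I would establish convergence of finite–dimensional distributions. For $x_1,\dots,x_m\in K$ and reals $c_1,\dots,c_m$ one has the exact identity $\sum_i c_i\X_\epsilon(x_i)=\int_\Omega\X(w)\psi(w)\,dw$ with $\psi:=\sum_i c_i\varphi_\epsilon(x_i-\cdot)\in C_c^\infty(\Omega)$, so by Definition \ref{def:EN} the law of the left–hand side under $\P_N$ converges to its law under $\P$. By the Cram\'er--Wold device, $(\X_\epsilon(x_1),\dots,\X_\epsilon(x_m))$ converges in law under $\P_N$ to the corresponding (Gaussian) vector under $\P$. Along the way one records, by combining Hölder's inequality with \eqref{eq:1expmom} (which reduces a finite exponential linear combination $e^{\lambda\sum_i\X_\epsilon(z_i)}$ to single points), that $\sup_N\E_N e^{\lambda\X_\epsilon(z)}<\infty$ for every $\lambda\in\R$ uniformly for $z\in K$; in particular any fixed polynomial in the $e^{\pm\gamma\X_\epsilon(z_i)}$ is uniformly integrable under $\{\P_N\}$, so the convergence in law above is accompanied by convergence of all mixed exponential moments to their Gaussian values.

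The heart of the matter is tightness of $\{\X_\epsilon\}$ in $C(K)$. By the Arzel\`a--Ascoli criterion this amounts to a uniform–in–$N$ modulus–of–continuity estimate, for which I would prove increment moment bounds of the form $\sup_N\E_N|\X_\epsilon(x)-\X_\epsilon(y)|^{q}\le C_{\epsilon,q}\,|x-y|^{q}$ on $K$ for some $q>d$. Writing $\X_\epsilon(x)-\X_\epsilon(y)=\int_0^1\nabla\X_\epsilon\big(y+t(x-y)\big)\cdot(x-y)\,dt$ and $\nabla\X_\epsilon=\X*\nabla\varphi_\epsilon$, this reduces to moment bounds for the derivative field $\nabla\X_\epsilon$; since $\varphi$ is a \emph{fixed smooth} function one may write $\partial_j\varphi$ as a multiple of a difference of two compactly supported probability densities and then bound $\E_N e^{\lambda\,\partial_j\X_\epsilon(z)}$, uniformly in $N$, by exactly the same Hölder/exponential–moment mechanism as above (in the concrete random–matrix setting this is anyway underwritten by the Hankel determinant asymptotics of Section \ref{sec:hankel_intro} applied to the relevant kernels; in the abstract setting it is the content of \cite{LOS18}). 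Kolmogorov--Chentsov together with the previous step then gives tightness, and hence $\X_\epsilon\to_{\mathrm{law}}\X_\epsilon$ in $C(K)$ from $\P_N$ to $\P$. This is where essentially all the work lies: the one–point exponential moment control of \eqref{eq:1expmom} has to be leveraged into genuine spatial regularity of $\X_\epsilon$, and although the fixedness and smoothness of the mollifier make this possible, one cannot simply read off variances of increments under $\P_N$ as one would under the Gaussian law and must route everything through exponential moments.

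Finally I would close the argument. Put $\rho_N(x)=\E_N e^{\gamma\X_\epsilon(x)}$ and $\rho(x)=\E e^{\gamma\X_\epsilon(x)}$; by \eqref{eq:1expmom} we have $\rho_N\to\rho$ uniformly on $K$, and $\rho$ is continuous and bounded below. The functionals $\Phi_N(g)=\int_\Omega f(x)\,e^{\gamma g(x)}/\rho_N(x)\,dx$ on $C(K)$ are continuous and satisfy $\sup_{\|g\|_\infty\le R}|\Phi_N(g)-\Phi(g)|\to 0$ for every $R>0$, where $\Phi$ is defined with $\rho$ in place of $\rho_N$. Combining this with the weak convergence just obtained — for instance via the Skorokhod representation theorem and dominated convergence, or via a direct three–$\eta$ estimate using that $\sup_N\P_N(\|\X_\epsilon\|_{\infty,K}>R)\to 0$ as $R\to\infty$ (a consequence of tightness) — yields that $\Phi_N(\X_\epsilon)$ under $\P_N$ converges in law to $\Phi(\X_\epsilon)$ under $\P$, which is precisely the assertion of the lemma.
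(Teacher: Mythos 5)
The paper's proof (following \cite[Lemma 2.2]{LOS18}) is appreciably lighter than yours: it establishes convergence of the finite-dimensional distributions of the \emph{integrand} $e^{\gamma\X_\epsilon(\cdot)}/\E_N e^{\gamma\X_\epsilon(\cdot)}$ and pairs this with tightness of the real-valued random variable $\int_\Omega f(x)\frac{e^{\gamma\X_\epsilon(x)}}{\E_N e^{\gamma\X_\epsilon(x)}}dx$ --- which is immediate since $\E_N\big[\int|f(x)|\frac{e^{\gamma\X_\epsilon(x)}}{\E_N e^{\gamma\X_\epsilon(x)}}dx\big]=\int|f|\,dx<\infty$ uniformly in $N$ --- and then invokes Prokhorov. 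No regularity of the field $\X_\epsilon$ as a random element of $C(K)$ is ever needed. You instead try to push through the strictly stronger claim of weak convergence of $\X_\epsilon$ in $C(K)$ and then feed it into a continuous-mapping argument. The closing step of your argument (uniform convergence of $\rho_N\to\rho$ plus a three-$\eta$ estimate) is fine, and the Cram\'er--Wold step for finite-dimensional distributions is also fine and is essentially the same as what the paper uses. The problem is your tightness step.

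To run Kolmogorov--Chentsov you need $\sup_N\E_N\big|\X_\epsilon(x)-\X_\epsilon(y)\big|^q\ll|x-y|^q$, which you reduce to bounding $\E_N e^{\lambda\partial_j\X_\epsilon(x)}$ uniformly in $N$ and $x$, and you assert this follows ``by exactly the same H\"older/exponential-moment mechanism as above.'' This is where the argument breaks. The hypothesis \eqref{eq:1expmom} only controls $\E_N e^{\gamma(\X*\varphi_\epsilon)(x)}$ for the \emph{specific} rotation-invariant mollifier $\varphi$ that was fixed at the outset. Writing $\partial_j\varphi=c(\psi^+-\psi^-)$ with $\psi^\pm$ compactly supported probability densities and applying H\"older reduces you to bounding $\E_N e^{\mu(\X*\psi^\pm_\epsilon)(x)}$, but these are convolutions against kernels that are not $\varphi_\epsilon$, and nothing in Definition \ref{def:EN} or in \eqref{eq:1expmom} gives uniform-in-$N$ control of them. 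In particular, neither Jensen against the probability density $\psi^\pm_\epsilon$ (which would bring in the unsmoothed $\E_N e^{\mu\X(y)}$, which is \emph{not} bounded uniformly in $N$) nor a deconvolution $\psi^\pm_\epsilon=\varphi_\epsilon*\rho$ (which would require $\widehat{\psi^\pm}/\widehat{\varphi}$ to be positive definite, and it generically is not) can be made to work. Your parenthetical --- that in the concrete random-matrix setting the needed estimates are ``underwritten by the Hankel determinant asymptotics'' and that in the abstract setting ``it is the content of \cite{LOS18}'' --- is either appealing to facts outside the stated hypotheses or is circular, since \cite[Lemma 2.2]{LOS18} is precisely the statement being proved. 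So the tightness of $\X_\epsilon$ in $C(K)$ does not follow from the assumptions of the lemma, and the argument as written has a genuine gap; the paper's route at the level of the integral (rather than the field) is what makes it possible to avoid this problem entirely.
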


While our assumptions are slightly different than in \cite[Lemma 2.2]{LOS18}, looking at the proof there, one sees that it carries through word for word in our case -- more precisely, the assumptions of Definition \ref{def:EN} combined with \eqref{eq:1expmom} imply that $e^{\gamma \X_\epsilon}/ \E_N e^{\gamma \X_\epsilon}$ under $\P_N$ converges to $e^{\gamma \X_\epsilon-\frac{\gamma^{2}}{2}\E \X_\epsilon^2}$ under $\P$ in the sense of finite dimensional distributions while tightness of $\int_\Omega f(x)\frac{e^{\gamma \X_\epsilon(x)}}{\E_N e^{\gamma \X_\epsilon(x)}}dx$ under $\P_N$ follows  by construction, so by Prokhorov's theorem, one finds the claim.

\medskip

The role of this result is that if we add a second level of approximation, namely smooth our ``approximately Gaussian" field by a fixed amount, then we can use it to construct a multiplicative chaos measure by first letting $N\to\infty$ and then $\epsilon\to0$.
Recall that our goal is to prove that this can be done without the smoothing. While not quite doing this, in \cite{LOS18} the authors provided a condition under which a  \emph{mesoscopic smoothing} is sufficient, i.e.  one can take $\epsilon=\epsilon_N$ tending to zero as $N\to\infty$. 
Mesoscopic smoothing refers to the fact that one regularizes the process at a small scale $\epsilon_N$ tending to 0 as $N\to+\infty$ but which does not see the \emph{microscopic} or \emph{local} scale of the model. 
Bearing in mind the example of the eigenvalue counting function, this means e.g. $\epsilon_N  = N^{\alpha-1}$  for some $0<\alpha<1$ so that  the regularized process $\h * \varphi_{\epsilon_N}$ cannot distinguish between individual eigenvalues  in the bulk of the spectrum. The result of \cite{LOS18} concerning mesoscopic smoothing is the following.

\begin{theorem}[\cite{LOS18}, Theorem 1.7] \label{thm:LOS}
In addition to the assumptions of Definition \ref{def:EN}, suppose that the following conditions hold$:$  there exists a sequence $\epsilon_N>0$ tending to zero as $N\to\infty$ such that for any $n\in \N$, $\boldsymbol{t}\in \R^n $, we have as $N\to \infty$,
\begin{equation} \label{exp_moment}
\E_N\bigg[ \exp\bigg( \sum_{k=1}^n t_k \X_{\eta_k}(\mathrm{x}_k) \bigg)\bigg]=
\E\bigg[ \exp\bigg( \sum_{k=1}^n  t_k \X_{\eta_k}(\mathrm{x}_k) \bigg)\bigg]   \big( 1+ \o(1) \big), 
\end{equation}
where the error term is uniform in $\boldsymbol{\eta} \in [\epsilon_N ,1]^n$ and $\mathbf{x}$ in a fixed compact subset of $\Omega^n$. Then for any $-\sqrt{2d}<\gamma <\sqrt{2d}$ and $f\in C_c(\Omega)$, the law of $\int_\Omega \frac{e^{\gamma \X_{\epsilon_N}(x)}}{\E_N e^{\gamma \X_{\epsilon_N}(x)}}f(x)dx$ under $\P_N$ tends to that of $\int_\Omega f(x)d\mu^\gamma(x)$ $($under $\P)$ as $N\to\infty$.
\end{theorem}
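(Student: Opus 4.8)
The plan is to adapt Berestycki's truncated second--moment construction of Gaussian multiplicative chaos, using the hypothesis \eqref{exp_moment} to transfer every moment computation from $\P_N$ to $\P$. Since $\X$ is \emph{not} Gaussian under $\P_N$, Kahane's convexity inequality and other Gaussian comparison tools are unavailable; instead, every expectation we need to estimate will -- after discretising the relevant events and test functions into finitely many pieces -- be written as a finite combination of quantities of the form $\E_N[\exp(\sum_k t_k\X_{\eta_k}(\mathrm x_k))\1_{(\cdots)}]$ with all scales $\eta_k\ge\epsilon_N$ and all points in a fixed compact, and \eqref{exp_moment} replaces each such quantity by its Gaussian counterpart up to a $(1+\o(1))$ factor; only then do we invoke known properties of $\mu^\gamma$. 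We may assume $\gamma\in(0,\sqrt{2d})$ (the case $\gamma=0$ is trivial, and $\gamma\mapsto-\gamma$ corresponds to $\X\mapsto-\X$, which preserves all hypotheses) and, splitting $f$ into positive and negative parts, $f\ge 0$; it then suffices to show $\E_N F(\int f\,d\nu_{N,\epsilon_N})\to\E F(\int f\,d\mu^\gamma)$ for every bounded Lipschitz $F:\R\to\R$, where $d\nu_{N,\epsilon}(x)=\frac{e^{\gamma\X_\epsilon(x)}}{\E_N e^{\gamma\X_\epsilon(x)}}dx$.

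Fix a truncation level $\alpha\in(\gamma,\sqrt{2d})$ -- a non--empty window exactly because of subcriticality $\gamma<\sqrt{2d}$ -- put $G^\alpha_\epsilon(x)=\{\X_\delta(x)\le\alpha\log(1/\delta)\text{ for all }\delta\in[\epsilon,1]\}$, and set $d\nu^\alpha_{N,\epsilon}(x)=\frac{e^{\gamma\X_\epsilon(x)}}{\E_N e^{\gamma\X_\epsilon(x)}}\1_{G^\alpha_\epsilon(x)}dx$ under $\P_N$ and $d\mu^{\gamma,\alpha}_\epsilon(x)=e^{\gamma\X_\epsilon(x)-\frac{\gamma^2}{2}\E\X_\epsilon(x)^2}\1_{G^\alpha_\epsilon(x)}dx$ under $\P$. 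Two facts about the Gaussian side are classical and follow from \eqref{cov_estimate} together with a Cameron--Martin shift: (i) $\E[(\int f\,d\mu^{\gamma,\alpha}_{\epsilon'}-\int f\,d\mu^{\gamma,\alpha}_\epsilon)^2]\le\omega_\alpha(\epsilon)$ with $\omega_\alpha(\epsilon)\to0$ as $\epsilon\to0$, uniformly over $\epsilon'\le\epsilon$ -- finiteness of this second moment is precisely what forces $\alpha<\sqrt{2d}$ -- so that $\int f\,d\mu^{\gamma,\alpha}_\epsilon\to\int f\,d\mu^{\gamma,\alpha}$ in $L^2(\P)$; (ii) $\int f\,d\mu^{\gamma,\alpha}\uparrow\int f\,d\mu^\gamma$ in $L^1(\P)$ as $\alpha\uparrow\sqrt{2d}$, because the mass of $\mu^\gamma$ lives on $\gamma$--thick points and a truncation at any level $\alpha>\gamma$ is asymptotically invisible.

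The heart of the matter is that, for \emph{fixed} $\alpha$, $\int f\,d\nu^\alpha_{N,\epsilon_N}$ under $\P_N$ converges in law to $\int f\,d\mu^{\gamma,\alpha}$ under $\P$. For each fixed $\epsilon>0$ one has $\int f\,d\nu^\alpha_{N,\epsilon}\Rightarrow\int f\,d\mu^{\gamma,\alpha}_\epsilon$: discretising $G^\alpha_\epsilon$ into an event depending on $\X_\delta(x)$ at finitely many dyadic scales, this follows from the joint convergence in law of $(\X_{\eta_k}(\mathrm x_k))_k$ under $\P_N$ (itself a consequence of \eqref{exp_moment}, as in the proof of Lemma~\ref{le:wcvg}). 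To pass to the mesoscopic scale one needs the uniform bound $\limsup_{N\to\infty}\E_N[(\int f\,d\nu^\alpha_{N,\epsilon_N}-\int f\,d\nu^\alpha_{N,\epsilon})^2]\le\omega_\alpha(\epsilon)$: expand the square, discretise the two good events, apply \eqref{exp_moment} termwise, and invoke (i). Since $\E_N\int f\,d\nu^\alpha_{N,\epsilon_N}\le\int f$ the family is tight, and sending $N\to\infty$ then $\epsilon\to0$ pins every subsequential limit to the law of $\int f\,d\mu^{\gamma,\alpha}$. Finally, to remove the truncation, $0\le\E_N[\int f\,d\nu_{N,\epsilon_N}-\int f\,d\nu^\alpha_{N,\epsilon_N}]=\int f(x)\frac{\E_N[e^{\gamma\X_{\epsilon_N}(x)}\1_{G^\alpha_{\epsilon_N}(x)^c}]}{\E_N e^{\gamma\X_{\epsilon_N}(x)}}dx$, which by \eqref{exp_moment} and a union bound over dyadic scales is at most $\eta(\alpha)\int f+\o(1)$ with $\eta(\alpha)\to0$ as $\alpha\uparrow\sqrt{2d}$ uniformly in $N$, since under the $\gamma$--shift $\X_\delta(x)$ has mean $\approx\gamma\log(1/\delta)<\alpha\log(1/\delta)$. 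Combining this with (ii), a triangle inequality for $F$, and letting $\alpha\uparrow\sqrt{2d}$ yields $\E_N F(\int f\,d\nu_{N,\epsilon_N})\to\E F(\int f\,d\mu^\gamma)$; the pair $(f_+,f_-)$ is handled jointly by the same argument, giving convergence in law of the random measures.

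The main obstacle I expect is the uniform--in--$N$ second--moment estimate for the truncated mesoscopic measures: the expansion of $\E_N[(\int f\,d\nu^\alpha_{N,\epsilon_N}-\int f\,d\nu^\alpha_{N,\epsilon})^2]$ and the dyadic discretisation of $G^\alpha$ must be organised so that \emph{every} term falls exactly under \eqref{exp_moment} -- all scales in $[\epsilon_N,1]$ and all points in a fixed compact, which is why $f$ must have compact support and the truncation may only involve scales $\ge\epsilon_N$ -- and then matched, with errors that vanish as the discretisation is refined, to the classical Gaussian estimate whose $\epsilon$--dependence is the entire point. Arranging the nested limits ($N\to\infty$, refine the discretisation, $\epsilon\to0$, $\alpha\uparrow\sqrt{2d}$) in a legitimate order, and verifying that the Gaussian--side estimates (i)--(ii) hold uniformly throughout the window $\alpha\in(\gamma,\sqrt{2d})$, is where the real work lies; the transfer from $\P_N$ to $\P$ via \eqref{exp_moment} is conceptually routine but notationally heavy.
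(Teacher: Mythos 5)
The paper does not reprove this theorem --- it is quoted verbatim from \cite[Theorem~1.7]{LOS18}, and the paper's own Theorem~\ref{thm:chaos} with its proof in Sections~\ref{sect:main}--\ref{sect:lem} is the closest analogue, being a strengthening of the same argument to cope with $\eta_k=0$. Your overall strategy (Berestycki-style truncated second moment, transferring every expectation from $\P_N$ to $\P$ via \eqref{exp_moment}, nested limits $N\to\infty$, $\epsilon\to0$, truncation removed last) is the right one and matches the architecture of Theorem~\ref{thm:chaos}, so I will flag only the step that actually fails.

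Your step~(ii) --- that $\int f\,d\mu^{\gamma,\alpha}\uparrow\int f\,d\mu^\gamma$ in $L^1(\P)$ as $\alpha\uparrow\sqrt{2d}$, and correspondingly that $\eta(\alpha)\to0$ as $\alpha\uparrow\sqrt{2d}$ in the first-moment bound for the bad set --- is incorrect, and it is not a technicality one can paper over. Your good event $G^\alpha_\epsilon(x)=\{\X_\delta(x)\le\alpha\log(1/\delta)\text{ for all }\delta\in[\epsilon,1]\}$ imposes a constraint at $\delta=1$ that reads $\X_1(x)\le 0$, and more generally imposes constraints at the coarse scales $\delta\approx 1$ where the threshold $\alpha\log(1/\delta)$ is $\O(1)$. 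Under the Cameron--Martin shift by $\gamma$ the mean of $\X_\delta(x)$ at those coarse scales is $\gamma\,\E\X_\delta(x)^2=\O(1)$, and by \eqref{cov_estimate} the variance is also $\O(1)$, so $\q^{(x)}\big[\X_1(x)>0\big]$ is bounded away from zero uniformly in $\alpha<\sqrt{2d}$. Consequently $\E[\int f\,d\mu^\gamma-\int f\,d\mu^{\gamma,\alpha}]$ and the corresponding $\P_N$-side quantity stay bounded away from zero as $\alpha\uparrow\sqrt{2d}$: the truncated measure converges to something strictly smaller than $\mu^\gamma$, and your final triangle inequality does not close. The heuristic ``$\mu^\gamma$ lives on $\gamma$-thick points so a truncation at level $\alpha>\gamma$ is invisible'' is only a statement about the microscopic behaviour $\delta\to0$; it says nothing about the macroscopic scales near $\delta=1$, which your running constraint also penalises.

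The mechanism that does work --- and that the paper uses in Theorem~\ref{thm:chaos} and Lemma~\ref{lem:barrier}, and that \cite{LOS18} must use as well --- is to introduce a lower cutoff $\ell$ and only impose the barrier on the fine scales: $A_{\ell,L}(x)=\bigcap_{\ell\le k\le L}\{\X_{e^{-k}}(x)<(\gamma+\varkappa)k\}$, with $\varkappa$ a small \emph{fixed} constant in the window $\big(\sqrt{2\tau},\min(\tfrac{d-\gamma^2/2}{\sqrt{2d}},\tfrac\gamma2)\big)$. The first moment of the bad event is then $\ll\sum_{k\ge\ell}e^{-\varkappa^2k/2}\ll e^{-\varkappa^2\ell/2}$, which vanishes as $\ell\to\infty$ --- precisely because the sum starts at $k=\ell$ rather than at $k=0$. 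So it is $\ell\to\infty$, not $\alpha\uparrow\sqrt{2d}$, that removes the truncation; the barrier height $\gamma+\varkappa$ stays bounded well below $\sqrt{2d}$ throughout, and indeed must, since the uniform $L^2$ estimate on the good set needs $(\gamma+\varkappa)$ comfortably below the critical value. The rest of your proposal --- the fixed-$\epsilon$ convergence via finite-dimensional Laplace transforms and \eqref{exp_moment}, and the uniform $L^2$ bound for the mesoscopic cutoff --- survives once this correction is made, essentially as in Lemmas~\ref{lem:trunc1}--\ref{lem:cvg}.
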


As mentioned, for the proof of Theorem \ref{th:gmc} we cannot directly apply Theorem \ref{thm:LOS}. Indeed, if e.g. $n=2$ and  $\eta_1=\eta_2=0$,  the condition \eqref{exp_moment} does not hold and in general, if the model has some non-trivial behavior on the local scale, the uniform estimates required by Theorem~\ref{thm:LOS} are not likely to hold.  The idea of the proof of  Theorem~\ref{th:gmc} is to keep the general approach of \cite{LOS18}, but to work under slightly different exponential moment assumptions, which are more of a kin to the approach of \cite{Webb18}.  Some of the technical details also become more involved. For this reason, we will formulate precisely our assumptions and explain the core of the proof in Section~\ref{sect:main}, while the details of how to modify the arguments of \cite{LOS18} will be given in Section~\ref{sect:L2}. 

\subsection{Main result concerning convergence to multiplicative chaos}\label{sect:main}

We  work  in the general context of Section~\ref{sect:background}. Let us fix some $\gamma\in[0,\sqrt{2d})$ -- the extension to $\gamma\in(-\sqrt{2d},0)$ is simply a matter of notational changes by applying our results to $-\X$.

We begin this section by formulating a set of general assumptions on asymptotics of exponential moments of the fields $\X$ and $\X_\epsilon$ with respect to the measure $\P_N$, under which we will be able to prove our main result about convergence to multiplicative chaos.  After this, we state some key lemmas, and prove our main result assuming these lemmas. We conclude our general discussion with the proof of these key lemmas in Section \ref{sect:lem} and a discussion of test functions which are not necessarily compactly supported in Section \ref{sect:extension}. Finally, in Section \ref{sec:special}, we will verify that in the setting of $\h$, these assumptions follow from our results stated in Section \ref{sec:hankel_intro}.

The assumptions we shall require for proving our main result concerning convergence to multiplicative chaos are the following.

\begin{assumption} \label{A:exp_moment}
We suppose that there exists a sequence $\epsilon_N$ $($possibly depending on $\gamma)$ tending to zero as $N\to\infty$ such that the following assumptions hold$:$
\begin{itemize}
\item
For any fixed {$\epsilon,\epsilon' \ge 0$}, 
\begin{equation} \label{A31}
	\lim_{N\to \infty}\frac{\E_N e^{\gamma \X_\epsilon(x)+\gamma \X_{\epsilon'}(y)}}{\E_N e^{\gamma \X_\epsilon(x)}\E_N e^{\gamma \X_{\epsilon'}(y)}} = e^{ \gamma^2
		\E \X_\epsilon(x)\X_{\epsilon'}(y)}
	\end{equation}
uniformly for all $(x,y)$ in any fixed compact subset of $\{(u,v)\in \Omega^2:u\neq v\}$.

\item
For any fixed $\epsilon\geq 0$, $\epsilon'>0$, and $K\subset \Omega$ compact, there exists a positive constant $C=C(\epsilon,\epsilon',K)<\infty$ such that 
\begin{equation}\label{A32}
\sup_{N\geq 1}\sup_{x,y\in K}\frac{\E_N e^{\gamma \X_\epsilon(x)+\gamma\X_{\epsilon'}(y)}}{\E_N e^{\gamma \X_\epsilon(x)}\E_N e^{\gamma \X_{\epsilon'}(y)}}\leq C.
\end{equation}

\item
One has
\begin{equation} \label{A33}
	\frac{\E_N e^{\gamma \X(x)+\gamma \X(y)}}{\E_N e^{\gamma \X(x)}\E_N e^{\gamma \X(y)}}  \ll  |x-y|^{-\gamma^2}
	\end{equation}
uniformly for all $(x,y)$ in a fixed compact subset of  $\Omega^2$ intersected with $\{(x,y)\in \Omega^2: |x-y| \ge \epsilon_N\}$.

 \item For any fixed $\rho>0$,  $\epsilon, \epsilon' \ge 0$,  $n\in \N$ and $\boldsymbol{t}\in \R^n$, one has as $N\to +\infty$,
\begin{align} \label{A1}
	\frac{\E_N e^{\gamma \X_\epsilon(x)+\gamma \X_{\epsilon'}(y)}e^{\sum_{k=1}^n  t_k \X_{\eta_k}(\mathrm{z}_k) } }{\E_N e^{\gamma \X_\epsilon(x)+\gamma \X_{\epsilon'}(y)}} &=
	\big(1+ \o_{\rho,\epsilon,\epsilon'}(1) \big) e^{\sum_{k=1}^n \gamma t_k(\E \X_\epsilon(x)\X_{\eta_k}(\mathrm{z}_k)+\E \X_{\epsilon'}(y)\X_{\eta_k}(\mathrm{z}_k))}\\
	& \notag \qquad \times e^{\frac{1}{2}\E (\sum_{k=1}^n t_k \X_{\eta_k}(\mathrm{z}_k))^2}	
	\end{align}
uniformly for all $\boldsymbol{\eta}\in (\epsilon_N,1]^n$,  $\mathbf{z}$ in any fixed compact subset of $\Omega^n$, and $(x,y)$ in a compact subset of $\{(u,v)\in \Omega^2: |u-v|\geq \rho\}$.

\item
For any $\lambda\in\R$,  one has
 \begin{equation} \label{A2}
\frac{\E_N\big[ e^{\gamma \X(x) +\lambda \X_{\epsilon}(x) }\big]}{\E_N\big[ e^{\gamma \X(x)}\big]} \ll \epsilon^{-\lambda(\lambda+2\gamma)/2} , 
\end{equation}
uniformly for all $\epsilon\ge \epsilon_N$ and for all $x$ in compact subsets of $\Omega$.

\item For any fixed $\epsilon \ge 0$ and $\epsilon'>0$ such that $\epsilon'\ge\epsilon$ and $\lambda\in\R$, 
\begin{equation} \label{A41}
	\limsup_{N\to+\infty} \frac{\E_N e^{\gamma \X_\epsilon(x)+\gamma \X_{\epsilon'}(y)+\lambda\X_\eta(y)}}{\E_N e^{\gamma \X_\epsilon(x)+\gamma \X_{\epsilon'}(y)}}\ll e^{\frac{\lambda^2}{2}\E X_\eta(y)^2+\lambda\gamma \E \X_\epsilon(x)\X_\eta(y)+\lambda\gamma \E \X_{\epsilon'}(y)\X_\eta(y)}	, 
	\end{equation}
uniformly for all  $\eta \ge \epsilon'$ and $(x,y)$ in any fixed compact subset of of $\Omega^2$. Moreover the implied constant does not depend on $(\epsilon,\epsilon')$.

\item For any fixed $\lambda\in\R$, we have
\begin{equation} \label{A42}
	\frac{\E_N e^{\gamma \X(x)+\gamma \X(y)+\lambda\X_\eta(x)}}{\E e^{\gamma \X(x)+\gamma \X(y)}}\ll e^{\frac{\lambda^2}{2}\E X_\eta(x)^2+\lambda\gamma \E \X(x)\X_\eta(x)+\lambda\gamma \E \X(y)\X_\eta(x)}, 
	\end{equation}
uniformly for all  $\eta \ge \epsilon_N$ and $(x,y)$  in any fixed compact subset of $\Omega^2$. 

\item
Finally, we suppose that there exists a small parameter $0<\tau<\min(\frac{(d-\frac{\gamma^2}{2})^2}{4d},\frac{\gamma^2}{2})$ such that for any compact set $A\subset \Omega^2$,  
\begin{equation} \label{A5}
\iint_{A} \1_{|x-y| \le \epsilon_N} \frac{\E_N e^{\gamma \X(x)+\gamma \X(y)}}{\E_N e^{\gamma \X(x)}\E_N e^{\gamma \X(y)}}dxdy 
\underset{A}{\ll}  \epsilon_N^{- (\gamma^2-d)_+ - \tau}.  
\end{equation}	
\end{itemize}
Even though it was not emphasized,  the implied constants above may  depend on $\gamma>0$, the dimension $d\in\N$, as well as $n,\rho, \boldsymbol{t},\lambda$ in \eqref{A1}, \eqref{A2}, \eqref{A41} and \eqref{A42}. Also the convergence in \eqref{A31} is not assumed to be uniform in $\epsilon,\epsilon'$.
\end{assumption}
\medskip

Let us briefly comment on the nature of these assumptions here. First of all, they could be relaxed to a degree. E.g. in the $L^2$-phase -- $0<\gamma<\sqrt{d}$, the argument is simpler (see e.g. \cite{LOS18,Webb15}). Also we do not use in \eqref{A1} in quite this strong a form, but this is a convenient way to write the assumptions. Next we note that by setting $\gamma=0$, \eqref{A1} implies for example the assumptions of Theorem \ref{thm:LOS}  so these are stronger assumptions than \eqref{exp_moment}. Next, we point out that typically in models where log-correlated Gaussian fields play a role, these objects arise on mesoscopic (and global) scales. Thus $\epsilon_N$ should be viewed as a \emph{mesoscopic cutoff} and  conditions like $|x-y|\geq \epsilon_N$ or $\eta\geq \epsilon_N$ are needed to ensure Gaussian behavior. From an asymptotic viewpoint, these cases should  be simpler to analyze.  What is expected to be harder to control is a situation where a possible local structure of the model comes to play, e.g. situations where $|x-y|<\epsilon_N$ or $\epsilon= \epsilon'=0$, and that it is important to obtain uniform estimates in these cases as well. 

\medskip

Our main result concerning convergence to multiplicative chaos is that these assumptions imply such convergence in distribution. More precisely, we have the following theorem.

\begin{theorem} \label{thm:chaos}
If Assumptions~\ref{A:exp_moment} hold, then for any $0<\gamma <\sqrt{2d}$ and $f\in C_c(\Omega)$, the law of 
\begin{equation} \label{integral}
\int_\Omega f(x)\frac{e^{\gamma \X(x)}}{\E_N e^{\gamma \X(x)}}dx 
\end{equation}
under $\P_N$ converges to the law of $\int_\Omega f(x)d\mu^\gamma(x)$, where $\mu^\gamma$ is as in Section \ref{sect:background}.
\end{theorem}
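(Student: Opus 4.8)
\textbf{Proof strategy for Theorem \ref{thm:chaos}.}

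The plan is to follow the general scheme of \cite{LOS18} (Theorem \ref{thm:LOS} above), replacing the microscopic exponential moment control there with the family of mesoscopic estimates collected in Assumptions \ref{A:exp_moment}. The key object is, for a mollifier $\varphi$ as in Lemma \ref{le:wcvg} and $\epsilon>0$, the regularized approximation
\[
I_N^\epsilon(f):=\int_\Omega f(x)\frac{e^{\gamma \X_\epsilon(x)}}{\E_N e^{\gamma \X_\epsilon(x)}}\,dx,
\qquad
I^\epsilon(f):=\int_\Omega f(x)e^{\gamma \X_\epsilon(x)-\frac{\gamma^2}{2}\E \X_\epsilon(x)^2}\,dx,
\]
and the corresponding quantity $I_N(f)$ with $\epsilon=0$, i.e.\ the integral in \eqref{integral}. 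We know from the GMC background in Section \ref{sect:background} that $I^\epsilon(f)\to \int_\Omega f\,d\mu^\gamma$ in $L^1(\P)$ as $\epsilon\to0$ (for $f$ non-negative, hence for general $f\in C_c(\Omega)$ by splitting into positive and negative parts), and from Lemma \ref{le:wcvg}, whose hypothesis \eqref{eq:1expmom} is a special case of \eqref{A1} with $\gamma=0$ there (or follows from \eqref{A31} with one of $\epsilon,\epsilon'$ absent), that for each fixed $\epsilon>0$, $I_N^\epsilon(f)\Rightarrow I^\epsilon(f)$ as $N\to\infty$ under $\P_N$. By a standard three-$\epsilon$ / approximation-of-limits argument (see e.g.\ \cite[Theorem 3.2]{Billingsley}-type reasoning, or the analogous step in \cite{LOS18}), it therefore suffices to show that
\[
\lim_{\epsilon\to 0}\ \limsup_{N\to\infty}\ \E_N\bigl[\,|I_N(f)-I_N^{\epsilon}(f)|\,\bigr]=0,
\]
possibly after first passing to the mesoscopically-smoothed object $I_N^{\epsilon_N}(f)$ and checking separately that $I_N^{\epsilon_N}(f)$ and $I_N^{\epsilon}(f)$ are close; I would in fact run the comparison in two stages, first $\epsilon \leadsto \epsilon_N$ and then $\epsilon_N\leadsto 0$, exactly as in \cite{LOS18}.

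For the first stage one estimates $\E_N|I_N^{\epsilon}(f)-I_N^{\epsilon_N}(f)|$ by a second-moment bound: expand the square, use \eqref{A31}/\eqref{A32} to control the normalized two-point exponential moments $\E_N e^{\gamma \X_{\eta}(x)+\gamma \X_{\eta'}(y)}/(\E_N e^{\gamma \X_{\eta}(x)}\E_N e^{\gamma \X_{\eta'}(y)})$ uniformly, then pass to the Gaussian limit where the resulting double integral $\iint f(x)f(y)e^{\gamma^2\E \X_{\eta}(x)\X_{\eta'}(y)}dxdy$ converges as $\epsilon_N\to0$ by the Kahane-type covariance estimate \eqref{cov_estimate}, since $\gamma<\sqrt{2d}$ keeps the singularity integrable in the $L^2$-phase and a truncation argument handles the general sub-critical range. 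For the more delicate $\epsilon_N\leadsto 0$ stage — which I expect to be the main obstacle — one cannot use a plain second moment because $\E_N I_N(f)^2$ may blow up when $\gamma^2\ge d$; instead one runs the \emph{truncated/modified second moment method}: introduce the ``good event'' on which $\X_{\epsilon}$ does not exceed roughly $\gamma\log(1/\epsilon)$ (a barrier at the expected thick-point height), restrict the measures to it, and show (i) the restricted version of $I_N(f)$ has a finite second moment controlled via \eqref{A33} and \eqref{A5} — here \eqref{A5} is precisely what bounds the near-diagonal contribution $\iint \1_{|x-y|\le\epsilon_N}(\cdots)$ by $\epsilon_N^{-(\gamma^2-d)_+-\tau}$, and the choice $\tau<(d-\gamma^2/2)^2/(4d)$ is what makes this negligible after dividing by the $L^2$-mass of $\mu_\epsilon^\gamma$ — and (ii) the contribution of the complement of the good event is small in $L^1$, using the one-point tail estimates \eqref{A2} and the conditional estimates \eqref{A41}, \eqref{A42} to dominate $\E_N$ of the exponential on the bad set by its Gaussian analogue and then invoking the known GMC fact that the limiting measure charges no thick points above the barrier.

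Concretely the order of steps is: (1) reduce to $f\ge0$ and recall the GMC convergence $I^\epsilon(f)\to\int f\,d\mu^\gamma$; (2) invoke Lemma \ref{le:wcvg} for fixed-$\epsilon$ convergence under $\P_N$; (3) prove the $\epsilon\leadsto\epsilon_N$ closeness by a uniform second-moment estimate using \eqref{A31}, \eqref{A32} and \eqref{cov_estimate}; (4) set up the barrier event and prove the truncated second-moment bound for $I_N(f)$ using \eqref{A33} and \eqref{A5}; (5) bound the bad-event contribution in $L^1$ using \eqref{A2}, \eqref{A41}, \eqref{A42}, together with \eqref{A1} to identify limiting conditional laws as Gaussian; (6) combine to get $\lim_{\epsilon\to0}\limsup_N \E_N|I_N(f)-I_N^{\epsilon_N}(f)|=0$, and conclude by the approximation-of-limits lemma. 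The genuinely new technical content over \cite{LOS18} lies in steps (4)–(5): because $\X$ under $\P_N$ has a non-trivial local (microscopic) structure, one must carry the barrier argument using only the mesoscopically-cutoff moment bounds of Assumptions \ref{A:exp_moment} rather than exact Gaussian computations, and the interplay between the cutoff scale $\epsilon_N$, the parameter $\tau$, and the thick-point exponent $\gamma^2/2$ has to be tracked carefully; this is the part whose details I would defer to Section \ref{sect:L2}.
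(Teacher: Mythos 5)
Your proposal correctly identifies all the ingredients of the paper's proof: barrier events at the thick-point height, a first-moment estimate on the complement of the barrier (Lemma \ref{lem:barrier}), a truncated second-moment comparison on the barrier (Lemma \ref{prop:L2}), and the reduction to the Gaussian case via Lemma \ref{le:wcvg} together with the GMC $L^1$-convergence from Section \ref{sect:background}. You also correctly pinpoint the role of \eqref{A5} in controlling the near-diagonal contribution. The one genuine structural difference is your proposed two-stage split $\E_N|I_N^{\eta_L}-I_N|\le\E_N|I_N^{\eta_L}-I_N^{\epsilon_N}|+\E_N|I_N^{\epsilon_N}-I_N|$. The paper does not pass through an intermediate $\epsilon_N$-smoothed integral at all: Lemma \ref{prop:L2} directly compares the barrier-truncated versions of $I_N(f)$ (with barrier $A_{\ell,M_N}$ cut at depth $M_N=\lfloor\log\epsilon_N^{-1}\rfloor$) and $I_N^{\eta_L}(f)$ (with barrier $A_{\ell,L}$), so the mesoscopic scale $\epsilon_N$ enters only through the depth of the barrier, never through an extra regularized field. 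Your variant should also work, but is likely more expensive: stage one is not simply a citation of Theorem \ref{thm:LOS} (which gives distributional, not $L^1$, convergence), and for $\gamma^2\ge d$ that stage too needs a barrier argument, so you end up carrying the truncation machinery through two separate comparisons rather than one, while also having to coordinate the order of limits in $N,\ell,L$ across both stages (in the paper this coordination is concentrated in Lemma \ref{lem:cvg} and the definition of $q_\ell(x,y)$). A minor misattribution: in the paper the bad-event estimate (Lemma \ref{lem:barrier}) uses only \eqref{A2} (and \eqref{A1} with $\gamma=0$ for the smoothed case); \eqref{A41}--\eqref{A42} feed into the good-event second-moment bound through Lemmas \ref{lem:trunc1}--\ref{lem:trunc2}, not into the bad-event control as you suggest.
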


\medskip

The main idea of the proof which originates from \cite{DS11, Berestycki15} is to split the integral  \eqref{integral}
into two parts  depending on the values of the field $\X$. 
Typically we expect that the random measure $\mu^\gamma_N(dx) := \frac{e^{\gamma \X(x)}}{\E_N e^{\gamma \X(x)}}dx$ lives on the set of $\gamma$-thick points (see \eqref{eq:thick} and e.g. Proposition~\ref{prop:TP} below)
and the idea is to introduce the following barrier events: 
  for any $\ell , L \in \N$ with $\ell<L$ and all $x\in \Omega$,  let
\begin{equation} \label{eventA}
A_{\ell,L}(x) = \bigcap_{\ell \le k \le L} \big\{ \X_{e^{-k}}(x) < (\gamma +\varkappa) k \big\} , 
\end{equation}
where $\varkappa>0$ is a small fixed parameter which depends only on $\gamma$ and we will choose later on. We also denote by $A_{\ell,L}^*(x)$ the complement of this event. 
As we already emphasized, the event $A_{\ell,L}(x)$ will turn out to be typical for most $x\in \Omega$ sampled according to the random measure $\mu^\gamma_N$ since these points are not $(\gamma+\varkappa)$-thick.

We split our proof of Theorem \ref{thm:chaos} into two parts. First we get rid of the contribution from the points $x$ for which the event $A_{\ell,L}^*(x)$ occurs using a simple first moment estimate. Then, using the Gaussian estimates from Assumptions~\ref{A:exp_moment}, we will show that if we restrict the integral \eqref{integral} to the set of points $x$ for which the event $A_{\ell,L}(x)$ occurs, then it converges in law a to $\int_\Omega f d\mu^\gamma$ as the various parameters $N, L$ and $\ell$ converge to infinity in a suitable way. 

\begin{lemma} \label{lem:barrier}
Fix two integers $\ell <L$ independent of $N$ and let $\eta_L=e^{-L}$. Then for any $\varkappa>0$ and  any $f\in C_c(\Omega)$,  under Assumptions \ref{A:exp_moment} $($actually, one needs only \eqref{A1}$)$
 \begin{equation*}
 \limsup_{N\to\infty} \E_N \bigg[ \int_{\Omega}|f(x)| \1_{A^*_{\ell,L}(x)}  \frac{e^{\gamma\X_{\eta_L}(x)}}{\E_N e^{\gamma \X_{\eta_L}(x)}}dx \bigg]  \ll_{\varkappa} e^{- \varkappa^2 \ell/2} . 
 \end{equation*}
 Let us also write $M_N=\lfloor \log \epsilon_N^{-1}\rfloor$. Then, we have $($under Assumptions \ref{A:exp_moment}$)$
 \begin{equation*}
 \limsup_{N\to\infty} \E_N \bigg[ \int_{\Omega} |f(x)|\1_{A^*_{\ell,M_N}(x)} \frac{e^{\gamma \X(x)}}{\E_N e^{\gamma \X(x)}}dx \bigg]  \ll_{\varkappa} e^{- \varkappa^2 \ell/2}   .
\end{equation*}
\end{lemma}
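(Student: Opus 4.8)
The plan is to handle both statements by a first moment computation, using only the exponential moment asymptotics \eqref{A1}, and then summing a geometric-type series in the scale index $k$. I would start with the first inequality, concerning the mesoscopic cutoff $\eta_L=e^{-L}$ with $\ell,L$ fixed. By Fubini and the definition of $A^*_{\ell,L}(x)$, I would write
\[
\E_N \bigg[ \int_\Omega |f(x)| \1_{A^*_{\ell,L}(x)} \frac{e^{\gamma \X_{\eta_L}(x)}}{\E_N e^{\gamma \X_{\eta_L}(x)}}dx \bigg] \le \sum_{k=\ell}^{L} \int_{\Omega} |f(x)| \, \E_N\bigg[ \1_{\{\X_{e^{-k}}(x) \ge (\gamma+\varkappa)k\}} \frac{e^{\gamma \X_{\eta_L}(x)}}{\E_N e^{\gamma \X_{\eta_L}(x)}}\bigg]\, dx .
\]
For a fixed $k$, I would estimate the inner expectation by a Markov/Chernoff bound: for any $\lambda>0$, $\1_{\{\X_{e^{-k}}(x)\ge (\gamma+\varkappa)k\}} \le e^{\lambda \X_{e^{-k}}(x) - \lambda(\gamma+\varkappa)k}$, which turns the expectation into the exponential moment ratio controlled by \eqref{A1} with $n=1$, $t_1=\lambda$, $\eta_1=e^{-k}\ge \epsilon_N$ (valid once $N$ is large since $L$ is fixed), and $\epsilon=\epsilon'$ not present (one uses the single-point version, or takes $x=y$ formally — more cleanly, one applies the $n=1$ specialization that follows from setting $\gamma=0$ in \eqref{A1}, i.e. the hypothesis \eqref{exp_moment} of Theorem~\ref{thm:LOS}, but with the extra $\gamma\X_{\eta_L}(x)$ factor present; this is exactly what \eqref{A1} provides with $\epsilon=0$, $\epsilon'=\eta_L$, and a single point). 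This produces, up to $(1+o(1))$ as $N\to\infty$ and using $\E\X_{e^{-k}}(x)^2 = k + \mathcal{O}(1)$, $\E\X_{\eta_L}(x)\X_{e^{-k}}(x) = k+\mathcal{O}(1)$ for $k\le L$ from \eqref{cov_estimate}, a bound of the form $\exp(\tfrac{\lambda^2}{2}k + \lambda\gamma k - \lambda(\gamma+\varkappa)k + \mathcal{O}(1))$. Optimizing in $\lambda$, i.e. choosing $\lambda=\varkappa$, gives $\exp(-\tfrac{\varkappa^2}{2}k + \mathcal{O}(1))$. Summing over $k\ge \ell$ yields a convergent geometric series bounded by a constant times $e^{-\varkappa^2\ell/2}$, and $\int|f|<\infty$ absorbs the spatial integral; the $\limsup_{N\to\infty}$ converts the $(1+o(1))$ into a clean bound.

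For the second inequality, the argument is structurally identical but now the upper cutoff is $M_N=\lfloor \log\epsilon_N^{-1}\rfloor\to\infty$ and the weight is $e^{\gamma\X(x)}$ (the ``bare'' field, $\epsilon=0$) rather than $e^{\gamma\X_{\eta_L}(x)}$. One again unions over $k$ from $\ell$ to $M_N$:
\[
\E_N\bigg[\int_\Omega |f(x)|\1_{A^*_{\ell,M_N}(x)}\frac{e^{\gamma\X(x)}}{\E_N e^{\gamma\X(x)}}dx\bigg] \le \sum_{k=\ell}^{M_N}\int_\Omega |f(x)|\,\E_N\bigg[\1_{\{\X_{e^{-k}}(x)\ge(\gamma+\varkappa)k\}}\frac{e^{\gamma\X(x)}}{\E_N e^{\gamma\X(x)}}\bigg]dx,
\]
and for each $k\le M_N$ one has $e^{-k}\ge \epsilon_N$, so the relevant ratio $\E_N[e^{\gamma\X(x)+\lambda\X_{e^{-k}}(x)}]/\E_N[e^{\gamma\X(x)}]$ is controlled by hypothesis \eqref{A2}, which gives exactly $\ll (e^{-k})^{-\lambda(\lambda+2\gamma)/2} = e^{k\lambda(\lambda+2\gamma)/2}$ uniformly. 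Combined with the Chernoff factor $e^{-\lambda(\gamma+\varkappa)k}$ and choosing $\lambda=\varkappa$, this again produces $e^{-\varkappa^2 k/2}$ up to a uniform constant, and summing the geometric series over $k\ge\ell$ gives the claimed $\ll_\varkappa e^{-\varkappa^2\ell/2}$. Crucially, because \eqref{A2} is a genuine (non-asymptotic) bound uniform over $\epsilon\ge\epsilon_N$, no issue arises from $M_N$ growing with $N$; one never needs to interchange a limit in $N$ with the sum over $k$.

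The only genuinely delicate point is bookkeeping the uniformity: in the first statement the estimate \eqref{A1} is only asymptotic in $N$, so one must fix $\ell,L$ first, sum the \emph{finite} collection of $k\in[\ell,L]$, and only then take $\limsup_{N\to\infty}$ — this is legitimate precisely because $L$ does not depend on $N$. I expect the main obstacle (really the only subtlety) is making sure that the single-point specialization of the exponential moment hypotheses is invoked correctly: \eqref{A1} as written carries two thick-point insertions $\gamma\X_\epsilon(x)+\gamma\X_{\epsilon'}(y)$ at \emph{separated} points $|x-y|\ge\rho$, whereas here we want a single insertion. The clean fix is to note that the $\gamma=0$ case of the needed estimate is Theorem~\ref{thm:LOS}'s hypothesis \eqref{exp_moment} (which \eqref{A1} subsumes), and the weighted version with the extra $e^{\gamma\X_{\eta_L}(x)}$ or $e^{\gamma\X(x)}$ factor is precisely \eqref{A2} in the bare case and the single-point reduction of \eqref{A1} (equivalently \eqref{A41} with the $x$ and $y$ insertions identified) in the mesoscopic case — so everything needed is among Assumptions~\ref{A:exp_moment}. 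Everything else is the standard Chernoff-plus-geometric-series computation familiar from the construction of multiplicative chaos.
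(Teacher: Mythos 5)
Your proposal is correct and follows essentially the same route as the paper: a union bound over scales $k$, a Chernoff estimate with the optimal choice $\lambda=\varkappa$, the exponential moment hypotheses to control the resulting ratio, and a geometric series in $k$. For the second estimate you correctly identify \eqref{A2} with $\epsilon=e^{-k}\ge\epsilon_N$ and $\lambda=\varkappa$ as the tool, yielding $e^{\varkappa(\varkappa+2\gamma)k/2-\varkappa(\gamma+\varkappa)k}=e^{-\varkappa^2 k/2}$ directly, and you correctly note that since this bound is uniform in $N$ no limit interchange is needed as $M_N\to\infty$.

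The one place your write-up wobbles is in locating the right hypothesis for the first estimate: you cycle through a ``single-point specialization'' of \eqref{A1}, a ``$x=y$ formally'' reading, and \eqref{A41}. The clean way to say it (and what the paper does) is to use \eqref{A1} with its outer parameter $\gamma$ set to $0$ --- which removes the $\gamma\X_\epsilon(x)+\gamma\X_{\epsilon'}(y)$ insertion and the separation constraint entirely and leaves $\E_N e^{\sum t_k\X_{\eta_k}(z_k)}=(1+o(1))\,e^{\frac12\E(\sum t_k\X_{\eta_k}(z_k))^2}$ --- applied once with $n=2$ (points $z_1=z_2=x$, coefficients $\gamma$ and $\varkappa$, scales $\eta_L$ and $e^{-k}$) for the numerator and once with $n=1$ for the denominator $\E_N e^{\gamma\X_{\eta_L}(x)}$. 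Dividing and using \eqref{cov_estimate} (so that $\E\X_{e^{-k}}(x)^2$ and $\E\X_{\eta_L}(x)\X_{e^{-k}}(x)$ are both $k+\mathcal O(1)$ when $e^{-k}\ge\eta_L$) reproduces the $e^{-\varkappa^2 k/2}$ bound up to a constant, and then, as you say, one takes $\limsup_{N\to\infty}$ over the finite sum $k\in\{\ell,\dots,L\}$.
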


\begin{proof}
The arguments for the two cases are very similar, and in fact exactly the same as in \cite[Lemma~2.7]{LOS18}, so we only prove the second estimate -- the proof of the first one would differ simply by applying \eqref{A1} (with $\gamma=0$ and $n=2$ as well as $n=1$) instead of \eqref{A2}. 

Observe that by a union bound, $\1_{A^*_{\ell,M_N}(x)} \le \sum_{k=\ell}^{M_N} e^{\varkappa \X_{e^{-k}}(x)} e^{-\varkappa(\gamma+\varkappa) k} $. 
Using the assumption \eqref{A2} with $\epsilon=0$ and $\lambda= \varkappa$, this shows that uniformly in $x\in\mathrm{supp}(f)$, 
\[
\E_N \bigg[ \1_{A^*_{\ell,M_N}(x)}  \frac{e^{\gamma \X(x)}}{\E_N e^{\gamma \X(x)}} \bigg]  \ll  \sum_{k=\ell}^{M_N} e^{ \varkappa(\varkappa+2\gamma)k/2 -\varkappa(\gamma+\varkappa) k}   \le  \frac{e^{- \varkappa^2 \ell/2}}{1-e^{- \varkappa^2/2}} . 
\]
Since $f\in C_c(\Omega)$, this completes the proof. 
\end{proof}

The harder ingredient of the proof of Theorem \ref{thm:chaos} is the following lemma, whose proof we postpone to Section \ref{sect:L2}. This result is also where the main differences between our approach and that of \cite{LOS18} lie, and where we need the full extent of Assumptions \ref{A:exp_moment}.

\begin{lemma} \label{prop:L2}
Let $\tau$ be as in \eqref{A5}. Suppose that $\sqrt{2\tau}< \varkappa< \min( \frac{d- \gamma^2/2}{\sqrt{2d}} ,\frac{\gamma}{2})$. Then, writing again $\eta_L=e^{-L}$ and $M_N=\log\lfloor \epsilon_N^{-1}\rfloor$, we have under Assumptions \ref{A:exp_moment}, for any $f\in C_c(\Omega)$
\begin{equation*}
\lim_{\ell\to+\infty}\limsup_{L\to+\infty}\limsup_{N\to+\infty}
\E_N \bigg[ \Big|  \int_\Omega f(x) \1_{A_{\ell,M_N}(x)} \frac{e^{\gamma \X(x)}}{\E_N e^{\gamma \X(x)}}dx - \int_\Omega f(x) \1_{A_{\ell,L}(x)} \frac{e^{\gamma \X_{\eta_L}(x)}}{\E_N e^{\gamma \X_{\eta_L}(x)}} \Big|^2\bigg]
=0 .
\end{equation*}
\end{lemma}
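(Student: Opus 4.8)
\textbf{Proof proposal for Lemma \ref{prop:L2}.}

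The plan is to expand the square, producing three terms: a ``diagonal'' term $\E_N[(\int f(x)\1_{A_{\ell,M_N}(x)}\frac{e^{\gamma\X(x)}}{\E_N e^{\gamma\X(x)}}dx)^2]$, a similar diagonal term with $\X$ replaced by $\X_{\eta_L}$ and $M_N$ by $L$, and a cross term $-2\E_N[\cdots]$. Each of these is a double integral over $\Omega^2$ whose integrand involves a ratio of exponential moments weighted by the indicators of the barrier events. First I would split each double integral into the near-diagonal region $\{|x-y|\le\eta_L\}$ and its complement. On the near-diagonal region, the indicator of $A_{\ell,L}(x)$ (or $A_{\ell,M_N}(x)$) forces $\X_{\eta_L}(x)$ to be not too large, so one uses \eqref{A41}, \eqref{A42}, and the first-moment-type control from \eqref{A5} to show this contribution is bounded by a constant times $\eta_L^{\text{something positive}}$ (here is where the smallness condition $\sqrt{2\tau}<\varkappa<\min(\frac{d-\gamma^2/2}{\sqrt{2d}},\frac\gamma2)$ is used — it guarantees the relevant exponent is positive, so this piece vanishes as $L\to\infty$, uniformly in $N$). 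This is essentially the ``barrier gives a small near-diagonal contribution'' mechanism and is where the extra Assumptions \eqref{A41}, \eqref{A42}, \eqref{A5} beyond those of \cite{LOS18} are genuinely needed.

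On the off-diagonal region $\{|x-y|\ge\eta_L\}$, the idea is to pass to the $N\to\infty$ limit using the Gaussian asymptotics \eqref{A31}, \eqref{A1} (with $n=2$, the two points being the mollification scales $\eta_L$ at $x$ and $y$, or scales $0$ in the $\X$-version) and the domination bounds \eqref{A32}, \eqref{A33}, so that dominated convergence applies and each of the three terms converges, as $N\to\infty$, to the corresponding Gaussian quantity — i.e. the analogous expression with $\P_N$ replaced by $\P$ and $\X$ by the log-correlated Gaussian field. At this point the three off-diagonal Gaussian terms assemble into $\E[|\int f(x)\1_{A_{\ell,\infty}(x)}(d\mu^\gamma - \mu^\gamma_{\eta_L,\ell})|^2]$-type expressions (restricted off-diagonal), which is exactly the $L^2$ Cauchy estimate underlying the classical construction of GMC in \cite{Berestycki15,LOS18}; letting $L\to\infty$ and then $\ell\to\infty$ and invoking that computation finishes the off-diagonal part. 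One must be slightly careful that the order of limits is $N\to\infty$, then $L\to\infty$, then $\ell\to\infty$, matching the statement, and that the near-diagonal bound was uniform in $N$ and decayed in $L$ alone.

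The main obstacle I expect is the near-diagonal estimate: one cannot use Gaussian convergence there (the scales degenerate as $|x-y|\to 0$ and the local structure of $\X$ under $\P_N$ is nontrivial), so one must run a purely moment-based argument. Concretely, after conditioning on $A_{\ell,L}(x)$ one writes $\1_{A_{\ell,L}(x)}\le e^{-\lambda\X_{e^{-k}}(x)}e^{\lambda(\gamma+\varkappa)k}$ for a suitably chosen $\lambda<0$ and $k\sim\log|x-y|^{-1}$, inserts this into the two-point exponential moment, applies \eqref{A41}/\eqref{A42} to bound the resulting three-point moment by its Gaussian value, and then integrates over $|x-y|\le\eta_L$ using \eqref{A5} and the logarithmic form \eqref{cov_estimate} of the Gaussian covariance. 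Balancing the exponents in $k$ — making sure the gain from the barrier beats the $|x-y|^{-\gamma^2}$-type blowup of the two-point function and the $\epsilon_N^{-(\gamma^2-d)_+-\tau}$ from \eqref{A5} — is the delicate bookkeeping step, and it is exactly this balance that dictates the admissible range of $\varkappa$. The rest of the argument is a faithful adaptation of \cite[proof of Lemma 2.8 and surrounding]{LOS18} once these uniform bounds are in place.
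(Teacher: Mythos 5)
Your high-level plan — expand the square, split each of the three resulting double integrals into a diagonal region and its complement, bound the diagonal by a Markov/barrier argument using \eqref{A41}, \eqref{A42}, \eqref{A5}, and pass the off-diagonal to its Gaussian limit via \eqref{A1} and \eqref{A31}–\eqref{A33} — is the same as the paper's, which organizes it into Lemmas \ref{lem:trunc1}, \ref{lem:trunc2}, and \ref{lem:cvg}. Two points, however, are either missing or slightly off.

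First, and most importantly, you do not address how to pass $N\to\infty$ in the term carrying the barrier $A_{\ell,M_N}$, where $M_N=\lfloor\log\epsilon_N^{-1}\rfloor$ grows with $N$. The Gaussian asymptotics \eqref{A1} give convergence of finite-dimensional Laplace transforms of $(\X_{e^{-k}}(x),\X_{e^{-k}}(y))_{k}$ for a fixed number of levels, so they cannot be applied directly to the event $A_{\ell,M_N}(x)\cap A_{\ell,M_N}(y)$ whose number of constraints grows with $N$. The paper inserts a dedicated step (Lemma \ref{lem:trunc2}) showing that replacing $M_N$ by a fixed $L$ in the barrier costs only $\mathcal{O}_\ell(\eta_L^{\varkappa^2/2})$, \emph{uniformly in $N$}; this is what makes the $N\to\infty$ passage legitimate, and it is a genuine extra argument, not just bookkeeping. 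Relatedly, when you say you would apply \eqref{A1} ``with $n=2$'' you underestimate what is needed: the paper's Lemma \ref{lem:cvg} uses \eqref{A1} with $n$ of order $2L$, one pair of components per barrier level, so that the whole vector $(\X_{e^{-k}}(x),\X_{e^{-k}}(y))_{k\le L}$ converges in law under the tilted measure.

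Second, you propose truncating the diagonal at the scale $\eta_L=e^{-L}$, whereas the paper truncates at $\delta=e^{-\ell}$ (depending only on the outer parameter $\ell$). Your choice is not fatal, but it forces you to carry a region $\{|x-y|\ge\eta_L\}$ that enlarges as $L\to\infty$ while simultaneously passing to the $L\to\infty$ limit inside it, which makes the monotone/dominated convergence argument that identifies the limit less clean; the paper's fixed-$\delta$ cut leaves the off-diagonal region unchanged during the $L\to\infty$ step and restricts the final $\ell\to\infty$ to a one-parameter limit of a monotone family. With the $M_N\to L$ replacement added and the truncation scale adjusted (or argued around), the rest of your sketch matches the paper's proof.
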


Let us now present the proof of Theorem \ref{thm:chaos}, assuming Lemma \ref{prop:L2}.

\begin{proof}[Proof of  Theorem~\ref{thm:chaos}]
Let us choose $\varkappa$ as in the statement of Lemma \ref{prop:L2}. By the triangle inequality, for any $\ell , L\in\N$ such that $\ell<L<M_N$, 
 \begin{align}
\notag \E_N &\bigg[ \Big|  \int_\Omega f(x)\frac{e^{\gamma \X(x)}}{\E_N e^{\gamma \X(x)}}dx   - \int_\Omega f(x) \frac{e^{\gamma \X_{\eta_L}(x)}}{\E_N e^{\gamma \X_{\eta_L}(x)}}  dx \Big|\bigg]  
\\
&  \label{E1}
\le \E_N \bigg[  \int_\Omega |f(x)|  \1_{A^*_{\ell,M_N}(x)}  \frac{e^{\gamma \X(x)}}{\E_N e^{\gamma \X(x)}}dx  + \int_\Omega |f(x)|   \1_{A^*_{\ell,L}(x)} \frac{e^{\gamma \X_{\eta_L}(x)}}{\E_N e^{\gamma \X_{\eta_L}(x)}} \bigg]\\
&\quad \label{E2}
+ \E_N \bigg[ \Big|  \int_\Omega f(x) \1_{A_{\ell,M_N}(x)} \frac{e^{\gamma \X(x)}}{\E_N e^{\gamma \X(x)}}dx - \int_\Omega f(x) \1_{A_{\ell,L}(x)} \frac{e^{\gamma \X_{\eta_L}(x)}}{\E_N e^{\gamma \X_{\eta_L}(x)}} dx \Big|\bigg] . 
\end{align}
By Lemma \ref{lem:barrier}, we see that as we let $N\to +\infty$, $L\to+\infty$ and then $\ell\to+\infty$, \eqref{E1} converges to 0. Similarly, by Lemma~\ref{prop:L2}, \eqref{E2} also converges to 0.  This shows that 
\begin{equation}  \label{E6}
\lim_{L\to \infty}\limsup_{N\to+\infty}
\E_N \bigg[ \Big|  \int_\Omega f(x) \frac{e^{\gamma \X(x)}}{\E_N e^{\gamma \X(x)}}dx - \int_\Omega f(x) \frac{e^{\gamma \X_{\eta_L}(x)}}{\E_N e^{\gamma \X_{\eta_L}(x)}} dx \Big|\bigg]
=0 . 
\end{equation}
Fix $\xi\in\R$ and let $\Phi(u) = e^{\i \xi u}$ for all $u\in\R$.  
Since $\Phi$ is $|\xi|$-Lipschitz continuous, by the triangle inequality for any (large) $L$,
 \begin{align}
&\left| \E_N\bigg[ \Phi\Big(  \int_\Omega f(x)  \frac{e^{\gamma \X(x)}}{\E_N e^{\gamma \X(x)}}dx\Big)\bigg] - \E \bigg[\Phi\Big(  \int_\Omega f(x)  d\mu^\gamma(x)\Big)  \bigg]\right|\notag \\
&\notag
\qquad \le  |\xi|\ \E_N \bigg[ \Big|  \int_\Omega f(x) \frac{e^{\gamma \X(x)}}{\E_N e^{\gamma \X(x)}}dx - \int_\Omega f(x) \frac{e^{\gamma \X_{\eta_L}(x)}}{\E_N e^{\gamma \X_{\eta_L}(x)}} dx \Big|\bigg]
\\
&\label{E4}
\qquad \quad + \left| \E_N\bigg[ \Phi\Big(  \int_\Omega f(x)  \frac{e^{\gamma \X_{\eta_L}(x)}}{\E_N e^{\gamma \X_{\eta_L}(x)}}\Big)\bigg] - {\E} \bigg[\Phi\Big(  \int_\Omega f(x) \frac{e^{\gamma \X_{\eta_L}(x)}}{\E e^{\gamma \X_{\eta_L}(x)}} dx\Big)  \bigg]\right| \\
& \qquad \quad \label{E5}
 + |\xi|\ \E\bigg[ \Big|  \int_\Omega f(x) \frac{e^{\gamma \X_{\eta_L}(x)}}{\E e^{\gamma \X_{\eta_L}(x)}}dx - \int_\Omega f(x)  d\mu^\gamma(x) \Big|\bigg] . 
 \end{align}
By Lemma~\ref{le:wcvg} (whose use is justified by \eqref{A1} with $\gamma=0$ and  $n=1$), \eqref{E4} converges to 0 as $N\to+\infty$ and, by the very definition of multiplicative chaos, as reviewed in Section \ref{sect:background}, \eqref{E5} converges to 0 as $L\to\infty$. Combining these facts with \eqref{E6}, we obtain
\[
\limsup_{N\to+\infty} \left| \E_N\bigg[ \Phi\Big(  \int_\Omega f(x)  \frac{e^{\gamma\X(x)}}{\E_N e^{\gamma \X(x)}}dx\Big)\bigg] - {\E} \bigg[\Phi\Big(  \int_\Omega f(x)  d\mu^\gamma(x)\Big)  \bigg]\right| =0 . 
\]
As the pointwise convergence of the characteristic function of a sequence of random variables implies convergence in law, this completes the proof.
\end{proof}
We now turn to the proof of Lemma \ref{prop:L2},

\subsection{Proof of Lemma \ref{prop:L2}}\label{sect:L2}
Before turning to the proof of Lemma \ref{prop:L2}, we find it convenient to introduce some further notation. More precisely, we introduce certain biased probability measures. Similar objects play an important role in the construction of multiplicative chaos measures in \cite{Berestycki15}.
	
\begin{definition}\label{def:bias}
	For any $x,y\in \Omega$, $N\in \N$ and $\epsilon,\epsilon'\geq 0$, let
	\begin{equation}\label{eq:Wdef}
	\mathscr{W}^{N}_{\epsilon, \epsilon'}(x,y): = \frac{\E_N\big[ e^{\gamma \X_\epsilon(x) +\gamma \X_{\epsilon'}(y) }\big]}{\E_N\big[ e^{\gamma \X_\epsilon(x)}\big]\E_N\big[ e^{\gamma\X_{\epsilon'}(y) }\big]} 
	\end{equation}
	and define a new probability measure  $\q_{N,\epsilon,\epsilon'}^{(x,y)}$ $($on the same sample space and $\sigma$-algebra as $\P_N)$ by
	\begin{equation}\label{eq:Qdef}
	\frac{d\q_{N,\epsilon,\epsilon'}^{(x,y)}}{d \P_N} :=  \frac{e^{\gamma \X_\epsilon(x)+ \gamma \X_{\epsilon'}(y)  }}{\E_N\big[e^{\gamma \X_\epsilon(x) +\gamma \X_{\epsilon'}(y) }\big]}.
	\end{equation}
	Moreover for any $x,y\in \Omega$ and $\epsilon,\epsilon'>0$ define also the probability measure 
	\begin{equation}\label{eq:QGdef}
	\frac{d\q_{\epsilon,\epsilon'}^{(x,y)}}{d \P} :=  \frac{e^{\gamma \X_\epsilon(x)+ \gamma \X_{\epsilon'}(y)  }}{\E\big[e^{\gamma \X_\epsilon(x) +\gamma \X_{\epsilon'}(y) }\big]}.
	\end{equation}
	By Girsanov's theorem, the law of $\X(\cdot)$ under $\q_{\epsilon,\epsilon'}^{(x,y)}$ is simply the law of $\X(\cdot)+\gamma\E \X(\cdot)\X_\epsilon(x)+\gamma\E \X(\cdot)\X_{\epsilon'}(y)$. Using this representation, we extend the definition of $\q_{\epsilon,\epsilon'}^{(x,y)}$ to $\epsilon=0$ or $\epsilon'=0$ $($or $\epsilon=\epsilon'=0)$.
\end{definition}

Note that in this notation, Assumptions \ref{A:exp_moment} can be formulated in a concise way. For example \eqref{A31} becomes a statement about $\mathscr W_{\epsilon,\epsilon'}^N(x,y)$ converging to $\E e^{\gamma^2\E \X_\epsilon(x)\X_{\epsilon'}(y)}$. Similarly e.g. \eqref{A1} can be written as 
\[
\q_{N,\epsilon,\epsilon'}^{(x,y)}\big[ e^{\sum_{k=1}^n  t_k \X_{\eta_k}(\mathrm{z}_k) } \big] =
{\q}_{\epsilon,\epsilon'}^{(x,y)}\big[ e^{\sum_{k=1}^n  t_k \X_{\eta_k}(\mathrm{z}_k) } \big] \big(1+ \o_{\rho,\epsilon,\epsilon'}(1) \big), 
\]	
where we have used the shorthand notation $\q(F)$ for the expectation of the random variable $F$ under the measure $\q$.

Other than this, we work with the notation and under the assumptions  of Section~\ref{sect:main}. To simplify notation, we will write $\eta=e^{-L}$ and $M=\lfloor \log\epsilon_N^{-1}\rfloor$ instead of $\eta_L$ and $M_N$. Also we will constantly assume Assumptions \ref{A:exp_moment} without further statements, and we will assume as in Lemma \ref{prop:L2} that 
\begin{equation} \label{kappaalpha}
\sqrt{2\tau}<\varkappa<\min\left(\frac{d-\frac{\gamma^2}{2}}{\sqrt{2d}},\frac{\gamma}{2}\right),
\end{equation} 
where $\tau$ is as in \eqref{A5}. Throughout this section, we fix $\ell\in\N$ and let $\delta = e^{-\ell}$ -- we will pass to the limit as $\ell\to+\infty$ at the very last step. 

For clarity reasons,  we split the proof of Lemma~\ref{prop:L2} into three lemmas: Lemmas \ref{lem:trunc1}, \ref{lem:trunc2}, and \ref{lem:cvg}, which will be proved in Section~\ref{sect:lem}. 
Compared with the proof of Theorem~\ref{thm:LOS}, that is \cite[Theorem 1.7]{LOS18}, 
we will use  Lemma~\ref{lem:trunc1} to remedy the issue that the asymptotics of $\E_N e^{\gamma \X(x)+\gamma\X(y)}$ for $|x-y|\leq \epsilon_N$ might not be Gaussian -- e.g. due to a complicated local structure in the model. In particular, the assumptions \eqref{A42} and \eqref{A5} will be crucial in the proof of Lemma~\ref{lem:trunc1}. 
Then, the proof of Lemma~\ref{lem:cvg} is essentially the same as that of \cite[Lemma 2.9]{LOS18} and relies only on the asymptotics \eqref{A1}.

We introduce now some notation that will allow expressing some of the relevant quantities in our proof in a simple way.  For any  $g\in L^\infty(\Omega\times \Omega)$,  $\epsilon, \epsilon' \ge 0$, $ L, L' \in\N$ with $L,L' > \ell$, we define
\[
\Upsilon^{\epsilon, \epsilon'}_{N,L,L'}(g) := \iint_{\Omega\times \Omega}  g(x,y)\ \q_{N,\epsilon,\epsilon'}^{(x,y)}\big[A_{\ell, L}(x) \cap A_{\ell, L'}(y) \big]\
\mathscr{W}^N_{\epsilon, \epsilon'} (x,y) dxdy . 
\]
We also write for $f\in C_c(\Omega)$, 
\[
F_\rho(x,y) = f(x)f(y) \1_{|x-y| \ge \rho}
\] 
for all $0\le \rho\le 1$. Using the definition of $\q_{N,\epsilon,\epsilon'}^{(x,y)}$ and $\mathscr W_{\epsilon,\epsilon'}^N(x,y)$, a simple calculation shows that we can write the object we are interested in the following way:
 \begin{align} \notag
\Delta^N_{\ell,L}  :&= 
\E_N \bigg[ \Big|  \int_\Omega f(x) \1_{A_{\ell,M}(x)} \frac{e^{\gamma \X(x)}}{\E_N e^{\gamma \X(x)}}dx - \int_\Omega f(x) \1_{A_{\ell,L}(x)}\frac{e^{\gamma \X_{\eta}(x)}}{\E_N e^{\gamma \X_{\eta}(x)}}dx \Big|^2\bigg] \\
&\label{Delta}
=  \Upsilon^{0,0}_{N,M,M}(F_0) -2 \Upsilon^{0, \eta}_{N,M,L}(F_0)+\Upsilon^{\eta, \eta}_{N,L,L}(F_0) . 
\end{align}
The first step is to show that we may replace $F_0$ by $F_{\delta}$  in formula \eqref{Delta} by paying only a small price. In fact, removing the diagonal from the integrals will allow using e.g. the precise asymptotics of \eqref{A1}.  The  statement about replacing $F_0$ by $F_{\delta}$ is the following.

\begin{lemma} \label{lem:trunc1}
If $(L' , \epsilon) = (L,\eta) \text{ or }(L',\epsilon)=(M,0)$, then for each $f\in C_c(\Omega)$, 
\begin{equation}  \label{trunc0}
\limsup_{N\to+\infty} \Upsilon^{\epsilon, \eta}_{N,L',L}(f(x)f(y)\1_{|x-y|\le \delta}) \ll \delta^{\frac{\varkappa^2}{2}} .
\end{equation}
Similarly, if $\tau$ is as in \eqref{A5}, then
\begin{equation}  \label{trunc1}
\limsup_{N\to+\infty} \Upsilon^{0, 0}_{N,M,M}(f(x)f(y)\1_{|x-y|\le \delta}) \ll \delta^{\frac{\varkappa^2}{2}-\tau} .
\end{equation}
where the implied constants in \eqref{trunc0} and \eqref{trunc1} depend only on the test function $f$. 
\end{lemma}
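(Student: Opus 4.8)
The plan is to estimate each truncated $\Upsilon$ by bounding the barrier probability $\q_{N,\epsilon,\epsilon'}^{(x,y)}[A_{\ell,L}(x)\cap A_{\ell,L'}(y)]$ from above by $1$ and then integrating $\mathscr{W}^N_{\epsilon,\epsilon'}(x,y)$ over the near-diagonal region $\{|x-y|\le\delta\}$, together with a dyadic decomposition of that region. First I would treat \eqref{trunc0}: since $\epsilon'=\eta=e^{-L}>0$ is a fixed positive scale, the relevant quantity $\mathscr{W}^N_{\epsilon,\eta}(x,y)$ is controlled by \eqref{A32} when $|x-y|$ is comparable to or smaller than $\eta$, and by \eqref{A33} (or rather its analogue with one mollified argument, which follows from the same circle of estimates together with \eqref{A1}) when $\eta\le|x-y|\le\delta$, giving roughly $\mathscr{W}^N_{\epsilon,\eta}(x,y)\ll \max(|x-y|,\eta)^{-\gamma^2}$. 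Crucially, however, bounding the barrier probability by $1$ is too lossy on the diagonal; instead I would use the barrier event at scale $k$ with $e^{-k}\approx|x-y|$: on $A_{\ell,L}(x)$ one has $\X_{e^{-k}}(x)<(\gamma+\varkappa)k$, and applying \eqref{A41} with $\lambda$ chosen to optimize (a Markov/Chebyshev step against the biased measure $\q$, exactly as in \cite[Lemma 2.7, Lemma 2.8]{LOS18}) converts this into a gain of order $|x-y|^{\gamma^2/2+\varkappa^2/2}$ or so, which beats the $|x-y|^{-\gamma^2}$ blow-up from $\mathscr{W}$ and leaves an integrable power; summing the dyadic pieces over $e^{-L}\le|x-y|\le\delta$ produces the bound $\delta^{\varkappa^2/2}$ after taking $N\to\infty$ (so that $\mathscr W^N\to \mathscr W$ via \eqref{A31}/\eqref{A41}) and using that $f$ is bounded with compact support.

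For \eqref{trunc1} the extra difficulty is that now $\epsilon=\epsilon'=0$, so on the part of the diagonal where $|x-y|\le\epsilon_N$ the model's local (microscopic) structure can genuinely make $\mathscr{W}^N_{0,0}(x,y)$ non-Gaussian, and we have no pointwise control there — only the integrated bound \eqref{A5}, namely $\iint_{|x-y|\le\epsilon_N}\mathscr{W}^N_{0,0}(x,y)\,dxdy\ll\epsilon_N^{-(\gamma^2-d)_+-\tau}$. So I would split $\{|x-y|\le\delta\}$ into $\{|x-y|\le\epsilon_N\}$ and $\{\epsilon_N\le|x-y|\le\delta\}$. On the latter annulus, \eqref{A33} gives $\mathscr{W}^N_{0,0}(x,y)\ll|x-y|^{-\gamma^2}$ and the same dyadic barrier argument as above — using \eqref{A42} in place of \eqref{A41} to handle the unmollified fields, and the barrier event at the scale matched to $|x-y|$ — yields a contribution $\ll\delta^{\varkappa^2/2}$. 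On the region $\{|x-y|\le\epsilon_N\}$, I cannot localize, so I bound the barrier probability by $1$ and use \eqref{A5} directly; since the barrier event $A_{\ell,M}$ still constrains the field at scale $\epsilon_N\approx e^{-M}$, one should actually keep the $k=M$ term of the barrier to extract a factor $\epsilon_N^{\varkappa^2/2}$ (again via Markov against $\q$ and \eqref{A42}), which combined with \eqref{A5} gives $\ll\epsilon_N^{\varkappa^2/2-(\gamma^2-d)_+-\tau}$ — and here the hypothesis $\varkappa^2/2>\tau$ from \eqref{kappaalpha}, together with $\epsilon_N\le\delta$ and the constraint $0<\tau<(d-\gamma^2/2)^2/(4d)$ which guarantees the relevant exponent is nonnegative so that $\epsilon_N^{\varkappa^2/2-\cdots}\le\delta^{\varkappa^2/2-\tau}$ in the regime where it matters, closes the estimate with the claimed loss $\delta^{\varkappa^2/2-\tau}$.

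I would organize the write-up by first recording the elementary dyadic/barrier lemma (a one-line Markov inequality under $\q_{N,\epsilon,\epsilon'}^{(x,y)}$ combined with \eqref{A41} or \eqref{A42}) that says: for $e^{-L}\le r\le 1$ and $|x-y|\asymp r$, $\q_{N,\epsilon,\epsilon'}^{(x,y)}[A_{\ell,L}(x)]\ll r^{\varkappa^2/2+\text{(something)}}$ uniformly in $N$ large, then summing geometric series. The main obstacle, I expect, is the region $\{|x-y|\le\epsilon_N\}$ in \eqref{trunc1}: there the only available input is the crude integrated bound \eqref{A5}, and one has to be careful that retaining just the top barrier scale $k=M_N$ is both legitimate (the event $A_{\ell,M_N}(x)$ really does contain the constraint at scale $e^{-M_N}\approx\epsilon_N$) and sufficient (the resulting exponent $\varkappa^2/2-(\gamma^2-d)_+-\tau$ has the right sign, which is exactly what the arithmetic constraints on $\tau$ and $\varkappa$ in \eqref{kappaalpha} are designed to ensure). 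Everything else is a routine adaptation of \cite[Section 2]{LOS18}, the only genuinely new point being the use of \eqref{A5} and \eqref{A42}, which are precisely the assumptions flagged in the text as the ones needed here.
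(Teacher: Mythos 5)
Your proposal follows essentially the same route as the paper: bound the barrier probability via a Markov/Chebyshev step against the tilted measure $\q_{N,\epsilon,\epsilon'}^{(x,y)}$ at the scale $e^{-k}\asymp|x-y|$ (using \eqref{A41} resp.\ \eqref{A42}), multiply by the singular bound on $\mathscr W^N$, and integrate — with the near-diagonal piece $\{|x-y|\le\epsilon_N\}$ in \eqref{trunc1} handled separately by retaining only the top-scale barrier term $\epsilon_N^{(\gamma-\varkappa)^2/2}$ together with \eqref{A5}. The only cosmetic differences are that the paper computes one integral of $(\max(|x-y|,\eta))^{-\gamma^2+(\gamma-\varkappa)^2/2}$ rather than summing dyadic annuli, and for \eqref{trunc0} it passes to the Gaussian limit via \eqref{A31}/\eqref{A32} (reverse Fatou) before bounding $\mathscr W$, rather than bounding $\mathscr W^N$ directly; for the near-diagonal term in \eqref{trunc1} the paper simply notes that $\epsilon_N^{\varkappa^2/2-\tau}\to 0$ so its limsup vanishes, which is slightly cleaner than comparing $\epsilon_N^{\cdots}$ to $\delta^{\cdots}$ as you suggest, but either closes the argument.
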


The second step  is to replace $M$  by $L$ which is independent of the parameter $N$ in formula \eqref{Delta} (with $F_0$ replaced by $F_{\delta}$). This is justified by the following result.

\begin{lemma}\label{lem:trunc2}
For any $L\in\N$ such that $\ell <L<M$, we have
\begin{equation}  \label{trunc2}
\limsup_{N\to+\infty} \big| \Upsilon_{N,L,L}^{0,\eta}(F_{\delta})  - \Upsilon^{0, \eta}_{N,M,L}(F_{\delta}) \big|
\ll_{\ell} \eta^{\varkappa^2/2} ,
\end{equation}
and
\begin{equation}  \label{trunc3}
\limsup_{N\to+\infty}  \big| \Upsilon^{0,0}_{N,L,L}(F_{\delta})  - \Upsilon^{0, 0}_{N,M,M}(F_{\delta}) \big|
\ll_{\ell} \eta^{\varkappa^2/2}, 
\end{equation}
where the implied constants are independent of $L$.
\end{lemma}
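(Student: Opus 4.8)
I would treat \eqref{trunc2} first and obtain \eqref{trunc3} from it by telescoping. Since $A_{\ell,M}(x)\subseteq A_{\ell,L}(x)$ for $M>L$ and $\mathscr{W}^N_{0,\eta}\ge 0$, dropping the barrier event for $y$ and passing to $|f|$ gives
\[
\big|\Upsilon_{N,L,L}^{0,\eta}(F_\delta) - \Upsilon^{0, \eta}_{N,M,L}(F_\delta)\big|
\le \iint_{|x-y|\ge\delta} |f(x)||f(y)|\ \q_{N,0,\eta}^{(x,y)}\big[A_{\ell,L}(x)\setminus A_{\ell,M}(x)\big]\ \mathscr{W}^N_{0,\eta}(x,y)\,dxdy .
\]
By the definition \eqref{eventA}, a point of $A_{\ell,L}(x)\setminus A_{\ell,M}(x)$ must exceed the barrier at some scale $e^{-j}$ with $L<j\le M$, so a union bound and the Markov inequality with tilt $\lambda>0$ yield
\[
\q_{N,0,\eta}^{(x,y)}\big[A_{\ell,L}(x)\setminus A_{\ell,M}(x)\big]
\le \sum_{j=L+1}^{M} e^{-\lambda(\gamma+\varkappa)j}\ \q_{N,0,\eta}^{(x,y)}\big[e^{\lambda \X_{e^{-j}}(x)}\big].
\]

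The core of the argument is the evaluation of the tilted moment $\q_{N,0,\eta}^{(x,y)}[e^{\lambda \X_{e^{-j}}(x)}]$. Because $|x-y|\ge\delta$ is a fixed macroscopic distance and $e^{-j}\ge e^{-M}\ge\epsilon_N$, assumption \eqref{A1} (with $n=1$, $z_1=x$, $\eta_1=e^{-j}$, $t_1=\lambda$, and $\rho=\delta$) applies and replaces this by $(1+\o(1))\,\q_{0,\eta}^{(x,y)}[e^{\lambda \X_{e^{-j}}(x)}]$, with the $\o(1)$ uniform in $j$. Computing the latter by Girsanov and the covariance estimate \eqref{cov_estimate} — using $\E\X_{e^{-j}}(x)^2=j+\bigO(1)$, $\E\X_{e^{-j}}(x)\X(x)=j+\bigO(1)$, and $\E\X_{e^{-j}}(x)\X_\eta(y)=\log\tfrac{1}{|x-y|}+\bigO(1)=\bigO_\ell(1)$ for $|x-y|\ge\delta$ — gives $e^{\lambda\gamma j+\frac{\lambda^2}{2}j+\bigO_\ell(1)}$. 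The resulting exponent is $j\big(\tfrac{\lambda^2}{2}-\lambda\varkappa\big)+\bigO_\ell(1)$, which is minimized at $\lambda=\varkappa$, so $\q_{N,0,\eta}^{(x,y)}[\X_{e^{-j}}(x)\ge(\gamma+\varkappa)j]\ll_\ell e^{-\varkappa^2 j/2}$ uniformly in $(x,y)$ with $|x-y|\ge\delta$. Summing the geometric series over $j>L$ gives a bound $\ll_\ell e^{-\varkappa^2 L/2}=\eta^{\varkappa^2/2}$ with a constant independent of $L$; together with $\mathscr{W}^N_{0,\eta}(x,y)\ll|x-y|^{-\gamma^2}\ll_\ell 1$ on $|x-y|\ge\delta$ (from \eqref{A32}/\eqref{A33}) and $f\in C_c(\Omega)$, this proves \eqref{trunc2} after taking $\limsup_{N\to\infty}$.

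For \eqref{trunc3} I would telescope, writing $\Upsilon^{0,0}_{N,L,L}-\Upsilon^{0,0}_{N,M,M}=(\Upsilon^{0,0}_{N,L,L}-\Upsilon^{0,0}_{N,M,L})+(\Upsilon^{0,0}_{N,M,L}-\Upsilon^{0,0}_{N,M,M})$, so that in each difference only one of the two barrier events changes; each difference is then estimated exactly as above, now invoking \eqref{A1} with $(\epsilon,\epsilon')=(0,0)$ and bounding $\mathscr{W}^N_{0,0}(x,y)\ll|x-y|^{-\gamma^2}$ via \eqref{A33}. The Gaussian computation is unchanged except that $\E\X_{e^{-j}}(x)\X(y)=\log\tfrac1{|x-y|}+\bigO(1)$ is again $\bigO_\ell(1)$ on $|x-y|\ge\delta$, so the same Chernoff optimization produces the rate $e^{-\varkappa^2 j/2}$ and hence the bound $\ll_\ell\eta^{\varkappa^2/2}$. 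I expect the main obstacle to be bookkeeping rather than conceptual: one must check that \eqref{A1} controls the tilted moments under the biased measures $\q_{N,\epsilon,\epsilon'}^{(x,y)}$ \emph{uniformly} over all scales $e^{-j}\in[\epsilon_N,1]$ — this is what makes the diverging number $M-L$ of summands harmless, since the sum is dominated by a convergent geometric series — and one must carefully separate constants that may depend on $\ell=-\log\delta$ (entering only through $\log\tfrac{1}{|x-y|}\le\ell$ and the factor $|x-y|^{-\gamma^2}$, integrable once $|x-y|\ge\delta$) from those that must be uniform in $L$ and $N$.
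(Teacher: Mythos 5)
Your proposal is correct and takes essentially the same route as the paper: bound the difference by a single-variable barrier escape $\q[A^*_{L+1,M}(\cdot)]$, control that by a union bound and an exponential tilt at scale $e^{-j}$, pass to the Gaussian measure via \eqref{A1} (uniformly in $j$ since $e^{-j}\ge\epsilon_N$), and sum a geometric series to get $\eta^{\varkappa^2/2}$. The only cosmetic difference is that for \eqref{trunc3} you telescope through $\Upsilon^{0,0}_{N,M,L}$ whereas the paper uses the symmetry of the integrand in $(x,y)$ to absorb both terms into a factor of $2$; these are the same decomposition, and your free-parameter Chernoff optimization simply recovers the choice $\lambda=\varkappa$ used directly in the paper.
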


The final step will be to compute the limit of $\Upsilon^{\eta, \eta}_{N,L,L}(F_{\delta})$ and similar  quantities allowing us to control the asymptotics of \eqref{Delta} as $N\to+\infty$ and then $L\to+\infty$. 

\begin{lemma} \label{lem:cvg}
For any $\ell\in\N$ and $x,y\in \Omega$, the following limit exists
\[
q_\ell(x,y) =  \lim_{L\to+\infty}  {\q}_{0,0}^{(x,y)}\big[A_{\ell, L}(x) \cap A_{\ell, L}(y) \big] . 
\]
Moreover, if $\epsilon= \eta\text{ or } \epsilon=0$, we have
\begin{equation} \label{E7}
\lim_{L\to+\infty} \lim _{N\to+\infty}
\Upsilon^{\epsilon, \eta}_{N,L,L}(F_{\delta}) 
=
\iint F_{\delta}(x,y) q_\ell(x,y) e^{\gamma^2 {\E}[\X(x) \X(y)]} dxdy .  
\end{equation}
and
\begin{equation} \label{E8}
\lim_{L\to+\infty}\lim _{N\to+\infty}\Upsilon^{0,0}_{N,L,L}(F_{\delta})
= 
\iint F_{\delta}(x,y) q_\ell(x,y) e^{\gamma^2 {\E}[\X(x) \X(y)]} dxdy . 
\end{equation}
\end{lemma}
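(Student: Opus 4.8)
Looking at Lemma \ref{lem:cvg}, I need to establish existence of the limit $q_\ell(x,y)$ and compute the limits of $\Upsilon^{\epsilon,\eta}_{N,L,L}(F_\delta)$ as $N\to\infty$ then $L\to\infty$.

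\textbf{Plan of proof.} The existence of $q_\ell(x,y)$ is a purely Gaussian statement about the measure $\q_{0,0}^{(x,y)}$. Under $\q_{0,0}^{(x,y)}$, by Girsanov's theorem (as recorded in Definition \ref{def:bias}), the field $\X$ has the law of $\X(\cdot)+\gamma\E\X(\cdot)\X(x)+\gamma\E\X(\cdot)\X(y)$, so the barrier event $A_{\ell,L}(x)\cap A_{\ell,L}(y)$ becomes an event about a shifted Gaussian field. First I would observe that, for fixed $x\neq y$, the events $A_{\ell,L}(x)\cap A_{\ell,L}(y)$ are \emph{decreasing} in $L$: as $L$ increases, we intersect with more constraints. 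Hence $\q_{0,0}^{(x,y)}[A_{\ell,L}(x)\cap A_{\ell,L}(y)]$ is a non-increasing sequence in $L$, bounded below by $0$, so it converges; this defines $q_\ell(x,y)$. (On the diagonal $x=y$ the same monotonicity argument applies, or one can simply note $F_\delta$ vanishes there so the value is irrelevant.)

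\textbf{Computing the limit of $\Upsilon$.} The strategy is to pass to the limit inside the double integral, first in $N$ and then in $L$, using dominated convergence each time. Fix $L$ and send $N\to\infty$. For $(x,y)\in\operatorname{supp}F_\delta$ we have $|x-y|\geq\delta$, so assumption \eqref{A1} (equivalently the concise form $\q_{N,\epsilon,\epsilon'}^{(x,y)}[\,\cdot\,]=\q_{\epsilon,\epsilon'}^{(x,y)}[\,\cdot\,](1+o(1))$, valid uniformly on $\{|x-y|\geq\delta\}$ since $L$ is fixed and $\eta_k=e^{-k}\in(\epsilon_N,1]$ eventually) shows that $\q_{N,\epsilon,\eta}^{(x,y)}[A_{\ell,L}(x)\cap A_{\ell,L}(y)]\to\q_{\epsilon,\eta}^{(x,y)}[A_{\ell,L}(x)\cap A_{\ell,L}(y)]$ — the barrier event is a finite intersection of events of the form $\{\X_{e^{-k}}(z)<(\gamma+\varkappa)k\}$, each a linear combination of the $\X_{\eta_k}(z_k)$ appearing in \eqref{A1}, so approximating indicators by finite-dimensional continuous bounded functions and using \eqref{A1} gives the convergence. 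Simultaneously $\mathscr W^N_{\epsilon,\eta}(x,y)\to\E e^{\gamma^2\E\X_\epsilon(x)\X_\eta(y)}$ by \eqref{A31}. For $\epsilon=\epsilon'=0$ (the case \eqref{E8}) one uses the analogous convergence, with \eqref{A31} at $\epsilon=\epsilon'=0$ giving $\mathscr W^N_{0,0}(x,y)\to\E e^{\gamma^2\Sigma(x,y)}=e^{\gamma^2\E[\X(x)\X(y)]}$. To exchange limit and integral I would dominate the integrand: the barrier probability is $\leq 1$ and $\mathscr W^N$ is bounded on the relevant compact set by \eqref{A32}, with $|F_\delta|$ compactly supported and bounded; dominated convergence applies on the compact region $\operatorname{supp}f\times\operatorname{supp}f\cap\{|x-y|\geq\delta\}$. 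This yields
\[
\lim_{N\to\infty}\Upsilon^{\epsilon,\eta}_{N,L,L}(F_\delta)=\iint F_\delta(x,y)\,\q^{(x,y)}_{\epsilon,\eta}[A_{\ell,L}(x)\cap A_{\ell,L}(y)]\,e^{\gamma^2\E\X_\epsilon(x)\X_\eta(y)}\,dxdy.
\]

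\textbf{Sending $L\to\infty$.} Now I let $L\to\infty$ in the expression above. Here I replace $\q^{(x,y)}_{\epsilon,\eta}$ by $\q^{(x,y)}_{0,0}$ at the cost of a vanishing error: since for $|x-y|\geq\delta$ the mollified fields $\X_\eta$, $\X_{\eta'}$ converge to $\X$ in the appropriate Gaussian sense as $\eta=e^{-L}\to 0$, and the Girsanov shifts $\gamma\E\X(\cdot)\X_\eta(z)\to\gamma\Sigma(\cdot,z)$ converge uniformly on compacts, one gets $\q^{(x,y)}_{\epsilon,\eta}[A_{\ell,L}(x)\cap A_{\ell,L}(y)]-\q^{(x,y)}_{0,0}[A_{\ell,L}(x)\cap A_{\ell,L}(y)]\to 0$; combined with monotone convergence $\q^{(x,y)}_{0,0}[A_{\ell,L}(x)\cap A_{\ell,L}(y)]\downarrow q_\ell(x,y)$ this gives the pointwise limit $q_\ell(x,y)$. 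Likewise $e^{\gamma^2\E\X_\epsilon(x)\X_\eta(y)}\to e^{\gamma^2\Sigma(x,y)}=e^{\gamma^2\E[\X(x)\X(y)]}$ for $x\neq y$. Bounded convergence on the compact set $\operatorname{supp}F_\delta$ (integrand bounded by $\|f\|_\infty^2\sup_{|x-y|\geq\delta}e^{\gamma^2\Sigma(x,y)}<\infty$) then produces \eqref{E7}; the same argument with $\epsilon=\epsilon'=0$ gives \eqref{E8}. The main obstacle, and the place requiring the most care, is justifying that the finitely many barrier constraints are ``continuity points'' so that convergence in law of the underlying Gaussian vectors (from \eqref{A1}) transfers to convergence of the probabilities of $A_{\ell,L}(x)\cap A_{\ell,L}(y)$ — this needs that the shifted Gaussian vector $(\X_{e^{-k}}(x),\X_{e^{-k}}(y))_{k=\ell}^L$ under $\q$ has a non-degenerate law so that the boundary of the box $\{u<(\gamma+\varkappa)k\}$ is a null set, together with an $\varepsilon$-approximation of $\1_{A_{\ell,L}}$ from above and below by continuous functions; this is routine once the non-degeneracy (from the logarithmic covariance structure \eqref{eq:logcorC}) is noted, but it is where the argument is least mechanical.
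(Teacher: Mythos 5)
Your proof of the existence of $q_\ell$ and the $N\to\infty$ step (for fixed $L$, $\eta=e^{-L}$) track the paper closely: monotone convergence for $q_\ell$, and convergence in law of the Gaussian vector $(\X_{e^{-k}}(x),\X_{e^{-k}}(y))_{k=\ell}^L$ from \eqref{A1} together with absolute continuity of the limiting Gaussian (portmanteau) to pass to the probabilities, then dominated convergence using \eqref{A31}--\eqref{A32}. All of that is sound.

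However, the $L\to\infty$ step in \eqref{E7} is where the real difficulty lies, and this is exactly where your argument has a gap. You assert that
\[
\q^{(x,y)}_{\epsilon,\eta}\bigl[A_{\ell,L}(x)\cap A_{\ell,L}(y)\bigr]-\q^{(x,y)}_{0,0}\bigl[A_{\ell,L}(x)\cap A_{\ell,L}(y)\bigr]\to 0
\qquad (L\to\infty,\ \eta=e^{-L}),
\]
justifying this by uniform convergence of the Girsanov shifts $\gamma\E\X(\cdot)\X_\eta(z)\to\gamma\Sigma(\cdot,z)$. But the event $A_{\ell,L}(x)\cap A_{\ell,L}(y)$ depends on $L$ as well: as $L\to\infty$ the number of linear constraints grows without bound, so pointwise (or even uniform-in-$k$) convergence of the shifts does not by itself give convergence of the probabilities of these growing intersections. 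Convergence of finitely many marginals plus convergence of shifts controls any \emph{fixed} finite collection of constraints, not an increasing one. You would need an explicit quantitative bound (e.g., a ``thin shell'' estimate on the difference of the two probabilities, summed over $k=\ell,\dots,L$) and verify that the accumulated error vanishes as $L\to\infty$; you have not done this.

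The paper resolves the tension with a two-scale argument: fix an auxiliary $L'<L$ and split. First it proves (via Girsanov, Markov's inequality, and the Gaussian variance estimate $\E\X_{e^{-k}}(x)^2=k+\mathcal O(1)$) that
\[
\Bigl|\q^{(x,y)}_{0,\eta}\bigl[A_{\ell,L'}(x)\cap A_{\ell,L'}(y)\bigr]-\q^{(x,y)}_{0,\eta}\bigl[A_{\ell,L}(x)\cap A_{\ell,L}(y)\bigr]\Bigr|\ll_\ell e^{-L'\varkappa^2/2},
\]
\emph{uniformly} in $L\geq L'$ and hence in $\eta=e^{-L}$ — this controls the tail. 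Then, for the remaining \emph{fixed} finite collection of $L'-\ell+1$ constraints, it shows $\q_{0,\eta}^{(x,y)}[A_{\ell,L'}(x)\cap A_{\ell,L'}(y)]\to\q_{0,0}^{(x,y)}[A_{\ell,L'}(x)\cap A_{\ell,L'}(y)]$ as $\eta\to 0$, using absolute continuity of the limiting Gaussian vector with respect to Lebesgue measure on $\R^{2L'}$ together with continuity of the shift vectors $\mathbf m_\eta,\mathbf m'_\eta$ in $\eta$. Finally one sends $L'\to\infty$ by monotone convergence to hit $q_\ell(x,y)$. Without this $L'$ decoupling your limit interchange is not justified. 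Note also that you identify the $N\to\infty$ portmanteau step as ``the main obstacle''; it is in fact routine — the genuinely delicate step is the one discussed above, where both the measure and the event depend on the parameter $L$ being sent to infinity.
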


Observe that since $\q_{0,0}^{(x,y)}$ is a family of Gaussian measures, we have $q_\ell\in L^\infty(\Omega\times\Omega)$  and  the integrals on the RHS of \eqref{E7}--\eqref{E8} are finite since $f\in C_c(\Omega)$ and 
\[
F_{\delta}(x,y) \le \|f\|_{L^\infty}^2 \1_{|x-y| \ge \delta} \1_{\mathrm{supp}(f)\times \mathrm{supp}(f)}(x,y).
\] 
We are now ready to give the proof of Lemma~\ref{prop:L2} assuming Lemmas \ref{lem:trunc1}, \ref{lem:trunc2}, and \ref{lem:cvg}.

\begin{proof}[Proof of  Lemma~\ref{prop:L2}]
By formula \eqref{Delta} and Lemma~\ref{lem:trunc1},  we have 
\begin{equation*}
\Delta^N_{\ell,L} 
=  \Upsilon^{0,0}_{N,M,M}(F_{\delta}) -2 \Upsilon^{0, \eta}_{N,M,L}(F_{\delta})+\Upsilon^{\eta, \eta}_{N,L,L}(F_{\delta}) + \O(\delta^{\frac{\varkappa^2}{2}-\tau }).
\end{equation*}
Then, by Lemma~\ref{lem:trunc2},  this implies that as $N\to+\infty$, 
\begin{equation}\label{Delta2}
\Delta^N_{\ell,L} 
 =  \Upsilon^{0,0}_{N,L,L}(F_{\delta}) -2 \Upsilon^{0, \eta}_{N,L,L}(F_{\delta})+\Upsilon^{\eta, \eta}_{N,L,L}(F_{\delta}) + \O_\ell(\eta^{\varkappa^2/2}) + \O(\delta^{\frac{\varkappa^2}{2}-\tau}) . 
\end{equation}
By Lemma \ref{lem:cvg}, all of the $\Upsilon$-terms on the RHS of \eqref{Delta2} converge to the same finite limit if we let first $N\to+\infty$ and then $L\to+\infty$. 
Hence, since $\eta\to0$ as $L\to+\infty$ and $\delta\to0$ as $\ell\to+\infty$, we conclude that
\[
\lim_{\ell\to+\infty}\limsup_{L\to+\infty}\limsup_{N\to+\infty} \Delta^N_{\ell,L}  =0,
\]
which completes the proof.
\end{proof}
To finish our proof of Theorem \ref{thm:chaos}, we turn to the proof of our key lemmas in the next section.

\subsection{Proof of Lemmas~\ref{lem:trunc1}, \ref{lem:trunc2} and~\ref{lem:cvg}} \label{sect:lem}
Recall that we are assuming that $0<\gamma <\sqrt{2d}$, 
$\varkappa$ satisfies the condition \eqref{kappaalpha} where $\tau>0$ is as in \eqref{A5} and we write $\eta=\eta_L=e^{-L}$, $\delta=e^{-\ell}$ with $\ell\leq L$, and $M=\lfloor \log \epsilon_N^{-1}\rfloor$. We begin with the proof of Lemma \ref{lem:trunc1}. 

\begin{proof}[Proof of Lemma~\ref{lem:trunc1}]
We give the proof of the estimate \eqref{trunc0} in the case where $(L', \epsilon)= (M,0)$ -- the other case is analogous. 
Recall that by definition,
\[
\Upsilon^{0, \eta}_{N,M,L}(f(x)f(y)\1_{|x-y|\le \delta}) = \iint_{\Omega^2} f(x)f(y)\1_{|x-y|\le \delta}\ \q_{N,0,\eta}^{(x,y)}\big[A_{\ell, M}(x) \cap A_{\ell, L}(y) \big]\
\mathscr{W}^N_{0, \eta} (x,y) dxdy . 
\]
If $k =\min( \lfloor \log |x-y|^{-1} \rfloor ,L )$ and  $|x-y|\le \delta$, we have by Markov's inequality:
\[\begin{aligned}
 \q_{N,0,\eta}^{(x,y)}\big[A_{\ell, L}(y) \big]
 & \le  \q_{N,0,\eta}^{(x,y)}\big[ \X_{e^{-k}}(y) < (\gamma+\varkappa) k\big]  \\
 &\le e^{(\gamma^2-\varkappa^2) k} {\q}_{N,0,\eta}^{(x,y)}\big[ e^{(\varkappa-\gamma)\X_{e^{-k}}(y)} \big] , 
\end{aligned}\]
and using \eqref{A41} (which is readily translated into a statement about $\q_{N,0,\eta}^{(x,y)}$),  we obtain for some constant $C$ independent of $\ell,L,x,$ and $y$,
\[
\limsup_{N\to+\infty}  \q_{N,0,\eta}^{(x,y)}\big[A_{\ell, L}(y) \big] \le C e^{(\gamma^2-\varkappa^2) k} {\q}_{0,\eta}^{(x,y)}\big[ e^{(\varkappa-\gamma)\X_{e^{-k}}(y)} \big] , 
\]
for all  $x,y\in\mathrm{supp}(f)$ such that $|x-y|\le \delta$.
Recalling that under $\P$, the covariance of the Gaussian process $\X$ satisfies \eqref{cov_estimate}, a simple Gaussian calculation shows that by our choice of $k$, 
\[
 {\q}_{0,\eta}^{(x,y)}\big[ e^{(\varkappa-\gamma)\X_{e^{-k}}(y)} \big]  
 \ll e^{(\gamma -\varkappa)^2k/2 -2\gamma(\gamma-\varkappa)k}
 = e^{-(\frac{3}{2}\gamma^2 - \gamma \varkappa - \frac{\varkappa^2}{2})k} ,
\]
which implies that for all $(x,y)\in\{(u,v)\in \mathrm{supp}(f)^2:|u-v|\leq \delta\}$, 
\[
\limsup_{N\to+\infty} \q_{N,0,\eta}^{(x,y)}\big[A_{\ell, L}(y) \big] \ll e^{-(\gamma-\varkappa)^2k/2}
 = \big(\max(|x-y|,\eta)\big)^{(\gamma-\varkappa)^2/2},
\]
the implied constant being independent of the parameters $\ell, L$. Then \eqref{A32} allows us to use the reverse Fatou lemma to take the limit under the integral, and combining \eqref{A31} with the above estimates, we see that with the required uniformity:
\[\begin{aligned}
\limsup_{N\to+\infty}  \Upsilon^{0, \eta}_{N,M,L}(f(x)f(y)\1_{ |x-y|\le \delta}) 
&\ll \iint_{\substack{\Omega^2 \\ |x-y|\le \delta}}  f(x)f(y)\big( \max(|x-y|,\eta)\big)^{-\gamma^2 +(\gamma-\varkappa)^2/2 } dxdy \\
&\ll \delta^{\frac{\varkappa^2}{2}} .
\end{aligned}\]
Note that we used here our assumption that $d > \frac{\gamma^2}{2}+\sqrt{2d}\varkappa$.

\medskip

We now turn to the proof of the estimate \eqref{trunc1} which is quite similar. On the one hand, by using the asymptotics \eqref{A42} (again, this is readily seen to be equivalent to a statement about $\q_{N,0,0}^{(x,y)}[e^{\lambda \X_\eta(y)}]$) like we used \eqref{A41} in the first part of the proof, since $M= \lfloor \log \epsilon_N^{-1}\rfloor$,  we obtain that  uniformly for all $(x,y)\in\{(u,v)\in \mathrm{supp}(f)^2:|u-v|\leq \delta\}$,
\begin{equation}\label{bd3}
 \q_{N,0,0}^{(x,y)}\big[A_{\ell, M}(x) \big] 
 \ll \big(\max(\epsilon_N ,|x-y|) \big)^{(\gamma-\varkappa)^2/2} . 
\end{equation}
Then, by \eqref{A33} (translated into a statement about $\mathscr W_{0,0}^N(x,y)$), this implies that
\begin{equation} \label{bd1}
\Upsilon^{0, 0}_{N,M,M}(f(x)f(y)\1_{\epsilon_N\le |x-y|\le \delta}) \ll \delta^{\frac{\varkappa^2}{2}} ,
\end{equation} 
with the implied constants in \eqref{bd3} and \eqref{bd1} being independent of the parameters $N$ and $\ell$. On the other hand, by using the asymptotics \eqref{bd3} and \eqref{A5} (again, this can be readily written in terms of $\mathscr W_{0,0}^N(x,y)$), we obtain
\[
\Upsilon^{0, 0}_{N,M,M}(f(x)f(y)\1_{|x-y|\le \epsilon_N}) 
 \ll \epsilon_N^{(\gamma-\varkappa)^2/2} \epsilon_N^{-(\gamma^2-d)_+ -\tau}   
\]
where the implied constant depends only on the test function $f$. 
Using the condition \eqref{kappaalpha}, we obtain 
\begin{equation} \label{bd2}
\Upsilon^{0, 0}_{N,M,M}(f(x)f(y)\1_{|x-y|\le \epsilon_N}) 
\ll \epsilon_N^{\frac{\varkappa^2}{2}-\tau} .
\end{equation}
Combining \eqref{bd1} and \eqref{bd2} completes the proof. 
\end{proof}

The next step is the proof of Lemma \ref{lem:trunc2}.

\begin{proof}[Proof of Lemma~\ref{lem:trunc2}]
We begin by proving the estimate \eqref{trunc2}. By definition, we have
\[ \begin{aligned}
 \left| \Upsilon^{0, \eta}_{N, L,L}(F_{\delta}) - \Upsilon^{0,\eta}_{N, M,L}(F_{\delta}) \right| 
&\leq  \iint |F_{\delta}(x,y)| \q_{N,0,\eta}^{(x,y)}\big[A_{\ell, L}(x) \cap A_{\ell, L}(y) \cap A^*_{L+1, M}(x) \big] \mathscr{W}^N_{0, \eta} (x,y)  dxdy \\
& \le  \| f\|_{L^\infty}^2   \iint_{\supp(f)^2}  \1_{|x-y|\ge \delta} \q_{N,0,\eta}^{(x,y)}\big[ A_{L+1, M}^*(x) \big]  \mathscr{W}^N_{0, \eta} (x,y)  dxdy . 
\end{aligned}\] 
Moreover, by a union bound and Markov's inequality,
\begin{align*}
 \q_{N,0,\eta}^{(x,y)}\big[ A_{L+1, M}^*(x) \big] 
 & \le \sum_{L< k\le M} \q_{N,0,\eta}^{(x,y)}\big[ \X_{e^{-k}}(x) \ge  (\gamma+\varkappa)k\big] \\
 & \le \sum_{L<k\le M} e^{- k\varkappa^2} \q_{N,0,\eta}^{(x,y)} \big[ e^{\varkappa( \X_{e^{-k}}(x) -\gamma k)} \big] 
\end{align*}

From \eqref{A1} with $\epsilon=0, \epsilon'=\eta$ (translated into a statement about $\q_{N,0,\eta}^{(x,y)}[e^{\varkappa \X_{e^{-k}}(x)}]$), if $N$ is sufficiently large, ``sufficiently large" depending on $\delta$ and $\mathrm{supp}(f)$,  we have  for all $(x,y)\in \mathrm{supp}(f)^2$ with $|x-y|\geq \delta$ and  for all $k = L, \dots , M$, 
\[
\q_{N,0,\eta}^{(x,y)} \big[ e^{\varkappa( \X_{e^{-k}}(x) -\gamma k)} \big] \le 2 \q_{0,\eta}^{(x,y)} \big[ e^{\varkappa( \X_{e^{-k}}(x) -\gamma k)} \big] . 
\]
By \eqref{cov_estimate}, for $|x-y|\geq \delta$, we see that under the measure  ${\q}_{0,\eta}^{(x,y)}$, $\X_{e^{-k}}(x)$ is a Gaussian random variable with variance $k+\O(1)$
and mean $\gamma \E \X(x)\X_{e^{-k}}(x)+ \gamma \E \X_{\eta}(x)\X_{e^{-k}}(y) = \gamma k + \O_\delta(1)$.
This implies that 
\begin{align} \notag
 \q_{N,0,\eta}^{(x,y)}\big[ A_{L+1, M}^*(x) \big] 
 & \ll_\delta \sum_{k>L} e^{- k\varkappa^2/2} \\
 & \label{trunc4}
  \ll_\delta \eta^{\varkappa^2/2} , 
\end{align}
where the implied constant is independent of the relevant $x,y$, $N$, and $L$.
Using \eqref{A32} to justify taking the limit under the integral and then combining \eqref{A31} with \eqref{trunc4}, we obtain
\[
\limsup_{N\to+\infty} \big| \Upsilon^{0, \eta}_{N, L,L}(F_{\delta}) - \Upsilon^{0,\eta}_{N, M,L}(F_{\delta})  \big|
 \ll_{\delta}  \eta^{\varkappa^2/2}   \delta^{-\gamma^2+d}\ll_\delta \eta^{\varkappa^2/2} .
\]

\smallskip

We proceed in a similar way to prove the estimate \eqref{trunc3}. By symmetry of the integrand, we check that  
\[ \begin{aligned}
\left| \Upsilon^{0, 0}_{N, L,L}(F_\delta) - \Upsilon^{0,0}_{N, M,M}(F_{\delta}) \right|
& \le 2  \| f\|_{L^\infty}^2   \iint_{\mathrm{supp}(f)^2}  \1_{|x-y|\ge \delta} \q_{N,0,0}^{(x,y)}\big[ A_{L+1, M}^*(x) \big] \mathscr{W}^N_{0, 0} (x,y)  dxdy \\
& \ll_{\delta}  \eta^{\varkappa^2/2}   \delta^{-\gamma^2+d} ,
\end{aligned}\]
where the argument is essentially identical to the previous case apart from us having $\q_{N,0,0}^{(x,y)}$ instead of $\q_{N,0,\eta}^{(x,y)}$ (which changes nothing in our argument) and we made use of \eqref{A33} instead of \eqref{A32}. This concludes the proof. 
\end{proof}
To conclude this section, we prove Lemma \ref{lem:cvg}.
\begin{proof}[Proof of Lemma~\ref{lem:cvg}]
Let us begin by proving that the function $q_\ell$ exists. This is an immediate consequence of the monotone convergence theorem as $\1_{A_{\ell,L}(x)\cap A_{\ell,L}(y)}$ is decreasing in $L$. Let us turn to the other statements --
the simplest one being \eqref{E8}.
  
For any $L \in\N$, the asymptotics \eqref{A1}  imply that, for any $(x,y)\in\{(u,v)\in \mathrm{supp}(f)^2:|u-v|\geq \delta\}$,  the Laplace transform of the law of
$\big(\X_{e^{-k}}(x), \X_{e^{-k}}(y) \big)_{k=1}^L$ under $\q_{N,0,0}^{(x,y)}$  converges to that of a multivariate Gaussian vector on $\R^{2L}$, with a mean and covariance inherited from the law $\q_{0,0}^{(x,y)}$. This implies that the law of $\big(\X_{e^{-k}}(x), \X_{e^{-k}}(y) \big)_{k=1}^L$ under $\q_{N,0,0}^{(x,y)}$ converges to a suitable multivariate normal distribution.
Since such a Gaussian measure is absolutely continuous with respect to the Lebesgue measure on $\R^{2L}$, this directly implies (e.g. by the portmanteau theorem) that  
\begin{equation} \label{lim1}
\lim_{N\to+\infty} \q_{N,0,0}^{(x,y)}\big[A_{\ell, L}(x) \cap A_{\ell, L}(y) \big] =
{\q}_{0,0}^{(x,y)}\big[A_{\ell, L}(x) \cap A_{\ell, L}(y) \big]   .
\end{equation}
Moreover, the asymptotics \eqref{A31} also imply that 
\begin{equation} \label{lim2}
\lim_{N\to+\infty}\mathscr{W}^N_{0,0}(x,y) =e^{\gamma^2 {\E}[\X(x) \X(y)]}  ,
\end{equation}
uniformly in $(x,y)\in\mathrm{supp}(f)^2$ with $|x-y| >\delta$. Hence, by combining \eqref{lim1} and \eqref{lim2}, 
we have by the dominated convergence theorem that
\begin{equation} \label{lim3}
\lim _{N\to+\infty}\Upsilon^{0,0}_{N,L,L}(F_{\delta})
= 
\iint F_{\delta}(x,y) {\q}_{0,0}^{(x,y)}\big[A_{\ell, L}(x) \cap A_{\ell, L}(y) \big] e^{\gamma^2 {\E}[\X(x) \X(y)]} dxdy . 
\end{equation}
Then, \eqref{E8} follows from  \eqref{lim3} and the definition of the function $q_\ell$ by the monotone convergence theorem.

\medskip

Let us now turn to the proof of \eqref{E7} in the case where $\epsilon =0$ -- the argument when $\epsilon=\eta$ is identical.
Like in the first part of the proof, we have 
\begin{equation*}
\lim_{N\to+\infty} \q_{N,0,\eta}^{(x,y)}\big[A_{\ell, L}(x) \cap A_{\ell, L}(y) \big] =
{\q}_{0,\eta}^{(x,y)}\big[A_{\ell, L}(x) \cap A_{\ell, L}(y) \big]   ,
\end{equation*}
for all $(x,y)\in\mathrm{supp}(f)^2$ with $|x-y| >\delta$.
Then, by \eqref{A31}
$\mathscr{W}^N_{0,\eta}(x,y) \to e^{\gamma^2 {\E}[\X(x) \X_{\eta}(y)]}$ (uniformly in $(x,y)\in \mathrm{supp}(f)^2$ with $|x-y|\geq \delta$) as $N\to+\infty$, this implies that 
\[
\lim _{N\to+\infty}\Upsilon^{0,\eta}_{N,L,L}(F_{\delta})
= 
\iint F_{\delta}(x,y) {\q}_{0,\eta}^{(x,y)}\big[A_{\ell, L}(x) \cap A_{\ell, L}(y) \big] e^{\gamma^2 {\E}[\X(x) \X_\eta(y)]} dxdy . 
\]
First of all, ${\E}[\X(x) \X_{\eta}(y)] \to {\E}[\X(x) \X(y)]$ (uniformly in $(x,y)\in \mathrm{supp}(f)^2$ with $|x-y|\geq \delta$) as $L\to\infty$ by basic convolution estimates. Moreover as ${\E}[\X(x) \X_\eta(y)]  \ll_\ell 1$  with the implied constant being uniform in $(x,y) \in\mathrm{supp}(f)^2$ with $|x-y| \ge \delta$, the dominated convergence theorem implies that to complete the proof, it remains to show that
\begin{equation} \label{lim4}
\lim_{L\to+\infty}  {\q}_{0,\eta}^{(x,y)}\big[A_{\ell, L}(x) \cap A_{\ell, L}(y) \big] = q_\ell(x,y).
\end{equation}
To do this,  fix an integer $L'>\ell$ and define for all $\epsilon\ge 0$ the vectors  $\mathbf{m}_\epsilon , \mathbf{m}_\epsilon' \in \R^{L'}$ by
\[ \begin{cases}
\mathrm{m}_{\epsilon,k}  = \gamma \E \X(x)\X_{e^{-k}}(x)+ \gamma \E \X_{e^{-k}}(x)\X_\epsilon(y) ,   \\
\mathrm{m}'_{\epsilon,k}   = \gamma \E\X(x)\X_{e^{-k}}(y) + \gamma \E \X_{e^{-k}}(y)\X_\epsilon(y), 
\end{cases} \qquad k=1,\dots L' .  \]
Observe that by Girsanov's theorem for any $\epsilon\ge 0$,
\[
{\q}_{0,\eta}^{(x,y)}\big[A_{\ell, L'}(x) \cap A_{\ell, L'}(y) \big] 
= {\P}{ \left[ \begin{array}{l}
\X_{e^{-k}}(x) \le {(\gamma+\varkappa)k} -  \mathrm{m}_{\epsilon,k}  \\
\X_{e^{-k}}(y) \le {(\gamma+\varkappa)k} -  \mathrm{m}'_{\epsilon,k}
\end{array},\
 k =\ell, \dots L'\right]} . 
\]
Hence, e.g. by combining the absolute continuity of  a Gaussian measure with respect to the Lebesgue measure on $\R^{2L'}$ with the continuity
of the functions $\epsilon \mapsto \mathbf{m}_\epsilon , \mathbf{m}_\epsilon' $, we see that for any fixed integer $L'>\ell$,
\begin{equation} \label{lim5}
\lim_{L\to+\infty}{\q}_{0,\eta}^{(x,y)}\big[A_{\ell, L'}(x) \cap A_{\ell, L'}(y) \big]   = {\q}_{0,0}^{(x,y)}\big[A_{\ell, L'}(x) \cap A_{\ell, L'}(y)\big] . 
\end{equation}

Finally, if $\ell<L'<L$, we have once again by Girsanov and Markov's inequality that for $|x-y|\geq \delta$, 
\[ \begin{aligned} &0\le
{\q}_{0,\eta}^{(x,y)}\big[A_{\ell, L'}(x) \cap A_{\ell, L'}(y) \big]  - {\q}_{0,\eta}^{(x,y)}\big[A_{\ell, L}(x) \cap A_{\ell, L}(y) \big]  \\
&\hspace{3cm}
\le {\q}_{0,\eta}^{(x,y)}\big[A^*_{L'+1, L}(x) \big]  + {\q}_{0,\eta}^{(x,y)}\big[A^*_{L'+1, L}(y) \big]   \\
&\hspace{3cm}
\le \sum_{L'<k\le L} {\P}\big[\X_{e^{-k}}(x) > {(\gamma+\varkappa)k} -  \mathrm{m}_{\eta,k} \big]
+  {\P}\big[\X_{e^{-k}}(y) > {(\gamma+\varkappa)k} -  \mathrm{m}'_{\eta,k} \big] \\
&\hspace{3cm}
\ll_\ell \sum_{L'<k\le L} e^{- k\varkappa^2/2}
\end{aligned}\]
since, under the law ${\P}$, $\X_{e^{-k}}(x)$ is a  centered Gaussian variable with variance ${k} + \O(1)$ and  $\mathrm{m}_{\eta, k} = {\gamma k} +\O(1)$ uniformly for all $k\le L$ and $x,y\in\supp(f)^2$ such that  $|x-y| \ge \delta$. 
Hence we have established that (with an implied constant being independent of $L',L$)
\begin{equation} \label{lim6}
\Big| {\q}_{0,\eta}^{(x,y)}\big[A_{\ell, L'}(x) \cap A_{\ell, L'}(y) \big]  - {\q}_{0,\eta}^{(x,y)}\big[A_{\ell, L}(x) \cap A_{\ell, L}(y) \big] \Big| 
\ll_\ell e^{-L' \varkappa^2/2} . 
\end{equation}
Combining the estimates \eqref{lim6} and \eqref{lim5}, we conclude that for any integer $L' >\ell$, 
\[
\lim_{L\to+\infty}{\q}_{0,\eta}^{(x,y)}\big[A_{\ell, L}(x) \cap A_{\ell, L}(y) \big]   = {\q}_{0,0}^{(x,y)}\big[A_{\ell, L'}(x) \cap A_{\ell, L'}\big] +
\O_\ell(e^{-L' \varkappa^2/2}) . 
\]
 By the monotone convergence theorem, the RHS converges to $q_\ell(x,y)$ as $L'\to+\infty$, which means that we obtain \eqref{lim4} and have completed the proof.
\end{proof}

This concludes our proof of Theorem \ref{thm:chaos}. To wrap up our general discussion of convergence to multiplicative chaos, we will now turn to extending our convergence to test functions which are not necessarily compactly supported. 

\subsection{Extension to all bounded test functions}\label{sect:extension}

The goal of this section is to prove the following result which is a simple extension of Theorem~\ref{thm:chaos}:

\begin{proposition} \label{prop:L1}
If Assumptions~\ref{A:exp_moment} hold, then for any $0<\gamma <\sqrt{2d}$ and any bounded $ f\in C(\Omega)$, the law of 
\begin{equation*} 
\int_\Omega f(x)\frac{e^{\gamma\X(x)}}{\E_N e^{\gamma \X(x)}}dx  
\end{equation*}
under $\P_N$ converges to that of $\int_\Omega f(x)d\mu^\gamma(x)$.
\end{proposition}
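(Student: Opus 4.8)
The plan is to reduce to Theorem~\ref{thm:chaos} by a truncation argument near $\partial\Omega$, the point being that the mass of both $\mu^\gamma$ and the random measures $d\mu_N^\gamma(x)=\frac{e^{\gamma\X(x)}}{\E_N e^{\gamma\X(x)}}dx$ close to $\partial\Omega$ is controlled \emph{uniformly in $N$} by an elementary first moment bound. Recall that $\Omega$ is bounded, so every bounded $f\in C(\Omega)$ lies in $L^1(\Omega)$ with $\|f\|_{L^1(\Omega)}\le|\Omega|\,\|f\|_{L^\infty}$, and writing $f=f^+-f^-$ with $f^\pm\ge0$ in $L^1(\Omega)$, the object $\int_\Omega f\,d\mu^\gamma$ is a.s.\ a well-defined real number by Proposition~\ref{pr:gmcprops}.

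First I would fix a sequence $\chi_k\in C_c(\Omega)$ with $0\le\chi_k\le1$, $\chi_k\equiv1$ on $\{x\in\Omega:d(x,\partial\Omega)\ge1/k\}$ and $\chi_k\uparrow1$ pointwise on $\Omega$, and set $f_k=\chi_k f\in C_c(\Omega)$. Since by definition $\E_N\big[e^{\gamma\X(x)}/\E_N e^{\gamma\X(x)}\big]=1$ for every $x\in\Omega$, Tonelli's theorem gives, for any non-negative $g\in L^1(\Omega)$, the identity $\E_N\int_\Omega g\,d\mu_N^\gamma=\int_\Omega g\,dx$. Applying this with $g=|f-f_k|=(1-\chi_k)|f|$ yields the bound
\[
\sup_{N\ge1}\,\E_N\Big|\int_\Omega(f-f_k)\,d\mu_N^\gamma\Big|\le\int_\Omega(1-\chi_k)|f|\,dx\ \xrightarrow[k\to\infty]{}\ 0
\]
by dominated convergence. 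On the Gaussian side, the first moment bound established in the proof of Proposition~\ref{pr:gmcprops}, namely $\E\int_\Omega g\,d\mu^\gamma\le\int_\Omega g\,dx$ for non-negative $g\in L^1(\Omega)$, gives in exactly the same way
\[
\E\Big|\int_\Omega(f-f_k)\,d\mu^\gamma\Big|\le\int_\Omega(1-\chi_k)|f|\,dx\ \xrightarrow[k\to\infty]{}\ 0 .
\]

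Finally I would run a standard $3\varepsilon$ argument with characteristic functions. Fix $\xi\in\R$ and put $\Phi(u)=e^{\i\xi u}$, which is $|\xi|$-Lipschitz. For each $k$ and $N$, by the triangle inequality,
\[
\begin{aligned}
\Big|\E_N\Phi\big(\textstyle\int_\Omega f\,d\mu_N^\gamma\big)-\E\Phi\big(\textstyle\int_\Omega f\,d\mu^\gamma\big)\Big|
&\le|\xi|\,\E_N\Big|\textstyle\int_\Omega(f-f_k)\,d\mu_N^\gamma\Big|\\
&\quad+\Big|\E_N\Phi\big(\textstyle\int_\Omega f_k\,d\mu_N^\gamma\big)-\E\Phi\big(\textstyle\int_\Omega f_k\,d\mu^\gamma\big)\Big|\\
&\quad+|\xi|\,\E\Big|\textstyle\int_\Omega(f-f_k)\,d\mu^\gamma\Big| .
\end{aligned}
\]
Given $\varepsilon>0$, the two displays above allow choosing $k$ so large that the first and third terms are $<\varepsilon$ for every $N$; then, since $f_k\in C_c(\Omega)$, Theorem~\ref{thm:chaos} forces the middle term to $0$ as $N\to\infty$. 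Hence $\limsup_{N\to\infty}\big|\E_N\Phi(\int_\Omega f\,d\mu_N^\gamma)-\E\Phi(\int_\Omega f\,d\mu^\gamma)\big|\le2\varepsilon$; letting $\varepsilon\to0$ and invoking that pointwise convergence of characteristic functions implies convergence in law completes the proof. I do not anticipate a genuine obstacle here: the only delicate point is the uniformity in $N$ of the truncation error, and this is supplied for free by the identity $\E_N[d\mu_N^\gamma/dx]\equiv1$ together with the matching first moment bound for $\mu^\gamma$ from Proposition~\ref{pr:gmcprops}.
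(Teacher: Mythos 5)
Your proposal is correct and follows essentially the same route as the paper: truncate with compactly supported cutoffs $\chi_k\uparrow 1$, use the identity $\E_N\int g\,d\mu_N^\gamma=\int g\,dx$ together with the first moment bound for $\mu^\gamma$ from Proposition~\ref{pr:gmcprops} to control the truncation error uniformly in $N$, and then run the Lipschitz/characteristic-function $3\varepsilon$ argument combined with Theorem~\ref{thm:chaos} on the truncated test function. The only cosmetic difference is that you fix $k$ first using the uniform-in-$N$ bound and then let $N\to\infty$, whereas the paper organizes the same three terms as nested $\limsup$'s; the substance is identical.
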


We point out that the reader might find Proposition \ref{prop:L1} conceptually satisfying as it follows from general theory that this result implies convergence of the law of $\frac{e^{\gamma \X(x)}}{\E_N e^{\gamma \X(x)}}dx$ under $\P_N$ to that of $\mu^\gamma$ with respect to the weak topology. For further discussion, see e.g. \cite[Chapter 4]{Kallenberg17}. The proof of Proposition \ref{prop:L1} is very similar to that of Proposition \ref{pr:gmcprops}.

\begin{proof}[Proof of Proposition~\ref{prop:L1}]
Recalling the notation of the proof of Proposition \ref{pr:gmcprops}, let us write $g_k\in C_c(\Omega)$ for an increasing sequence of functions, bounded by one, such that $g_k(x)=1$ for $d(x,\partial\Omega)\geq 1/k$. We also write for $x,\xi\in \R$, $\Phi(x)=e^{i\xi x}$. Once again, this is bounded by one and $|\xi|$-Lipschitz. We thus have 
\begin{align*}
&\left|\E_N \Phi\left(\int_\Omega f(x)\frac{e^{\gamma \X(x)}}{\E_N e^{\gamma \X(x)}}dx\right)-\E_N  \Phi\left(\int_\Omega g_k(x)f(x)\frac{e^{\gamma \X(x)}}{\E_N e^{\gamma \X(x)}}dx\right)\right|\\
&\leq |\xi|\E_N\left[\int_\Omega |f(x)||1-g_k(x)|\frac{e^{\gamma \X(x)}}{\E_N e^{\gamma \X(x)}}dx\right]\\
&\leq |\xi|\|f\|_{L^\infty(\Omega)}\int_\Omega |1-g_k(x)|dx
\end{align*}
which tends to zero as $k\to\infty$ (uniformly in $N$) say by the dominated convergence theorem (recall that $\Omega$ is bounded). Thus by Theorem \ref{thm:chaos} and (the proof of) Proposition \ref{pr:gmcprops}, we have
\begin{align*}
\limsup_{N\to\infty}&\left|\E_N \Phi\left(\int_\Omega f(x)\frac{e^{\gamma \X(x)}}{\E_N e^{\gamma \X(x)}}dx\right)-\E \Phi\left(\int_\Omega f(x)d\mu^\gamma(x)\right)\right|\\
&\leq \limsup_{k\to\infty}\limsup_{N\to\infty} \left|\E_N \Phi\left(\int_\Omega f(x)\frac{e^{\gamma \X(x)}}{\E_N e^{\gamma \X(x)}}dx\right)-\E_N\Phi\left(\int_\Omega g_k(x)f(x)\frac{e^{\gamma \X(x)}}{\E_N e^{\gamma \X(x)}}dx\right)\right|\\
&\quad +\limsup_{k\to\infty}\limsup_{N\to\infty}\left|\E_N \Phi\left(\int_\Omega g_k(x)f(x)\frac{e^{\gamma \X(x)}}{\E_N e^{\gamma \X(x)}}dx\right)-\E\Phi\left(\int_\Omega g_k(x)f(x)d\mu^\gamma(x)\right)\right|\\
&\quad +\limsup_{k\to\infty}\left|\E \Phi\left(\int_\Omega f(x)d\mu^\gamma(x)\right)-\E\Phi\left(\int_\Omega g_k(x)f(x)d\mu^\gamma(x)\right)\right|\\
&=0,
\end{align*}
which concludes the proof.\end{proof}

\subsection{Specializing to the setting of Theorem \ref{th:gmc} and Theorem \ref{th:gmc2}} \label{sec:special}

In this section, we use our results on asymptotics of Hankel determinants as described in Section \ref{sec:hankel_intro} to to prove Theorem \ref{th:gmc} and Theorem \ref{th:gmc2}  with our general approach to multiplicative chaos. Before going into the actual proofs, we will discuss how our problem relates to the discussion in Section \ref{sect:gmc}. After this, we will first prove Theorem \ref{th:gmc2} in Section \ref{sect:gmceasy} as its proof only relies on Theorem \ref{thm:LOS}, while the proof of Theorem \ref{th:gmc}, which we present in Section \ref{sect:gmchard}, relies on Theorem \ref{thm:chaos}. 

First of all, let us mention that to prove that a sequence of random measures $\mu_N$ to converges to a limiting measure $\mu$ in law with respect to the weak topology, it is sufficient to prove that for each bounded continuous test function $f$, $\int f d\mu_N$ converges in law to $\int f d\mu$ -- for details about this fact, see e.g. \cite[Chapter 4]{Kallenberg17}. Now based on Proposition \ref{prop:L1}, it is indeed sufficient to verify Assumptions \ref{A:exp_moment} in the setting of Theorem \ref{th:gmc} and those of Theorem \ref{thm:LOS} in the setting of Theorem \ref{th:gmc2}.

\medskip

To make contact with the discussion in beginning of this section, let $\P_N$ be the law of our $N\times N$ one-cut regular Hermitian random matrix from Section \ref{sec:intro}, let the law of $\X$ under $\P_N$ be the law of the eigenvalue counting function $\h$ \eqref{h},  and let $\P$ be the law of a centered Gaussian log-correlated field on $\Omega=(-1,1)$ with covariance kernel
\[
\Sigma(x,y)=\log \frac{1-xy+\sqrt{1-x^2}\sqrt{1-y^2}}{|x-y|}. 
\]

The mollifying function $\varphi$ used in e.g. \eqref{eq:Xeps} will be taken to be the Poisson kernel
\[
\varphi(x)=\frac{1}{\pi}\frac{1}{1+x^2}.
\]
Note that this is not a compactly supported function as we assumed in our preceding discussion, but as we are looking at processes on $(-1,1)$, convolving with $\varphi_\epsilon(x)=\epsilon^{-1}\varphi(x/\epsilon)$ is the same as convolving with (a scaled version of) $\varphi$ multiplied by a smooth compactly supported function which is $1$ in a suitable neighborhood  of $(-1,1)$. Thus we can ignore the fact that $\varphi$ is not compactly supported. 

As described in Section \ref{sect:background}, one can construct the limiting multiplicative chaos measure $\mu^\gamma$ through convolution approximations of the Gaussian field using the function $\varphi$. The reason for using this particular function for the convolution is that in the notation \eqref{eq:hcomplex}, we have for any $\epsilon>0$,
\[
\h(x+\i\epsilon )=(\varphi_\epsilon* \h)(x),
\]
which is a very natural approximation to $\h$. 
Moreover, if we write for $\lambda\in \R$
\begin{align*}
w_{\epsilon,x}(\lambda) &= \sqrt{2}\pi  (\varphi_\epsilon*\1_{(-\infty, x]})(\lambda),
\end{align*}
then 
\[
\h(x+\i\epsilon ) = \sum_{1\le j\le N}  w_{\epsilon,x}(\lambda_j) - N \int w_{\epsilon,x}  d\mu_V
\]
is a centered linear statistic.  

 We also point out that we can analytically continue $w_{\epsilon,x}$ by writing 
\begin{align}\label{eq:wdef}
w_{\epsilon,x}(\lambda)=\frac{\sqrt{2}}{2}\pi+\frac{\sqrt{2}}{2\i}\big(\log (\epsilon-\i(x-\lambda))-\log(\epsilon+\i(x-\lambda))\big) 
\end{align}
where the branches of the logarithms are the principal ones -- thus the cuts of $w_{\epsilon,x}$ are along $x\pm \i[\epsilon,\infty)$. $w_{\epsilon,x}$ having such an analytic continuation to $\C\setminus [(x+\i [\epsilon,\infty))\cup (x-\i[\epsilon,\infty))]$ will be vital for our RH analysis for proving the results of Section \ref{sec:hankel_intro}.

We will also need some estimates on the size of $w_{\epsilon,x}$ in a suitable neighborhood of $(-1,1)$. Essentially by definition, we have the a trivial bound of the form $|\mathrm{Re}(w_{\epsilon,x}(\lambda))|\leq 4 \pi$. For the imaginary part, we note that using the identity
\[
\mathrm{Im}(w_{\epsilon,x}(\lambda))=-\frac{1}{2\sqrt{2}}\log \frac{(\epsilon-\mathrm{Im}(\lambda))^2+(x-\mathrm{Re}(\lambda))^2}{(\epsilon+\mathrm{Im}(\lambda))^2+(x-\mathrm{Re}(\lambda))^2}.
\]
an elementary calculation shows that there exists a universal constant $C$ (in particular, it is independent of $\epsilon$ and $x$) such that
\begin{equation}\label{eq:wboud}
\sup\left\{|w_{\epsilon,x}(\lambda)|: |\mathrm{Im}(\lambda)|<\frac{\epsilon}{2} \quad \text{or} \quad |\mathrm{Re}(\lambda)-x|\geq |\mathrm{Im}(\lambda)|+\epsilon\right\}\leq C.
\end{equation}
The reason we need such a bound later on is that it shows that for $\mathcal S_N$ as in \eqref{domain}, $w_{\epsilon,x}$ is analytic and uniformly bounded in $\mathcal S_N$ for any $\epsilon\geq \epsilon_N$.

Finally before turning to the proof of Theorem \ref{th:gmc2}, we verify the assumptions of Definition \ref{def:EN} as these are common for both Theorem \ref{thm:LOS} and Theorem \ref{thm:chaos}. 

By construction, $\h$ (or equivalently $\X$ under $\P_N$) is piecewise continuous (in fact continuous from the right with limits from the left existing), and one has of course $|\h(x)|\leq 2\sqrt{2}\pi N$ for all $x$ and every realization of our randomness, so certainly $\X$ is integrable, bounded from above, upper semi-continuous, and even the exponential moment estimates are trivially satisfied. Thus to verify the conditions of Definition \ref{def:EN}, it is sufficient to show that for each $f\in C_c^\infty(\Omega)$, the law of $\int f(x)\X(x)dx$ under $\P_N$ converges to that of it under $\P$, but this follows immediately from Johansson's CLT -- see our discussion at the beginning of Section~\ref{gmc_results}.

We now turn to the more involved parts  of the proofs.

\subsubsection{Proof of Theorem \ref{th:gmc2}}\label{sect:gmceasy}

As discussed above, to prove Theorem \ref{th:gmc2}, it remains to verify the conditions of Theorem \ref{thm:LOS}. In particular, as we already checked the conditions of Definition \ref{def:EN}, it remains to verify \eqref{exp_moment}. For this, let $\alpha \in(0,1)$ be as in the statement of Theorem \ref{th:gmc2}, $n\in \N$, $t_1,...,t_n\in \R^n$, $x_1,...,x_n\in (-1,1)$, and $\eta_1,....,\eta_n \ge N^{-1+\alpha}$.
We then note that by our definitions
\[
\sum_{j=1}^n t_j\X_{\eta_j}(x_j)=\sum_{l=1}^N \sum_{j=1}^n t_j w_{\eta_j,x_j}(\lambda_l) -N \sum_{j=1}^n t_j \int_{-1}^1 w_{\eta_j,x_j}d\mu_V.
\]
Thus following our discussion from Section \ref{sec:hankel_intro} on the connection between random matrices and Hankel determinants, we see that the object of interest for Theorem \ref{thm:LOS} can be written as
\begin{align*}
\E_N e^{\sum_{j=1}^n t_j\X_{\eta_j}(x_j)}&= e^{ -N \int_{-1}^1 w d\mu_V} \frac{D_N(e^{w-NV})}{Z_N}
\end{align*}
where the function $w$ is given by 
\[
w(\lambda)=\sum_{j=1}^n t_j w_{\eta_j,x_j}(\lambda).
\]
Recall from our discussion of the beginning of this section that $w_{\eta_j,x_j}(\lambda)$ is analytic in the slit plane $\C\setminus[(x_j+\i [\eta_j,\infty))\cup(x_j-\i [\eta_j,\infty))]$ for any $x_j\in (-1,1)$. Moreover, from \eqref{eq:wboud}, we see that for some constant $C$ only depending on the points $t_j$, we have 
\[
\sup\left\{|w(\lambda)|: |\mathrm{Im}(\lambda)|\leq \frac{N^{-1+\alpha}}{2} \quad \text{or} \quad \min_j |\mathrm{Re}(\lambda)-x_j|\geq |\mathrm{Im}(\lambda)|+N^{-1+\alpha}\right\}\leq C.
\]
This shows that  the function $w$ is analytic and bounded by the above constant $C>0$ in the domain  $\mathcal S_N$ as in \eqref{domain} with parameters $\epsilon_N=N^{-1+\alpha'}$, $\delta_N=N^{-\alpha'/2}$ and $\alpha' = \min\{\alpha, 1/2\}$. Here, it is important that the points $x_j$ are in a fixed compact subset of $(-1,1)$ so that either $|\mathrm{Im}(\lambda)|\leq \frac{\epsilon_N}{2}$ or $\min_j|\mathrm{Re}(\lambda)-x_j|\geq \epsilon+|\mathrm{Im}(\lambda)|$ for all $\lambda\in \mathcal S_N$. Thus from Theorem \ref{th:Hankel1} with $\gamma_1=\gamma_2=0$ (note that in this case, $D_N(x_1,x_2;\gamma_1,\gamma_2;0)=Z_N$ and $D_N(x_1,x_2;\gamma_1,\gamma_2;w)=D_N(e^{w-NV})$), we find that as $N\to\infty$
\[
\E_N e^{\sum_{j=1}^n t_j\X_{\eta_j}(x_j)}=e^{\frac{1}{2}\sigma(w)^2}(1+o(1)),
\]
where the error $o(1)$ is uniform in the locations $x_j$ when restricted to a compact subset of $(-1,1)$, and by \eqref{cov2}, 
\begin{align*}
\sigma(w)^2&=  \sum_{i,j=1}^n t_i t_j  \sigma^2( w_{\eta_i,x_i}  ; w_{\eta_j,x_j} )  = \sum_{i,j=1}^nt_i t_j \E \X_{\eta_i}(x_i) \X_{\eta_j}(x_j) .
\end{align*}

 We thus conclude that as $N\to+\infty$,
\[
\E_N e^{\sum_{j=1}^n t_j\X_{\eta_j}(x_j)}=  (1+o(1))e^{\frac{1}{2}\sum_{i,j=1}^nt_i t_j \E \X_{\eta_i}(x_i) \X_{\eta_j}(x_j)}=(1+o(1))\E e^{\sum_{j=1}^n t_j\X_{\eta_j}(x_j)},
\]
with the required uniformity of Theorem \ref{thm:LOS}, so we have verified the assumptions of Theorem \ref{thm:LOS} and we see that for each $\gamma\in(-\sqrt{2},\sqrt{2})$ and $f\in C_c(\Omega)$, the law of 
\[
\int_\Omega f(x)\frac{e^{\gamma \X_{\epsilon_N}(x)}}{\E_N e^{\gamma \X_{\epsilon_N}(x)}}dx
\]
under $\P_N$, that is, in the notation of Theorem \ref{th:gmc2}, the law of $\int_\Omega fd\widetilde \mu_N^\gamma$ converges to that of $\int_\Omega f d\mu^\gamma$ under $\P$. By our discussion at the beginning of this section, this implies that the measures converge in law with respect to the weak topology. This concludes the proof of Theorem \ref{th:gmc2}.

\subsubsection{Proof of Theorem \ref{th:gmc}}\label{sect:gmchard}

As indicated above, our proof of Theorem \ref{th:gmc} relies on Theorem \ref{thm:chaos}, and consists of verifying the conditions of Assumptions \ref{A:exp_moment}. This is rather similar to the verification of the conditions of Theorem \ref{thm:LOS} from Section \ref{sect:gmceasy}, but more involved -- more precisely, it relies on applying Theorem \ref{th:Hankel1} (with various $w$ -- all analytic in $\mathcal S_N$ from \eqref{domain}), Theorem \ref{th:Hankel2}, and \cite[Theorem 1.1]{Charlier}. We will verify Assumptions~\ref{A:exp_moment} with the choice $\epsilon_N=N^{-1+\alpha(\gamma)}$, where we will not write this $\alpha(\gamma)>0$ explicitly, but we will deduce at the end of verifying \eqref{A5} the existence of one that suits our needs. We will typically obtain much stronger asymptotics than those required by Assumptions \ref{A:exp_moment}. As in Section \ref{sect:gmceasy}, we will rely heavily on the connection between moments of suitable random matrix objects and Hankel determinants as discussed in Section \ref{sec:hankel_intro}.

\medskip

\underline{Verification of \eqref{A31}:} consider first the case of $\epsilon=\epsilon'=0$. Using\footnote{More precisely, if we write in the notation of \cite{Charlier} $m=2$, $\i \beta_j=\frac{\gamma}{\sqrt{2}}$, $\alpha_j=0$, $W=0$, $t_1=x$, $t_2=y$, and note that $ \frac{\gamma}{\sqrt{2}}\mathbf 1_{\{\lambda\leq x\}}-\frac{\gamma}{\sqrt{2}} \mathbf 1_{\{\lambda>x\}}=\sqrt{2}\gamma \mathbf 1_{\{\lambda\leq x\}}-\frac{\gamma}{\sqrt{2}}$, an elementary calculation using the identity $\frac{1-xy+\sqrt{1-x^2}\sqrt{1-y^2}}{|x-y|}=\frac{|x-y|}{1-xy-\sqrt{1-x^2}\sqrt{1-y^2}}$  for $x,y\in(-1,1)$ yields the claim.} \cite[Theorem 1.1]{Charlier} shows that for $x\neq y$ 
\[
\lim_{N\to \infty}\frac{\E_N e^{\gamma \X(x)+\gamma \X(y)}}{\E_N e^{\gamma \X(x)}\E_N e^{\gamma \X(y)}}=\left(\frac{1-xy+\sqrt{1-x^2}\sqrt{1-y^2}}{|x-y|}\right)^{\gamma^2}=e^{\gamma^2 \Sigma(x,y)}=e^{\gamma^2 \E\X(x)\X(y)}
\]	
and the convergence is uniform in compact subsets of $\{(x,y)\in \Omega^2: x\neq y\}$. Thus for $\epsilon=\epsilon'=0$, the condition is verified.

Consider now the situation where $\epsilon=0$ and $\epsilon'>0$ (the case of $\epsilon>0$ and $\epsilon'=0$ follows by symmetry). This would follow again from \cite[Theorem 1.1]{Charlier}, but to avoid discussing the connection between the notations, we rely instead on Theorem \ref{th:Hankel1}. We actually use it twice, since we have
\[
\frac{\E_N e^{\gamma \X(x)+\gamma \X_{\epsilon'}(y)}}{\E_N e^{\gamma \X(x)}\E_N e^{\gamma \X_{\epsilon'}(y)}}=\frac{D_N(x;\gamma;w) D_N(x;0;0)}{D_N(x;\gamma;0)D_N(x;0;w)}
\]
with $w(\lambda)=\gamma w_{\epsilon',y}(\lambda)$ (see \eqref{eq:wdef} for the definition). Thus Theorem \ref{th:Hankel1} first with $\gamma_1=\gamma$, $\gamma_2=0$, $\mathcal S_N$ as in \eqref{domain} (with arbitrary $\epsilon_N\leq \epsilon'$), and then with $\gamma_1=\gamma_2=0$ yields
\[
\lim_{N\to\infty}\frac{\E_N e^{\gamma \X(x)+\gamma \X_{\epsilon'}(y)}}{\E_N e^{\gamma \X(x)}\E_N e^{\gamma \X_{\epsilon'}(y)}}=(1+o(1))e^{\frac{\gamma}{\sqrt{2}}(\mathcal U w)(x)\sqrt{1-x^2}},
\]	
where the error is uniform in $(x,y)$ in a fixed compact subset of $(-1,1)^2$ (note that this is stronger than required and relies on $w$ being bounded as given by \eqref{eq:wboud}). To see the connection to $\E \X(x)\X_{\epsilon'}(y)$, note from \eqref{cov6} that
\[
\frac{1}{\sqrt{2}}(\mathcal U w)(x)\sqrt{1-x^2}=\gamma\E \X(x)\X_{\epsilon'}(y).
\]

Finally the case of $\epsilon,\epsilon'>0$ is very similar. One now simply takes in Theorem \ref{th:Hankel1} $\gamma_1=\gamma_2=0$ and applies the theorem three times with $w=w_1=\gamma w_{\epsilon,x}$, $w=w_2=\gamma w_{\epsilon',y}$ and $w=w_3=\gamma w_{\epsilon,x}+\gamma w_{\epsilon',y}$ and finds that as $N\to\infty$ 
 \[
	\frac{\E_N e^{\gamma \X_\epsilon(x)+\gamma \X_{\epsilon'}(y)}}{\E_N e^{\gamma \X_\epsilon(x)}\E_N e^{\gamma \X_{\epsilon'}(y)}}=(1+o(1))e^{\frac{1}{2}(\sigma(w_3)^2 -\sigma(w_1)^2-\sigma(w_2)^2)}
	\]
and using \eqref{cov2}, one finds that
\[
\frac{1}{2}(\sigma(w_3)^2-\sigma(w_1)^2-\sigma(w_2)^2)=\gamma^2\E \X_\epsilon(x)\X_{\epsilon'}(y).
\]
Again the error is uniform in $x,y$ in compact subsets of $(-1,1)$. This concludes the verification of \eqref{A31}.

\medskip

\underline{Verification of \eqref{A32}:} This follows directly from our discussion in the verification of \eqref{A31}. In particular, we saw that uniformly in $x,y$ in a compact subset of $(-1,1)^2$, for $0\leq \epsilon<\epsilon'$
\[
\lim_{N\to \infty}\frac{\E_N e^{\gamma \X_\epsilon(x)+\gamma \X_{\epsilon'}(y)}}{\E_N e^{\gamma \X_\epsilon(x)}\E_N e^{\gamma \X_{\epsilon'}(y)}}=e^{\gamma^2 \E \X_{\epsilon}(x)\X_{\epsilon'}(y)}. 
\]
The claim is now simply an estimate for the covariance $\E \X_{\epsilon}(x)\X_{\epsilon'}(y)$, and the required estimate follows immediately from \eqref{cov_estimate}. Thus we can move on to the next assumption.

\medskip

\underline{Verification of \eqref{A33}:} For any fixed $\delta>0$, the bound for $|x-y|\geq \delta$ follows directly from \cite[Theorem 1.1]{Charlier} with similar considerations as in the verification of \eqref{A31}, so let us focus on the case $\epsilon_N\leq |x-y|\leq \delta$ for some small but fixed $\delta>0$. This follows from Theorem \ref{th:Hankel2} combined with \cite[Theorem 1.1]{Charlier}. More precisely, writing
\[
\frac{\E_N e^{\gamma \X(x)+\gamma \X(y)}}{\E_N e^{\gamma \X(x)}\E_N e^{\gamma \X(y)}}=\frac{D_N(x,y;\gamma,\gamma;0)}{D_N(x;2\gamma;0)}\frac{D_N(x;2\gamma;0)D_N(x;0;0)}{D_N(x;\gamma;0)D_N(y;\gamma;0)}
\]	
and applying Theorem \ref{th:Hankel2} for the first ratio and \cite[Theorem 1.1]{Charlier} for the second one, we see that for $\epsilon_N\leq |x-y|\leq \delta$ 
	\[
\frac{\E_N e^{\gamma \X(x)+\gamma \X(y)}}{\E_N e^{\gamma \X(x)}\E_N e^{\gamma \X(y)}}	= e^{\mathcal O(1)} e^{-\gamma^2 \max(0,\log[N|x-y|])}N^{\gamma^2}\ll|x-y|^{-\gamma^2}
	\]	
and the implied constants have the required uniformity -- this concludes the verification of \eqref{A33}. Note that once again, the asymptotics we obtain are much stronger than required -- we in fact found an upper bound of the form $\min(N^{\gamma^2},|x-y|^{-\gamma^2})$ valid for all $x,y$ (in a compact subset of $(-1,1)$). 

\medskip 

\underline{Verification of \eqref{A1}:} This is very similar to the verification of \eqref{exp_moment}, but with the difference of the $e^{\gamma \X_\epsilon(x)+\gamma \X_\epsilon'(y)}$-terms. Let us focus on the $\epsilon=\epsilon'=0$-case. The others are similar though require a change of the function $w$.

For $\epsilon=\epsilon'=0$, we write
\[
w(\lambda)=\sum_{j=1}^n t_j w_{\eta_j,x_j}(\lambda).
\]
and similarly to the proof of Theorem \ref{th:gmc2}, we find from Theorem \ref{th:Hankel1} (with $\mathcal S_N$ as in \eqref{domain} with the same $\epsilon_N=N^{-1+\alpha(\gamma)}$)
 \[
	\frac{\E_N e^{\gamma \X(x)+\gamma \X(y)}e^{\sum_{j=1}^n t_j \X_{\eta_j}(x_j)}}{\E_N e^{\gamma \X(x)+\gamma \X(y)}}=(1+o(1))e^{\frac{1}{2}\sigma(w)^2+\frac{\gamma}{\sqrt{2}}(\sqrt{1-x^2}(\mathcal Uw)(x)+\sqrt{1-y^2}(\mathcal Uw)(y))}
	\]
where the error is uniform in the $x_i$ as $w$ is bounded and uniform in $\eta_i\geq \epsilon_N$ -- recall \eqref{eq:wboud}. From \eqref{cov2} and \eqref{cov6} we find again that
\[
\sigma(w)^2=\sum_{i,j=1}^n t_i t_j \E X_{\eta_i}(x_i)\X_{\eta_j}(x_j)\qquad \text{and} \qquad \frac{\gamma}{\sqrt{2}}\sqrt{1-x^2}(\mathcal Uw)(x)=\sum_{i=1}^n  \gamma t_i \E \X(x)\X_{\eta_i}(x_i).
\]
Thus with the required uniformity (again, actually even stronger uniformity since we do not need to exclude the diagonal)
\begin{align*}
	&\frac{\E_N e^{\gamma \X(x)+\gamma \X(y)}e^{\sum_{j=1}^n t_j \X_{\eta_j}(x_j)}}{\E_N e^{\gamma \X(x)+\gamma \X(y)}}\\
	&\qquad \qquad =(1+o(1))e^{\frac{1}{2}\sum_{i,j=1}^nt_it_j \E \X_{\eta_i}(x_i)\X_{\eta_j}(x_j)+\sum_{i=1}^n \gamma t_i [\E \X(x)\X_{\eta_i}(x_i)+\E \X(y)\X_{\eta_i}(x_i)]},
	\end{align*}
which was precisely the claim for $\epsilon=\epsilon'=0$. For $\epsilon>0$ or $\epsilon'>0$, we replace $w$ by $w+\gamma w_{\epsilon,x}$ or $w+\gamma w_{\epsilon',y}$ (or $w+\gamma w_{\epsilon,x}+\gamma w_{\epsilon',y}$ if $\epsilon,\epsilon'>0$) and again apply Theorem \ref{th:Hankel1} with the corresponding $\gamma_i=0$. We omit further details.

\medskip

\underline{Verification of \eqref{A2}:} This is essentially identical to the verification of \eqref{A1}. One chooses $w=\lambda w_{\epsilon,x}$ and applies Theorem \ref{th:Hankel1} with $\gamma_1=\gamma$, $x_1=x$,  and $\gamma_2=0$. One finds from \eqref{cov6} that with the required uniformity 
\begin{align*}
\frac{\E_N e^{\gamma \X(x)+\lambda \X_{\epsilon}(x)}}{\E e^{\gamma \X(x)}}&=(1+o(1)) e^{\frac{1}{2}\sigma(w)^2 +\frac{\gamma}{\sqrt{2}}(\mathcal U w)(x)\sqrt{1-x^2}}\\
&=(1+o(1))e^{\frac{\lambda^2}{2}\E \X_{\epsilon}(x)^2+\gamma \lambda \E \X_\epsilon(x)\X(x)}\\
&=(1+o(1))\epsilon^{-\frac{\lambda^2}{2}-\lambda\gamma+\mathcal O(1)},
\end{align*}
where in the last step, we used the covariance estimate \eqref{cov_estimate}. This is precisely the required claim, so we are done.

\medskip

\underline{Verification of \eqref{A41}:} This is a special case of the verification of \eqref{A1} since due to our strong asymptotics, we did not need to exclude the diagonal in our verification of \eqref{A1}.

\medskip

\underline{Verification of \eqref{A42}:} This follows by an identical reasoning as \eqref{A41} did. 

\medskip

\underline{Verification of \eqref{A5}:} For this final claim, we note that we saw in the verification of \eqref{A33}, that for $|x-y|$ small enough (say less than some fixed small $\delta>0$), we have
 \[
	\frac{\E_N e^{\gamma \X(x)+\gamma \X(y)}}{\E_N e^{\gamma \X(x)}\E_N e^{\gamma \X(y)}}\ll \min(N^{\gamma^2},|x-y|^{-\gamma^2}), 
	\]
so we find
\begin{align*}
\int_{|x-y|\leq \epsilon_N} \frac{\E_N e^{\gamma \X(x)+\gamma \X(y)}}{\E_N e^{\gamma \X(x)}\E_N e^{\gamma \X(y)}}dxdy&\ll\int_{|x-y|\leq N^{-1}} N^{\gamma^2} dxdy+\int_{N^{-1}\leq |x-y|\leq \epsilon_N}|x-y|^{-\gamma^2}dxdy\\
&\ll_\gamma  N^{-1+\gamma^2}+\log N N^{ (\gamma^2-1)_+}\\
&=\log N N^{(\gamma^2-1)_+},
\end{align*}	
where the $\log N$-factor arises only for $\gamma=1$. It remains to show that $\log N N^{(\gamma^2-1)_+}\leq \epsilon_N^{-(\gamma^2-1)_+-\tau}$ for a suitable $\tau$ as required in \eqref{A5}. Recalling that we have $\epsilon_N=N^{-1+\alpha(\gamma)}$, we see that there is of course no issue for $\gamma^2\leq 1$. Thus we simply need to see that for some choice of $\alpha(\gamma)$
\[
\gamma^2-1\leq (1-\alpha(\gamma))(\gamma^2-1)+\tau
\]
for some $\tau\in(0,\min(\frac{(1-\frac{\gamma^2}{2})^2}{4},\frac{\gamma^2}{2}))$. This is true if we take $\alpha(\gamma)$ close enough to $0$. 
Thus, we have finished the verification of \eqref{A5}. 

\medskip

This concludes the verification of Assumptions \ref{A:exp_moment} and thus the proof of Theorem \ref{th:gmc} using Theorem \ref{thm:chaos}.  We now move on to extrema of the eigenvalue counting function, thick points, and freezing.

\section{Consequences for extreme values of log--correlated fields} \label{sect:LB}

The goal of this section is to prove Theorem~\ref{thm:max2},  Theorem~\ref{th:TP}, and Corollary~\ref{thm:freezing}. 
Even though the applications discussed in this article are concerned with eigenvalue statistics coming from Hermitian random matrices, like in Section \ref{sect:gmc}, we formulate our results  under general assumptions  in hope of our approach being of use in other contexts. We will use the same notations as in Section \ref{sect:gmc}.

Recall that $\Omega \subset \R^d$ is a bounded open set, simply connected, with a smooth boundary (in particular, its Lebesgue measure  is finite and positive: $0< |\Omega| <\infty$) and that under the measure $\P_N$, the canonical process $\X$ approximates   a Gaussian log-correlated field in the sense of Definition~\ref{def:EN}.  Moreover, since we assume $\X$ is almost surely bounded from above and  upper semi-continuous, for any compact $A\subset\Omega$, $\X$ attains its maximum in $A$ and 
$\max_A \X <+\infty$  $\P_N$ almost surely.
 In this section, we will be interested in the leading asymptotic behavior of these random variables as $N\to+\infty$, as well as that of $\sup_\Omega \X$ -- see Section \ref{sect:max}.

Like in Section \ref{sect:gmc}, we will be interested in approximations to multiplicative chaos measures: we write for $\gamma>0$
\[
\mu^{\gamma}_N(dx) =    \frac{e^{\gamma \X(x)}}{\E_N[e^{\gamma  \X(x) }] } dx . \footnote{Note the  inconsistency in our notation. In Section \ref{sect:gmc}, we wrote $\mu_\epsilon^\gamma$ for an approximation of a multiplicative chaos measure coming from smoothing the Gaussian field. We hope this will cause no confusion for the reader.
Moreover, the case $\gamma<0$ can be treated by considering $-\X$.}
\]
In addition to extrema of the field $\X$ under $\P_N$, we will study the fractal geometric properties of $\X$ under $\P_N$ by studying the thick points of $\X$ and freezing properties of $\mu^\gamma_N$ -- see Sections \ref{sect:tp} and \ref{sect:freezing}.
We will prove our results under certain general assumptions -- e.g. the validity of the results of Section \ref{sect:gmc}  -- and then in Section \ref{sect:evcf}, check that in the setting of random matrices, these assumptions hold. A fundamental assumption we will make throughout this section is that  the following basic exponential moment estimates hold.

\begin{assumption} \label{ass:1p}
For all $\gamma>0$ and $N\in \N$, there exist $R_\gamma>0$ and $\epsilon_N>0$ such that $\lim_{N\to \infty}\epsilon_N=0$ and for all $x\in \Omega$, 
\begin{equation} \label{1p2}
 \E_N \big[ e^{\gamma  \X(x)} \big]  \le R_\gamma \epsilon_N^{- \gamma^2/2} .  
\end{equation}
For any compact set $A \subset \Omega$, there exists a $C_{\gamma, A}>0$ such that for all  $x\in A$, 
\begin{equation} \label{1p1}
 \E_N \big[ e^{\gamma  \X(x)} \big]  \ge C_{\gamma, A}^{-1} \epsilon_N^{- \gamma^2/2} .  
\end{equation}
\end{assumption}

We prove in Section \ref{sect:evcf} that for the eigenvalue counting function $\X = h_N$  on $\Omega=(-1,1)$, in the one-cut regular case,  the asymptotics of Theorem~\ref{th:Hankel3} imply that Assumptions~\ref{ass:1p} hold with $\epsilon_N = N^{-1}$.  The interpretation of these assumptions that the reader should keep in mind is that under $\P_N$, to leading order, $\X(x)$ behaves like a centered Gaussian random variable of variance $\log \epsilon_N^{-1}+\mathcal O(1)$. 

Before discussing the maximum of $\X$ under $\P_N$, we begin by considering the thick points of the field.

\subsection{Thick points and the ``support" of the random measure $\mu^\gamma_N$}\label{sect:tp}

For any $\alpha\ge 0$, we define
$$
\mathscr{T}_N^\alpha = \left\{  x \in \Omega : \X(x) \ge \alpha \log \epsilon_N^{-1}\right\} . 
$$
This is usually called the set of $\alpha$-\emph{thick} points of $\X$.  
These are (relatively) closed  subsets of $\Omega$,  $\P_N$ almost surely.  The following lemma, which will play a role in the study of the maximum of $\X$, states essentially that the random measure $\mu^\gamma_N$ lives on $\mathscr{T}_N^\gamma$ -- see also Remark~\ref{rk:supp}.

\begin{lemma} \label{lem:supp}
If  Assumptions~\ref{ass:1p} hold, then for any compact set $A\subset \Omega$ and for any $0<\delta \le \gamma$, 
\[
\E_N \big[ \mu_N^\gamma\big( \{x\in A : x \in \mathscr{T}_N^{\gamma+\delta} \text{ or }x\notin  \mathscr{T}_N^{\gamma-\delta}\}\big) \big] \underset{A, \gamma , \delta}{\ll} \epsilon_N^{\delta^2/2} . 
\]
\end{lemma}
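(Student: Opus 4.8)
The plan is to reduce the claim to a pointwise (``one-point'') estimate that is uniform over $x\in A$, and then integrate over the set $A$, which has finite Lebesgue measure. Writing out the definition of $\mu_N^\gamma$, we have
\begin{align*}
&\E_N \big[ \mu_N^\gamma\big( \{x\in A : x \in \mathscr{T}_N^{\gamma+\delta} \text{ or }x\notin  \mathscr{T}_N^{\gamma-\delta}\}\big) \big] \\
&\qquad = \int_A \frac{\E_N\big[ e^{\gamma \X(x)} \big( \1_{\X(x)\ge (\gamma+\delta)\log\epsilon_N^{-1}} + \1_{\X(x)<(\gamma-\delta)\log\epsilon_N^{-1}} \big)\big]}{\E_N[e^{\gamma \X(x)}]}\,dx ,
\end{align*}
so it suffices to bound the integrand by $C_{A,\gamma,\delta}\,\epsilon_N^{\delta^2/2}$ uniformly in $x\in A$.

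For the ``too thick'' term I would use the exponential Markov inequality: for any $\lambda>0$, $\1_{\X(x)\ge(\gamma+\delta)\log\epsilon_N^{-1}}\le e^{\lambda \X(x)}\epsilon_N^{\lambda(\gamma+\delta)}$, so that $\E_N\big[e^{\gamma\X(x)}\1_{\X(x)\ge(\gamma+\delta)\log\epsilon_N^{-1}}\big]\le \epsilon_N^{\lambda(\gamma+\delta)}\,\E_N[e^{(\gamma+\lambda)\X(x)}]$. Bounding the numerator with \eqref{1p2} and the denominator with \eqref{1p1} produces a ratio of order $\epsilon_N^{\,\lambda(\gamma+\delta)-(\gamma+\lambda)^2/2+\gamma^2/2}=\epsilon_N^{\,\lambda\delta-\lambda^2/2}$, and this exponent is maximized by the choice $\lambda=\delta$, giving $\epsilon_N^{\delta^2/2}$. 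For the ``too thin'' term I would run the mirror-image argument: for $\lambda>0$, $\1_{\X(x)<(\gamma-\delta)\log\epsilon_N^{-1}}\le e^{-\lambda \X(x)}\epsilon_N^{-\lambda(\gamma-\delta)}$, hence $\E_N\big[e^{\gamma\X(x)}\1_{\X(x)<(\gamma-\delta)\log\epsilon_N^{-1}}\big]\le \epsilon_N^{-\lambda(\gamma-\delta)}\,\E_N[e^{(\gamma-\lambda)\X(x)}]$; taking again $\lambda=\delta$ (admissible since $0<\delta\le\gamma$, so $\gamma-\lambda\ge 0$ and \eqref{1p2} still applies, the case $\gamma=\delta$ being trivial because $\E_N[1]=1$) and dividing by \eqref{1p1} yields the same exponent $\lambda\delta-\lambda^2/2=\delta^2/2$. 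Adding the two contributions and integrating over $A$ gives the claimed bound.

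I do not expect a genuine obstacle here; this is precisely the standard first-moment input to the second-moment method for multiplicative chaos. The only points that require a little care are: that the two optimizations both collapse to the single exponent $\lambda\delta-\lambda^2/2$ and are simultaneously optimized at $\lambda=\delta$; that the constants $R_{\gamma\pm\delta}$ and $C_{\gamma,A}$ supplied by Assumptions~\ref{ass:1p} are legitimately available for the shifted parameters $\gamma\pm\delta\ge 0$ (with the boundary case $\gamma-\delta=0$ handled directly); and that $\alpha\log\epsilon_N^{-1}$ enters linearly, so the bounds really are powers of $\epsilon_N$. Uniformity in $x\in A$ is automatic, since \eqref{1p2} holds uniformly on $\Omega$ and \eqref{1p1} holds uniformly on $A$.
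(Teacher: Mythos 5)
Your proposal is correct and follows essentially the same route as the paper: split into the two indicator events, apply the exponential Markov bound $\mathbf 1_{\pm\X\gtrless a}\le e^{\pm\lambda(\X-a)}$ at the optimal tilt $\lambda=\delta$, and then invoke \eqref{1p2} and \eqref{1p1} for the shifted parameters $\gamma\pm\delta$ before integrating over $A$. The only cosmetic difference is that the paper writes the tilt $\lambda=\delta$ directly rather than optimizing over $\lambda$, and your explicit treatment of the boundary case $\gamma-\delta=0$ is a small point the paper leaves implicit.
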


\begin{proof}
The proof is an elementary argument based on Markov's inequality. Using the estimates \eqref{1p2} and \eqref{1p1}, and the fact that $\1_{\X<a}\leq e^{\delta(a-X)}$, we have
\begin{align*}
\E_N\big[  \mu_{N}^\gamma(A\setminus\mathscr{T}_N^{\gamma-\delta} ) \big]  
& =   \int_A  \E_N\big[ \1_{\X(x) < (\gamma-\delta) \log \epsilon_N^{-1} } e^{\gamma \X(x)} \big]\frac{dx}{\E_N[  e^{\gamma  \X(x)}] } \\
&  \le C_{\gamma, A}  \int_A \E_N\big[ e^{(\gamma-\delta) \X(x)}\big]  \epsilon_N^{-\delta(\gamma-\delta) +\tfrac{\gamma^2}{2}} dx \\
&\le  R_{\gamma-\delta} C_{\gamma, A} |\Omega|  \epsilon_N^{\delta^2 / 2}. 
\end{align*}
Similarly, one has
\begin{align*}
\E_N\big[  \mu_{N}^\gamma(A\cap\mathscr{T}_N^{\gamma+\delta} ) \big]  
&=   \int_A  \E_N\big[ \1_{\X(x) \ge (\gamma+\delta) \log \epsilon_N^{-1} } e^{\gamma \X(x)} \big] \frac{dx}{\E_N\big[  e^{\gamma  \X(x)} \big] } \\
&\le C_{\gamma, A}  \int_A \E_N\big[  e^{(\gamma+\delta) \X(x)} \big] \epsilon_N^{\delta(\gamma+\delta) +\tfrac{\gamma^2}{2}} dx \\
&\le  R_{\gamma+\delta} C_{\gamma, A} |\Omega|  \epsilon_N^{\delta^2 / 2} 
\end{align*}
which completes the proof.
\end{proof}

\begin{remark} \label{rk:supp}
We point out here a simple consequence of this lemma that we shall make use of later on. First of all, directly from the definition of $\mu^\gamma_N$ one has that for any compact $A\subset\Omega$, $\E_N\big[\mu^\gamma_N(\Omega \setminus A) \big] =|\Omega \setminus A| $. The previous lemma combined with this remark then implies that for any such compact $A$
\begin{align*}
\limsup_{N\to+\infty}&\E_N \big[ \mu_N^\gamma\big(\{x\in \Omega: x \in \mathscr{T}_N^{\gamma+\delta} \text{ or }x\notin  \mathscr{T}_N^{\gamma-\delta}\}\big) \big] \\
&\leq \limsup_{N\to+\infty}\E_N \big[ \mu_N^\gamma\big(\{x\in A: x \in \mathscr{T}_N^{\gamma+\delta} \text{ or }x\notin  \mathscr{T}_N^{\gamma-\delta}\}\big) \big]  + \limsup_{N\to+\infty}\E_N \big[ \mu_N^\gamma\big(\Omega\setminus A\big) \big] \\
&= |\Omega \setminus A|, 
\end{align*}
and as $A$ is arbitrary, one concludes that
\[
\lim_{N\to+\infty}\E_N \big[ \mu_N^\gamma\big(\{x\in \Omega: x \in \mathscr{T}_N^{\gamma+\delta} \text{ or }x\notin  \mathscr{T}_N^{\gamma-\delta}\}\big) \big]=0.
\]
Thus by Markov's inequality, we see that for any  $\delta, \varepsilon > 0$, 
\begin{equation} \label{supp_event}
\lim_{N\to\infty}\P_N\left(\mu_N^\gamma\big(\{x\in \Omega: x \in \mathscr{T}_N^{\gamma+\delta} \text{ or }x\notin  \mathscr{T}_N^{\gamma-\delta}\}\big) \ge \varepsilon \right)=0,
\end{equation}
which should be interpreted as $\mu_N^\gamma$ living roughly on $\mathscr T_N^{\gamma}$.\hfill $\blacksquare$
\end{remark}
We turn now to studying the maximum of $\X$ under $\P_N$.

\subsection{Leading order of the maximum} \label{sect:max}
In this section, we will prove our estimates concerning the maximum of a log-correlated field. The basic idea is that as the multiplicative chaos measure lives on the set of $\gamma$-thick points, and the measure is non-trivial for $\gamma<\sqrt{2d}$, then there exist such $\gamma$-thick points for any $\gamma<\sqrt{2d}$. This gives a lower bound for the maximum. 
So, in addition to the exponential moment assumption formulated in Assumption \ref{ass:1p}, we will make some further assumptions about the convergence of the approximation to multiplicative chaos measure to get hold of the asymptotic behavior of the maximum of $\X$ under $\P_N$. 
An upper bound can also be formulated in terms of thick points and proven to hold under Assumptions \ref{ass:1p} and some minor regularity assumptions on the field.

\begin{assumption} \label{ass:gmc}
Let $\gamma_* = \sqrt{2d}$. We assume that for any $\gamma < \gamma_*$ and any fixed deterministic Borel set $A \subseteq \Omega$ such that $|A|>0$, the random variable $\mu_N^\gamma(A)$ under $\P_N$ converges in distribution as $N\to+\infty$ to a random variable
$\zeta_A^\gamma$ which satisfies $\P[ 0<\zeta_A^\gamma<+\infty]=1$. 
\end{assumption}

Note that by  Proposition \ref{pr:gmcprops} and Proposition \ref{prop:L1}, Assumptions~\ref{ass:gmc} are satisfied under Assumptions \ref{A:exp_moment}.
We are now able to prove a lower bound for the maximum.

\begin{theorem} \label{thm:LB}
If Assumptions~\ref{ass:1p} and~\ref{ass:gmc} hold, then for any compact  set $A \subset \Omega$  with $|A|>0$ and for any $\delta>0$, one has as $N\to\infty$,
\[
\P_N\big[ \max_{ A} \X \le  (\gamma_* - \delta)\log \epsilon_N^{-1}\big] \to 0. 
\]
\end{theorem}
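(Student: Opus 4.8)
The plan is to derive the lower bound on the maximum from the non-degeneracy of the limiting multiplicative chaos measure, combined with the fact that $\mu_N^\gamma$ essentially lives on the $\gamma$-thick points (Lemma \ref{lem:supp} / Remark \ref{rk:supp}). First I would fix $\delta>0$ and choose $\gamma<\gamma_*=\sqrt{2d}$ close enough to $\gamma_*$ that $\gamma>\gamma_*-\delta/2$, say, and then pick an auxiliary $\delta'>0$ small enough that $\gamma-\delta'>\gamma_*-\delta$. The key observation is the following chain of inclusions of events: if $\max_A\X\le(\gamma_*-\delta)\log\epsilon_N^{-1}$, then in particular $\X(x)<(\gamma-\delta')\log\epsilon_N^{-1}$ for every $x\in A$, i.e. $A\cap\mathscr T_N^{\gamma-\delta'}=\emptyset$; hence $\mu_N^\gamma(A)=\mu_N^\gamma(A\setminus\mathscr T_N^{\gamma-\delta'})\le \mu_N^\gamma(\{x\in A: x\notin\mathscr T_N^{\gamma-\delta'}\})$. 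Consequently, for any fixed $\varepsilon>0$,
\[
\P_N\big[\max_A\X\le(\gamma_*-\delta)\log\epsilon_N^{-1}\big]\le \P_N\big[\mu_N^\gamma(A)\le\varepsilon\big]+\P_N\big[\mu_N^\gamma(\{x\in A: x\notin\mathscr T_N^{\gamma-\delta'}\})>\varepsilon\big].
\]

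For the second term on the right, Markov's inequality together with Lemma \ref{lem:supp} (applied with the compact set $A$ and the pair $\gamma,\delta'$) gives $\P_N[\mu_N^\gamma(\{x\in A: x\notin\mathscr T_N^{\gamma-\delta'}\})>\varepsilon]\le \varepsilon^{-1}\E_N[\mu_N^\gamma(\{x\in A: x\notin\mathscr T_N^{\gamma-\delta'}\})]\ll_{A,\gamma,\delta'}\varepsilon^{-1}\epsilon_N^{(\delta')^2/2}\to 0$ as $N\to\infty$, since $\epsilon_N\to0$. For the first term, Assumptions \ref{ass:gmc} give that $\mu_N^\gamma(A)\Rightarrow\zeta_A^\gamma$ with $\P[\zeta_A^\gamma>0]=1$; choosing $\varepsilon$ below a continuity point of the distribution function of $\zeta_A^\gamma$, we get $\limsup_{N\to\infty}\P_N[\mu_N^\gamma(A)\le\varepsilon]\le\P[\zeta_A^\gamma\le\varepsilon]$, and the right-hand side can be made arbitrarily small by taking $\varepsilon\to0$ along continuity points, since $\P[\zeta_A^\gamma=0]=0$. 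Combining the two estimates and sending first $N\to\infty$ and then $\varepsilon\to0$ yields $\P_N[\max_A\X\le(\gamma_*-\delta)\log\epsilon_N^{-1}]\to0$, as claimed.

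The only genuinely delicate point — and the step I would be most careful about — is the use of $\mu_N^\gamma(A)\Rightarrow\zeta_A^\gamma$ for a general compact (hence possibly not "nice", e.g. not open with smooth boundary) deterministic set $A$: convergence in distribution of $\mu_N^\gamma(A)$ is asserted in Assumptions \ref{ass:gmc} precisely for such Borel sets, so this is available, but one should note that the event $\{\mu_N^\gamma(A)\le\varepsilon\}$ need not be a continuity set for the portmanteau theorem unless $\varepsilon$ is a continuity point of the law of $\zeta_A^\gamma$ — hence the restriction of $\varepsilon$ to continuity points above, which is harmless since the distribution function has at most countably many discontinuities and $\P[\zeta_A^\gamma=0]=0$. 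Everything else is a routine first-moment bound. It is worth remarking that the argument does not even need the full convergence in Assumptions \ref{ass:gmc}: a tightness-type lower bound $\liminf_N\P_N[\mu_N^\gamma(A)>\varepsilon]\to1$ as $\varepsilon\to0$ would suffice, which is why the theorem is stated under these relatively mild hypotheses rather than requiring Assumptions \ref{A:exp_moment} directly.
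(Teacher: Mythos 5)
Your proof is correct and takes essentially the same approach as the paper: you decompose via the thick set $\mathscr{T}_N^{\gamma-\delta'}$, control the contribution from $A\setminus\mathscr{T}_N^{\gamma-\delta'}$ by Markov's inequality and Lemma~\ref{lem:supp}, and conclude using the portmanteau theorem and the almost-sure positivity of $\zeta_A^\gamma$. The paper phrases the argument contrapositively (bounding $\P_N[\mu_N^\gamma(A\cap\mathscr{T}_N^\alpha)>\varepsilon]\le\P_N[\max_A\X\ge\alpha\log\epsilon_N^{-1}]$), whereas you bound the complementary event directly, and your remark about continuity points is unnecessary since the closed-set form of the portmanteau theorem gives $\limsup_N\P_N[\mu_N^\gamma(A)\le\varepsilon]\le\P[\zeta_A^\gamma\le\varepsilon]$ for every $\varepsilon$; these are cosmetic differences only.
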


\begin{proof}
On the one hand, by a union bound,   for any $\alpha>0$ and any $\varepsilon>0$, 
\begin{equation} \label{LB1}
\P_N\big[  \mu_{N}^\gamma(A \cap \mathscr{T}_N^\alpha ) \le \varepsilon \big]  \le \P_N\big[  \mu^\gamma_N(A)\le 2 \varepsilon \big]+ 
\P_N\big[  \mu_{N}^\gamma( A\setminus\mathscr{T}_N^\alpha ) \ge \varepsilon \big]   . 
\end{equation}
If $\alpha<\gamma$, by Lemma~\ref{lem:supp}, the second term on the RHS  of \eqref{LB1} converges to 0 as $N\to+\infty$ and, since the random variable $\mu_{N}^\gamma(A)$ converges in distribution to $\zeta_A^\gamma$, we obtain by the portmanteau theorem that 
\begin{align*}
\limsup_{N\to\infty}\P_N\big[  \mu_{N}^\gamma(A \cap \mathscr{T}_N^\alpha ) \le \varepsilon\big]  \le \P\big[  \zeta_A^\gamma \le 2 \varepsilon\big]  .
\end{align*}
On the other hand, observe that by the definition of $\alpha$-thick points, 
\[
  \P_N\big[  \mu_{N}^\gamma(A \cap\mathscr{T}_N^\alpha ) >\varepsilon\big]  \le \P_N\big[ \max_A \X\ge \alpha \log \epsilon_N^{-1} \big],
\]
which implies that
\[
\liminf_{N\to\infty} \P_N\big[ \max_A \X \ge \alpha \log \epsilon_N^{-1} \big] \ge 1-  \P\big[  \zeta_A^\gamma \le 2 \varepsilon\big]. 
\]
Since this holds for any $\varepsilon>0$, $\alpha<\gamma<\gamma_*$ and $ \zeta_A^\gamma>0$ almost surely for any $\gamma <\gamma_*$, we conclude that for any $\alpha<\gamma_*$, 
\[
\liminf_{N\to\infty} \P_N\big[ \max_A \X \ge \alpha \log \epsilon_N^{-1} \big] \ge 1 . 
\]
This completes the proof. \end{proof}

We will now formulate conditions that will ensure the correct upper bound for the maximum. We first formulate our assumption in terms of thick points, and later show that it follows from a regularity assumption on the field which may be easier to check in concrete situations.

\begin{assumption} \label{ass:max}
For any $\alpha> \gamma_*$, as $N\to\infty$
\[
 \P_N[ \mathscr{T}_N^{\alpha}  \neq \emptyset] \to 0 . 
\]
\end{assumption}
The interpretation of this assumption is that for $\alpha>\gamma_*$, $\alpha$-thick points are unlikely to occur. This implies that under $\P_N$, $\max_\Omega \X$ is unlikely to be much greater than $\gamma_*\log \epsilon_N^{-1}$. The precise statement about the maximum is the following.

\begin{corollary} \label{max:LLN}
If the random field $\X$ satisfies Assumptions~\ref{ass:1p},~\ref{ass:gmc} and~\ref{ass:max},  then 
\[
\frac{\sup_\Omega \X }{\log \epsilon_N^{-1}} \to  \gamma_*  , 
\]
in probability as $N\to+\infty$.
\end{corollary}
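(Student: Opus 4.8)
The plan is to obtain the claimed convergence in probability by proving a matching lower and upper bound for $\sup_\Omega \X$, each of which is an almost immediate consequence of results already established. Since $\epsilon_N\to 0$ we have $\log\epsilon_N^{-1}\to+\infty$, so it suffices to show that for every $\delta>0$,
\[
\P_N\!\left[ \sup_\Omega \X \le (\gamma_*-\delta)\log \epsilon_N^{-1}\right]\to 0
\quad\text{and}\quad
\P_N\!\left[ \sup_\Omega \X \ge (\gamma_*+\delta)\log \epsilon_N^{-1}\right]\to 0
\]
as $N\to+\infty$; together these give $\sup_\Omega\X/\log\epsilon_N^{-1}\to\gamma_*$ in probability. (Here $\sup_\Omega\X$ is finite $\P_N$-a.s. and measurable, since $\X$ is a.s. bounded from above and upper semi-continuous, so the supremum may be taken along a countable dense subset of $\Omega$.)

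For the lower bound, I would fix any compact set $A\subset\Omega$ with $|A|>0$; such a set exists because $\Omega$ is a nonempty open subset of $\R^d$. Since $\sup_\Omega \X \ge \max_A \X$ on the whole sample space,
\[
\P_N\!\left[ \sup_\Omega \X \le (\gamma_*-\delta)\log \epsilon_N^{-1}\right]
\le \P_N\!\left[ \max_A \X \le (\gamma_*-\delta)\log \epsilon_N^{-1}\right],
\]
and the right-hand side tends to $0$ by Theorem~\ref{thm:LB}, whose hypotheses (Assumptions~\ref{ass:1p} and~\ref{ass:gmc}) are in force. This yields the first of the two displays above.

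For the upper bound, I would observe that on the event $\{\sup_\Omega \X \ge (\gamma_*+\delta)\log \epsilon_N^{-1}\}$ there is a point $x\in\Omega$ with $\X(x)\ge(\gamma_*+\delta)\log\epsilon_N^{-1}$, so this event is contained in $\{\mathscr T_N^{\gamma_*+\delta}\neq\emptyset\}$. Hence
\[
\P_N\!\left[ \sup_\Omega \X \ge (\gamma_*+\delta)\log \epsilon_N^{-1}\right]
\le \P_N\!\left[ \mathscr T_N^{\gamma_*+\delta}\neq\emptyset\right]\to 0
\]
by Assumption~\ref{ass:max}, applied with $\alpha=\gamma_*+\delta>\gamma_*$. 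Combining the two bounds completes the proof. There is essentially no obstacle at this stage: all the genuine difficulty has been absorbed into Theorem~\ref{thm:LB} (and, behind it, the multiplicative chaos convergence encoded in Assumption~\ref{ass:gmc}) and into Assumption~\ref{ass:max}; the only minor point worth noting is the passage from $\max_A\X$ over a compact $A$ to $\sup_\Omega\X$, which is harmless by monotonicity of the supremum.
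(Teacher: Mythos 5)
Your proof is correct and follows essentially the same route as the paper: the lower bound comes from Theorem~\ref{thm:LB} applied to a compact $A\subset\Omega$ with $|A|>0$, and the upper bound comes from observing that the event $\{\sup_\Omega\X\ge(\gamma_*+\delta)\log\epsilon_N^{-1}\}$ is contained in $\{\mathscr T_N^{\gamma_*+\delta}\neq\emptyset\}$, whose probability vanishes by Assumption~\ref{ass:max}. The remarks on measurability of the supremum and the monotonicity passage from $\max_A\X$ to $\sup_\Omega\X$ are harmless additions not spelled out in the paper.
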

\begin{proof}
The claim is that for any $\delta>0$
\[
\lim_{N\to\infty}\P_N\left(\left|\frac{\sup_\Omega \X}{\log \epsilon_N^{-1}}-\gamma_*\right|>\delta\right)=0.
\]
To see how this follows from our assumptions, note that if $A\subset \Omega$ is any compact  set with positive Lebesgue measure, we have
\begin{align*}
\P_N\left(\left|\frac{\sup_\Omega \X}{\log \epsilon_N^{-1}}-\gamma_*\right|>\delta\right)&=\P_N\left(\sup_\Omega\X<(\gamma_*-\delta)\log \epsilon_N^{-1}\right)+\P_N\left(\sup_\Omega\X>(\gamma_*+\delta)\log \epsilon_N^{-1}\right)\\
&\leq \P_N\left(\sup_A\X<(\gamma_*-\delta)\log \epsilon_N^{-1}\right)+\P_N\left(\mathscr{T}_N^{\gamma_*+\delta}  \neq \emptyset\right).
\end{align*}
By Theorem \ref{thm:LB} (which holds under Assumptions \ref{ass:1p} and Assumptions \ref{ass:gmc}), the first term tends to zero as $N\to\infty$. Under Assumptions \ref{ass:max}, the second term tends to zero as $N\to\infty$. This concludes the proof.
\end{proof}

As already noted, Assumptions \ref{ass:max} might not be so easy to check directly, so we formulate another assumption which is perhaps simpler to check in practice and implies Assumptions \ref{ass:max}. This condition can be seen as a rather mild regularity condition on the realizations of $\X$ under~$\P_N$. Moreover, by simply adapting the proof we also obtain large deviation estimates for the maximum of the random field $\X$.

\begin{proposition} \label{prop:cont}
Suppose that $\X$ satisfies the condition \eqref{1p2} of Assumption \ref{ass:1p} and that there exist  deterministic constants ${\rm C,  c}>0$ such that $\P_N$ almost surely$:$ for any $x\in \Omega$, there is a $($possibly random$)$ compact $\mho_x^N \subset \Omega$ such that  $|\mho_x^N| \ge {\rm c} \epsilon_N^{d}$ and 
\[
\X(t) \ge \X(x) - {\rm C}
\qquad\text{for all } t\in \mho_x^N . 
\]
We emphasize here that we do not require that $x\in \mho_x^N$.

Then for any  $\alpha> \gamma_*$, 
\[
 \P_N[ \mathscr{T}_N^{\alpha}  \neq \emptyset] \to 0  ,
 \qquad\text{as }N\to\infty.
\]
Moreover, if $\varpi_N$ is any increasing sequence such that 
$\displaystyle\lim_{N\to+\infty}\frac{\varpi_N}{\log \epsilon_N^{-1}} =+\infty$, for any $\gamma>0$, there exists a constant $C_\gamma>0$ such that \begin{equation} \label{LDE2}
 \P_N\big[ \sup_\Omega \X \ge \varpi_N \big] \le C_\gamma \epsilon_N^{-\gamma^2/2-d} e^{-\gamma \varpi_N} . 
\end{equation}
\end{proposition}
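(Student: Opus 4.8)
The plan is to deduce both statements from a single deterministic observation: the plateau hypothesis converts a large value of $\X$ at one point into a lower bound for the random integral $\int_\Omega e^{\gamma\X(t)}\,dt$, which can then be controlled in expectation via \eqref{1p2} and Markov's inequality. Concretely, suppose that at some (random) point $x\in\Omega$ one has $\X(x)\ge a$. By assumption there is a compact set $\mho_x^N\subset\Omega$ with $|\mho_x^N|\ge \mathrm{c}\,\epsilon_N^{d}$ on which $\X(t)\ge\X(x)-\mathrm{C}\ge a-\mathrm{C}$, so that, using $\gamma>0$,
\[
\int_\Omega e^{\gamma\X(t)}\,dt \ \ge\ \int_{\mho_x^N} e^{\gamma\X(t)}\,dt \ \ge\ \mathrm{c}\,\epsilon_N^{d}\, e^{\gamma(a-\mathrm{C})} .
\]
On the other hand, by Tonelli's theorem and \eqref{1p2}, $\E_N\!\int_\Omega e^{\gamma\X(t)}\,dt=\int_\Omega \E_N e^{\gamma\X(t)}\,dt\le |\Omega|R_\gamma\epsilon_N^{-\gamma^2/2}$, so Markov's inequality gives, for every threshold $a\in\R$ and every $\gamma>0$,
\[
\P_N\big[\,\exists\, x\in\Omega:\ \X(x)\ge a\,\big]\ \le\ \P_N\!\Big[\int_\Omega e^{\gamma\X(t)}\,dt\ge \mathrm{c}\,\epsilon_N^{d} e^{\gamma(a-\mathrm{C})}\Big]\ \le\ \frac{|\Omega|R_\gamma\, e^{\gamma\mathrm{C}}}{\mathrm{c}}\,\epsilon_N^{-\gamma^2/2-d}e^{-\gamma a} .
\]

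For the first assertion I would take $a=\alpha\log\epsilon_N^{-1}$, so that the event on the left is exactly $\{\mathscr{T}_N^\alpha\neq\emptyset\}$ and the bound reads $\P_N[\mathscr{T}_N^\alpha\neq\emptyset]\ll_\gamma \epsilon_N^{\gamma\alpha-\gamma^2/2-d}$. The exponent $\gamma\alpha-\gamma^2/2-d$ is maximised over $\gamma>0$ at $\gamma=\alpha$, where it equals $\alpha^2/2-d$; since $\alpha>\gamma_*=\sqrt{2d}$ this is strictly positive, and choosing $\gamma=\alpha$ yields $\P_N[\mathscr{T}_N^\alpha\neq\emptyset]\ll \epsilon_N^{\alpha^2/2-d}\to 0$ as $N\to\infty$. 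In particular this reproves Assumption~\ref{ass:max} under the present hypotheses. For the large deviation estimate \eqref{LDE2}, fix $\gamma>0$ and apply the displayed bound with $a=\varpi_N-1$: on the event $\{\sup_\Omega\X\ge\varpi_N\}$ there is a point $x\in\Omega$ with $\X(x)\ge\varpi_N-1$ (whether or not the supremum is attained), so this event is contained in the left-hand event, and we obtain
\[
\P_N\big[\sup_\Omega\X\ge\varpi_N\big]\ \le\ \frac{|\Omega|R_\gamma\,e^{\gamma(1+\mathrm{C})}}{\mathrm{c}}\,\epsilon_N^{-\gamma^2/2-d}e^{-\gamma\varpi_N},
\]
which is \eqref{LDE2} with $C_\gamma=|\Omega|R_\gamma e^{\gamma(1+\mathrm{C})}/\mathrm{c}$; the hypothesis $\varpi_N/\log\epsilon_N^{-1}\to\infty$ is then only needed to see that this bound tends to $0$.

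There is no serious obstacle here: the argument is essentially a one-line first-moment/Markov estimate, and the points requiring a little care are minor. First, $\X$, being merely upper semicontinuous on the open set $\Omega$, need not attain its supremum there, which is why one passes to a point within distance $1$ (in value) of $\sup_\Omega\X$. Second, the plateau set $\mho_x^N$ is \emph{not} assumed to contain $x$, but this is irrelevant since only $|\mho_x^N|$ and the pointwise lower bound $\X|_{\mho_x^N}\ge\X(x)-\mathrm{C}$ enter. Finally, one should note that the measurability of $\omega\mapsto\int_\Omega e^{\gamma\X(t)}\,dt$ and the use of Tonelli's theorem are justified by the upper semicontinuity of $\X$ built into Definition~\ref{def:EN}.
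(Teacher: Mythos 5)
Your proof is correct and is essentially the same first-moment/Markov argument as the paper's: convert the plateau hypothesis into a deterministic lower bound for a $\gamma$-exponential integral and compare to its expectation. The only cosmetic difference is that the paper works with the normalized mass $\mu_N^\gamma(\Omega)=\int_\Omega e^{\gamma\X}/\E_N e^{\gamma\X}\,dx$, whose expectation is exactly $|\Omega|$, whereas you use the unnormalized integral $\int_\Omega e^{\gamma\X}\,dt$ and bound its expectation via \eqref{1p2}; and you optimize $\gamma=\alpha$ for the first claim while the paper takes $\gamma=\gamma_*-\delta$ with $\alpha=\gamma_*+\delta$ and $\delta<2\gamma_*/3$ (WLOG by nesting of the thick-point sets). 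Your choice is a bit cleaner, and your use of $a=\varpi_N-1$ to handle the possibility that the supremum over the open set $\Omega$ is not attained is a small point of care the paper glosses over.
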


\begin{proof}
Without loss of generality, let us assume that $\alpha = \gamma_*+\delta$ for some small $0<\delta < 2\gamma_*/3$.
Let $x\in \Omega$ be a random point such that
$\X(x) \ge \alpha \log \epsilon_N^{-1} $.  Then, by the definition of $\mho_x^N$ and \eqref{1p2}, we have that under the event of the existence of such a point (or in other words, the event that $\mathscr T_N^\alpha\neq \emptyset$), for any $\gamma>0$, 
\[ \begin{aligned}
\mu_{N}^\gamma(\Omega)
& \ge \epsilon_N^{-\gamma \alpha} \int_{\mho_x^N} \frac{e^{-\gamma {\rm C}}}{ \E_N \big[ e^{\gamma  \X(x)} \big]} dx\\
&\ge {\rm c} R_\gamma^{-1} e^{-\gamma {\rm C}}  \epsilon_N^{-\gamma\alpha + \gamma^2/2 +d} .
\end{aligned}\]
Now, if we choose  and $\gamma=\gamma_*-\delta$, since $\gamma_*^2 = 2d$,  we find for some deterministic constant $C_{\delta}$ independent of $N$ that 
\[
\mu_N^\gamma(\Omega)\geq C_{\delta}\epsilon_N^{- \gamma_* \delta + 3 \delta^2/2} .
\]
By Markov's inequality, since $\E_N\big[ \mu_{N}^\gamma(\Omega)\big]= |\Omega| <+\infty$,   this estimate implies that 
\[
\limsup_{N\to+\infty}\P_N\big[ \exists x\in \Omega ; \X(x) \ge \alpha \log \epsilon_N^{-1} \big] 
\le  \limsup_{N\to+\infty} \P_N\big[ \mu_{N}^\gamma(\Omega)   \geq C_\delta  \epsilon_N^{- \gamma_* \delta + 3 \delta^2/2}  \big]  = 0 . 
\]
 This completes the proof of the first claim. Using the same first moment computation, we  similarly obtain for any $\gamma>0$,
 \[
\P_N\big[ \sup_\Omega \X \ge \varpi_N \big]
\le  \P_N\big[  \mu_N^\gamma(\Omega) \geq C_\gamma^{-1}  e^{\gamma \varpi_N } \epsilon_N^{\gamma^2/2 +d} \big] \leq C_\gamma \epsilon_N^{-\gamma^2/2 -d} e^{-\gamma \varpi_N}. 
\]
 for some constant $C_\gamma$ independent of $N$. 
\end{proof}

This concludes our discussion about the leading order of the maximum and its connection to multiplicative chaos. Before turning to thick points and freezing, we make a brief comment about what can be inferred about the supercritical regime of multiplicative chaos.

\begin{remark}[Supercritical regime] \label{rk:supercritical}
Let us recall from our general discussion about multiplicative chaos from Section \ref{sect:background}, that for $\gamma > \gamma_*$, the standard construction of the multiplicative chaos measure through Gaussian fields yields the zero measure. One would expect this to be the case also for the measure $\mu_N^\gamma$. In this remark, we show that this is indeed the case. For $\gamma>\gamma_*$, we can choose $\delta>0$ so that $\gamma -\delta = \gamma_* + \delta$ and 
\[
\mu_N^\gamma(\Omega) = \mu_N^\gamma(\mathscr{T}_N^{\gamma_*+\delta}) +   \mu^\gamma_N(\Omega \setminus\mathscr{T}_N^{\gamma-\delta}) .
\]
Then
\[
\P_N\big[ \mu_N^\gamma(\Omega)  \ge \varepsilon \big] \le 
  \P_N[ \mathscr{T}_N^{\gamma_*+\delta}  \neq \emptyset] + 
 \P_N\left(\mu_N^\gamma\big(\{x: x \in \mathscr{T}_N^{\gamma+\delta} \text{ or }x\notin  \mathscr{T}_N^{\gamma-\delta}\}\big) \ge \varepsilon/2 \right).
\]
By Remark~\ref{rk:supp}, this implies that if Assumptions~\ref{ass:1p} and~\ref{ass:max} hold, then for any $\varepsilon >0$
\[
\limsup_{N\to+\infty}  \P_N\big[ \mu_N^\gamma(\Omega)  \ge \varepsilon \big]  =0, 
\]
i.e. that in the super-critical regime $\gamma >\gamma_*$, the random measure $\mu^\gamma_N$ converges to 0 in the sense that its total mass tends to zero in probability. \hfill $\blacksquare$
\end{remark}

We now turn to discussing further fractal properties of $\h$ through thick points and freezing.
\subsection{Size of the sets of thick points and freezing}\label{sect:freezing}

In this section, we will provide estimates about the size of the sets of $\gamma$-thick points for any $\gamma>0$ and then deduce a general freezing result.  The proofs rely on the connection between the multiplicative chaos measures and the sets of thick points.

\begin{proposition} \label{prop:TP}
Under Assumptions~\ref{ass:1p} and~\ref{ass:gmc}, for any $0 \le \gamma <\gamma_*$ and $\delta>0$,
\begin{equation*}
\lim_{N\to\infty} \P_N\left( \frac{\gamma^2}{2} + \delta \ge  \frac{\log | \mathscr{T}_N^\gamma|}{\log \epsilon_N} \ge  \frac{\gamma^2}{2} - \delta\right) =1 .
\end{equation*}
Hence under $\P_N$, the random variable $\frac{\log | \mathscr{T}_N^\gamma|}{\log \epsilon_N}$  converges in probability to $\gamma^2/2$. 
\end{proposition}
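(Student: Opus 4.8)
The plan is to prove the two-sided bound for $\log|\mathscr T_N^\gamma|/\log\epsilon_N$ separately, and in both cases the key is the elementary identity
\[
\mu_N^\gamma\big(\mathscr T_N^\gamma\big)=\int_{\mathscr T_N^\gamma}\frac{e^{\gamma\X(x)}}{\E_N e^{\gamma\X(x)}}\,dx,
\]
together with the exponential moment bounds \eqref{1p2}--\eqref{1p1} from Assumptions~\ref{ass:1p} which control $\E_N e^{\gamma\X(x)}$ up to constants by $\epsilon_N^{-\gamma^2/2}$, and the convergence of $\mu_N^\gamma(A)$ to a strictly positive, finite limit from Assumptions~\ref{ass:gmc}. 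The two directions require, respectively, an upper bound on $|\mathscr T_N^\gamma|$ via a first-moment (Markov) estimate, and a lower bound on $|\mathscr T_N^\gamma|$ via a pointwise comparison of $\mu_N^\gamma$-mass to Lebesgue measure on $\mathscr T_N^\gamma$.

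First I would prove the upper bound $\log|\mathscr T_N^\gamma|/\log\epsilon_N\ge \gamma^2/2-\delta$ with probability tending to $1$; note that since $\log\epsilon_N\to-\infty$ this is an \emph{upper} bound on $|\mathscr T_N^\gamma|$, namely $|\mathscr T_N^\gamma|\le \epsilon_N^{\gamma^2/2-\delta}$ w.h.p. For this, fix a compact $A\subset\Omega$ with $|\Omega\setminus A|$ small and write $\1_{\X(x)\ge\gamma\log\epsilon_N^{-1}}\le e^{\gamma\X(x)}\epsilon_N^{\gamma^2}$, so by \eqref{1p1},
\[
\E_N|\mathscr T_N^\gamma\cap A|=\int_A\P_N\big[\X(x)\ge\gamma\log\epsilon_N^{-1}\big]dx\le\int_A \E_N e^{\gamma\X(x)}\,\epsilon_N^{\gamma^2}\,dx\ll \epsilon_N^{\gamma^2/2}.
\]
Actually I want a slightly better exponent than $\gamma^2/2$ to beat the $\delta$, which is achieved by the standard trick of optimizing: use $\1_{\X(x)\ge\gamma\log\epsilon_N^{-1}}\le e^{(\gamma+\delta')\X(x)}\epsilon_N^{\gamma(\gamma+\delta')}$ for a small $\delta'>0$, giving $\E_N|\mathscr T_N^\gamma\cap A|\ll\epsilon_N^{\gamma^2/2+\gamma\delta'-\delta'^2/2}\le\epsilon_N^{\gamma^2/2+\delta''}$. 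Markov's inequality then yields $\P_N[|\mathscr T_N^\gamma\cap A|\ge\epsilon_N^{\gamma^2/2-\delta}]\to 0$; handling the (Lebesgue-small) complement $\Omega\setminus A$ crudely via $|\mathscr T_N^\gamma\cap(\Omega\setminus A)|\le|\Omega\setminus A|$ is not quite enough since that is a constant, so instead I would take $A=A_N$ exhausting $\Omega$ slowly, or more simply apply the same first-moment bound on all of $\Omega$ using \eqref{1p2} in place of \eqref{1p1} (which holds for all $x\in\Omega$), giving $\E_N|\mathscr T_N^\gamma|\ll\epsilon_N^{\gamma^2/2+\delta''}$ directly; then Markov finishes it.

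For the lower bound $\log|\mathscr T_N^\gamma|/\log\epsilon_N\le\gamma^2/2+\delta$ w.h.p., i.e. $|\mathscr T_N^\gamma|\ge\epsilon_N^{\gamma^2/2+\delta}$ w.h.p., I would argue as follows. Fix a compact $A\subset\Omega$ with $|A|>0$. On $\mathscr T_N^\gamma$ the integrand of $\mu_N^\gamma$ is small: for $x\in\mathscr T_N^{\gamma+\delta'}$ we have $e^{\gamma\X(x)}\le e^{\gamma(\gamma+\delta')\log\epsilon_N^{-1}}$ wait — that is the wrong direction. Instead, split $\mu_N^\gamma(A)=\mu_N^\gamma(A\cap\mathscr T_N^{\gamma-\delta'})+\mu_N^\gamma(A\setminus\mathscr T_N^{\gamma-\delta'})$. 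By Lemma~\ref{lem:supp}, $\E_N\mu_N^\gamma(A\setminus\mathscr T_N^{\gamma-\delta'})\ll\epsilon_N^{\delta'^2/2}\to0$, so by Markov $\mu_N^\gamma(A\setminus\mathscr T_N^{\gamma-\delta'})\to0$ in probability; since $\mu_N^\gamma(A)$ converges in law to a strictly positive limit $\zeta_A^\gamma$ by Assumptions~\ref{ass:gmc}, we get that $\mu_N^\gamma(A\cap\mathscr T_N^{\gamma-\delta'})$ stays bounded away from $0$ with probability close to $1$. On the other hand, on $\mathscr T_N^{\gamma-\delta'}$ (for $\delta'$ small the set $\mathscr T_N^{\gamma-\delta'}$ contains $\mathscr T_N^\gamma$ but we want an upper bound on its $\mu_N^\gamma$-density in terms of its Lebesgue measure) — more carefully, for any $x$ the density of $\mu_N^\gamma$ is $e^{\gamma\X(x)}/\E_N e^{\gamma\X(x)}$, and restricting instead to $\mathscr T_N^{\gamma+\delta'}$ we still need a ceiling; the cleanest route is: $\mu_N^\gamma(\mathscr T_N^{\gamma-\delta'}\setminus\mathscr T_N^{\gamma+\delta'})\le \sup_{x}\frac{e^{(\gamma+\delta')\log\epsilon_N^{-1}\cdot\gamma}}{\E_N e^{\gamma\X(x)}}\,|\mathscr T_N^{\gamma-\delta'}|\ll \epsilon_N^{-\gamma(\gamma+\delta')+\gamma^2/2}|\mathscr T_N^{\gamma-\delta'}|$ using \eqref{1p1}, and combining with the fact that $\mu_N^\gamma(\mathscr T_N^{\gamma+\delta'})\to0$ in probability (Lemma~\ref{lem:supp} again) and $\mu_N^\gamma(A\cap\mathscr T_N^{\gamma-\delta'})$ bounded below, we obtain $|\mathscr T_N^{\gamma-\delta'}|\gg\epsilon_N^{\gamma(\gamma+\delta')-\gamma^2/2}=\epsilon_N^{\gamma^2/2+\gamma\delta'}$ w.h.p.; since $\delta'$ is free and $\mathscr T_N^\gamma\subset\mathscr T_N^{\gamma-\delta'}$ is the wrong inclusion — so I would instead run this argument directly with $\gamma$ replaced by $\gamma+\delta'$ throughout the lower-bound step, using that $\mathscr T_N^{\gamma+\delta'}\subset\mathscr T_N^\gamma$, concluding $|\mathscr T_N^\gamma|\ge|\mathscr T_N^{\gamma+\delta'}|\gg\epsilon_N^{(\gamma+\delta')^2/2+\text{small}}$, and choosing $\delta'$ small relative to $\delta$ gives the claim. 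The main obstacle is precisely this bookkeeping of which thick-point level to use where: one must feed a slightly super-critical exponent into Lemma~\ref{lem:supp} and a slightly sub-critical one into the positivity of $\mu_N^\gamma(A)$, and reconcile the two with the Lebesgue-vs-chaos density comparison, all while keeping the $\delta$'s ordered correctly; once that is set up, each individual estimate is a one-line Markov or Gaussian computation.
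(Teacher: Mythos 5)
Your proposal is correct and for the lower bound on $|\mathscr T_N^\gamma|$ it follows the paper's argument exactly: introduce a buffer level $\delta'$, invoke Lemma~\ref{lem:supp} to kill $\mu_N^\gamma(A\cap\mathscr T_N^{\gamma+\delta'})$ and $\mu_N^\gamma(A\setminus\mathscr T_N^{\gamma-\delta'})$, use $\zeta^\gamma_A>0$ a.s.\ from Assumptions~\ref{ass:gmc} to keep $\mu_N^\gamma(A)$ bounded away from $0$, bound the $\mu_N^\gamma$-density on $\mathscr T_N^{\gamma-\delta'}\setminus\mathscr T_N^{\gamma+\delta'}$ via \eqref{1p1}, and then shift $\gamma\mapsto\gamma+\delta'$ so that the conclusion lands on $\mathscr T_N^\gamma$ itself. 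The bookkeeping you flag as the ``main obstacle'' is precisely what the paper resolves by choosing $\delta'$ with $\gamma\delta'+\delta'^2/2\le\delta$.

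For the other direction (the upper bound on $|\mathscr T_N^\gamma|$) you diverge from the paper in a clean way. You run a first-moment/Markov argument directly on $|\mathscr T_N^\gamma|$, using $\1_{\X(x)\ge\gamma\log\epsilon_N^{-1}}\le\epsilon_N^{\gamma^2}e^{\gamma\X(x)}$ together with \eqref{1p2}; this gives $\E_N|\mathscr T_N^\gamma|\ll\epsilon_N^{\gamma^2/2}$ and then Markov does the rest. This uses only Assumptions~\ref{ass:1p}. The paper instead lower-bounds the density of $\mu_N^\gamma$ on $\mathscr T_N^\gamma$ by $R_\gamma^{-1}\epsilon_N^{-\gamma^2/2}$ to get $|\mathscr T_N^\gamma|\le R_\gamma\epsilon_N^{\gamma^2/2}\mu_N^\gamma(\Omega)$ and then invokes tightness of $\mu_N^\gamma(\Omega)$ from Assumptions~\ref{ass:gmc}. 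Both are one-line arguments and give the same exponent; yours is marginally more elementary since it drops the reliance on the GMC convergence here. One correction to your narration: the ``optimization'' step where you tilt by $e^{(\gamma+\delta')\X(x)}$ is both unnecessary and miscomputed --- the exponent you would get is $\gamma^2/2-\delta'^2/2$, not $\gamma^2/2+\gamma\delta'-\delta'^2/2$, so the tilt strictly worsens the bound. The plain tilt at $\gamma$ already yields the optimal exponent $\gamma^2/2$, and the extra factor $\epsilon_N^\delta$ coming from Markov is what beats the $\delta$. Finally, the paper treats $\gamma=0$ as a separate trivial/symmetry case, which you omit; since $\gamma=0$ is excluded in Theorem~\ref{th:TP} this is minor, but the proposition as stated does include it.
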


\begin{proof} 
Let us first point out that for $\gamma=0$, the lower bound is trivial since $|\Omega|<\infty$, while the upper bound follows from a simple symmetry argument making use of the fact that $|\{x\in \Omega: \X(x)\geq 0\}|\stackrel{d}{=}|\{x\in \Omega: \X(x)\leq 0\}|$. We omit further details and focus on the $\gamma>0$ case, whose proof we split into two parts.	

\medskip

{\bf Lower--bound.} 
The estimate \eqref{1p2} implies that
\[
\mu_N^\gamma(\Omega)\geq  \mu^\gamma_N\big( \mathscr{T}_N^\gamma ) \ge  R_\gamma^{-1}  \epsilon_N^{-\gamma^2/2}  | \mathscr{T}_N^\gamma| ,
\]
so that for any $\delta>0$, 
\[
\P_N \big[  | \mathscr{T}_N^\gamma| \ge \epsilon_N^{\gamma^2/2 -\delta}   \big]  \le  \P_N \big[  \mu^\gamma_N(\Omega) \ge R_\gamma^{-1} \epsilon_N^{-\delta} \big] .
\]
Since we assume that $\mu^\gamma_N(\Omega) \Rightarrow \zeta^\gamma_\Omega $ as $N\to+\infty$ and $ \zeta^\gamma_\Omega<+\infty$ almost surely, this implies that for any $\delta>0$
 \[
\limsup_{N\to+\infty} \P_N \big[  | \mathscr{T}_N^\gamma| \ge \epsilon_N^{\gamma^2/2 -\delta}   \big]  =0 , 
 \]
which is precisely the lower bound we are after. 
\medskip

{\bf Upper--bound.} Fix a compact set $A \subset \Omega$, choose $\delta'>0$ in such a way that  $\gamma\delta'+ \frac{\delta^{\prime2}}{2} \le \delta$    and define the event:
\[
\mathscr{A} = \Big\{ \big| A \cap \mathscr{T}_N^{\gamma-\delta'}\big| \le \epsilon_N^{(\gamma+\delta')^2/2}  \Big\} .
\]
We will show that $\P_N[\mathscr{A}] \to 0$ as $N\to+\infty$.
Since on the complement of $\mathscr A$, $\log |\mathscr T_N^{\gamma-\delta'}| \geq \frac 12 (\gamma+\delta')^2\log \epsilon_N$, the upper bound will follow  from replacing $\gamma$ by $\gamma+\delta'$ and using that $\gamma\delta'+ \frac{\delta^{\prime2}}{2} \le \delta$. 
On the one hand, let us observe that by \eqref{1p1}, conditionally on  $\mathscr{A}$,  we have for any $\varepsilon \ge 0$, 
\[ \begin{aligned}
\mu_N^\gamma\big(\{ x\in A : x\in  \mathscr{T}_N^{\gamma-\delta'} \setminus  \mathscr{T}_N^{\gamma+\delta'} \}\big) & \le C_{\gamma, A} \epsilon_N^{-\gamma(\gamma+\delta') + \gamma^2/2} \big| A \cap \mathscr{T}_N^{\gamma-\delta'} \big|\\
&\le C_{\gamma, A} \epsilon_N^{\delta^{\prime2}/4}  \le \varepsilon/2 , 
\end{aligned}\]
when the parameter $N$ is sufficiently large. 
On the other hand, since
\[
\mu_N^\gamma(A) = \mu_N^\gamma\big(\{ x\in A : x\in  \mathscr{T}_N^{\gamma-\delta} \setminus  \mathscr{T}_N^{\gamma+\delta}\} \big) 
+  \mu_N^\gamma\big(\{ x\in A : x \in \mathscr{T}_N^{\gamma+\delta} \text{ or }x\notin  \mathscr{T}_N^{\gamma-\delta}\}\big) ,
\]
this implies that for any $\varepsilon \ge 0$, if $N$ is large, 
\[
\P_N[ \mu_N^\gamma(A) > \varepsilon] \le \P_N[ \mathscr{A}^*] +  \P_N[  \mu_N^\gamma\big(\{ x\in A : x \in \mathscr{T}_N^{\gamma+\delta} \text{ or }x\notin  \mathscr{T}_N^{\gamma-\delta}\}\big)  \ge \varepsilon/2] ,
\]
where $\mathscr{A}^*$ denotes the complement of the event $\mathscr{A}$. 
 Hence, by Lemma~\ref{lem:supp} and Markov's inequality,  we obtain that for any $\varepsilon \ge 0$, 
\[ \begin{aligned}
\limsup_{N\to+\infty} \P_N[ \mathscr{A}] 
& \le \limsup_{N\to+\infty}\P_N[ \mu_N^\gamma(A) \le \varepsilon]. 
\end{aligned}\]
By Assumptions~\ref{ass:gmc},
$\displaystyle  \limsup_{N\to+\infty}\P_N[ \mu_N^\gamma(A) \le \varepsilon] \le \P[\zeta^\gamma_A \le \varepsilon]$, and $\P[\zeta^\gamma_A \le \varepsilon] \to0$ as $\varepsilon \to 0$ since $\zeta^\gamma_A>0$ almost surely. 
In the end, we conclude that $\P_N[ \mathscr{A}]  \to 0$ as $N\to+\infty$ which completes the proof.
\end{proof}

Our last proposition of this section shows that the free energy corresponding to the random field $\X$ exhibits  freezing.  
This can be seen as a corollary of Proposition~\ref{prop:TP}, see e.g. \cite[Section~4]{ABB16} for such a discussion, but we give a slightly  different proof for completeness.

\begin{proposition} \label{prop:freezing}
Under the Assumptions~\ref{ass:1p},~\ref{ass:gmc} and~\ref{ass:max},  for any $\gamma \ge 0$, as $N\to\infty$,
\[ 
\frac{1}{\log \epsilon_N^{-1}} \log\left( \int_\Omega e^{\gamma \X(x)} dx \right)  \to 
\gamma\big( \gamma\wedge \gamma_*\big) - \frac{\big( \gamma\wedge \gamma_*\big)^2}{2}
\]
in probability with respect to  $\P_N$.
\end{proposition}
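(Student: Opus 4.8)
The plan is to split the statement into a lower bound and an upper bound for $\frac{1}{\log\epsilon_N^{-1}}\log\int_\Omega e^{\gamma\X(x)}dx$, and to treat the two regimes $\gamma\le\gamma_*$ and $\gamma>\gamma_*$ in a unified way by exploiting the decomposition of the integral according to the thickness of $\X$. The key observation is that $\int_\Omega e^{\gamma\X(x)}dx = \mu_N^\gamma(\Omega)\cdot\E_N[e^{\gamma\X(x)}]$ only makes sense pointwise, so instead I would work directly with the integral and slice $\Omega$ into the level sets $\mathscr{T}_N^{\alpha}\setminus\mathscr{T}_N^{\alpha+d\alpha}$, i.e. the points where $\X(x)\approx\alpha\log\epsilon_N^{-1}$; on such a slice the integrand is $\approx\epsilon_N^{-\gamma\alpha}$ and, by Proposition~\ref{prop:TP}, the slice has Lebesgue measure $\approx\epsilon_N^{\alpha^2/2}$, so the contribution is $\approx\epsilon_N^{-\gamma\alpha+\alpha^2/2}$. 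Optimizing $-\gamma\alpha+\alpha^2/2$ over $\alpha\in[0,\gamma_*]$ (the range where thick points exist) gives $\alpha=\gamma\wedge\gamma_*$ and exponent $-\gamma(\gamma\wedge\gamma_*)+\frac{(\gamma\wedge\gamma_*)^2}{2}$, which is precisely the negative of the claimed limit. This is the heuristic; I now describe how to make each inequality rigorous.

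For the \textbf{lower bound}, fix $\alpha<\gamma\wedge\gamma_*$ and $\delta>0$ small. By Proposition~\ref{prop:TP} applied with parameter $\alpha$ (legitimate since $\alpha<\gamma_*$), with probability tending to $1$ we have $|\mathscr{T}_N^\alpha|\ge\epsilon_N^{\alpha^2/2+\delta}$. On this event, since $\X(x)\ge\alpha\log\epsilon_N^{-1}$ on $\mathscr{T}_N^\alpha$,
\[
\int_\Omega e^{\gamma\X(x)}dx \ge \int_{\mathscr{T}_N^\alpha}e^{\gamma\X(x)}dx \ge \epsilon_N^{-\gamma\alpha}|\mathscr{T}_N^\alpha| \ge \epsilon_N^{-\gamma\alpha+\alpha^2/2+\delta},
\]
so $\frac{1}{\log\epsilon_N^{-1}}\log\int_\Omega e^{\gamma\X(x)}dx \ge \gamma\alpha-\frac{\alpha^2}{2}-\delta$ with high probability. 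Letting $\alpha\uparrow\gamma\wedge\gamma_*$ and $\delta\downarrow0$ gives the lower bound. (When $\gamma\le\gamma_*$ one could alternatively and more cheaply use a first-moment/Jensen argument on a fixed compact $A$, $\frac{1}{|A|}\int_A e^{\gamma\X}\ge e^{\frac{\gamma}{|A|}\int_A\X}$, combined with the CLT, but the thick-point route covers both regimes at once.)

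For the \textbf{upper bound}, fix $\delta>0$ and decompose $\Omega$ into the regions $R_k=\{x: (k\delta)\log\epsilon_N^{-1}\le\X(x)<((k+1)\delta)\log\epsilon_N^{-1}\}$ for $k=0,1,\dots,\lceil\gamma_*/\delta\rceil$, together with the region $R_-=\{\X<0\}$ and the region $\mathscr{T}_N^{\gamma_*+\delta}$ of very thick points. On $R_k\subset\mathscr{T}_N^{k\delta}$ we have, by the lower-bound part of Proposition~\ref{prop:TP} used as an upper bound on the measure — more precisely using $|\mathscr{T}_N^{k\delta}|\le\epsilon_N^{(k\delta)^2/2-\delta'}$ with high probability (which is the "$\le$" half of Proposition~\ref{prop:TP}) — that $|R_k|\le\epsilon_N^{(k\delta)^2/2-\delta'}$, so $\int_{R_k}e^{\gamma\X}\le\epsilon_N^{-\gamma(k+1)\delta}\epsilon_N^{(k\delta)^2/2-\delta'}\le\epsilon_N^{-\gamma(k\delta)+(k\delta)^2/2-O(\delta)-\delta'}$. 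Since $-\gamma t+t^2/2\ge -\gamma(\gamma\wedge\gamma_*)+\frac{(\gamma\wedge\gamma_*)^2}{2}$ for all $t\in[0,\gamma_*]$, each such term is bounded by $\epsilon_N^{-\gamma(\gamma\wedge\gamma_*)+(\gamma\wedge\gamma_*)^2/2-O(\delta)-\delta'}$. The contribution of $R_-$ is bounded by $|\Omega|$, hence negligible on the exponential scale; and the contribution of $\mathscr{T}_N^{\gamma_*+\delta}$ must be handled by Assumption~\ref{ass:max}: on the event $\{\mathscr{T}_N^{\gamma_*+\delta}=\emptyset\}$, which has probability $\to1$, this region is empty. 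Summing the finitely many (a number depending only on $\delta$, not $N$) contributions and taking logs, $\frac{1}{\log\epsilon_N^{-1}}\log\int_\Omega e^{\gamma\X}\le -\gamma(\gamma\wedge\gamma_*)+\frac{(\gamma\wedge\gamma_*)^2}{2}+O(\delta)+\delta'+o(1)$ with high probability; let $\delta,\delta'\downarrow0$.

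The \textbf{main obstacle} is making the upper bound fully rigorous: Proposition~\ref{prop:TP} controls $|\mathscr{T}_N^\gamma|$ only for $\gamma<\gamma_*$, so the slices $R_k$ with $k\delta$ close to $\gamma_*$ are covered, but one must be careful that the finitely many exceptional events (one per slice, plus the event from Assumption~\ref{ass:max}) are combined by a union bound — this is fine since their number is fixed independent of $N$. A secondary point requiring care is that the exponent function $t\mapsto -\gamma t+t^2/2$ is decreasing then increasing, so its maximum over $[0,\gamma_*]$ is attained at an endpoint or at $t=\gamma$; one checks that on $[0,\gamma_*]$ the maximum is $-\gamma(\gamma\wedge\gamma_*)+\frac{(\gamma\wedge\gamma_*)^2}{2}$ exactly (for $\gamma\le\gamma_*$ the max is at $t=\gamma$ giving $-\gamma^2/2$; for $\gamma>\gamma_*$ it is at $t=\gamma_*$ giving $-\gamma\gamma_*+\gamma_*^2/2$), matching the two cases in the statement. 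Everything else is bookkeeping with Lebesgue measure and the elementary estimate $\log\sum_j a_j\le\log(\#\{j\})+\max_j\log a_j$.
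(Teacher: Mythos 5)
Your proposal is correct and takes the same conceptual route as the paper for the supercritical regime $\gamma\ge\gamma_*$, but a genuinely different route for the subcritical regime $\gamma<\gamma_*$. The paper's proof of Proposition~\ref{prop:freezing} splits into two cases: for $\gamma<\gamma_*$ it bypasses thick points entirely and sandwiches the integral directly using Assumption~\ref{ass:1p},
\[
C_{\gamma,A}^{-1}\,\epsilon_N^{-\gamma^2/2}\,\mu_N^\gamma(A)\le \int_\Omega e^{\gamma\X}dx \le R_\gamma\,\epsilon_N^{-\gamma^2/2}\,\mu_N^\gamma(\Omega),
\]
and then invokes convergence of $\mu_N^\gamma$ to a strictly positive, finite random variable (Assumption~\ref{ass:gmc}) plus the continuous mapping theorem; for $\gamma\ge\gamma_*$ it uses a thick-point slicing argument very close in spirit to yours, though with the slicing anchored at $\gamma_*-\delta_k$ for an increasing sequence $\delta_k$ so that Proposition~\ref{prop:TP} is applied only at levels strictly below $\gamma_*$. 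Your proposal unifies both regimes via slicing, which has the aesthetic virtue of treating $\gamma\le\gamma_*$ and $\gamma>\gamma_*$ at once and of making the variational principle $-\gamma\alpha+\alpha^2/2$ visible; the paper's split buys a shorter argument in the subcritical case because the sandwich reduces everything to a single weak-convergence statement without any bookkeeping over level sets. Both routes use the same key lemmas (Proposition~\ref{prop:TP} and Assumption~\ref{ass:max}).

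On the one substantive issue you flag — the top slices $R_k$ with $k\delta$ near or at $\gamma_*$, where Proposition~\ref{prop:TP} is not directly applicable — you are right that it needs care, and the paper's choice of grid (descending from $\gamma_*-\delta$, with $\delta_K\le\gamma_*$) is exactly designed to dodge it. Your fix is available and simple: for the top slice, replace the inclusion $R_k\subset\mathscr{T}_N^{k\delta}$ by $R_k\subset\mathscr{T}_N^{\gamma_*-\delta}$ and apply Proposition~\ref{prop:TP} at level $\gamma_*-\delta$, accepting an $O(\delta)$ loss in the exponent that vanishes in the final limit. This is not a gap, just a point that must be written carefully; with that amendment the argument closes.
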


\begin{proof}
The proofs for $\gamma<\gamma_*$ and $\gamma\geq \gamma_*$ are rather different. For $\gamma<\gamma_*$, the proof  is a direct consequence of convergence to multiplicative chaos, while for $\gamma\geq \gamma_*$, we rely on our estimate on the size of the sets of thick points.

\medskip

{\bf Case 1.} Let us first consider the sub-critical regime $0<\gamma< \gamma_*$. It follows from Assumptions \ref{ass:1p} that for any compact $A\subset \Omega$
\[
C_{\gamma, A}^{-1}  \epsilon_N^{-\gamma^2/2}  \mu^\gamma_N(A) \le  \int_Ae^{\gamma \X(x)} dx  \le \int_\Omega e^{\gamma \X(x)} dx  \le R_\gamma \epsilon_N^{-\gamma^2/2} \mu^\gamma_N(\Omega),
\]
or in other words, 
\begin{equation}\label{eq:contmap}
\frac{\log C_{\gamma,A}^{-1}+\log \mu_N^\gamma(A)}{\log \epsilon_N^{-1}}\leq \frac{\log \int_\Omega e^{\gamma \X(x)}dx}{\log \epsilon_N^{-1}}-\frac{\gamma^2}{2}\leq \frac{\log R_\gamma+\log \mu_N^\gamma(\Omega)}{\log \epsilon_N^{-1}}.
\end{equation}
As $\mu_N^\gamma(\Omega)$ and similarly $\mu_N^\gamma(A)$ converges in law to an almost surely positive random variable, the continuous mapping theorem (see e.g. \cite[Theorem 4.27]{Kallenberg02}) implies that 
the RHS and LHS of the inequalities \eqref{eq:contmap} converge to zero in law and in probability  by a routine argument.
This shows that under $\P_N$, $\frac{\log \int_\Omega e^{\gamma \X(x)}dx}{\log \epsilon_N^{-1}}$ converges to $\frac{\gamma^2}{2}$ in probability in the subcritical regime. 
\medskip

{\bf Case 2.}  Now, we suppose that $\gamma \ge \gamma_*$. 
Let $0<\varepsilon<1$ and $0<\delta < \frac{\varepsilon}{2(\gamma+\gamma_*)}$  be small fixed  parameters. 
First of all,  observe that by the definition of $\mathscr T_N^{\gamma_*-\delta}$, 
\[\begin{aligned}
\int_\Omega e^{\gamma \X(x)} dx   
\ge \int_{  \mathscr{T}_N^{ \gamma_*-\delta}} e^{\gamma \X(x)} dx    
\ge \epsilon_N^{-\gamma(\gamma_*-\delta)} \big|  \mathscr{T}_N^{ \gamma_*-\delta}\big| , 
\end{aligned}\]
and by Proposition~\ref{prop:TP}, and the fact that $\gamma\delta \le \varepsilon$, we obtain that for small enough $\varepsilon>0$ as $N\to\infty$, 
\begin{equation} \label{freez3}
\P_N \left[ \int_\Omega e^{\gamma \X(x)} dx  \ge \epsilon_N^{-\gamma\gamma_*+\gamma_*^2/2+ \varepsilon} \right] \geq \P_N \left[ \left|\mathscr T_N^{\gamma_*-\delta}\right|  \ge \epsilon_N^{(\gamma_*^2 -\delta)/2 + \delta} \right] \to 1. 
\end{equation}

Next we  will obtain a complementary upper-bound.
Let $K = \lfloor \frac{2 \gamma_* (\gamma_*-\delta)}{\varepsilon} \rfloor$ and let us consider the sequence  
$\delta_{k} = \delta+ k \frac{\varepsilon}{2\gamma_*}$ for $k\ge 0$ (note that  $\delta_K \le \gamma_*$).
One has the decomposition:
\[ \begin{aligned}
\int_\Omega e^{\gamma \X(x)} dx  & =
 \int_{  \mathscr{T}_N^{ \gamma_*+\delta}}  e^{\gamma \X(x)} dx 
+ \int_{  \mathscr{T}_N^{ \gamma_*-\delta} \setminus \mathscr{T}_N^{ \gamma_*+\delta}}  e^{\gamma \X(x)} dx \\
&\qquad +  
\sum_{k =0}^{K-1}
\int_{  \mathscr{T}_N^{ \gamma_*-\delta_{k+1}} \setminus \mathscr{T}_N^{ \gamma_*- \delta_{k}}}  e^{\gamma \X(x)} dx 
+  \int_{ \Omega \setminus \mathscr{T}_N^{\gamma_*- \delta_{K}}}   e^{\gamma \X(x)} dx  .  
\end{aligned}\] 
First, since $\delta_K \ge \gamma_* - \frac{\varepsilon}{2\gamma_*}$,  we have $\X(x) \le  \frac{\varepsilon}{2\gamma_*}  \log \epsilon_N^{-1}$ for all $x\notin \mathscr{T}_N^{ \gamma_*  - \delta_K}$ and
\[
\Big|  \int_{ \Omega \setminus \mathscr{T}_N^{\gamma_*-\delta_K}}   e^{\gamma \X(x)} dx  \Big| \le |\Omega| \epsilon_N^{- \frac{\gamma}{2\gamma_*}\varepsilon}  \le  |\Omega|  \epsilon_N^{ -\gamma\gamma_*+\frac{\gamma_*^2}{2}}. 
\]

Secondly, if $\mathscr{A} = \big\{ | \mathscr{T}_N^{ \gamma_*-\delta_{k}} |  \le \epsilon_N^{\frac{(\gamma_*-\delta_k)^2- \varepsilon}{2}} ,\ k=0, \dots, K  \} $,
we have conditionally on $\mathscr{A}$, for any $k=0, \dots , K-1$, 
\[\begin{aligned}
\int_{  \mathscr{T}_N^{ \gamma_*-\delta_{k+1}} \setminus \mathscr{T}_N^{ \gamma_*- \delta_{k}}}  e^{\gamma \X(x)} dx  &\le 
\big| \mathscr{T}_N^{ \gamma_*- \delta_{k+1}} \big|  \epsilon_N^{-\gamma(\gamma_*-\delta_k)} \\
&\le \epsilon_N^{ (\gamma_*- \delta_{k+1})^2/2  - \gamma(\gamma_*-\delta_k) - \varepsilon/2} \\
&\le \epsilon_N^{-\gamma\gamma_*+ \gamma_*^2/2 -\varepsilon} \epsilon_N^{ \delta_{k+1}^2/2} .
\end{aligned}\]
For the last step, we used that $\gamma \delta_k -\gamma_* \delta_{k+1}  \ge -\frac{\varepsilon}{2} $. Similarly, since $\delta < \frac{\varepsilon}{2(\gamma +\gamma_*)}$, we have conditionally on $\mathscr A$ (where we use the condition for $k=0$):
\[\begin{aligned}
 \int_{  \mathscr{T}_N^{ \gamma_*-\delta} \setminus \mathscr{T}_N^{ \gamma_*+\delta}}  e^{\gamma \X(x)} dx 
&\le \epsilon_N^{ (\gamma_*- \delta)^2/2- \gamma(\gamma_*+\delta) - \varepsilon/2} \\
&\le \epsilon_N^{-\gamma\gamma_*+ \gamma_*^2/2 - \varepsilon }\epsilon_N^{\delta^2/2} . 
\end{aligned}\] 
Putting together our estimates  shows that conditionally on $\mathscr{A} \cap \{  \mathscr{T}_N^{ \gamma_*+\delta} = \emptyset\}$,  
\[
\int_\Omega e^{\gamma \X(x)} dx  \le   \epsilon_N^{-\gamma\gamma_*+ \gamma_*^2/2-\varepsilon} \left( {\textstyle \sum_{k=0}^{K} }\epsilon_N^{\delta_k^2/2}  + |\Omega| \epsilon_N^{\varepsilon}\right).    
\]
If $N$ is sufficiently large, the previous parenthesis is bounded by $1$ and we have shown that for large enough $N$, 
\begin{equation} \label{freez4}
\P_N[\mathscr{A} \cap \{  \mathscr{T}_N^{ \gamma_*+\delta} = \emptyset\}] \le 
\P_N \left[\int_\Omega e^{\gamma \X(x)} dx  \le   \epsilon_N^{-\gamma\gamma_*+ \gamma_*^2/2-\varepsilon}   \right] . 
\end{equation}
By Proposition~\ref{prop:TP} and  Assumption~\ref{ass:max}, 
$\P_N[\mathscr{A} \cap \{  \mathscr{T}_N^{ \gamma_*+\delta} = \emptyset\}] \to1$ as $N\to+\infty$.
Hence, combining the estimates \eqref{freez3} and  \eqref{freez4}, we conclude that for any $\gamma\ge \gamma_*$,  as $N\to\infty$
\[
\frac{1}{\log \epsilon_N^{-1}} \log\left( \int_\Omega e^{\gamma \X(x)} dx \right) 
\to  \gamma\gamma_* - \gamma_*^2/2
\]
in probability. 
\end{proof}
This concludes our general discussion and we now move on to applying it to random matrix theory.

\subsection{Application to the eigenvalue counting function in the one-cut regular case} \label{sect:evcf}

The main goal of this section is to prove  Theorem~\ref{thm:max2} by applying  the results of Section~\ref{sect:max}. In fact, we will give two independent proofs of the lower bound.  For the upper bound, the results of Section \ref{sect:max} provide a tool for controlling the maximum over $\Omega=(-1,1)$. To extend the upper bound to all of $\R$, as required in Theorem \ref{thm:max2}, we will make use of monotonicity properties of the eigenvalue counting function.

In Section~\ref{sect:proof1}, we work directly with the eigenvalue counting function using Theorem~\ref{th:gmc} whose proof in Section \ref{sec:special} required Fisher-Hartwig asymptotics in the regime where two singularities are merging -- the full extent of Theorem \ref{th:Hankel1} and Theorem \ref{th:Hankel2}. 
The second proof for the lower bound in Section~\ref{sect:proof2} is also interesting because it relies on Theorem~\ref{th:gmc2} whose proof in Section \ref{sect:gmceasy} is based 
on exponential moment estimates which are much easier to control in the RH analysis.

In Section \ref{sect:proof1}, we also offer proofs of Theorem \ref{th:TP} and Corollary \ref{thm:freezing} using the results of Section \ref{sect:freezing}.

\subsubsection{Proof of Theorem~\ref{thm:max2} and  proofs of Theorem~\ref{th:TP} and Corollary~\ref{thm:freezing}.} \label{sect:proof1}

We will first focus on the maximum over $(-1,1)$ using our general approach and in the end of this section extend to the maximum over $\R$ using specific properties of the eigenvalue counting function. 

Let  $\P_N$  be the law of the eigenvalues $(\lambda_1,\dots ,\lambda_N)$ of random matrix of size $N$ coming from a one-cut regular potential (or to be more precise, a potential satisfying Assumptions \ref{ass:regular}) $V$ with equilibrium measure 
$d\mu_V = \psi_V(t)\sqrt{1-t^2} \1_{|t|<1}dt$ as in Section \ref{rigidity_results}. 
We let  $F(x) = \int_{-1}^{x} d\mu_V$ and recall that for all $x\in\R$, 
\[ 
\h(x) = \sqrt{2}\pi \#\{j:\lambda_j \le x\} - \sqrt{2}\pi N F(x) .
\]
Note that this function is bounded from above, integrable on $(-1,1)$, and upper semicontinuous on $\R$. 
As we have seen in Section \ref{gmc_results}, under  $\P_N$,   the function $\X=  \h$  approximates   a Gaussian log-correlated field on $\Omega=(-1,1)$  in the sense of Definition~\ref{def:EN} and the underlying (centered) Gaussian process has the covariance kernel: 
\[
\Sigma(x,y) = \log\left( \frac{1-xy+\sqrt{1-x^2}\sqrt{1-y^2}}{|x-y|} \right).
\] 

In order to control the maximum over $(-1,1)$, we first need to check the validity of Assumptions 3.1.  This will rely on Theorem \ref{th:Hankel3}.
 
\begin{lemma} \label{lem:1p}
The random fields $\X=  \pm \h$ restricted to $(-1,1)$ satisfy Assumptions~\ref{ass:1p} with $\epsilon_N = 1/N$.  
\end{lemma}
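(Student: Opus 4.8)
The plan is to deduce Assumptions~\ref{ass:1p} for $\X = \pm\h$ directly from the single-singularity Hankel asymptotics of Theorem~\ref{th:Hankel3}. Recall the translation between moments of the counting function and Hankel determinants: by the connection spelled out in Section~\ref{sec:special} (and the discussion around \eqref{eq:Heine}), for $x\in(-1,1)$ and $\gamma>0$ one has
\[
\E_N\big[e^{\gamma\h(x)}\big] = e^{-\sqrt{2}\pi\gamma N F(x)}\,\frac{D_N(e^{w-NV})}{Z_N},
\qquad w(\lambda)=\sqrt{2}\pi\gamma\,\mathbf 1_{\{\lambda\le x\}},
\]
which in the notation of \eqref{eq:FH} reads $\E_N[e^{\gamma\h(x)}] = e^{-\sqrt{2}\pi\gamma N F(x)}\,D_N(x;\gamma;0)/D_N(x;0;0)$, since $D_N(x;0;0)=Z_N$. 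The quantity $D_N(x;0;0)$ here is simply $Z_N$, so this is exactly the ratio appearing in Theorem~\ref{th:Hankel3}. First I would substitute the asymptotics of Theorem~\ref{th:Hankel3}: writing $N\int_{-1}^x d\mu_V = NF(x)$, the term $\sqrt{2}\pi\gamma N\int_{-1}^x d\mu_V$ cancels the prefactor $e^{-\sqrt{2}\pi\gamma N F(x)}$ exactly, leaving
\[
\log\E_N\big[e^{\gamma\h(x)}\big] = \frac{\gamma^2}{2}\log N + \frac{3\gamma^2}{4}\log(1-x^2) + \mathcal O(1),
\]
with the $\mathcal O(1)$ uniform for $|x|\le 1-mN^{-2/3}$ and $\gamma\in[-\Gamma,\Gamma]$.

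From this identity the two bounds of Assumptions~\ref{ass:1p} follow by elementary estimates with $\epsilon_N = 1/N$, so that $\epsilon_N^{-\gamma^2/2} = N^{\gamma^2/2}$. For the upper bound \eqref{1p2}, valid on all of $\Omega=(-1,1)$: on the range $|x|\le 1-mN^{-2/3}$ we have $\log(1-x^2)\le 0$, so $\log\E_N[e^{\gamma\h(x)}]\le \frac{\gamma^2}{2}\log N + C_\gamma$, giving $\E_N[e^{\gamma\h(x)}]\le R_\gamma\,N^{\gamma^2/2}$. For the complementary sliver $1-mN^{-2/3}<|x|<1$, I would note the crude deterministic bound $|\h(x)|\le 2\sqrt{2}\pi N$, valid for every realization (as recorded in Section~\ref{sec:special}), hence $\E_N[e^{\gamma\h(x)}]\le e^{2\sqrt{2}\pi\gamma N}$; this is clearly $\le R_\gamma N^{\gamma^2/2}$ once $R_\gamma$ is taken large --- wait, that is false as $N\to\infty$. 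The correct fix is that near the edge one should use instead the rigidity of extreme eigenvalues: by Tracy--Widom fluctuations the number of eigenvalues beyond $1-mN^{-2/3}$ is $\mathcal O(1)$ with stretched-exponential tails, so $\h(x)$ restricted to this sliver is bounded in exponential moments uniformly; alternatively, and more in keeping with the paper's self-contained style, one enlarges the constant $m$ and invokes Theorem~\ref{th:Hankel3} up to $|x|\le 1-mN^{-2/3}$ and handles $|x|\in(1-mN^{-2/3},1)$ via $\log(1-x^2)\ge \log(mN^{-2/3})\ge -CN^{2/3}$ together with a separate edge estimate. For the lower bound \eqref{1p1}, which need only hold on a compact $A=[-1+a,1-a]\subset\Omega$: there $\log(1-x^2)\ge \log(a(2-a))=:-c_a$ is bounded below, so $\log\E_N[e^{\gamma\h(x)}]\ge \frac{\gamma^2}{2}\log N - C_\gamma - \frac{3\gamma^2}{4}c_a$, giving $\E_N[e^{\gamma\h(x)}]\ge C_{\gamma,A}^{-1}N^{\gamma^2/2}$.

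The same argument applies verbatim to $\X=-\h$: replacing $\gamma$ by $-\gamma$ in the above (Theorem~\ref{th:Hankel3} is stated for all $\gamma\in[-\Gamma,\Gamma]$) yields $\log\E_N[e^{-\gamma\h(x)}] = \frac{\gamma^2}{2}\log N + \frac{3\gamma^2}{4}\log(1-x^2)+\mathcal O(1)$, and the bounds follow identically. The main (only) obstacle is the behavior in the boundary layer $1-mN^{-2/3}<|x|<1$, which is outside the uniformity window of Theorem~\ref{th:Hankel3}: I would either cite a standard edge rigidity estimate (e.g. from \cite{EYY} or the RH analysis of the edge already needed elsewhere in the paper) to control $\E_N[e^{\pm\gamma\h(x)}]$ there, or observe that for the \emph{upper} bound one may harmlessly shrink $\Omega$ slightly --- but since \eqref{1p2} is required on all of $(-1,1)$, the cleanest route is to combine Theorem~\ref{th:Hankel3} with the fact that $\h$ is, with overwhelming probability, uniformly bounded by $C\log N$ on the whole interval (a consequence of the crude rigidity estimate \eqref{EYY} cited in the introduction), which more than suffices for the required $N^{\gamma^2/2}$ upper bound on exponential moments after splitting the expectation over this high-probability event and its complement.
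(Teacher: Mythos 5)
Your argument is correct and matches the paper's proof for the bulk range $|x|\le 1-mN^{-2/3}$: the translation to $D_N(x;\gamma;0)/D_N(x;0;0)$, the application of Theorem~\ref{th:Hankel3}, the cancellation of the $\sqrt{2}\pi\gamma NF(x)$ term, and both the lower bound on compact subsets and the upper bound on the bulk range go through as you say. Your shortcut $\log(1-x^2)\le 0$ is fine for this lemma; the paper records the slightly sharper estimate $\E_N e^{\gamma\h(x)}\le R_\gamma'\big(NF(x)\big)^{\gamma^2/2}$ only because it is reused later in the proof of Lemma~\ref{lem:MB}, but for Assumption~\ref{ass:1p} either form suffices.

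The genuine gap is in the slivers $1-mN^{-2/3}<|x|<1$, and the route you finally settle on does not close. First, the rigidity estimate \eqref{EYY} gives $|\h|$ of order $(\log N)^{\alpha\log\log N}$ with probability $1-\exp(-c(\log N)^{\alpha'\log\log N})$, not $C\log N$. More importantly, even granting a high-probability logarithmic bound, splitting $\E_N e^{\gamma\h(x)}$ over a good event and its complement fails: on the bad event $\h(x)$ can deterministically be as large as $2\sqrt{2}\pi N$, so the bad-event contribution is of order $e^{c\gamma N}\cdot\P_N[\text{bad}]$, and a stretched-exponential tail in $\log N$ does not beat $e^{c\gamma N}$. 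The Tracy--Widom route you mention first could be made to work but is far more machinery than needed. The observation you are missing is purely deterministic. For $x\ge 1-mN^{-2/3}$ one has $\#\{j:\lambda_j\le x\}\le N$, hence
\[
\h(x)\le\sqrt{2}\pi N\big(1-F(x)\big)\le\sqrt{2}\pi N\big(1-F(1-mN^{-2/3})\big)=\mathcal O_m(1),
\]
since the square-root vanishing of $\psi_V\sqrt{1-x^2}$ at the edge gives $1-F(1-mN^{-2/3})=\mathcal O(N^{-1})$. For $x\le-1+mN^{-2/3}$, monotonicity of $x\mapsto\#\{j:\lambda_j\le x\}$ together with $F\ge 0$ gives
\[
\h(x)\le\h(-1+mN^{-2/3})+\sqrt{2}\pi N F(-1+mN^{-2/3})=\h(-1+mN^{-2/3})+\mathcal O_m(1),
\]
so the already-established bulk bound at the point $-1+mN^{-2/3}$ transfers to the whole left sliver. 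The same two observations, with the roles of the edges interchanged, handle $\X=-\h$. This is the paper's argument; it is elementary, deterministic, and requires no edge universality input at all.
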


{\begin{proof}
We will give the proof in the case  $\X= \h$, the case $\X= -\h$ being analogous. Let us fix $\gamma>0$ and recall that in the  notation of Section \ref{sec:hankel_intro}, we have 
\[
\E_N e^{\gamma \h(x)}=\frac{D_N(x;  \gamma;0)}{D_N(0;0;0)}e^{-N \sqrt{2}\gamma\pi F(x)} ,
\]
By Theorem \ref{th:Hankel3}, this implies that there exists a constant $m \in \N$ such that as $N\to+\infty$,
\begin{equation} \label{Laplace1}
\log \E_N e^{\gamma \h(x)}=\frac{\gamma^2}{2}\log N+\frac{3\gamma^2}{4}\log(1-x^2)+ \mathcal O_\gamma(1),
\end{equation}
uniformly for all $|x|\le 1- m N^{-2/3}$. 
In particular,  this already implies that for any fixed compact set $A \subset (-1,1)$, there exists $C_{\gamma, A}$ such that for all $x\in A$,
\[
\E_N e^{\gamma  \h(x)} \ge C_{\gamma, A}^{-1} N^{ \gamma^2/2} .  
\]
This gives the estimate  \eqref{1p1} with $\epsilon_N = 1/N$ -- it remains to obtain the complementary bound   \eqref{1p2}.
Since we assume that the potential is regular, the equilibrium density vanishes like a square-root at the edges and there exists a constant $C_V>0$ such that  $\psi_V(x)(1-x^2)^{3/2} \le C_V F(x)$ for all $|x| \le 1$, 
 then by \eqref{Laplace1}, there exists a constant $R'_\gamma>0$  such that for all $|x|\le 1- m N^{-2/3}$,
\begin{equation} \label{Laplace2}
\E_N e^{\gamma  \h(x)} 
\le R'_\gamma  \big(NF(x)\big)^{\gamma^2/2}  . 
\end{equation}
To extend this estimate for $x\le -1+ m N^{-2/3}$, we use the deterministic bounds:
\[
 \h(x) \le \h(-1+ m N^{-2/3}) +  \sqrt{2}\pi N F(-1+ m N^{-2/3})
\qquad\text{and}\qquad  F(x) \le C_m N^{-1} ,  
\] 
where $C_m>0$ is a constant.  By \eqref{Laplace2}, this implies that for all $x\le -1+ m N^{-2/3}$, 
\begin{align} \notag
\E_N e^{\gamma  \h(x)}
&  \le e^{\sqrt{2}\pi  \gamma C_{m}} \E_N \big[ e^{\gamma  \h(-1+m N^{-2/3})} \big]   \\
& \label{Laplace_3} \le R'_\gamma  e^{\sqrt{2}\pi  \gamma C_{m}}  C_{m}^{\gamma^2/2} . 
\end{align} 
 Similarly, since  there exists a constant $C'_{m}>0$ such that for all $x\ge 1- m N^{-2/3}$, 
\[
\h(x) \le   \sqrt{2}\pi  N\left( 1 - \int^{1- m N^{-2/3}}_{-1} \hspace{-1cm}\psi_V(t) \sqrt{1-t^2} dt \right) \le C'_{m} .
\]
Combining this estimate and \eqref{Laplace_3}, we have shown that for any fixed $m>0$, 
\begin{equation} \label{Laplace4}
\E_N e^{\gamma  \h(x)} = \O_{m, \gamma}(1)
\end{equation}
uniformly for all $|x|\ge 1- m N^{-2/3}$.
With the estimate \eqref{Laplace2}, this completes the proof of~\eqref{1p2}. 
\end{proof}}

 By Theorem~\ref{th:gmc}, the random processes $\X=  \pm\h$   also satisfy the Assumptions~\ref{ass:gmc}. Thus to establish the proofs of Theorem \ref{thm:max2} (for the maximum over $(-1,1)$), Theorem \ref{th:TP}, and Corollary \ref{thm:freezing}, using Corollary \ref{max:LLN}, Proposition \ref{prop:TP}, and Proposition \ref{prop:freezing}, we simply need to verify Assumption \ref{ass:max}, or by Proposition \ref{prop:cont}, we simply need to verify the assumptions of Proposition \ref{prop:cont}. To do this, note that since the counting function $x\mapsto \#\{\lambda_j \le x\}$ is non-decreasing and the density of the equilibrium measure is bounded, $0 \le \psi_V(x)\sqrt{1-x^2} \le {\rm C}/\pi$, 
we have for any  $x\in [-1,1-\frac 1N]$,
\begin{equation*}
h_N(t) \ge h_N(x) - {\rm C} , \qquad \text{for all } t\in[x, x+\tfrac 1N] . 
\end{equation*}
Moreover, notice that if $x\in[1- \tfrac 1N , 1]$, we have the deterministic bound 
$h_N(x) \le \sqrt{2}\pi  N\big(1- F(1-\tfrac 1N)\big) \le C/\sqrt{N}$ where the constant $C>0$ depends only $F$. Similarly, since $h_N(t) \ge - \sqrt{2}\pi NF(\tfrac 1N)  \ge -  C/\sqrt{N}$ for all $t\in[-1,-1+\tfrac 1N]$, this implies that if $N$ is sufficiently large:
\[
h_N(t) \ge h_N(x) - {\rm C} , \qquad \text{for all } x\in[1-\tfrac 1N, 1] \text{ and } t\in[-1,-1+\tfrac 1N] . 
\]
This shows that the field $\h$ satisfies the assumption of Proposition \ref{prop:cont}, and this concludes the proof of Theorem \ref{thm:max2} (for the maximum over $(-1,1)$),  Theorem \ref{th:TP}, and Corollary \ref{thm:freezing} --  similar arguments also apply to the field $-\h$.  Note that this proof also gives the following estimate: for any compact set $A\subset(-1,1)$ such that $|A|>0$ and any $\delta>0$,
\begin{equation} \label{lower_bound_1}
\P_N\big[ (\sqrt{2} - \delta)\log  N \le  \max_{A} \h \le   \max_{[-1,1]} \h   \le  (\sqrt{2} +\delta)\log  N  \big] \to 1
\qquad\text{as }N\to+\infty. 
\end{equation}
By \eqref{LDE2}, we also obtain the following large deviation estimate: for any increasing sequence $\varpi_N$ such that 
$\displaystyle\lim_{N\to+\infty}\frac{\varpi_N}{\log N} =+\infty$, for any $\gamma>0$, there exists a constant $C_\gamma>0$ such that 
\begin{equation} \label{LDE3}
 \P_N\big[ \max_{[-1,1]} \h \ge \varpi_N \big] \le C_\gamma N^{\gamma^2/2+1} e^{-\gamma \varpi_N} . 
\end{equation}
 
To conclude the proof of Theorem \ref{thm:max2}, we need to control the maximum of $\h$ off of $(-1,1)$. For this, we note that immediately from the definition of $\h$, we have for $x<-1$, $h(x)\leq h(-1)$ and for $x\geq 1$, $h(x)\leq 0$. From this, $\max_\R \h=\max(\sup_{(-1,1)}\h,0)$ and the claim follows from our upper bound for the maximum over $(-1,1)$.

We now turn to our second proof of the lower bound for the maximum of the eigenvalue counting function.

\subsubsection{Second proof of the lower bound of Theorem~\ref{thm:max2}} \label{sect:proof2}

In this section we offer a proof of the lower bound part of Theorem \ref{thm:max2}, which requires milder estimates than the first proof we gave.  In particular, we will work with the smoothed eigenvalue counting function and rely on Theorem \ref{th:gmc2} instead of the actual eigenvalue counting function and Theorem \ref{th:gmc}. We focus on the lower bound since it seems hard to get an upper bound for $\h$ from an upper bound of the smoothed field. A direct approach for the upper bound for $\h$ would in any case rely on Theorem \ref{th:Hankel3} in some way and we already gave one proof of this type -- though see Lemma \ref{lem:MB} for a slightly different argument.

Let $\alpha>0$ and $A \subset (-1,1)$ be a compact  set.
For any $N\in\N$, we once again let $\P_N$ be the law of the eigenvalues of a  one-cut regular random matrix of size $N$ and now, as opposed to the previous section, we let $\epsilon_N = N^{\alpha-1}$.
 As a special case of Theorem~\ref{th:Hankel1} (see our verification of \eqref{exp_moment} based on Theorem \ref{th:Hankel1}, in Section \ref{sec:special}), we see that the random field $\X=  \h(\cdot + \i\epsilon_N)$  restricted to $A$ satisfies Assumptions~\ref{ass:1p}. Moreover, by Theorem~\ref{th:gmc2},  $\X$ also satisfies  Assumptions~\ref{ass:gmc}.
Hence, by Theorem~\ref{thm:LB}, we obtain for any $\delta>0$,
\[
\P_N\big[ \max_{x\in A} \h(x+\i \epsilon_N) \ge  (\sqrt{2} - \delta)\log \epsilon_N^{-1}\big] \to 1
\qquad\text{as }N\to+\infty.  
\]
Since $\h(\cdot + \i\epsilon_N) = \h*\phi_{\epsilon_N}$ where $\phi(u) = \frac{1}{\pi}\frac{1}{1+u^2}$, $\max_\R \h\geq \max_A h(\cdot+\i\epsilon_N)$ and this implies that
\[
\P_N\big[ \max_\R \h \ge  (\sqrt{2} - \delta) (1 - \alpha) \log N \big]  \to 1
\qquad\text{as }N\to+\infty.  
\]
Since $\alpha, \delta >0 $ are arbitrary, this establishes the lower bound in Theorem \ref{thm:max2} in the $+$-case. For the $-$-case, the argument is identical.

\section{Eigenvalue Rigidity} \label{sec:rigidity}

In this section we will finally prove our main rigidity result -- Theorem \ref{thm:rigidity}. 
The main content of the proof is given in Section~\ref{sec:rigbulk}.  Then, in Section~\ref{sec:rigest}  we establish some technical lemmas that we need and in Section \ref{sec:rigedge},  we prove rigidity near the edge of the spectrum. 

\medskip

Throughout this section, let  $F(x) = \int_{-1}^x \psi_V(t)\sqrt{1-t^2}dt$ and recall that we denote by $\varkappa_1, \dots , \varkappa_{N}\in(-1,1]$ the quantiles of the equilibrium measure which are given implicitly by $F(\varkappa_k)= k/N$ for all $k=1,\dots,N$. Before going into the details of the proof of Theorem \ref{thm:rigidity}, we point out that a slightly weaker version of Theorem \ref{thm:rigidity} follows almost immediately from Theorem \ref{thm:max2}. The upper bound from Theorem \ref{thm:max2} implies that 
\[\lim_{N\to\infty}\P_N\left[|h_N(\lambda_j)|\leq (1+\delta)\sqrt{2}\log N\mbox{ for all $j=1,\ldots, N$}\right]=1\]
for any $\delta>0$.
As $\h(\lambda_j)=\sqrt{2}\pi j-\sqrt{2}\pi N F(\lambda_j)$, the above identity is equivalent to
\[\lim_{N\to\infty}\P_N\left[F(\lambda_j)\in\left[\frac{j}{N}-(1+\delta)\frac{\log N}{\pi N}, \frac{j}{N}+(1+\delta)\frac{\log N}{\pi N}\right]\mbox{ for all $j=1,\ldots, N$}\right]=1,\]
and this implies
\[\lim_{N\to\infty}\P_N\left[\lambda_j\in\left[\kappa_{\lfloor j-(1+\delta)\frac{\log N}{\pi}\rfloor}, \kappa_{\lceil j+(1+\delta)\frac{\log N}{\pi}\rceil}\right]\mbox{ for all $j=1,\ldots, N$}\right]=1,\]
where we set $\kappa_j=-\infty$ for $j<0$ and $\kappa_j=+\infty$ for $j>N$.
This gives us a good upper bound on the fluctuations of the bulk eigenvalues, but unfortunately it does not allow us to bound the fluctuations of the
$\lfloor(1+\delta)\frac{\log N}{\pi}\rfloor$ smallest and largest eigenvalues. Indeed, much of the work in this section concerns bounding the fluctuations of the eigenvalues close to the edge.

Finally as another preliminary remark, note that for any $k=k_N$ for which $k/N\to 0$, we have as $N\to\infty$
\begin{equation} \label{qest}
\frac{\varkappa_k+ 1}{k^{2/3} N^{-2/3}} \to c_-:=\left(\frac{3}{2\sqrt{2}\psi_V(-1)}\right)^{2/3}
 \qquad\text{and}\qquad 
  \frac{1-\varkappa_{N-k}}{k^{2/3} N^{-2/3}} \to c_+:=\left(\frac{3}{2\sqrt{2}\psi_V(1)}\right)^{2/3}.
\end{equation}
Note that by Assumption \ref{ass:regular}, $c_\pm$ are finite and positive. We use these results throughout the coming sections.

\subsection{Proof of Theorem \ref{thm:rigidity}} \label{sec:rigbulk}
 As already indicated above,  the proof is divided in two parts: bulk and edge rigidity. 
We begin by establishing rigidity for bulk eigenvalues, Proposition~\ref{prop:brig}. After this, we state the main ingredients for edge rigidity and assuming these, we provide a proof of Theorem \ref{thm:rigidity}.
In order to explain the connection with Theorem~\ref{thm:max2}, we will make the following assumption. Let $\ell_N = \lfloor (\log\log N)^5\rfloor$ and define the event 
\begin{equation} \label{B_event}
\mathscr{B}_N = \big\{ |\lambda_k- \varkappa_k| \le N^{-2/3} \text{ for all } 
k = \ell_N, \dots , N-\ell_N\big\} . 
\end{equation}
We assume that $\P_N[\mathscr{B}_N] \to 1$ as $N\to+\infty$. This will be justified in Section~\ref{sec:rigest} -- see Lemma~\ref{lem:rig}.

\begin{proposition}[Optimal bulk Rigidity] \label{prop:brig}
Let  $\ell_N = \lfloor (\log\log N)^5\rfloor$, $\mathscr{B}_N$ be given by \eqref{B_event} and suppose that  $\P_N[\mathscr{B}_N] \to 1$ as $N\to+\infty$. Then, for any $\alpha>0$, we have as $N\to+\infty$,
\[
\P_N\bigg[  \frac{1-\alpha}{\pi} \frac{\log N}{N} \le \max_{k=\ell_N \dots, N-\ell_N}   F'(\varkappa_k)|\lambda_k- \varkappa_k|  \le \frac{1+\alpha}{\pi} \frac{\log N}{N} \bigg] \to 1 .
\]
\end{proposition}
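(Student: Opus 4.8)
The starting point is the identity $\h(\lambda_k)=\sqrt{2}\pi k - \sqrt{2}\pi N F(\lambda_k) = \sqrt{2}\pi N\big(F(\varkappa_k)-F(\lambda_k)\big)$, which translates the eigenvalue deviations $|\lambda_k-\varkappa_k|$ into values of the counting function $\h$. Since $F'=\psi_V(\cdot)\sqrt{1-\cdot^2}$ is bounded above and, on any compact subset of $(-1,1)$, bounded below, a Taylor expansion of $F$ gives $\h(\lambda_k)=\sqrt{2}\pi N F'(\varkappa_k)(\varkappa_k-\lambda_k)\big(1+O(|\lambda_k-\varkappa_k|)\big)$, so that on the event $\mathscr{B}_N$ (where $|\lambda_k-\varkappa_k|\le N^{-2/3}$ for the relevant range of $k$) we have uniform two-sided control
\[
\big| \h(\lambda_k) - \sqrt{2}\pi N F'(\varkappa_k)(\varkappa_k-\lambda_k) \big| \le C N^{-2/3}\, |\h(\lambda_k)|
\]
for $\ell_N\le k\le N-\ell_N$. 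The first step is therefore to make this comparison precise: on $\mathscr{B}_N$, up to a multiplicative factor $1+O(N^{-2/3}\log N)=1+o(1)$, one has
\[
\sqrt{2}\pi N \max_{\ell_N\le k\le N-\ell_N} F'(\varkappa_k)|\lambda_k-\varkappa_k| = \max_{\ell_N\le k\le N-\ell_N} |\h(\lambda_k)|.
\]

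The second step is to relate $\max_{\ell_N\le k\le N-\ell_N}|\h(\lambda_k)|$ to $\max_{x\in\R}\{\pm\h(x)\}$, which is controlled by Theorem~\ref{thm:max2}. The upper bound is immediate: $\max_k|\h(\lambda_k)|\le \max\{\max_x\h(x), \max_x(-\h(x))\}\le (1+\delta)\sqrt{2}\log N$ with probability tending to one. For the lower bound one must argue that the global maximum (and minimum) of $\h$ is \emph{attained} at some $\lambda_k$ with $\ell_N\le k\le N-\ell_N$, i.e. not at an edge eigenvalue. Here I would use the monotonicity observation from the introduction, $\max_{x\in\R}\h(x)=\max_{j}\h(\lambda_j)$, together with the edge estimates: the $\ell_N$ smallest and largest eigenvalues are within $O(\ell_N^{2/3}N^{-2/3})$ of the corresponding quantiles by \eqref{qest} and the (forthcoming) Lemma~\ref{lem:rig}-type bounds, so $|\h(\lambda_k)|\le \sqrt{2}\pi N\cdot O(\ell_N^{2/3}N^{-2/3})\cdot \ell_N/N$-type estimates, which is $o(\log N)$; hence the near-edge eigenvalues cannot carry the maximum once $N$ is large. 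Combining, $\max_{\ell_N\le k\le N-\ell_N}|\h(\lambda_k)| = \max_{x\in\R}\{|\h(x)|\}+o(\log N)$ with high probability, and Theorem~\ref{thm:max2} pins this down to $(1+o(1))\sqrt{2}\log N$.

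Putting the two steps together: on $\mathscr{B}_N\cap\{\text{edge control}\}\cap\{\text{Theorem~\ref{thm:max2} bounds hold}\}$, which has probability tending to one, we get
\[
\sqrt{2}\pi N \max_{\ell_N\le k\le N-\ell_N}F'(\varkappa_k)|\lambda_k-\varkappa_k| = (1+o(1))\sqrt{2}\log N,
\]
i.e. $\max_{\ell_N\le k\le N-\ell_N}F'(\varkappa_k)|\lambda_k-\varkappa_k| = (1+o(1))\frac{\log N}{\pi N}$, which after absorbing the $o(1)$ into a factor $(1\pm\alpha)$ is exactly the claimed statement. The main obstacle is the second step — specifically, ensuring that the global extrema of $\h$ are realized strictly inside the bulk index range $[\ell_N, N-\ell_N]$, which requires the near-edge rigidity input (Lemma~\ref{lem:rig} and the asymptotics \eqref{qest}) and a careful estimate showing $|\h(\lambda_k)|=o(\log N)$ for $k<\ell_N$ or $k>N-\ell_N$; the Taylor-expansion step and the upper bound are comparatively routine.
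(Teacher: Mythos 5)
Your overall plan is the right one and your first step (Taylor expanding $F$ about $\varkappa_k$ on the event $\mathscr{B}_N$, so that $\h(\lambda_k)$ and $\sqrt{2}\pi N F'(\varkappa_k)(\varkappa_k-\lambda_k)$ agree up to a small error) and your upper-bound argument are essentially the paper's. One caveat in the Taylor step: the multiplicative error bound you state, $|\h(\lambda_k)-\sqrt{2}\pi NF'(\varkappa_k)(\varkappa_k-\lambda_k)|\le CN^{-2/3}|\h(\lambda_k)|$, is not correct as written. The relevant ratio is $|F''(\xi)|/F'(\varkappa_k)$, which is \emph{not} uniformly $O(1)$ on the window covered by $\mathscr{B}_N$: for $k$ near $\ell_N$, $\varkappa_k$ is at distance $\sim\ell_N^{2/3}N^{-2/3}$ from $-1$ so $F'(\varkappa_k)\sim\ell_N^{1/3}N^{-1/3}$ while $|F''(\xi)|\sim\ell_N^{-1/3}N^{1/3}$, giving a multiplicative error $O(\ell_N^{-2/3})$ rather than $O(N^{-2/3})$. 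This still tends to $0$ so your conclusion survives, but the paper's bookkeeping is simpler: on $\mathscr{B}_N$, $\lambda_k,\varkappa_k\in[-1+N^{-2/3},1-N^{-2/3}]$, and using $\sup_{|x|\le1-N^{-2/3}}|F''(x)|\le C'N^{1/3}$ one gets the \emph{additive} bound $\h(\lambda_k)=-\sqrt{2}\pi N F'(\varkappa_k)(\lambda_k-\varkappa_k)+O(1)$, which is all that matters when the maximum is of order $\log N$.

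The genuine divergence is in the lower bound. You propose to show the global maximizer of $\h$ lies at a bulk eigenvalue, by arguing that $|\h(\lambda_k)|=o(\log N)$ for $k<\ell_N$ or $k>N-\ell_N$. As you yourself flag, this needs edge control such as Lemma~\ref{lem:MB}: the event $\mathscr{B}_N$ gives no information at all about $|\lambda_k-\varkappa_k|$ for $k<\ell_N$, and \eqref{qest} only locates the \emph{quantiles}, not the eigenvalues. So carried out literally, your argument would make the proposition depend on estimates beyond its stated hypotheses. The paper avoids this entirely by never asking where the global maximum sits: on $\mathscr{B}_N$ one has $\lambda_{\ell_N}\le-1/2$ and $\lambda_{N-\ell_N}\ge1/2$ (since $\varkappa_{\ell_N},\varkappa_{N-\ell_N}$ are within $o(1)$ of $\mp1$ and the eigenvalues are within $N^{-2/3}$ of those quantiles), and since the eigenvalues are exactly the local maxima of $\h$, the supremum of $\h$ over $[-1/2,1/2]$ is attained at an eigenvalue with index in $[\ell_N, N-\ell_N]$, giving
\[
\sqrt{2}\pi N\max_{\ell_N\le k\le N-\ell_N}F'(\varkappa_k)|\lambda_k-\varkappa_k|\ \ge\ \max_{|x|\le1/2}\h(x)-C
\]
on $\mathscr{B}_N$. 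One then invokes the lower bound over the \emph{fixed} compact set $[-1/2,1/2]$ (which is established en route to Theorem~\ref{thm:max2}, see~\eqref{lower_bound_1}). Both routes are viable given the machinery developed in the paper, but the paper's uses only the hypothesis $\P(\mathscr{B}_N)\to1$ plus Theorem~\ref{thm:max2}, whereas yours needs an extra near-edge input.
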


\begin{proof}
Observe that conditionally on $\mathscr{B}_N$, by a Taylor expansion, we have  for any $k=\ell_N ,\dots , N-\ell_N$,
\[
\h(\lambda_k) =\sqrt{2}\pi N\big(F(\varkappa_k) - F(\lambda_k) \big)
= - \sqrt{2}\pi N F'(\varkappa_k)(\lambda_k - \varkappa_k) +\O(1).
\]
 We emphasize here that the above $\mathcal O(1)$-term is only defined on the event $\mathscr B_N$ and the implied constant is non-random and independent of $k$ -- let us write $C$ for this constant.
  This estimate comes from the fact that by \eqref{qest} we have on the event $\mathscr{B}_N$:
$\lambda_k , \varkappa_k \in [-1+ N^{-2/3}, 1-N^{-2/3}]$ for all relevant $k$ and there exists a constant $C'>0$ such that $\displaystyle\sup_{|x| \le 1-N^{-2/3}}\big| F''(x) \big| \le C' N^{1/3}$.

In particular, this implies that conditionally on $\mathscr{B}_N$,
\[ \begin{aligned} \sqrt{2}\pi N   \max_{\ell_N \le k\le  N-\ell_N}  \big| F'(\varkappa_k)(\lambda_k - \varkappa_k)  \big| 
&\le  C+ \max_{k=\ell_N, \dots , N-\ell_N} \big| \h(\lambda_k) \big|  \\
&\le C + \max_{x\in \R} |\h(x)|.
\end{aligned}\]
By Theorem~\ref{thm:max2}, this  implies that as $N\to+\infty$,
\begin{align} \notag
\P_N\left[ \left\{  \max_{\ell_N \le k\le N-\ell_N}  F'(\varkappa_k)\big|\lambda_k - \varkappa_k  \big|  \ge \frac{1+\alpha}{\pi} \frac{\log N}{N} \right\} \cap \mathscr{B}_N \right]
&\le \P\left[ \max_{x\in \R} |\h(x)|  \ge \sqrt{2}(1+\alpha)\log N   - C \right]\\
&\label{rig1}\to 0 . 
\end{align}

On the other  hand,  we also have conditionally on $\mathscr B_N$
\[ \begin{aligned} \sqrt{2}\pi N    \max_{\ell_N \le k\le  N-\ell_N} \big| F'(\varkappa_k)(\lambda_k - \varkappa_k)  \big| 
&\ge    \max_{\ell_N \le k\le  N-\ell_N}   \big| \h(\lambda_k) \big| -C.
\end{aligned}\]
Note that the eigenvalues are exactly the local maxima of the function $\h$ and conditionally on $\mathscr{B}_N$ :  $[-\tfrac 12, \tfrac 12] \subset [\lambda_{\ell_N}, \lambda_{N-\ell_N}]$ so that
\[
 \sqrt{2}\pi N    \max_{\ell_N \le k\le  N-\ell_N} \big| F'(\varkappa_k)(\lambda_k - \varkappa_k)  \big|  \ge \max_{|x|\le 1/2 } \h(x) -C
\]
By Theorem~\ref{thm:max2}, this  implies that as $N\to+\infty$,
\begin{equation} \label{rig2}
\P_N\left[ \left\{  \max_{\ell_N \le k\le N-\ell_N}  F'(\varkappa_k)\big|\lambda_k - \varkappa_k  \big|  \le \frac{1-\alpha}{\pi} \frac{\log N}{N} \right\} \cap \mathscr{B}_N \right] \to 0 .
\end{equation}
Since we assume that $\P[\mathscr{B}_N] \to 1$ as $N\to+\infty$, combining the estimates \eqref{rig1} and \eqref{rig2},  we complete the proof.
\end{proof}

Let us observe that assuming that the probability of $\mathscr B_N$ tends to one, Proposition~\ref{prop:brig} already implies the lower--bound  of Theorem \ref{thm:rigidity}.
So, to complete the proof of Theorem \ref{thm:rigidity}, it remains to verify this assumption concerning $\mathscr B_N$ and extend the upper--bound to the edge eigenvalues. Namely, we will show in Section~\ref{sec:rigedge} (see Proposition \ref{prop:rigedge}) that there exists a constant $C>0$  such that if $\ell_N = \lfloor (\log\log N)^5\rfloor$, we have as $N\to+\infty$,
\begin{equation} \label{rigedge}
 \P\left[ \max_{\substack{k\le \ell_N \\ k\ge N-\ell_N}} \left\{ F'(\varkappa_k) |\lambda_k - \varkappa_k|  \right\}  \le C \frac{\ell_N}{N} \right] \to 1 . 
\end{equation}
By combining Proposition~\ref{prop:brig}, Lemma~\ref{lem:rig} below and \eqref{rigedge}, we complete the proof of Theorem~\ref{thm:rigidity}.

\subsection{Estimates for the eigenvalue counting function near the edges}
\label{sec:rigest}

The main goal of this section is to show that if $\mathscr{B}_N$ is given by \eqref{B_event}, then $\P[\mathscr{B}_N] \to 1$ as $N\to+\infty$. 
To prove this, we will need estimates for the maximum of the eigenvalue counting function $\h$ which are more precise than those  of Theorem~\ref{thm:max2} near the edges of the spectrum. The following lemma does this and will be crucial in order to prove Proposition \ref{prop:rigedge} in the next section.

\begin{lemma} \label{lem:MB}
There exists $C>0$ such that  for any $\rho>0$  $($possibly depending on $N)$, 
\begin{align} 
& \label{MB1}
\sup_{n \le N}\P_N\big[ \max_{x\le \varkappa_n } \h(x) \ge \sqrt{2} \log(n+1) +\rho \big] \le C e^{-\sqrt{2} \rho} \\
& \label{MB2}
\sup_{n \le N}\P_N\big[ \max_{x\ge \varkappa_{N-n+1} } \h(x) \ge \sqrt{2} \log(n+1) +\rho \big] \le C e^{-\sqrt{2} \rho} . 
\end{align}
By symmetry, similar estimates hold for the random function  $-\h$. 
\end{lemma}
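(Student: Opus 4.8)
The statement is a maximal large-deviation estimate for $\h$ restricted to a half-line whose endpoint is a quantile $\varkappa_n$ near the left edge (and symmetrically near the right edge for \eqref{MB2}); by the symmetry $\h\mapsto-\h$ it suffices to treat \eqref{MB1}. The plan is to combine a union bound over a dyadic decomposition of the interval $[-1,\varkappa_n]$ with the one-point exponential moment estimates from Theorem~\ref{th:Hankel3}, using the monotonicity of the counting part of $\h$ to pass from a supremum over an interval to a supremum over its right endpoint. Concretely, for each dyadic block I would write $\max_{x\in[\varkappa_{2^{k-1}},\varkappa_{2^k}]}\h(x)\le \h(\varkappa_{2^k}^-)+\sqrt 2\pi(F(\varkappa_{2^k})-F(\varkappa_{2^{k-1}}))N$, where the second term is $\le \sqrt 2\pi\cdot 2^{k-1}$ because consecutive quantiles carry mass exactly $1/N$; more carefully one uses that $\#\{\lambda_j\le x\}$ is nondecreasing so that on $[\varkappa_{m},\varkappa_{m'}]$, $\h(x)\le \sqrt 2\pi(\#\{\lambda_j\le \varkappa_{m'}\}-NF(\varkappa_m))=\h(\varkappa_{m'})+\sqrt 2\pi(m'-m)$.

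The core estimate is then a bound of the form $\P_N[\h(\varkappa_m)\ge t]\le C\, m^{\gamma^2/2}\,e^{-\gamma t}$ for a well-chosen $\gamma>0$, obtained from Markov's inequality applied to $e^{\gamma\h(\varkappa_m)}$ together with $\E_N e^{\gamma\h(\varkappa_m)}=D_N(\varkappa_m;\gamma;0)/D_N(\varkappa_m;0;0)\,e^{-\sqrt2\pi\gamma N F(\varkappa_m)}$ and Theorem~\ref{th:Hankel3}, which gives $\log\E_N e^{\gamma\h(\varkappa_m)}=\frac{\gamma^2}{2}\log N+\frac{3\gamma^2}{4}\log(1-\varkappa_m^2)+\mathcal O(1)$ uniformly for $|\varkappa_m|\le 1-mN^{-2/3}$; via \eqref{qest}, $1-\varkappa_m^2\asymp m^{2/3}N^{-2/3}$ for $m\to\infty$ with $m/N\to0$, so $\frac{\gamma^2}{2}\log N+\frac{3\gamma^2}{4}\log(1-\varkappa_m^2)=\frac{\gamma^2}{2}\log m+\mathcal O(1)$. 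Choosing $\gamma=\sqrt2$ this yields $\P_N[\h(\varkappa_m)\ge \sqrt2\log m+s]\le C\,e^{-\sqrt 2 s}$ uniformly in $m$. Summing the dyadic bound: with $K=\lceil\log_2(n+1)\rceil$, a block $[\varkappa_{2^{k-1}},\varkappa_{2^k}]$ contributes $\max\h\ge \sqrt2\log(n+1)+\rho$ only if $\h(\varkappa_{2^k})\ge \sqrt2\log(n+1)+\rho-\sqrt2\pi\, 2^{k-1}\ge \sqrt2\log(2^k)+\rho+\big(\sqrt2\log(n+1)-\sqrt2 k\log 2-\sqrt2\pi 2^{k-1}\big)$; since $k\le K$ the parenthetical is bounded below by an absolute constant minus $\sqrt2\pi 2^{k-1}$, and the per-block probability is $\le C e^{-\sqrt2\rho}e^{-\sqrt2(\sqrt2\log(n+1)-\sqrt2 k\log2-\sqrt2\pi 2^{k-1})}$; summing the geometric-type series in $k$ and using $(n+1)^{-2}\sum_k 4^k e^{-2\sqrt2\pi 2^{k-1}}\le C$ gives the claimed $Ce^{-\sqrt2\rho}$. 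The innermost block $[-1,\varkappa_1]$ (and, if $\rho$ or $n$ forces it, a boundary layer within $mN^{-2/3}$ of $-1$ where Theorem~\ref{th:Hankel3} does not apply) is handled separately by the deterministic bound $\h(x)\le \h(\varkappa_1)+\sqrt2\pi$ for $x\le\varkappa_1$ together with the $m=1$ or small-$m$ case of the exponential moment estimate, or directly by $\max_{x\le -1+mN^{-2/3}}\h(x)\le \sqrt2\pi N F(-1+mN^{-2/3})=\mathcal O(1)$.

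I would then assemble these pieces: split $[-1,\varkappa_n]$ into $[-1,\varkappa_1]$ and the dyadic blocks $[\varkappa_{2^{k-1}},\varkappa_{2^k}]$ for $1\le k\le K$ (truncating the last block at $\varkappa_n$), bound the probability that $\max\h$ exceeds the threshold on each piece as above, and sum. The statement \eqref{MB2} follows by the reflection $x\mapsto -x$ composed with the potential's symmetry, or more robustly by repeating the argument with $\varkappa_{N-n+1}$ in place of $\varkappa_n$ and using the right-edge asymptotics in \eqref{qest} and the $1-\varkappa^2$ factor in Theorem~\ref{th:Hankel3}, noting that for $x\ge\varkappa_{N-n+1}$ one has $\h(x)\le \h(x)$ with the counting function again monotone so $\h(x)\le \sqrt2\pi(N-NF(\varkappa_{N-m+1}))+\big(\h \text{ evaluated suitably}\big)$; the bookkeeping is the mirror image of the left-edge case.

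\textbf{Main obstacle.} The delicate point is the uniformity in $n$ (and in $\rho$, which may depend on $N$): one must ensure the dyadic sum telescopes to an $n$-independent constant, which forces the precise matching between the $\frac{\gamma^2}{2}\log N+\frac{3\gamma^2}{4}\log(1-\varkappa_m^2)$ term in Theorem~\ref{th:Hankel3} and $\frac{\gamma^2}{2}\log m$ — hence the need for the sharp edge asymptotics \eqref{qest} of the quantiles rather than a crude bound — and requires care at the scale $m\asymp N$ where the two-sided constants in \eqref{qest} degrade and where the hypothesis $|\varkappa_m|\le 1-mN^{-2/3}$ of Theorem~\ref{th:Hankel3} is only marginally satisfied; there one falls back on the general bound $\max_\R\h\le (1+\delta)\sqrt2\log N$ from Theorem~\ref{thm:max2}, which is more than enough once $n$ is of order $N$.
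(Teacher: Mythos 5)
Your plan correctly identifies the two ingredients — the one-point exponential moment bound $\E_N e^{\sqrt{2}\h(\varkappa_m)}\lesssim m$ coming from Theorem~\ref{th:Hankel3} and \eqref{qest}, and the monotonicity of the counting part of $\h$ — but the dyadic decomposition breaks the argument. On the block $[\varkappa_{2^{k-1}},\varkappa_{2^k}]$ your monotonicity bound reads
\[
\max_{x\in[\varkappa_{2^{k-1}},\varkappa_{2^k}]}\h(x)\le \h(\varkappa_{2^k})+\sqrt{2}\pi\,2^{k-1},
\]
so the loss is an \emph{additive} term of order $2^{k-1}$, growing with the block. When you push this through Markov with $\gamma=\sqrt{2}$, the per-block probability acquires the factor $e^{+\sqrt{2}\cdot\sqrt{2}\pi 2^{k-1}}=e^{+2\pi 2^{k-1}}$ — your written series $(n+1)^{-2}\sum_k 4^k e^{-2\sqrt{2}\pi 2^{k-1}}$ has the wrong sign in the exponent. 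With the correct sign the series behaves like $(n+1)^{-2}\,2^K e^{\pi 2^K}\approx e^{\pi (n+1)}/(n+1)$ for $K\approx\log_2(n+1)$, so it diverges; the bound is vacuous rather than $C e^{-\sqrt{2}\rho}$.

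The fix is to abandon the dyadic coarse-graining and take the finest decomposition: union bound over the individual quantiles $\varkappa_1,\dots,\varkappa_n$. Between consecutive quantiles the additive loss is always $\sqrt{2}\pi\cdot 1 = \sqrt{2}\pi$, an absolute constant, because $NF(\varkappa_j)-NF(\varkappa_{j-1})=1$. Thus $\max_{x\le\varkappa_n}\h(x)\le\max_{j\le n}\h(\varkappa_j)+\sqrt{2}\pi$, and Markov at parameter $\sqrt{2}$ with $\E_N e^{\sqrt{2}\h(\varkappa_j)}\le Rj$ gives
\[
\P_N\bigl[\max_{x\le\varkappa_n}\h(x)\ge \sqrt{2}\log(n+1)+\rho\bigr]\le e^{2\pi}e^{-\sqrt{2}\rho}\,\frac{R}{(n+1)^2}\sum_{j\le n} j\le \tfrac{1}{2}R\,e^{2\pi}\,e^{-\sqrt{2}\rho},
\]
since $\sum_{j\le n}j\sim(n+1)^2/2$ matches the normalisation $e^{-\sqrt{2}\cdot\sqrt{2}\log(n+1)}=(n+1)^{-2}$ exactly. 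So the cancellation you were hoping for is present, but only at the finest scale; any coarse-graining ruins the additive $\sqrt{2}\pi$ loss. Finally, for \eqref{MB2}, note that the potential $V$ in Assumptions~\ref{ass:regular} is not assumed to be even, so the reflection $x\mapsto -x$ is not available; your fallback — repeating the argument at the right edge with $\varkappa_{N-n+1}$ and using $\E_N e^{\sqrt{2}\h(\varkappa_{N-j})}\lesssim N-j$ together with the deterministic bound $\max_{x\ge\varkappa_{N-n+1}}\h(x)\le\max\{0,\max_{j\ge N-n+1}\h(\varkappa_j)\}+\sqrt{2}\pi$ — is the correct route.
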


\begin{proof}[Proof of Lemma~\ref{lem:MB}]
We first prove the estimate \eqref{MB1} and then explain how to change the argument to obtain \eqref{MB2}. 
First, let us observe that the estimates \eqref{Laplace2} and \eqref{Laplace4}  from the Proof of Lemma~\ref{lem:1p}  imply that there exists a constant $R>0$ such that for all $j=1,\dots, N$,
\begin{equation} \label{Laplace5}
\E_N e^{\sqrt{2} \h(\varkappa_j)} 
\le R NF(\varkappa_j) = Rj  . 
\end{equation}

By the definition of $\h$, we have for any $j =2,\dots, N$ and all $ x\in[\varkappa_{j-1},  \varkappa_j]$, 
\[
\h(x) \le \h(\varkappa_j)  +\sqrt{2}\pi , 
\]
and for all $x\le \varkappa_1$, 
\[
\h(x)  \le \h(\varkappa_1) + \sqrt{2}\pi .
\]
These bounds imply that for any $n=1,\dots, N$,
\[
\max_{x\le \varkappa_n} \h(x) \le \max_{j \le n} \h(\varkappa_j)  +\sqrt{2}\pi,
\]
and by a union bound, 
\[
\P_N\big[ \max_{x\le \varkappa_n } \h(x) \ge \sqrt{2} \log(n+1)+\rho \big]  \le \sum_{j\le n} \P_N\big[   \h(\varkappa_j)  \ge  \sqrt{2} \log(n+1) +\rho -\sqrt{2}\pi \big] . 
\]
Using Markov's inequality and \eqref{Laplace5}, this implies that for any $n=1,\dots, N$,
\[
\P_N\big[ \max_{x\le \varkappa_n } \h(x) \ge \sqrt{2} \log(n+1) +\rho \big]  \le \frac{R}{(n+1)^2} e^{-\sqrt{2}(\rho- \sqrt{2}\pi)}\sum_{j\le n} j  \le \frac{e^{2\pi}R}{2} e^{-\sqrt{2}\rho} . 
\]

Similarly, by slightly adapting the Proof of Lemma~\ref{lem:1p}, we have that for a possibly larger $R>0$ and for all $j=1,\dots, N$,
\[
\E_N e^{\sqrt{2} \h(\varkappa_j)} 
\le R N\big(1-F(\varkappa_j)\big) = R(N-j)  . 
\]
Using this bound, the estimate $\displaystyle\max_{x \ge \varkappa_{N-n+1}} \h(x) \le \max\Big\{ 0 , \max_{j\ge N-n+1} \h(\varkappa_j)\Big\} +\sqrt{2} \pi,$ and a similar application of Markov's inequality, we obtain  that for any $n=1,\dots, N$,
\[\begin{aligned}
\P_N\big[ \max_{x\ge \varkappa_{N-n+1} } \h(x) \ge \sqrt{2} \log(n+1) +\rho \big] 
& \le  \frac{e^{2\pi}R}{(n+1)^2}  e^{-\sqrt{2}\rho} \sum_{j\le n} \left(1+ \E_N e^{\sqrt{2} \h(\varkappa_{N-j})}  \right)\\
& \le \frac{e^{2\pi}(n+2)R}{2(n+1)}e^{-\sqrt{2}\rho},
\end{aligned}\]
which completes the proof.  
\end{proof}

In the remainder of this section, we show that this implies the a priori estimate that we need for verifying the assumptions of Proposition~\ref{prop:brig}.

\begin{lemma} \label{lem:rig}
Let $\ell_N = \lfloor (\log\log N)^5 \rfloor$ and $\mathscr{B}_N$ be given by \eqref{B_event}, then $\P[\mathscr{B}_N] \to 1$ as $N\to+\infty$. 
\end{lemma}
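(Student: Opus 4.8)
The goal is to show $\P_N[\mathscr{B}_N]\to 1$ where $\mathscr{B}_N=\{|\lambda_k-\varkappa_k|\le N^{-2/3}\text{ for all }\ell_N\le k\le N-\ell_N\}$ with $\ell_N=\lfloor(\log\log N)^5\rfloor$. The plan is to use the rigidity-type bounds for the eigenvalue counting function $\h$ near the edge, namely Lemma~\ref{lem:MB}, together with its symmetric counterpart for $-\h$, and transfer these into control on the eigenvalue positions via the standard monotonicity dictionary $\h(\lambda_k)=\sqrt{2}\pi k-\sqrt{2}\pi NF(\lambda_k)$. First I would fix $k$ with $\ell_N\le k\le N/2$ (the range $N/2\le k\le N-\ell_N$ being symmetric, using $-\h$ and the reflection $x\mapsto-x$) and observe that $\lambda_k\notin[\varkappa_k-N^{-2/3},\varkappa_k+N^{-2/3}]$ forces one of two events: either $\lambda_k>\varkappa_k+N^{-2/3}$, meaning fewer than $k$ eigenvalues lie to the left of $\varkappa_k+N^{-2/3}$, i.e. $\h(\varkappa_k+N^{-2/3})<\sqrt{2}\pi k-\sqrt{2}\pi NF(\varkappa_k+N^{-2/3})$ is very negative; or $\lambda_k<\varkappa_k-N^{-2/3}$, meaning at least $k$ eigenvalues lie to the left of $\varkappa_k-N^{-2/3}$, i.e. $\h(\varkappa_k-N^{-2/3})$ is very large.

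\textbf{Key steps.} The main quantitative input is that $N(F(\varkappa_k\pm N^{-2/3})-F(\varkappa_k))$ has order comparable to $k^{1/3}$ in the relevant range: indeed using \eqref{qest}, for $k$ in the bulk-to-edge regime one has $\varkappa_k+1\asymp (k/N)^{2/3}$ and $F'(\varkappa_k)=\psi_V(\varkappa_k)\sqrt{1-\varkappa_k^2}\asymp (k/N)^{1/3}$, so a shift of size $N^{-2/3}$ changes $NF$ by an amount of order $N^{1/3}(k/N)^{1/3}=k^{1/3}$ (one must also check the second-order term $N F''(\varkappa_k)N^{-4/3}$ is of smaller or comparable order, which follows from $F''(x)\asymp(1-x^2)^{-1/2}$ near the edge). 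Hence on $\{\lambda_k<\varkappa_k-N^{-2/3}\}$ we get $\h(\varkappa_k-N^{-2/3})\ge c\,k^{1/3}$ for some $c>0$, and since $\varkappa_k-N^{-2/3}\le\varkappa_k$ we may apply \eqref{MB1} with $n=k$ and $\rho=c\,k^{1/3}/\sqrt{2}-\sqrt{2}\log(k+1)$, which is positive for $k\ge\ell_N$ once $N$ is large (here $k^{1/3}\gg\log k$ precisely when $k\gtrsim(\log\log N)^{?}$ is not quite enough — one wants $k^{1/3}\gg\log N$, which requires $k\ge(\log N)^{3+\epsilon}$; so in fact I would take $\ell_N$ as in the statement but only need the bound $\rho\ge c' k^{1/3}$ and sum $e^{-\sqrt2\rho}$ over $k$). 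Summing the resulting bounds $\P_N[\lambda_k<\varkappa_k-N^{-2/3}]\ll e^{-\sqrt2 c k^{1/3}}(k+1)^2$ over $\ell_N\le k\le N/2$ gives a geometric-type tail that is $o(1)$ as $N\to\infty$; the symmetric event $\{\lambda_k>\varkappa_k+N^{-2/3}\}$ is handled by applying the $-\h$ version of \eqref{MB1} at the point $\varkappa_k+N^{-2/3}$ (noting $\h(\varkappa_k+N^{-2/3})\le -c k^{1/3}$ there, equivalently $-\h$ is large), and the range $k\ge N/2$ by reflecting. A union bound over all $k$ then yields $\P_N[\mathscr{B}_N^c]\to 0$.

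\textbf{Main obstacle.} The delicate point is the matching of scales in the threshold of Lemma~\ref{lem:MB}: the lemma controls $\max_{x\le\varkappa_n}\h(x)$ against $\sqrt2\log(n+1)+\rho$, so to get a useful (summable) bound I need the "signal" $NF(\varkappa_k)-NF(\varkappa_k-N^{-2/3})\asymp k^{1/3}$ to dominate $\sqrt2\log(k+1)$ with room to spare, i.e. I need $k^{1/3}\gtrsim \log k$ plus an extra logarithmic-in-$N$ margin so that the union bound over $k$ and the factor $(k+1)^2$ from Lemma~\ref{lem:MB} can be absorbed. This is exactly why the cutoff is chosen at $\ell_N=\lfloor(\log\log N)^5\rfloor$ — wait, that threshold gives $\ell_N^{1/3}=(\log\log N)^{5/3}$, which is much smaller than $\log N$, so a more careful accounting is needed: one should note that for $k\ge\ell_N$ the shift $N^{-2/3}$ is actually \emph{large} compared to the typical eigenvalue spacing near $\varkappa_k$ (which is of order $N^{-2/3}k^{-1/3}$), so that $NF(\varkappa_k)-NF(\varkappa_k-N^{-2/3})\asymp k^{1/3}\to\infty$, and then one uses that $\log(k+1)\le\log N$ while choosing $\rho$ to simply be, say, half of $c k^{1/3}$ whenever that exceeds $2\sqrt2\log N$, and otherwise uses the cruder global bound from Theorem~\ref{thm:max2} / \eqref{lower_bound_1} which already confines $\max|\h|\le(1+\delta)\sqrt2\log N$ with high probability — combining the two regimes appropriately. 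Reconciling these two mechanisms and checking the arithmetic of the exponents is the part that needs care; everything else is a routine union bound and Taylor expansion using \eqref{qest}.
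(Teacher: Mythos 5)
Your ``Key steps'' paragraph gives a valid argument that takes a genuinely different route from the paper's. You bound $\P_N[|\lambda_k-\varkappa_k|\geq N^{-2/3}]$ for each $k$ individually by one application of Lemma~\ref{lem:MB} (with a quantile index of order $k$; for the right-deviation event $\lambda_k>\varkappa_k+N^{-2/3}$ one should take $n$ so that $\varkappa_k+N^{-2/3}\leq\varkappa_n$, which~\eqref{qest} allows with $n$ still comparable to $k$), obtaining $\P_N[\cdot]\ll(k+1)^2e^{-\sqrt{2}ck^{1/3}}$ with $c>0$ uniform in $k$, and then sum. The paper never sums over $k$: it sets $L_N=\lfloor(\log N)^5\rfloor$ and argues that a deviation for some $k\in[L_N,N-L_N]$ forces $\max_{[-1,1]}|\h|\geq L_N^{1/4}$ (handled by a single appeal to Theorem~\ref{thm:max2}), while a deviation for some $k\in[\ell_N,L_N]$ forces $\max_{x\leq\varkappa_{2L_N}}|\h|\geq\ell_N^{1/4}$ (handled by a single appeal to Lemma~\ref{lem:MB} with $n=2L_N$). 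Your route never needs Theorem~\ref{thm:max2} and works for any $\ell_N\to\infty$, whereas the paper's two-scale argument requires $\ell_N^{1/4}/\log L_N\to\infty$, which is exactly what forces the choice $\ell_N=\lfloor(\log\log N)^5\rfloor$ there.

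Your ``Main obstacle'' paragraph, however, is a red herring, and the fallback you propose there does not work. You do not need $k^{1/3}$ to dominate $\log N$: you only need $\rho_k=ck^{1/3}-\sqrt{2}\log(n_k+1)>0$ with $n_k$ of order $k$, i.e.\ $k^{1/3}$ to dominate $\log k$, which holds for all $k$ past some universal constant. The union bound then closes simply because $\sum_{k\geq 1}(k+1)^2e^{-\sqrt{2}ck^{1/3}}<\infty$, so its tail from $k=\ell_N$ onwards vanishes as soon as $\ell_N\to\infty$; no extra logarithmic-in-$N$ margin is needed, the stretched exponential already absorbs the $(k+1)^2$ and the length of the sum. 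The proposed use of Theorem~\ref{thm:max2} for small $k$ cannot be made to work: for $k\approx\ell_N$ the signal is of order $(\log\log N)^{5/3}$, far below $\sqrt{2}\log N$, so the statement $\max|\h|\leq(1+\delta)\sqrt{2}\log N$ tells you nothing about whether $|\h|$ exceeds $ck^{1/3}$ at $\varkappa_k\pm N^{-2/3}$. Near the edge one genuinely needs the refined $O(\log n)$ control of Lemma~\ref{lem:MB}, which your ``Key steps'' argument already exploits. Delete the ``Main obstacle'' paragraph, spell out the choice of $n_k$ for the right-deviation events and the uniformity of the constant $c$ via $\psi_V>0$ on $[-1,1]$, and you have a complete (and in some ways simpler) proof.
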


\begin{proof} 
 The idea is to look at the probability of finding a $k$ such that $|\lambda_k-\varkappa_k|\geq N^{-2/3}$ and if $k$ is not too close $1$ or $N$, to argue using Theorem \ref{thm:max2} that this probability tends to zero, while if $k$ is close to the end points, then we rely on Lemma \ref{lem:MB}.  Let $L_N = \lfloor (\log N)^5 \rfloor$ be a quantity differentiating these cases, 
 and suppose  that there exists $k \in [L_N, N-L_N]$ such that $|\lambda_k-\varkappa_k| \ge N^{-2/3} $. Then,  we have either 
\[  \tfrac{1}{\sqrt{2}\pi N}\h(\varkappa_k- N^{-2/3})  
 \ge F(\varkappa_k) - F(\varkappa_k- N^{-2/3})  
 \]
or 
\[  \tfrac{1}{\sqrt{2}\pi N}\h(\varkappa_k+ N^{-2/3})  
 \le F(\varkappa_k) - F(\varkappa_k+ N^{-2/3})   .
 \]
By \eqref{qest}, we have $\varkappa_{L_N}-N^{-2/3}\geq -1+L_N^{7/12} N^{-2/3}$  and $\varkappa_{N-L_N}+N^{-2/3}\leq 1- L_N^{7/12}N^{-2/3},$ if $N$ is large enough, so that  the previous bounds imply that if such a $k$ exists, then
 \begin{equation} \label{R1}
\max_{[-1,1]} |\h|  \ge \sqrt{2}\pi N^{1/3}   \inf_{|x| \le 1- L_N^{7/12}N^{-2/3}} \hspace{-.2cm} |F'(x)| . 
 \end{equation}
Since $|F'(x)| \gg\sqrt{1-x^2}$  for all $|x| \in (-1,1)$, we deduce from \eqref{R1} that if $N$ is sufficiently large,
\[
\P_N\left[ |\lambda_k-\varkappa_k| \ge N^{-2/3}  \text{ for a }k\in [L_N, N-L_N] \right]
\le \P_N\Big[ \max_{[-1,1]} |\h|  \ge L_N^{1/4}\Big]
\]
By Theorem~\ref{thm:max2} this probability converges to 0 as $N\to+\infty$. 

\medskip

Similarly, by \eqref{qest}, since  $\varkappa_{L_N} +  N^{-2/3} \le \varkappa_{2L_N} $ if $N$ is large enough,  we see that if $|\lambda_k-\varkappa_k| \ge N^{-2/3} $ for a $k\in[\ell_N, L_N]$, then 
\[
\max_{x\le \varkappa_{2L_N}} |\h(x)| \ge   \sqrt{2}\pi N^{1/3}  \inf_{|x| \le 1- \ell_N^{7/12}/N^{2/3}} \hspace{-.2cm} |F'(x)| \ge \ell_N^{1/4} . 
\]
This bound implies that
\[
\P_N\left[ |\lambda_k-\varkappa_k| \ge N^{-2/3}  \text{ for a }k\in [\ell_N, L_N]   \right] \le  \P_N\Big[ \max_{x\le \varkappa_{2L_N}} |\h(x)|  \ge \ell_N^{1/4}
\Big]
\]
Since $\frac{\ell_N^{1/4}}{\log L_N} \to +\infty$ as $N\to+\infty$,  by Lemma~\ref{lem:MB}, this probability converges to 0 as $N\to+\infty$.
We show in an analogous fashion that 
\[
\P_N\left[ |\lambda_k-\varkappa_k| \ge N^{-2/3}  \text{ for a }k\in [N- L_N, N- \ell_N]   \right] \to 0 , 
\]
as $N\to+\infty$, which completes the proof. 
\end{proof}

\subsection{Edge rigidity}  \label{sec:rigedge}

It remains to establish rigidity of the eigenvalues near the edges of the spectrum. Our proof relies on Lemma~\ref{lem:MB} and the convergence in distribution of the smallest and largest eigenvalues (properly rescaled) to Tracy-Widom random variables.
More precisely, we use the fact that if $V$ satisfies the Assumptions~\ref{ass:regular}, then the smallest and largest eigenvalues $\lambda_1$ and $\lambda_N$ of the ensemble satisfy as $N\to+\infty$
\begin{equation}\label{eq:TWconv}
c_+^{-1}(\lambda_N-1) N^{2/3} \Rightarrow {\rm TW}_2 
\qquad\text{and}\qquad
- c_-^{-1}(\lambda_1+1) N^{2/3} \Rightarrow {\rm TW}_2 ,
\end{equation}
where ${\rm TW}_2$ denotes the GUE Tracy-Widom law \cite{TW}. 
This was proved in \cite[Corollary 1.3]{DeiftGioev} for polynomial $V$, by relying on asymptotics for orthogonal polynomials from \cite{DKMVZ} which allow to prove trace class convergence of the eigenvalue correlation kernel to the Airy kernel.
Although \eqref{eq:TWconv} has, to the best of our knowledge, not been stated explicitly for non-polynomial $V$, the proof of \cite{DeiftGioev} can be extended by relying in addition on the asymptotic analysis from \cite{DKMVZ2}, valid also in the non-polynomial case. 

Our result concerning rigidity at the edge is the following.

\begin{proposition}[Edge] \label{prop:rigedge}
 There exists a constant $C>0$ such that if $\ell_N = \lfloor (\log\log N)^5 \rfloor$, then as $N\to+\infty$, 
\begin{equation} \label{edge1}
 \P_N\left[ \max_{k\le \ell_N} \left\{ F'(\varkappa_k) |\lambda_k - \varkappa_k|  \right\}  \le C \frac{\ell_N}{N} \right] \to 1 
\end{equation}
and 
\begin{equation} \label{edge2}
 \P_N\left[ \max_{k\ge N- \ell_N} \left\{ F'(\varkappa_k) |\lambda_k - \varkappa_k|  \right\}  \le C \frac{\ell_N}{N} \right] \to 1 . 
\end{equation}
\end{proposition}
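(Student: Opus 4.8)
The plan is to prove \eqref{edge1} and \eqref{edge2} by combining the Tracy--Widom convergence \eqref{eq:TWconv} (which controls the extreme eigenvalues $\lambda_1,\lambda_N$) with Lemma~\ref{lem:MB} (which controls the maximum of $\pm\h$ near the edges and hence pins down the eigenvalues with indices $k=2,\dots,\ell_N$ and $k=N-\ell_N,\dots,N-1$). We treat the left edge; the right edge is symmetric. Throughout we use the quantile asymptotics \eqref{qest}, which give $\varkappa_k+1 \asymp k^{2/3}N^{-2/3}$ and $F'(\varkappa_k)=\psi_V(\varkappa_k)\sqrt{1-\varkappa_k^2}\asymp k^{1/3}N^{-1/3}$ for $k\le \ell_N$ (recall $\psi_V$ is bounded above and bounded below away from $0$ near $-1$ by Assumption~\ref{ass:regular}). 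Hence $F'(\varkappa_k)|\lambda_k-\varkappa_k|\le C\ell_N/N$ for all $k\le\ell_N$ will follow once we show $|\lambda_k-\varkappa_k|\le C' N^{-2/3}$ for all such $k$, because then $F'(\varkappa_k)|\lambda_k-\varkappa_k| \ll \ell_N^{1/3}N^{-1/3}\cdot N^{-2/3}\le \ell_N/N$.

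First I would handle $k=1$ directly. By \eqref{eq:TWconv}, $-c_-^{-1}(\lambda_1+1)N^{2/3}$ converges in distribution to ${\rm TW}_2$, which is a tight family, so for any $\varepsilon>0$ there is $M$ with $\P_N[|\lambda_1+1|\le M N^{-2/3}]\ge 1-\varepsilon$ for large $N$; since $\varkappa_1+1\asymp N^{-2/3}$ as well, this gives $|\lambda_1-\varkappa_1|\le C' N^{-2/3}$ with high probability. Next, for $2\le k\le \ell_N$ I would use the counting-function description exactly as in the proof of Lemma~\ref{lem:rig}: if $\lambda_k \ge \varkappa_k + tN^{-2/3}$ for some $t>0$, then $\h(\varkappa_k + tN^{-2/3}) \le \sqrt{2}\pi N(F(\varkappa_k)-F(\varkappa_k+tN^{-2/3})) \le -c\, t\, k^{1/3}N^{-1/3}\cdot N \cdot N^{-2/3} = -c\,t\,k^{1/3}$ using $F'\asymp k^{1/3}N^{-1/3}$ on the relevant range and the fact that $F$ is concave-like near the edge (more carefully: $F(\varkappa_k+tN^{-2/3})-F(\varkappa_k) \ge \tfrac12 F'(\varkappa_k) tN^{-2/3}$ for $t$ bounded, since $F'$ is monotone near $-1$). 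By \eqref{qest} the point $\varkappa_k + tN^{-2/3}$ stays below $\varkappa_{2\ell_N}$ for $t$ bounded, so this event is contained in $\{\min_{x\le \varkappa_{2\ell_N}} \h(x) \le -c\,t\,k^{1/3}\}$, equivalently $\{\max_{x\le\varkappa_{2\ell_N}}(-\h(x)) \ge c\,t\,k^{1/3}\}$; similarly $\lambda_k \le \varkappa_k - tN^{-2/3}$ forces $\h(\varkappa_k - tN^{-2/3}) \ge c\,t\,k^{1/3}$, hence $\{\max_{x\le \varkappa_{\ell_N}} \h(x) \ge c\,t\,k^{1/3}\}$ (one must check $\varkappa_k - tN^{-2/3}\ge -1$, which holds once $t < c_- k^{2/3}$, so for $k\ge 2$ and $t$ a fixed large constant this is fine for the deviation direction that matters; the other direction is unconstrained). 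A union bound over $2\le k\le \ell_N$ together with Lemma~\ref{lem:MB} (applied with $n=2\ell_N$ and $\rho = c\,t\,k^{1/3}-\sqrt{2}\log(2\ell_N+1)$) then gives, for $t$ a sufficiently large fixed constant $C'$,
\[
\P_N\Big[ \exists\, 2\le k\le \ell_N:\ |\lambda_k-\varkappa_k| \ge C' N^{-2/3}\Big] \;\le\; \sum_{k=2}^{\ell_N} C e^{-\sqrt{2}(c\, C'\, k^{1/3} - \sqrt{2}\log(2\ell_N+1))} \longrightarrow 0,
\]
where we used that $c\,C'\,k^{1/3}$ dominates $2\log(2\ell_N+1) = 2\log(2\lfloor(\log\log N)^5\rfloor+1)$, which grows like $\log\log\log N$, for all $k\ge 2$ and $N$ large. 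Combining this with the $k=1$ bound yields \eqref{edge1}; the estimate \eqref{edge2} follows by the same argument applied to the right edge, using the second parts of \eqref{eq:TWconv}, \eqref{MB2}, and the symmetric version of Lemma~\ref{lem:MB} for $-\h$.

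\textbf{Main obstacle.} The delicate point is the bookkeeping near $k$ of order $1$ (say $k=1,2,3$), where the quantile spacing $\varkappa_{k+1}-\varkappa_k$ is itself only of order $N^{-2/3}$, so the reduction ``$|\lambda_k-\varkappa_k|\le C'N^{-2/3}$'' cannot be obtained purely from the counting-function / Lemma~\ref{lem:MB} mechanism — there is not enough room to separate $\varkappa_k$ from $-1$, and the deviation $\lambda_k < \varkappa_k - C'N^{-2/3}$ can only be excluded once $C'$ is small relative to $c_- k^{2/3}$. This is exactly why the Tracy--Widom input \eqref{eq:TWconv} is needed for $\lambda_1$ (and, if one wants, for a fixed finite number of the smallest eigenvalues via the convergence of the Airy point process), while Lemma~\ref{lem:MB} handles the regime $k\ge 2$ in the upward direction and all $k$ in the direction that does not run into the hard edge. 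One must be careful to phrase the argument so that for each $k$ at least one of the two one-sided deviations is controlled by the counting function and the other, when it hits the hard-edge constraint, is either vacuous (because $\lambda_k\ge\lambda_1\ge -1+\Theta(N^{-2/3})$ already on the Tracy--Widom event) or again controlled; reconciling these cases cleanly — rather than through the somewhat informal appeal to ``similar arguments'' — is where the real work lies, but it is entirely routine given the two cited inputs.
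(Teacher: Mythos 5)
Your overall strategy --- Tracy--Widom tightness for $\lambda_1$, plus counting-function control via Lemma~\ref{lem:MB} for interior indices --- is the same as the paper's, but the union bound at the heart of your argument does not close with a fixed $C'$, and this is a genuine gap. You target $|\lambda_k - \varkappa_k| \le C' N^{-2/3}$ for $2 \le k \le \ell_N$, reduce the deviation at index $k$ to $\bigl\{\max_{x\le\varkappa_{2\ell_N}}(\mp\h(x)) \ge c\,C'\,k^{1/3}\bigr\}$, and sum over $k$:
\[
\sum_{k=2}^{\ell_N} C\, e^{-\sqrt{2}\left(cC'k^{1/3} - \sqrt{2}\log(2\ell_N+1)\right)}
= C\,(2\ell_N+1)^2 \sum_{k=2}^{\ell_N} e^{-\sqrt{2}cC'k^{1/3}} .
\]
The inner sum is bounded above and below by constants depending only on $C'$, so the whole expression is of order $(2\ell_N+1)^2 \to \infty$, not $\to 0$. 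Your parenthetical claim that $cC'k^{1/3}$ dominates $2\log(2\ell_N+1)$ for all $k\ge 2$ is false: at $k=2$ the left-hand side is a fixed constant while the right-hand side grows like $\log\log\log N$, so the $k=2$ term alone already fails to vanish.

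The paper avoids the union bound entirely, and with it the small-$k$ bookkeeping you flag as the ``main obstacle.'' For the downward deviation it simply uses $\lambda_k\ge\lambda_1$: on $\mathscr{W}_N = \{\lambda_1\ge -1-\ell_N^{2/3}N^{-2/3}\}$ one has $\varkappa_k - \lambda_k \le \varkappa_{\ell_N}+1+\ell_N^{2/3}N^{-2/3}$ uniformly in $k\le\ell_N$, and since $F'(\varkappa_k)\le F'(\varkappa_{\ell_N})\ll\ell_N^{1/3}N^{-1/3}$ this gives $F'(\varkappa_k)(\varkappa_k-\lambda_k)\ll\ell_N/N$ deterministically on $\mathscr{W}_N$. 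For the upward deviation it uses convexity of $F$ near $-1$ to write $F'(\varkappa_k)(\lambda_k-\varkappa_k)\le(F(\lambda_k)-F(\varkappa_k))_+\le\tfrac{1}{\sqrt{2}\pi N}(\h(\lambda_k))_-$, and on $\mathscr{B}_N$ all of $\lambda_1,\dots,\lambda_{\ell_N}$ lie below $\varkappa_{2\ell_N}$, so a \emph{single} application of Lemma~\ref{lem:MB} with threshold $\ell_N$ (rather than $cC'k^{1/3}$) handles every $k$ at once, and $\ell_N\gg\log(2\ell_N+1)$ makes the exceptional probability vanish. The key point you missed is that the permissible deviation at index $k$ is $\asymp\ell_N k^{-1/3}N^{-2/3}$, not $N^{-2/3}$; the extra factor $\ell_N/k^{1/3}$ is exactly what lifts the counting-function threshold to the $k$-independent value $\ell_N$, making one invocation of the lemma sufficient and the union bound unnecessary.
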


\begin{proof}
We only show the estimate \eqref{edge1} since the proof of  \eqref{edge2} follows from analogous arguments. 
Define the event 
 \[
\mathscr{W}_N = \big\{ \lambda_1 \ge -1 - \ell_N^{2/3} N^{-2/3}\big\} 
\]
and note that \eqref{eq:TWconv} implies that since $\ell_N \to +\infty$ as $N\to+\infty$,  we have 
\begin{equation} \label{W_event}
\P_N[\mathscr{W}_N ] \to 1. 
\end{equation}

On the one hand,  conditionally on $\mathscr{W}_N$, we have for all $k \le \ell_N$,  
\[
  \varkappa_k - \lambda_k  \le   \varkappa_{\ell_N}+1 + \ell_N^{2/3} N^{-2/3}.  
\]
Since $F'(\varkappa_{\ell_N}) \ll \sqrt{1+ \varkappa_{\ell_N}}$, by \eqref{qest}, we have
\begin{equation} \label{R5}
\max_{k\le \ell_N} \left\{ F'(\varkappa_k) \big(   \varkappa_k - \lambda_k  \big) \right\}  \ll \frac{\ell_N}{N} , 
\end{equation}
where the implied deterministic constant depends only on the equilibrium measure.

On the other hand, since $F''>0$ on $(-1, -1+\delta]$ for some small $\delta>0$, 
if $N$ is sufficiently large, we have for all $k\le \ell_N$, 
\[
  F'(\varkappa_k) (\lambda_k - \varkappa_k) \le  \big( F(\lambda_k) - F(\varkappa_k) \big)_+ \le \tfrac{1}{\sqrt{2}\pi N}  \big(\h(\lambda_k)  \big)_- .
\]
Then conditionally on the event $\mathscr{B}_N$ given by \eqref{B_event}, $\lambda_{\ell_N} \le \varkappa_{\ell_N} + N^{-2/3} \le  \varkappa_{2\ell_N}$ if $N$ is large enough, so that 
\begin{equation} \label{R6}
\max_{k\le \ell_N} \left\{   F'(\varkappa_k) (\lambda_k - \varkappa_k) \right\} 
\le  \tfrac{1}{\sqrt{2}\pi N}  \max_{x\le \varkappa_{2\ell_N}} \big(\h(\lambda_k)  \big)_- .
\end{equation}
 Combining the estimates \eqref{R5} and \eqref{R6}, we see that there exists a  (deterministic) constant $C>0$ such that if $N$ is sufficiently large, we have conditionally on $\mathscr{W}_N \cap \mathscr{B}_N$, 
\[
\max_{k\le \ell_N} \left\{ F'(\varkappa_k) |\lambda_k - \varkappa_k|  \right\} 
\le \frac{C}{N} \max\left\{ \ell_N ,  \max_{x\le \varkappa_{2\ell_N} }\big\{- \h(x)\big\} \right\} . 
\]
This shows that
\[
\P_N\left[ \max_{k\le \ell_N} \left\{ F'(\varkappa_k) |\lambda_k - \varkappa_k|  \right\}  \ge C \frac{\ell_N}{N} \right]
\le \P_N\left[  \max_{x\le \varkappa_{2\ell_N} }\big\{- \h(x)\big\}  \ge \ell_N  \right]  + \P_N\big[ (\mathscr{W}_N \cap \mathscr{B}_N)^c\big] .
\]
Since $\P_N\big[ (\mathscr{W}_N \cap \mathscr{B}_N)^c\big] \to 0$ as $n\to+\infty$ by Lemma~\ref{lem:rig} and \eqref{W_event}, we conclude that by Lemma~\ref{lem:MB},
\[
\limsup_{N\to+\infty} \P_N\left[ \max_{k\le \ell_N} \left\{ F'(\varkappa_k) |\lambda_k - \varkappa_k|  \right\}  \ge C \frac{\ell_N}{N} \right] =0 . 
\]
This completes the proof. 
\end{proof}

\section{Differential identities for Hankel determinants and RH problems}\label{section: diffid}

In this section, we begin our analysis of the Hankel determinants discussed in Section \ref{sec:hankel_intro}. We begin with a short outline of the remainder of the article which focuses on proving the results of Section \ref{sec:hankel_intro}.

The goal of this section is to review the basic connection between Hankel determinants and RH problems as well to transform the RH problems associated to the Hankel determinants $D_N(x_1,x_2;\gamma_1,\gamma_2;w)$ from Section \ref{sec:hankel_intro} into a form accessible for asymptotic analysis. Particularly important tools for this asymptotic analysis are certain differential identities we record in Section \ref{sec:DI}. The transformations we shall perform are detailed in Section \ref{sec:g} and Section \ref{sec:lens1}. After this, the RH problems will be in a form where it becomes important to search for approximate solutions to the problem -- such approximate solutions are often called parametrices in the RH-literature. These approximate solutions depend radically on the location and distance of the points $x_1,x_2$ appearing in Theorems \ref{th:Hankel1}, \ref{th:Hankel2}, and \ref{th:Hankel3}. In Section \ref{sec:merging}, we consider the situation where the points $x_1$ and $x_2$ are close to each other, but not close to the edge of the spectrum -- constructing the approximate solution will allow proving Theorem \ref{th:Hankel2} in this section. In Section \ref{section: RHseparated}, we treat the situation where the points are separated, and combining with the analysis of Section \ref{sec:merging}, this will yield the proof of Theorem \ref{th:Hankel1}. In Section \ref{section: RHedge}, we consider the case of $x_1=x_2$ being near the edge, and our analysis of the approximate problem will yield a proof of Theorem \ref{th:Hankel3}. The construction of all of these approximate solutions relies on certain model RH problems we consider in Section \ref{sec:model}.

Let us now turn to discussing the classical connection between Hankel determinants, orthogonal polynomials and RH problems in order to derive suitable differential identities for the Hankel determinants $D_N(x_1,x_2;\gamma_1,\gamma_2;w)$ defined in {\eqref{eq:FH}}. In what follows, we assume $-1<x_1\leq x_2<1$, $\gamma_1,\gamma_2\in\mathbb R$, and we assume that $w$ is a function analytic in the region $\mathcal S_N$ defined in \eqref{domain}.

\subsection{Differential identities}\label{sec:DI}

For notational convenience, define the weight function $h:\R\to \R$ as follows,
\begin{equation}\label{def:w}
h(\lambda)=h(\lambda;x_1,x_2;\gamma_1,\gamma_2;w;N)=e^{\sqrt{2}\pi\gamma_1\mathbf{1}_{(-\infty,x_1 ]}(\lambda)+\sqrt{2}\pi\gamma_2\mathbf{1}_{(-\infty,x_2]}(\lambda)}e^{-NV(\lambda)+w(\lambda)}.
\end{equation}

Let $p_0,p_1,p_2,\ldots$ be normalized orthogonal polynomials with respect to the weight $h$ on the real line with positive leading coefficients $\kappa_0,\kappa_1,\kappa_2,\ldots$, characterized by the orthogonality conditions
\begin{equation*}
\int_{\mathbb R}p_j(x)p_k(x)h(x)dx=\delta_{j,k},\qquad j,k\in\mathbb N\cup\{0\}.
\end{equation*} 
We define the Cauchy transform for $z\in \C\setminus\R$ by
\begin{equation*}
C_hg(z)=\frac{1}{2\pi i}\int_{\mathbb R}g(x)h(x)\frac{dx}{x-z},
\end{equation*}
and we let
\begin{equation}\label{def:Y}
Y(z)=Y_N(z;x_1,x_2;\gamma_1,\gamma_2;w)=\begin{pmatrix}\frac{1}{\kappa_N}p_N(z)& \frac{1}{\kappa_N}C_hp_N(z)\\
-2\pi i\kappa_{N-1}p_{N-1}(z)&-2\pi i\kappa_{N-1}C_hp_{N-1}(z)\end{pmatrix}.
\end{equation}
This $2\times 2$ matrix-valued function is analytic in $\mathbb C\setminus \mathbb R$ and it is the unique solution to the standard RH problem for orthogonal polynomials \cite{FokasItsKitaev}.
More precisely, $Y$ satisfies the following conditions.

\subsubsection*{RH problem for $Y$}
\begin{itemize}
\item[(a)] $Y:\mathbb C\setminus\mathbb R\to\mathbb C^{2\times 2}$ is analytic,
\item[(b)] $Y$ has continuous boundary values $Y_\pm(x)$ when approaching $x\in\mathbb R\setminus\{x_1,x_2\}$ from above ($+$) and below ($-$), and they are related by
\begin{equation*}
Y_+(x)=Y_-(x)\begin{pmatrix}1&h(x)\\
0&1\end{pmatrix},\qquad x\in\mathbb R\setminus\{x_1,x_2\},
\end{equation*}
\item[(c)] as $z\to\infty$, we have
\begin{equation*}
Y(z)=\left(I+\mathcal O(z^{-1})\right)z^{N\sigma_3},\qquad \sigma_3=\begin{pmatrix}1&0\\0&-1\end{pmatrix},
\end{equation*}
uniformly for $z\in\mathbb C\setminus\mathbb R$,
\item[(d)] as $z\to x_j$ for $j=1,2$, we have
\begin{equation*}
Y(z)=\mathcal O(\log(z-x_j)),
\end{equation*}
by which we mean that each entry of the matrix is of this order.
\end{itemize}
We note that since $h(x)>0$ for $x\in \mathbb R$, the orthogonal polynomials $p_j$ exist uniquely (under the condition that $\kappa_j>0$) for $j=0,1,2,\dots$. It follows that the RH problem for $Y$ has a solution for $N=1,2,\dots$.
We also note that the orthogonal polynomials $p_j$ and the above RH problem also make  sense if $x_1=x_2$.

The reason for introducing $Y$ is that on one hand we can express logarithmic derivatives of the Hankel determinants $D_N$ in terms of $Y$; on the other hand we can use the Deift/Zhou steepest descent method to obtain large $N$ asymptotics for $Y$ in various double scaling regimes.

Our next result contains differential identities with respect to the position of the singularities, which will be crucial for our asymptotic analysis as $x_2- x_1\to 0$ and as $x_1=x_2\to \pm 1$. Similar differential identities were used for instance in \cite{DeiftItsKrasovsky2, Charlier}.

\begin{proposition}\label{pr:di}
Let $\gamma_1,\gamma_2\in\mathbb R$, and set $w\equiv 0$. For $-1<x_1<x_2<1$, we have 
\begin{equation}\label{eq:diffid}
\frac{d}{dy}\log D_N(x_1,x_2=x_1+y;\gamma_1,\gamma_2;0)=-e^{-NV(x_2)}\frac{1-e^{\sqrt{2}\pi\gamma_2}}{2\pi i}\left(Y^{-1}(x_2)Y'(x_2)\right)_{2,1},
\end{equation}
where $Y(z)=Y_N(z;x_1,x_2;\gamma_1,\gamma_2;0)$ is as in \eqref{def:Y} and by the notation $(Y^{-1}(x_2)Y'(x_2))_{2,1}$, we mean $\lim_{z\to x_2}(Y^{-1}(z)Y'(z))_{2,1}$, where the limit is taken off of the jump contour, and does not depend on the side of the contour we approach the point from.
If $x_1=x_2=:x\in(-1,1)$, we have 
\begin{equation}\label{eq:diffidconfluent}
\frac{d}{dx}\log D_N(x,x;\gamma_1,\gamma_2;0)=-e^{-NV(x)}\frac{1-e^{\sqrt{2}\pi\gamma_1+\sqrt{2}\pi\gamma_2}}{2\pi i}\left(Y^{-1}(x)Y'(x)\right)_{2,1}.
\end{equation}
\end{proposition}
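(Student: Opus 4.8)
\textbf{Proof plan for Proposition~\ref{pr:di}.}
The strategy is the classical one of differentiating the Hankel determinant through the orthogonal-polynomial normalization and then re-expressing the result in terms of the RH solution $Y$. First I would recall that $D_N(x_1,x_2;\gamma_1,\gamma_2;0)=\prod_{j=0}^{N-1}\kappa_j^{-2}$, where $\kappa_j=\kappa_j(x_1,x_2)$ are the leading coefficients of the orthonormal polynomials with respect to the weight $h(\lambda)=e^{\sqrt2\pi\gamma_1\mathbf 1_{(-\infty,x_1]}+\sqrt2\pi\gamma_2\mathbf 1_{(-\infty,x_2]}-NV}$. Hence
\[
\frac{d}{dy}\log D_N=-2\sum_{j=0}^{N-1}\frac{1}{\kappa_j}\frac{d\kappa_j}{dy}.
\]
The parameter $y=x_2-x_1$ enters the weight only through the jump location $x_2$, and $\partial_y h=-\delta_{x_2}\,(1-e^{\sqrt2\pi\gamma_2})\,h_{\rm smooth}(x_2)$ in a distributional sense; more precisely $\partial_y h(\lambda)=-(1-e^{\sqrt2\pi\gamma_2})e^{-NV(x_2)+\sqrt2\pi\gamma_1\mathbf 1_{\lambda\le x_1}}\delta(\lambda-x_2)$ (here $\lambda=x_2$ lies to the left of the $x_1$-singularity so $\mathbf 1_{\lambda\le x_1}=0$; this is where $-1<x_1<x_2$ is used). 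I would make this rigorous by differentiating the orthogonality relations $\int p_j p_k h\,d\lambda=\delta_{jk}$ and the normalization, taking care that although $h$ has a jump at $x_2$, the product $p_j(\lambda)p_k(\lambda)$ is continuous there, so the relevant boundary terms are well defined.

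The second step is bookkeeping: differentiating $\int p_N^2 h=1$ and $\int p_N p_{N-1}h=0$ with respect to $y$ and using standard manipulations (as in \cite{DeiftItsKrasovsky2, Charlier}) yields
\[
\frac{d}{dy}\log D_N = -\,(1-e^{\sqrt2\pi\gamma_2})\,e^{-NV(x_2)}\,\Big(p_N(x_2)\,\big(C_h p_{N-1}\big)(x_2)-p_{N-1}(x_2)\big(C_h p_N\big)(x_2)\Big)\cdot(\text{const}),
\]
or some equivalent bilinear expression in $p_N,p_{N-1}$ and their Cauchy transforms evaluated at $x_2$. The point is that every quantity appearing here is an entry of $Y$ or of $Y^{-1}Y'$. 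Using $\det Y\equiv 1$ and the explicit form \eqref{def:Y}, one identifies the combination of $p_N,p_{N-1},C_hp_N,C_hp_{N-1}$ and the derivative $\partial_z$ hitting one factor (coming from the $C_h$ kernel $\tfrac1{2\pi i}\int \tfrac{\cdot}{x-z}$, whose $y$-derivative produces a $\tfrac{1}{x_2-z}$ evaluated via a residue, hence a derivative) precisely with $(Y^{-1}(x_2)Y'(x_2))_{2,1}$. I would verify this by writing $Y^{-1}Y'$ entrywise from \eqref{def:Y}, noting $(Y^{-1})=\begin{pmatrix}Y_{22}&-Y_{12}\\-Y_{21}&Y_{11}\end{pmatrix}$, so $(Y^{-1}Y')_{2,1}=Y_{11}Y_{21}'-Y_{21}Y_{11}'$, and matching this against the bilinear expression obtained from the differentiation. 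That $\lim_{z\to x_2}(Y^{-1}(z)Y'(z))_{2,1}$ exists and is side-independent follows from the logarithmic (rather than power) nature of the singularity at $x_2$: near $x_2$, $Y(z)=\widehat Y(z)\,(I+\tfrac{\gamma_2}{\sqrt2 \,\pi i}\log(z-x_2)\,E)\,(\text{something analytic})$ with $E$ nilpotent, so the $\log$-singular terms cancel in $Y^{-1}Y'$ up to an $\mathcal O((z-x_2)^{-1}\log)$ off-diagonal piece that, in the $(2,1)$ entry, is actually bounded — I would check this cancellation explicitly, as it is the one genuinely delicate point.

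For the confluent case $x_1=x_2=x$, the weight becomes $h(\lambda)=e^{(\sqrt2\pi\gamma_1+\sqrt2\pi\gamma_2)\mathbf 1_{(-\infty,x]}-NV+w}$ with a single jump of total size $e^{\sqrt2\pi(\gamma_1+\gamma_2)}$, and now $\partial_x h=-(1-e^{\sqrt2\pi(\gamma_1+\gamma_2)})e^{-NV(x)}\delta(\lambda-x)$. Repeating verbatim the same computation with $\gamma_2$ replaced by $\gamma_1+\gamma_2$ in the jump factor gives \eqref{eq:diffidconfluent}; the RH problem for $Y$ is structurally identical (one merged logarithmic singularity at $x$ instead of two), so no new analytic input is needed. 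The main obstacle, as indicated, is the careful justification of differentiating under the integral sign when the weight has a jump, and the verification that the resulting boundary expression at $x_2$ (a limit of $Y$-entries approaching a logarithmic singularity) is well defined and equals the stated $(2,1)$-entry of $Y^{-1}Y'$; both are routine in spirit but require attention to the $\log$-singularity structure.
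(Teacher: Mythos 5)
Your proposal starts off on the right track: Heine's identity, $D_N=\prod_{j=0}^{N-1}\kappa_j^{-2}$, and the observation that the $y$-derivative of the weight concentrates at $\lambda=x_2$ with weight $(e^{\sqrt2\pi\gamma_2}-1)e^{-NV(x_2)}$ are all correct and match the paper. Where you go astray is in the ``bookkeeping'' step and the subsequent identification with $(Y^{-1}Y')_{2,1}$.

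First, the key lemma you are missing is the \emph{Christoffel--Darboux formula} (in confluent form). The computation from $-2\sum_{j<N}\partial_y\kappa_j/\kappa_j$ lands on $-(1-e^{\sqrt2\pi\gamma_2})\,e^{-NV(x_2)}\sum_{j=0}^{N-1}p_j(x_2)^2$, i.e.\ the reproducing kernel on the diagonal; it is Christoffel--Darboux that converts $\sum_{j<N}p_j(x_2)^2$ into $\frac{\kappa_{N-1}}{\kappa_N}\left(p_N'(x_2)p_{N-1}(x_2)-p_N(x_2)p_{N-1}'(x_2)\right)$. Differentiating only the relations $\int p_N^2h=1$ and $\int p_Np_{N-1}h=0$ is not enough, because the log-derivative of $D_N$ involves \emph{all} leading coefficients $\kappa_0,\dots,\kappa_{N-1}$; you must sum over all indices and then telescope via Christoffel--Darboux.

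Second, the ``bilinear expression in $p_N,p_{N-1}$ and their Cauchy transforms'' that you propose cannot be correct, and your own later formula already contradicts it. You (correctly) write $(Y^{-1}Y')_{2,1}=Y_{11}Y_{21}'-Y_{21}Y_{11}'$ using $\det Y\equiv1$; but from \eqref{def:Y} the first column of $Y$ consists of $\tfrac{1}{\kappa_N}p_N$ and $-2\pi i\kappa_{N-1}p_{N-1}$ only, so
\[
(Y^{-1}Y')_{2,1}=\frac{2\pi i\,\kappa_{N-1}}{\kappa_N}\bigl(p_{N-1}p_N'-p_Np_{N-1}'\bigr),
\]
which involves \emph{no} Cauchy transforms. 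Your heuristic about the $y$-derivative hitting the Cauchy kernel and producing a residue conflates the $z$-derivative in $Y'$ with the $y$-derivative of the weight; once Christoffel--Darboux is applied, the matching with the RH solution is a one-line computation and the Cauchy transform entries never enter.

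Third, because the $(2,1)$-entry of $Y^{-1}Y'$ only involves the polynomial column of $Y$, it is an \emph{entire} function of $z$. The careful analysis of the logarithmic singularity at $x_2$ that you flag as ``the one genuinely delicate point'' is therefore moot: there is no singularity to cancel, and the limit trivially exists and is independent of the side of the jump contour. The delicate singularity analysis you describe would indeed be necessary for entries involving $C_hp_j$, but that is precisely why the differential identity is stated for the $(2,1)$-entry and not, say, the $(1,1)$ or $(2,2)$ entry.

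The confluent case \eqref{eq:diffidconfluent} is handled exactly as you say, by noting $D_N(x,x;\gamma_1,\gamma_2;0)=D_N(x_1,x;0,\gamma_1+\gamma_2;0)$ for any $x_1<x$ and applying \eqref{eq:diffid}; this part of your plan is fine.
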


\begin{proof}
In this proof, we write $D_N(y):=D_N(x_1,x_2=x_1+y;\gamma_1,\gamma_2;0)$.
A well-known consequence of Heine's determinantal formula for orthogonal polynomials is that the Hankel determinant is given as
\begin{equation*}
D_N(y)=\prod_{j=0}^{N-1}\kappa_j^{-2},
\end{equation*}
where as above, $\kappa_j$ is the leading coefficient of $p_j$.
We note that both $D_N(y)$ and $\kappa_j$ are positive for each $j$. Moreover, they are smooth in $y$, since they are expressed, again via Heine's formula, in terms of sums and products of moments of the function $h$ and using \eqref{def:w}, one can check that these moments are smooth in $y$. We can thus take the logarithmic $y$-derivative of the above identity and then use the orthogonality conditions, which gives
\begin{equation*}
\partial_y\log D_N(y)=-2\sum_{j=0}^{N-1}\frac{\partial_y \kappa_j}{\kappa_j}=-\int_{\mathbb R}\partial_y\left(\sum_{j=0}^{N-1}p_j(x)^2\right)h(x)dx.
\end{equation*}
Again, using the definition \eqref{def:w} of the weight function $h$, we get
\[
\partial_y\log D_N(y)=-\partial_y \int_{\mathbb R}\left(\sum_{j=0}^{N-1}p_j(x)^2\right) h(x)dx-e^{-NV(x_2)}\left(1-e^{\sqrt{2}\pi\gamma_2}\right)\left(\sum_{j=0}^{N-1}p_j(x_2)^2\right)
.
\]
By orthonormality and the confluent form of the Christoffel-Darboux formula for orthogonal polynomials (confluent meaning that we evaluate the kernel on the diagonal), this becomes
\begin{align*}
\partial_y\log D_N(y)&=-e^{-NV(x_2)}\left(1-e^{\sqrt{2}\pi\gamma_2}\right)\left(\sum_{j=0}^{N-1}p_j(x_2)^2\right)\\
&=-\frac{\kappa_{N-1}}{\kappa_N}e^{-NV(x_2)}\left(1-e^{\sqrt{2}\pi\gamma_2}\right)\left(p_N'(x_2)p_{N-1}(x_2)-p_N(x_2)p_{N-1}'(x_2)\right).
\end{align*}
By \eqref{def:Y}, we get \eqref{eq:diffid} after a straightforward calculation in which we make use of the fact that $\det Y\equiv 1$.

To prove \eqref{eq:diffidconfluent}, it suffices to note that 
$D_N(x,x;\gamma_1,\gamma_2;0)=D_N(x_1,x;0,\gamma_1+\gamma_2;0)$ for any $x_1\in(-1,x)$  and to apply \eqref{eq:diffid} to this case.
\end{proof}

\medskip

We now consider a deformation of $w$, namely for $t\in[0,1]$, we define 
\begin{equation}\label{eq:Wtdef}
w(\lambda;t)=tw(\lambda),
\end{equation}
which interpolates between $w(\cdot;0)=0$ and $w(\cdot;1)=w$. Note that $w(\lambda;t)$ is bounded and analytic in the region $\mathcal S_N$, uniformly in $t\in[0,1]$, provided that $w(\lambda;1)=w(\lambda)$ is.

Let us write, for $t\in[0,1]$, $h_t$ for the weight function defined as in \eqref{def:w} with $w=w(\cdot;t)$ and similarly, let us write $Y(z;t)$ for the matrix defined as in \eqref{def:Y} with $h=h_t$. This of course satisfies for each $t\in[0,1]$ the corresponding RH problem. Again using Heine's identity and representing the orthogonal polynomials in terms of moments of $h_t$, we see that both $Y(z;t)$ and $\log D_N(x_1,x_2;\gamma_1,\gamma_2;w(\cdot;t))$ are smooth functions of $t$. Indeed, the point of introducing the deformation $w(\cdot;t)$, is that we can express $\partial_t \log D_N$ in terms of $Y(z;t)$ in a rather simple manner, which then allows calculating the asymptotics of $D_N$ from the asymptotics of $Y(z;t)$. The following result follows from a general differential identity proven in \cite[Section 3.2]{Charlier}, by noting that $\partial_t h_t(\lambda)=w(\lambda)h_t(\lambda)$.

\begin{proposition}\label{le:di1}
Let $\gamma_1,\gamma_2\in\mathbb R$, and let $-1<x_1\leq x_2<1$. If $w(\lambda;t)$ is given by \eqref{eq:Wtdef}, we have for $t\in(0,1)$,
$$
\partial_t \log D_N(x_1,x_2;\gamma_1,\gamma_2;w(\cdot;t))=\frac{1}{2\pi i }\int_\R \left[Y^{-1}(\lambda;t) Y'(\lambda;t)\right]_{21}w(\lambda)h_t(\lambda)d\lambda.
$$
\end{proposition}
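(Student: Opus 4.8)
The plan is to derive Proposition~\ref{le:di1} as a special case of the general differential identity established in \cite[Section 3.2]{Charlier}. First I would recall the setup of that reference: for a weight of the form $h_t(\lambda)$ depending smoothly on a parameter $t$, with associated RH solution $Y(\cdot;t)$ and Hankel determinant $D_N(t)$, the general identity expresses $\partial_t\log D_N(t)$ as an integral over $\mathbb{R}$ of $\left[Y^{-1}(\lambda;t)Y'(\lambda;t)\right]_{21}$ weighted against $\partial_t\log h_t(\lambda)$ times $h_t(\lambda)$. So the first step is simply to verify that our weight $h_t$, defined as in \eqref{def:w} with $w$ replaced by $w(\cdot;t)=tw(\cdot)$, satisfies the regularity hypotheses needed: it is positive on $\mathbb{R}$, it has enough decay at infinity (inherited from $e^{-NV}$ and the growth condition in Assumptions~\ref{ass:regular}), and it is smooth in $t$ (indeed real-analytic, since $w(\lambda;t)=tw(\lambda)$ is linear in $t$ and $w$ is analytic and bounded on $\mathcal S_N$, hence in particular continuous on $\mathbb{R}$).

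Next I would carry out the elementary computation $\partial_t h_t(\lambda)=w(\lambda)\,h_t(\lambda)$, which is immediate from \eqref{def:w} and \eqref{eq:Wtdef} since the only $t$-dependence sits in the factor $e^{w(\lambda;t)}=e^{tw(\lambda)}$. Plugging this into the general identity of \cite{Charlier} — that is, substituting $\partial_t\log h_t(\lambda)=w(\lambda)$ — yields exactly
\[
\partial_t \log D_N(x_1,x_2;\gamma_1,\gamma_2;w(\cdot;t))=\frac{1}{2\pi i}\int_\mathbb{R}\left[Y^{-1}(\lambda;t)Y'(\lambda;t)\right]_{21}w(\lambda)h_t(\lambda)\,d\lambda .
\]
I would also remark that the integrand is integrable: away from $x_1,x_2$ the factor $h_t(\lambda)$ decays fast, and near $x_1,x_2$ the entries of $Y^{-1}Y'$ grow at most logarithmically by condition (d) of the RH problem for $Y$, while $h_t$ stays bounded; the jump discontinuities of $h_t$ at $x_1,x_2$ are harmless. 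The smoothness of $t\mapsto\log D_N$ needed to make $\partial_t$ meaningful follows, as noted in the text before the statement, from Heine's identity representing $D_N$ through moments of $h_t$, which depend smoothly (analytically) on $t$.

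The main obstacle — though it is really just a matter of careful bookkeeping rather than a genuine difficulty — is making sure that the hypotheses of the cited general identity from \cite{Charlier} are literally met in our slightly different setting, in particular that allowing $x_1=x_2$ and allowing $w$ to be only analytic on the strip-like domain $\mathcal S_N$ (rather than entire, or analytic in a full neighbourhood of $[-1,1]$) does not break the derivation. Here the key points are that the proof in \cite{Charlier} only uses the value of $w$ on $\mathbb{R}$ for this particular identity (no contour deformation of $w$ is needed), that $h_t>0$ on $\mathbb{R}$ guarantees existence and uniqueness of $Y(\cdot;t)$, and that the case $x_1=x_2$ is covered because the RH problem for $Y$ and the orthogonal polynomials $p_j$ still make sense then, as already observed in the text following the RH problem for $Y$. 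Once these checks are in place, the identity follows with no further work.
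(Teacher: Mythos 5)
Your proposal matches the paper's own argument exactly: the paper simply invokes the general differential identity from \cite[Section 3.2]{Charlier} together with the observation that $\partial_t h_t(\lambda) = w(\lambda)h_t(\lambda)$, which is precisely what you do. The extra checks you include (positivity of $h_t$, the case $x_1=x_2$, smoothness in $t$ via Heine's formula) are sensible bookkeeping that the paper leaves implicit.
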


Before turning our attention to a detailed study of the different double scaling limits, we perform two transformations of the RH problem for $Y$ which are standard in RH analysis \cite{Deift} and which we will need in both of the relevant double scaling regimes.

\subsection{The \texorpdfstring{$g$}{g}-function and a first transformation of the RH problem}\label{sec:g}

Our first transformation of the RH problem normalizes the behavior at infinity and transforms the jump matrices  in such a way that they become small except on $[-1,1]$.

Define the functions
\begin{align}
\label{def:g}&g(z)=\int\log(z-s)d\mu_V(s),& z\in\mathbb C\setminus (-\infty,1],\\
&\label{def:xi}\xi(z)=g(z)-\frac{1}{2}V(z)+\frac{1}{2}\ell,& z\in\mathbb C\setminus (-\infty,1],
\end{align}
where the principal value of the logarithm is taken, and where $\ell$ is given in terms of the Euler-Lagrange variational conditions \eqref{eq:el1} for the equilibrium measure $\mu_V$. These imply that there exists a constant $\ell$ such that
\begin{align}
&\label{eq:EL1}g_+(x)+g_-(x)-V(x)+\ell=0,& x\in[-1,1],\\
&\label{eq:EL2}g_+(x)+g_-(x)-V(x)+\ell<0,& x\in\mathbb R\setminus [-1,1],
\end{align}
if $V$ satisfies Assumptions \ref{ass:regular}.
We then have the identities
\begin{align}
&\label{eq:id g mu}g_+(x)-g_-(x)=\begin{cases}2\pi i \int_x^{1}\mu_V(dx),& x\in(-1,1),\\
2\pi i ,& x<-1,
\end{cases}
\\
&\label{eq:id xi g}\xi_\pm(x)=\frac{g_{\pm}(x)-g_{\mp}(x)}{2}, \qquad \qquad \quad  x\in(-1,1).\end{align}
It will be useful to note that by analytic continuation of \eqref{eq:id g mu} and \eqref{eq:id xi g} 
\begin{equation} \label{formulaxi}
\xi(z)=-\pi  \int_1^z \psi_V(w)(w^2-1)^{1/2}dw,
\end{equation}
where the branch of the root is such that the cut is on $[-1,1]$ and the root is positive on $(1,\infty)$, and $\psi_V(w)$ is the analytic continuation of $\psi_V$ to a neighbourhood of the real line which we assume contains $\mathcal S_N$ defined in \eqref{domain} without loss of generality. 
Define now for $z\in \C\setminus \R$
\begin{equation}\label{def:T}
T(z)=e^{\frac{N\ell}{2}\sigma_3}Y(z)e^{-Ng(z)\sigma_3}e^{-\frac{N\ell}{2}\sigma_3}.
\end{equation}
Using the RH problem for $Y$, it is straightforward to check that $T$ satisfies the following RH problem.

\subsubsection*{RH problem for $T$}
\begin{itemize}
\item[(a)] $T:\mathbb C\setminus\mathbb R\to\mathbb C^{2\times 2}$ is analytic,
\item[(b)] $T$ satisfies the jump relations 
\begin{align*}
&T_+(x)=T_-(x)\begin{pmatrix}1&e^{\sqrt{2}\pi\gamma_1+\sqrt{2}\pi\gamma_2} e^{w(x;t)}e^{2N\xi_+(x)}\\
0&1\end{pmatrix},& x<-1,\\
&T_+(x)=T_-(x)\begin{pmatrix}e^{-2N\xi_+(x)}&e^{\sqrt{2}\pi\gamma_1+\sqrt{2}\pi\gamma_2}e^{w(x;t)}\\
0&e^{2N\xi_+(x)}\end{pmatrix},& -1<x<x_1,\\
&T_+(x)=T_-(x)\begin{pmatrix}e^{-2N\xi_+(x)}&e^{\sqrt{2}\pi\gamma_2}e^{w(x;t)}\\
0&e^{2N\xi_+(x)}\end{pmatrix},& x_1<x<x_2,\\
&T_+(x)=T_-(x)\begin{pmatrix}e^{-2N\xi_+(x)}&e^{w(x;t)}\\
0&e^{2N\xi_+(x)}\end{pmatrix},& x_2<x<1,\\
&T_+(x)=T_-(x)\begin{pmatrix}1& e^{w(x;t)} e^{2N\xi(x)}\\
0&1\end{pmatrix},& x>1,
\end{align*}
\item[(c)] as $z\to\infty$, we have
\begin{equation*}
T(z)=I+\mathcal O(z^{-1}),
\end{equation*}
uniformly for $z\in\mathbb C\setminus\mathbb R$,
\item[(d)] as $z\to x_j$, $j=1,2$, we have
\begin{equation*}
T(z)=\mathcal O(\log(z-x_j)).
\end{equation*}
\end{itemize}

We note that the above RH problem makes sense also if $x_1=x_2$. In that case, the jump on $(x_1,x_2)$ can be ignored, and $T$ still blows up logarithmically near $x_1=x_2$.

By \eqref{def:xi} and \eqref{eq:EL2} we have for any  $x\in\mathbb R\setminus[-1,1]$ that $\mathrm{Re}(\xi_+(x))<0$. Combining this with the uniform boundedness of $w(\cdot,t)$ on $\mathbb R$, it follows that the jump matrix for $T$ is close to the identity matrix on $\mathbb R\setminus[-1,1]$ for large $N$.
For $x\in(-1,1)$, the diagonal entries of the jump matrix for $T$ oscillate rapidly for $N$ large. The standard remedy for this is to perform a transformation known as the opening of lenses.

\subsection{Opening of lenses}\label{sec:lens1}
We use the factorization
\begin{multline*}
\begin{pmatrix}e^{-2N\xi_+(x)}&e^{\sqrt{2}\pi\gamma} e^{w(x;t)}\\
0&e^{2N\xi_+(x)}\end{pmatrix}=J_1(x;\gamma)J_2(x;\gamma)J_3(x;\gamma):=\begin{pmatrix}1&0\\
e^{-\sqrt{2}\pi\gamma} e^{-w(x;t)}e^{-2N\xi_-(x)}&1\end{pmatrix}\\
\times\
\begin{pmatrix}0&e^{\sqrt{2}\pi\gamma} e^{w(x;t)}\\
-e^{-\sqrt{2}\pi\gamma} e^{-w(x;t)}&0\end{pmatrix}\begin{pmatrix}1&0\\
e^{-\sqrt{2}\pi\gamma} e^{-w(x;t)}e^{-2N\xi_+(x)}&1\end{pmatrix}
\end{multline*} valid for any $\gamma\in\mathbb R$ and
for $x\in (-1,1)$
to split the jump contour into lens-shaped curves and to transform the oscillatory jumps into exponentially decaying jumps.
To that end, we extend $J_1$ analytically to the part of the region $\mathcal S_N$ in the lower half plane (note that we know that $V$  and hence $\xi$ is also analytic in this neighbourhood for large enough $N$), and similarly for $J_3$ in the upper half plane:
\begin{align*}
&J_1(z;\gamma)=\begin{pmatrix}1&0\\
e^{-\sqrt{2}\pi\gamma} e^{-w(z;t)}e^{-2N\xi(z)}&1\end{pmatrix},&z\in\mathbb C^{-}\cap \mathcal S_N,\\
&J_3(z;s)=\begin{pmatrix}1&0\\
e^{-\sqrt{2}\pi\gamma} e^{-w(z;t)}e^{-2N\xi(z)}&1\end{pmatrix},&z\in\mathbb C^{+}\cap \mathcal S_N,
\end{align*}
and we define regions $\Omega$, $\overline{\Omega}$ which are both unions of several half-lens shaped regions as shown in Figure \ref{fig:lens2} and in Figure \ref{fig:lens}.
We will specify the precise shape of the lenses later on in each of the asymptotic regimes we consider. For now, we simply point out that for $w,t=0$ they need to be contained in $\mathcal V$ (recall from Assumptions \ref{ass:regular} that we defined this set to be a strip in which $V$ is analytic), while for $w,t\neq0$ they need to be contained in a region $\mathcal S_N'$ which is of  a similar  form as $\mathcal S_N$ but slightly smaller and bounded, namely

\begin{align}\label{domain2}
 \mathcal S_N'&=\bigg(\left\{z\in\mathbb C:|\Re z|\leq 1-\tfrac{3}{2}\delta_N, |\Im z|<\tfrac{3\epsilon_N}{8}\right\}\\
 &\qquad \notag \bigcup\left\{z\in\mathbb C:1-\tfrac{3}{2}\delta_N\leq |\Re z|\leq 1+\tfrac{3}{2}\delta_N,|\Im z|<\tfrac{3}{2}\delta_N\right\}\bigg).
\end{align}
 See Figure \ref{fig:SSprime} for an illustration of the sets $\mathcal S_N$ and $\mathcal S_N'$.
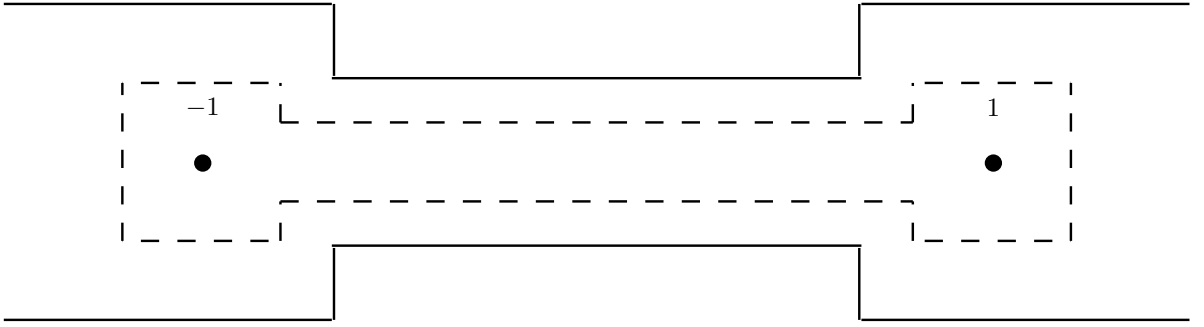
\begin{figure}
\begin{tikzpicture}[scale=2.3]
    \begin{axis}[ticks=none,axis line style={draw=none},
        unit vector ratio*=1 1 1,       
        xmin=-1.5,xmax=1.5,
        ymin=-.41,ymax=.41]
        \addplot [black,thin,domain=-147:-90] ({1+0.4*cos(147)}, {0.4*sin(x)});
        \addplot [black,thin,domain=90:147] ({1+0.4*cos(147)}, {0.4*sin(x)});
        \addplot [black,thin,domain=33:90] ({-1+0.4*cos(33)}, {0.4*sin(x)});
        \addplot [black,thin,domain=270:327] ({-1+0.4*cos(33)}, {0.4*sin(x)});

        \addplot [black,thin,domain=0.67:1.5]({x},{0.4});
        \addplot [black,thin,domain=0.67:1.5]({x},{-0.4});    
        \addplot [black,thin,domain=0.67:1.5]({-x},{0.4});
        \addplot [black,thin,domain=0.67:1.5]({-x},{-0.4});

        \addplot [black,dashed,domain=0.83:1.2]({x},{0.2});
        \addplot [black,dashed,domain=0.83:1.2]({x},{-0.2});    
        \addplot [black,dashed,domain=0.83:1.2]({-x},{0.2});
        \addplot [black,dashed,domain=0.83:1.2]({-x},{-0.2});    

        
        \addplot [black,dashed,domain=-0.2:-0.1]({-0.8},{x});
        \addplot [black,dashed,domain=0.1:0.2]({-0.8},{x});

        \addplot [black,dashed,domain=-0.2:0.2]({-1.2},{x});
        \addplot [black,dashed,domain=-0.2:0.2]({1.2},{x});

        \addplot [black,dashed,domain=-0.2:-0.1]({0.8},{x});
        \addplot [black,dashed,domain=0.1:0.2]({0.8},{x});


        \addplot [black,thin,domain=-0.67:0.67]({x},{0.212});
        \addplot [black,thin,domain=-0.67:0.67]({x},{-0.212});    

        \addplot [black,dashed,domain=-0.8:0.8]({x},{0.1});
        \addplot [black,dashed,domain=-0.8:0.8]({x},{-0.1});

    \end{axis}

\fill [color=black] (5.72,0.93) circle (0.05);
\node at (5.72,1.25) {$1$};
\fill [color=black] (1.15,0.93) circle (0.05);
\node at (1.15,1.25) { $-1$};
\end{tikzpicture}
\caption{The sets $\mathcal S_N$ (boundary drawn solid) and $\mathcal S_N'$ (boundary drawn dashed).}\label{fig:SSprime}
\end{figure}

If $x_1$ and $x_2$ converge slowly to each other as in Section \ref{section: RHseparated}, we will choose a lens configuration as in Figure \ref{fig:lens2}; if $x_1$ and $x_2$ approach each other at a fast rate as in Section \ref{sec:merging} or if they are equal (and merging to $\pm 1$) as in Section \ref{section: RHedge}, we will choose the configuration from Figure \ref{fig:lens}.
We note also that in the case where $\gamma_1=\gamma_2=0$, there is no need to close the lenses at $x_1$ and $x_2$, and then we could also work with a single lens connecting $-1$ with $1$. This would simplify the analysis and hence the proof of Theorem \ref{th:Hankel1} for $\gamma_1=\gamma_2=0$.
Recall that the proof of Theorem \ref{th:gmc2} relies only on this special case.

The total lens is delimited by the curves $\omega_{1}, \overline{\omega}_1$, $\omega_{2}, \overline{\omega}_2$, and $\omega_{3}, \overline{\omega}_3$, where $\omega_2,\overline{\omega}_2$ can possibly coincide with the interval $(x_1,x_2)$. We orient all the curves from left to right, and say that the top side of the contour is the $+$-side and the bottom side of it the $-$-side.

Define
\begin{equation}\label{def:S}
S(z)=\begin{cases}T(z)J_3(z;\gamma)^{-1},& z\in\Omega,\\
T(z)J_1(z;\gamma),& z\in\overline{\Omega},\\
T(z),& \mbox{elsewhere,}
\end{cases}
\end{equation}
where the value of $\gamma$ is taken equal to $\gamma_1+\gamma_2$ for $\Re z<x_1$, $\gamma_2$ for $x_1<\Re z<x_2$ and equal to $0$ for $\Re z>x_2$.

\begin{figure}
\begin{tikzpicture}[scale=2.3]
    \begin{axis}[ticks=none,axis line style={draw=none},
        unit vector ratio*=1 1 1,       
        xmin=-1.5,xmax=1.5,
        ymin=-.41,ymax=.41]
        \addplot [black,dashed,domain=-147:-90] ({1+0.4*cos(147)}, {0.4*sin(x)});
        \addplot [black,dashed,domain=90:147] ({1+0.4*cos(147)}, {0.4*sin(x)});
        \addplot [black,dashed,domain=33:90] ({-1+0.4*cos(33)}, {0.4*sin(x)});
        \addplot [black,dashed,domain=270:327] ({-1+0.4*cos(33)}, {0.4*sin(x)});

        \addplot [black,dashed,domain=0.67:1.33]({x},{0.4});
        \addplot [black,dashed,domain=0.67:1.33]({x},{-0.4});    
        \addplot [black,dashed,domain=0.67:1.33]({-x},{0.4});
        \addplot [black,dashed,domain=0.67:1.33]({-x},{-0.4});    

        \addplot [black,dashed,domain=-0.4:0.4]({1.33},{x});
        \addplot [black,dashed,domain=-0.4:0.4]({-1.33},{x});
        
        \addplot [black,dashed,domain=-0.67:0.67]({x},{0.212});
        \addplot [black,dashed,domain=-0.67:0.67]({x},{-0.212});    
        \addplot [black,thick,domain=-1.5:1.5]({x},{0});    
        \addplot [black,thick,domain=-1:-0.33]({x},{-0.9*x^2-1.2*x-0.3});
        \addplot [black,thick,domain=-1:-0.33]({x},{0.9*x^2+1.2*x+0.3});
        \addplot [black,thick,domain=-0.33:0.33]({x},{-0.9*x^2+0.1});
        \addplot [black,thick,domain=-0.33:0.33]({x},{0.9*x^2-0.1});
        \addplot [black,thick,domain=0.33:1]({x},{-0.9*x^2+1.2*x-0.3});
        \addplot [black,thick,domain=0.33:1]({x},{0.9*x^2-1.2*x+0.3});
    \end{axis}
\fill [color=black] (5.72,0.93) circle (0.05);
\node at (5.72,1.25) {$1$};
\fill [color=black] (1.15,0.93) circle (0.05);
\node at (1.15,1.25) { $-1$};
\fill [color=black] (2.67,0.93) circle (0.05);
\node at (2.67,1.25) {$x_1$};
\fill [color=black] (4.19,0.93) circle (0.05);
\node at (4.19,1.25) {$x_2$};

\node at (1.91,1.05) {$\Omega$};
\node at (1.91,0.82) {$\overline{\Omega}$};
\node at (1.91,1.25) {$\omega_1$};
\node at (1.91,0.55) {$\overline{\omega}_1$};

\node at (3.43,1.05) {$\Omega$};
\node at (3.43,0.82) {$\overline{\Omega}$};
\node at (3.43,1.25) {$\omega_2$};
\node at (3.43,0.55) {$\overline{\omega}_2$};

\node at (4.95,1.05) {$\Omega$};
\node at (4.95,0.82) {$\overline{\Omega}$};
\node at (4.95,1.25) {$\omega_3$};
\node at (4.95,0.55) {$\overline{\omega}_3$};

\end{tikzpicture}
\caption{A characterization of the jump contour (solid) contained in the set $\mathcal S_N'$ (dashed) in the separated regime.}\label{fig:lens2}
\end{figure}
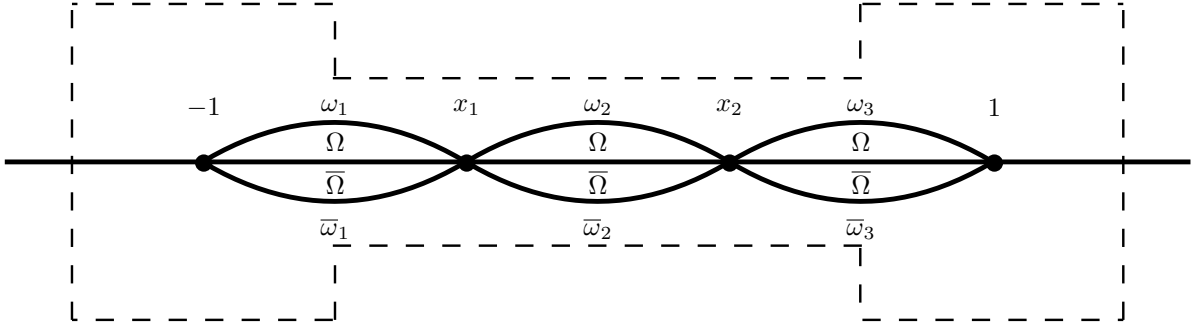
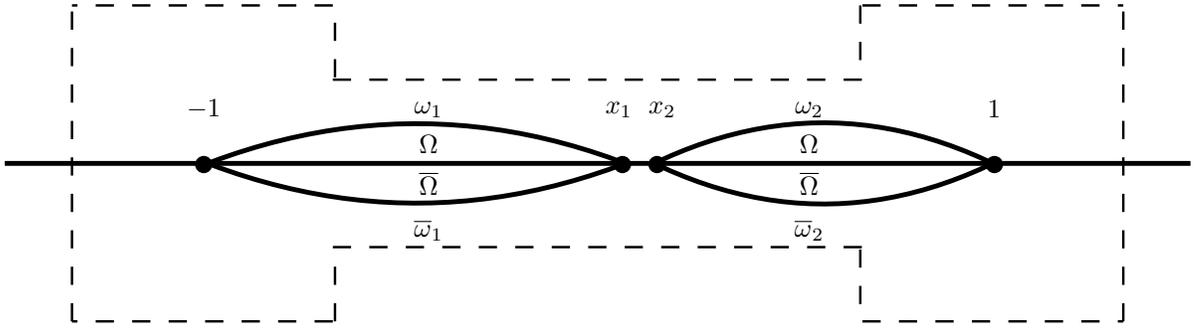
\begin{figure}
\begin{tikzpicture}[scale=2.3]
    \begin{axis}[ticks=none,axis line style={draw=none},
        unit vector ratio*=1 1 1,       
        xmin=-1.5,xmax=1.5,
        ymin=-.41,ymax=.41]
        \addplot [black,dashed,domain=-147:-90] ({1+0.4*cos(147)}, {0.4*sin(x)});
        \addplot [black,dashed,domain=90:147] ({1+0.4*cos(147)}, {0.4*sin(x)});
        \addplot [black,dashed,domain=33:90] ({-1+0.4*cos(33)}, {0.4*sin(x)});
        \addplot [black,dashed,domain=270:327] ({-1+0.4*cos(33)}, {0.4*sin(x)});

        \addplot [black,dashed,domain=0.67:1.33]({x},{0.4});
        \addplot [black,dashed,domain=0.67:1.33]({x},{-0.4});    
        \addplot [black,dashed,domain=0.67:1.33]({-x},{0.4});
        \addplot [black,dashed,domain=0.67:1.33]({-x},{-0.4});    

        \addplot [black,dashed,domain=-0.4:0.4]({1.33},{x});
        \addplot [black,dashed,domain=-0.4:0.4]({-1.33},{x});
        \addplot [black,dashed,domain=-0.67:0.67]({x},{0.212});
        \addplot [black,dashed,domain=-0.67:0.67]({x},{-0.212});    
        \addplot [black,thick,domain=-1:0.07]({x},{-0.36*x^2-0.33*x+0.025});    
        \addplot [black,thick,domain=0.15:1]({x},{-0.56*x^2+0.64*x-0.08});    
        \addplot [black,thick,domain=-1:0.07]({x},{0.36*x^2+0.33*x-0.025});    
        \addplot [black,thick,domain=0.15:1]({x},{0.56*x^2-0.64*x+0.08});    
        \addplot [black,thick,domain=-1.5:1.5]({x},{0});    

    \end{axis}
\fill [color=black] (3.77,0.93) circle (0.05);
\node at (3.8,1.25) {$x_2$};
\fill [color=black] (3.57,0.93) circle (0.05);
\node at (3.55,1.25) { $x_1$};
\fill [color=black] (5.72,0.93) circle (0.05);
\node at (5.72,1.25) {$1$};
\fill [color=black] (1.15,0.93) circle (0.05);
\node at (1.15,1.25) { $-1$};
\node at (2.45,1.05) {$\Omega$};
\node at (2.45,0.82) {$\overline{\Omega}$};
\node at (2.45,1.25) {$\omega_1$};
\node at (2.45,0.55) {$\overline{\omega}_1$};

\node at (4.65,1.05) {$\Omega$};
\node at (4.65,0.82) {$\overline{\Omega}$};
\node at (4.65,1.25) {$\omega_2$};
\node at (4.65,0.55) {$\overline{\omega}_2$};

\end{tikzpicture}
\caption{A characterization of the jump contour (solid) contained in the set $\mathcal S_N'$ (dashed) in the merging regime.}\label{fig:lens}
\end{figure}

If the contour setting is as in Figure \ref{fig:lens2}, the RH problem for $S$ now becomes the following.

\subsubsection*{RH problem for $S$}
\begin{itemize}
\item[(a)] $S:\mathbb C\setminus\left(\mathbb R\cup \omega_{1}\cup\overline{\omega}_{1}\cup \omega_{2}\cup\overline{\omega}_{2}\cup\omega_3\cup\overline{\omega}_3\right)\to\mathbb C^{2\times 2}$ is analytic,
\item[(b)]{   $S$ satisfies the jump relations
\begin{align*}
&S_+(x)=S_-(x)\begin{pmatrix}1&e^{{\sqrt{2}\pi}\gamma_1+{\sqrt{2}\pi}\gamma_2} e^{w(x;t)}e^{2N\xi_+(x)}\\
0&1\end{pmatrix},& x<-1,\\
&S_+(z)=S_-(z)J_1\left(z;\gamma\right),& z\in\overline{\omega}_{j},\,\, j=1,2,3,\\
&S_+(x)=S_-(x)J_2\left(x;\gamma\right),& -1<x<1,\\
&S_+(z)=S_-(z)J_3\left(z;\gamma \right),& z\in{\omega_{j}},\,\, j=1,2,3,\\
&S_+(x)=S_-(x)\begin{pmatrix}1& e^{w(x;t)} e^{2N\xi(x)}\\
0&1\end{pmatrix},& x>1,
\end{align*}
where the value of $\gamma$ is taken equal to $\gamma_1+\gamma_2$ for $\Re z<x_1$, $\gamma_2$ for $x_1<\Re z<x_2$ and equal to $0$ for $\Re z>x_2$.}
\item[(c)] as $z\to\infty$, we have
\begin{equation*}
S(z)=I+\mathcal O(z^{-1}),
\end{equation*}
uniformly for $z\in\mathbb C\setminus\mathbb R$,
\item[(d)] as $z\to x_j$, $j=1,2$, we have
\begin{equation*}
S(z)=\mathcal O(\log(z-x_j)).
\end{equation*} This is true both if $x_j$ is approached from inside the lens-shaped region and from outside.
Moreover, $S$ is bounded near $\pm 1$.
\end{itemize}
If the lenses are chosen as in Figure \ref{fig:lens}, the RH problem is the same, except for the fact that the jump conditions on $\omega_2, (x_1,x_2), \overline{\omega}_2$ are replaced by the single jump relation
\begin{equation}\label{ed:jumpSmod}
S_+(x)=S_-(x)\begin{pmatrix}e^{-2N\xi_+(x)}&e^{{\sqrt{2}\pi}\gamma_2} e^{w(x;t)}\\
0&e^{2N\xi_+(x)}\end{pmatrix},\qquad x\in(x_1,x_2).
\end{equation}
Like for the RH problem for $T$, the RH problem for $S$ also makes sense if $x_1=x_2$, in which case we simply ignore the jump matrix on $(x_1,x_2)$ and on the corresponding parts of the lenses.

We now prove a result that implies that the jump matrices of $S$ are close to the identity apart from on $(-1,1)$ and small neighborhoods of $\pm 1$ and $x_1,x_2$.

\begin{lemma}\label{pr:lensbound1}
Suppose that $w(z)$ is analytic and uniformly bounded for $z\in\mathcal S_N$. There exists a constant $\kappa>0$ such that
\begin{equation*}
\Re (w(z;t)+2N\xi(z))\geq N^{\kappa}
\end{equation*}
for $N$ sufficiently large and for all $t\in[0,1]$, for all $z\in\{\zeta\in\mathcal S_N': |\Re(\zeta)|<1, |\zeta\pm 1|\geq \delta_N/2, |\Im \zeta|\geq \frac{1}{16}\epsilon_N\}$ with  $\epsilon_N$ and $\delta_N$ as in \eqref{domain}.

Moreover, for $N$ sufficiently large, we have
\begin{equation*}
\Re(w(z;t)+2N\xi(z))<-N^{1/4}\log (|z|+1).
\end{equation*}
for  $|\Im z|<\epsilon_N$, $z\notin \mathbb R$, with $|z|\geq 1+\delta_N/2$, with $\delta_N$ as in \eqref{domain}. \end{lemma}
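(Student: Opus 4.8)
\textbf{Proof plan for Lemma \ref{pr:lensbound1}.}

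The plan is to establish both estimates by carefully analyzing the real part of $\xi(z)$ using the explicit formula \eqref{formulaxi}, namely $\xi(z) = -\pi\int_1^z \psi_V(w)(w^2-1)^{1/2}dw$, together with the uniform boundedness of $w$. For the first estimate (the lens bound in the bulk), the key structural fact is that on the interval $(-1,1)$ one has $\xi_\pm(x) = \pm i\pi\int_x^1 \psi_V(s)\sqrt{1-s^2}\,ds$ (purely imaginary, from \eqref{eq:id g mu}--\eqref{eq:id xi g}), so $\Re\xi$ vanishes on $(-1,1)$ and, by the Cauchy--Riemann equations, grows linearly as one moves off the real axis: since $\partial_y \Re\xi = -\partial_x \Im\xi$ and $\Im\xi_\pm(x) = \pm\pi\int_x^1\psi_V(s)\sqrt{1-s^2}\,ds$ has a strictly negative $x$-derivative bounded away from zero on compact subsets of $(-1,1)$, we get $\Re\xi(x+iy) \gg |y|$ there (with sign $+$ in the upper half plane, $-$ in the lower, reflecting the lens structure). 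The subtlety is uniformity as $\Re z \to \pm 1$: there $\psi_V(s)\sqrt{1-s^2}$ vanishes like a square root, so the linear-in-$y$ lower bound degenerates. First I would split the region $\{|\Re z| < 1, |z\pm 1|\ge \delta_N/2, |\Im z|\ge \frac{1}{16}\epsilon_N\}$ into a ``central'' part where $\Re z$ is bounded away from $\pm 1$ by a fixed amount, and ``edge'' parts where $|z\mp 1|$ ranges between $\delta_N/2$ and a fixed constant. In the central part, $\Re(2N\xi(z)) \gg N|\Im z| \ge N\epsilon_N/16 = N^{\alpha}/16$, which dominates the bounded $w$-term and gives the bound with any $\kappa < \alpha$. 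In the edge parts, a local analysis near $z=1$ (say) using $\xi(z) \asymp -c(z-1)^{3/2}$ for $z$ near $1$ off the cut shows $\Re(2N\xi(z)) \gg N|z-1|^{1/2}|\Im z|$ roughly, and one checks that on the relevant region — where $|z-1|\ge \delta_N/2 = \frac{1}{2}N^{-\alpha/2}$ and $|\Im z| \ge \frac{1}{16}\epsilon_N = \frac{1}{16}N^{-1+\alpha}$ — the product $N|z-1|^{1/2}|\Im z|$ is at least a positive power of $N$. One must be slightly careful that the geometry $\mathcal S_N'$ forces $|\Im z| \asymp |z-1|$ near the edge corners, so the worst case is $|z-1|\asymp\delta_N$, giving $\Re(2N\xi) \gg N \cdot \delta_N^{1/2}\cdot \delta_N = N\delta_N^{3/2} = N^{1-3\alpha/4}$, which is a positive power of $N$ since $\alpha < 2/3$; choosing $\kappa$ smaller than $\min(\alpha, 1-3\alpha/4)$ works.

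For the second estimate (the bound outside $[-1,1]$), the relevant fact is \eqref{eq:EL2}: $\xi_+(x) < 0$ for real $x$ with $|x|>1$, together with the growth condition \eqref{eq:Vgrowth} controlling $\xi(z) = g(z) - \frac12 V(z) + \frac12\ell$ for large $|z|$. Here I would argue: for $1+\delta_N/2 \le |z| \le R$ with $R$ a large fixed constant and $|\Im z| < \epsilon_N$ small, continuity and the strict inequality $\Re\xi < 0$ on the real segment $(1, R]$ (which is uniformly bounded above by $-c|x-1|^{3/2}$ near $1$ and by a negative constant away from $1$) give $\Re\xi(z) < -c'\delta_N^{3/2}$ or so; but we actually need $\Re(w(z;t)+2N\xi(z)) < -N^{1/4}\log(|z|+1)$, so we need $N \cdot \delta_N^{3/2} = N^{1-3\alpha/4}$ to beat $N^{1/4}\log N$, which holds for $\alpha$ small enough — this is why the statement is for ``$N$ sufficiently large'' and presumably $\alpha$ is chosen small in the applications. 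For $|z| > R$, the growth condition \eqref{eq:Vgrowth} makes $\Re(-\frac12 V(z))$ dominate and $\Re\xi(z) \to -\infty$ like $-\Re V(z)/2$, which grows faster than any logarithm, so $2N\Re\xi(z) < -N^{1/4}\log(|z|+1)$ easily; the $w$-term is bounded and irrelevant. The passage across $|z|=R$ just needs the two estimates to be compatible, which they are since both give something more negative than $-N^{1/4}\log(|z|+1)$.

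The main obstacle I anticipate is the uniformity of the first estimate near the edge corners of $\mathcal S_N'$, i.e. tracking the precise interplay between the three small parameters $\epsilon_N = N^{-1+\alpha}$, $\delta_N = N^{-\alpha/2}$, and the local behavior $\xi(z)\asymp (z\mp 1)^{3/2}$. One has to be careful about the branch of $(z^2-1)^{1/2}$ and the sign of $\Re\xi$ in each of the four quadrants near $z=1$ and $z=-1$ — this is exactly where the lens is ``pinched'' — and to verify that the exclusion region $|z\pm 1|\ge\delta_N/2$ together with $|\Im z|\ge\frac{1}{16}\epsilon_N$ really does keep $z$ in the region where $\Re(2N\xi)$ is a positive power of $N$ and not merely positive. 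A clean way to handle this is to introduce the local variable $\zeta = N^{2/3}(z-1)$ (the Airy scaling), note that on the relevant part of $\mathcal S_N'$ one has $|\zeta| \gtrsim N^{2/3}\delta_N = N^{2/3 - \alpha/2} \to \infty$, and use the asymptotics of $\Re(\frac23(-\zeta)^{3/2})$ for large $|\zeta|$ with $\arg\zeta$ bounded away from the negative real axis (which is guaranteed by $|\Im z|\ge\frac{1}{16}\epsilon_N$ relative to $|z-1|$, since in the corner region these are comparable). I would also double-check that the bound $w$ analytic and uniformly bounded on $\mathcal S_N$ — combined with the estimate \eqref{eq:wboud} in the random-matrix application — really does let us absorb $\Re w(z;t) = t\,\Re w(z)$ into the $N^\kappa$ term, which is immediate since $|w(z;t)| \le |w(z)| \le C$ uniformly.
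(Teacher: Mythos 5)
Your overall decomposition — reduce to $w=0$, split into a central region and an edge region, use Cauchy--Riemann in the bulk and the local behavior $\xi(z)\asymp(z-1)^{3/2}$ near the edge, and use \eqref{eq:el2} plus the growth condition \eqref{eq:Vgrowth} for the second estimate — is exactly the paper's strategy. However, there is a genuine error in your handling of the worst case near the edge.

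You claim the geometry forces $|\Im z|\asymp|z-1|$ near the ``corners'', so that $\arg\zeta$ (with $\zeta=N^{2/3}(z-1)$) is bounded away from the negative real axis, and conclude that the worst case is $|z-1|\asymp\delta_N$, $|\Im z|\asymp\delta_N$, giving $N\delta_N^{3/2}$. This is false. In the region $\{|\Re z|<1,\ |z-1|\geq\delta_N/2,\ |\Im z|\geq\epsilon_N/16\}$ the constraints on $|z-1|$ and $|\Im z|$ are independent, and since $\epsilon_N=N^{-1+\alpha}\ll\delta_N=N^{-\alpha/2}$ (recall $\alpha<2/3$), the actual worst case is $|z-1|\approx\delta_N/2$ with $|\Im z|\approx\epsilon_N/16\ll|z-1|$. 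In that regime $\arg(z-1)\to\pm\pi$, so the asymptotics of $\Re((z-1)^{3/2})$ ``with $\arg\zeta$ bounded away from the negative real axis'' do not apply. One must instead observe that when $\arg(z-1)$ is close to $\pm\pi$, say $z-1 = -r+i\eta$ with $0<\eta\ll r$, one has $\Re((z-1)^{3/2}) \approx -\tfrac{3}{2}r^{1/2}\eta$, which after multiplication by $f(1)<0$ gives $\Re\xi(z)\asymp|z-1|^{1/2}|\Im z|$ — this survives, but it must be established, not assumed. The paper does this via the explicit integral representation \eqref{formulaxi}, computing $\Re\xi(x+i\eta)=\pi\eta\sqrt{1-x^2}\psi_V(x)(1+\mathcal O(\eta)+\mathcal O(\eta/(1-x)))$. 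The resulting lower bound is $N\Re\xi(z)\gtrsim N\delta_N^{1/2}\epsilon_N = N^{3\alpha/4}$, a smaller power of $N$ than the $N^{1-3\alpha/4}$ you assert, though still sufficient for the conclusion with $\kappa<3\alpha/4$. Once the edge computation is corrected, the rest of your outline matches the paper's proof, including the second estimate (where the requirement $1-3\alpha/4>1/4$ is equivalent to $\alpha<1$ and is automatic from $\alpha<2/3$; no extra smallness of $\alpha$ is needed).
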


\begin{proof}
First of all, we recall that $w(\cdot;t)$  is uniformly bounded for $z\in\mathcal S_N$. It is thus sufficient to prove the inequalities in the case where we set $w(\cdot;t)=0$. To do this, we note that for $x\in (-1,1)$,
\[\partial_x\left.\Im\xi(x+iy)\right|_{y\to 0_+}=-\pi \sigma_V(x)<0,\] 
where $\sigma_V(x)=\psi_V(x)\sqrt{1-x^2}$, and that $\xi$ has an analytic continuation from the upper half plane to a full neighbourhood of $x$. Hence, we can apply the Cauchy-Riemann conditions to conclude that $\partial_y\left.\Re\xi(x+iy)\right|_{y\to 0_+}>0$. By uniform continuity, for any $\delta>0$, there exist $c,\eta>0$ such that 
$\partial_y\Re\xi(x+iy)>c$ for any $0<y<\eta$ and $x\in(-1+\delta, 1-\delta)$.
This allows us to conclude that there is a $c>0$ such that for $|\Re z|<1-\delta$, $0<\Im z<\eta$, 
$$
\Re (\xi(z))\geq c\,|\Im z|
$$
and a similar argument allows to conclude the same for $-\eta<\Im z<0$.
This yields the first claim for $|\Re z|<1-\delta$ for any fixed $\delta>0$. 

For $1-\delta\leq \Re z<1$ with $\delta>0$ sufficiently small, it follows from \eqref{eq:id g mu}, \eqref{eq:id xi g}, and Assumptions \ref{ass:regular} that 
$\xi(z)$ can be written as 
$\xi(z)=(z-1)^{3/2}f(z)$ where principal branches are taken with $f$ analytic in a neighborhood of $1$ and { $f(1)=-\frac{2\sqrt{2}\pi \psi_V(1)}{3}<0$}, and then \[\arg \xi(z)=\frac{3}{2}\arg(z-1)+\arg f(z),\qquad |\xi(z)|=|z-1|^{3/2}|f(z)|>N^{-1+\epsilon}\] 
for $z$ sufficiently close to $1$ and for some $\epsilon>0$, from which the claim follows easily assuming $\pi/3+\Delta<|\arg(z-1)|<\pi-\Delta$ for some fixed $\Delta>0$. 

 If $z=x+i\eta$, $\eta>0$, $1-\delta<x<1$,  then by \eqref{formulaxi}
\begin{equation}\begin{aligned}
\Re \xi(z)&=\Re\left(-\pi  \int_x^{x+i\eta}\psi_V(w)\sqrt{w^2-1}dw  \right)\\
&=\pi \eta \sqrt{1-x^2}\psi_V(x)(1+\mathcal O(\eta)+\mathcal O(\eta/(1-x)))
\end{aligned}
\end{equation}
as $\eta \to 0$, $\eta/(1-x)\to 0$. In particular, for $z$ satisfying $|z-1|\geq \delta_N/2$ and $|\Im z|\geq \epsilon_N/16$, we have for any fixed $\delta>0$ and for $0<1-\Re(z)<\delta$ the existence of a constant $C_\delta>0$ such that $N\Re \xi(z)\geq C_\delta N \delta_N^{1/2}\epsilon_N$.  A similar statement can be made for $\eta<0$  as $\eta \to 0$.  The case $-1<\Re z<-1+\delta$ is similar and recalling the definition of $\delta_N$ and $\epsilon_N$ above \eqref{domain}  concludes the proof of the first claim. 

For the second claim of the lemma, first recall from \eqref{eq:EL2} that on $\R\setminus[-1,1]$
$$
N\mathrm{Re}(\xi(x))=\frac{N}{2}\left(g_+(x)+g_-(x)-V(x)-\ell\right)<0.
$$
Moreover, combining the local behavior of $\xi$ near $1$ (that is $\xi(z)=(z-1)^{3/2}f(z)$ with $f$ analytic in a neighborhood of $1$ and $f(1)<0$) and its behavior at infinity (namely from \eqref{eq:Vgrowth} $\lim_{x\to \infty}\frac{V(x)}{\log x}=\infty$ while $g_\pm(x)=\mathcal O(\log x)$ as $x\to \infty$), one readily sees that for $x\geq 1+\delta_N/2$, \eqref{eq:EL2} can be upgraded to 
\[
N\mathrm{Re}(\xi(x))< -N^{1/4}\log (1+x).
\]
A similar argument works also for $x<-1-\delta_N/2$.

Using Assumptions \ref{ass:regular} to generalize this bound to $|z|\geq 1+\frac{\delta_N}{2}$ satisfying $|\Im z|<\epsilon_N$ and $\Re z\neq 0$ is a routine argument: for large $|z|$, the asymptotics are determined by $V$ through \eqref{eq:Vgrowth}; for $z$ close to $1$, we see that  $|\arg z|$ is small and the local behavior of $\xi$ near $1$ again yields the claim ($z$ close to $-1$ being similar); for intermediate values of $|z|$, the claim follows from a simple argument utilizing the uniform continuity of $V$ combined with the asymptotics on $\R$ derived above.  
\end{proof}

We now turn to the next step in the RH analysis, namely the construction of explicit approximations to $S$, called parametrices, in different regions of the complex plane.
The parametrices in small neighborhoods of the points $-1,x_1,x_2, 1$ will depend on the asymptotic regime and will be constructed later. Away from those points, we construct a global parametrix, which solves a RH problem which is obtained from the RH problem for $S$ after ignoring exponentially small jumps and small neighbourhoods of $-1,x_1,x_2, 1$. The fact that the parametrices are good approximations to $S$ will be shown at the end of the RH analysis in each of the different cases.

\subsection{Global parametrix}\label{sec:global} For $z$ not too close to $-1,x_1,x_2,1$ and $N$ large, we expect that $S$ will be well approximated by the solution $P^\infty$ to the following RH problem. 

\subsubsection*{RH problem for $P^\infty$}
\begin{itemize}
\item[(a)] $P^\infty:\mathbb C\setminus [-1,1]\to\mathbb C^{2\times 2}$ is analytic,
\item[(b)] $P^\infty$ satisfies the jump relations
\begin{align*}
&P^\infty_+(x)=P^\infty_-(x)J_2(x;\gamma_1+\gamma_2),& -1<x<x_1,\\
&P^\infty_+(x)=P^\infty_-(x)J_2(x;\gamma_2),& x_1<x<x_2,\\
&P^\infty_+(x)=P^\infty_-(x)J_2(x;0),& x_2<x<1,
\end{align*}
\item[(c)] as $z\to\infty$, we have
\begin{equation*}
P^\infty(z)=I+\mathcal O(z^{-1}),
\end{equation*}
\item[(d)] as $z\to x_j$ for $j\in \lbrace 1,2\rbrace$, 
$$
P^\infty(z)=\mathcal{O}(\log(z-x_j)),
$$
\item[(e)] as $z\to \pm 1$,  we have
\begin{equation*}
P^\infty(z)=\mathcal O(|z\mp 1|^{-1/4}).
\end{equation*}
\end{itemize}

It may appear strange at first sight that we allow $P^\infty$ to  be unbounded near $\pm 1$, since $S$ is bounded near those points. However, the RH problem for $P^\infty$ would not be solvable if we imposed boundedness near $\pm 1$.

The unique solution to this problem was constructed in \cite[Section 4.4]{Charlier}. While there are minor differences in our notation, it is a straightforward exercise to check that in our setting, the solution takes the form
\begin{equation}
\label{def:Pinfty}
P^{\infty}(z)=D_\infty^{\sigma_3}Q(z)D(z)^{-\sigma_3},
\end{equation}
where $Q$ is given by
\begin{equation}\label{eq:QQdef}Q(z)=\begin{pmatrix}\frac{1}{2}(a(z)+a(z)^{-1})&-\frac{1}{2i}(a(z)-a(z)^{-1})\\\frac{1}{2i}(a(z)-a(z)^{-1})&\frac{1}{2}(a(z)+a(z)^{-1})\end{pmatrix},\qquad a(z)=\left(\frac{z+1}{z-1}\right)^{1/4},\end{equation}
with $a(z)$ analytic off $[-1,1]$ and tending to $1$ as $z\to\infty$,
and with $D_\infty$ and $D(z)$ given by
\begin{align}\label{def:D}D(z)&=D_{w}(z)D_\gamma(z)\\
D_{w}(z)&=\exp\left(\frac{\sqrt{z^2-1}}{2\pi}\int_{-1}^1\frac{w(x;t)}{\sqrt{1-x^2}}\frac{dx}{z-x}\right),\\
D_\gamma(z)&=\exp\left(\frac{\sqrt{z^2-1}}{{\sqrt{2}}}\sum_{j=1}^2\gamma_j\int_{-1}^{x_j}\frac{1}{\sqrt{1-x^2}}\frac{dx}{z-x}\right)\\
D_\infty&=\lim_{z\to\infty}D(z)\nonumber.
\end{align}
The branch of $\sqrt{z^2-1}$ analytic off $[-1,1]$ and behaving like $z$ at infinity is chosen.
We can also write $D_{\gamma}$  as 
\begin{equation}
\label{expressionD}
{ D_\gamma (z)}=\prod_{j=1}^2 e^{\frac{\gamma_j{\pi}}{{\sqrt{2}}}}\left(\frac{zx_j-1+e^{-\frac{\pi i}{2}}\sqrt{(z^2-1)(1-x_j^2)}}{z-x_j}\right)^{\frac{\gamma_j}{{\sqrt{2}} i}},
\end{equation}
see e.g. \cite[Section 4.4]{Charlier}. 
Here, $\sqrt{(z^2-1)(1-x_j^2)}$ is analytic off $[-1,1]$ and positive for $z>1$, and the power $(.)^{\frac{\gamma_j}{\sqrt{2} i}}$ is interpreted as $|.|^{\frac{\gamma_j}{\sqrt{2} i}}e^{\frac{\gamma_j}{\sqrt{2}}\arg(.)}$, with the argument in $(-\pi,\pi)$.

Later on, we will need the following technical estimate.
\begin{lemma}\label{lemma:boundw}
Let $w:[-1,1]\to\mathbb R$ be a bounded and integrable function, and denote
\[\|w\|=\sup_{x\in[-1,1]}|w(x)|.\]
Then, there exists a universal constant $M$ such that
\[\left|\Re\left(\frac{\sqrt{z^2-1}}{2\pi}\int_{-1}^1\frac{w(\lambda)}{z-\lambda}\frac{d\lambda}{\sqrt{1-\lambda^2}}\right)\right|\leq M\|w\|\]
for all $z\in\mathbb C\setminus[-1,1]$, where $\sqrt{z^2-1}$ is chosen analytic for $z\in\mathbb C\setminus[-1,1]$ and positive for $z>1$.
\end{lemma}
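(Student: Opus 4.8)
\textbf{Proof proposal for Lemma~\ref{lemma:boundw}.}

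The plan is to estimate the real part of the function
\[
G(z)=\frac{\sqrt{z^2-1}}{2\pi}\int_{-1}^1\frac{w(\lambda)}{z-\lambda}\frac{d\lambda}{\sqrt{1-\lambda^2}}
\]
by first reducing to a purely boundary computation. The key observation is that $G$ is analytic on $\mathbb C\setminus[-1,1]$ (the factor $\sqrt{z^2-1}$ is analytic there, and the Cauchy-type integral is analytic away from $[-1,1]$), and that $G(z)\to 0$ as $z\to\infty$ since $\sqrt{z^2-1}=z+\mathcal O(1/z)$ while the integral decays like $1/z$. Moreover, near the endpoints $z=\pm 1$ we have $\sqrt{z^2-1}=\mathcal O(|z\mp1|^{1/2})$, which kills the at-worst logarithmic or inverse-square-root singularity of the Cauchy integral of an $L^1$ density against $1/\sqrt{1-\lambda^2}$; hence $G$ extends continuously to $\mathbb C\setminus(-1,1)$ with $G(\pm1)=0$, and $\Re G$ is a bounded harmonic function on $\mathbb C\setminus[-1,1]$ (thought of as a domain on the Riemann sphere). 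By the maximum principle for harmonic functions, $\sup_{\mathbb C\setminus[-1,1]}|\Re G|$ is attained as a boundary value on the two sides of the slit $[-1,1]$, so it suffices to bound $\Re G_\pm(x)$ for $x\in(-1,1)$.

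The second step is the boundary computation. For $x\in(-1,1)$ the boundary values of $\sqrt{z^2-1}$ from above and below are $\pm i\sqrt{1-x^2}$, and the Sokhotski--Plemelj formula gives
\[
\frac{1}{2\pi i}\int_{-1}^1\frac{w(\lambda)}{\lambda-x\mp i0}\frac{d\lambda}{\sqrt{1-\lambda^2}}
=\pm\frac{w(x)}{2\sqrt{1-x^2}}+\frac{1}{2\pi i}\,\mathrm{P.V.}\!\int_{-1}^1\frac{w(\lambda)}{\lambda-x}\frac{d\lambda}{\sqrt{1-\lambda^2}}.
\]
Multiplying by $\pm i\sqrt{1-x^2}/(2\pi)\cdot 2\pi i = \mp\sqrt{1-x^2}$ (tracking signs carefully), one finds that the principal-value term contributes a \emph{purely real} quantity multiplied by $\sqrt{1-x^2}$, while the delta-contribution yields $\mp\frac12 w(x)$ — also real. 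Thus $\Re G_\pm(x)$ reduces to $\mp\frac12 w(x)$ plus $\mp\frac{\sqrt{1-x^2}}{2\pi}\,\mathrm{P.V.}\!\int_{-1}^1\frac{w(\lambda)}{\lambda-x}\frac{d\lambda}{\sqrt{1-\lambda^2}}$, i.e. $\Re G_\pm(x)=\mp\frac12 w(x)\mp\frac12(\mathcal U w)(x)\sqrt{1-x^2}$ in the notation of \eqref{U_transform}. The first term is bounded by $\frac12\|w\|$ trivially. For the second term I would invoke the classical fact that the finite Hilbert transform operator $w\mapsto (\mathcal Uw)(x)\sqrt{1-x^2}$ is bounded on $L^\infty(-1,1)$ — concretely, $\left|\sqrt{1-x^2}\,\mathrm{P.V.}\!\int_{-1}^1\frac{d\lambda}{(\lambda-x)\sqrt{1-\lambda^2}}\right|$ is uniformly bounded in $x\in(-1,1)$ (in fact this particular principal-value integral vanishes identically, a standard Chebyshev computation), so $\left|\sqrt{1-x^2}(\mathcal U w)(x)\right|\le M'\|w\|$ for a universal constant $M'$.

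The main obstacle is the uniform $L^\infty$ bound on the weighted finite Hilbert transform near the endpoints $x=\pm1$, where both the $1/\sqrt{1-\lambda^2}$ weight in the integrand and the naive blow-up of $\mathrm{P.V.}\!\int 1/(\lambda-x)$ could conspire; the point is that the $\sqrt{1-x^2}$ prefactor exactly compensates. A clean way to handle this without delicate singular-integral estimates is to stay with the harmonic-function argument: since $\Re G$ is harmonic and bounded on the sphere minus $[-1,1]$, vanishes at $\infty$ and at $\pm1$, and since on approaching $[-1,1]$ its boundary values from the two sides are $\mp\frac12 w(x)\mp\frac12(\mathcal Uw)(x)\sqrt{1-x^2}$, one only needs these boundary values to be bounded; and boundedness of $(\mathcal Uw)(x)\sqrt{1-x^2}$ follows because $G$ itself is bounded near $\pm1$ (the $\sqrt{z^2-1}$ factor forces $G(z)\to0$ there regardless of the Cauchy integral's endpoint behavior), so the boundary values inherit that bound. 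Assembling: $|\Re G(z)|\le \frac12\|w\|+M'\|w\|=:M\|w\|$ for all $z\in\mathbb C\setminus[-1,1]$, with $M$ universal. This completes the proof.
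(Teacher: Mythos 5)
The maximum-principle strategy is genuinely different from the paper's direct estimates, but as written there are two concrete errors in the boundary step, and even with those fixed the argument would need more work.

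First, the Sokhotski--Plemelj arithmetic miscarries. Write $g(\lambda)=w(\lambda)/\sqrt{1-\lambda^2}$ and $G(z)=\frac{\sqrt{z^2-1}}{2\pi}\int_{-1}^1\frac{g(\lambda)}{z-\lambda}\,d\lambda$. Using $\sqrt{z^2-1}_{\pm}=\pm i\sqrt{1-x^2}$ and $\int\frac{g}{(x\pm i0)-\lambda}\,d\lambda=\mp\pi i\,g(x)-\mathrm{P.V.}\!\int\frac{g(\lambda)}{\lambda-x}\,d\lambda$ one gets
\[
G_\pm(x)=\frac{w(x)}{2}\mp\frac{i\sqrt{1-x^2}}{2\pi}\,\mathrm{P.V.}\!\int_{-1}^1\frac{g(\lambda)}{\lambda-x}\,d\lambda ,
\]
so the finite Hilbert transform contributes entirely to the \emph{imaginary} part, and $\Re G_{\pm}(x)=\tfrac12 w(x)$, trivially bounded by $\tfrac12\|w\|$. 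Your computation puts a Hilbert transform term into the real part; the parenthetical ``$\cdot\,2\pi i$'' suggests you normalized the Cauchy transform by $1/(2\pi i)$ rather than the $1/(2\pi)$ actually appearing in the statement. This sign slip is what sent you looking for an $L^\infty$ bound on $\mathcal U$. Second, and independently, that bound is false: $w\mapsto\sqrt{1-x^2}\,(\mathcal Uw)(x)$ is not bounded on $L^\infty(-1,1)$. For $w=\mathbf 1_{(0,1)}$, a substitution $\lambda=\sin\theta$ shows $(\mathcal Uw)(x)$ has a $\log|x|$ singularity at $x=0$, and $\sqrt{1-x^2}\approx 1$ there gives no compensation. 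That $\mathcal U$ annihilates constants is a special identity and yields no operator bound. Your last paragraph, which tries to deduce boundedness of $\sqrt{1-x^2}(\mathcal Uw)$ from boundedness of $G$, is circular (boundedness of $G$ is the thing being proved) and points at the wrong place: the potential blow-up of the Hilbert transform is in the interior of $(-1,1)$, not at $\pm1$. As a side remark, $G$ need not vanish at $\pm1$ or at $\infty$ (taking $w\equiv1$ gives $G\equiv\tfrac12$); only boundedness there holds.

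Even after correcting the boundary values to $\Re G_\pm=w/2$, the maximum-principle step still has a gap: for $w$ merely bounded the boundary values exist only a.e.\ as nontangential limits, and $G$ itself (through its imaginary part) is not a priori bounded near the slit, so a naive ``maximum on the boundary'' argument does not close without a Poisson-representation or approximation argument (e.g., run the argument for H\"older $w$ where everything is continuous up to the boundary, then pass to the limit). The paper sidesteps all boundary regularity by estimating $\Re G(x+i\eta)$ directly in the interior: the same cancellation you should have found shows that, modulo a controllable error from $\sqrt{z^2-1}=i\sqrt{1-x^2}(1+\mathcal O(\eta/(1-x^2)))$, the real part reduces to $\eta\sqrt{1-x^2}\int_{-1}^1\frac{w(t)}{|z-t|^2}\frac{dt}{\sqrt{1-t^2}}$, a Poisson-kernel-type quantity that is then bounded by elementary splittings of the interval. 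That is the quantitative, uniform-in-$\eta$ version of the observation that the Hilbert transform has dropped out of the real part.
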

\begin{proof} 
The proof is divided into two steps.  First, we establish the bound in the region $ |\Re z|+ |\Im z| \ge 1$. Then, we obtain it in the region $ |\Re z| + |\Im z| \le 1$.  

{\bf Step 1.} 
We first assume that $|\Re(z)|+|\Im(z)|>1$. Then if $z$ is bounded away from $\{-1,1\}$, the bound is clear. Now assume that $|z-1|=\delta<1/3$. Then, 
\begin{equation}\begin{aligned}
\left|  \frac{\sqrt{z^2-1}}{2\pi} \int_{-1}^1 \frac{w(t)}{z-t} \frac{dt}{\sqrt{1-t^2}}  \right|
&\leq 
  \frac{\sqrt{3\delta}\|w\|}{2\pi} \left(\int_{-1}^0+\int_0^{1-2\delta}+\int_{1-2\delta}^1 \right) \frac{dt}{|z-t|\sqrt{1-t^2}}  
\\& \leq \frac{ \sqrt{3\delta}\| w\|}{2\pi}\left(3+
\int_{0}^{1-2\delta}\frac{dt}{(1-t-\delta)^{3/2}}+
\frac{2}{\delta}\int_{1-2\delta}^1\frac{dt}{\sqrt{1-t}}
 \right)\\&<c_1\|w\|,
 \end{aligned}
\end{equation}
for some universal constant $c_1>0$. A similar argument can be made for $|z+1|<1/3$, and the result follows for $|\Re(z)|+|\Im(z)|>1$

{\bf Step 2.}
Let $z= x+i \eta$ with $0<\eta<1$ and $x\in[-1+\eta,1-\eta]$, so that $|\Re(z)|+|\Im(z)|<1$. If $\eta$ is bounded away from $0$, the result is clear, and we take the limit $\eta \to 0$. Then, since $|z-t| \ge \eta $ for all $t\in\S$ and $w$ is uniformly bounded, one has 
\begin{multline}\label{eqlemma}
\sqrt{z^2-1} \int_{\S} \frac{w(t)}{z-t} \frac{dt}{\sqrt{1-t^2}} 
= i \sqrt{1-x^2}\big(1+\O\left(\frac{\eta}{1-x^2}\right)\big) \int_{\S} \frac{w(t)}{z-t} \frac{dt}{\sqrt{1-t^2}} \\
= i \sqrt{1-x^2} \int_{\S} \frac{w(t)}{z-t} \frac{dt}{\sqrt{1-t^2}}   + \O\left(\frac{\eta \|w\|}{\sqrt{1-x^2}}\int_{\S}\frac{dt}{|z-t|\sqrt{1-t^2}}\right),
\end{multline}
as $\eta\to 0$. 
We now prove that the second term is actually $\O(\|w\|)$. We can restrict ourselves to the case where $x\geq 0$ by symmetry. Then,
\begin{align*}
&\frac{\eta \|w\|}{\sqrt{1-x^2}}\int_{\S}\frac{dt}{|z-t|\sqrt{1-t^2}}\leq
\frac{2\eta \|w\|}{\sqrt{1-x}}\int_{0}^1\frac{dt}{\sqrt{(x-t)^2+\eta^2}\sqrt{1-t}}\\
&\qquad \leq 2\frac{\eta}{1-x} \|w\|\int_{0}^{(1-x)^{-1}}\frac{du}{\sqrt{u}\sqrt{(u-1)^2+\left(\frac{\eta}{1-x}\right)^2}}\\
&\qquad \leq 2\left(\frac{\eta}{1-x}\right)^{3/4} \|w\|\int_{0}^{\infty}\frac{du}{\sqrt{u}|u-1|^{3/4}}
\leq c_1\|w\|,
\end{align*}
for some universal constant $c_1$,
where we applied the substitution $1-t=(1-x) u$ in the second line, and the inequalities $(u-1)^2+\left(\frac{\eta}{1-x}\right)^2\geq (u-1)^2$ and  $(u-1)^2+\left(\frac{\eta}{1-x}\right)^2\geq \left(\frac{\eta}{1-x}\right)^2$ in the last line.

For the first term in \eqref{eqlemma}, since $w$ is real,  
\begin{equation} \label{bound6}
\Re\left\{ i \sqrt{1-x^2} \int_{\S} \frac{w(t)}{z-t} \frac{dt}{\sqrt{1-t^2}} \right\}
= \eta \sqrt{1-x^2} \int_{\S} \frac{w(t)}{|z-t|^2} \frac{dt}{\sqrt{1-t^2}}. 
\end{equation}
Without loss of generality we assume again that $x>0$, and we split the region of integration into two regions $\left(-1,\frac{x+1}{2}\right)$ and $\left(\frac{x+1}{2},1\right)$, which we consider separately. In the first region we have 
\begin{equation}
\left|\int_{-1}^{\frac{x+1}{2}}\frac{w(t)}{|z-t|^2} \frac{dt}{\sqrt{1-t^2}}\right|\leq 2\pi\|w\|+\frac{{ 2}\|w\|}{\sqrt{1-x}}\int_{-1/2}^{\frac{x+1}{2}}\frac{dt}{(t-x)^2+\eta^2}\end{equation}
Taking the change of variables $t=u\eta +x$, it follows that 
\begin{equation} \int_{-1/2}^{\frac{x+1}{2}}\frac{dt}{(t-x)^2+\eta^2}\leq \frac{1}{\eta}\int_{-\infty}^{\infty}\frac{du}{u^2+1}, 
\end{equation} 
and thus
\begin{equation}\label{ineq1} 
\left|\int_{-1}^{\frac{x+1}{2}}\frac{w(t)}{|z-t|^2} \frac{dt}{\sqrt{1-t^2}}\right|\leq  \frac{c_2\|w\|}{\eta\sqrt{1-x}},
\end{equation}
for some universal constant $c_2$.
In the second region we have
\begin{equation}\label{ineq2}
\left|\int_{\frac{x+1}{2}}^1\frac{w(t)}{|z-t|^2} \frac{dt}{\sqrt{1-t^2}}\right|\leq \frac{4\|w\|}{(1-x)^2}
\int_{\frac{x+1}{2}}^1\frac{dt}{\sqrt{1-t}}\leq \frac{8\|w\|}{(1-x)^{3/2}}.
\end{equation}
Then our result follows upon substituting \eqref{ineq1}-\eqref{ineq2} into \eqref{bound6} and \eqref{eqlemma}, since $\eta<1-x$. 
\end{proof}

\section{Model RH problems}\label{sec:model}

In this section, we study properties of two model RH problems that we will need later on.

\subsection{The Painlev\'e V model RH problem}

We consider a RH problem which was studied in detail in \cite[Section 3.1]{CK}, and which we will need to build a local parametrix when two singularities approach each other. The general version of the problem depends on $5$ complex parameters $\alpha_1,\alpha_2,\beta_1,\beta_2, s$, but for our purposes only the special case where $\alpha_1=\alpha_2=0$ and $\beta_1,\beta_2$ purely imaginary such that
$-{\sqrt{2}} i\beta_1=\gamma_1$, $-{\sqrt{2}} i\beta_2=\gamma_2$, $s\in -i\mathbb R_+:=(-i\infty,0)$ will be relevant. We will often find it convenient to suppress the dependence of $\Psi$ on $s$. We will always suppress the dependence on $\gamma_1$ and $\gamma_2$. In this special case, the RH problem from \cite{CK} reads as follows.
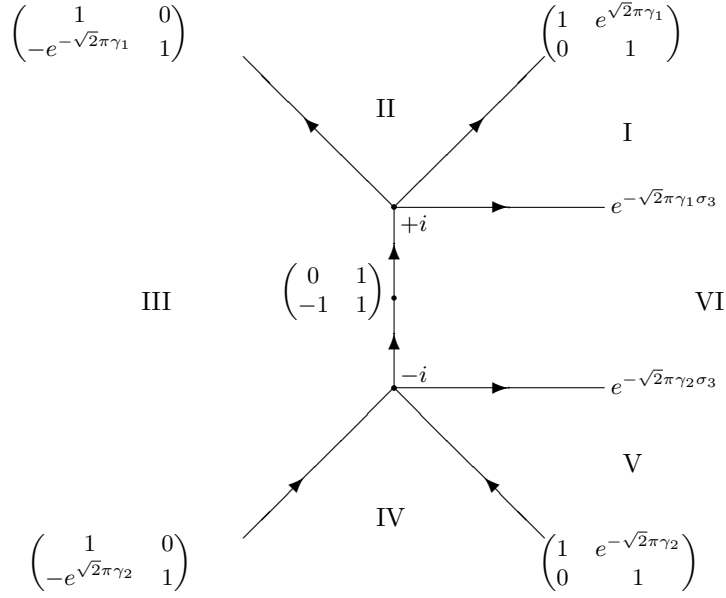
\begin{figure}[t]
\begin{center}
    \setlength{\unitlength}{0.8truemm}
    \begin{picture}(110,95)(-10,-7.5)
    \put(50,45){\thicklines\circle*{.8}}
    \put(50,60){\thicklines\circle*{.8}}
    \put(50,30){\thicklines\circle*{.8}}
    \put(51,56){\small $+i$}
    \put(51,31){\small $-i$}
    \put(50,60){\thicklines\circle*{.8}}
    \put(50,30){\thicklines\circle*{.8}}
    \put(69,60){\thicklines\vector(1,0){.0001}}
    \put(69,30){\thicklines\vector(1,0){.0001}}
    \put(50,60){\line(1,1){25}}
    \put(50,30){\line(1,-1){25}}
    \put(50,60){\line(-1,1){25}}
    \put(50,30){\line(-1,-1){25}}
    \put(50,30){\line(1,0){35}}
    \put(50,60){\line(1,0){35}}
    \put(50,30){\line(0,1){30}}
    \put(65,75){\thicklines\vector(1,1){.0001}}
    \put(65,15){\thicklines\vector(-1,1){.0001}}
    \put(50,39){\thicklines\vector(0,1){.0001}}
    \put(50,54){\thicklines\vector(0,1){.0001}}
    \put(35,75){\thicklines\vector(-1,1){.0001}}
    \put(35,15){\thicklines\vector(1,1){.0001}}

    \put(74,88){\small $\begin{pmatrix}1&e^{{\sqrt{2}\pi}\gamma_1}\\0&1\end{pmatrix}$}
    \put(-14,88){\small $\begin{pmatrix}1&0\\-e^{-{\sqrt{2}\pi}\gamma_1}&1\end{pmatrix}$}
\put(86,29){\small $e^{-{\sqrt{2}\pi}\gamma_2 \sigma_3}$} \put(86,59){\small
$e^{-{\sqrt{2}\pi}\gamma_1 \sigma_3}$}
\put(31,45){\small $\begin{pmatrix}0&1\\-1&1\end{pmatrix}$}
    \put(-11,0){\small $\begin{pmatrix}1&0\\-e^{{\sqrt{2}\pi}\gamma_2}&1\end{pmatrix}$}
    \put(74,0){\small $\begin{pmatrix}1&e^{-{\sqrt{2}\pi}\gamma_2}\\0&1\end{pmatrix}$}
\put(100,43){VI}
    \put(8,43){III}
    \put(88,71){I}
    \put(47,7){IV}
    \put(47,75){II}
    \put(88,16){V}
    \end{picture}
    \caption{The jump contour and jump matrices for $\Psi$.}
    \label{figure: Gamma}
\end{center}
\end{figure}

\subsubsection*{RH problem for $\Psi$}
\begin{itemize}
    \item[(a)] $\Psi=\Psi(\cdot;s):\mathbb C\setminus \Gamma \to \mathbb C^{2\times 2}$ is analytic, where
    \begin{align*}&\Gamma=\cup_{k=1}^7\Gamma_k,&& \Gamma_1=i+e^{\frac{i\pi}{4}}\mathbb R^+,
    &&&\Gamma_2=i+e^{\frac{3i\pi}{4}}\mathbb R^+,\\&\Gamma_3=-i+e^{\frac{5i\pi}{4}}\mathbb R^+,
    &&\Gamma_4=-i+e^{\frac{7i\pi}{4}}\mathbb R^+,&&&\Gamma_5=-i+\mathbb R^+,\\&\Gamma_6=i+\mathbb R^+,
    &&\Gamma_7=[-i,i],\end{align*}
    with the orientation chosen as in Figure \ref{figure: Gamma},
    \item[(b)] $\Psi$ satisfies the jump conditions
    \begin{equation}\label{jump Psi}\Psi_+(\zeta)=\Psi_-(\zeta)J_k,\qquad
    \zeta\in\Gamma_k,\end{equation}  where
                \begin{align}
                &J_1=\begin{pmatrix}1&e^{{\sqrt{2}\pi}\gamma_1}\\0&1\end{pmatrix},
                &&J_2=\begin{pmatrix}1&0\\-e^{-{\sqrt{2}\pi}\gamma_1}&1\end{pmatrix},\\
                &J_3=\begin{pmatrix}1&0\\-e^{{\sqrt{2}\pi}\gamma_2}&1\end{pmatrix},
                &&J_4=\begin{pmatrix}1&e^{-{\sqrt{2}\pi}\gamma_2}\\0&1\end{pmatrix},\\ \label{eq:jump56}
                &J_5=e^{-{\sqrt{2}\pi}\gamma_2\sigma_3},
                &&J_6=e^{-{\sqrt{2}\pi}\gamma_1\sigma_3},\\
                &J_7=\begin{pmatrix}0&1\\-1&1\end{pmatrix},
                \end{align}
    \item[(c)] there exist matrices $\Psi_1=\Psi_1(s), \Psi_2=\Psi_2(s)$ independent of $\zeta$ such that we have in all regions:
    \begin{equation}\label{Psi as}
    \Psi(\zeta)=\left(I+\frac{\Psi_1}{\zeta}+\frac{\Psi_2}{\zeta^2}+\bigO(\zeta^{-3})\right)
    \Psi^\infty(\zeta)e^{-\frac{is}{4}\zeta\sigma_3} \qquad  \mbox{ as $\zeta\to \infty$,}
    \end{equation}
    where
    \begin{equation}\label{Pinfty}
    \Psi^{\infty}(\zeta)=\Psi^\infty(\zeta;s)=
    \left(\frac{is}{2}\right)^{\frac{\gamma_1+\gamma_2}{{\sqrt{2}} i}\sigma_3}
    (\zeta-i)^{\frac{\gamma_1}{{\sqrt{2}} i}\sigma_3}(\zeta+i)^{\frac{\gamma_2}{{\sqrt{2}} i}\sigma_3}
     \end{equation}
                with the branches corresponding to
                arguments between $0$ and $2\pi$ (i.e.\ $z^{c\sigma_3}=|z|^{c\sigma_3}e^{ic\arg z \sigma_3}$ with $\arg z\in(0,2\pi)$),
                
                \item[(d)] as $\zeta \to x\in\{-i,i\}$, we have
                \begin{equation*}
                \Psi(\zeta;s)=\mathcal O(\log(\zeta-x)).
                \end{equation*}
    \end{itemize}

It was proven in \cite{CK} that the RH problem for $\Psi$ has a unique solution for each $\gamma_1,\gamma_2\in \mathbb R$ and $s\in -i\R_+$.

\medskip

It will turn out to be useful for us to consider a modification of this function and a corresponding modification of the RH problem. The modified function is given by
\begin{equation}\label{def:Psihat}
\widehat\Psi(\lambda;s)=\begin{cases}
e^{\frac{-3{\sqrt{2}\pi}\gamma_2-3{\sqrt{2}\pi}\gamma_1}{4}\sigma_3}e^{-\frac{s}{4}\sigma_3}\Psi(-2i\lambda+i;s)e^{-\frac{{\sqrt{2}\pi}\gamma_2}{2}\sigma_3},&\Im\lambda<0,\\
e^{\frac{-3{\sqrt{2}\pi}\gamma_2-3{\sqrt{2}\pi}\gamma_1}{4}\sigma_3} e^{-\frac{s}{4}\sigma_3}\Psi(-2i\lambda+i;s)\sigma_3\sigma_1e^{-\frac{{\sqrt{2}\pi}\gamma_2}{2}\sigma_3},&\Im\lambda>0, 0<\Re\lambda<1,\\
e^{\frac{-3{\sqrt{2}\pi}\gamma_2-3{\sqrt{2}\pi}\gamma_1}{4}\sigma_3} e^{-\frac{s}{4}\sigma_3}\Psi(-2i\lambda+i;s)\sigma_3\sigma_1  e^{-{\sqrt{2}\pi}\gamma_1\sigma_3}e^{-\frac{{\sqrt{2}\pi}\gamma_2}{2}\sigma_3},&\Im\lambda>0, \Re\lambda<0,\\
e^{\frac{-3{\sqrt{2}\pi}\gamma_2-3{\sqrt{2}\pi}\gamma_1}{4}\sigma_3}e^{-\frac{s}{4}\sigma_3}\Psi(-2i\lambda+i;s)\sigma_3\sigma_1 e^{\frac{{\sqrt{2}\pi}\gamma_2}{2}\sigma_3},&\Im\lambda>0, \Re\lambda>1,
\end{cases}
\end{equation}
where $\sigma_1=\begin{pmatrix}0&1\\1&0\end{pmatrix}$ and $\sigma_3=\begin{pmatrix}1&0\\0&-1\end{pmatrix}$ are the standard Pauli matrices.
To describe its RH problem, let us denote $\widehat\Gamma=\cup_{j=1}^7\widehat\Gamma_j$, where
 \begin{align*}& \widehat\Gamma_1=e^{\frac{i3\pi}{4}}\mathbb R^+,
    &&\widehat\Gamma_2=e^{-\frac{3i\pi}{4}}\mathbb R^+,&&&\widehat\Gamma_3=1+e^{\frac{-i\pi}{4}}\mathbb R^+,\\
&\widehat\Gamma_4=1+e^{\frac{i\pi}{4}}\mathbb R^+,&&\widehat\Gamma_5=[0,1],&&&\widehat\Gamma_6=(-\infty,0],\\
&\widehat\Gamma_7=[1,+\infty),
    \end{align*}
and where we choose the orientation as in Figure \ref{fig:hatPsi}. We claim that the RH problem for $\widehat{\Psi}$ then becomes the following.

\subsubsection*{RH problem for $\widehat\Psi$}
\begin{itemize}
    \item[(a)] $\widehat\Psi:\mathbb C\setminus \widehat\Gamma \to \mathbb C^{2\times 2}$ is analytic, 
    \item[(b)] $\widehat\Psi$ satisfies the jump conditions
    \begin{equation}\label{jump hatPsi}\widehat\Psi_+(\lambda)=\widehat\Psi_-(\lambda)\widehat J_k,\qquad
    \lambda\in\widehat\Gamma_k,\end{equation}  where
                \begin{align*}
                &\widehat J_1=\begin{pmatrix}1&0\\e^{-{\sqrt{2}\pi}(\gamma_1+\gamma_2)}&1\end{pmatrix},
                &&\widehat J_2=\begin{pmatrix}1&0\\e^{-{\sqrt{2}\pi}(\gamma_1+\gamma_2)}&1\end{pmatrix},\\
                &\widehat J_3=\begin{pmatrix}1&0\\1&1\end{pmatrix},
                &&\widehat J_4=\begin{pmatrix}1&0\\ 1&1\end{pmatrix},\\
                &\widehat J_5=\begin{pmatrix}1&e^{{\sqrt{2}\pi}\gamma_2}\\0&1\end{pmatrix},
&&\widehat J_6=\begin{pmatrix}0&e^{{\sqrt{2}\pi}\gamma_1+{\sqrt{2}\pi}\gamma_2}\\ -e^{-{\sqrt{2}\pi}\gamma_1-{\sqrt{2}\pi}\gamma_2}&0\end{pmatrix},\\
                &\widehat J_7=\begin{pmatrix}0&1\\-1&0\end{pmatrix},                
                \end{align*}
    \item[(c)] 
    \begin{equation}\label{Psi ashat}
    \widehat\Psi(\lambda)=\left(I+\frac{\widehat\Psi_1}{\lambda}+\frac{\widehat\Psi_2}{\lambda^2}+\bigO(\lambda^{-3})\right)
    \widehat\Psi^\infty(\lambda)e^{\pm \frac{s}{2}\lambda\sigma_3} \qquad  \mbox{ as $\lambda\to \infty$ \quad and \quad $\pm \mathrm{Im}\lambda>0$,}
    \end{equation}
    where
    \begin{equation}\label{Pinftyhat}
    \widehat\Psi^{\infty}(\lambda)=\widehat \Psi^\infty(\lambda;s)=\begin{cases}
   e^{-{\sqrt{2}\pi}\gamma_2\sigma_3}|s|^{\frac{\gamma_1+\gamma_2}{{\sqrt{2}} i}\sigma_3} \lambda^{\frac{\gamma_1}{{\sqrt{2}} i}\sigma_3} (1-\lambda)^{\frac{\gamma_2}{{\sqrt{2}} i}\sigma_3},&\Im\lambda<0,\\
   |s|^{\frac{\gamma_1+\gamma_2}{{\sqrt{2}} i}\sigma_3} \lambda^{\frac{\gamma_1}{{\sqrt{2}} i}\sigma_3} (1-\lambda)^{\frac{\gamma_2}{{\sqrt{2}}i}\sigma_3}\sigma_3\sigma_1,&\Im\lambda>0,
   \end{cases}
     \end{equation}
                with principal branches corresponding to
                arguments of $\lambda$ and $1-\lambda$ between $-\pi$ and $\pi$, for some matrices $\widehat\Psi_1, \widehat\Psi_2$. 
                \item[(d)] As $\lambda \to x\in\{0,1\}$, we have
                \begin{equation*}
                \widehat\Psi(\lambda;s)=\mathcal O(\log(\lambda-x)).
                \end{equation*}
    \end{itemize}

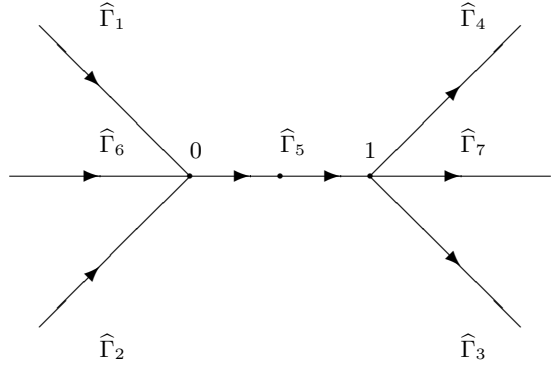
\begin{figure}[t]
\begin{center}
    \setlength{\unitlength}{0.8truemm}
    \begin{picture}(75,55)(5,10)
    \put(45,50){\thicklines\circle*{.8}}
    \put(60,50){\thicklines\circle*{.8}}
    \put(30,50){\thicklines\circle*{.8}}
    \put(55,50){\thicklines\vector(1,0){.0001}}
    \put(40,50){\thicklines\vector(1,0){.0001}}
    \put(15,50){\thicklines\vector(1,0){.0001}}
    \put(75,50){\thicklines\vector(1,0){.0001}}
    \put(0,50){\line(1,0){90}}
    \put(60,50){\line(1,1){25}}
    \put(30,50){\line(-1,1){25}}
    \put(60,50){\line(1,-1){25}}
    \put(30,50){\line(-1,-1){25}}
    \put(75,65){\thicklines\vector(1,1){.0001}}
    \put(15,65){\thicklines\vector(1,-1){.0001}}
    \put(75,35){\thicklines\vector(1,-1){.0001}}
    \put(15,35){\thicklines\vector(1,1){.0001}}

    \put(75,75){\small $\widehat{\Gamma}_4$}
    \put(15,75){\small $\widehat{\Gamma}_1$}
    \put(75,20){\small $\widehat{\Gamma}_3$}
    \put(15,20){\small $\widehat{\Gamma}_2$}
    \put(45,54){\small $\widehat{\Gamma}_5$}
    \put(15,54){\small $\widehat{\Gamma}_6$}
    \put(75,54){\small $\widehat{\Gamma}_7$}

    \put(30,53){\small $0$}
    \put(59,53){\small $1$}

    \end{picture}
\end{center}
    \caption{The jump contour for $\widehat{\Psi}$.}
    \label{fig:hatPsi}
\end{figure}

\begin{lemma}\label{pr:Psihat}
$\widehat\Psi$ defined in \eqref{def:Psihat} satisfies the above RH problem.
\end{lemma}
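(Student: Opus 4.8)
The proof of Lemma~\ref{pr:Psihat} is a direct, if somewhat laborious, verification: one takes the definition \eqref{def:Psihat} of $\widehat\Psi$ in terms of $\Psi$, and checks each of the four defining properties (a)--(d) of the RH problem for $\widehat\Psi$ against the corresponding properties of $\Psi$. The plan is to work piece by piece through the four sectors in which $\widehat\Psi$ is defined, tracking how the change of variable $\zeta = -2i\lambda + i$ and the various constant right- and left-multipliers transform contours, jump matrices, and the asymptotic data.

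First I would deal with analyticity and the contour, property (a). The affine map $\zeta\mapsto -2i\lambda+i$ (equivalently $\lambda = \frac{i-\zeta}{2i} = \frac12 + \frac{i\zeta}{2}$) sends the points $\zeta = \pm i$ to $\lambda = 0$ and $\lambda = 1$, sends the real axis in the $\zeta$-plane to the real axis in the $\lambda$-plane, and sends the rays $\Gamma_1,\dots,\Gamma_4$ emanating from $\pm i$ at angles $\pm\pi/4, \pm 3\pi/4$ to rays emanating from $0,1$; a short computation with the rotation by $-i$ (i.e. by $-\pi/2$) identifies these images with $\widehat\Gamma_1,\dots,\widehat\Gamma_4$, and the segment $\Gamma_7 = [-i,i]$ maps to $\widehat\Gamma_5 = [0,1]$ while the two pieces of $\Gamma_5\cup\Gamma_6$ together with the rest of $\R$ map to $\widehat\Gamma_6\cup\widehat\Gamma_7$. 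One must also check that the orientations match those in Figure~\ref{fig:hatPsi} (the map $\zeta\mapsto\lambda$ reverses or preserves orientation according to the local geometry, and the left/right multiplication by constant invertible matrices does not affect analyticity), and that the piecewise-constant right-multipliers in \eqref{def:Psihat} are chosen precisely so that $\widehat\Psi$ has no spurious jump across the images of the rays $\{\Re\lambda = 0\}$ and $\{\Re\lambda = 1\}$ in the upper half plane where the formula changes — this is where the factors $\sigma_3\sigma_1$, $e^{-{\sqrt2\pi}\gamma_1\sigma_3}$, $e^{\pm\frac{{\sqrt2\pi}\gamma_2}{2}\sigma_3}$ do their work, and verifying the cancellation there is the natural place for bookkeeping errors.

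Next comes property (b), the jump matrices. For each of the seven arcs $\widehat\Gamma_k$ one writes $\widehat\Psi_+ = \widehat\Psi_- \widehat J_k$ and substitutes \eqref{def:Psihat}: if $\widehat\Psi = C\,\Psi(\zeta(\lambda))\,R$ with $C$ a fixed left factor and $R$ the (sector-dependent) right factor, then on an arc where $R$ is the same on both sides, $\widehat J_k = R^{-1} J_j R$ with $J_j$ the jump of $\Psi$ on the preimage arc, reoriented appropriately (a reversal of orientation replaces $J_j$ by $J_j^{-1}$); on the two arcs $\widehat\Gamma_6,\widehat\Gamma_7$ where $R$ jumps, one gets $\widehat J_k = R_-^{-1} J_j R_+$. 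Carrying out these conjugations for $J_1,\dots,J_7$ from \eqref{jump Psi}--\eqref{eq:jump56} and comparing with the claimed $\widehat J_1,\dots,\widehat J_7$ is the bulk of the computation; I expect this to be the main obstacle, not because any single step is hard but because there are many sign and exponent conventions (the branch choices $\arg\in(0,2\pi)$ for $\Psi^\infty$ versus principal branches for $\widehat\Psi^\infty$, the factor $-2i$ in the change of variable, the half-integer powers of $\gamma_j/({\sqrt2}i)$) that must all be tracked consistently. Property (d), the logarithmic behaviour at $\lambda\in\{0,1\}$, is immediate since $\zeta\mapsto\lambda$ is a biholomorphism near $\pm i$ and left/right multiplication by constants preserves the $\mathcal O(\log)$ bound.

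Finally, property (c): one must show that the constant $e^{-\frac{s}{4}\sigma_3}$ and $e^{\frac{-3{\sqrt2\pi}(\gamma_1+\gamma_2)}{4}\sigma_3}$ prefactors, together with the substitution in \eqref{Psi as}, reproduce \eqref{Psi ashat} with the stated $\widehat\Psi^\infty$. Here one substitutes $\zeta = -2i\lambda+i$ into $\Psi^\infty(\zeta)e^{-\frac{is}{4}\zeta\sigma_3}$ from \eqref{Psi as}--\eqref{Pinfty}: the exponential becomes $e^{-\frac{is}{4}(-2i\lambda+i)\sigma_3} = e^{-\frac{s}{2}\lambda\sigma_3}e^{\frac{s}{4}\sigma_3}$, and combined with the left factor $e^{-\frac s4\sigma_3}$ and the sector-dependent right factors (which contribute $\sigma_3\sigma_1$ in the upper half plane, swapping the roles that produce the two cases of \eqref{Pinftyhat}) one recovers $\pm$ in the exponent according to $\pm\Im\lambda>0$; meanwhile $(\zeta-i)^{\frac{\gamma_1}{{\sqrt2}i}\sigma_3}(\zeta+i)^{\frac{\gamma_2}{{\sqrt2}i}\sigma_3} = (-2i\lambda)^{\frac{\gamma_1}{{\sqrt2}i}\sigma_3}(-2i(\lambda-1))^{\frac{\gamma_2}{{\sqrt2}i}\sigma_3}$, and one rewrites $-2i\lambda = |s|\cdot(\text{stuff})$ after absorbing the $\left(\frac{is}{2}\right)^{\frac{\gamma_1+\gamma_2}{{\sqrt2}i}\sigma_3}$ factor and carefully switching from the $\arg\in(0,2\pi)$ branch to principal branches for $\lambda$ and $1-\lambda$; the branch-switch contributes precisely the constant $e^{-{\sqrt2\pi}\gamma_2\sigma_3}$ appearing in the $\Im\lambda<0$ case of \eqref{Pinftyhat} and accounts for the remaining constant prefactor $e^{\frac{-3{\sqrt2\pi}(\gamma_1+\gamma_2)}{4}\sigma_3}$. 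The existence of the matrices $\widehat\Psi_1,\widehat\Psi_2$ then follows directly from those of $\Psi_1,\Psi_2$ in \eqref{Psi as} by conjugation. Since all the ingredients are elementary, I would present the argument by exhibiting the contour identification and then displaying the conjugation $\widehat J_k = R^{-1}J_jR$ (or $R_-^{-1}J_jR_+$) for the three or four least obvious arcs, leaving the remaining verifications to the reader as routine.
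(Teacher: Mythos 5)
Your proposal is correct and takes essentially the same approach as the paper's proof: you verify the four RH conditions for $\widehat\Psi$ by direct substitution of \eqref{def:Psihat}, tracking how the affine change of variable $\zeta=-2i\lambda+i$ transforms the contours (with $\pm i\mapsto 0,1$), how the piecewise-constant right multipliers are chosen so the jumps from $J_5,J_6$ on $[0,+i\infty)$ and $[1,1+i\infty)$ cancel (making $\widehat\Psi$ analytic across those rays), how the remaining jumps conjugate to $\widehat J_1,\dots,\widehat J_7$, and how the branch mismatch between $\arg\in(0,2\pi)$ and principal branches is absorbed by the constant prefactors in \eqref{def:Psihat}. If anything you are slightly more explicit than the paper about one genuine subtlety — the orientation reversal of some image contours, which forces $J_j\mapsto J_j^{-1}$ in the conjugation formula (the paper writes $\Psi_-^{-1}\Psi_+=J_j$ without flagging this, implicitly tracking the $\pm$ swap).
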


\begin{proof}
Since $\Psi(\zeta)$ is analytic on $\mathbb C\setminus\Gamma$, it follows that $\Psi(-2i\lambda+i;s)$ is analytic for $\lambda$ in $\mathbb C\setminus\left(\widehat\Gamma_1\cup\widehat\Gamma_2\cup\widehat\Gamma_3\cup\widehat\Gamma_4\cup [0,1]\cup [0, +i\infty)\cup [1, 1+i\infty)\right)$. By the definition \eqref{def:Psihat} of $\widehat\Psi$, it then follows directly that $\widehat\Psi$ is analytic on $\mathbb C\setminus\left(\widehat\Gamma_1\cup\widehat\Gamma_2\cup\widehat\Gamma_3\cup\widehat\Gamma_4\cup \mathbb R\cup [0, +i\infty)\cup [1, 1+i\infty)\right)$.
First of all, using \eqref{def:Psihat} and the jump matrices $J_5$ and $J_6$ in \eqref{eq:jump56}, we verify directly that $\widehat\Psi_+(\lambda)=\widehat\Psi_-(\lambda)$ for $\lambda\in (0,+i\infty)$ and for $\lambda\in (1,1+i\infty)$, hence we can conclude that $\widehat\Psi$ has an analytic continuation across those half-lines. Next, we can compute the jump matrix $\widehat J_j:=\widehat\Psi_-^{-1}(\lambda)\widehat\Psi_+(\lambda)$ on $\widehat\Gamma_j$ for $j=1,\ldots, 5$ by substituting \eqref{def:Psihat} and by using the fact that
$\Psi_-^{-1}(-2i\lambda+i){\Psi_+(-2i\lambda+i)}=J_j$.
Finally, again using \eqref{def:Psihat}, we compute the jump matrices $\widehat J_6$ and $\widehat J_7$ on $\mathbb R\setminus[0,1]$. This shows that conditions (a)--(b) of the RH problem are satisfied.

The asymptotic behaviour of $\widehat\Psi$ as $\lambda\to\infty$ is inherited from \eqref{Psi as} with $\zeta=-2i\lambda+i$. We recall that the arguments of $\zeta\pm i$ in \eqref{Pinfty} are chosen between $0$ and $2\pi$, whereas those of $\lambda$ and $1-\lambda$ in \eqref{Pinftyhat} are chosen between $-\pi$ and $\pi$.
This implies that, for $is>0$,
\[
\Psi^\infty({ -2i \lambda+i})=e^{\frac{3{\sqrt{2}\pi}\gamma_1+{\sqrt{2}\pi}\gamma_2}{4}\sigma_3}|s|^{\frac{\gamma_1+\gamma_2}{{\sqrt{2}} i}\sigma_3}\lambda^{\frac{\gamma_1}{{\sqrt{2}} i}\sigma_3}(1-\lambda)^{\frac{\gamma_2}{{\sqrt{2}} i}\sigma_3}
\ \times \ 
\begin{cases}1,&\Im\lambda<0,\\
1,&\Im\lambda>0, 0<\Re\lambda<1,\\
e^{-{{\sqrt{2}\pi}\gamma_1}\sigma_3},&\Im\lambda>0, \Re\lambda<0,\\
e^{-{\sqrt{2}\pi}\gamma_2\sigma_3},&\Im\lambda>0, \Re\lambda>1.
\end{cases}
\]
Substituting this together with \eqref{Psi as} in \eqref{def:Psihat}, we obtain \eqref{Psi ashat}--\eqref{Pinftyhat} after a straightforward calculation, and condition (c) of the RH problem is verified.

Finally, condition (d) is checked directly using the corresponding condition of the RH problem for $\Psi$ and the definition \eqref{def:Psihat} of $\widehat\Psi$.
\end{proof}

A basic fact about the functions $\Psi$ we will need in our asymptotic analysis is the following.

\begin{lemma}\label{le:psiasyat-i}
For $j=1,2$, the limit $\lim_{\epsilon\to 0^+}\Psi_{j,1}(-i-\epsilon;s)$ exists for all $s\in -i\R_+$, and it is bounded uniformly in $s\in -i\R_+$.
\end{lemma}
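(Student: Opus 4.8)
\textbf{Proof strategy for Lemma \ref{le:psiasyat-i}.}

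The plan is to track the behavior of $\Psi(\zeta;s)$ as $\zeta\to -i$ through the explicit local structure of the singularity, and then to upgrade pointwise existence to uniformity in $s$ by a compactness/continuity argument. First I would recall the structure of the RH problem for $\Psi$ near $\zeta=-i$: by condition (d), $\Psi(\zeta;s)=\mathcal O(\log(\zeta+i))$, and more precisely — since the parameters at $-i$ are $\alpha_2=0$, $\beta_2=\gamma_2/(\sqrt2\,i)$ purely imaginary — the standard local analysis (as in \cite{CK}, following \cite{DIK11}) shows that in a fixed neighborhood of $-i$ one can write $\Psi(\zeta;s)=E(\zeta;s)\,\Phi_{\rm loc}(\zeta)\,(\zeta+i)^{\beta_2\sigma_3}\,C(\zeta)$, where $E(\zeta;s)$ is analytic at $-i$, $C(\zeta)$ is a piecewise-constant matrix built from the jump matrices $J_3,J_4,J_5$, and $\Phi_{\rm loc}$ encodes the (bounded, since $\alpha_2=0$) confluent-hypergeometric/logarithmic contribution. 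Because $\beta_2$ is purely imaginary, the factor $(\zeta+i)^{\beta_2\sigma_3}$ has entries of modulus $e^{O(\arg)}$, i.e. bounded (not blowing up) as $\zeta\to -i$ along a fixed ray; combined with the fact that for $j=1,2$ the quantities $\Psi_{j,1}$ are, up to the bounded analytic prefactor $E$ and bounded constant matrices, the $(\cdot,1)$-entries of $\Phi_{\rm loc}(\zeta)(\zeta+i)^{\beta_2\sigma_3}$, one sees that $\lim_{\epsilon\to0^+}\Psi_{j,1}(-i-\epsilon;s)$ exists. Here $-i-\epsilon$ approaches $-i$ from the left along the real-shift direction, which lies in a single sector of the contour $\Gamma$ (between $\Gamma_3$ and $\Gamma_2$, say), so the boundary value is unambiguous.

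Next I would address uniformity in $s\in -i\mathbb R_+$. The delicate points are the limits $s\to 0^-i$ and $s\to -i\infty$. For $s\to -i\infty$, I would use the known large-$s$ asymptotics of $\Psi$: as $|s|\to\infty$, $\Psi$ connects to the model problem at infinity, and in fact $\Psi(\zeta;s)\to$ (a product of explicit Bessel/Airy-free elementary factors) uniformly on compact subsets of $\mathbb C\setminus\{-i,i\}$, with the local singular behavior at $-i$ stabilizing; this is exactly the regime where the Painlevé V solution degenerates and the relevant connection coefficients remain bounded — this can be extracted from \cite{CK} or proved directly via a small-norm argument on the rescaled RH problem. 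For $s\to 0$, the RH problem for $\Psi$ has a well-defined limit as $s\to 0$ (the jump matrices $J_1,\dots,J_7$ are $s$-independent; only $\Psi^\infty$ and the exponential $e^{-\frac{is}{4}\zeta\sigma_3}$ depend on $s$, and at $s=0$ the problem is still solvable — it reduces to a confluent hypergeometric / Bessel-type problem with merged points, or one argues continuity of the solution in $s$ down to $s=0$). On any compact subinterval of $(-i\infty,0)$, the solution $\Psi(\cdot;s)$ depends continuously on $s$ by standard RH theory (the jump contour is fixed and the jumps depend continuously on $s$, with the singularity exponents fixed), so $s\mapsto \lim_{\epsilon\to0^+}\Psi_{j,1}(-i-\epsilon;s)$ is continuous there; combining this with boundedness at the two endpoints $s\to 0$ and $s\to-i\infty$ gives uniform boundedness on all of $-i\mathbb R_+$.

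I expect the main obstacle to be making the two endpoint asymptotics ($s\to 0$ and $s\to-i\infty$) rigorous and showing that the \emph{local} connection matrix $E(-i;s)$ — the one analytic factor multiplying the explicit singular piece — stays bounded uniformly; this requires either citing precise statements from \cite{CK} about the $s$-dependence of $\Psi$ (in particular that $\Psi$ extends continuously to $s=0$ and that its large-$s$ behavior is as claimed, with the Painlevé V tau-function not vanishing on $-i\mathbb R_+$), or redoing a short steepest-descent argument. Everything else — the existence of the boundary limit for fixed $s$, the continuity in $s$ on compacts, and the assembly of the pieces — is routine RH bookkeeping. I would structure the written proof as: (1) local parametrix at $-i$ and existence of the limit for fixed $s$; (2) continuity in $s$ on compact subsets of $-i\mathbb R_+$; (3) boundedness as $s\to 0$; (4) boundedness as $s\to -i\infty$; (5) conclusion.
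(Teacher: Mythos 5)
Your overall plan — get existence of the boundary limit from the local structure of $\Psi$ at $\zeta=-i$, then get uniformity in $s$ by combining continuity on compacts with separate asymptotic analyses at $s\to 0$ and $s\to -i\infty$ — is exactly what the paper does, and your identification of the two endpoint regimes as the main work is correct. But the existence argument as written has a genuine gap.

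You posit a local decomposition of the form $\Psi(\zeta;s)=E(\zeta;s)\,\Phi_{\rm loc}(\zeta)\,(\zeta+i)^{\beta_2\sigma_3}\,C(\zeta)$ and argue that since $\beta_2$ is purely imaginary, $(\zeta+i)^{\beta_2\sigma_3}$ is bounded, hence the limit exists. Two problems. First, boundedness is not convergence: for $\beta_2 \in i\mathbb{R}\setminus\{0\}$ the diagonal factor $(\zeta+i)^{\beta_2}=e^{\beta_2\log(\zeta+i)}$ has bounded modulus along a fixed ray but oscillates indefinitely as $\zeta\to -i$ (the phase $e^{i|\beta_2|\log|\zeta+i|}$ has no limit), so if that factor actually sat in the first column your argument would prove boundedness but would \emph{disprove} existence of the limit. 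Second, and more fundamentally, the decomposition itself is not the right one for $\alpha_2=0$, $\beta_2\neq 0$: the confluent hypergeometric model problem with $\alpha=0$ does not have a power-type singularity $(\zeta+i)^{\beta_2\sigma_3}$ at its singular point but an \emph{additive logarithmic} one, concentrated in a single off-diagonal entry. This is precisely what \cite[(3.17)]{CK} records and what the paper uses: $\Psi(-i-\epsilon;s)=F(-i-\epsilon;s)\begin{pmatrix}1&\frac{1-e^{-\sqrt{2}\pi\gamma_2}}{2\pi i}\log(-\epsilon)\\0&1\end{pmatrix}$ with $F$ analytic near $-i$. Because the singular term occupies only the $(1,2)$ slot and the multiplying matrix is upper-triangular, the \emph{first column} of $\Psi$ coincides with the first column of $F$, so $\lim_{\epsilon\to 0^+}\Psi_{j,1}(-i-\epsilon;s)=F_{j,1}(-i;s)$ exactly. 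That structural fact — that the log lands in the second column only — is what makes the lemma true, and your proposed decomposition obscures it.

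Once existence is repaired in this way (by invoking the correct local form), your three-regime uniformity strategy matches the paper's: meromorphy/continuity in $s$ on compacts using solvability of the $\Psi$-RH problem, the degenerate large-$|s|$ asymptotics from \cite[Section 5]{CK}, and the small-$|s|$ expansion from \cite[Section 6]{CK} where $H(\cdot;s)=I+\mathcal O(s\log|s|)$. You correctly flagged that these two tail analyses require precise inputs from \cite{CK}; the paper does indeed just cite the relevant equations rather than rederive them, so that part of your outline is fine as a plan, but the existence step needs the fix above.
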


\begin{proof}
We begin by recalling from \cite[(3.17)]{CK} that we can write for $\epsilon>0$ and $s\in -i\R_+$
\begin{equation}\label{eq:psinear-i}
\Psi(-i-\epsilon;s)=F(-i-\epsilon;s)\begin{pmatrix}
1 & \frac{1-e^{-{\sqrt{2}\pi}\gamma_2}}{2\pi i}{\log( -\epsilon)}\\
0 & 1
\end{pmatrix},
\end{equation}
where for each $s\in -i\R_+$, $\zeta\mapsto F(\zeta;s)$ is analytic in a neighborhood of $-i$. From this, we see that the relevant limit exists and we have for $j=1,2$
$$
\lim_{\epsilon\to 0^+}\Psi_{j,1}(-i-\epsilon;s)=F_{j,1}(-i;s).
$$

It is a general fact\footnote{While well known to experts, a proof of such a statement unfortunately does not exist in the literature currently. For a proof of a similar result for a slightly different RH problem, see e.g. \cite[Appendix A]{FIKN}. For the convenience of a concerned reader, we briefly illustrate the main ideas behind a proof of such a statement. The first step is to transform the RH problem to one whose solution is normalized to be the identity matrix at infinity and whose only $s$-dependence is in the jump matrices (and this dependence is smooth). The solution to the transformed RH problem can then be expressed in terms of the resolvent of the Cauchy operator associated to the $s$-dependent jump matrix. From this, one can finally prove that the solution depends smoothly on $s$.} about RH-problems related to Painlev\'e equations (as is in the case at hand  -- see \cite{CK}) that quantities such as $F_{j,1}(-i;s)$ are meromorphic functions of $s\in \mathbb C$, with possible singularities for values of $s$ for which there is no solution to the RH problem of $\Psi(\cdot,s)$.

 Since the RH problem for $\Psi$ is solvable for all  $s\in -i\R_+$, it follows that $F_{j,1}(-i;s)$ is a smooth function for  $s\in -i\R_+$. In particular, if $K\subset -i\R_+$ is any compact set, then $\sup_{s\in K}|F_{j,1}(-i;s)|<\infty$. It thus remains to prove the claim for small $|s|$ and large $|s|$. For this, we need more precise information about $\Psi$ proven in \cite[Sections 4-6]{CK}. { In what follows below, one should keep in mind that we consider the parameters $\beta_1,\beta_2$ in \cite{CK}  to be purely imaginary and that $|||\beta|||=0$, with the notation of $|||\beta|||$ as in \cite{CK}.}

Let us begin with the situation where $s\to -i\infty$ -- this is mainly covered in \cite[Section 5]{CK}.  Using \cite[(5.1), (5,13), and (5.17)]{CK} (along with the discussion leading to these equations), we see that we can write for $\epsilon>0$ and large enough $|s|$, (namely $|s|>C$ for some fixed $C$)
$$
\Psi(-i-\epsilon;s)=\widetilde R (-i-\epsilon;s)E_2(-i-\epsilon;s)\widehat{M}\left(-\frac{|s|}{2}\epsilon\right),
$$
where by \cite[(5.18) and (5.25)]{CK} $\widetilde R(-i-\epsilon;s)=I+\mathcal O(|s|^{-1})$ (where the implied constant is uniform in $\epsilon>0$ small enough) by \cite[(5.14)]{CK}, $E_2(-i-\epsilon;s)=\mathcal O(1)$ (where the implied constant is uniform in $\epsilon>0$ small enough and $s\in -i\R_+$ with $|s|>C$)  and by \cite[(4.12) and (5.12)]{CK} 
$$
\widehat M\left(-\frac{|s|}{2}\epsilon\right)=\widetilde L\left(-\frac{|s|}{2}\epsilon\right)\begin{pmatrix}
1 & m(-\frac{|s|}{2}\epsilon)\\
0& 1
\end{pmatrix}{ e^{\frac{{\sqrt{2}\pi}\gamma_2}{4}\sigma_3}},
$$ 
where $\widetilde L$ is a $2\times 2$ matrix-valued function which is analytic in a neighborhood of zero and does not depend on $s$. The function $m$ is given in \cite[(4.13)]{CK}, but we don't need to know anything about it. Combining these remarks, we see that for large enough $|s|$,  $j=1,2$,
$$
\lim_{\epsilon\to 0^+}\Psi_{j,1}(-i-\epsilon;s)=\left(\widetilde R (-i;s)E_2(-i;s)\widetilde L\left(0\right)\right)_{j,1}{e^{\frac{{\sqrt{2}\pi}\gamma_2}{4}\sigma_3}},
$$
which is uniformly bounded for $s\in -i\R_+$ large enough (``large enough" depending only on $\gamma_1,\gamma_2$). This verifies the claim for large $s$.

It remains to check the claim for small $s$. For this, we make use of results of \cite[Section 6]{CK}. Using \cite[(6.2),(6.23),(6.26), and (6.28)]{CK}, we can write for $\epsilon>0$ and $s\in -i\R_+$ small enough
\begin{equation}\label{eq:psiat-i}
\Psi(-i-\epsilon;s)=e^{\frac{s}{4}\sigma_3}{ e^{-\frac{{\sqrt{2}\pi}\gamma_2-{\sqrt{2}\pi}\gamma_1}{4}\sigma_3}}H\left(s-\frac{\epsilon|s|}{2};s\right) { \widetilde L} \left(s-\frac{\epsilon|s|}{2}\right)\begin{pmatrix}
1 & \widehat m(\epsilon;s)\\
0 & 1
\end{pmatrix} { e^{-\frac{{\sqrt{2}\pi}\gamma_1-{\sqrt{2}\pi}\gamma_2}{4}\sigma_3}},
\end{equation}
where again $\widetilde L$ is a $2\times 2$ matrix-valued function which is analytic in a neighborhood of zero and independent of $s$, and while $\widehat m$ is explicit, we have no need for details about it. Arguing as in the discussion leading to \cite[(6.29)]{CK}, but now using \cite[(6.27)]{CK}, we have $H(s-\frac{\epsilon|s|}{2};s)=I+\mathcal O(|s\log |s||)$, where the implied constant is independent of $\epsilon$. We conclude that for $\epsilon>0$, $s\in -i\R_+$ small enough and $j=1,2$
$$
\Psi_{j,1}(-i-\epsilon;s)=\mathcal O(1)
$$
where the implied constant and ``small enough" depends only on $\gamma_1,\gamma_2$. This concludes the proof.
\end{proof}

Let us now recall the connection between the RH problem for $\Psi$ and the Painlev\'e V equation.
Consider the equation
\cite[Formula (2.8)]{ForresterWitte}
\begin{equation}\label{sigmaPV}
s^2\sigma''(s)^2=\left(\sigma(s)-s\sigma'(s)+2\sigma'(s)^2\right)^2
-4(\sigma'(s)-\theta_1)(\sigma'(s)-\theta_2)
(\sigma'(s)-\theta_3)(\sigma'(s)-\theta_4),
\end{equation}
where the parameters $\theta_1,\theta_2, \theta_3, \theta_4$ are given by
\begin{align}
&\theta_1=\theta_2=-\theta_3=-\theta_4=-\frac{\gamma_1+\gamma_2}{{2\sqrt{2}}i}.\label{theta1234}
\end{align}
It was shown in \cite{CK} that there exists a solution $\sigma$ which is real analytic for $s\in -i(0,+\infty)$ and has the asymptotic behavior
\begin{align}
&\sigma(s)=-\frac{\gamma_1-\gamma_2}{{2\sqrt{2}}} |s| +\frac{(\gamma_1-\gamma_2)^2}{{4}}+\mathcal O(|s|^{-1}),&&s\to -i\infty,\label{eq:sigmaasy1}\\
&\sigma(s)=\frac{(\gamma_1+\gamma_2)^2}{{4}}+\mathcal O(|s\ln|s||),&&s\to -i 0_+,\label{eq:sigmaasy2}
\end{align}
and which can be constructed in terms of the RH solution $\Psi$. We summarize this result in the following proposition.

\begin{proposition}\label{pr:psiderasy}
For $s\in -i\mathbb R^+$, we have the identity
\begin{align}
&\label{eq:sigmaid1}\lim_{\epsilon\to 0_+}\left(\Psi^{-1}(-i-\epsilon;s)\Psi'(-i-\epsilon;s)\right)_{2,1}=-\frac{\pi e^{{\sqrt{2}\pi}\gamma_2}}{e^{{\sqrt{2}\pi}\gamma_2}-1}\left(\sigma(s)-\frac{\gamma_1+\gamma_2}{{2\sqrt{2}} i}s-\frac{(\gamma_1+\gamma_2)^2}{{4}}\right).
\end{align}
where $\sigma$ is a smooth solution to the $\sigma$-form of the Painlev\'e V equation \eqref{sigmaPV} which has the asymptotic  behavior
\eqref{eq:sigmaasy1}--\eqref{eq:sigmaasy2}, and where $\Psi'(\zeta;s)$ denotes the derivative with respect to $\zeta$.
\end{proposition}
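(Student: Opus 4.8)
The plan is to connect the logarithmic derivative quantity on the left-hand side of \eqref{eq:sigmaid1} with a Hamiltonian/$\tau$-function associated to the RH problem for $\Psi$, and then to identify this Hamiltonian with the $\sigma$-function satisfying \eqref{sigmaPV}. Concretely, I would start from the representation \eqref{eq:psinear-i} of $\Psi$ near $\zeta=-i$, namely
\[
\Psi(-i-\epsilon;s)=F(-i-\epsilon;s)\begin{pmatrix}1&\frac{1-e^{-\sqrt{2}\pi\gamma_2}}{2\pi i}\log(-\epsilon)\\0&1\end{pmatrix},
\]
with $\zeta\mapsto F(\zeta;s)$ analytic near $-i$. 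Differentiating in $\zeta$ and using that the triangular factor and its inverse commute appropriately, one finds that
\[
\bigl(\Psi^{-1}(-i-\epsilon;s)\Psi'(-i-\epsilon;s)\bigr)_{2,1}=\bigl(F^{-1}(-i-\epsilon;s)F'(-i-\epsilon;s)\bigr)_{2,1},
\]
because the extra term coming from differentiating the $\log(-\epsilon)$ entry only contributes to the $(1,1)$, $(1,2)$ and $(2,2)$ entries (the triangular factor has zero $(2,1)$ entry and so does its $\zeta$-derivative). Hence the limit $\epsilon\to0^+$ exists and equals $\bigl(F^{-1}(-i;s)F'(-i;s)\bigr)_{2,1}$, which is finite and, by Lemma~\ref{le:psiasyat-i} and the smoothness of $s\mapsto F(-i;s)$ on $-i\R_+$, depends smoothly on $s$.

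The main work is then to show that this quantity, suitably normalized, equals the $\sigma$-function. For this I would invoke the standard isomonodromy machinery for the Painlev\'e V RH problem as set up in \cite{CK}: the function $\Psi(\cdot;s)$ solves a Lax pair, and the associated Hamiltonian $H(s)$ (equivalently the logarithmic derivative of the isomonodromic $\tau$-function, $\sigma(s)=s\,\tfrac{d}{ds}\log\tau(s)$ up to an explicit affine correction in $s$) can be read off from the expansion coefficients $\Psi_1(s),\Psi_2(s)$ in \eqref{Psi as} and from the local data at the Fisher--Hartwig points $\pm i$. The residue/connection computation expressing $\bigl(F^{-1}(-i;s)F'(-i;s)\bigr)_{2,1}$ in terms of $\sigma(s)$ is exactly of the type carried out in \cite[Section 3]{CK} (and in the analogous Toeplitz computations of \cite{DIK11}); it produces the prefactor $-\frac{\pi e^{\sqrt{2}\pi\gamma_2}}{e^{\sqrt{2}\pi\gamma_2}-1}$ and the affine shift $\sigma(s)-\frac{\gamma_1+\gamma_2}{2\sqrt{2}i}s-\frac{(\gamma_1+\gamma_2)^2}{4}$, where the constants $\frac{\gamma_1+\gamma_2}{2\sqrt{2}i}$ and $\frac{(\gamma_1+\gamma_2)^2}{4}$ are dictated by the exponents $\frac{\gamma_j}{\sqrt{2}i}$ appearing in $\Psi^\infty$ and by the leading behaviour \eqref{eq:sigmaasy2}.

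It remains to record that $\sigma$ so defined is real-analytic on $-i(0,\infty)$, satisfies the $\sigma$-form \eqref{sigmaPV} with parameters \eqref{theta1234}, and has the asymptotics \eqref{eq:sigmaasy1}--\eqref{eq:sigmaasy2}: all three facts are established in \cite{CK} for precisely the present special case ($\alpha_1=\alpha_2=0$, $\beta_j=\frac{i\gamma_j}{\sqrt 2}$, $|||\beta|||=0$), and I would simply cite them, checking only that the normalization of $\sigma$ there matches the affine combination appearing on the right of \eqref{eq:sigmaid1} (this match is fixed by comparing the $s\to -i0_+$ limits, using \eqref{eq:sigmaasy2} together with the known boundary value of $F$ at small $s$ from \eqref{eq:psiat-i}). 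The expected main obstacle is the bookkeeping in the isomonodromy/residue computation of the second paragraph — tracking the branch choices in $\Psi^\infty$ (arguments in $(0,2\pi)$) through the Lax-pair residue at $\zeta=-i$ and pinning down the exact constants — rather than any conceptually new input, since solvability, regularity, and asymptotics of $\sigma$ are all imported from \cite{CK}.
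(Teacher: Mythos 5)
Your opening reduction is correct and matches the paper: writing $\Psi=FT$ with $T$ the unipotent upper-triangular factor from \eqref{eq:Psilocal}, one has $\Psi^{-1}\Psi'=T^{-1}(F^{-1}F')T+T^{-1}T'$, and since conjugation by a unipotent upper-triangular matrix fixes the $(2,1)$ entry while $T^{-1}T'$ is strictly upper-triangular, $(\Psi^{-1}\Psi')_{21}\to (F^{-1}F')_{21}(-i;s)=F_{1,21}(s)$. Your final paragraph, fixing the integration constant by comparing the $s\to -i0_+$ limits via \eqref{eq:sigmaasy2} and \eqref{eq:psiat-i}, is also exactly what the paper does.

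The gap is the middle step, where you assert that a ``residue/connection computation'' expresses $(F^{-1}(-i;s)F'(-i;s))_{2,1}$ \emph{directly} in terms of $\sigma(s)$ in the style of \cite[Section~3]{CK}. That is not available: the $\zeta$-Lax matrix $A=\Psi_\zeta\Psi^{-1}$ has residue at $\zeta=-i$ equal to $A_2(s)=\frac{1-e^{-\sqrt{2}\pi\gamma_2}}{2\pi i}\,F_0E_{12}F_0^{-1}$ with $F_0:=F(-i;s)$, and \cite{CK} gives $(A_2(s))_{11}=-\sigma_s+\frac{\gamma_1+\gamma_2}{2\sqrt{2}i}$; this is information about the \emph{zeroth} Taylor coefficient $F_0$, whereas what you want is the $(2,1)$ entry of the \emph{first} coefficient $F_1$. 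There is no clean fixed-$s$ algebraic identity between these. The paper closes the gap by differentiating in $s$: using the $s$-part of the Lax pair, $B:=[\partial_s\Psi]\Psi^{-1}=-\frac{i\zeta}{4}\sigma_3+\widetilde B(s)$, one obtains $\partial_s F_1=-\tfrac{i}{4}F_0^{-1}\sigma_3F_0$, and the determinant-one identity $(F_0^{-1}\sigma_3F_0)_{21}=2(F_0E_{12}F_0^{-1})_{11}$ lets one substitute the known $(A_2)_{11}$. Integrating this ODE in $s$, with your small-$s$ normalization, produces \eqref{eq:sigmaid1}. You should replace the vague appeal to ``isomonodromy machinery'' with this explicit $s$-differentiation argument; without it, the proposal does not actually connect $F_{1,21}$ to $\sigma$.
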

\begin{proof}
It was shown in \cite[Formula (3.17)]{CK} that $\Psi$ can be written in the following form for $\zeta$ close to $-i$ and in sector III shown in Figure \ref{figure: Gamma}:
\begin{equation}\label{eq:Psilocal}
\Psi(\zeta;s)=F(\zeta;s)\begin{pmatrix}1&\frac{1-e^{-{\sqrt{2}\pi}\gamma_2}}{2\pi i}\log(\zeta+i)\\0&1\end{pmatrix},
\end{equation} where $F$ is an analytic function near $-i$. We will prove \eqref{eq:sigmaid1} by proving first some identities for $F$.

 We use \eqref{eq:Psilocal} to expand $A(\zeta;s):=\Psi_\zeta(\zeta;s)\Psi^{-1}(\zeta;s)$ as $\zeta\to -i$ and obtain
\[A(\zeta;s)=\frac{1-e^{-{\sqrt{2}\pi}\gamma_2}}{2\pi i(\zeta+i)}F(\zeta;s)\begin{pmatrix}0&1\\0&0\end{pmatrix}F(\zeta;s)^{-1}+\mathcal O(1).\] Moreover, it was shown in \cite[Formula (3.21)]{CK} that $A$ takes the form
\[A(\zeta;s)=-\frac{is}{4}\sigma_3+\frac{A_1(s)}{\zeta-i}+\frac{A_2(s)}{\zeta+i}\]
for matrices $A_1(s)$ and $A_2(s)$ independent of $\zeta$.
Moreover, by \cite[Formulas (3.32), (3.45), (3.50), and Theorem 1.1]{CK} $\left(A_2(s)\right)_{1,1}$ is given by (recall that the connection between our notation and that of  \cite{CK} is $\gamma_j=-{\sqrt{2}} i\beta_j$)
\[\left(A_2(s)\right)_{1,1}=-\sigma_s+\frac{\gamma_1+\gamma_2}{{2\sqrt{2}}i},\]
where $\sigma$ is a certain smooth function satisfying the hypotheses of our proposition and $\sigma_s$ denotes its derivative.
It follows that
\begin{equation}\label{eq:idsigmaproof1}
\left(F(-i;s)\begin{pmatrix}0&1\\0&0\end{pmatrix}F(-i;s)^{-1}\right)_{1,1}
=\frac{2\pi i}{1-e^{-{\sqrt{2}\pi}\gamma_2}}\left(-\sigma_s+\frac{\gamma_1+\gamma_2}{{2\sqrt{2}}i}\right).
\end{equation}

To prove another identity for $F$, we recall from \cite[Formulas (3.20), (3.22), and (3.33)]{CK} that
$B(\zeta;s):=[\frac{d}{ds}\Psi(\zeta;s)]\Psi(\zeta;s)^{-1}$ takes the form
\[B(\zeta;s)=-\frac{i\zeta}{4}\sigma_3+\widetilde{B}(s),\]
where $\widetilde{B}(s)$ denotes a matrix independent of $\zeta$. 
Consequently, using \eqref{eq:Psilocal},
\[\frac{d}{ds}F(\zeta;s)=\left(-\frac{i\zeta}{4}\sigma_3+\widetilde{B}(s)\right)F(\zeta;s).\]
Expanding $F$ for $\zeta$ near $-i$ as
\[F(\zeta;s)=F_0(s)\left(I+F_1(s)(\zeta+i)+\mathcal O((\zeta+i)^2)\right),\]
we obtain the identities
\[\frac{d}{ds}F_0={ B(-i;s)}F_0,\qquad \frac{d}{ds}F_1= -\frac{i}{4}F_0^{-1}\sigma_3F_0.\]

It follows that \[\frac{d}{ds}\lim_{\epsilon\to 0_+}\left(\Psi^{-1}(-i-\epsilon;s)\Psi'(-i-\epsilon;s)\right)_{2,1}=\frac{d}{ds}F_{1, 21}(s)=-\frac{i}{4}\left(F_0(s)^{-1}\sigma_3F_0(s)\right)_{2,1}.\]
Now the right hand side can be expressed by \eqref{eq:idsigmaproof1} as
\begin{align*}
-\frac{i}{4}\left(F(-i;s)^{-1}\sigma_3F(-i;s)\right)_{2,1}&=-\frac{i}{2}\left(F(-i;s)\begin{pmatrix}0&1\\0&0\end{pmatrix}F(-i;s)^{-1}\right)_{1,1}\\
&=\frac{\pi }{1-e^{-{\sqrt{2}\pi}\gamma_2}}\left(-\sigma'(s)+\frac{\gamma_1+\gamma_2}{{2\sqrt{2}} i}\right).
\end{align*}

Integrating the identity 
\[\frac{d}{ds}\lim_{\epsilon\to 0_+}\left(\Psi^{-1}(-i-\epsilon;s)\Psi'(-i-\epsilon;s)\right)_{2,1}=\frac{\pi }{1-e^{-{\sqrt{2}\pi}\gamma_2}}\left(-\sigma'(s)+\frac{\gamma_1+\gamma_2}{{2\sqrt{2}} i}\right)\]
in $s$ proves \eqref{eq:sigmaid1}  up to the value of the integration constant. To calculate the value of this integration constant, we see from \eqref{eq:sigmaasy2} that it is sufficient to verify that $$\lim_{\epsilon\to 0_+}\left(\Psi^{-1}(-i-\epsilon;s)\Psi'(-i-\epsilon;s)\right)_{2,1}\to 0$$ as $s\to 0$ along $-i\R_+$. To do this, we recall \eqref{eq:psiat-i}, from which we find for small enough $\epsilon$ and $s$ (``small enough" depending only on $\gamma_1,\gamma_2$)

\begin{align*}
&\left(\Psi^{-1}(-i-\epsilon;s)\Psi'(-i-\epsilon;s)\right)_{2,1}\\
&\quad =\frac{|s|}{2}{ e^{-{\sqrt{2}\pi}\frac{\gamma_1-\gamma_2}{2}}}\left[{ \widetilde  L}\left(s-\frac{\epsilon|s|}{2}\right)^{-1}H\left(s-\frac{\epsilon|s|}{2};s\right)^{-1}H'\left(s-\frac{\epsilon|s|}{2};s\right){ \widetilde  L}\left(s-\frac{\epsilon|s|}{2}\right)\right]_{2,1}\\
&\qquad +\frac{|s|}{2}{ e^{{\sqrt{2}\pi}\frac{\gamma_1-\gamma_2}{2}}}\left[{ \widetilde  L}\left(s-\frac{\epsilon|s|}{2}\right)^{-1}{ \widetilde  L}'\left(s-\frac{\epsilon|s|}{2}\right)\right]_{2,1}.
\end{align*}

Let us recall from our discussion surrounding \eqref{eq:psiat-i} that it followed from \cite[Section 6]{CK} that $H(s-\frac{\epsilon}{2}|s|;s)=I+\mathcal O(s\log |s|)$. In fact, it follows from \cite[Section 6]{CK} (see in particular \cite[(6.27) -- (6.30)]{CK}) that $\lambda\mapsto H(\lambda;s)$ is analytic in a small enough but fixed neighborhood of zero and uniformly in $\lambda$ in this neighborhood, one has $H(\lambda;s)=I+\mathcal O(s\log |s|)$. Thus from Cauchy's integral formula (for derivatives), one finds that $H'(\lambda;s)=\mathcal O(s\log |s|)$ uniformly in a small enough fixed neighborhood of $0$. In particular, we have for small enough $s$, $H'(s-\frac{\epsilon}{2}|s|;s)=\mathcal O(s\log |s|)$. Moreover, the implied constants in the estimates for $H(s-\frac{\epsilon}{2}|s|;s)$ and $H'(s-\frac{\epsilon}{2}|s|;s)$ are uniform in $\epsilon>0$ small enough. Also as $\lambda\mapsto  \widetilde  L(\lambda)$ is independent of $s$, analytic at zero, and has determinant one, we see that ${ \widetilde  L}(s-\frac{\epsilon|s|}{2}), { \widetilde  L}(s-\frac{\epsilon|s|}{2})^{-1},{ \widetilde  L}'(s-\frac{\epsilon|s|}{2})=\mathcal{O}(1)$, with the implied constant uniform in $\epsilon>0$ and $|s|$ small enough, so we conclude that 
$$
\left(\Psi^{-1}(-i-\epsilon;s)\Psi'(-i-\epsilon;s)\right)_{2,1}=\mathcal O(|s|)
$$
where the implied constant is uniform in $\epsilon>0$ small enough (again with ``small enough" depending only on $\gamma_1,\gamma_2$). We conclude that $\lim_{\epsilon\to 0_+}\left(\Psi^{-1}(-i-\epsilon;s)\Psi'(-i-\epsilon;s)\right)_{2,1}\to 0$ as $s\to 0$ along $-i\R_+$, which concludes the proof.
\end{proof}

\subsection{A model RH problem for the local parametrix near $1$} \label{sect:paremetrix1}

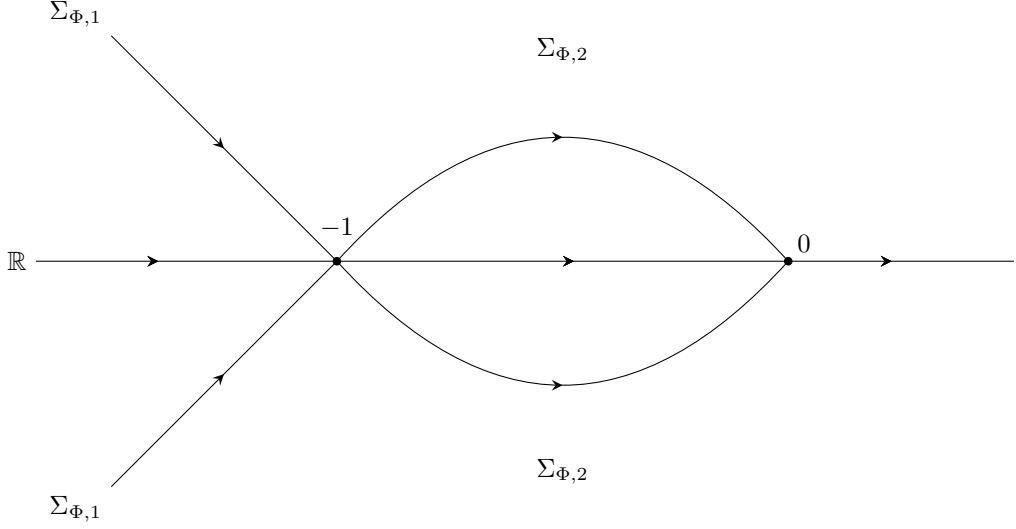
\begin{figure}
\begin{tikzpicture}
\draw [decoration={markings, mark=at position 0.125 with {\arrow[thick]{>}}},
        postaction={decorate}, decoration={markings, mark=at position 0.55 with {\arrow[thick]{>}}},
        postaction={decorate},decoration={markings, mark=at position 0.875 with {\arrow[thick]{>}}},
        postaction={decorate},] (-1,0) -- (12,0);
\draw [decoration={markings, mark=at position 0.5 with {\arrow[thick]{>}}},
        postaction={decorate}] (0,3) -- (3,0); 
\draw  [decoration={markings, mark=at position 0.5 with {\arrow[thick]{>}}},
        postaction={decorate}] (3,0) .. controls (5,-2.2) and (7,-2.2) .. (9,0);     
\draw  [decoration={markings, mark=at position 0.5 with {\arrow[thick]{>}}},
        postaction={decorate}] (0,-3) -- (3,0);
\draw  [decoration={markings, mark=at position 0.5 with {\arrow[thick]{>}}},
        postaction={decorate}] (3,0) .. controls (5,2.2) and (7,2.2) .. (9,0);  
\node [above left] at (0,3) {$\Sigma_{\Phi,1}$};
\node [above ] at (6,2.5) {$\Sigma_{\Phi,2}$};
\node [below left] at (0,-3) {$\Sigma_{\Phi,1}$};
\node [below ] at (6,-2.5) {$\Sigma_{\Phi,2}$};
\node [above right] at (9,0) {$0$};
\node [above ] at (3,0.2) {$-1$}; 
\draw[fill] (9,0) circle [radius=0.05];
\draw[fill] (3,0) circle [radius=0.05];  
\node [left] at (-1,0) {$\mathbb R$};    
\end{tikzpicture}
\caption{The jump contour $\Sigma_{\Phi}$}\label{ContourPhi}
\end{figure}

We now study another model RH problem, which we will need to construct a local parametrix in the case where a FH singularity  is close to the edges of the support $\pm 1$.
This RH problem did not appear in the literature in this form to the best of our knowledge, however it is closely related to model RH problems that appeared in \cite{ClaeysDoeraene,XuDai}.

\subsubsection*{RH Problem for $\Phi$}
\begin{itemize}
\item[(a)] $\Phi = \Phi(\cdot;u)$ is analytic on $\mathbb C\setminus (\mathbb R \cup \Sigma_{\Phi,1}\cup\Sigma_{\Phi,2})$. The contours $\Sigma_{\Phi,1}$ and $\Sigma_{\Phi,2}$ are as in Figure \ref{ContourPhi}: they consist of straight lines near $0$, and near $-1$ they will be specified below.
\item[(b)] On $\mathbb R \cup \Sigma_{\Phi,1}\cup\Sigma_{\Phi,2} \setminus \{-1,0\}$, $\Phi$ has the following jumps:
\begin{equation}
\begin{aligned}
\Phi_+(\lambda)&=\Phi_-(\lambda) \begin{pmatrix}
1&0\\e^{-{\sqrt{2}\pi}\gamma}e^{\frac{4}{3}(\lambda u)^{3/2}}&1
\end{pmatrix}	&& \textrm{for } \lambda\in \Sigma_{\Phi,1},
\\
\Phi_+(\lambda)&=\Phi_-(\lambda) \begin{pmatrix}
1&0\\e^{\frac{4}{3}(\lambda u)^{3/2}}&1
\end{pmatrix}	&& \textrm{for } \lambda\in \Sigma_{\Phi,2},
\\
\Phi_+(\lambda)&=\Phi_-(\lambda) \begin{pmatrix}
0&e^{{\sqrt{2}\pi}\gamma}\\-e^{-{\sqrt{2}\pi}\gamma}&0
\end{pmatrix}	&& \textrm{for } \lambda\in (-\infty,-1),
\\
\Phi_+(\lambda)&=\Phi_-(\lambda) \begin{pmatrix}
0&1\\-1&0
\end{pmatrix}	&& \textrm{for } \lambda\in (-1,0),
\\
\Phi_+(\lambda)&=\Phi_-(\lambda) \begin{pmatrix}
1&e^{-\frac{4}{3}(\lambda u)^{3/2}}\\0&1
\end{pmatrix}	&& \textrm{for } \lambda\in (0,\infty),\label{eq:jumpPhi}
\end{aligned}
\end{equation}
where principal branches are chosen, and where $u>0$ and { $\gamma \in \mathbb R$} are parameters.
\item[(c)] As $\lambda\to \infty$,
\begin{equation}\label{eq:Phiasy}
\Phi(\lambda)={  (I+\mathcal O(\lambda^{-1/2}))\begin{pmatrix}
1&0\\ i{\sqrt{2}}\gamma &1
\end{pmatrix}\lambda^{-\frac{1}{4}\sigma_3} A
e^{-{\sqrt{2}\pi}\frac{\gamma}{2}\sigma_3},}
\end{equation}
where principal branches are chosen, and $A=\frac{1}{\sqrt 2}\begin{pmatrix}
1&i\\i&1
\end{pmatrix}$.
\item[(d)] As $\lambda\to 0$, $\Phi(\lambda)$ remains bounded and as $\lambda\to -1$, $\Phi(\lambda)=\mathcal O(\log(\lambda+1))$.
\end{itemize}

Unlike for the first model RH problem, we do not need precise knowledge of the solution here. We only need the existence of a solution $\Phi=\Phi(\lambda;u)$ for large enough $u$, and explicit asymptotics for $\Phi$ as $u\to +\infty$.

Note that as long as the angles between the jump contour and $(-\infty,0)$ are strictly less than $\pi/2$ and strictly greater than zero, then as $u\to\infty$, the jumps of $\Phi$ tend uniformly to the identity except on $(-\infty,0)$ and in fixed neighbourhoods of $-1$ and $0$.
More precisely, for any $\delta>0$, we have for $\lambda\in \Sigma_{\Phi,1}\cup \Sigma_{\Phi,2}\cup (\delta,\infty)$ such that $|\lambda{+}1|>\delta$, $|\lambda|>\delta$,
\begin{equation}
\Phi_+(\lambda)=\Phi_-(\lambda)\left(I+\mathcal O\left(e^{-\epsilon u^{3/2}(|\lambda|^{3/2}+1)}\right)\right),\qquad u\to +\infty,
\end{equation}
for some $\epsilon>0$. Note that it follows from \eqref{eq:jumpPhi} that $\epsilon$ and the constant implied by the error term can be chosen independent of $\gamma\in[-\Gamma,\Gamma]$.

Away from $-1$ and $0$, it will turn out that $\Phi$ for large $u$ is well approximated by the function
\begin{equation}\label{def:M}
M(\lambda)=\lambda^{-\frac{1}{4}\sigma_3}A\left(\frac{1+e^{-\pi i /2}\sqrt{\lambda}}{\sqrt{\lambda+1}}\right)^{-i{\sqrt{2}}{\gamma}\sigma_3},
\end{equation}
where the matrix $A$ is as \eqref{eq:Phiasy} and with principal branches of the root functions.
Then it is straightforward to verify that $\left(\frac{1+e^{-\pi i /2}\sqrt{\lambda}}{\sqrt{\lambda+1}}\right)^{-i{\sqrt{2}}{\gamma}\sigma_3}$ is analytic on $\mathbb C\setminus (-\infty,0]$, and that
\[
\left(\frac{1+e^{-\pi i /2}\sqrt{\lambda}}{\sqrt{\lambda+1}}\right)^{-i{\sqrt{2}}{\gamma}\sigma_3}_+\left(\frac{1+e^{-\pi i /2}\sqrt{\lambda}}{\sqrt{\lambda+1}}\right)^{-i{\sqrt{2}}{\gamma}\sigma_3}_-=\begin{cases}
{ e^{-{\sqrt{2}\pi}\gamma\sigma_3}},&\mbox{on }(-\infty, -1),\\
I,&\mbox{on }(-1,0).
\end{cases}
\]
From this, one easily obtains that $M$ is analytic on $\mathbb C\setminus (-\infty,0]$, and that $M$ satisfies the same jump conditions as $\Phi$ on $(-\infty,-1)$ and on $(-1,0)$. Moreover, {a routine calculation shows that} we have
\begin{equation}\label{eq:asymptPhiM}
\lim_{\lambda\to\infty}\Phi(\lambda)M(\lambda)^{-1}=I.
\end{equation}

\medskip

Let $\Phi_{\textrm{Ai}}$ be the solution to the Airy model RH problem as defined in Appendix \ref{section:appendixAiry}. In a neighbourhood $U_0$ of $0$ {(not intersecting $(-\infty,-1]$)},  we define the local parametrix as
\begin{equation*}
P_0(\lambda)=\widehat E_0(\lambda)
\Phi_{\textrm{Ai}}(u\lambda)e^{\frac{2}{3}(\lambda u)^{3/2}\sigma_3}, \qquad \widehat E_0(\lambda)=M(\lambda)
A^{-1}(\lambda u)^{\frac{1}{4}\sigma_3},
\end{equation*}
where principal branches are chosen.
It is straightforward to verify that $\widehat E_0$ is analytic in $U_0$. Defined in this way, it follows from the RH conditions for $\Phi_{\rm Ai}$ (see in particular \eqref{RHP Psi: b1} -- \eqref{RHP Psi: b3}) that $P_0$ has the same jumps on $U_0\cap (\Sigma_{\Phi_2}\cup(-1,+\infty))$ as $\Phi$, and (from \eqref{RHP Psi: c}) that we have the matching condition
\begin{equation}\label{smallness1}
P_0(\lambda)M(\lambda)^{-1}=I+\mathcal O(u^{-3/2}),\qquad \lambda\in\partial U_0
\end{equation}
as $u\to \infty$, uniformly in $\lambda$ and in $\gamma\in[-\Gamma,\Gamma]$.  Moreover, one checks readily (from the boundedness of $\Phi_{\rm Ai}$ at the origin -- see property (d) in Appendix \ref{section:appendixAiry}) that $P_0$ is bounded at the origin.

\medskip

Next, we let $U_{-1}$ be a fixed open disk containing $-1$ {and not intersecting $U_0$}. 
Let
\begin{equation*}
h_u(\lambda)=\frac{4}{3}e^{-\pi i/2}\left((\lambda u)^{3/2}-e^{\frac{3\pi i}{2}}u^{3/2}\right),
\end{equation*}
with $\lambda^{3/2} $ positive as $\lambda\to +\infty$, and with a branch cut for $\arg \lambda=-\pi/2$. Then $h_u(-1)=0$, $h_u(\lambda)>0$ for $\lambda\in (-1,0)$, and $h_u$ is a conformal map in $U_{-1}$.
As $\lambda\to -1$,
\begin{equation}\label{asym:h}
h_u(\lambda)=2u^{3/2}(\lambda+1)-\frac{u^{3/2}}{2}(\lambda+1)^2
+\mathcal O((\lambda+1)^3).
\end{equation}
We can now specify our choice of contours $\Sigma_{\Phi,1}$ and $\Sigma_{\Phi,2}$ by requiring that $h_u$ maps $\left(\Sigma_{\Phi,1}\cup\Sigma_{\Phi,2}\right)\cap U_{-1}$ on the lines $e^{\pm \frac{\pi i}{4}}\mathbb R$.
To construct the local parametrix in $U_{-1}$,  we need another model RH problem, constructed in terms of confluent hypergeometric functions, similarly to \cite{DIK11, ItsKrasovsky, Charlier, FoulquieMartinezSousa}, and with jumps on $\mathbb R\cup e^{\pm \frac{\pi i}{4}}\mathbb R$. The relevant properties of this model RH problem are recalled in Appendix \ref{section:appendixCFH} and we write $\Phi_{\rm HG}$ for its solution.

We define the local parametrix $P_{-1}$ by
\begin{equation}\label{def:P-1}
P_{-1}(\lambda)=\widehat E_{-1}(\lambda)\Phi_{\rm HG}\left(h_u(\lambda);\beta=\frac{\gamma}{{\sqrt{2}} i}\right)e^{\frac{2}{3}(\lambda u)^{3/2}\sigma_3}e^{-{\sqrt{2}\pi}\frac{\gamma}{4}\sigma_3},
\end{equation} 
where principal branches are chosen, and $\widehat E_{-1}$ is analytic on $U_{-1}$, and is given by
\begin{equation}\label{def:E-1}
\widehat E_{-1}(\lambda)=
M(\lambda)\times\begin{cases}I, &\Im \lambda>0,\\
\begin{pmatrix}
0&1\\-1&0
\end{pmatrix},&\Im \lambda<0,\end{cases}\times h_u(\lambda)^{\frac{\gamma}{{\sqrt{2}} i} \sigma_3}e^{\frac{2}{3}i u^{3/2}\sigma_3},
\end{equation}
{where  $h_u(\lambda)^{\frac{\gamma}{{\sqrt{2}} i} \sigma_3}$ is analytic on $\mathbb C\setminus ([0,-i\infty)\cup (-\infty,-1])$}.
Then (using \eqref{eq:HGjump}) it is easily verified that $P_{-1}$ has the same jumps as $\Phi$ on $U_{-1}$, and (using \eqref{eq:HGasy}) that furthermore
\begin{equation}\label{smallness2}
P_{-1}(\lambda)M^{-1}(\lambda)=I+\mathcal O(u^{-3/2})
\end{equation}
as $u\to \infty$, uniformly in $\lambda\in \partial U_{-1}$ and $\gamma\in[-\Gamma,\Gamma]$.  Finally from the behavior of $\Phi_{\mathrm{HG}}$ near the origin (namely \eqref{eq:PhiHG-log}), one checks that $P_{-1}(\lambda)=\mathcal O(\log (\lambda+1))$ as $\lambda\to -1$. 

We now define the function
\begin{equation}\label{defR}
\Xi(\lambda)=\begin{cases}
\Phi(\lambda)M^{-1}(\lambda) &\textrm{for } \lambda\in \mathbb C \setminus \left(U_0\cup U_{-1}\right), \\
\Phi(\lambda)P_0^{-1}(\lambda) &\textrm{for } \lambda\in U_0,\\
\Phi(\lambda)P_{-1}^{-1}(\lambda) &\textrm{for } \lambda\in U_{-1}.
\end{cases}
\end{equation}

Then, {since we saw that the relevant jumps cancel and the possible isolated singularities at $-1$ and $0$ are not strong enough to be poles and thus must be removable, we deduce that} $\Xi$ satisfies the following RH problem. 
\subsubsection*{RH problem for $\Xi$}
\begin{itemize}
\item[(a)] $\Xi: \mathbb C \setminus \Sigma_\Xi \to \mathbb C$ is analytic, where $\Sigma_\Xi=\partial U_0\cup\partial U_1 \cup \Sigma_{\Phi,1}{ \cup \Sigma_{\Phi_2}}\cup  \{ (0,\infty)\setminus [U_0\cup U_{-1}]\}$, and where the orientations of $\partial U_0$ and $\partial U_1$ are taken clockwise, while the orientation of $\Sigma_{\Phi,1}{ \cup \Sigma_{\Phi_2}}\cup (0,\infty)\setminus [U_0\cup U_{-1}]$ is taken from left to right.
\item[(b)] On $\Sigma_\Xi$, $\Xi$ satisfies the jump relation
\begin{align}
\Xi_+(\lambda)&= \Xi_-(\lambda)\Delta(\lambda),
\end{align}
where 
\begin{equation}
\Delta(\lambda)=\begin{cases}
{ P_0(\lambda)M(\lambda)^{-1}} &\textrm{for } \lambda\in \partial U_0,\\
{ P_{-1}(\lambda)M(\lambda)^{-1}}&\textrm{for } \lambda\in \partial U_{-1},\\
M(\lambda) \begin{pmatrix}
1&0\\e^{-{\sqrt{2}\pi}\gamma}e^{\frac{4}{3}(\lambda u)^{3/2}}&1
\end{pmatrix}M(\lambda)^{-1}	&\textrm{for } \lambda\in \Sigma_{\Phi,1}\cup \setminus \overline{U_{-1}},
\\
M(\lambda) \begin{pmatrix}
1&0\\e^{\frac{4}{3}(\lambda u)^{3/2}}&1
\end{pmatrix}M(\lambda)^{-1}	&\textrm{for } \lambda\in \Sigma_{\Phi,2}\setminus \overline{U_0\cup U_{-1}},
\\
M(\lambda) \begin{pmatrix}
1&e^{-\frac{4}{3}(\lambda u)^{3/2}}\\0&1
\end{pmatrix}M(\lambda)^{-1}	&\textrm{for } \lambda\in (0,\infty)\setminus \overline{U_0}.
\end{cases}
\end{equation}

\item[(c)] As $\lambda \to \infty$,
\begin{equation} \Xi(\lambda)=I +\mathcal{O} (\lambda^{-1}).
\end{equation}
\end{itemize}
Then we can check, using \eqref{smallness1}, \eqref{smallness2}, and the behaviour of the exponential on the respective parts of the contours, that $\Delta(\lambda)$ is 
 close to the identity as $u\to +\infty$, uniformly in $\lambda$ on the jump contour and in $\gamma\in[-\Gamma,\Gamma]$. We have for instance the estimates
\begin{equation}
\Delta(\lambda)=\left(I+\mathcal O\left(e^{-\epsilon u^{3/2}(|\lambda|^{3/2}+1)}\right)\right),
\end{equation}
for { some $\epsilon>0$ and} $\lambda\in \Sigma_\Xi\setminus\left(\partial U_0\cup\partial U_{-1}\right)$,
 and 
\begin{equation}
\Delta(\lambda)=I+\mathcal O(u^{-3/2}),
\end{equation}
for $\lambda\in\partial U_0\cup\partial U_{-1}$.
It turns out that due to the normalization at infinity and the jump matrix being close to the identity, such problems can be solved approximately through a standard method (known as small norm analysis -- see e.g. \cite[Section 7]{DKMVZ} for details) and by this method, we have that the solution to the RH problem for $\Xi$ exists and is unique for $u$ large enough, and moreover
\begin{equation}\label{eq:smallnormPhi}
\Xi(\lambda)=I+\mathcal O((1+|\lambda|)^{-1}u^{-3/2}), \qquad \Xi'(\lambda)=\mathcal O((1+|\lambda|)^{-1}u^{-3/2}),
\end{equation}
as $u\to \infty$, uniformly for $\lambda\in \mathbb C\setminus \Sigma_\Xi$ and $\gamma\in[-\Gamma,\Gamma]$.

Recall that we have not proved yet that the solution to the RH problem for $\Phi$ exists for large enough $u$. This now follows from the fact that the RH problem for $\Xi$ has a solution
and from the fact that we can invert the transformation \eqref{defR}.

\section{Asymptotics for the Hankel determinants in the merging regime}
\label{sec:merging}

{The large $N$ asymptotics of $Y(z;t)$ relevant to {Proposition} \ref{pr:di} and {Proposition} \ref{le:di1} depend on how small $y=x_2-x_1$ is compared to $N$. We { consider in this section two situations} depending on whether $w=0$ or not. If $w=0$, we let $y<\epsilon$ for some {fixed} $\epsilon>0$, and in this section we prove Theorem \ref{th:Hankel2} for $y<\epsilon$ (the case where $y>\epsilon$ was proven in \cite{Charlier}). When $w\neq 0$, we {consider the situation} $y\leq \epsilon_N/8$, where $\epsilon_N$ is as in the definition of $\mathcal S_N$ from \eqref{domain} -- recall that in Theorem \ref{th:Hankel1} $w$ is assumed to be analytic in $\mathcal S_N$. This analysis will be important in Section \ref{section: RHseparated}, where we prove Theorem \ref{th:Hankel1}}.  

In \cite{Charlier}, large $N$ asymptotics for the RH solution $Y$ have been obtained for $y>0$ fixed (where the singularities are separated) and $w(\cdot;t)$ independent of $N$. The general strategy of our asymptotic analysis will be very similar to that in \cite[Section 4]{Charlier}, but the construction of a local parametrix near $x_1$ and $x_2$ will be different because of the fact that $y\to 0$ is allowed. The local construction is similar to the one in \cite{CK}.
An additional difficulty for us is that the function $w(\cdot;t)$ is not fixed but can depend on $N$, in particular, it can approximate a non-smooth function.

Our strategy throughout this section follows the standard approach to the asymptotic analysis of such RH problems. We perform a series of transformations which finally result in a  small-norm problem, which can be formulated as a suitable singular integral equation and solved approximately.

\subsection{Local parametrices near $\pm 1$}

Let $\widehat \delta= \delta_N$ when $w\neq 0$ and $\widehat \delta>0$ be some fixed but sufficiently small constant when $w=0$, where  $ \delta_N$ was given in the definition of the region 
$\mathcal S_N$ given by \eqref{domain}, such that the disks $B(\pm 1, \widehat \delta)$ are contained in $\mathcal S_N'\subset \mathcal S_N$ -- recall the definition of $\mathcal S_N'$ from \eqref{domain2}. 
Local parametrices in these disks can be constructed in terms of the Airy function. This is a standard construction, and we refer to \cite[Section 4.6 and Section 4.7]{Charlier} for details. We only need to pay attention to the fact that our disks shrink as $N\to\infty$.  All we need to know about the parametrices near $\pm 1$, is that they have the following properties.

\subsubsection*{RH conditions for $P$}
\begin{itemize}
\item[(a)] $P:B(\pm 1,\widehat \delta)\setminus\left(\mathbb R\cup \omega_1\cup\overline{\omega}_1\cup \omega_2\cup\overline{\omega}_2\right)\to\mathbb C^{2\times 2}$ is analytic,
\item[(b)] $P$ satisfies the same jump relations as $S$ on $B(\pm 1,\widehat \delta)\cap\left(\mathbb R\cup \omega_1\cup\overline{\omega}_1\cup \omega_2\cup\overline{\omega}_2\right)$,
\item[(c)] for $z\in\partial B(\pm 1,\widehat \delta)$, we have the matching condition
\begin{equation}\label{eq:matchingAiry}
P(z)P^\infty(z)^{-1}=P^\infty(z)\left(I+\O(\widehat \delta^{-3/2}N^{-1})\right){ P^{\infty}(z)^{-1}},
\end{equation}
as $N\to\infty$, where the error is uniform for all $|z\pm1| = \widehat \delta$,  
see \cite[equations  (4.58), (4.59), (8.2)]{Charlier} and also uniform in $w$ in the sense of Theorem \ref{th:Hankel1} and $P^{\infty}$ is the global parametrix (Section~\ref{sec:global}),
\item[(d)] as $z\to \pm 1$, $P(z)$ remains bounded.
\end{itemize}
In order to make the matching condition \eqref{eq:matchingAiry} more explicit, we make the important observation that {uniformly  in $w$ (in the same sense as above),} ${(P^\infty(z))^{\pm 1}}=\O(\widehat \delta^{-1/4})$ as $N\to\infty$, $|z\pm 1|=\widehat \delta$. This follows from \eqref{def:Pinfty}, \eqref{def:D}, and Lemma \ref{lemma:boundw}. We thus see that 
\begin{equation}\label{eq:matchingAiry2}
P(z)P^\infty(z)^{-1}=I+\mathcal O(\widehat \delta^{-2}N^{-1})
\end{equation}
 as $N\to\infty$, uniformly for $|z\pm1| = \widehat \delta$ and uniformly in the relevant $w$. Moreover, this is uniform in $x_1$ in a fixed compact subset of $(-1,1)$ and $0\leq y\leq \widehat \epsilon/8$,
 where we define $\widehat \epsilon=\epsilon_N$ for $w\neq 0$ and we let $\widehat \epsilon>0$ be fixed and sufficiently small for $w=0$.

Note that we have not explicitly specified our lenses inside $B(\pm 1,\widehat \delta)$; this is also done in a standard manner and we refer to \cite[Section 4.6 and Section 4.7]{Charlier} for further details.

\subsection{Parametrix near $x_1$ and $x_2$}

For the construction of the local parametrix, near $x_1$ and $x_2$, we will work in the disk  $B(x_1,\widehat \epsilon/4)$ which contains both $x_1$ and $x_2$ and which is contained in $\mathcal S_N'$. 
For the construction of the local parametrix, we follow the approach from \cite{CK, CF16}, and in particular we will use the function $\widehat\Psi$ given in \eqref{def:Psihat}.

We search for a parametrix of the form
\begin{equation}\label{def:P}
P(z)=E_{N,y}(z)\widehat\Psi(\lambda_y(z);s_{N,y})e^{-N\xi(z)\sigma_3}e^{-\frac{w(z;t)}{2}\sigma_3},
\end{equation}
where $\xi$ is given by \eqref{def:xi}; the matrix-valued function $E_{N,y}$ is analytic in $B({x_1},\widehat \epsilon/4)$ and will be constructed below; $s_{N,y}$ is given by
\begin{equation}\label{def:s}
s_{N,y}=2N(\xi_+(x_2)-\xi_+(x_1))=2Ny\xi_+'(x_1)+\mathcal O(Ny^2)=-2\pi iNy\sigma_V(x_1)+\mathcal O(Ny^2),
\end{equation}
uniformly in the limit where $N\to\infty$ and $0\leq y\leq\epsilon$ for $\epsilon>0$,
with $\sigma_V$ being the density of $\mu_V$, by \eqref{eq:id g mu}--\eqref{eq:id xi g}; 
$\lambda_y$ is a conformal map from a neighbourhood of $x_1$ to a neighbourhood of $0$.
More precisely, $\lambda_y$ maps $x_1$ to $0$ and $x_2$ to $1$, and it is given by
\begin{equation}\label{def:lambda}
\lambda_y(z)=\pm\frac{2N}{s_{N,y}}(\xi(z)-\xi_\pm(x_1)),\qquad \pm \Im z>0.
\end{equation}
First, it is straightforward to verify that $\lambda_y$ is continuous across the real line, and hence analytic in $B({x_1},\widehat \epsilon/4)$. Secondly, its derivative at $x_1$ is given by
\begin{equation}\label{eq:lambdader}
\lambda_y'(x_1)=\frac{2N}{s_{N,y}}\xi_{+}'(x_1)= \frac{1}{y}+\mathcal O(1), 
\end{equation}
again in the limit $N\to\infty$, $0\leq y\leq \epsilon$ for $\epsilon>0$.
This conformal map allows us to specify the lens-shaped jump contours ${ \omega_1,\omega_2}$ for $S$ in the vicinity of $x_1$ and $x_2$: we take them inside the disk $B({x_1},\widehat \epsilon/4)$ such that $\lambda_y$ maps them to the jump contour $\widehat\Gamma$ of $\widehat\Psi$.

The function $E_{N,y}$ is finally given by
\begin{equation}\label{def:E}
E_{N,y}(z)=
P^{\infty}(z)e^{\frac{w(z;t)}{2}\sigma_3}\widehat\Psi^\infty(\lambda_y(z))^{-1}e^{-N\xi_+(x_1)\sigma_3}.
\end{equation}
By the jump relations for $P^\infty$, the asymptotic behavior of $P^\infty$ at $x_1$ and $x_2$, and by \eqref{Pinftyhat}, it is straightforward to verify that $E_{N,y}$ has no jumps or non-removable singularities and hence is analytic in $B({x_1},\widehat \epsilon/4)$.

Before discussing the RH problem satisfied by $P$, we will state some facts about the asymptotics of $\widehat{\Psi}$ as well as ${ E_{N,y}}$. In particular, we will need these to check that $P$ agrees with $P^\infty$ to leading order on $\partial B(x_1,\widehat \epsilon/4)$. The asymptotics of ${ E_{N,y}}$ we need here are rather simple and those of $\widehat{\Psi}$ follow from asymptotics of $\Psi$ which have been studied extensively in \cite{CK}. In the proof of the following lemma, we will indicate where to find the relevant claims in \cite{CK}.

\begin{lemma}\label{le:psihatasy}
The following asymptotics hold uniformly as $N\to\infty$ and $0\leq y\leq \widehat \epsilon/8$.
\begin{itemize}
\item[$1.$] For $z\in \overline{B(x_1,\widehat \epsilon/4)}$, we have
$
E_{N,y}(z)^{\pm 1}=\mathcal O (1)
$
{uniformly in the relevant $w$ and uniformly in $0\leq y \leq \widehat \epsilon/8$ and $x_1$ in a fixed compact subset of $(-1,1)$.} \item[$2.$]   As $N\to \infty$,
$$
\widehat{\Psi}(\lambda_y(z);s_{N,y})e^{\mp \frac{s_{N,y}\lambda_y(z)}{2}\sigma_3}\widehat{\Psi}^\infty(\lambda_y(z);s_{N,y})^{-1}=I+\mathcal O\left(\frac{1}{N \widehat \epsilon}\right),
$$
 uniformly for $z\in \partial B(x_1,\widehat \epsilon/4)$, {$0\leq y \leq \widehat \epsilon/8$, and $x_1$ in a fixed compact subset of $(-1,1),$ with} $\pm\Im \lambda_y(z)>0$.
\end{itemize}
\end{lemma}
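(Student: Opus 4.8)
\textbf{Proof plan for Lemma \ref{le:psihatasy}.}

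The plan is to reduce both claims to asymptotic information about the model function $\Psi$ established in \cite{CK}, transported through the explicit change of variables \eqref{def:Psihat}, together with the elementary estimates on the conformal map $\lambda_y$ from \eqref{eq:lambdader} and on the global parametrix $P^\infty$ from Section \ref{sec:global}. For the first claim, I would unwind the definition \eqref{def:E} of $E_{N,y}$: it is a product of $P^\infty(z) e^{\frac{w(z;t)}{2}\sigma_3}$, the matrix $\widehat\Psi^\infty(\lambda_y(z))^{-1}$ given explicitly in \eqref{Pinftyhat}, and the scalar factor $e^{-N\xi_+(x_1)\sigma_3}$. On $\overline{B(x_1,\widehat\epsilon/4)}$ the point $z$ stays a fixed positive distance from $\pm 1$ (for $x_1$ in a fixed compact subset of $(-1,1)$), so $P^\infty(z)^{\pm 1} = \mathcal O(1)$ from \eqref{def:Pinfty}, \eqref{def:D} and Lemma \ref{lemma:boundw}; $e^{\frac{w(z;t)}{2}\sigma_3}$ is bounded since $w$ is uniformly bounded on $\mathcal S_N \supset \mathcal S_N'$; and the combination $\widehat\Psi^\infty(\lambda_y(z))^{-1} e^{-N\xi_+(x_1)\sigma_3}$ must be analyzed. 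Here one uses that $\widehat\Psi^\infty$ has the form $|s_{N,y}|^{c\sigma_3}\lambda^{c_1\sigma_3}(1-\lambda)^{c_2\sigma_3}$ times a constant matrix (with $c,c_1,c_2$ purely imaginary because $\gamma_1,\gamma_2\in\mathbb R$), so all these powers have modulus one; what could blow up is a power of $|s_{N,y}|$, but since $\widehat E_{N,y}$ has already been checked to be analytic in $B(x_1,\widehat\epsilon/4)$, the apparent singular/large factors must cancel against the branch structure of $P^\infty$ near $x_1$ and $x_2$ — I would make this cancellation explicit by matching the local behaviour of $P^\infty$ at $x_j$ (from the RH problem for $P^\infty$, conditions (d) and the explicit formula \eqref{expressionD}) against \eqref{Pinftyhat}, exactly as in \cite{CF16, CK}. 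The upshot is $E_{N,y}(z)^{\pm1}=\mathcal O(1)$ with the claimed uniformities.

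For the second claim, I would invoke the large-$\zeta$ asymptotics \eqref{Psi as}–\eqref{Pinfty} of $\Psi$, which say $\Psi(\zeta;s) = (I + \Psi_1/\zeta + \mathcal O(\zeta^{-2}))\Psi^\infty(\zeta)e^{-\frac{is}{4}\zeta\sigma_3}$; crucially, \cite{CK} proves this with an error uniform in $s\in -i\mathbb R_+$ (this uniformity in $s$ is the substance one imports). Translating via \eqref{def:Psihat} with $\zeta = -2i\lambda_y(z)+i$ and using the relation \eqref{Psi ashat}–\eqref{Pinftyhat} between $\Psi^\infty$ and $\widehat\Psi^\infty$ derived in the proof of Lemma \ref{pr:Psihat}, one obtains
\[
\widehat\Psi(\lambda;s_{N,y})\,e^{\mp\frac{s_{N,y}\lambda}{2}\sigma_3}\,\widehat\Psi^\infty(\lambda;s_{N,y})^{-1} = I + \frac{\widehat\Psi_1}{\lambda} + \mathcal O(\lambda^{-2}),
\]
valid uniformly in $s_{N,y}\in -i\mathbb R_+$ (recall from \eqref{def:s} that $s_{N,y} = -2\pi i N y\sigma_V(x_1) + \mathcal O(Ny^2)$ indeed lies on $-i\mathbb R_+$ for $y\ge 0$ small, since $\sigma_V>0$). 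Here one should also note $\widehat\Psi_1$ is bounded uniformly in $s\in -i\mathbb R_+$, which follows from \cite{CK} in the same way Lemma \ref{le:psiasyat-i} is proved. For $z\in\partial B(x_1,\widehat\epsilon/4)$ the conformal map satisfies $|\lambda_y(z)| \gg \widehat\epsilon/y \cdot (1+\mathcal O(y)) \gg \widehat\epsilon/y$ by \eqref{eq:lambdader} (the distance from $z$ to $x_1$ is $\asymp \widehat\epsilon$ and $\lambda_y'(x_1) = 1/y + \mathcal O(1)$, and $\lambda_y$ is conformal on the disk), and in fact also $|1-\lambda_y(z)|\gg \widehat\epsilon/y$; hence $1/|\lambda_y(z)| \ll y/\widehat\epsilon$. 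Substituting and using $s_{N,y} \asymp Ny$ so that $y \asymp s_{N,y}/N \ll 1/N$ when $y\le \widehat\epsilon/8$ — more directly, $|\lambda_y(z)|^{-1} \ll y/\widehat\epsilon$ and $Ny = \mathcal O(N\widehat\epsilon)$, giving $|\lambda_y(z)|^{-1} \ll \frac{1}{N\widehat\epsilon}$ after using $y \ll (N\widehat\epsilon)\cdot\frac{1}{N}\cdot\frac{1}{\widehat\epsilon} $... — I would phrase it cleanly as: $|\lambda_y(z)|\gtrsim \widehat\epsilon/y$ and $1/(\widehat\epsilon/y) = y/\widehat\epsilon$; combined with $y\le \widehat\epsilon/8$ this is bounded, but to get the sharp $\frac{1}{N\widehat\epsilon}$ rate one uses additionally that in the merging regime the relevant range is $Ny = \mathcal O(1)$ whenever the $s\to 0$ asymptotics matter, while for $Ny\to\infty$ one uses the improved decay of $\widehat\Psi_1$; the clean statement is $1/|\lambda_y(z)| = \mathcal O(y/\widehat\epsilon) = \mathcal O(1/(N\widehat\epsilon))$ since $Ny = \mathcal O(1)$ is not assumed — rather $y \le \widehat\epsilon/8$ and $s_{N,y}$ ranges over a bounded-below-by-nothing set, so I will instead track $\frac{1}{|\lambda_y(z)|} \asymp \frac{y}{\widehat\epsilon} \le \frac{1}{N\widehat\epsilon}\cdot Ny$, and absorb $Ny$ into the $s$-uniform bound on $\widehat\Psi_1/\lambda$ which actually reads $\widehat\Psi_1 = \mathcal O(\min(1, 1/|s|))$ by \cite{CK}, yielding $\widehat\Psi_1/\lambda_y(z) = \mathcal O\big(\frac{y}{\widehat\epsilon}\cdot\frac{1}{Ny}\big) = \mathcal O\big(\frac{1}{N\widehat\epsilon}\big)$. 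This last bookkeeping — getting exactly the $\frac{1}{N\widehat\epsilon}$ rate uniformly over the full range $0\le y\le\widehat\epsilon/8$ by combining the $s$-uniform bound on $\widehat\Psi_1$ with the size of $\lambda_y(z)$ — is the one genuinely delicate point, and it is where I expect to spend the most care; everything else is a transcription of \cite{CK} through \eqref{def:Psihat} plus the elementary bounds on $\lambda_y$ and $P^\infty$.
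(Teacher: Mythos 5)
Your plan for Claim 1 reaches the correct conclusion, but a piece of the reasoning is a digression: once you observe that the exponents $\frac{\gamma_1}{\sqrt{2}i}$, $\frac{\gamma_2}{\sqrt{2}i}$, $\frac{\gamma_1+\gamma_2}{\sqrt{2}i}$ are purely imaginary (so every power in $\widehat\Psi^\infty$ from \eqref{Pinftyhat} has modulus one) and that $\xi_+(x_1)$ is purely imaginary by \eqref{formulaxi} (so $e^{-N\xi_+(x_1)\sigma_3}$ has modulus one), there is nothing left that ``could blow up,'' and in particular no cancellation against the branch structure of $P^\infty$ is needed for \emph{boundedness} (that cancellation is what gives analyticity of $E_{N,y}$, which is a different claim and was already recorded before the lemma). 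The paper's argument is exactly this direct one: Lemma \ref{lemma:boundw} bounds $(P^\infty)^{\pm 1}$ uniformly including in $w$, and then the remaining two factors in \eqref{def:E} together with their inverses have modulus-one entries. So this part is fine, just simplifiable.

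For Claim 2 there is a genuine gap, and it sits exactly where you say you ``expect to spend the most care.'' You invoke the large-$\zeta$ asymptotics \eqref{Psi as} and assert that ``\cite{CK} proves this with an error uniform in $s\in -i\mathbb R_+$.'' That uniformity on the \emph{whole} ray is not a black box; it is the crux of the proof, and the paper does not claim it as a general fact. What is available ``for free'' (as a general principle for Painlev\'e-type RH problems) is uniformity of the $\zeta\to\infty$ expansion for $s$ in compact subsets of the domain of solvability. The two non-compact ends are precisely where the content lies: for $|s|\to\infty$ the paper uses the large-$s$ RH analysis of \cite[Section 5]{CK} (around (5.25)) to get an error of the form $\mathcal O(|s_{N,y}|^{-1}(1+|\lambda_y|)^{-1})$, and for $|s|\to 0$ it uses the small-$s$ analysis of \cite[Section 6]{CK} (formulas (6.28)–(6.29) together with (4.6)) to get an error of the form $\mathcal O(|\lambda_y s_{N,y}|^{-1})$ up to a modulus-one prefactor. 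The paper therefore splits into three regimes $Ny>C$, $c<Ny<C$, $Ny<c$ and in each one plugs in $|\lambda_y(z)|\asymp \widehat\epsilon/y$ and $|s_{N,y}|\asymp Ny$ to land on $\mathcal O((N\widehat\epsilon)^{-1})$. Your proposed unified bound ``$\widehat\Psi_1 = \mathcal O(\min(1,1/|s|))$'' is stated without a reference or derivation and is not obviously correct — for small $|s|$ the error that actually comes out of \cite[Section 6]{CK} is $\mathcal O(|\lambda s|^{-1})$, which corresponds to a coefficient of order $1/|s|$, \emph{larger} than $1$, not smaller. (Your bookkeeping happens to give the right final rate under either bound since $Ny<c$ forces $y/\widehat\epsilon < c/(N\widehat\epsilon)$, but the claim as stated is unsupported.) You would also need to control the $\mathcal O(\lambda^{-2})$ remainder in \eqref{Psi as} uniformly in $s$, which your plan does not address. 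The fix is to carry out the regime split explicitly, as the paper does.
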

\begin{proof}
The first claim follows simply from noting that Lemma \ref{lemma:boundw} implies that $[P^\infty]^{\pm 1}(z)$ (see \eqref{def:D}) are bounded uniformly in all {the} relevant parameters {(including $w$)}. Moreover from the definition of $\widehat{\Psi}^\infty$ -- \eqref{Pinftyhat} -- and noting that $\xi_+(x_1)$ is imaginary (see e.g. \eqref{formulaxi}), we see that also the last two terms in \eqref{def:E} along with their inverses are bounded uniformly in everything relevant.

The second claim is more involved. Let us first translate it into a statement about $\Psi$.  As in the proof of Lemma \ref{pr:Psihat}, one can check using \eqref{def:Psihat} that the statement is equivalent to proving that 
$$
\Psi(-2i \lambda_y(z)+i;s_{N,y})e^{\frac{is_{N,y}}{4}(-2i\lambda_y(z)+i)\sigma_3}\Psi^\infty(-2i\lambda_y(z)+i)^{-1}=I+\mathcal O\left(\frac{1}{N \widehat \epsilon}\right).
$$

To do this, let us fix a sufficiently small positive constant $c$ and a sufficiently large positive constant $C$.
We first consider the situation where $Ny>C$, which translates into $|s_{N,y}|$ being large. Now it follows\footnote{Actually to see that this is equivalent to the asymptotics we are interested in, we must check that in the notation of \cite[Section 5]{CK}, for $z\in \partial B(x_1,\widehat \epsilon/4)$, $-2i\lambda_y(z)+i$ is not in $\overline{\mathcal U_1\cup\mathcal U_2}$ nor in the set delimited by $\Gamma_7'$ and $\Gamma_7''$. The condition of $\mathcal U_1$ and $\mathcal U_2$ is ensured by taking these sets small enough since for $z\in \partial B(x_1,\widehat \epsilon/4)$, $|\lambda_y(z)|\approx \frac{\widehat \epsilon}{4y}\geq 2$. The condition about not being between $\Gamma_7'$ and $\Gamma_7''$ can be checked with a similar estimate.} from \cite[Section 5]{CK} -- see in particular the discussion leading to \cite[(5.25)]{CK} -- that in this regime, 

\begin{align*}
\Psi(-2i \lambda_y(z)+i;s_{N,y})e^{\frac{is_{N,y}}{4}(-2i\lambda_y(z)+i)\sigma_3}&\Psi^\infty(-2i\lambda_y(z)+i)^{-1}\\
&=I+\mathcal O(|s_{N,y}|^{-1}(1+|\lambda_y|)^{-1})\\
&=I+\mathcal O(|s_{N,y}\lambda_y|^{-1})\\
&=I+\mathcal O((N\widehat \epsilon)^{-1}).
\end{align*}

In the situation where $c<Ny<C$, we have that $s=s_{N,y}$ is bounded and purely imaginary. It is a general fact\footnote{ Similarly to the fact about solutions of RH problems related to Painlev\'e equations being meromorphic in $s$, there unfortunately is not a general proof of such a claim in the literature. Again in \cite[Appendix A]{FIKN}, one can find a proof of a related fact. Moreover, similarly to the claim about smoothness in $s$, this can be reduced to analysis of the resolvent of a suitable Cauchy operator. Such an argument in a closely related problem (namely root singularities instead of jump singularities) can also be found in \cite[Appendix B]{Webb18}.} about RH problems related to Painlev\'e equations that the asymptotics in \eqref{Psi as} are actually uniform when $s$ is in compact subsets of the domain of analyticity of the corresponding Painlev\'e solution.

We can conclude that in this regime, 
$$
\Psi(-2i \lambda_y(z)+i;s_{N,y})e^{\frac{is_{N,y}}{4}(-2i\lambda_y(z)+i)\sigma_3}\Psi^\infty(-2i\lambda_y(z)+i)^{-1}=I+\mathcal O(|\lambda_y(z)|^{-1}).
$$
Now we have $|\lambda_y(z)|^{-1}=\mathcal O([\widehat \epsilon/y]^{-1})=\mathcal O((N\widehat \epsilon)^{-1})$ and we find the claim in this regime as well. 

Finally for $Ny<c$, or small $s$, we make use of results in \cite[Section 6]{CK}. Here we note that $|\lambda_y(z)|$ is uniformly at least of order $N^{\kappa}$ on $\partial B(x_1,\widehat \epsilon/4)$. Thus we are interested in the large $\lambda$ and small $|s|$ asymptotics of $\Psi(\lambda,s)$. Using in particular\footnote{To be precise, \cite[(6.29)]{CK} doesn't hold in our situation, since in the notation of \cite{CK}, we have $\alpha_1=\alpha_2=0$. Using \cite[(6.27)]{CK}, we see that to us, the only difference is that in \cite[(6.29)]{CK}, $H^{(1)}(\lambda)$ is replaced by a term that is $\mathcal O(|\lambda|^{-1}|s|\log |s|)$ and similarly one has logarithmic corrections to the subleading terms.} \cite[(6.28)]{CK}, \cite[(6.29)]{CK} and \cite[(4.6)]{CK}, we have for $z\in \partial B(x_1,\widehat \epsilon/4)$, 
\begin{align*}
\Psi(-2i \lambda_y(z)+i&;s_{N,y})e^{\frac{is_{N,y}}{4}(-2i\lambda_y(z)+i)\sigma_3}\Psi^\infty(-2i\lambda_y(z)+i)^{-1}\\
&=e^{\frac{s_{N,y}}{4}\sigma_3}(I+\mathcal O(|\lambda_y(z)s_{N,y}|^{-1}))\\
&=I+\mathcal O\left(\frac{1}{N \widehat \epsilon}\right).
\end{align*}
{All estimates satisfied the required uniformity, so this} concludes the proof.\end{proof}

We can now discuss the RH problem satisfied by $P$.

\begin{lemma}\label{pr:P}The function $P$ defined in \eqref{def:P} satisfies the following RH conditions:
\begin{itemize}
\item[$(a)$] $P:B({x_1},\widehat \epsilon/4)\setminus\left(\mathbb R\cup { \omega_{1}\cup\overline{\omega_{1}}\cup \omega_{2}\cup\overline{\omega_{2}}}\right)\to\mathbb C^{2\times 2}$ is analytic,
\item[$(b)$] $P$ satisfies the same jump relations as $S$ in $B({x_1},\widehat \epsilon/4)\setminus\left(\mathbb R\cup { \omega_{1}\cup\overline{\omega_{1}}\cup \omega_{2}\cup\overline{\omega_{{2}}}}\right)$,
\item[$(c)$] for $z\in\partial B({x_1},\widehat \epsilon/4)$, we have the uniform matching condition $($uniform in all the relevant parameters, including $w)$ 
\begin{equation*}
P(z)P^\infty(z)^{-1}=I+\mathcal O\left (\frac{1}{\widehat \epsilon N}\right), 
\end{equation*}
as $N\to\infty$ and  $y\leq \widehat \epsilon/8$
\item[$(d)$] as $z\to x_j$, $j=1,2$, we have
\begin{equation*}
P(z)=\mathcal O(\log(z-x_j)).
\end{equation*}
\end{itemize}
\end{lemma}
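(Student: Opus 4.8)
The plan is to verify the four RH conditions for $P$ from the explicit formula \eqref{def:P} together with the properties of its ingredients $\xi$, $\widehat\Psi$, $E_{N,y}$, and $P^\infty$ that were established above. Parts $(a)$ and $(b)$ are local computations: analyticity of $P$ in $B(x_1,\widehat\epsilon/4)\setminus(\mathbb{R}\cup\omega_1\cup\overline\omega_1\cup\omega_2\cup\overline\omega_2)$ follows because $E_{N,y}$ is analytic there (as noted just after \eqref{def:E}), $\lambda_y$ is a conformal map of $B(x_1,\widehat\epsilon/4)$ onto a neighbourhood of $[0,1]$ that sends the lens contours precisely to $\widehat\Gamma$ (this was how the lenses were chosen near $x_1,x_2$), $\widehat\Psi$ is analytic off $\widehat\Gamma$, and the remaining factors $e^{-N\xi(z)\sigma_3}$ and $e^{-w(z;t)\sigma_3/2}$ are analytic in $B(x_1,\widehat\epsilon/4)$ minus the jump contour. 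For the jump relations in $(b)$, I would plug \eqref{def:P} into $P_-^{-1}P_+$ on each arc, use that $E_{N,y}$ has no jump, and check that the jump matrices $\widehat J_k$ of $\widehat\Psi$ listed in the RH problem for $\widehat\Psi$ conjugate correctly through $e^{-N\xi\sigma_3}e^{-w\sigma_3/2}$: on the lens boundaries the relevant computation uses $\xi_+=\xi_-$ there combined with the explicit form of $\widehat J_1,\dots,\widehat J_4$ (lower-triangular unipotent with the $\gamma$-dependent entry), which reproduce the $J_1(z;\gamma),J_3(z;\gamma)$ appearing in the RH problem for $S$; on the real segments $(-1,x_1),(x_1,x_2),(x_2,1)$ one uses $\xi_+=-\xi_-$ there together with $\widehat J_5,\widehat J_6,\widehat J_7$ to recover the $J_2(x;\gamma)$ jumps and the diagonal oscillatory jump. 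This bookkeeping is routine given the way $\widehat\Psi$ was designed in \eqref{def:Psihat}, and it mirrors the analogous verification in \cite{CK,CF16}.

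\textbf{Part $(d)$: behaviour near $x_1,x_2$.} Near $x_j$, the factor $E_{N,y}(z)$ is analytic (hence bounded), the exponential factors are bounded, and $\widehat\Psi(\lambda_y(z);s_{N,y})=\mathcal O(\log(\lambda_y(z)-\lambda_y(x_j)))=\mathcal O(\log(z-x_j))$ by condition $(d)$ of the RH problem for $\widehat\Psi$ (the singular points $0$ and $1$ of $\widehat\Psi$ are the images of $x_1$ and $x_2$), and since $\lambda_y$ is conformal the logarithmic blow-up is preserved. This gives $(d)$.

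\textbf{Part $(c)$: the matching condition.} This is the analytically substantive step. On $\partial B(x_1,\widehat\epsilon/4)$ I would write, using \eqref{def:P} and \eqref{def:E},
\begin{equation*}
P(z)P^\infty(z)^{-1}=E_{N,y}(z)\,\widehat\Psi(\lambda_y(z);s_{N,y})\,e^{-N\xi(z)\sigma_3}e^{-\frac{w(z;t)}{2}\sigma_3}\,P^\infty(z)^{-1},
\end{equation*}
and substitute $P^\infty(z)^{-1}=e^{-N\xi_+(x_1)\sigma_3}\widehat\Psi^\infty(\lambda_y(z))\,e^{-\frac{w(z;t)}{2}\sigma_3}E_{N,y}(z)^{-1}$ read off from \eqref{def:E}. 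Collecting exponentials and using the definition \eqref{def:lambda} of $\lambda_y$, which gives $N(\xi(z)-\xi_+(x_1))=\pm\tfrac{1}{2}s_{N,y}\lambda_y(z)$ for $\pm\Im z>0$, the middle block becomes exactly $\widehat\Psi(\lambda_y(z);s_{N,y})\,e^{\mp s_{N,y}\lambda_y(z)\sigma_3/2}\,\widehat\Psi^\infty(\lambda_y(z);s_{N,y})^{-1}$, which by part $2$ of Lemma \ref{le:psihatasy} equals $I+\mathcal O((N\widehat\epsilon)^{-1})$ uniformly on $\partial B(x_1,\widehat\epsilon/4)$, uniformly in $0\le y\le\widehat\epsilon/8$, in $x_1$ in a compact subset of $(-1,1)$, and in $w$. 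Conjugating this $I+\mathcal O((N\widehat\epsilon)^{-1})$ error by $E_{N,y}(z)$, which is $\mathcal O(1)$ with $\mathcal O(1)$ inverse by part $1$ of Lemma \ref{le:psihatasy} (again uniformly in $w$ via Lemma \ref{lemma:boundw}), preserves the $\mathcal O((N\widehat\epsilon)^{-1})$ bound, yielding $P(z)P^\infty(z)^{-1}=I+\mathcal O((\widehat\epsilon N)^{-1})$. The main obstacle is precisely ensuring all these estimates hold with the claimed uniformity — in particular the uniformity in $w$ bounded on $\mathcal S_N$ and in the merging parameter $y$ down to $0$ — but this is already packaged into Lemma \ref{le:psihatasy} and Lemma \ref{lemma:boundw}, so the proof amounts to carefully assembling those inputs with the algebraic identity for the exponentials.
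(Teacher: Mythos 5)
Your proposal follows the same route as the paper: conditions (a), (b), (d) are read off from the construction of $E_{N,y}$, the conformality of $\lambda_y$, and the jump and logarithmic behaviour of $\widehat\Psi$; for (c) you collapse $P(z)P^{\infty}(z)^{-1}$ to the middle block $\widehat\Psi(\lambda_y(z);s_{N,y})\,e^{\mp \frac{1}{2}s_{N,y}\lambda_y(z)\sigma_3}\,\widehat\Psi^{\infty}(\lambda_y(z);s_{N,y})^{-1}$ and invoke Lemma~\ref{le:psihatasy} (parts 1 and 2), exactly as in the paper.

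One caveat: your intermediate rearrangement is misstated. Reading off \eqref{def:E} gives $P^\infty(z)^{-1}=e^{\frac{w(z;t)}{2}\sigma_3}\widehat\Psi^\infty(\lambda_y(z))^{-1}e^{-N\xi_+(x_1)\sigma_3}E_{N,y}(z)^{-1}$, not the formula you wrote (yours has the factors in the wrong order, the wrong sign on $w$, and omits the inverse on $\widehat\Psi^\infty$). Likewise the exponential identity should be $N(\xi(z)\mp\xi_+(x_1))=\pm\tfrac12 s_{N,y}\lambda_y(z)$ for $\pm\Im z>0$: the sign flip $\xi_+(x_1)\mapsto -\xi_+(x_1)$ in the lower half plane, and equivalently the need to commute $e^{-N\xi_+(x_1)\sigma_3}$ through the $\sigma_3\sigma_1$ factor present in $\widehat\Psi^\infty$ on $\Im\lambda>0$, is precisely what produces the paper's $e^{-N(\xi(z)\mp\xi_+(x_1))\sigma_3}$. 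The middle-block formula you ultimately state is nevertheless the correct one, so the argument goes through once the intermediate steps are written carefully.
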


\begin{proof}
Condition (a) is valid because of the choice of the contours $\Omega_1, \Omega_2$ inside $B({x_1},\widehat \epsilon/4)$. The jump relations in condition (b) are directly verified using \eqref{def:P} along with the jump relations of $\widehat{\Psi}$ and $\xi$.
For the matching condition (c), we first note that using \eqref{def:E}, a short calculation (and recalling \eqref{eq:id xi g}) shows that for $\pm \Im \lambda_y(z)>0$
{
\begin{align*}
P(z)P^\infty(z)^{-1}&=E_{N,y}(z) \widehat{\Psi}(\lambda_y(z);s_{N,y})e^{-N(\xi(z)\mp \xi_+(x_1))\sigma_3}\widehat{\Psi}^\infty(\lambda_y(z);s_{N,y})^{-1} E_{N,y}(z)^{-1}\\
&=E_{N,y}(z) \widehat{\Psi}(\lambda_y(z);s_{N,y})e^{\mp\frac{1}{2}s_{N,y}\lambda_y(z)\sigma_3}\widehat{\Psi}^\infty(\lambda_y(z);s_{N,y})^{-1} E_{N,y}(z)^{-1}
\end{align*}
since $\xi_{+}(x_1)=-\xi_-(x_1)$.}
Thus an application of Lemma \ref{le:psihatasy} yields for $z\in \partial B(x_1,\widehat \epsilon/4)$

$$
P(z)P^\infty(z)^{-1}=E_{N,y}(z) \left(I+\mathcal O\left(\frac{1}{\widehat \epsilon N}\right)\right)E_{N,y}(z)^{-1}=I+\mathcal O\left(\frac{1}{\widehat \epsilon N}\right)
$$
{with the required uniformity. This} is precisely the matching condition (c).

Condition (d) is easily verified using \eqref{def:P} and condition (d) in the RH problem for $\widehat\Psi$.
\end{proof}
We now turn to solving asymptotically our initial RH problem.

\subsection{Small-norm RH problem}
We now define
\begin{equation*}
R(z)=\begin{cases}
S(z)P(z)^{-1},& z\in { B(x_1,\widehat \epsilon/4)\cup  B(-1,\widehat \delta)\cup B(1,\widehat \delta)},\\
S(z)P^{\infty}(z)^{-1},& z\in\mathbb C\setminus\left(B(x_1,\widehat \epsilon/4)\cup B(-1,\widehat \delta)\cup B(1,\widehat \delta)\right).
\end{cases}
\end{equation*}

It is straightforward to check that $R$ satisfies the following RH problem, using the RH conditions for $S$, $P$, and $P^\infty$. 
\subsubsection*{RH problem for $R$}
\begin{itemize}
\item[(a)] $R:\mathbb C\setminus\Sigma_R\to\mathbb C^{2\times 2}$ is analytic, with $\Sigma_R$ as in Figure \ref{fig:R},
\item[(b)] If we orient the disks in the clockwise manner and the remaining parts of $\Sigma_R$ from left to right, we see that for $z\in\Sigma_R$, $R$ satisfies jump conditions of the form $R_+(z)=R_-(z)J_R(z)$, where 
\begin{equation*}
J_R(z)=\begin{cases}
P(z)P^{\infty}(z)^{-1},&z\in { \partial B(x_1,\widehat \epsilon/4)\cup \partial B(-1,\widehat \delta)\cup \partial B(1,\widehat \delta)},\\
P^{\infty}(z)J_S(z)P^{\infty}(z)^{-1},&z\in \Sigma_R\setminus \left(B(x_1,\widehat \epsilon/4)\cup B(-1,\widehat \delta)\cup B(1,\widehat \delta)\right),
\end{cases}
\end{equation*}
\item[(c)] as $z\to\infty$, we have the asymptotics
\begin{equation*}
R(z)=I+\mathcal O(z^{-1}).
\end{equation*}
\end{itemize}

\begin{figure}
\begin{tikzpicture}[scale=2.3]
    \begin{axis}[ticks=none,axis line style={draw=none},
        unit vector ratio*=1 1 1,       
        xmin=-1.5,xmax=1.5,
        ymin=-.41,ymax=.41]

        \addplot [black,dashed,domain=-147:-90] ({1+0.4*cos(147)}, {0.4*sin(x)});
        \addplot [black,dashed,domain=90:147] ({1+0.4*cos(147)}, {0.4*sin(x)});
        \addplot [black,dashed,domain=33:90] ({-1+0.4*cos(33)}, {0.4*sin(x)});
        \addplot [black,dashed,domain=270:327] ({-1+0.4*cos(33)}, {0.4*sin(x)});

        \addplot [black,dashed,domain=-0.67:0.67]({x},{0.212});
        \addplot [black,dashed,domain=-0.67:0.67]({x},{-0.212});    

        \addplot [black,dashed,domain=0.67:1.33]({x},{0.4});
        \addplot [black,dashed,domain=0.67:1.33]({x},{-0.4});    
        \addplot [black,dashed,domain=0.67:1.33]({-x},{0.4});
        \addplot [black,dashed,domain=0.67:1.33]({-x},{-0.4});    

        \addplot [black,dashed,domain=-0.4:0.4]({1.33},{x});
        \addplot [black,dashed,domain=-0.4:0.4]({-1.33},{x});

        \addplot [black,thick,domain=-0.81:-0.09]({x},{-0.36*x^2-0.33*x+0.025});    
        \addplot [black,thick,domain=0.2:0.81]({x},{-0.56*x^2+0.64*x-0.08});    
        \addplot [black,thick,domain=-0.81:-0.09]({x},{0.36*x^2+0.33*x-0.025});    
        \addplot [black,thick,domain=0.2:0.81]({x},{0.56*x^2-0.64*x+0.08});    
        \addplot [black,thick,domain=-1.5:-1.2]({x},{0});    
        \addplot [black,thick,domain=1.2:1.5]({x},{0});

        \addplot [black,thick,domain=0:360] ({1+0.2*cos(x)}, {0.2*sin(x)});
        \addplot [black,thick,domain=0:360] ({-1+0.2*cos(x)}, {0.2*sin(x)});

        \addplot [black,thick,domain=0:360] ({0.05+0.15*cos(x)}, {0.15*sin(x)});

    \end{axis}
\fill [color=black] (3.77,0.93) circle (0.05);
\node at (3.75,1.1) {$x_2$};
\fill [color=black] (3.57,0.93) circle (0.05);
\node at (3.45,1.1) { $x_1$};
\fill [color=black] (5.72,0.93) circle (0.05);
\node at (5.72,1.25) {$1$};
\fill [color=black] (1.15,0.93) circle (0.05);
\node at (1.15,1.25) { $-1$};

\end{tikzpicture}
\caption{A characterization of the jump contour $\Sigma_R$ and the set $\mathcal S_N'$ (dashed).}\label{fig:R}
\end{figure}

We now verify that the jump matrices of $R$ are close to the identity for large $N$. For this, we need an a posteriori condition on the shape of the lenses, namely we require that
$|\Im z|>\frac{\widehat \epsilon}{8}$ for every $z\in\Sigma_R\setminus\left(\partial B(x_1,\widehat \epsilon/4)\cup \partial B(-1,\widehat \delta)\cup \partial B(1,\widehat \delta){\cup \R}\right)$. {Also for later purposes, we will find it convenient to require that also $|\Im z|\leq \frac{\widehat \epsilon}{4}$ for $z\in\Sigma_R\setminus\left(\partial B(-1,\widehat \delta)\cup \partial B(1,\widehat \delta)\right)$.} Note that {these requirements are} compatible with the size of the domain $\mathcal S_N'$ given in \eqref{domain2} for $w\neq 0$. 
{
We denote $\Sigma_R=\Sigma_{R,{x_1}}\cup \Sigma_{R,\pm 1}\cup \Sigma_{R,{\rm exp}}$, where 
\begin{equation} \Sigma_{R,x_1}=\partial B(x_1,\widehat \epsilon/4),\qquad \Sigma_{R,\pm 1}=\partial B(1,\widehat \delta)\cup \partial B(-1,\widehat \delta),
\end{equation}
and where $\Sigma_{R,{\rm exp}}$ is the remaining part of the contour, namely the union of the edges of the lenses outside the local parametrices and $(-\infty,-1-\widehat \delta)$ and $(1+\widehat \delta, \infty)$.

We denote 
\begin{equation}
\begin{cases}
J_R(z)=J_{R,x_1}(z)&{\rm for }\, z \in  \Sigma_{R,x_1},\\
J_R(z)=J_{R,\pm 1}(z)&{\rm for }\, z \in  \Sigma_{R,\pm 1},\\
J_R(z)=J_{R,{\rm exp}}(z)&{\rm for }\, z \in  \Sigma_{R,{\rm exp}}.\\
\end{cases}
\end{equation}

\begin{lemma}\label{prop:jump2}As $N\to\infty$ and $y\leq \widehat \epsilon/8$, 
\begin{equation}\label{eq:estimateJR}\begin{aligned}
J_{R,x_1}(z)&=I+\mathcal O\left(\frac{1}{N\widehat \epsilon}\right)\\
J_{R,\pm 1}(z)&=I+\mathcal O\left(\frac{1}{N\widehat \delta^{2}}\right)\\
J_{R,{\rm exp}}(z)&=
I+\mathcal O\left(\frac{1}{e^{N^\kappa}(|z|^2+1)}\right),&
\end{aligned}
\end{equation}
uniformly {in the relevant $w$, uniformly in $x_1$ in a fixed compact subset of $(-1,1)$, uniformly in $0\leq y\leq \widehat \epsilon/8$, and uniformly in} $z\in \Sigma_{R,{x_1}},\Sigma_{R,\pm 1}, \Sigma_{R,{\rm exp}}$ respectively,  for some $\kappa>0$.
\end{lemma}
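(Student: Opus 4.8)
\textbf{Proof strategy for Lemma \ref{prop:jump2}.}

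The plan is to estimate each of the three pieces of the jump matrix $J_R$ separately, using the matching conditions already established for the local parametrices together with the decay estimate from Lemma \ref{pr:lensbound1}. For the piece $J_{R,x_1}$ on $\partial B(x_1,\widehat\epsilon/4)$, the jump is $J_{R,x_1}(z)=P(z)P^\infty(z)^{-1}$, and the estimate $J_{R,x_1}(z)=I+\mathcal O((N\widehat\epsilon)^{-1})$ is exactly the content of the matching condition (c) in Lemma \ref{pr:P}, which was proved with the required uniformity in $w$, in $x_1$ in a compact subset of $(-1,1)$, and in $0\le y\le\widehat\epsilon/8$. So this piece requires nothing new. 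Similarly, for $z\in\Sigma_{R,\pm1}=\partial B(\pm1,\widehat\delta)$ the jump is $P(z)P^\infty(z)^{-1}$ with $P$ now the Airy parametrix near $\pm1$, and the bound $J_{R,\pm1}(z)=I+\mathcal O(N^{-1}\widehat\delta^{-2})$ is precisely \eqref{eq:matchingAiry2}, again with the stated uniformity (including in $w$ via Lemma \ref{lemma:boundw}).

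The only genuine work is the estimate on $\Sigma_{R,{\rm exp}}$, where $J_{R,{\rm exp}}(z)=P^\infty(z)J_S(z)P^\infty(z)^{-1}$. Here one uses the explicit form of $J_S$ from the RH problem for $S$: on the lens edges $\omega_j,\overline\omega_j$ (outside the local parametrices) the jump $J_S$ differs from the identity only in one off-diagonal entry, which is a bounded (in $w$, $\gamma$) constant times $e^{-w(z;t)-2N\xi(z)}$ (for the lower lens) or $e^{w(z;t)+2N\xi(z)}$ (upper lens); on $(-\infty,-1-\widehat\delta)$ and $(1+\widehat\delta,\infty)$ it is $I$ plus a single off-diagonal entry proportional to $e^{w(z;t)+2N\xi(z)}$. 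By construction we have imposed $|\Im z|\ge\widehat\epsilon/16$ on the lens edges and $|z|\ge1+\widehat\delta/2$ on the outer real rays, so the two inequalities of Lemma \ref{pr:lensbound1} apply: on the lens edges $\Re(w(z;t)+2N\xi(z))\ge N^\kappa$ for some $\kappa>0$, and on the outer rays $\Re(w(z;t)+2N\xi(z))<-N^{1/4}\log(|z|+1)$. Combined with the fact that $(P^\infty(z))^{\pm1}=\mathcal O(\widehat\delta^{-1/4})$ uniformly (from \eqref{def:Pinfty}, \eqref{def:D} and Lemma \ref{lemma:boundw}), conjugation by $P^\infty$ costs at most a factor $\widehat\delta^{-1/2}$, which is absorbed into the exponential, yielding $J_{R,{\rm exp}}(z)=I+\mathcal O\bigl(e^{-N^\kappa}(|z|^2+1)^{-1}\bigr)$ after possibly shrinking $\kappa$ (the polynomial decay in $|z|$ coming from the $\log(|z|+1)$ in the exponent on the outer rays). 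One must take care that $\widehat\delta=\delta_N=N^{-\alpha/2}$ when $w\ne0$, so that $N^{-1}\widehat\delta^{-2}=N^{-1+\alpha}\to0$ and $\widehat\delta^{-1/2}e^{-N^\kappa}$ is still superpolynomially small — this is where one verifies compatibility with the size of $\mathcal S_N'$ from \eqref{domain2}.

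The main obstacle, such as it is, is bookkeeping rather than mathematics: one must check that all the uniformity claims (in $w$ ranging over the bounded analytic family, in $x_1$ in a compact subset, in $0\le y\le\widehat\epsilon/8$) propagate correctly through the three estimates, and in particular that the constant $\kappa>0$ in Lemma \ref{pr:lensbound1} and the constant implied by $\mathcal O$ are genuinely independent of these parameters. Since Lemma \ref{pr:lensbound1} was stated with $w(\cdot;t)$ uniformly bounded on $\mathcal S_N$ and the parametrix matching conditions were proved with the same uniformity, this is routine. One subtlety worth flagging is that the exponential estimate on the lens edges near $\pm1$ degrades as one approaches the boundary of the local-parametrix disks, but since we stay at distance $\ge\widehat\delta/2$ from $\pm1$ there, Lemma \ref{pr:lensbound1}'s lower bound $N\Re\xi(z)\gtrsim N\delta_N^{1/2}\epsilon_N$ still dominates any polynomial loss from $P^\infty$, giving the claimed superpolynomial smallness. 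Assembling the three bounds gives \eqref{eq:estimateJR} and completes the proof.
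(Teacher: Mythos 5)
Your proof is correct and follows essentially the same approach as the paper: the bounds on $\partial B(x_1,\widehat\epsilon/4)$ and $\partial B(\pm 1,\widehat\delta)$ are read off from the matching conditions (Lemma \ref{pr:P}(c) and \eqref{eq:matchingAiry2}), while the exponential bound on $\Sigma_{R,\exp}$ combines Lemma \ref{pr:lensbound1} with the estimate $(P^\infty)^{\pm1}=\mathcal O(\widehat\delta^{-1/4})$ via Lemma \ref{lemma:boundw}. The extra commentary on uniformity and domain bookkeeping is consistent with, though more verbose than, the paper's treatment.
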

}

\begin{proof}

Let us consider first a point $z\in \Sigma_{R,{\rm exp}}$. Since we have chosen the lenses such that $|\Im z|>\widehat \epsilon/8$, we can use Proposition \ref{pr:lensbound1} to conclude that on this part of $\Sigma_R$, the jump matrix of $S$ is $I+\mathcal O(e^{-N^\epsilon \log|z|})$ for some $\epsilon>0$  as $N\to\infty$. As $[P^\infty]^{\pm 1}(z)=\mathcal O(\widehat \delta^{-1/4})$ (using Lemma \ref{lemma:boundw} in a similar way as before) on this part of the contour, 
we have {(uniformly in all of the relevant parameters)} \[J_R(z)=I+\mathcal O\left(\widehat \delta^{-1/2}e^{-N^\epsilon \log|z|}\right)=I+\mathcal O\left(\frac{1}{e^{-N^{\kappa}}(|z|^2+1)}\right)\]
as $N\to\infty$ for some $\kappa>0$.

Next, we consider $z\in \partial B(\pm 1,\widehat \delta)$. Here the result follows from \eqref{eq:matchingAiry2}. 

Finally, for $z\in\partial B(x_1,\widehat \epsilon/4)$ the result follows from the matching condition (c) in the RH problem for $P$, see Lemma \ref{pr:P}.
\end{proof}

As already briefly mentioned at the end of the previous section, if a RH problem, normalized to $I$ at infinity, has jumps which are uniformly close to the identity matrix on jump contours which are fixed, 
there are standard arguments to conclude that the solution to the RH problem is also uniformly close to $I$ -- see e.g. \cite[Section 7]{DKMVZ}. Since we are able to choose fixed (independent of $N$, $x_1$, $y$) jump contours for $t=0$ (equivalent to $w=0$), these standard arguments allow us to conclude the following -- we omit further details.

\begin{proposition}\label{prop:R0}For $t=0$, or equivalently $w=0$,  we have
\begin{equation}\label{eq:asR0}
R(z)=I+\mathcal O\left(\frac{1}{N(|z|+1)}\right),\qquad R'(z)=\mathcal O\left(\frac{1}{N(|z|+1)}\right),
\end{equation}
uniformly for $z\in\mathbb C\setminus\Sigma_R$ {and uniformly in $x_1$ in a fixed compact subset of $(-1,1)$ and uniformly in $y\leq \widehat \epsilon/8$} as $N\to\infty$.
\end{proposition}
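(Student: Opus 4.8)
\textbf{Proof proposal for Proposition \ref{prop:R0}.}

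The plan is to invoke the standard small-norm theory for Riemann--Hilbert problems. The key structural features already established are: (i) $R$ is analytic off the contour $\Sigma_R$ depicted in Figure \ref{fig:R}, and it is normalized to $I$ at infinity; (ii) by Lemma \ref{prop:jump2}, when $w=0$ the jump matrix $J_R$ satisfies $\|J_R-I\|_{L^\infty(\Sigma_{R,x_1})}=\mathcal O((N\widehat\epsilon)^{-1})$, $\|J_R-I\|_{L^\infty(\Sigma_{R,\pm1})}=\mathcal O((N\widehat\delta^2)^{-1})$, and $\|J_R-I\|_{L^\infty(\Sigma_{R,{\rm exp}})}=\mathcal O(e^{-N^\kappa}(|z|^2+1)^{-1})$, with all constants uniform in $x_1$ in a fixed compact subset of $(-1,1)$ and in $0\le y\le\widehat\epsilon/8$. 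The crucial point for $w=0$ is that $\widehat\epsilon$ and $\widehat\delta$ are \emph{fixed} positive constants independent of $N$, $x_1$, $y$, so the contour $\Sigma_R$ itself can be taken fixed (it does not shrink as in the $w\neq0$ case), and the whole bound $\|J_R-I\|_{L^\infty(\Sigma_R)\cap L^2(\Sigma_R)}=\mathcal O(N^{-1})$ holds uniformly. Here the decay factor $(|z|^2+1)^{-1}$ on the unbounded part of the contour guarantees finiteness of the $L^2$ and $L^1$ norms.

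First I would recall the standard setup: writing $\Delta_R=J_R-I$, one seeks $R$ of the form $R(z)=I+\frac{1}{2\pi i}\int_{\Sigma_R}\frac{\rho(s)}{s-z}\,ds$ where $\rho\in L^2(\Sigma_R)$ solves the singular integral equation $(1-\mathcal C_{\Delta_R})\rho=\mathcal C_{\Delta_R}I$, with $\mathcal C_{\Delta_R}$ the operator $f\mapsto \mathcal C_-(f\Delta_R)$ and $\mathcal C_-$ the boundary value of the Cauchy transform from the $-$-side. Since $\|\mathcal C_-\|_{L^2\to L^2}$ is bounded on the (fixed, self-intersecting but otherwise nice) contour $\Sigma_R$, the operator norm of $\mathcal C_{\Delta_R}$ is bounded by $C\|\Delta_R\|_{L^\infty}=\mathcal O(N^{-1})$, hence $1-\mathcal C_{\Delta_R}$ is invertible for $N$ large by Neumann series, and $\|\rho\|_{L^2}\le C\|\Delta_R\|_{L^2}=\mathcal O(N^{-1})$. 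Plugging this back into the integral representation and using $\|\Delta_R\|_{L^1}=\mathcal O(N^{-1})$ together with $\|\rho\|_{L^2}\|\Delta_R\|_{L^2}=\mathcal O(N^{-2})$ yields, for $z$ bounded away from $\Sigma_R$,
\[
R(z)-I=\frac{1}{2\pi i}\int_{\Sigma_R}\frac{\Delta_R(s)+\rho(s)\Delta_R(s)}{s-z}\,ds=\mathcal O\!\left(\frac{1}{N(|z|+1)}\right),
\]
the factor $(|z|+1)^{-1}$ coming from $|s-z|^{-1}\ll (|z|+1)^{-1}$ uniformly in $s\in\Sigma_R$ when $|z|$ is large, combined with the fact that $\Sigma_R$ is bounded except for the two horizontal rays where $\Delta_R$ decays like $(|s|^2+1)^{-1}$. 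For $z$ in a neighborhood of $\Sigma_R$ one uses instead the Plemelj formula and the $L^2$ bound on $\rho$; the estimate $R=I+\mathcal O(N^{-1}(|z|+1)^{-1})$ persists uniformly on $\mathbb C\setminus\Sigma_R$ since one may always slightly deform $\Sigma_R$. The derivative bound $R'(z)=\mathcal O(N^{-1}(|z|+1)^{-1})$ follows by differentiating the integral representation under the integral sign (legitimate since $\Delta_R,\rho\Delta_R\in L^1$) and noting $\partial_z(s-z)^{-1}=(s-z)^{-2}$, which only improves the decay in $|z|$; locally near $\Sigma_R$ one deforms the contour as above before differentiating, or uses Cauchy's formula on a small circle.

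The main (and only real) obstacle is bookkeeping rather than conceptual: one must check that the uniformity claims are not lost, i.e. that the implied constants in the small-norm estimates depend only on the geometry of the \emph{fixed} contour $\Sigma_R$ and on the uniform bounds of Lemma \ref{prop:jump2}, and in particular do not deteriorate as $x_1$ ranges over a compact subset of $(-1,1)$ or as $y\to0$. This is where it matters that in the $w=0$ case $\widehat\epsilon,\widehat\delta$ are genuinely fixed, so that $\Sigma_R$ can be chosen once and for all (with $x_1$ fixed we even have a single contour, and for $x_1$ varying in a compact set one can either use a continuously varying family of contours with uniformly bounded Cauchy operators, or conjugate by a translation); the $\mathcal O$-constants in Lemma \ref{prop:jump2} are already stated to be uniform in all these parameters, so no new difficulty arises. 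I would simply cite \cite[Section 7]{DKMVZ} for the general small-norm machinery and state that the above considerations show its hypotheses are met here, concluding the proof.
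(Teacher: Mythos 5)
Your proposal is correct and matches the paper's approach exactly. The paper's own proof of Proposition~\ref{prop:R0} is essentially a one-sentence appeal to the standard small-norm machinery of \cite[Section 7]{DKMVZ}, justified by the observation that for $t=0$ the quantities $\widehat\epsilon$ and $\widehat\delta$ are fixed (so $\Sigma_R$ does not shrink with $N$), combined with the uniform jump bounds of Lemma~\ref{prop:jump2} — which is precisely what you identify as the crucial structural input. Your write-up simply spells out the singular-integral-equation details of that standard argument (Neumann series for $1-\mathcal C_{\Delta_R}$, $L^2$ bound on $\rho$, decay of $\Delta_R$ on the unbounded rays giving integrability, and contour deformation or Cauchy's formula for the derivative bound), all of which are correct and are exactly the details the paper chooses to omit.
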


For $t> 0$, the situation is more complicated because the domain $\mathcal S_N$ may shrink as $N\to\infty$, and consequently the jump contour $\Sigma_R$ may depend on $N$. For that reason, the standard arguments to conclude that $R$ is close to the identity matrix do not apply directly, and we need to adapt them to our situation.

We will only need asymptotics for $R(z)$ and $R'(z)$ for $z$ outside the domain $\mathcal S_N'$. For the $t> 0$ case, the estimate we have is the following.

\begin{proposition}\label{prop:R}
For $t\in(0,1]$ and $z\in \mathbb C\setminus \left(\mathcal S_N' \cup\Sigma_R\right)$,  we have
\begin{equation}\label{eq:asR}\begin{aligned}
R(z)=I+\mathcal O\left(N^{-1}\left(|z-x_1|^{-1}+\delta_N^{-1}|z-1|^{-1}+\delta_N^{-1}|z+1|^{-1}\right)\right),\\ 
 R'(z)=\mathcal O\left(\frac{1}{N \epsilon_N }\left(|z-x_1|^{-1}+\delta_N^{-1}|z-1|^{-1}+\delta_N^{-1}|z+1|^{-1}\right)\right),
\end{aligned}
\end{equation}
{where the implied constants are uniform in $z,t,x_1,y$ and $w$.}
\end{proposition}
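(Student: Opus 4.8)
The plan is to adapt the standard small-norm analysis of Riemann--Hilbert problems (as in \cite[Section 7]{DKMVZ}) to the situation where the jump contour $\Sigma_R$ depends on $N$ through the shrinking domain $\mathcal S_N'$. The starting point is the integral equation formulation: writing $R_-=I+(C_{J_R-I}R_-)$, where $C_{J_R-I}$ is the Cauchy operator mapping $f\mapsto C_-(f(J_R-I))$ with $C_\pm$ the boundary values of the Cauchy transform along $\Sigma_R$, the solution $R$ can be recovered via $R(z)=I+\frac{1}{2\pi i}\int_{\Sigma_R}\frac{R_-(\zeta)(J_R(\zeta)-I)}{\zeta-z}d\zeta$. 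The key obstacle compared with the fixed-contour case is that $\|J_R-I\|_{L^\infty(\Sigma_R)}$ is only $\mathcal O(1/(N\epsilon_N))$ on $\Sigma_{R,x_1}$ and $\mathcal O(1/(N\delta_N^2))$ on $\Sigma_{R,\pm1}$ (by Lemma \ref{prop:jump2}), and the $L^2$-norms and the operator norm of $C_{J_R-I}$ could a priori pick up $N$-dependent factors from the length and geometry of the $N$-dependent contour. The main point to check carefully is therefore that $\|C_{J_R-I}\|_{L^2(\Sigma_R)\to L^2(\Sigma_R)}\to 0$, which amounts to verifying that the Cauchy operator $C_-$ on $\Sigma_R$ has an operator norm bounded uniformly in $N$ despite the contour shrinking; this holds because $\Sigma_R$ is, up to the rescaling near $x_1,\pm1$, a Lipschitz contour of bounded Lipschitz character, and after rescaling (by $\epsilon_N$ near $x_1$, by $\delta_N$ near $\pm1$) it becomes a fixed contour on which $C_-$ is bounded; together with the smallness of $J_R-I$ this gives that $I-C_{J_R-I}$ is invertible with uniformly bounded inverse for $N$ large.

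Concretely I would proceed as follows. First, split $\Sigma_R=\Sigma_{R,x_1}\cup\Sigma_{R,\pm1}\cup\Sigma_{R,{\rm exp}}$ as in Lemma \ref{prop:jump2} and record the three bounds from \eqref{eq:estimateJR}; note in particular that the exponentially decaying part $\Sigma_{R,{\rm exp}}$ contributes negligibly (its $L^1$, $L^2$ and $L^\infty$ norms are all $\mathcal O(e^{-N^\kappa})$). Second, perform a local change of variables: near $x_1$ set $\zeta=x_1+\epsilon_N\widehat\zeta$ so that $\Sigma_{R,x_1}$ becomes a circle of fixed radius, and near $\pm1$ set $\zeta=\pm1+\delta_N\widehat\zeta$; on the rescaled contours the Cauchy operator is scale invariant, so its $L^2$-boundedness is inherited from a fixed model contour. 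Third, estimate $\|J_R-I\|_{L^2(\Sigma_R)}$: on $\Sigma_{R,x_1}$ the contour has length $\mathcal O(\epsilon_N)$ and the jump is $\mathcal O(1/(N\epsilon_N))$, giving an $L^2$-contribution $\mathcal O(\epsilon_N^{1/2}/(N\epsilon_N))=\mathcal O(\epsilon_N^{-1/2}N^{-1})$; similarly $\Sigma_{R,\pm1}$ contributes $\mathcal O(\delta_N^{-3/2}N^{-1})$. Since $\epsilon_N=N^{-1+\alpha}$ and $\delta_N=N^{-\alpha/2}$, these tend to zero (using $0<\alpha<2/3$). Fourth, solve the integral equation by Neumann series: $R_-=(I-C_{J_R-I})^{-1}I$, whence $\|R_--I\|_{L^2(\Sigma_R)}\ll\|J_R-I\|_{L^2(\Sigma_R)}$.

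Finally, to obtain the pointwise bound \eqref{eq:asR} for $z\in\mathbb C\setminus(\mathcal S_N'\cup\Sigma_R)$, I would plug the estimate for $R_-$ into the Cauchy integral representation and split the integral over the three pieces of $\Sigma_R$. For $z$ outside $\mathcal S_N'$ the distance from $z$ to $\Sigma_{R,x_1}$ is comparable to $|z-x_1|$ (and at least of order $\epsilon_N$), so $\frac{1}{2\pi i}\int_{\Sigma_{R,x_1}}\frac{R_-(\zeta)(J_R(\zeta)-I)}{\zeta-z}d\zeta$ is bounded by (length) $\times\,\|J_R-I\|_{L^\infty}\times|z-x_1|^{-1}$ plus a term controlled by $\|R_--I\|_{L^2}\|J_R-I\|_{L^2}|z-x_1|^{-1}$, giving the $\mathcal O(N^{-1}|z-x_1|^{-1})$ contribution; the circles around $\pm1$ similarly give $\mathcal O(N^{-1}\delta_N^{-1}|z\mp1|^{-1})$ after accounting for the contour length $\mathcal O(\delta_N)$ and the jump size $\mathcal O(1/(N\delta_N^2))$, and $\Sigma_{R,{\rm exp}}$ is exponentially small. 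The derivative bound \eqref{eq:asR} follows by differentiating the Cauchy integral representation under the integral sign, which replaces $(\zeta-z)^{-1}$ by $(\zeta-z)^{-2}$; for $z\notin\mathcal S_N'$ we have $\operatorname{dist}(z,\Sigma_{R,x_1})\gg\epsilon_N$, so the extra factor costs at most $\epsilon_N^{-1}|z-x_1|^{-1}$ near $x_1$ and $\delta_N^{-1}|z\mp1|^{-1}$ near $\pm1$, which is exactly the loss displayed in the statement. Uniformity in $t,x_1,y$ and $w$ is automatic since all the contour-length and jump-size estimates above were already established uniformly in those parameters in Lemma \ref{prop:jump2} and Lemma \ref{pr:P}.
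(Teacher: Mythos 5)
Your proposal follows the same overall scaffolding as the paper (Sokhotski--Plemelj integral equation, bound the boundary values, substitute back and split the Cauchy integral over $\Sigma_{R,x_1}\cup\Sigma_{R,\pm1}\cup\Sigma_{R,\exp}$, then Cauchy's formula for the derivative), but you take a genuinely different route in the crucial middle step, and that route has a gap.

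The paper controls the boundary values of $R$ by an $L^\infty$ contour-deformation argument: exploiting the fact that the jump matrices extend analytically to an $\epsilon_N/16$-neighbourhood of $\Sigma_R$, for every $z$ near the contour they replace $\Sigma_R$ by a deformed contour $\widetilde\Sigma_R(z)$ satisfying $\operatorname{dist}(z,\widetilde\Sigma_R(z))\geq\frac{1}{32}\epsilon_N$, then bound $\sum_\star\int_{\widetilde\Sigma_{R,\star}(z)}\|J_{R,\star}-I\|\,|\xi-z|^{-1}\,d|\xi|$ directly (each piece contributes $\mathcal O(N^{-\kappa})$) and deduce a self-improving bound $\sup_{\mathcal U}\|R\|\leq 2$. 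This argument makes no reference to the $L^2$-operator norm of the Cauchy transform on $\Sigma_R$.

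You instead invoke the classical $L^2$ small-norm machinery: $\|C_{J_R-I}\|_{L^2\to L^2}\to 0$, hence $I-C_{J_R-I}$ is invertible and $\|R_--I\|_{L^2}\ll\|J_R-I\|_{L^2}$. The missing ingredient is the uniform-in-$N$ bound on $\|C_-\|_{L^2(\Sigma_R)\to L^2(\Sigma_R)}$, which is nontrivial precisely because $\Sigma_R$ shrinks with $N$. Your justification --- that ``after rescaling (by $\epsilon_N$ near $x_1$, by $\delta_N$ near $\pm1$) it becomes a fixed contour'' --- does not quite work, because $\Sigma_R$ carries features at two widely separated scales ($\epsilon_N=N^{-1+\alpha}\ll\delta_N=N^{-\alpha/2}$), so no single rescaling maps it to an $N$-independent contour, and the Cauchy operator norm is a global quantity that does not localise trivially. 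What is true, and what would rescue your argument, is that the $L^2$-boundedness of the Cauchy singular operator depends only on scale-invariant geometric data (a chord-arc / Ahlfors-regularity constant); but then one has to actually verify that the family $\Sigma_R$ has uniformly bounded chord-arc constant --- in particular that distinct arcs (e.g.\ the lens edges approaching the small circle $\partial B(x_1,\epsilon_N/4)$) do not approach one another tangentially as $N\to\infty$ --- which requires an argument, not merely an assertion. The paper's deformation approach is designed precisely to avoid having to make this geometric verification. If you either (i) switch to the paper's deformation argument for Step 2, or (ii) spell out and check the chord-arc estimate, your proof closes. Incidentally, your derivative bound near $\pm1$ comes out as $\mathcal O(N^{-1}\delta_N^{-2}|z\mp1|^{-1})$, which is slightly sharper than the paper's stated $\mathcal O(N^{-1}\epsilon_N^{-1}\delta_N^{-1}|z\mp1|^{-1})$, because you use the local distance $\gtrsim\delta_N$ to $\Sigma_{R,\pm1}$ rather than the global $\gtrsim\epsilon_N$; this is fine and in fact implies the stated estimate.
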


\begin{proof}As stated above, the basic idea of the proof is similar to the case of fixed contours. The proof consists of three steps. The first step of the proof, which  relies on the Sokhotski-Plemelj formula as in the fixed contour case, is to express the solution of the RH problem in terms of the boundary values of the solution and the jump matrices. The second step, which does differ from the fixed contour case, is to obtain uniform estimates for the boundary values -- this is done through bounds on the jump matrices combined with an adaptation of a well known contour deformation argument introduced in \cite[Section 7]{DKMVZ}.  Once these estimates are obtained, the final step is to use them to obtain estimates for the whole solution.

\underline{Step 1: expressing the solution in terms of the boundary values.} The RH problem for $R$ implies that
\[R_+(z)-R_-(z)=R_-(z)(J_R(z)-I),\qquad z\in\Sigma_R,\]
and by the Sokhotski-Plemelj formula, it follows that we have the integral equation
\begin{equation}\label{eq:Rint}R(z)=I+\frac{1}{2\pi i}\int_{\Sigma_R}R_-(\xi)(J_R(\xi)-I)\frac{d\xi}{\xi-z},\qquad z\in\mathbb C\setminus\Sigma_R.\end{equation}

\underline{Step 2: estimates for the boundary values.} This second step is the most involved one. 
We begin by defining a sufficiently narrow neighborhood $\mathcal U\subset \mathcal S_N$ of the jump contour $\Sigma_R$  which contains all $z{\in \mathcal S_N}$ which are such that ${\rm dist}(z,\Sigma_R)<\frac{1}{16}\epsilon_N$. Recall that {we required that $\frac{\epsilon_N}{8}<|\Im z|\leq \frac{\epsilon_N}{4}$} for all $z\in({ \omega_{1}\cup\omega_{2}\cup\overline{\omega}_1\cup\overline{\omega}_2})\setminus(B(-1,\delta_N)\cup B(1,\delta_N)\cup B(x_1,\epsilon_N/4))$.

The point of introducing this neighborhood is that we are able to deform the integration contour in \eqref{eq:Rint} within this domain. More precisely, we first note that looking at the jumps of the various parametrices as well as those of $S$, one readily verifies that the jump matrices $J_{R, x_1}$, $J_{R,\pm 1}$ and $J_{N,\exp}$ can be extended to analytic functions in small $\epsilon_N/16$ neighborhoods $\mathcal U_{x_1}, \mathcal U_{\pm 1}$ and $\mathcal U_{\exp}$ of their respective contours $\Sigma_{R,x_1}$, $\Sigma_{R,\pm 1}$, and $\Sigma_{R,\exp}$.  To be able to deform the contour, we also introduce analytic continuations of $R_-$: let the symbol $\star$ denote $x_1, \pm 1$, or $\exp$, and define $R_{-,\star}(\xi)=R(\xi)$ for $\xi$ at the negative side of the contour $\Sigma_{R,\star}$, and $R_{-,\star}(\xi)=R(\xi)J_{R,\star}(\xi)^{-1}$ for $\xi$ at the positive side, for $\xi\in\mathcal U_\star$.

\medskip

For any $z\in\mathcal U$, by analyticity of $R$ and $J_R$, we can deform the integration over $\Sigma_R$ in \eqref{eq:Rint} to integration over a slightly deformed contour $\widetilde\Sigma_R(z)$ near $z$, which lies in $\mathcal U$, and which we can always choose in such a way that
 \begin{equation}\label{eq:deformedcontour}
{\rm dist}(z,\widetilde \Sigma_{R}(z))\geq \frac{1}{32}\epsilon_N,\end{equation}
and in such a way that $\Sigma_R - \widetilde\Sigma_R(z)$ does not wind around $z$.
For instance, we can take
$\widetilde\Sigma_R(z)$ equal to $\Sigma_R\setminus B\left(z,\frac{1}{32}\epsilon_N\right)$ complemented if necessary with part of the circle $\partial B\left(z,\frac{1}{32}\epsilon_N\right)$ -- see Figure \ref{fig:deform}.  
Similarly to $\Sigma_R$, we can decompose the deformed contour $\widetilde\Sigma_R$ in four parts: three 
closed curves $\widetilde\Sigma_{R,x_1}$, $\widetilde\Sigma_{R,\pm 1}$ and the remaining parts $\widetilde\Sigma_{R,\exp}$ which lie outside these closed curves.

The integral equation \eqref{eq:Rint} can now be rewritten as
\begin{equation}\label{eq:Rintegral2}
R(z)=I+\frac{1}{2\pi i}\sum_{\star\in\{-1,1, x_1, \exp\}}\int_{\widetilde\Sigma_{R,\star}(z)}R_{-,\star}(\xi)(J_{R,\star}(\xi)-I)\frac{d\xi}{\xi-z}.
\end{equation}

The point of the representation \eqref{eq:Rintegral2} is that we automatically have an upper bound for the denominator, and  tracing through the proofs, one finds that the estimates for $J_{R,x_1}$ and $J_{R,\pm 1}$ from Proposition \ref{prop:jump2} remain valid on such a deformed contour $\widetilde\Sigma_R(z)$, while the deformed version of the contour $\Sigma_{R,\exp}$ is such that we can still apply Lemma \ref{pr:lensbound1}, which implies that the estimate for $J_{R,\exp}$ in Proposition \ref{prop:jump2} also holds on the deformed contour. Let us now see how this yields the estimates for $R_-$ that we were after in this part of the proof.

\begin{figure}
\begin{tikzpicture}[scale=2.3]
    \begin{axis}[ticks=none,axis line style={draw=none},
        unit vector ratio*=1 1 1,       
        xmin=-1.5,xmax=1.5,
        ymin=-.41,ymax=.41]

       \addplot [thick,domain=120:240] ({1+0.4*cos(x)}, {0.4*sin(x)});
       \addplot [thick,domain=-60:60] ({-1+0.4*cos(x)}, {0.4*sin(x)});
      
       \addplot [thick,domain= 0.46:0.61] ({x},{-0.5*x^2+0.3});
       \addplot [thick,domain=-0.61:0.35] ({x},{-0.5*x^2+0.3});
       \addplot [thick,domain=-0.61:0.61] ({x},{0.5*x^2-0.3});

       \addplot [thick,domain=166:331] ({0.41+0.06*cos(x)}, {0.23+0.06*sin(x)});

       \addplot [dashed,domain=120:145] ({1+0.5*cos(x)}, {0.5*sin(x)});
       \addplot [dashed,domain=215:240] ({1+0.5*cos(x)}, {0.5*sin(x)});

       \addplot [dashed,domain=-60:-35] ({-1+0.5*cos(x)}, {0.5*sin(x)});
       \addplot [dashed,domain=35:60] ({-1+0.5*cos(x)}, {0.5*sin(x)});

       \addplot [dashed,domain=-7:7] ({-1+0.5*cos(x)}, {0.5*sin(x)});
       \addplot [dashed,domain= 173:187] ({1+0.5*cos(x)}, {0.5*sin(x)});

       \addplot [dashed,domain=120:240] ({1+0.3*cos(x)}, {0.3*sin(x)});
       \addplot [dashed,domain=-60:60] ({-1+0.3*cos(x)}, {0.3*sin(x)});

       \addplot [dashed,domain=-0.55:0.55] ({x},{-0.5*x^2+0.4});
       \addplot [dashed,domain=-0.55:0.55] ({x},{0.5*x^2-0.4});

       \addplot [dashed,domain=-0.5:0.5] ({x},{-0.5*x^2+0.2});
       \addplot [dashed,domain=-0.5:0.5] ({x},{0.5*x^2-0.2});

    \end{axis}
\fill [color=black] (4.37,1.5) circle (0.03);
\node at (4.45,1.5) {$z$};
\fill [color=black] (5.72,0.93) circle (0.03);
\node at (5.72,1.1) {$1$};
\fill [color=black] (1.6,0.93) circle (0.03);
\node at (1.6,1.1) { $x_2$};
\end{tikzpicture}
\caption{A characterization of the procedure of deforming the contour (thick solid) near the point $z\in \mathcal{U}$ (boundary dashed).}\label{fig:deform}
\end{figure}
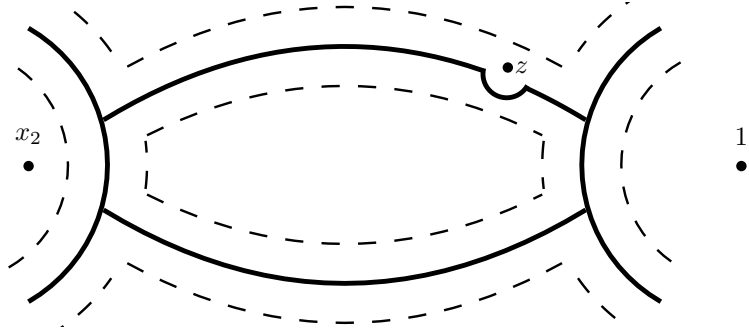

Taking a  sub-multiplicative matrix norm (normalized so that $\|I\|=1$,) of \eqref{eq:Rintegral2}  and the supremum over all $z\in\mathcal U\setminus\Sigma_R$, we get
\[\sup_{z\in\mathcal U\setminus\Sigma_R}\|R(z)\|\leq \|I\|+\frac{1}{2\pi}\sup_{z\in\mathcal U}\|R_-(z)\| \ \sup_{z\in\mathcal U}\sum_{\star\in\{-1,1, x_1, \exp\}}\int_{\widetilde\Sigma_{R,\star}(z)}\frac{\|J_{R,\star}(\xi)-I\|}{|\xi-z|}d|\xi|,\]
where we interpret 
$\sup_{z\in\mathcal U}\|R_-(z)\|$ as the maximum over $\star\in\{-1,+1,x_1,\exp\}$ of the suprema of the analytic continuations $R_{-,\star}(z)$ in $\mathcal U_\star$.

Since $J_{R,\star}$ converges uniformly {(in $z,t,$ and $w$)} to $I$ in $\mathcal U_\star$ as $N\to\infty$, we have (from the definition of the analytic continuations $R_{-,\star}(\xi)$) for $N$ sufficiently large that 
\[\sup_{z\in\mathcal U}\|R_-(z)\|\leq 2\sup_{z\in\mathcal U\setminus\Sigma_R}\|R(z)\|, \]
and we get
\[\sup_{z\in\mathcal U\setminus\Sigma_R}\|R(z)\|\leq \|I\|+\frac{1}{\pi}\sup_{z\in\mathcal U\setminus\Sigma_R}\|R(z)\| \ \sup_{z\in\mathcal U}\sum_{\star\in\{-1,1, x_1, \exp\}}\int_{\widetilde\Sigma_{R,\star}(z)}\frac{\|J_{R,\star}(\xi)-I\|}{|\xi-z|}d|\xi|.\]
 Thus,
\[\sup_{z\in\mathcal U\setminus\Sigma_R}\|R(z)\|\leq  \|I\|/\left(1-\frac{1}{\pi} \ \sup_{z\in\mathcal U}\sum_{\star\in\{-1,1, x_1, \exp\}}\int_{\widetilde\Sigma_{R,\star}(z)}\frac{\|J_{R,\star}(\xi)-I\|}{|\xi-z|}d|\xi|\right)
.\]
By \eqref{eq:deformedcontour}, $|\xi-z|=\mathcal O(\epsilon_N)$ as $\epsilon_N\to 0$. We may apply Lemma  \ref{prop:jump2} to the piecewise analytic continuation of $J_R$ (in particular, since $\epsilon_N<\delta_N$ the estimates for $J_{R,\pm1}$ also hold). Thus it follows that
\begin{equation}\label{eq:estimateintegralJR}\sup_{z\in\mathcal U}\sum_{\star\in\{-1,1, x_1, \exp\}}\int_{\widetilde\Sigma_{R,\star}(z)}\frac{\|J_{R,\star}(\xi)-I\|}{|\xi-z|}d|\xi|=\mathcal O(N^{-\kappa}),\end{equation}
uniformly {(in everything relevant)} as $N\to\infty$, for some $\kappa>0$, and it follows that $\sup_{z\in\mathcal U\setminus\Sigma_R}\|R(z)\|\leq 2$ for sufficiently large $N$. In particular, we have this bound for our boundary values $R_-$, and this concludes our second step of the proof.

\underline{Step 3: estimates for $R$.} We substitute our estimates for $R_-$ into \eqref{eq:Rint}, to find
\begin{equation}\nonumber \begin{aligned}
\|R(z)-I\|&\leq \int_{\Sigma_R}\frac{\|J_R(\xi)-I\|}{|\xi-z|}d|\xi|\\
&=\int_{\Sigma_{R,x_1}}\frac{\|J_{R,x_1}(\xi)-I\|}{|\xi-z|}d|\xi|
+\int_{\Sigma_{R,\pm1}}\frac{\|J_{R,\pm 1}(\xi)-I\|}{|\xi-z|}d|\xi|\\&
\quad +\int_{\Sigma_{R,{\rm exp}}}\frac{\|J_{R,{\rm exp}}(\xi)-I\|}{|\xi-z|}d|\xi|.
\end{aligned}
\end{equation}
For $z\in \mathbb C\setminus {\mathcal S_N',}$ with $|\Re z|\leq 1+\frac{3}{2}\delta_N$it follows {from the definition of $\mathcal S_N'$ and the fact that we required that $|\Im \xi|\leq \frac{\widehat \epsilon}{4}$ on $\Sigma_R\setminus (B(1,\widehat \delta)\cup B(-1,\widehat \delta))$,} that
\begin{equation}\nonumber
\begin{aligned}
\|R(z)-I\|&\leq \frac{3}{|z-x_1|}\int_{\Sigma_{R,x_1}}\|J_{R,x_1}(\xi)-I\|d|\xi|
\\&+\left(\frac{3}{|z-1|}+\frac{3}{|z+1|}\right)\int_{\Sigma_{R,\pm1}}
\|J_{R,\pm 1}(\xi)-I\|d|\xi|
+\frac{16\epsilon_N^{-1}}{|z|+1}\int_{\Sigma_{R,{\rm exp}}}\|J_{R,{\rm exp}}(\xi)-I\|d|\xi|.
\end{aligned}
\end{equation}
as $\epsilon_N,\delta_N\to 0$. A similar bound for $z\in \C\setminus \Sigma_R$ with $|\Re z|\geq 1+\frac{3}{2}\delta_N$ follows from substituting the bound $\sup_{z\in\mathcal U\setminus\Sigma_R}\|R(z)\|\leq 2$ into \eqref{eq:Rintegral2}. The length of $ \Sigma_{R,x_1}$ is of order $\mathcal O(\epsilon_N)$ as $\epsilon_N\to 0$, and the length of $\Sigma_{R,\pm1}$ is $\mathcal O(\delta_N)$ as $\delta_N\to 0$. 
 Thus the proof of the first part of the  proposition follows upon application of Lemma \ref{prop:jump2}.

For the estimate for $R'$, we use Cauchy's formula \[R'(z)=\frac{1}{2\pi i}\int_{\gamma} \frac{R(\xi)-I}{(z-\xi)^2}d\xi,\]
where we integrate over a circle $\gamma$  around $z$ with radius at least $c\epsilon_N$ for some fixed $c>0$, and if needed, we employ the same contour deformation argument involving $\widetilde \Sigma(z)$ as before. One finds from the bounds on $R$ that \[\|R'(z)\|\leq \frac{2\pi c\epsilon_N}{2\pi c^2\epsilon_N^2} \sup_{\xi\in\gamma}\|R(\xi)-I\|,\]
and the proposition follows.
\end{proof}

We now apply these asymptotics to studying the asymptotics of the $y$-differential identity.

\subsection{Asymptotics of the \texorpdfstring{$y$}{y}-differential identity}

We will now use our representation of $Y$ in terms of our transformations, local parametrix and $R$ to study the large $N$ asymptotics of the $y$-differential identity \eqref{eq:diffid}. Later on, we will be able to prove Theorem \ref{th:Hankel2} by integrating these asymptotics in $y$.
 Note that here we only care about the situation where $w=0$. This means that we can use the stronger estimate from Proposition \ref{prop:R0}.
In our analysis of the differential identity, we will need  further information about the behavior of $\widehat{\Psi}(\lambda_y(z);s_{N,y})$ and $E_{N,y}(z)$ as we let $z\to x_2$.

The result about $E_{N,y}$ that we will make use of is the following (recall that $E_{N,y}$ is analytic at $x_2$ so the statement makes sense).

\begin{lemma}\label{le:Ebound}
As $N\to\infty$, we have for $w=0$ and $|y|<c$ with a small enough $c>0$ $($which is independent of $N)$
$$
E_{N,y}^{-1}(x_2)E_{N,y}'(x_2)=\mathcal O(1),
$$
where the implied constant is uniform in $|y|<c$ and $x_2$ in a fixed compact subset of $(-1,1)$ and $E_{N,y}$ is as in \eqref{def:E}. 
\end{lemma}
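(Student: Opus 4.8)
\textbf{Proof strategy for Lemma~\ref{le:Ebound}.}
The plan is to exploit the explicit formula \eqref{def:E} for $E_{N,y}$, namely
\[
E_{N,y}(z)=P^{\infty}(z)\,\widehat\Psi^\infty(\lambda_y(z))^{-1}\,e^{-N\xi_+(x_1)\sigma_3}
\]
(with $w=0$, so the middle exponential factor is absent), and to track the behaviour of each factor as $z$ ranges over a fixed neighbourhood of $x_2$, uniformly in the small parameter $y$. Since $E_{N,y}$ is analytic at $x_2$, the quantity $E_{N,y}^{-1}(x_2)E_{N,y}'(x_2)$ can be read off from a Cauchy integral on a small circle around $x_2$ of fixed (i.e. $N$- and $y$-independent) radius, so it suffices to bound $E_{N,y}(z)$ and $E_{N,y}(z)^{-1}$ uniformly for $z$ on such a circle. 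The constant $e^{-N\xi_+(x_1)\sigma_3}$ is unimodular because $\xi_+(x_1)\in i\R$ by \eqref{formulaxi}, hence contributes nothing; by Lemma~\ref{lemma:boundw} (or directly from \eqref{def:D}--\eqref{expressionD}) the factor $P^\infty(z)^{\pm1}$ is bounded uniformly for $z$ at fixed distance from $\pm1,x_1,x_2$, and for $z$ at fixed distance from $x_1$ and $x_2$ there is no blow-up. So the only real content is to show that $\widehat\Psi^\infty(\lambda_y(z))$ and its inverse stay bounded and vary smoothly in $z$, uniformly in $y$ small.

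First I would note that by \eqref{eq:lambdader} and \eqref{def:lambda}, for $z$ on a circle of fixed radius $r$ around $x_2$ we have $\lambda_y(z)-1=\lambda_y(z)-\lambda_y(x_2)=\lambda_y'(x_1)(z-x_2)(1+o(1))$, so $|\lambda_y(z)-1|$ is of order $r/y$, which is \emph{large} when $y$ is small, while $\lambda_y(z)$ itself is also of order $r/y$. Thus one is evaluating $\widehat\Psi^\infty$ at arguments that are large in modulus but bounded away from $\{0,1\}$. From the explicit form \eqref{Pinftyhat},
\[
\widehat\Psi^\infty(\lambda)=|s_{N,y}|^{\frac{\gamma_1+\gamma_2}{\sqrt2\,i}\sigma_3}\lambda^{\frac{\gamma_1}{\sqrt2\,i}\sigma_3}(1-\lambda)^{\frac{\gamma_2}{\sqrt2\,i}\sigma_3}\times(\text{const. unimodular matrix}),
\]
and since $\gamma_1,\gamma_2$ are real the three power factors are all unimodular: $|\lambda^{\frac{\gamma_j}{\sqrt2 i}\sigma_3}|$ has entries of modulus $1$ regardless of $|\lambda|$, and likewise for $(1-\lambda)$ and for $|s_{N,y}|$. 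Hence $\widehat\Psi^\infty(\lambda_y(z))^{\pm1}=\mathcal O(1)$ uniformly on the circle, uniformly in $y$ and in $x_2$ in a compact subset of $(-1,1)$ (using also \eqref{def:s} to see $s_{N,y}\neq0$ and the branch cuts of $\lambda^{c\sigma_3},(1-\lambda)^{c\sigma_3}$ do not meet the circle when $r$ is chosen appropriately relative to the lens configuration). Combining, $E_{N,y}(z)^{\pm1}=\mathcal O(1)$ uniformly for $z$ on the circle; Cauchy's estimate then gives $E_{N,y}'(z)=\mathcal O(1)$ at $z=x_2$, and multiplying by the bounded $E_{N,y}^{-1}(x_2)$ finishes the proof.

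\textbf{Main obstacle.} The only delicate point is the uniformity in $y$ as $y\to0$: the argument $\lambda_y(z)$ runs off to infinity, so one must be careful that (i) the branch-cut structure of the power functions in $\widehat\Psi^\infty$ (arguments of $\lambda$ and $1-\lambda$ in $(-\pi,\pi)$) does not cause a discontinuity along the circle $|z-x_2|=r$ — this is handled by fixing $r$ small enough that the circle lies in the region where $\lambda_y$ is conformal and the preimages of the cuts $[0,-i\infty)$ and $[1,1+i\infty)$ stay away from it — and (ii) that the $o(1)$ in the expansion $\lambda_y(z)-1=\lambda_y'(x_1)(z-x_2)(1+o(1))$ is genuinely uniform, which follows from \eqref{eq:lambdader} and the fact that $\xi$ is analytic with $y$-independent Taylor coefficients at $x_1$. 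Everything else is bookkeeping with the explicit formulas \eqref{def:E}, \eqref{def:Pinfty}, \eqref{Pinftyhat}, together with Lemma~\ref{lemma:boundw}.
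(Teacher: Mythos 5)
Your proposal is correct and takes a genuinely different, cleaner route from the paper. The paper computes $E_{N,y}^{-1}E_{N,y}'$ explicitly via the product rule, obtaining a sum of several terms that individually blow up as $y\to 0$ — specifically $\lambda_y'(x_2)\sim 1/y$, the pole of $(\log D_\gamma)'$ at $x_1$ (at distance $y$ from $x_2$), and the contribution from $\lambda_y'(z)/(\lambda_y(z)-1)$ at $z=x_2$ — and then verifies that these singular contributions cancel up to an $\mathcal O(1)$ remainder. You instead observe that $E_{N,y}$ is analytic on the \emph{fixed} (because $w=0$) disk $B(x_1,\widehat\epsilon/4)$, bound $E_{N,y}^{\pm1}=\mathcal O(1)$ uniformly there (essentially the content of the first assertion of Lemma~\ref{le:psihatasy}: $P^\infty(z)^{\pm1}$ is $\mathcal O(1)$ by Lemma~\ref{lemma:boundw}, $\widehat\Psi^\infty(\lambda_y(z))^{\pm1}$ is $\mathcal O(1)$ because the exponents in \eqref{Pinftyhat} are purely imaginary, and $e^{-N\xi_+(x_1)\sigma_3}$ is unimodular since $\xi_+(x_1)\in i\R$), and then invoke Cauchy's estimate on a fixed-radius circle around $x_2$ to conclude $E_{N,y}'(x_2)=\mathcal O(1)$, hence $E_{N,y}^{-1}(x_2)E_{N,y}'(x_2)=\mathcal O(1)$. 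This bypasses the cancellation bookkeeping entirely. The trade-off is that your argument gives a less explicit formula (the paper's calculation extracts the exact singular structure, which could in principle be reused for finer asymptotics), but for the stated $\mathcal O(1)$ estimate it is the more economical proof. Two small imprecisions worth fixing: the power factors $\lambda^{\gamma_j\sigma_3/(\sqrt2\,i)}$ and $(1-\lambda)^{\gamma_j\sigma_3/(\sqrt2\,i)}$ are not literally \emph{unimodular} — $|u^{i\alpha}|=e^{-\alpha\arg u}$ — but they are bounded by $e^{\pi|\gamma_j|/\sqrt2}$ since the principal argument is confined to $(-\pi,\pi)$, which is what you actually need; and the cuts of the principal branches in \eqref{Pinftyhat} are along $(-\infty,0]$ and $[1,\infty)$ in the $\lambda$-plane, not $[0,-i\infty)$ and $[1,1+i\infty)$ — though since the discontinuity of $\widehat\Psi^\infty$ across these cuts is exactly compensated by that of $P^\infty$ (that is what makes $E_{N,y}$ analytic), and both one-sided limits are bounded, the circle may in fact cross the preimages of the cuts without harm.
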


\begin{proof}
To begin, note that by analyticity of $E_{N,y}$, we can calculate the quantity at $z$ in the lower half plane and then let $z\to x_2$. By \eqref{def:E} (in the case $w=0$), we have for $\Im z<0$ and $z$ sufficiently close to $x_2$ that
\begin{align*}
E_{N,y}^{-1}(z)E_{N,y}'(z)&={ e^{N\xi_+(x_1)\sigma_3}}
\left(\widehat\Psi^\infty(\lambda_y(z);s_{N,y})\frac{d}{dz}\widehat\Psi^\infty(\lambda_y(z);s_{N,y})^{-1} \right. \\
&\qquad \left. +\widehat\Psi^\infty(\lambda_y(z);s_{N,y})
P^{\infty}(z)^{-1}\frac{dP^{\infty}(z)}{dz}
\widehat\Psi^\infty(\lambda_y(z);s_{N,y})^{-1}
\right)
e^{-N\xi_+(x_1)\sigma_3}.
\end{align*}
We also have, by \eqref{Pinftyhat} (still for $\Im z<0$),
\[\widehat\Psi^\infty(\lambda_y(z))\frac{d}{dz}\widehat\Psi^\infty(\lambda_y(z))^{-1} = -\frac{\gamma_1}{{\sqrt{2}} i \lambda_y(z)}\lambda_y'(z)\sigma_3-\frac{\gamma_2}{{\sqrt{2}} i (\lambda_y(z)-1)}\lambda_y'(z)\sigma_3.\]
By \eqref{def:Pinfty}, it follows that
\[P^{\infty}(z)^{-1}\frac{d}{dz}P^{\infty}(z)=-(\log D(z))'\sigma_3 + D(z)^{\sigma_3}Q^{-1}(z)Q'(z)D(z)^{-\sigma_3}.\]

By \eqref{Pinftyhat} we also have (for $\Im z<0$)
\[\widehat\Psi^\infty(\lambda_y(z))
\sigma_3
\widehat\Psi^\infty(\lambda_y(z))^{-1}=\sigma_3.\]

Substituting the last three expressions into the one for $E_{N,y}^{-1}E_{N,y}'$, we obtain (for $\Im z<0$)
\begin{align*}
E_{N,y}^{-1}(z)E_{N,y}'(z)&={ e^{N\xi_+(x_1)\sigma_3}}
\left(\left[\frac{-\gamma_1 \lambda_y'(z)}{{\sqrt{2}} i \lambda_y(z)}+\frac{-\gamma_2 \lambda_y'(z)}{{\sqrt{2}} i (\lambda_y(z)-1)}  -(\log D)'(z)\right]
\sigma_3
\right.\\
&\qquad +\widehat\Psi^\infty(\lambda_y(z);s_{N,y})
D(z)^{\sigma_3}Q^{-1}(z)Q'(z)D(z)^{-\sigma_3}
\widehat\Psi^\infty(\lambda_y(z);s_{N,y})^{-1}
\Bigg)\\
&\qquad
\times\ e^{-N\xi_+(x_1)\sigma_3}.
\end{align*}
Using the fact that $\xi_+(x_1)$ is purely imaginary and the uniform (in $y,x_2$) boundedness of $[\widehat\Psi^\infty]^{\pm 1}$, $D^{\pm 1}$, $Q^{-1}$, and $Q'$ for $z$ near $x_2$, we obtain after letting $z\to x_2$,
\begin{multline}\label{eq:Enest}
E_{N,y}^{-1}(x_2)E_{N,y}'(x_2)=\left[\frac{-\gamma_1 \lambda_y'(x_2)}{{\sqrt{2}} i }+\lim_{z\to x_2} \left(\frac{-\gamma_2 \lambda_y'(z)}{{\sqrt{2}} i (\lambda_y(z)-1)}-(\log D)'(z)\right)\right]
\sigma_3 +\mathcal O(1),
\end{multline}
as $N\to\infty$, uniformly in all relevant parameters.
Using \eqref{def:D}, one can verify that {(once again uniformly in everything relevant)} 
\[(\log D)'(z)=-\frac{\gamma_1}{{\sqrt{2}} i(z-x_1)}-\frac{\gamma_2}{{\sqrt{2}} i(z-x_2)}+\mathcal O(1),\]
{ as $z\to x_2$, for $\Im z<0$.} Using the definition of $\lambda$, that is \eqref{def:lambda}, and Assumptions \ref{ass:regular} along with \eqref{formulaxi} we find that as $z\to x_2$
\[
\frac{\lambda_y'(z)}{\lambda_y(z)-1}=\frac{1}{z-x_2}+\mathcal O(1),
\]
where the implied constant is uniform in $x_1,x_2$. Thus
\[
\lim_{z\to x_2} \left(\frac{-\gamma_2 \lambda_y'(z)}{{\sqrt{2}} i (\lambda_y(z)-1)}-(\log D)'(z)\right)=\frac{\gamma_1}{\sqrt{2}i (x_2-x_1)}+\mathcal O(1)
\]	
again with an implied constant that is uniform in $x_1,x_2$. Finally using \eqref{eq:lambdader} (or more precisely, \eqref{def:lambda} to obtain the same bound at $x_2$), we see that
\[
\frac{-\gamma_1 \lambda_y'(x_2)}{{\sqrt{2}} i }+\lim_{z\to x_2} \left(\frac{-\gamma_2 \lambda_y'(z)}{{\sqrt{2}} i (\lambda_y(z)-1)}-(\log D)'(z)\right)=\mathcal O(1)
\]
with the required uniformity. Plugging this into \eqref{eq:Enest} yields the claim.
\end{proof}

We are now in a position to study the asymptotics of our differential identity and integrating it.
For simplicity, let us write $D_N(y)=D_N(x_1, x_1+y;\gamma_1,\gamma_2;0)$ in the remaining part of this section, with $D_N(x_1, x_1+y;\gamma_1,\gamma_2;0)$ defined as in \eqref{eq:FH}.
\begin{proposition}\label{pr:diasy}{ Let $w=0$.}
For $\delta>0$ small enough but fixed and $0<y<\delta$, we have as $N\to\infty$
$$
\frac{d}{dy}\log D_N(y)=\lambda_y'(x_1+y)\left(\sigma(s_{N,y})-\frac{\gamma_1+\gamma_2}{{2\sqrt{2}} i}s_{N,y}-\frac{(\gamma_1+\gamma_2)^2}{{4}}\right)+\mathcal O(1),
$$
where the implied constant is uniform in $0<y<\delta$ and $x_1\in(-1+2\delta, 1-2\delta)$, $\lambda_y$ is as in \eqref{def:lambda}, $\sigma$ is as in Proposition \ref{pr:psiderasy}, and $s_{N,y}$ is as in \eqref{def:s}.
\end{proposition}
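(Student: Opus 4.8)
The plan is to start from the exact differential identity \eqref{eq:diffid}, namely
\[
\frac{d}{dy}\log D_N(y)=-e^{-NV(x_2)}\frac{1-e^{\sqrt 2\pi\gamma_2}}{2\pi i}\bigl(Y^{-1}(x_2)Y'(x_2)\bigr)_{2,1},
\]
where $x_2=x_1+y$, and to substitute into the right-hand side the explicit representation of $Y$ in terms of the chain of transformations $Y\mapsto T\mapsto S\mapsto R$ together with the local parametrix $P$ near $x_1,x_2$. Concretely, unwinding \eqref{def:T}, \eqref{def:S}, \eqref{def:P} and the definition of $R$, for $z$ in the lower half of the lens inside $B(x_1,\widehat\epsilon/4)$ one has a formula of the shape
\[
Y(z)=e^{-\frac{N\ell}{2}\sigma_3}R(z)E_{N,y}(z)\widehat\Psi(\lambda_y(z);s_{N,y})e^{-N\xi(z)\sigma_3}e^{N\ell\sigma_3/2}(\text{explicit triangular factors}),
\]
and the quantity $\bigl(Y^{-1}Y'\bigr)_{2,1}$ evaluated at $x_2$ becomes a sum of several contributions: one coming from $\widehat\Psi^{-1}\widehat\Psi'$ (the ``Painlev\'e'' term), one from $E_{N,y}^{-1}E_{N,y}'$, one from $R^{-1}R'$, one from the $\xi$-factor, and cross terms. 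The conjugating diagonal and triangular factors, as well as the prefactor $e^{-NV(x_2)}(1-e^{\sqrt2\pi\gamma_2})$, must be tracked carefully; the singular factor $e^{-NV(x_2)}$ is cancelled against $e^{2N\xi_+(x_2)}$-type terms arising from the $\xi$-conjugation, exactly as in the derivation of the RH problem for $T$.

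The key steps, in order: (i) write out $Y$ near $x_2$ through the transformations and identify precisely which factor of $\widehat\Psi^{-1}\widehat\Psi'$ survives in the $(2,1)$ entry after conjugation — by \eqref{def:Psihat} this reduces to $\bigl(\Psi^{-1}(-i-\epsilon;s)\Psi'(-i-\epsilon;s)\bigr)_{2,1}$ evaluated in the limit $\epsilon\to0^+$, with an extra factor $\lambda_y'(x_2)$ from the chain rule and a known scalar from the conjugating exponentials; (ii) invoke Proposition \ref{pr:psiderasy}, i.e. the identity \eqref{eq:sigmaid1}, to rewrite that surviving quantity as $-\frac{\pi e^{\sqrt2\pi\gamma_2}}{e^{\sqrt2\pi\gamma_2}-1}\bigl(\sigma(s_{N,y})-\frac{\gamma_1+\gamma_2}{2\sqrt2 i}s_{N,y}-\frac{(\gamma_1+\gamma_2)^2}{4}\bigr)$, whereupon the prefactor $-e^{-NV(x_2)}\frac{1-e^{\sqrt2\pi\gamma_2}}{2\pi i}$ combines with the scalars to produce exactly $\lambda_y'(x_1+y)$ times the parenthesis — here one uses $s_{N,y}=2N(\xi_+(x_2)-\xi_+(x_1))$ from \eqref{def:s} and the continuity of $\lambda_y$ across $\mathbb R$; (iii) bound all remaining contributions by $\mathcal O(1)$: the $E_{N,y}$ term is $\mathcal O(1)$ by Lemma \ref{le:Ebound}, the $R$ term is $\mathcal O(1/N)$ by Proposition \ref{prop:R0} (since $w=0$ here, so the strong estimate \eqref{eq:asR0} applies and $x_2$ stays away from $\partial B(x_1,\widehat\epsilon/4)$ and from $\pm1$ uniformly), the $\xi$-conjugation contributes terms that are either cancelled or $\mathcal O(1)$ uniformly for $x_1\in(-1+2\delta,1-2\delta)$, and the cross terms are products of a bounded quantity with an $\mathcal O(1)$ or $\mathcal O(1/N)$ quantity. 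Finally one checks the claimed uniformity in $0<y<\delta$ and $x_1\in(-1+2\delta,1-2\delta)$, which is inherited from the uniformity statements in Lemma \ref{le:Ebound}, Proposition \ref{prop:R0}, and the uniformity of \eqref{def:s} and \eqref{eq:lambdader}.

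I expect the main obstacle to be the bookkeeping in step (i): correctly isolating which entry of $\widehat\Psi^{-1}\widehat\Psi'$ (equivalently $\Psi^{-1}\Psi'$ at $-i$) appears in the $(2,1)$ entry of $Y^{-1}Y'$ after all the (non-commuting) matrix conjugations and the diagonal/triangular dressing, and verifying that the triangular factors $e^{-w\sigma_3/2}$ (trivial here since $w=0$), $e^{-N\xi\sigma_3}$, and the lens factors $J_1,J_3$ do not contaminate the $(2,1)$ entry in a way that spoils the identification — in particular that the logarithmically divergent pieces of $\widehat\Psi$ near $x_2$ (condition (d) of its RH problem) cancel out of $Y^{-1}Y'$, which is automatic because $Y$ itself is only $\mathcal O(\log)$ there and the combination $Y^{-1}Y'$ extends continuously off the contour, but must be checked. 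Once the correct scalar normalization is pinned down, the rest is a routine application of the already-established parametrix estimates. A minor subsidiary point is to confirm that $\lambda_y'(x_1+y)$, rather than $\lambda_y'(x_1)$, is the natural prefactor — this is simply because the differential identity is evaluated at $x_2=x_1+y$, and $\lambda_y(x_2)=1$ with $\lambda_y'(x_2)=\tfrac1y+\mathcal O(1)$ by the same computation as \eqref{eq:lambdader}.
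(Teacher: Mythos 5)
Your overall approach is the same as the paper's: start from the differential identity \eqref{eq:diffid}, unwind $Y \mapsto T \mapsto S \mapsto R$ together with the local parametrix $P$ to write $Y(z) = e^{-\frac{N\ell}{2}\sigma_3}R(z)E_{N,y}(z)\widehat\Psi(\lambda_y(z);s_{N,y})e^{\frac{N}{2}V(z)\sigma_3}$ inside $B(x_1,\widehat\epsilon/4)$, decompose $(Y^{-1}Y')_{2,1}$ at $x_2$ into the Painlev\'e term plus $E_{N,y}$- and $R$-contributions, use Proposition \ref{pr:psiderasy} for the main term, and invoke Lemma \ref{le:Ebound} and Proposition \ref{prop:R0} for the errors. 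The scalar bookkeeping in your step (ii) also comes out correctly once you track the $-2i$ from the chain rule and the $e^{-\sqrt{2}\pi\gamma_2}$ from \eqref{def:Psihat}, and your subsidiary remark that the prefactor is $\lambda_y'(x_2)$ rather than $\lambda_y'(x_1)$ is right.

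However, there is a genuine gap in how you propose to bound the non-Painlev\'e contributions. After conjugation, these take the form $\bigl(\widehat\Psi^{-1}\,\mathcal{O}(1)\,\widehat\Psi\bigr)_{2,1}$, where the $\mathcal{O}(1)$ matrix comes from $E_{N,y}^{-1}E_{N,y}' + E_{N,y}^{-1}R^{-1}R'E_{N,y}$, and $\widehat\Psi$ is evaluated at the endpoint $\lambda_y(x_2)=1$ (equivalently $\Psi$ at $\zeta=-i$), where it has a logarithmic singularity. Your argument — that the divergence cancels \emph{automatically} because $Y^{-1}Y'$ extends continuously across $x_2$ — establishes that the limit exists pointwise, but it does not give the \emph{uniform} bound in $s_{N,y}\in -i\R_+$ (i.e.\ uniformly over $0<y<\delta$ and $x_1$, as both $s_{N,y}\to 0$ and $s_{N,y}\to-i\infty$ occur) that the proposition requires. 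The paper's proof closes this gap by observing that, because $\det\Psi\equiv 1$, the $(2,1)$ entry of $\Psi^{-1}\,\mathcal{O}(1)\,\Psi$ only involves the \emph{first column} of $\Psi$, and then invoking Lemma \ref{le:psiasyat-i}, which provides exactly the uniform-in-$s$ bound on $\lim_{\epsilon\to 0^+}\Psi_{j,1}(-i-\epsilon;s)$. Without citing this lemma (or some equivalent uniform estimate on the first column of $\Psi$ near $-i$ extracted from the small-$s$ and large-$s$ analyses of \cite{CK}), the uniformity claim in the proposition would not follow. You also implicitly need $E_{N,y}^{\pm 1}=\mathcal{O}(1)$ uniformly, which is Lemma \ref{le:psihatasy} item 1; this is minor but should be cited alongside Lemma \ref{le:Ebound}.
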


\begin{proof}
 Unravelling our transformations, we have for $z\in B(x_1,\widehat \epsilon/4)$ 
\begin{equation*}
Y(z)=e^{-\frac{N\ell}{2}\sigma_3}R(z)P(z){ e^{Ng(z)\sigma_3}}
e^{\frac{N\ell}{2}\sigma_3}.
\end{equation*}
By the formula \eqref{def:P} for $P$ (recalling that we are considering now $w=0$), this becomes
\begin{equation*}
Y(z)=e^{-\frac{N\ell}{2}\sigma_3}R(z)E_{N,y}(z)\widehat\Psi(\lambda_y(z);s_{N,y}){ e^{N\xi(z)\sigma_3}}e^{-Ng(z)\sigma_3}
e^{\frac{N\ell}{2}\sigma_3}.
\end{equation*}
By definition \eqref{def:xi} of $\xi$, this becomes
\begin{equation*}
Y(z)=e^{-\frac{N\ell}{2}\sigma_3}R(z)E_{N,y}(z)\widehat\Psi(\lambda_y(z);s_{N,y})e^{\frac{N}{2}V(z)\sigma_3}.
\end{equation*}
This allows us to compute $\left(Y^{-1}(x_2)Y'(x_2)\right)_{2,1}$, namely the quantity appearing in the differential identity of Proposition \ref{pr:di}, identically in terms of $E_{N,y}, R, \Psi, \lambda_y, s_{N,y}$. 
We obtain
\begin{align*}
e^{-NV(x_2)}&\left(Y^{-1}(x_2)Y'(x_2)\right)_{2,1}\\
&=\lim_{z\to x_2}\left(\widehat \Psi(\lambda_y(z);s_{N,y})^{-1}E_{N,y}^{-1}(z)R^{-1}(z)R'(z)E_{N,y}(z)\widehat \Psi(\lambda_y(z);s_{N,y})\right)_{2,1}\\
&\quad +\lim_{z\to x_2}\left(\widehat \Psi(\lambda_y(z);s_{N,y})^{-1}E_{N,y}^{-1}(z)E_{N,y}'(z)\widehat \Psi(\lambda_y(z);s_{N,y})\right)_{2,1}\\
&\quad +\lambda_y'(x_2)\lim_{\lambda\to 1}\left(\widehat\Psi^{-1}(\lambda;s_{N,y})\widehat \Psi'(\lambda;s_{N,y})\right)_{2,1}.
\end{align*}

Let us approach the point $x_2$ from the lower half plane so that in terms of $\Psi$, by 
\eqref{def:Psihat}
\begin{align*}
e^{-NV(x_2)}&\left(Y^{-1}(x_2)Y'(x_2)\right)_{2,1}\\
&=\lim_{z\to x_2}e^{-{\sqrt{2}\pi}\gamma_2}\left(\Psi(-2i\lambda_y(z)+i;s_{N,y})^{-1}\mathcal O(1)\Psi(-2i\lambda_y(z)+i;s_{N,y})\right)_{2,1}\\
&\quad -2i\lambda_y'(x_2)e^{-{\sqrt{2}\pi}\gamma_2}\lim_{\epsilon\to 0^+}\left({\Psi^{-1}(-i-\epsilon;s_{N,y}) \Psi'(-i-\epsilon;s_{N,y})}\right)_{2,1},
\end{align*}
where we used Lemma \ref{le:psihatasy}, Proposition \ref{prop:R0} (recall that we are considering the case $w=0$, or equivalently $t=0$, so we do not need to rely on Proposition \ref{prop:R}, where the error term might blow up if $y$ were small enough), and Lemma \ref{le:Ebound}, along with the fact that $s$ is imaginary, to obtain the $\mathcal O(1)$-term{ -- it follows from the uniformity in the quoted lemmas that the implied constant is uniform in everything relevant.} Now noting that independently of the $\mathcal O(1)$-matrix, the first quantity here will only involve entries of the first column of $\Psi$ (note that $\det\Psi=1$). Thus applying Lemma \ref{le:psiasyat-i}, we see that the first quantity here is $\mathcal O(1)$ {(uniformly in everything relevant)}.  For the second one, we have Lemma \ref{pr:psiderasy} which shows that {(once again uniformly in everything relevant)} $$
e^{-NV(x_2)}\left(Y^{-1}(x_2)Y'(x_2)\right)_{2,1}=\frac{2\pi i\lambda_y'(x_2)}{e^{{\sqrt{2}\pi}\gamma_2}-1}\left(\sigma(s_{N,y})-\frac{\gamma_1+\gamma_2}{{2\sqrt{2}} i}s_{N,y}-\frac{(\gamma_1+\gamma_2)^2}{{4}}\right)+\mathcal O(1)
$$
for a smooth function $\sigma$ with asymptotics described by Proposition \ref{pr:psiderasy}. The differential identity from Proposition \ref{pr:di} thus becomes (using the definition of $\lambda_y$ \eqref{def:lambda})
\begin{align*}
\frac{d}{dy}\log D_N(y)&=\lambda_y'(x_2)\left(\sigma(s_{N,y})-\frac{\gamma_1+\gamma_2}{{2\sqrt{2}}i}s_{N,y}-\frac{(\gamma_1+\gamma_2)^2}{{4}}\right)+\mathcal O(1)
\end{align*}
{with the required uniformity -- this is precisely the claim.} 
\end{proof}
We are finally in a position to prove Theorem \ref{th:Hankel2}.

\subsection{Proof of Theorem \ref{th:Hankel2}}

Using Proposition \ref{pr:diasy}, we find as $N\to\infty$ with $y$ small enough,
$$
\log D_N(y)=\log D_N(0)+\int_0^y \lambda_{u}'(x_1+u)\left(\sigma(s_{N,u})-\frac{\gamma_1+\gamma_2}{{2\sqrt{2}} i}s_{N,u}-\frac{(\gamma_1+\gamma_2)^2}{{4}}\right)du+\mathcal O(1)
$$
{uniformly in the relevant $x_1$ and $y$.}  Let us fix a constant $C>0$ and split the integral into two parts: $Nu<C$ and $Nu\geq C$ (if $Ny<C$, then the integral over $Nu\geq C$ is understood as zero).

 In the first regime, we have $|s_{N,u}|=\mathcal O(1)$ so using the asymptotics \eqref{eq:sigmaasy1} along with \eqref{def:s}, we have
$$
\sigma(s_{N,u})-\frac{\gamma_1+\gamma_2}{{2\sqrt{2}} i}s_{N,u}-\frac{(\gamma_1+\gamma_2)^2}{{4}}=\mathcal O(||s_{N,u}|\log |s_{N,u}||)=\mathcal O((Nu)\log (Nu)), 
$$
and  by \eqref{eq:lambdader} (or more precisely, again by a similar argument one can prove the same bound at $x_1+u$) $\lambda_{u}'(x_1+u)=\frac{1}{u}+\mathcal O(1)$, so we see that 
\begin{equation}\label{star1}\begin{aligned}
&\int_0^{\min(y,C/N)}\lambda_{u}'(x_1+u)\left(\sigma(s_{N,u})-\frac{\gamma_1+\gamma_2}{{2\sqrt{2}} i}s_{N,u}-\frac{(\gamma_1+\gamma_2)^2}{{4}}\right)du\\
&\qquad=\mathcal O\left(\int_0^C\big|\log |u|\big|du\right)=\mathcal O(1),
\end{aligned}
\end{equation}
{where all of the estimates were uniform in $x_1,y$.}

Consider then the situation when $y>\frac{C}{N}$, and the integral over $Nu\geq C$ might not vanish.  In this regime $|s_{N,u}|=is_{N,u}$ is bounded from below and may tend to infinity so we have by \eqref{eq:sigmaasy2}, \eqref{def:s}

\begin{align*}
\sigma&(s_{N,u})-\frac{\gamma_1+\gamma_2}{{2\sqrt{2}} i}s_{N,u}-\frac{(\gamma_1+\gamma_2)^2}{{4}}\\
&=-\frac{\gamma_1-\gamma_2}{{2\sqrt{2}}}|s_{N,u}|+\frac{(\gamma_1-\gamma_2)^2}{{4}}+\frac{\gamma_1+\gamma_2}{{2\sqrt{2}} }|s_{N,u}|-\frac{(\gamma_1+\gamma_2)^2}{{4}}+\mathcal O(|s_{N,u}|^{-1})\\
&=\frac{\gamma_2}{{\sqrt{2}}}|s_{N,u}|-{\gamma_1\gamma_2}+\mathcal O(|s_{N,u}|^{-1}),
\end{align*}
as $s_{N,u}\to \infty$.
Thus by \eqref{def:lambda} we find

\begin{align*}
\int_{C/N}^y &\lambda_{N,u}'(x_1+u)\left(\sigma(s_{N,u})-\frac{\gamma_1+\gamma_2}{{2\sqrt{2}} i}s_{N,u}-\frac{(\gamma_1+\gamma_2)^2}{{4}}\right)du\\
&=2N\int_{C/N}^y \frac{|\xi_+'(x_1+u)|}{|s_{N,u}|}\left(\frac{\gamma_2}{{\sqrt{2}}}|s_{N,u}|-{\gamma_1\gamma_2}+\mathcal O(|s_{N,u}|^{-1})\right)du.
\end{align*}

Now note that from \eqref{eq:id g mu} and \eqref{eq:id xi g}, we find $|\xi_+'(x_1+u)|=\pi \sigma_V(x_1+u)$, where $\sigma_V=\frac{d\mu_V}{dx}$. Also as $s_{N,u}=2Nu\xi_+'(x_1+u)(1+\mathcal O(u))$, we find

\begin{equation}\label{star2}\begin{aligned}
\int_{C/N}^y &\lambda_{N,u}'(x_1+u)\left(\sigma(s_{N,u})-\frac{\gamma_1+\gamma_2}{{2\sqrt{2}} i}s_{N,u}-\frac{(\gamma_1+\gamma_2)^2}{{4}}\right)du\\
&=N{\sqrt{2}\pi}\gamma_2\int_{C/N}^y \sigma_V(x_1+u)du -{\gamma_1\gamma_2}\int_{C/N}^{y}\frac{1+\mathcal O(u)}{u}du+\mathcal O\left(\frac{1}{N}\int_{C/N}^{y}\frac{1}{u^2}du\right)\\
&=N{\sqrt{2}\pi}\gamma_2\mu_V([x_1,x_2])-\gamma_1\gamma_2\log N-\gamma_1\gamma_2\log y+\mathcal O(1).
\end{aligned}\end{equation}
{Again, the estimates were uniform in $x_1,y$ so} \eqref{star1} and \eqref{star2} prove Theorem \ref{th:Hankel2}.

\section{Asymptotics for the Hankel determinants in the separated regime}\label{section: RHseparated}
{ 
In this section, we consider $w\neq 0$ and prove Theorem \ref{th:Hankel1}. For $|x_1-x_2|<\epsilon_N/8$, we rely on the analysis of the previous section. When  $|x_1-x_2|>\frac{1}{8}\epsilon_N$ we construct two different local parametrices in two disjoint disks around $x_1$ and $x_2$, say $B(x_1,\frac{\epsilon_N}{32})$ and $B(x_2,\frac{\epsilon_N}{32})$, which will approximate the RH solution $S$ from Section \ref{section: diffid}.
 As illustrated in Figure \ref{fig:lens2} and as already explained in Section \ref{section: diffid}, we  open lenses on three intervals $(-1,x_1)$, $(x_1,x_2)$ and $(x_2,1)$. As in the merging case, the contours near $x_1,x_2$ and $\pm 1$ will be chosen such that suitable conformal maps map the contours  to certain straight lines{, but we still require the lenses to stay within $\mathcal S_N'$ from \eqref{domain2}}. Further away from these points, one can for instance choose the contours to be straight line segments. We recall the definition of the functions $T$ and $S$ in \eqref{def:T} and \eqref{def:S}, and the construction of the global parametrix $P^\infty$ as in \eqref{def:Pinfty}.

We construct 4 local parametrices, each containing one of the points $-1,1,x_1,x_2$. The construction of local parametrices in the disks $B(\pm 1,\delta_N)$ near $\pm 1$ is exactly the same as in Section \ref{sec:merging}. It satisfies in particular the matching condition \eqref{eq:matchingAiry2} at the boundary of the disks.}

\medskip

The construction of the local parametrices near $x_1$ and $x_2$ in the separated regime differs significantly from the one in the previous section. 
The difference is that we surround $x_1$ and $x_2$ separately with disks of radius 
$\frac{1}{32}\epsilon_N$ and inside these we define the local parametrix $P$ (we choose to denote all local parametrices by $P$, in each of the asymptotic regimes, to keep notations consistent and simple) in terms of Kummer's confluent hypergeometric functions, using the RH solution $\Phi_{\rm HG}$ from Appendix \ref{section:appendixCFH}. This construction is almost identical to the one in \cite{Charlier}, which was inspired by earlier constructions in \cite{DIK11, ItsKrasovsky, FoulquieMartinezSousa}.
We omit the precise details and refer to \cite[Section 4.5 and Section 8]{Charlier} and references therein.
We note that our situation corresponds to the case where $m=2$, ${\sqrt{2}} i\beta_1=\gamma_1$, ${\sqrt{2}} i\beta_2=\gamma_2$, $\alpha_1=\alpha_2=0$, $t_1=x_1$, $t_2=x_2$ in \cite{Charlier}, { and furthermore with $W$ as in \cite{Charlier}, $W(z)=w+\gamma_1{\pi/\sqrt{2}}+\gamma_2{\pi/\sqrt{2}}$}.
We require the local parametrix to satisfy a matching condition on the boundary of the small disk surrounding the singularities $x_1$, $x_2$. To verify this, we begin with the following identity (which for us could be seen as the definition of $P$, but for further details, see \cite[Section 4.5]{Charlier}): for $z\in B(x_j,\frac{1}{32}\epsilon_N)$
$$
P(z)P^\infty(z)^{-1}=E(z)\Phi_{\rm HG}(Nf(z))e^{-N\xi(z)\sigma_3}M(z)e^{N\xi_+(x_j)\sigma_3}(Nf(z))^{\frac{\gamma_j}{{\sqrt{2}} i}\sigma_3}E(z)^{-1},
$$
where $f$ is a conformal map defined by 

$$
f(z)=\mp 2(\xi(z)-\xi_\pm(x_j)), \qquad \pm \Im z>0,
$$
and $M$ is given by
$$
M(z)=\begin{cases}
e^{-{\sqrt{2}\pi}\frac{\gamma_j}{2}\sigma_3}, & \Re f(z)<0,\\
\sigma_3\sigma_1, & \Re f(z)>0.
\end{cases}
$$
$E$ is given by
$$
E(z)=P^\infty(z)\Omega_j(z)^{\frac{\sigma_3}{2}}{ e^{{\sqrt{2}\pi}\frac{\gamma_j}{2}\sigma_3}  }e^{\frac{w(z;t)}{2}\sigma_3}M(z)e^{N\xi_+(x_j)\sigma_3}(Nf(z))^{\frac{\gamma_j}{{\sqrt{2}} i}\sigma_3},
$$
with
$$
\Omega_j(z)=e^{{\sqrt{2}\pi}\gamma_{j^*}\mathbf{1}_{\lbrace \Re z<x_{j^*}\rbrace}}, 
$$
where $j^*=2$ if $j=1$ and $j^*=1$ if $j=2$. It follows from the discussion in \cite[Section 4.5]{Charlier} that $E$ is analytic in $B(x_j,\frac{1}{32}\epsilon_N)$. Moreover,
$\Phi_{{\rm HG}}$ is a $2\times 2$-matrix-valued function which is analytic in $\C\setminus \cup_{0\leq k<4}e^{k\frac{\pi i}{4}}\R$ and which is constructed in terms of confluent hypergeometric functions, but for which we only need its asymptotic behavior: as $\lambda\to \infty$ with $\lambda\in\C\setminus \cup_{0\leq k<4}e^{k\frac{\pi i}{4}}\R$, we have
$$
\Phi_{\rm HG}(\lambda)M(\lambda)e^{\frac{\lambda}{2}\sigma_3}\lambda^{\frac{\gamma_j}{{\sqrt{2}} i}\sigma_3}=I+\mathcal O(\lambda^{-1}).
$$

Using the definitions and Lemma \ref{lemma:boundw}, one can check that $E$ is bounded uniformly in $B(x_j,\frac{1}{32}\epsilon_N)$. Another straightforward calculation shows that $\Re f(z)<0$ is equivalent to $\Im z>0$ from which one can deduce that 
$$
e^{-N\xi(z)\sigma_3}M(z) e^{N\xi_+(x_j)\sigma_3}=M(z)e^{\frac{1}{2}Nf(z)\sigma_3}.
$$
Thus, using the fact that on $\partial B(x_j,\frac{1}{32}\epsilon_N)$, $|Nf(z)|\geq c  N^{\alpha}$ for some uniform constant $c$ along with the asymptotics of $\Phi_{{\rm HG}}$, we have a matching condition: for $z\in \partial B(x_j,\frac{1}{32}\epsilon_N)$ 
\begin{equation}\label{eq:match2}
P(z)P^\infty(z)^{-1}=E(z)(I+\mathcal O(N^{-\alpha}))E(z)^{-1}=I+\mathcal O(N^{-\alpha}),
\end{equation}
{where the implied constant is uniform in the relevant $t,w,x_1,x_2$.}  Again, this allows us to transform the RH problem for $S$ into a small-norm  problem.

\subsection{Small-norm RH problem}
We now define
\begin{equation*}
R(z)=\begin{cases}
S(z)P(z)^{-1},& z\in B(x_1,\frac{1}{32}\epsilon_N)\cup B(x_2,\frac{1}{32}\epsilon_N)\cup B(-1,\delta_N)\cup B(1,\delta_N),\\
S(z)P^{\infty}(z)^{-1},& z\in\mathbb C\setminus\left(B(x_1,\frac{1}{32}\epsilon_N)\cup B(x_2,\frac{1}{32}\epsilon_N)\cup B(-1,\delta_N)\cup B(1,\delta_N)\right)
\end{cases}
\end{equation*}
and the relevant RH problem becomes the following.

\subsubsection*{RH problem for $R$}
\begin{itemize}
\item[(a)] $R:\mathbb C\setminus\Sigma_R\to\mathbb C^{2\times 2}$ is analytic, with $\Sigma_R$ as in Figure \ref{fig:R2},
\item[(b)] if we orient the disks in the clockwise manner and the remaining parts of $\Sigma_R$ from left to right, we see that for $z\in\Sigma_R$, $R$ satisfies jump conditions of the form $R_+(z)=R_-(z)J_R(z)$, where 
\begin{equation*}
J_R(z)=\begin{cases}
P(z)P^{\infty}(z)^{-1},&z\in \partial B(x_1,\frac{1}{32}\epsilon_N)\cup \partial B(x_2,\frac{1}{32}\epsilon_N)\cup \partial B(\pm 1,\delta_N),\\
P^{\infty}(z)J_S(z)P^{\infty}(z)^{-1},&z\in \Sigma_R\setminus \left(B(x_1,\frac{1}{32}\epsilon_N)\cup B(x_2,\frac{1}{32}\epsilon_N)\cup B(\pm 1,\delta_N)\right),
\end{cases}
\end{equation*}
where $P$  on the boundary of the disks is interpreted as its boundary value as the boundary is approached from the inside of the circles,
\item[(c)] as $z\to\infty$, we have the asymptotics
\begin{equation*}
R(z)=I+\mathcal O(z^{-1}).
\end{equation*}
\end{itemize}

\begin{figure}
\begin{tikzpicture}[scale=2.3]
    \begin{axis}[ticks=none,axis line style={draw=none},
        unit vector ratio*=1 1 1,       
        xmin=-1.5,xmax=1.5,
        ymin=-.41,ymax=.41]

        \addplot [black,dashed,domain=-147:-90] ({1+0.4*cos(147)}, {0.4*sin(x)});
        \addplot [black,dashed,domain=90:147] ({1+0.4*cos(147)}, {0.4*sin(x)});
        \addplot [black,dashed,domain=33:90] ({-1+0.4*cos(33)}, {0.4*sin(x)});
        \addplot [black,dashed,domain=270:327] ({-1+0.4*cos(33)}, {0.4*sin(x)});

        \addplot [black,dashed,domain=-0.67:0.67]({x},{0.212});
        \addplot [black,dashed,domain=-0.67:0.67]({x},{-0.212});    

        \addplot [black,dashed,domain=0.67:1.33]({x},{0.4});
        \addplot [black,dashed,domain=0.67:1.33]({x},{-0.4});    
        \addplot [black,dashed,domain=0.67:1.33]({-x},{0.4});
        \addplot [black,dashed,domain=0.67:1.33]({-x},{-0.4});    

        \addplot [black,dashed,domain=-0.4:0.4]({1.33},{x});
        \addplot [black,dashed,domain=-0.4:0.4]({-1.33},{x});

        \addplot [black,thick,domain=-0.81:-0.25]({x},{-0.36*x^2-0.33*x+0.025});    
        \addplot [black,thick,domain=0.4:0.81]({x},{-0.56*x^2+0.64*x-0.08});    
        \addplot [black,thick,domain=-0.81:-0.25]({x},{0.36*x^2+0.33*x-0.025});    
        \addplot [black,thick,domain=0.4:0.81]({x},{0.56*x^2-0.64*x+0.08});    
        \addplot [black,thick,domain=-1.5:-1.2]({x},{0});    
        \addplot [black,thick,domain=1.2:1.5]({x},{0});    

        \addplot [black,thick,domain=-0.13:0.28]({x},{0.075});
        \addplot [black,thick,domain=-0.13:0.28]({x},{-0.075});

        \addplot [black,thick,domain=0:360] ({1+0.2*cos(x)}, {0.2*sin(x)});
        \addplot [black,thick,domain=0:360] ({-1+0.2*cos(x)}, {0.2*sin(x)});

        \addplot [black,thick,domain=0:360] ({0.35+0.1*cos(x)}, {0.1*sin(x)});

        \addplot [black,thick,domain=0:360] ({-0.2+0.1*cos(x)}, {0.1*sin(x)});

    \end{axis}
\fill [color=black] (4.22,0.93) circle (0.05);
\node at (4.2,1.05) {$x_2$};
\fill [color=black] (3.0,0.93) circle (0.05);
\node at (2.98,1.05) { $x_1$};
\fill [color=black] (5.72,0.93) circle (0.05);
\node at (5.72,1.25) {$1$};
\fill [color=black] (1.15,0.93) circle (0.05);
\node at (1.15,1.25) { $-1$};


\end{tikzpicture}
\caption{A characterization of the jump contour $\Sigma_R$ in the set $\mathcal S_N'$ (dashed).}\label{fig:R2}
\end{figure}
Similarly to the merging regime (see Lemma \ref{prop:jump2}), we now have jump matrices which are uniformly close to the identity matrix. { We let $\Sigma_{R,x_1,x_2}$ be the union of $\partial B(x_1,\frac{1}{32}\epsilon_N)$ and $\partial B(x_2,\frac{1}{32}\epsilon_N)$, we let $\Sigma_{R,\pm1}=\partial B(1,\delta_N)\cup\partial B(-1,\delta_N)$, and denote by $\Sigma_{R,\exp}$ the remaining part of the contour $\Sigma_R$, consisting of the sections of the lenses which are not in the local parametrices, and $(-\infty,-1-\delta)\cup(1+\delta, +\infty)$.
We denote the jump on
$\Sigma_{R,x_1,x_2}$ by $J_{R,x_1,x_2}$, the jump on
$\Sigma_{R,\pm1}$ by  $J_{R,\pm 1}$, and the jump on
$\Sigma_{R,\exp}$ by $J_{R,{\rm exp}}$.

\begin{lemma}\label{prop:sepjump1}
As $N\to\infty$ and $y{\geq }\widehat \epsilon/8$,  
\begin{equation}\label{eq:sepboundJR1}
\begin{aligned}
J_{R,x_1,x_2}(z)&=I+\mathcal O\left(\frac{1}{N\widehat \epsilon}\right)\\
J_{R,\pm 1}(z)&=I+\mathcal O\left(\frac{1}{N\widehat \delta^{2}}\right)\\
J_{R,{\rm exp}}(z)&=
I+\mathcal O\left(\frac{1}{e^{N^\kappa}(|z|^2+1)}\right),&
\end{aligned}
\end{equation}
uniformly for $z\in \Sigma_{R,{x_1,x_2}},\Sigma_{R,\pm 1}, \Sigma_{R,{\rm exp}}$ {and uniformly in the relevant $x_1,x_2,t,w$} respectively, for some $\kappa>0$.
\end{lemma}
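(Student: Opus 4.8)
The plan is to prove Lemma~\ref{prop:sepjump1} exactly as in the merging case (Lemma~\ref{prop:jump2}), splitting according to the three pieces of $\Sigma_R$ and using in each piece the already established matching conditions together with uniform bounds on the global parametrix. First I would treat $z\in\Sigma_{R,\exp}$: here $J_{R,\exp}(z)=P^\infty(z)J_S(z)P^\infty(z)^{-1}$, and since the lenses were chosen inside $\mathcal S_N'$ with $|\Im z|>\widehat\epsilon/8$ away from the local parametrices, Lemma~\ref{pr:lensbound1} gives $J_S(z)=I+\mathcal O(e^{-N^\kappa \log(|z|+1)})$ for some $\kappa>0$ (on the parts of $\Sigma_R$ lying on $(-\infty,-1-\widehat\delta)\cup(1+\widehat\delta,\infty)$ one uses instead the second inequality of Lemma~\ref{pr:lensbound1}). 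Combining this with the bound $[P^\infty(z)]^{\pm1}=\mathcal O(\widehat\delta^{-1/4})$ on $\Sigma_{R,\exp}$, which follows from \eqref{def:Pinfty}, \eqref{def:D} and Lemma~\ref{lemma:boundw} (the crucial point being that $\mathrm{Re}$ of the $w$-dependent exponent in $D_w$ is uniformly bounded, and the $D_\gamma$ factor is purely oscillatory), we conclude $J_{R,\exp}(z)=I+\mathcal O\big(\widehat\delta^{-1/2}e^{-N^\kappa\log(|z|+1)}\big)=I+\mathcal O\big(e^{-N^{\kappa'}}(|z|^2+1)^{-1}\big)$ for some (possibly smaller) $\kappa'>0$, with the implied constant independent of $x_1,x_2,t,w$.

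Next I would handle $z\in\Sigma_{R,\pm1}=\partial B(\pm1,\delta_N)$: here $J_{R,\pm1}(z)=P(z)P^\infty(z)^{-1}$ with $P$ the Airy parametrix, and the estimate $J_{R,\pm1}(z)=I+\mathcal O(1/(N\widehat\delta^2))$ is precisely the content of \eqref{eq:matchingAiry2}, which was already shown to hold uniformly in the relevant $w$, uniformly for $x_1$ in a compact subset of $(-1,1)$ and uniformly in the position of the other singularities. Finally, for $z\in\Sigma_{R,x_1,x_2}=\partial B(x_1,\tfrac1{32}\epsilon_N)\cup\partial B(x_2,\tfrac1{32}\epsilon_N)$ the estimate $J_{R,x_1,x_2}(z)=I+\mathcal O(1/(N\widehat\epsilon))$ follows directly from the matching condition \eqref{eq:match2} established above for the confluent hypergeometric parametrix, together with the uniform boundedness of $E(z)^{\pm1}$ on these circles (which in turn rests on Lemma~\ref{lemma:boundw} and $|Nf(z)|\ge cN^\alpha$ on the boundary). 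Since $\widehat\epsilon=\epsilon_N=N^{-1+\alpha}$ in the regime $y\ge\widehat\epsilon/8$ under consideration, $\mathcal O(1/(N\widehat\epsilon))=\mathcal O(N^{-\alpha})$, matching \eqref{eq:match2}.

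There is essentially no serious obstacle here: the lemma is a bookkeeping step assembling ingredients already in hand, and the only point requiring a little care is the \emph{uniformity} of all error terms in $x_1,x_2$ (restricted to a fixed compact subset of $(-1,1)$ with $|x_1-x_2|\ge\widehat\epsilon/8$), in $t\in[0,1]$, and in $w$ ranging over the bounded analytic functions on $\mathcal S_N$ appearing in Theorem~\ref{th:Hankel1}. This uniformity is inherited at each stage from the uniformity already recorded in Lemma~\ref{pr:lensbound1}, in \eqref{eq:matchingAiry2}, in Lemma~\ref{lemma:boundw}, and in the construction of the hypergeometric parametrix following \cite{Charlier}; I would simply note in the write-up that the constant $M$ in Lemma~\ref{lemma:boundw} depends only on $\|w\|_\infty$, so that replacing $w$ by $w(\cdot;t)=tw$ changes nothing, and that the lower bound $|Nf(z)|\ge cN^\alpha$ on $\partial B(x_j,\tfrac1{32}\epsilon_N)$ holds with $c$ depending only on the compact set containing $x_1,x_2$ and on $\psi_V$. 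This completes the proof of Lemma~\ref{prop:sepjump1}.
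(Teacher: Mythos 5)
Your proposal is correct and follows essentially the same approach as the paper's proof, which itself only says that the result ``follows from the matching condition... similarly as in the proof of Lemma~\ref{prop:jump2}.'' You have simply made explicit what the paper leaves implicit: the three-way split into $\Sigma_{R,\exp}$ (handled by Lemma~\ref{pr:lensbound1} plus the $\widehat\delta^{-1/4}$ bound on $(P^\infty)^{\pm1}$ from Lemma~\ref{lemma:boundw}), $\Sigma_{R,\pm1}$ (handled by \eqref{eq:matchingAiry2}), and $\Sigma_{R,x_1,x_2}$ (handled by \eqref{eq:match2} together with the uniform boundedness of $E^{\pm1}$), with the observation that $\mathcal O(1/(N\widehat\epsilon))=\mathcal O(N^{-\alpha})$ reconciling the two forms of the third estimate.
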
}
\begin{proof}
This follows from the matching condition (c) in the RH problem for $P$, see Lemma \ref{pr:P}, similarly as in the proof of Lemma \ref{prop:jump2}. We omit further details.
\end{proof}

Now arguing as in the proof of Proposition \ref{prop:R} (we omit details, as the argument is essentially identical), we find the following result.

\begin{proposition}\label{prop:R2}
For $t\in[0,1]$ and $z\in\mathbb C\setminus\left(\mathcal S_N'\cup\Sigma_R\right)$, we have
{
\begin{equation}\label{eq:asR2}\begin{aligned}
R(z)=I+\mathcal O\left(N^{-1}\left(|z-x_1|^{-1}+|z-x_2|^{-1}+\delta_N^{-1}|z-1|^{-1}+\delta_N^{-1}|z+1|^{-1}\right)\right),\\ 
 R'(z)=\mathcal O\left(\frac{1}{N \epsilon_N }\left(|z-x_1|^{-1}+|z-x_2|^{-1}+\delta_N^{-1}|z-1|^{-1}+\delta_N^{-1}|z+1|^{-1}\right)\right).
\end{aligned}
\end{equation}}
uniformly {in the relevant $z,x_1,x_2,t,w$} as $N\to\infty$, with  $y\geq \frac{\epsilon_N}{8}$.
\end{proposition}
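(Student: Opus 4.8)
The plan is to mimic, essentially verbatim, the small-norm analysis carried out in the merging regime in Section~\ref{sec:merging} (specifically the proof of Proposition~\ref{prop:R}), but now using the local parametrices constructed in the separated regime above and the matching estimates of Lemma~\ref{prop:sepjump1}. The starting point is the same: the RH problem for $R$ combined with the Sokhotski--Plemelj formula gives the singular integral equation
\[
R(z)=I+\frac{1}{2\pi i}\int_{\Sigma_R}R_-(\xi)\big(J_R(\xi)-I\big)\frac{d\xi}{\xi-z},\qquad z\in\mathbb C\setminus\Sigma_R.
\]
As in the merging case, the obstacle is that the jump contour $\Sigma_R$ shrinks with $N$ (its lens portions live in $\mathcal S_N'$, which has vertical width of order $\epsilon_N$, and the disks around $x_1,x_2$ have radius of order $\epsilon_N$, while those around $\pm1$ have radius of order $\delta_N$), so the off-the-shelf small-norm theory for fixed contours does not apply directly and one must run the contour-deformation argument of \cite[Section~7]{DKMVZ}, adapted as in the proof of Proposition~\ref{prop:R}.

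Concretely, I would proceed in the same three steps. First (Step~1), record the integral equation above. Second (Step~2, the main technical step), establish uniform boundedness of the boundary values $R_-$. For this I introduce a neighbourhood $\mathcal U\subset\mathcal S_N$ of $\Sigma_R$ of width $\tfrac{1}{16}\epsilon_N$; I check that the jump matrices $J_{R,x_1,x_2}$, $J_{R,\pm1}$, $J_{R,\exp}$ extend analytically to $\epsilon_N/16$-neighbourhoods of their respective contour pieces (this uses, as in Section~\ref{sec:merging}, the explicit formulas for $P$, $P^\infty$ and the jumps of $S$, together with the fact that the lenses were required to stay inside $\mathcal S_N'$ from \eqref{domain2}), and I introduce the corresponding analytic continuations $R_{-,\star}$ of $R_-$. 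For $z\in\mathcal U$ I deform $\Sigma_R$ near $z$ to a contour $\widetilde\Sigma_R(z)$ with $\operatorname{dist}(z,\widetilde\Sigma_R(z))\ge\tfrac{1}{32}\epsilon_N$, rewrite the integral equation over the deformed contour, note that the estimates of Lemma~\ref{prop:sepjump1} and Lemma~\ref{pr:lensbound1} survive the deformation, and then, taking a submultiplicative matrix norm and the supremum over $z\in\mathcal U\setminus\Sigma_R$, use
\[
\sup_{z\in\mathcal U}\sum_{\star}\int_{\widetilde\Sigma_{R,\star}(z)}\frac{\|J_{R,\star}(\xi)-I\|}{|\xi-z|}\,d|\xi|=\mathcal O(N^{-\kappa})
\]
for some $\kappa>0$ (the lengths of $\Sigma_{R,x_1,x_2}$ and $\Sigma_{R,\exp}$ inside $\mathcal U$ are $\mathcal O(\epsilon_N)$, those of $\Sigma_{R,\pm1}$ are $\mathcal O(\delta_N)$, the denominators are $\gg\epsilon_N$, and the numerators are controlled by Lemma~\ref{prop:sepjump1}) to absorb the integral term and conclude $\sup_{z\in\mathcal U\setminus\Sigma_R}\|R(z)\|\le 2$ for $N$ large. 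This in particular bounds $R_-$ uniformly in $z$ and in the relevant $x_1,x_2,t,w$.

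Third (Step~3), substitute the bound $\|R_-\|\le 2$ back into the undeformed integral equation and estimate termwise. For $z\in\mathbb C\setminus(\mathcal S_N'\cup\Sigma_R)$ with $|\Re z|\le1+\tfrac32\delta_N$, the distance from $z$ to a point $\xi$ on $\Sigma_{R,x_1,x_2}$ is bounded below by a constant times $|z-x_1|$ or $|z-x_2|$ (whichever disk $\xi$ sits on), the distance to $\Sigma_{R,\pm1}$ is bounded below by constants times $|z-1|$, $|z+1|$, and the distance to $\Sigma_{R,\exp}$ is $\gg\epsilon_N(|z|+1)$; combining with the arc-length bounds and Lemma~\ref{prop:sepjump1} gives the claimed estimate on $R(z)$, and the case $|\Re z|\ge1+\tfrac32\delta_N$ follows from $\sup_{\mathcal U\setminus\Sigma_R}\|R\|\le2$ plugged into the deformed equation. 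The bound on $R'$ then follows from Cauchy's integral formula applied on a circle of radius $\asymp\epsilon_N$ around $z$ (using the contour-deformation trick again if that circle would cross $\Sigma_R$), exactly as in the proof of Proposition~\ref{prop:R}. The only genuine difference from the merging case is bookkeeping: there are now two separated singular disks contributing $|z-x_1|^{-1}$ and $|z-x_2|^{-1}$ instead of the single disk $B(x_1,\widehat\epsilon/4)$, but the matching estimate \eqref{eq:match2} (recorded in Lemma~\ref{prop:sepjump1}) is of the same quality, so nothing essential changes. I would therefore state that the proof is identical to that of Proposition~\ref{prop:R} up to these cosmetic modifications and omit the repeated details.
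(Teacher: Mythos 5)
Your proposal is correct and is exactly the argument the paper intends: the paper itself states that Proposition~\ref{prop:R2} follows "arguing as in the proof of Proposition~\ref{prop:R} (we omit details, as the argument is essentially identical)," and you have simply spelled out the three-step contour-deformation/small-norm argument with the jump estimates of Lemma~\ref{prop:sepjump1} substituted for those of Lemma~\ref{prop:jump2} and the two separated disks around $x_1,x_2$ replacing the single disk of the merging regime. Nothing is missing and the approach matches the paper's.
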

This allows us to study the asymptotics of our second differential identity.

\subsection{Asymptotics of the \texorpdfstring{$t$}{t}-differential identity and the proof of Theorem \ref{th:Hankel1}} 
As in \cite[Section 7]{Charlier} (and a similar argument in \cite[Section 5.1]{BWW18}), we can use the jump conditions of $Y$ and a contour deformation argument to write the integral from Proposition \ref{le:di1} as follows:

\begin{align}
&\label{eq:diffidas}\partial_t \log D_N(x_1,x_2;\gamma_1,\gamma_2;w(.;t))=\frac{1}{2\pi i}\int_\R\big[Y^{-1}Y'\big]_{21}(\lambda;t){ w(\lambda)h_t(\lambda)}d\lambda\notag \\
& =\frac{1}{2\pi i}\int_{\mathbb R\setminus[-1-\epsilon,1+\epsilon]}\big[Y^{-1}Y'\big]_{21}(\lambda;t){ w(\lambda)h_t(\lambda)}d\lambda +\frac{1}{2\pi i}\int_{\mathcal C_N}\big[Y^{-1}Y'\big]_{11}(\lambda;t){ w(\lambda)}d\lambda,
\end{align}
where $\mathcal C_N$ is a contour oriented in the counter-clockwise manner, enclosing $[-1,1]$ and passing through the points $\pm (1+\epsilon)$ for some fixed $\epsilon>0$. We can now take $\mathcal C_N$ to be be a contour in $\mathcal S_N$, but outside of $\mathcal S_N'$ and hence outside the lenses and the local disks in which local parametrices were constructed. Tracing through our transformations in the RH analysis, we are able to express $Y$ identically in terms of $R$ and the global parametrix $P^\infty$ from Section \ref{sec:global} as follows:
\[Y(\lambda)=e^{-\frac{N\ell}{2}\sigma_3}R(\lambda)P^\infty(\lambda)e^{Ng(\lambda)\sigma_3}e^{\frac{N\ell}{2}\sigma_3}\]
for $\lambda$ on $\mathcal C_N\cup\mathbb R\setminus[-1-\epsilon, 1+\epsilon]$.
In addition to this, we can use the asymptotics for $R$. Since we want to prove Theorem \ref{th:Hankel1} both in the merging regime and in the separated regime, we will need to use the estimates from Proposition \ref{prop:R}
as well as those from Proposition \ref{prop:R2}.

Let us first focus on the first term at the right hand side of \eqref{eq:diffidas}, which gives
\begin{multline*}\frac{1}{2\pi i}\int_{\mathbb R\setminus[-1-\epsilon,1+\epsilon]]}\big[Y^{-1}Y'\big]_{21}(\lambda;t) w(\lambda) h_t(\lambda)d\lambda\\=\int_{\mathbb R\setminus[-1-\epsilon,1+\epsilon]}\big[(P^\infty)^{-1}(\lambda;t)\big((P^\infty)'(\lambda;t)+R_+^{-1}(\lambda;t)R_+'(\lambda;t)P^\infty(\lambda;t)\big)\big]_{21}\\
\quad \times  e^{2Ng_+(\lambda)+N\ell }w(\lambda)h_t(\lambda)d\lambda.\end{multline*}
Note that from \eqref{def:w}, our assumption of $w$ being uniformly bounded in $\mathcal S_N$, and  \eqref{eq:id g mu}, one finds that $e^{2Ng_+(\lambda)+N\ell }w(\lambda)h_t(\lambda)=\mathcal O(e^{N(g_+(\lambda)+g_-(\lambda)+\ell-V(\lambda))})$, where the implied constant is uniform in everything relevant. By \eqref{eq:el2} and the fact that as $|\lambda|\to \infty$, $g_\pm =\mathcal O(\log |\lambda|)$, while $V(\lambda)/\log |\lambda|\to \infty$, we find e.g. that for some fixed $c>0$, $e^{2Ng_+(\lambda)+N\ell }w(\lambda)h_t(\lambda)=\mathcal O(\lambda^{-c N})$ for $|\lambda|>1+\epsilon$. Combining this with the fact that Proposition \ref{prop:R} and Proposition \ref{prop:R2} imply that $R^{-1}R'$ is uniformly bounded on the integration contour, as are $(P^\infty)^{\pm 1},P^{\infty \prime}$, we find that for any fixed $\epsilon>0$,
\begin{align*}
\frac{1}{2\pi i}&\int_{\mathbb R\setminus[-1-\epsilon,1+\epsilon]]}\big[Y^{-1}Y'\big]_{21}(\lambda;t) w(\lambda) h_t(\lambda)d\lambda=\mathcal O\left(\int_{1+\epsilon}^\infty \lambda^{-Nc}d\lambda\right)=o(1)
\end{align*}
as $N\to\infty$ (in fact, the quantity is exponentially small, but this is not important to us).

\medskip

Thus our task is to understand the large $N$ asymptotics of 
\begin{align*}
\frac{1}{2\pi i}&\int_{\mathcal C_N}\big[Y^{-1}Y'\big]_{11}(\lambda;t){ w(\lambda)}d\lambda\\
&=N\int_{\mathcal C_N}g'(\lambda)w(\lambda)\frac{d\lambda}{2\pi i}+\int_{\mathcal C_N}[(P^\infty)^{-1}(\lambda;t)(P^\infty)'(\lambda;t)]_{11}w(\lambda)\frac{d\lambda}{2\pi i}\\
&\quad +\int_{\mathcal C_N}[(P^\infty)^{-1}(\lambda;t)R^{-1}(\lambda;t)R'(\lambda;t)P^\infty(\lambda;t)]_{11}w(\lambda)\frac{d\lambda}{2\pi i}.
\end{align*}

Recalling the definition of $g$ from \eqref{def:g}, Cauchy's integral formula yields immediately (recall the orientation of $\mathcal C_N$) that 
\[
N\int_{\mathcal C_N}g'(\lambda)w(\lambda)\frac{d\lambda}{2\pi i}=N\int w(x)d\mu_V(x).
\]
For the last term, combining Proposition \ref{prop:R} and Proposition \ref{prop:R2} with the fact that $P^\infty$ has at worst fourth root singularities at $\pm 1$ and a logarithmic one at $x_j$, yields (after a routine calculation) that 
\begin{align*}
&\int_{\mathcal C_N}[(P^\infty)^{-1}(\lambda;t)R^{-1}(\lambda;t)R'(\lambda;t)P^\infty(\lambda;t)]_{11}w(\lambda)\frac{d\lambda}{2\pi i}\\
&=\mathcal O(N^{-1}\epsilon_N^{-1} \delta_N^{-3/2})+\mathcal O(N^{-1}\epsilon_N^{-1} (\log \epsilon_N^{-1})^2)=o(1)
\end{align*}
as $N\to\infty$ -- in the last step, we also made use of the definition of $\epsilon_N$ and $\delta_N$ from \eqref{domain}.

For the remaining term, we first note that by the definitions of Section \ref{sec:global}, we find
\begin{align*}
[(P^\infty)^{-1}(\lambda;t)(P^\infty)'(\lambda;t)]_{11}&=(Q^{-1}(\lambda)Q'(\lambda))_{11}-\frac{D'(\lambda;t)}{D(\lambda;t)}\\
&=-\partial_\lambda \log D_\gamma(\lambda)-\partial_\lambda\log D_{w}(\lambda;t),
\end{align*}
where we used the fact that the $Q$-term vanishes as one readily checks from \eqref{eq:QQdef}. It follows that
\begin{align*}
&\int_{\mathcal C_N}[(P^\infty)^{-1}(\lambda;t)(P^\infty)'(\lambda;t)]_{11}w(\lambda)\frac{d\lambda}{2\pi i}\\
&=-\int_{\mathcal C_N} w(\lambda) \partial_\lambda\log D_\gamma(\lambda)\frac{d\lambda}{2\pi i}+\int_{\mathcal C_N}\log D_{w}(\lambda;t)w'(\lambda)\frac{d\lambda}{2\pi i} 
\end{align*}
The quantity $\partial_\lambda \log D_\gamma(\lambda)$ has been evaluated in \cite[(6.11)]{Charlier}: one has 
\[
\partial_\lambda\log D_\gamma(\lambda)=\sum_{j=1}^2\frac{\gamma_j\sqrt{1-x_j^2}}{\sqrt{2}\sqrt{\lambda^2-1}(\lambda-x_j)},
\]
from which one finds by contour deformation
\[
-\int_{\mathcal C_N} w(\lambda) \partial_\lambda\log D_\gamma(\lambda)\frac{d\lambda}{2\pi i}=\sum_{j=1}^2 \frac{\gamma_j}{\sqrt{2}}\sqrt{1-x_j^2}\mathcal U w(x_j),
\]
where we used the notation \eqref{U_transform}.

To conclude, we note that from \eqref{def:D}, one finds with contour deformation and \eqref{cov3}
\[
\int_{\mathcal C_N}\log D_{w}(\lambda;t)w'(\lambda)\frac{d\lambda}{2\pi i}=\int_{\mathcal C_N}\frac{\sqrt{\lambda^2-1}}{2\pi}\int_{-1}^1 \frac{t w(x)}{\sqrt{1-x^2}}\frac{dx}{\lambda-x}w'(\lambda)\frac{d\lambda}{2\pi i}=t \sigma(w)^2.
\]

Putting everything together, we find that
\begin{align*}
\log \frac{D_N(x_1,x_2;\gamma_1,\gamma_2;w)}{D_N(x_1,x_2;\gamma_1,\gamma_2;0)}= N\int w d\mu_V+\sum_{j=1}^2\frac{\gamma_j}{\sqrt{2}}\sqrt{1-x_j^2}\mathcal U w(x_j)+\sigma(w)^2 \int_0^1 t dt+o(1),
\end{align*}
with the required uniformity. This concludes the proof of Theorem \ref{th:Hankel1} .

\section{Asymptotics for the Hankel determinants in the edge regime}\label{section: RHedge}
 
In this section, we will prove Theorem \ref{th:Hankel3}, namely we consider the case where $t=0$ (or $w\equiv 0$), $x_1=x_2=x$, and we denote $\gamma=\gamma_1+\gamma_2$.
We consider the case where for some fixed $\epsilon>0$, $|x\pm 1|\leq \epsilon$ and $N\to \infty$ in such a way that for some large but fixed $m$, ${N}^{2/3}|x\pm 1|{\geq m}$. We also suppose that $\gamma\in[-\Gamma, \Gamma]$  for some fixed $\Gamma>0$,  as required for Theorem \ref{th:Hankel3}. 
Uniform asymptotics for $|x\pm 1|\geq \epsilon$ were established in \cite{Charlier}, so we do not need to treat this case. We can assume without loss of generality that $x$ is close to $1$, the case of $-1$ is similar or can be treated when considering the potential $V(-x)$ instead of $V(x)$.

With the lenses chosen as in Figure \ref{fig:lens} with $x_2=x_1$ and $\mathcal S_N$ taken to be independent of $N$ as in Section \ref{sec:merging} with $w=0$, 
we define disks $B(-1,\delta)$ and $B(1,\delta)$ around $\pm 1$ with radius $\delta>\epsilon$ which is sufficiently small but independent of $N$  -- we could fix e.g. $\delta=2\epsilon$. The disk $B(1,\delta)$ will then contain both $1$ and $x$.
The local parametrix in $B(-1,\delta)$ is constructed exactly in the same way as before in the merging and separated regime, and satisfies the same RH conditions and matching condition with the global parametrix -- uniformly in the relevant $x$  and $\gamma\in[-\Gamma, \Gamma]$. Throughout the section, all error terms are uniform for $\gamma\in[-\Gamma, \Gamma]$.

Our approach in this section will be to integrate the differential identity  \eqref{eq:diffidconfluent} from $1-\epsilon$ to $x$ so that combining the asymptotics of \cite{Charlier} with our RH analysis for the asymptotics of the differential identity will yield the result of Theorem \ref{th:Hankel3}. Throughout this section, $x$ will be used both for the dummy integration variable -- or in other words, the variable appearing in \eqref{eq:diffidconfluent} -- as well as the end point of the integral -- or in other words, the variable appearing in Theorem \ref{th:Hankel3}. We hope this will cause no confusion for the reader.

\subsection{Asymptotic analysis of $Y$}\label{SectionY}

We start from the RH problem for $S$ given in Section \ref{section: diffid}, and construct a local parametrix containing both the edge of the support at $1$ and the singularity at $x$.
Let $f$ and $\lambda_x$ be conformal maps from a sufficiently small neighbourhood $B(1,\delta)$ of $1$ to a neighbourhood of $0$ defined by 
\begin{equation} \label{def:f1}f(z)=\left(\frac{3\pi}{2}\int_1^z \psi(s)\sqrt{s^2-1}ds\right)^{2/3},\qquad \lambda_x(z)=-\frac{f(z)}{f(x)},
\end{equation}
with the branches in the definition of $f$ taken such that $f$ is analytic on $B(1,\delta)$ and positive on $(1,1+\delta)$.

Define $u=u_{N,x}$ as
\begin{equation}\label{def:u}
u_{N,x}=-N^{2/3}f(x)=\left(\frac{3N}{2}|\xi_+(x)|\right)^{2/3},
\end{equation}
such that $u_{N,x}$ will be a large parameter, with 
\begin{equation}\label{uNxlarge}
u_{N,x}=\mathcal O(N^{2/3}(1-x)),
\end{equation}
as $N\to \infty$, uniformly for $x\in(1-\epsilon,1)$. We also choose $m$ large enough for the asymptotics \eqref{eq:smallnormPhi} to be valid for $\Xi(\lambda,u_{N,x})$ uniformly for all $x\in(1-\epsilon,1-m N^{-2/3})$.

Since we assumed that $x$ is in $B(1,\delta)$, and possibly approaches $1$ as $N\to\infty$, we think of $|f(x)|$ being small compared to $|f(z)|$ with $z\in \partial B(1,\delta)$, implying that $|\lambda_x(z)|$ on the other hand should be thought of being large.

By definition, 
\begin{equation}\label{largelambda}
\lambda_x(z)=\mathcal O((1-x)^{-1}), \end{equation}
 uniformly  for $z\in B(1,\delta)$.
More precisely, we have 
\begin{equation}
\label{eq:lambdx0}
\lambda_x(z)=-1+c_1(z-x)+c_2(z-x)^2+\mathcal O((z-x)^3),\qquad z\to x,
\end{equation}
with
\begin{equation}\label{derivativezeta1}
c_1=\lambda_x'(x)=\frac{1}{1-x}(1+\mathcal O(1-x)),\qquad 
c_2=\mathcal O\left(\frac{1}{1-x}\right),
\end{equation}
as $x\to 1$.

In $B(1,\delta)$, we define
\begin{equation}\label{def:P1}
P(z)=E(z)\Phi\left(
\lambda_x(z)
;u_{N,x}\right),
\end{equation} 
where $\Phi$ is the solution to the model RH problem from Section \ref{sect:paremetrix1}, and where
\begin{equation}\label{def:E1}
E(z)=P^{\infty}(z)M(\lambda_x(z))^{-1},
\end{equation}
with $M$ the global parametrix for $\Phi$ defined in \eqref{def:M} and  $P^{\infty}$ as in \eqref{def:Pinfty}.
We also define the jump contour $\Sigma_S$ for $S$ near $1$ in such a way that $\lambda_x$ maps $\Sigma_S\cap B(1,\delta)$ to a subset of the jump contour $\Sigma_\Phi$ for $\Phi$.

Since $P^{\infty}(z)$ and $M(\lambda_x(z))$ satisfy the same jump relations in $B(1,\delta)$, {and the possible isolated singularities at $x,1$ are not strong enough to be poles,} $E$ is analytic in $B(1,\delta)$.
Moreover, from the asymptotic analysis for $\Phi$ done in Section \ref{sec:model},  in particular from \eqref{defR} and \eqref{eq:smallnormPhi}, we know that $\Phi(\lambda;u)M(\lambda)^{-1}=I+\mathcal O(u^{-3/2}\lambda^{-1})$ for $\lambda$ sufficiently large as $u\to\infty$.

Hence, we have
\[
\Phi(\lambda_x(z);u_{N,x})M(\lambda_x(z))^{-1} = I+\mathcal O(u_{N,x}^{-3/2}\lambda_x(z)^{-1}) 
\]
{as $N\to \infty$, uniformly for $z\in\partial B(1,\delta)$ and $x$.} 
By \eqref{def:f1} and \eqref{def:u}, we obtain the uniform matching condition
\begin{equation*}
P(z)P^{\infty}(z)^{-1}=P^\infty(z)M^{-1}(\lambda_x(z))\left(I+\mathcal O(|1-x|^{-1/2}N^{-1})\right)M(\lambda_x(z))P^\infty(z)^{-1}\nonumber 
\end{equation*}
as $N\to \infty$, for $z\in\partial B(1,\delta)$. By \eqref{largelambda} and the definition of $M$ in \eqref{def:M}, we have $M(\lambda_x(z))^{\pm 1}=\mathcal O(|1-x|^{-1/4})$ as $x\to 1$, and so
\begin{equation}
P(z)P^{\infty}(z)^{-1}=I+\mathcal O\left(|1-x|^{-1}N^{-1}\right)=I+{\mathcal O}(N^{-1/3}),\label{smallP1Pinfty} \end{equation}
uniformly for $z\in B(1,\delta)$ as $N\to \infty$, where the last estimate follows from the fact that we assume that $N^{2/3}|1-x|\geq m$ for some fixed $m$.

The local parametrix near $-1$ is constructed in terms of the Airy model RH problem as before, and satisfies the matching condition
\begin{equation}\label{smallP-1Pinfty}
P(z)P^{\infty}(z)^{-1}=I+\mathcal O(N^{-1}),
\end{equation}
as $N\to \infty$, for $z\in\partial B(-1,\delta)$, with $\delta>0$ sufficiently small but independent of $N$, and the implied constant being uniform in $z$ and $x$. 
 
One  readily checks that the jumps of $S$ from Section \ref{sec:lens1} cancel inside of $B(1,\delta)$ and the only possible singularity, which is at $1$, is removable so $SP^{-1}$ has an analytic continuation to $B(1,\delta)$. A similar construction at $-1$ yields an analytic continuation into $B(-1,\delta)$. This motivates the following definition.  Let $R$ be given by
\begin{equation}\label{defhatR}
R(z)=\begin{cases}
 S(z)P^{\infty}(z)^{-1} &\textrm{for } z\in \mathbb C \setminus  (\Sigma_S\cup B( 1,\delta)\cup B(-1,\delta)), \\
 S(z)P(z)^{-1} &\textrm{for } z\in B(1,\delta)\cup B(-1,\delta).
\end{cases}
\end{equation}

By the RH problem for $S$ in Section~\ref{sec:lens1}, \eqref{smallP1Pinfty}, and \eqref{smallP-1Pinfty}, $R$ satisfies the following small norm RH problem:
\subsubsection*{RH problem for $R$}
\begin{itemize}
\item[(a)] $R: \mathbb C \setminus \Sigma_{R} \to \mathbb C$ is analytic, where $\Sigma_{R}$ is the jump contour of $R$, consisting of the two disks $\partial B(\pm 1,\delta)$ and the part of {$\Sigma_S\setminus[-1,1]$} outside these disks.
\item[(b)] On $\Sigma_{R}$, {with an orientation inherited from $\Sigma_S$ as well as the disks oriented in the clockwise manner,} $R$ has the following jumps as  $N\to \infty$,
\begin{align}
R_+(z)&= R_-(z)\left(I+{\mathcal O}(N^{-1/3}|z|^{-2})\right),
\end{align}
uniformly in $z\in\Sigma_R$ and $N^{2/3}|x-1|\geq m$.

\item[(c)] As $z \to \infty$,
\begin{equation*} R(z)=I +\mathcal{O} (z^{-1}).
\end{equation*}
\end{itemize}
By standard small norm analysis (see e.g. \cite{Deift}), it follows that
 \begin{equation}\label{order:R}
R(z)=I+ {\mathcal O}\left(N^{-1/3}\right), \qquad  R'(z)={\mathcal O}\left(N^{-1/3}\right),
\end{equation}
as $N\to\infty$, uniformly for $z\in \mathbb C \setminus \Sigma_{R}$  and uniformly in the relevant $x$ and $\gamma\in [-\Gamma, \Gamma]$. 
 
\subsection{Asymptotic analysis of $\log D_N(x,x;\gamma,0;0) $}

Reverting the transformations $Y\mapsto T\mapsto S\mapsto R$ and using the large $N$ asymptotics for $R$, we can now obtain large $N$ asymptotics for  the right hand side of the differential identity from Proposition \ref{pr:di}.
By \eqref{def:S}, \eqref{def:T} and Proposition \ref{pr:di}, we have
\begin{equation}\label{diffid2}
\frac{d}{dx}\log D_N(x,x;\gamma,0;0)=\frac{i}{2\pi}
\left(1-e^{{\sqrt{2}\pi}\gamma}
\right)e^{2N\xi_+(x)}(S_+^{-1}S_+')_{21}(x),
\end{equation}
where $S_+$ should here be interpreted as the limit from outside the lens in the upper half plane.
For $z$ in $B(1,\delta)$, outside the lens in the upper half plane,
we know from the definition \eqref{defhatR} of $R$ and by \eqref{def:P1}
that $S(z)= R(z) E(z)\Phi\left(\lambda_x(z);u_{N,x}\right)$.
By  \eqref{defR}, \eqref{def:E-1}, and \eqref{def:P-1}, we have 
\[
\Phi(\lambda;u)=\Xi(\lambda)M(\lambda)h_u(\lambda)^{\frac{\gamma}{{\sqrt{2}} i}\sigma_3}e^{\frac{2}{3}iu^{3/2}\sigma_3}\Phi_{\rm HG}\left(h_u(\lambda);\frac{\gamma}{{\sqrt{2}}i}\right)e^{\frac{2}{3}(\lambda u)^{3/2}\sigma_3}e^{-{\sqrt{2}\pi}\frac{\gamma}{4}\sigma_3},
\]
for $\lambda$ near $-1$ in the upper half plane.
Let  $H$ be given by
\begin{equation}\label{def:H}
H(z)=E(z)\Xi(\lambda_x(z))M(\lambda_x(z))h_{u_{N,x}}(\lambda_x(z))^{\frac{\gamma}{{\sqrt{2}} i}\sigma_3}e^{\frac{2i}{3}u_{N,x}^{3/2}\sigma_3}.
\end{equation}
Then for $z$ outside the lens with $\Im z>0$,
\begin{equation}\label{SRHPhi}S(z)= R(z) H(z) \Phi_{\rm HG}\left(h_u(\lambda_x(z))\right)e^{\frac{2}{3}(\lambda_x(z) u_{N,x})^{3/2}\sigma_3}e^{-{\sqrt{2}\pi}\frac{\gamma}{4}\sigma_3}.\end{equation} 
 Substituting \eqref{SRHPhi} into \eqref{diffid2} and using the formula $\xi_+(x)=\frac{2i}{3N}u_{N,x}^{ 3/2}$ (obtained from \eqref{def:u} and the definition of $\xi$) and the fact that $\lambda_{x,+}(x)^{3/2}=-i$,  we obtain
\begin{multline}\label{eq:quantityS2}
\frac{d}{dx}\log D_N(x,x;\gamma,0;0)=\frac{i}{2\pi}
\left(e^{-{\pi}\gamma/\sqrt{2}}-e^{{\pi}\gamma/\sqrt{2}}
\right)\\ \Bigg[\frac{d}{dz}\left. h_{u_{N,x}}(\lambda_x(z))\right|_{z=x} \left(\Phi_{{\rm HG},+}^{-1}(0)\Phi_{{\rm HG},+}'(0)\right)_{21}+
\left(\Phi_{{\rm HG},+}^{-1}(0)H^{-1}(x)H'(x)\Phi_{{\rm HG},+}(0)\right)_{21}\\ +
\left(\Phi_{{\rm HG},+}^{-1}(0)H^{-1}(x)R_+^{-1}(x) R_+'(x)H(x)\Phi_{{\rm HG},+}(0)\right)_{21}\Bigg].
\end{multline}
Here $\Phi_{{\rm HG},+}(0)$ is the boundary value of $\Phi_{\rm HG}$ as $0$ is approached from the sector $\pi/4<\arg\zeta<3\pi/4$.\footnote{Strictly speaking, the second column of $\Phi_{\mathrm{HG}}$ blows up at zero, but the $21$-entries we consider above can be expressed solely in terms of the first column of $\Phi_{\mathrm{HG}}$ -- this is due to the fact that $\det\Phi_{\mathrm{HG}}=1$. This is the way we interpret the above statement.}

 We will now evaluate all the quantities appearing at the right hand side of \eqref{eq:quantityS2}.
 
The first term at the right hand side of  \eqref{eq:quantityS2} can be evaluated directly,
by using \eqref{def:f1},
 \eqref{asym:h}, and Lemma \ref{Lemma1} in the {a}ppendix. 
For the  third term of \eqref{eq:quantityS2}, we need to use \eqref{order:R}, which implies that
 \begin{equation}
\label{eq:Restimate} (R^{-1}R')(x)={\mathcal O}(N^{-1/3}),
\end{equation}
uniformly as $N\to\infty$, for $N^{2/3}|x-1|\geq {m}$ for large enough fixed $m$.

This results in
\begin{multline}
\frac{d}{dx}\log D_N(x,x;\gamma,0;0)={\sqrt{2}\pi}\gamma N{\psi_V}(x)(1-x^2)^{1/2}
\\ +
\frac{i}{2\pi}
\left(e^{-{\pi}\gamma/\sqrt{2}}-e^{{\pi}\gamma/\sqrt{2}}
\right)\left(\Phi_{{\rm HG},+}^{-1}(0)H^{-1}(x)H'(x)\Phi_{{\rm HG},+}(0)\right)_{21}
\\+\left(\Phi_{{\rm HG},+}^{-1}(0)H^{-1}(x)\mathcal O(N^{-1/3})H(x)\Phi_{{\rm HG},+}(0)\right)_{21}.
\end{multline}

The relevant values for $\Phi_{{\rm HG_+}}(0)$ may be found in the appendix,
and so all that remains is to obtain an expression for $H^{-1}H'$, and to bound $H$ and $H^{-1}$, which we do in Lemma \ref{LemmaH} below. 
By Lemma \ref{LemmaH} and Lemma \ref{Lemma1}, we obtain
\[
\frac{d}{dx}\log D_N(x,x;\gamma,0;0)={\sqrt{2}\pi}\gamma N{\psi_V}(x)(1-x^2)^{1/2}
- { \frac{3\gamma^2}{{4}(1-x)}}+ B_{N,x}+ \mathcal O(1),
\]
with $B_{N,x}$ such that $\int_{1-\epsilon}^xB_{N,\eta}d\eta=\mathcal O(1)$ for sufficiently small $\epsilon>0$.

Integrating this expression in $x$ from $1-\epsilon$ to $x$, we finally obtain
\begin{multline*}
\log D_N(x,x;\gamma,0;0)=\log D_N(1-\epsilon, 1-\epsilon;\gamma,0;0)+{\sqrt{2}\pi}\gamma N\int_{1- \epsilon}^x{\psi_V}(\eta)(1-\eta^2)^{1/2}
d\eta\\ +{ \frac{3\gamma^2}{{4}}\log(1-x)}+ \mathcal O(1),
\end{multline*}
as $N\to\infty$, uniformly for $x$ with $|x-1|<\epsilon$ for some small fixed $\epsilon$ and $N^{2/3}|x-1|\geq m$ for some large fixed $m$.

Using the asymptotics from \cite[Theorem 1.1]{Charlier} for $\log D_N(1-\epsilon, 1-\epsilon;\gamma,0;0)$, we get (with the required uniformity, including in $\gamma$) 
\[
\log \frac{D_N(x,x;\gamma,0;0)}{D_N(x,x;0,0;0)}={ {\sqrt{2}\pi}\gamma N\left(1-\int_{x}^1{\psi_V}(\xi)(1-\xi^2)^{1/2}
d\xi \right)}+ \frac{\gamma^2}{{2}}\log N+{ \frac{3\gamma^2}{{4}}}\log(1-x)+ \mathcal O(1).
\]

To complete the proof of Theorem \ref{th:Hankel3}, we now state and prove Lemma \ref{LemmaH}, which we already used above.
\begin{lemma}\label{LemmaH}
As $N\to\infty$, we have {for large enough fixed $m$ and small enough fixed $\epsilon>0$, for} $x$ with $N^{2/3}|x-1|\geq {m}$ and $|x-1|<\epsilon$
\begin{equation}\label{estimateH1}
H(x)=\mathcal O((1-x)^{-1/4}),\qquad H^{-1}(x)=\mathcal O((1-x)^{-1/4})
\end{equation}
\begin{equation}\label{estimateH2}
H^{-1}(x)H'(x)=
-\frac{3\gamma}{{4\sqrt{2}} i(1-x)}\sigma_3
+A_{N,x}+\mathcal O(1),
\end{equation}
for some matrix $A_{N,x}$ which is such that, for sufficiently small $\epsilon>0$,
\[\int_{1-\epsilon}^{x}A_{N,\eta}d\eta=\mathcal O(1)\]
where the estimates are uniform in $x$ and  in $\gamma\in [-\Gamma, \Gamma]$.
\end{lemma}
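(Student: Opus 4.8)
\textbf{Proof strategy for Lemma \ref{LemmaH}.} The plan is to unravel the definition \eqref{def:H} of $H$, namely
\[
H(z)=E(z)\,\Xi(\lambda_x(z))\,M(\lambda_x(z))\,h_{u_{N,x}}(\lambda_x(z))^{\frac{\gamma}{\sqrt{2}i}\sigma_3}\,e^{\frac{2i}{3}u_{N,x}^{3/2}\sigma_3},
\]
and estimate each factor and its logarithmic derivative separately, keeping careful track of the uniformity in $x$ near $1$ and in $\gamma\in[-\Gamma,\Gamma]$. For the boundedness statement \eqref{estimateH1}, the key inputs are: $E(x)=P^\infty(x)M(\lambda_x(x))^{-1}$ from \eqref{def:E1}, where Lemma \ref{lemma:boundw} controls $[P^\infty]^{\pm1}$ (in fact $\gamma_1,\gamma_2$ are bounded so $P^\infty=\mathcal O(|1-x|^{-1/4})$ via \eqref{def:D}, \eqref{expressionD}), and $M(\lambda_x(z))^{\pm1}=\mathcal O(|1-x|^{-1/4})$ from \eqref{def:M} and \eqref{largelambda}; the factor $\Xi(\lambda_x(x))^{\pm1}=I+\mathcal O(u_{N,x}^{-3/2}(1+|\lambda_x(x)|)^{-1})=I+\mathcal O(N^{-1})$ by \eqref{eq:smallnormPhi} and \eqref{def:u}; the factor $h_{u_{N,x}}(\lambda_x(x))^{\frac{\gamma}{\sqrt{2}i}\sigma_3}$ has modulus controlled since $|\cdot^{\frac{\gamma}{\sqrt{2}i}\sigma_3}|=e^{\mp\frac{\gamma}{\sqrt{2}}\arg(\cdot)}$ is bounded uniformly in $\gamma\in[-\Gamma,\Gamma]$; and the last exponential is unitary. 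Multiplying the bounds gives $H(x)^{\pm1}=\mathcal O((1-x)^{-1/4})$ — actually one should note that the $E$-factor's $M(\lambda_x(z))^{-1}$ partially cancels the $M(\lambda_x(z))$ factor, so one can alternatively write $H(z)=P^\infty(z)\big(M(\lambda_x(z))^{-1}\Xi(\lambda_x(z))M(\lambda_x(z))\big)h_{u_{N,x}}(\lambda_x(z))^{\frac{\gamma}{\sqrt{2}i}\sigma_3}e^{\frac{2i}{3}u_{N,x}^{3/2}\sigma_3}$, and then the only genuinely unbounded behaviour as $x\to1$ comes from $P^\infty(x)=\mathcal O((1-x)^{-1/4})$.

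For the logarithmic-derivative statement \eqref{estimateH2}, the plan is to differentiate $H$ via the product rule and isolate the term that produces the $-\frac{3\gamma}{4\sqrt{2}i(1-x)}\sigma_3$ singularity. The dominant contribution comes from differentiating $h_{u_{N,x}}(\lambda_x(z))^{\frac{\gamma}{\sqrt{2}i}\sigma_3}$: this yields $\frac{\gamma}{\sqrt{2}i}\frac{\partial_z[h_{u_{N,x}}(\lambda_x(z))]}{h_{u_{N,x}}(\lambda_x(z))}\sigma_3$, and by \eqref{def:f1}, \eqref{asym:h}, the expansion \eqref{eq:lambdx0}--\eqref{derivativezeta1}, together with the behaviour of $h_u$ near its zero (see Lemma \ref{Lemma1} in the appendix, which is exactly what is cited at the point of use), one extracts a pole $\tfrac{3/2}{z-x}$-type term whose residue produces the stated $-\frac{3\gamma}{4\sqrt{2}i(1-x)}$ after evaluating at $z=x$ and using $c_1=\frac{1}{1-x}(1+\mathcal O(1-x))$. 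Here one must be careful: it is not $h_{u}(\lambda_x(z))$ itself but the combination appearing together with the $u_{N,x}^{3/2}$ term; however the purely $N$-dependent exponential $e^{\frac{2i}{3}u_{N,x}^{3/2}\sigma_3}$ is constant in $z$ and contributes nothing to $H^{-1}H'$. The remaining terms — those from differentiating $P^\infty$, from $M(\lambda_x(z))^{-1}\Xi M(\lambda_x(z))$, and the subleading pieces of the $h_u$-derivative — are collected into $A_{N,x}$; each is either $\mathcal O(1)$ uniformly (the $\Xi$-term is $\mathcal O(N^{-1/3})$ times bounded conjugating matrices by \eqref{eq:smallnormPhi}), or has an integrable singularity of the form $\mathcal O((1-x)^{-1})$ with a coefficient that is a \emph{total derivative} in the appropriate sense, so that $\int_{1-\epsilon}^x A_{N,\eta}\,d\eta=\mathcal O(1)$. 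The cleanest way to verify this last point is to observe that $P^\infty{}^{-1}(P^\infty)'$ integrated against the known explicit form \eqref{def:D}, \eqref{expressionD} gives $\log$-type antiderivatives, and similarly that $M(\lambda_x(z))^{-1}M(\lambda_x(z))'$ has an explicit primitive via \eqref{def:M}; combined with the conjugation by $P^\infty$ and by $h_u^{\frac{\gamma}{\sqrt{2}i}\sigma_3}$, whose $z$-dependence is likewise explicit, one sees that the error collected in $A_{N,x}$ integrates to a bounded quantity.

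The main obstacle I anticipate is the bookkeeping in separating the genuine $\frac{1}{1-x}$ singularity of $H^{-1}H'$ from the many subleading $\frac{1}{1-x}$-type contributions that must cancel or integrate to $\mathcal O(1)$: the conformal maps $f$, $\lambda_x$ and $h_u$ all have expansions near $z=x$ with coefficients that themselves blow up like powers of $(1-x)^{-1}$ (cf. \eqref{derivativezeta1}), and since $x$ itself is tending to $1$, a naive bound on each term is not enough — one needs the precise cancellation coming from the structure of $M$ and $\Phi_{\mathrm{HG}}$ and the fact that the problematic pieces are exact $z$-derivatives. This is precisely the content encapsulated by the statement "$\int_{1-\epsilon}^x A_{N,\eta}\,d\eta=\mathcal O(1)$", and I would prove it by exhibiting explicit primitives term-by-term rather than by estimating, exactly mirroring the strategy used for the analogous (simpler) quantities in \cite{Charlier}. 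The boundedness claims \eqref{estimateH1} and the extraction of the leading singular term are routine once the factorization is in place; the delicate part is controlling $A_{N,x}$, and I would devote the bulk of the written proof to it.
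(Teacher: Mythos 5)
Your bookkeeping for the boundedness estimates \eqref{estimateH1} is essentially right, as is the factorization of $H$; but the plan for \eqref{estimateH2} has a genuine gap at the point you yourself flag as the delicate one, namely showing $\int_{1-\epsilon}^x A_{N,\eta}\,d\eta=\mathcal O(1)$.

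First, a smaller inaccuracy: you attribute the leading singular term to differentiating $h_{u_{N,x}}(\lambda_x(z))^{\frac{\gamma}{\sqrt{2}i}\sigma_3}$ alone, extracting a pole of $\frac{h_u'}{h_u}$ type. But $h_{u_{N,x}}(\lambda_x(x))=0$, so that factor by itself has a branch singularity at $z=x$ and its logarithmic derivative is literally infinite there; what makes $H^{-1}(x)H'(x)$ finite (apart from the stated $\frac{1}{1-x}$ blow-up as $x\to 1$) is that the branch cut of $h_u^{\beta\sigma_3}$ at $\lambda=-1$ cancels against the matching branch cut of $M(\lambda)$. The paper exploits this by expanding the \emph{combined} object $M(\lambda)h_u(\lambda)^{\frac{\gamma}{\sqrt{2}i}\sigma_3}=\lambda^{-\frac14\sigma_3}A(4u)^{\frac{3\gamma}{2\sqrt{2}i}\sigma_3}\bigl(I-\frac{3\gamma}{4\sqrt{2}i}\sigma_3(\lambda+1)+\mathcal O((\lambda+1)^2)\bigr)$ near $\lambda=-1$, and the coefficient $-\frac{3\gamma}{4\sqrt{2}i}$ of the linear term, times $\lambda_x'(x)\sim(1-x)^{-1}$, gives exactly the stated leading singularity. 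So the cancellation is structural and must be exhibited before one can meaningfully talk about residues. You also cite Lemma \ref{Lemma1} at this point, but that lemma concerns $\Phi_{\rm HG}$ at the origin and is used elsewhere (for the first term of \eqref{eq:quantityS2}), not here.

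The more serious issue is the claim that $A_{N,x}$ ``integrates to a bounded quantity'' because each contribution has a $\log$-type primitive obtained from the explicit forms of $P^\infty$, $M$, etc. This is not what happens. After factoring out $(4u_{N,x})^{\frac{3\gamma}{2\sqrt{2}i}\sigma_3}e^{\frac{2i}{3}u_{N,x}^{3/2}\sigma_3}$ on the right and pushing it through the conjugation, the dominant remaining contribution to $H^{-1}H'$ comes from $Q^{-1}(x)Q'(x)=\frac{i}{2(1-x^2)}\begin{pmatrix}0&1\\-1&0\end{pmatrix}$, which is purely \emph{off-diagonal}. Conjugating an off-diagonal matrix by the large diagonal matrix $(4u_{N,x})^{\frac{3\gamma}{2\sqrt{2}i}\sigma_3}e^{\frac{2i}{3}u_{N,x}^{3/2}\sigma_3}$ does not leave it invariant — it multiplies the off-diagonal entries by $e^{\pm\frac{4i}{3}u_{N,x}^{3/2}}(4u_{N,x})^{\mp\frac{3\gamma}{\sqrt{2}i}}$. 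Thus $A_{N,x}$ is genuinely an oscillatory matrix with a large phase $\pm\frac{4}{3}u_{N,x}^{3/2}\sim N(1-x)^{3/2}$ and a pointwise size of order $(1-x)^{-1}$; the function $\eta\mapsto e^{\pm\frac{4i}{3}u_{N,\eta}^{3/2}}/(1-\eta^2)$ has no elementary primitive, and a naive termwise antidifferentiation will not close the argument. What is actually needed is a nonstationary-phase estimate: writing $g_N(\eta)=-\frac{2}{3\pi}u_{N,\eta}^{3/2}+\frac{3\gamma}{2\sqrt{2}\pi}\log(4u_{N,\eta})$, observing $g_N'(\eta)\asymp N\psi(\eta)\sqrt{1-\eta^2}$ is large, and integrating by parts against the phase $e^{2\pi i g_N(\eta)}$; the boundary terms and the error introduced by differentiating the amplitude are all $\mathcal O(N^{-1}(1-x)^{-3/2})$, which is $\mathcal O(1)$ under the hypothesis $N^{2/3}(1-x)\geq m$. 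Your proposal misses both the off-diagonal origin of $A_{N,x}$ (and hence the source of the oscillation) and the need for an oscillatory-integral argument in place of explicit primitives.
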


\begin{proof}
Our goal is to estimate $H^{\pm 1}(x)$ and $H^{-1}(x)H'(x)$ by estimating the various terms appearing in the definition \eqref{def:H}. We begin with $E$.	
	
{ \underline{Estimating $E^{\pm 1}(x)$.}}  { Recall that $E$} was defined in \eqref{def:E1}. By \eqref{def:E1}, \eqref{def:M}, and \eqref{def:Pinfty}, we have
\begin{align}\label{FormE}
E(z)=D_\infty^{\sigma_3}Q(z)F(z)A^{-1}(\lambda_x(z))^{\frac{\sigma_3}{4}},\end{align}
where 
\begin{equation}\label{defF}
F(z)=D(z)^{-\sigma_3}\left(\frac{1+e^{-\frac{\pi i}{2}}\sqrt{\lambda_x(z)}}{\sqrt{\lambda_x(z)+1}}\right)^{{\sqrt{2}i}\gamma\sigma_3},
\end{equation}
\[
A=\frac{1}{\sqrt{2}}\begin{pmatrix}
	1 & i\\i & 1
	\end{pmatrix},
\]
and $\lambda_x$ is as in \eqref{def:f1}. 

By \eqref{expressionD} (note that compared to \eqref{def:f1}, the change in the sign of the term inside the root compensates for the missing prefactor $e^{\pi\gamma/\sqrt{2}}$), we have
 \[F(z)=
\left(\frac{-zx+1+e^{\frac{\pi i}{2}}\sqrt{(z^2-1)(1-x^2)}}{z-x}\right)^{\frac{-\gamma}{{\sqrt{2}} i}\sigma_3}\left(\frac{1+e^{-\frac{\pi i}{2}}\sqrt{\lambda_x(z)}}{\sqrt{\lambda_x(z)+1}}\right)^{{\sqrt{2}i}\gamma\sigma_3},
\] 
and since $\lambda_x(z)\in(-1,0)$ for $z\in(x,1)$, it follows that that $|F_+(z)|=1$ for $z\in (x,1)$.

As $z\to x_+$, we find that
\begin{multline}\label{expansionF}
\left(\frac{-zx+1+e^{\frac{\pi i}{2}}\sqrt{(z^2-1)(1-x^2)}}{z-x}\right)\left(\frac{1+e^{-\frac{\pi i}{2}}\sqrt{\lambda_x(z)}}{\sqrt{\lambda_x(z)+1}}\right)^2\\=
\frac{1}{c_1(1-x^2)}\left(2+(z-x)\left[\frac{2}{1-x^2}-c_1-2\frac{c_2}{c_1}\right]\right)+\mathcal O((z-x)^2),\end{multline}
where $c_1$ and $c_2$ are as in \eqref{eq:lambdx0}. By  \eqref{derivativezeta1}, \eqref{expansionF} it follows that
\begin{equation}\label{asymF} F_{+}(x)=I+\mathcal O(1-x),\end{equation}
as $x\to 1$.  Furthermore, we note for future reference that \eqref{derivativezeta1} and \eqref{expansionF} imply that $F_+'(x)$ is bounded as $x\to 1$.

Using also the fact that $Q_+(x)^{\pm 1}=\mathcal O((1-x)^{-1/4})$ as $x\to 1$, we immediately get that $E(x), E^{-1}(x)=\mathcal O((1-x)^{-1/4})$ as $x\to 1$, where we also note that $E(x)$ is independent of $N$. { This concludes our estimate for $E$.}

\medskip

{ \underline{Estimating $H^{\pm 1}(x)$:}} The asymptotics for $E{ (x)}$ and $\Xi{ (z)}$ are in fact sufficient to deduce \eqref{estimateH1}: using \eqref{def:E1}, \eqref{def:H}, \eqref{eq:smallnormPhi} we get for $z$ close\footnote{{ We are using this representation to estimate $H$ at $z$, so close now means that $z\to x$ e.g. faster than $x\to 1$ and $\lambda_x(z)^{\pm 1}$ is close to $-1$ so we can ignore the $\lambda_x(z)^{\pm \sigma_3/4}$-terms.}} to $x$  and in the upper half plane outside the lens,
\begin{multline*}H(z)=P^\infty(z)h_u(\lambda_x(z))^{\frac{\gamma}{{\sqrt{2}} i}\sigma_3}e^{\frac{2i}{3}u_{N,x}^{3/2}\sigma_3}\\
+
E(z)\mathcal O((1-x)^{-3/2}N^{-1})M(\lambda_x(z)){h_{u_{N,x}}}(\lambda_x(z))^{\frac{\gamma}{{\sqrt{2}} i}\sigma_3}e^{\frac{2i}{3}u_{N,x}^{3/2}\sigma_3},
\end{multline*}
as $N\to \infty$, uniformly for $x\in(1-\epsilon,1-m/N^{2/3})$, for some $\epsilon,m>0$.

If we now take the limit $z\to x$, since $P^\infty(x)=\mathcal O((1-x)^{-1/4})$ as $x\to 1$, the first term at the right hand side is $\mathcal O((1-x)^{-1/4})${ -- note that while $h_{u_{N,x}}(\lambda_x(x))=0$, this does not cause any further blowup since the power of $h_{u_{N,x}}$ is imaginary and we take the limit $z\to x$ in a way where there is no winding}.
The same applies to the second term, uniformly for $x\in(1-\epsilon,1-m/N^{2/3})$, since $E$ is independent of $N$ and we have shown that $E( x)=\mathcal O((1-x)^{-1/4})$ as $x\to 1$ and  $M(\lambda_x(z))h_{ u_{N,x}}(\lambda_x(z))^{\frac{\gamma}{{\sqrt{2}} i}\sigma_3}=\mathcal O(1)$ as $x\to 1$ (as follows from \eqref{def:M} and \eqref{asym:h}). The same argument is easily applied to $H^{-1}$.  We conclude that $H^{\pm 1}(x)=\mathcal O((1-x)^{-1/4})$, which was precisely the claim of \eqref{estimateH1}.

\medskip

{ \underline{Estimating $H^{-1}(x)H'(x)$.}} For \eqref{estimateH2}, 
we need more detailed expansions. Using \eqref{asym:h}  and \eqref{def:M}, it is straightforward to check that
\[M(\lambda)h_u(\lambda)^{\frac{\gamma}{{\sqrt{2}} i}\sigma_3}=\lambda^{-\frac{1}{4}\sigma_3}A(4u)^{\frac{3\gamma}{{2\sqrt{2}} i}\sigma_3}\left(I-\frac{3\gamma}{{4\sqrt{2}} i}\sigma_3(\lambda+1) +\mathcal O((\lambda+1)^2)\right),\]
as $\lambda\to -1$.  Let us now introduce the notation
\begin{equation}\label{defG} G(z)=E(z)\Xi(\lambda_x(z))\lambda{ _x(z)}^{-\frac{1}{4}\sigma_3}A.\end{equation}
Combining this and the above expansion with the definition of $H$ from \eqref{def:H}, we obtain after a short calculation that
\[
H(z)=G(z) (4u_{N,x})^{\frac{3\gamma}{2\sqrt{2}i}\gamma \sigma_3}\left(I-\frac{3\gamma}{{4\sqrt{2}} i}\sigma_3(\lambda_x(z)+1) +\mathcal O((\lambda_x(z)+1)^2)\right)e^{\frac{2i}{3}u_{N,x}^{3/2}\sigma_3}
\]
and it follows by  \eqref{eq:lambdx0}--\eqref{derivativezeta1} that
\begin{multline}\label{eq:Heq}
H^{-1}(x)H'(x) =  e^{\frac{-2i}{3}u_{N,x}^{3/2}\sigma_3}(4u_{N,x})^{-\frac{3\gamma}{{2\sqrt{2}} i}\sigma_3}
G^{-1}(x)G'(x)
(4u_{N,x})^{\frac{3\gamma}{{2\sqrt{2}}i}\sigma_3}
e^{\frac{2i}{3}u_{N,x}^{3/2}\sigma_3}\\
-\frac{3\gamma}{{4\sqrt{2}} i(1-x)}\sigma_3  +\mathcal O(1) 
,
\end{multline}
as $N\to\infty$ uniformly for $x\in (1-\epsilon,1-m/N^{2/3})$, for some $\epsilon,m>0$.

 Let
\begin{equation*}
\widehat \Xi(z)=A^{-1}\lambda{ _x(z)}^{\frac{1}{4}\sigma_3}\Xi(\lambda_x(z))\lambda{ _x(z)}^{-\frac{1}{4}\sigma_3}A.\end{equation*}
Then by \eqref{FormE} and \eqref{defG},
\begin{equation} \label{FormG}
G(z)=D_\infty^{\sigma_3}Q(z)F(z)\widehat \Xi(z).\end{equation}

Comparing with \eqref{estimateH2}, we see that our task is to estimate the $G^{-1}G'$-term.  
To do this, we first observe that, by \eqref{eq:smallnormPhi} and \eqref{eq:lambdx0}--\eqref{derivativezeta1},
\begin{equation}
\label{eq:Xiestimate}
\widehat \Xi(x)=I+\mathcal O(u_{N,x}^{-3/2}),\qquad \left(\widehat \Xi^{-1}\widehat \Xi'\right)(x)=\mathcal O\left(u_{N,x}^{-3/2}(1-x)^{-1}\right).
\end{equation}

Substituting \eqref{eq:Xiestimate} and \eqref{asymF} into \eqref{FormG}, and using the boundedness of $F_+'(x)$ as $x\to 1$, it follows that
\begin{equation*}
G^{-1}_+(x)G'_+(x)=Q^{-1}_+(x)Q'_+(x)\left(I+\mathcal O\left(u_{N,x}^{-3/2}\right)+\mathcal O(1-x)\right)+\mathcal O\left(u_{N,x}^{-3/2}(1-x)^{-1}\right)+\mathcal O(1),
\end{equation*}
as $N\to \infty$, uniformly for $x\in(1-\epsilon,1-m/N^{2/3})$, for some $\epsilon,m>0$.

By the fact that (see \eqref{def:Pinfty})
 \[\left(Q^{-1}Q'\right)(x)=\frac{i}{2 (1-x^2)}\begin{pmatrix}0&1\\-1&0\end{pmatrix},\]
and the fact that $u_{N,x}=\mathcal O\left(N^{2/3}(1-x)\right)$ as $N\to \infty$ uniformly for $x\in\left(1-\epsilon, 1-m/N^{2/3}\right)$ for some $\epsilon,m>0$, it follows that
\[G^{-1}_{ +}(x)G'_{ +}(x)=\frac{i}{2(1-x^2)}\begin{pmatrix}0&1\\-1&0\end{pmatrix}+\mathcal O(1)+\mathcal O\left(\frac{1}{N(1-x)^{5/2}}\right),\]
as $N\to \infty$, uniformly for $x\in(1-\epsilon,1-m/N^{2/3})$ for some $\epsilon,m>0$. Substituting this into \eqref{eq:Heq}, we conclude that
\begin{align*}
H^{-1}(x)H'(x)&=-\frac{3\gamma}{{4\sqrt{2}} i(1-x)}\sigma_3+\frac{i}{2(1-x^2)}
\begin{pmatrix}
0& e^{\frac{-4i}{3}u_{N,x}^{3/2}}(4u_{N,x})^{-\frac{3\gamma}{{\sqrt{2}} i}}\\
-e^{\frac{4i}{3}u_{N,x}^{3/2}}(4u_{N,x})^{\frac{3\gamma}{{\sqrt{2}} i}}&0
\end{pmatrix}
\\
&\quad +\mathcal O(1) +\mathcal O\left( \frac{1}{N(1-x)^{5/2}}\right)
.
\end{align*}
The last term integrates to $\mathcal O(N^{-1}(1-x)^{-3/2})$ which is of order one by our assumptions. All that remains to prove \eqref{estimateH2} is to show that the integral of the terms involving $u_{N,x}$ is bounded. Let 
\begin{equation}\nonumber
g_N(\eta)=-\frac{2}{3\pi}u_{N,\eta}^{3/2}+\frac{3\gamma}{2\sqrt{2}\pi}\log(4u_{N,\eta}).
\end{equation}
We will prove that
\begin{equation}\label{intgN0}
\int_{1-\epsilon}^{x}e^{2\pi i g_N(\eta)}\frac{d\eta}{1-\eta^2}=\mathcal O(1),
\end{equation}
as $N\to \infty,$ uniformly for $x\in\left(1-\epsilon, 1-m/N^{2/3}\right)$, which is precisely the integral of the $1,2$ entry in the matrix containing $u_{N,x}$. The $2,1$ entry is similar.
By \eqref{def:u}, we have
\begin{equation}\label{derivativegN}
\begin{aligned}
\frac{d}{d\eta}g_N(\eta)&=N\psi(\eta)\sqrt{1-\eta^2}-\frac{\gamma\psi(\eta)\sqrt{1-\eta^2}}{\sqrt{2}\pi\int_\eta^1 \psi(s)\sqrt{1-s^2}ds}\\
&=N\psi(\eta)\sqrt{1-\eta^2}\left(1+\mathcal O \left( \frac{1}{N(1-\eta)^{3/2}}\right)\right),
\end{aligned}\end{equation}
 uniformly for $\eta\in(1-\epsilon,1-m N^{-2/3})$.

To prove \eqref{intgN0}, and thus conclude the proof of the lemma, we first note that plugging \eqref{derivativegN} (in the form $1=\frac{g_N'(\eta)}{N\psi(\eta)\sqrt{1-\eta^2}}+\mathcal O(\frac{1}{N(1-\eta)^{3/2}})$) into the integral of \eqref{intgN0} and using the fact that $g_N$ is real-valued yields
\begin{align*}
\int_{1-\epsilon}^x e^{2\pi i g_N(\eta)}\frac{d\eta}{1-\eta^2}=\int_{1-\epsilon}^x e^{2\pi i g_N(\eta)}\frac{1}{1-\eta^2}\frac{g_N'(\eta)}{N\psi(\eta)\sqrt{1-\eta^2}}d\eta+\mathcal O\left(\frac{1}{N}\int_{1-\epsilon}^x \frac{1}{(1-\eta)^{5/2}}d\eta\right).
\end{align*}
For the last term, we note that the integral is $\mathcal O(N^{-1}(1-x)^{-3/2})$ which is $\mathcal O(1)$ under our assumptions, so it is sufficient to show that the first term on the right hand side is bounded. For this, we note that integrating by parts, 
\begin{align*}
\int_{1-\epsilon}^x e^{2\pi i g_N(\eta)}\frac{1}{1-\eta^2}\frac{g_N'(\eta)}{N\psi(\eta)\sqrt{1-\eta^2}}d\eta&=\mathcal O(N^{-1}(1-x)^{-3/2})\\
&\qquad -\frac{1}{2\pi i N}\int_{1-\epsilon}^x e^{2\pi i g_N(\eta)}\frac{d}{d\eta}\left(\frac{1}{\psi(\eta)(1-\eta^2)^{3/2}}\right)d\eta\\
&=\mathcal O(N^{-1}(1-x)^{-3/2})+\mathcal O\left(N^{-1}\int_{1-\epsilon}^x (1-\eta)^{-5/2}d\eta\right)\\
&=\mathcal O(N^{-1}(1-x)^{-3/2})
\end{align*}
which once again is bounded under our assumptions. This concludes the proof of both \eqref{intgN0} and the lemma.
\end{proof}

\appendix

\section{More on the log--correlated Gaussian process $X$}\label{app:logcor}

 In this appendix, we review some of the basic properties of the log-correlated field describing the limiting behavior of the eigenvalue counting function.
 
\subsection{Series representation and covariance structure} \label{sec:cov}

Recall that the Gaussian multiplicative chaos measure of Theorem~\ref{th:gmc} is associated with the (centered) log--correlated field $X$ on $(-1,1)$ with covariance kernel:
\[
\Sigma(x,y) = \log\bigg|\frac{1-xy+\sqrt{1-x^2}\sqrt{1-y^2}}{x-y}\bigg| . 
\]

As we pointed out in Section~\ref{sect:background}, in order to see that the kernel $\Sigma$ is positive definite on  $(-1,1)^2$,  one can represent $X$ as a random Chebyshev series:
\begin{equation} \label{X2}
X(x) :=  \sum_{k=1}^\infty \frac{\xi_k}{\sqrt{k/2}}U_{k-1}(x)\sqrt{1-x^2}, \qquad x\in(-1,1) , 
\end{equation}  
where $U_0, U_1, \dots$ are the usual Chebyshev polynomials of the second kind and $\xi_1, \xi_2, \dots$ are independent standard Gaussian random variables, see \eqref{eq:series}. 
Recall from Section \ref{sect:gmc} that such objects can be understood as random generalized functions in say $H^{-\epsilon}(\R^d)$. For the purposes of this appendix, we find it convenient to interpret the field $X$ as an element of the dual of the Sobolev space $\mathscr{H}^\epsilon= \big\{ g\in L^2(\mu_{\rm sc}) : \sum_{k=1}^{+\infty} k^\epsilon g_k^2 <+\infty \big\}$ for some $\epsilon>0$ where as in Section \ref{sec:intro}, $\frac{d\mu_{\rm sc}}{dx} = \frac 2\pi \sqrt{1-x^2} \1_{|x| \le 1}$ denotes the semicircle law, and $g_k=\int g U_k d\mu_{\rm sc}$. One can check that the above series converges almost surely in such a space. 
Indeed, since the Chebyshev polynomials of the second kind constitute an orthonormal basis with respect to the semicircle law, the meaning of the series \eqref{X2} is that for any $g\in \mathscr{H}^\epsilon$, 
\begin{equation} \label{X1}
\int_{-1}^1  g(x) X(x)  dx = \frac{\pi}{\sqrt 2} \sum_{k=1}^{+\infty} \frac{g_{k-1} \xi_k}{\sqrt{k}}. 
\end{equation}
Moreover, for all $k\in\N$ and $\theta\in [0,\pi]$, 
\[
U_{k-1}( \cos \theta) \sqrt{1-(\cos \theta)^2} =  \sin(k\theta) ,
\]
so that we have
\[
X(\cos \theta) =  \Im\bigg\{ \sum_{k=1}^\infty \frac{\xi_k}{\sqrt{k/2}} e^{\i k\theta} \bigg\} , \qquad \theta \in (0,\pi) . 
\]
 This reveals a connection between $X$ and the Gaussian Free Field (GFF).  
Namely, if we let $Z(z)$ be the restriction of the 2d GFF to the unit circle $|z| =1$,  normalized to have zero average over the unit circle, it is well-known that 
\begin{equation} \label{GFF}
Z(z) = \Re\bigg\{  \sum_{k=1}^\infty \frac{\zeta_k}{\sqrt{k/2}} z^k\bigg\}
\end{equation}
where $\zeta_1, \zeta_2$ are independent standard complex Gaussian random variables. Indeed, a straightforward computation shows that the covariance kernel of $Z$ is given by 
\begin{equation} \label{GFFcov}
\E Z(z) Z(z') = - \log|z-z'| , \qquad |z| = |z'| =1 . 
\end{equation}
Then, by choosing $\xi_k = - \sqrt{2} \Im(\zeta_k)$ for  $k\in\N$, we can couple our log-correlated fields $X$ and $Z$  in the following way:
\begin{equation} \label{X3}
X(\cos \theta)  = \frac{Z(e^{\i\theta}) - Z(e^{-\i\theta})  }{\sqrt{2}}  , \qquad \theta \in  (0,\pi) . 
\end{equation}
Now, using the representation \eqref{X3} and formula \eqref{GFFcov},  Thus, we verify that for any $\theta, \theta' \in  (0,\pi)$, 
\[ \begin{aligned}
\E{X(\cos \theta) X(\cos \theta')} & =  - \log|e^{\i \theta}- e^{\i \theta'}| + \log|e^{-\i \theta}- e^{\i \theta'}| \\
& = \frac 12 \log\bigg( \frac{1- \cos(\theta+\theta')}{1- \cos(\theta-\theta')} \bigg) . 
\end{aligned}\]
Using basic trigonometric expansion, this shows that  for all $x, y\in (-1,1)$,
\[ \begin{aligned}
\E{X(x) X(y)} 
& = \frac 12 \log\bigg( \frac{1- xy + \sqrt{1-x^2}\sqrt{1-y^2}}{1- xy- \sqrt{1-x^2}\sqrt{1-y^2}}\bigg)  \\
& = \Sigma(x,y) . 
\end{aligned}\]

\medskip

Now, let us explain the connection between the variance which appears in Johansson's CLT \eqref{eq:CLT} and our log--correlated field $X$. We claim that for any function $f \in C^1(\R)$ such that $f' \in \mathscr{H}^\epsilon$, by formula \eqref{X1}, 
\[ \begin{aligned}
\sigma^2(f) & =  \frac{1}{2\pi^2} \iint_{[-1,1]^2} f'(x) f'(y) \Sigma(x,y) dxdy \\
& = \frac{1}{2\pi^2} \E\left( \int_{-1}^1 f'(x) X(x) dx  \right)^2 \\
& = \frac{1}{4} \sum_{k=1}^{+\infty} \frac 1k (f')_{k-1}^2  .
\end{aligned}\]

If $T_1, T_2, \dots$ denote the Chebyshev polynomials of the first kind, we let $\displaystyle \hat{f}_{k} = \frac{2}{\pi} \int_{-1}^1 \frac{f(x) T_k(x)}{\sqrt{1-x^2}}dx$ for $k\in\N$ be the  Fourier--Chebyshev coefficients of the function $f$. 
Under our assumptions, we have $(f')_{k-1} = k\hat{f}_{k}$, see e.g. \cite[Formula (4.11)]{LLW}. 
This implies that for any function $f \in C^1(\R)$ such that $f' \in \mathscr{H}^\epsilon$, 
\begin{equation*}
\sigma^2(f) = \frac{1}{4} \sum_{k=1}^{+\infty} k \hat{f}_{k}^2 , 
\end{equation*}
which is the formula given by Johansson in \cite{Johansson}.  Using this representation for the variance $\sigma^2(f)$, it is easy to verify that under the same assumptions on $f$, we also have
\begin{equation*} 
\sigma^2(f) = -\frac{1}{2\pi}  \int_{-1}^1 f'(t) \mathcal U f (t)\sqrt{1-t^2} dt ,
\end{equation*}
where $\mathcal U$ denotes the finite Hilbert transform \eqref{U_transform}, 
see e.g. \cite[Formula (4.15)]{LLW}. By polarization, we define for any smooth functions $f,g :\R \to \R$,
\begin{equation} \label{cov3} 
\begin{aligned} 
\sigma^2(f;g) 
& =  \frac{1}{2\pi^2} \iint_{[-1,1]^2} f'(x) g'(y) \Sigma(x,y) dxdy \\
& = -\frac{1}{2\pi}  \int_{-1}^1 f'(t) \mathcal U g (t)\sqrt{1-t^2} dt . 
\end{aligned}\end{equation}

\subsection{Harmonic extension}

We consider the following approximation of the log--correlated field $X$: for any $\eta>0$ and $x\in \R$, let  
\begin{equation} \label{X4}
X_\eta(x) =  \int_{-1}^{1}  \varphi_\eta(x-u) X(u) du 
\end{equation}
where $\varphi(t) = \frac{1}{\pi(1+t^2)}$ and $\varphi_\eta(t) = \eta^{-1}\varphi(t \eta^{-1})$ for any $\eta>0$ and $t\in\R$. By our previous discussion, the Gaussian random variables \eqref{X4} are well-defined, centered and have covariance:
\begin{equation} \label{cov1}
\E  X_\eta(x) X_\epsilon(y)  = \iint_{[-1,1]^2}   \varphi_\eta(u-x)   \varphi_\epsilon(v-y) \Sigma(u,v) dudv  , \qquad  x, y\in \R , \ \eta, \epsilon>0 . 
\end{equation}
Recall that for any $\eta>0$ and $x\in\R$, we denote by $w_{\eta,x} = \sqrt{2} \pi  \varphi_\eta*\1_{(-\infty, x]}$. 
In particular, it is of relevance in Section~\ref{sec:special} to observe that
\begin{equation} \label{cov2}
\E  X_\eta(x) X_\epsilon(y)  =\sigma^2( w_{\eta,x}  ; w_{\epsilon,y} )  , \qquad  x, y\in \R , \ \eta, \epsilon>0 . 
\end{equation}
This follows directly from \eqref{cov3} and the fact that for any $\eta>0$ and $u,x\in\R$,
\begin{equation} \label{w'}
w_{\eta,x}'(u) =  \sqrt{2}\pi \int_{-\infty}^x \eta^{-2} \varphi'(\tfrac{u-t}{\eta}) dt = -\sqrt{2}\pi\varphi_\eta(u-x) . 
\end{equation}

Let us point out that the (upper half-plane) Poisson kernel $\varphi$ is not in the usual class of mollifiers used in GMC theory, \cite{Berestycki15}, but it is a very natural choice in our setting because $X_\eta$ corresponds to the harmonic extension of $X$ in the upper-half plane $\mathds{H}$. Then, according to formula \eqref{eq:hcomplex}, for any $x\in\R$ and $\eta>0$, as $N\to+\infty$
\begin{equation} \label{extension_cvg}
h_N(x+\i \eta)\ \Rightarrow\ X_\eta(x). 
\end{equation}
Moreover, it is not difficult to turn \eqref{extension_cvg} to a functional convergence in the space of  continuous functions on $\mathds{H}$ equipped with the topology of uniform convergence in compact subsets of $\mathds{H}$.  Let us also observe that almost surely, the series \eqref{GFF} converges uniformly for  all $|z| < r<1$ and that $Z$  corresponds to the harmonic extension of the Gaussian Free Field defined in the domain $\C\setminus\mathds{D}$   inside of the unit disk $\mathds{D}$ and we verify that 
\[
\E Z(z) Z(z') = - \log| 1- z'\overline{z}| , \qquad z, z' \in \mathds{D}.
\]
So, if $\varpi :  \C\setminus [-1,1]\to \mathbb{D} $ is the conformal bijection given by
\[
\varpi (z) = z- \sqrt{z^2-1} ,
\]
(with the branch of the root being fixed by the condition of being a conformal bijection)  by \eqref{X3}, we verify that for any $\epsilon>0$ and $x\in\R$
\[
X_\eta(x)  = \frac{Z\big(\varpi(x+\i \eta)\big) - Z\big(\overline{\varpi(x+\i \eta)}\big)  }{\sqrt{2}}  .
\]
This gives us an \emph{explicit formula} for the covariance \eqref{cov2}:
\begin{equation*}
\E  X_\eta(x) X_\epsilon(y)  
=  -\log\bigg|\frac{1-\varpi(x+\i \eta)\overline{\varpi(y+\i \epsilon)}}{1-  \varpi(x+\i \eta)\varpi(y+\i \epsilon)} \bigg|
, \qquad  x, y\in \R , \ \eta, \epsilon>0 . 
\end{equation*}

\medskip

Now, let us check that the covariance formulae used in Section~\ref{sec:special} hold.  In particular, we define for any $\eta>0$ and $x, y \in (-1,1)$, 
\begin{equation} \label{cov4}
 \E  X_\eta(x) X(y)  := \int_{-1}^1 \Sigma(y,t)  \varphi_\eta(x-t) dt . 
\end{equation}
Note that this function is continuous on $(-1,1)^2$
and we easily verify using formula \eqref{cov1} and the definition of $\Sigma$ that for all $x,y\in(-1,1)$,
\begin{equation} \label{cov5}
\E  X_\eta(x) X(y)  = \lim_{\epsilon\to 0 } \E  X_\eta(x) X_\epsilon(y) 
\end{equation}
and $\displaystyle \Sigma(x,y) = \lim_{\eta\to0 }\E  X_\eta(x) X(y) $ for all $x\neq y$. 
In particular, using the representation \eqref{cov3} and \eqref{cov2}, \eqref{w'}, \eqref{cov5},   this shows that
\[
\E  X_\eta(x) X(y)  = \lim_{\epsilon\to 0 } \frac{1}{\sqrt{2}}   \int_{-1}^1 \varphi_\epsilon(t-y)  \mathcal U w_{\eta,x}(t) \sqrt{1-t^2} dt ,
\]
Since the function $w_{\eta,x}$ is smooth on $\R$, so is its finite Hilbert transform and the function $t \mapsto \mathcal U w_{\eta,x}(t) \sqrt{1-t^2} \1_{|t| \le 1}$ is continuous and bounded on $\R$.  
Since $\varphi_\epsilon(t-y) dt \to \delta_y$ weakly, this implies that  
\begin{equation} \label{cov6}
\E  X_\eta(x) X(y) = \frac{1}{\sqrt{2}} \mathcal U w_{\eta,x}(y) \sqrt{1-y^2} .
\end{equation}

\section{The Airy model RH problem}\label{section:appendixAiry}

In this appendix we review the basic facts about the Airy model RH problem. For details, see e.g. \cite[Section 7]{DKMVZ}.

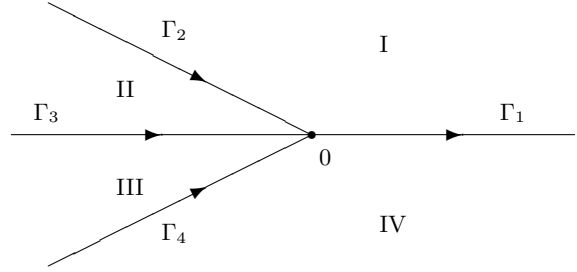
\begin{figure}[h]
    \begin{center}
    \setlength{\unitlength}{1mm}
    \begin{picture}(95,47)(0,2)
        \put(30,38){\small $\Gamma_2$}
        \put(13,27){\small $\Gamma_3$}
        \put(30,11){\small $\Gamma_4$}
        \put(75,27){\small $\Gamma_1$}

        \put(59,36){\small ${\rm I}$}
        \put(24,30){\small ${\rm II}$}
        \put(24,17){\small ${\rm III}$}
        \put(59,12){\small ${\rm IV}$}

        \put(50,25){\thicklines\circle*{.9}}
        \put(51,21){\small 0}

        \put(50,25){\line(-2,1){35}} \put(36,32){\thicklines\vector(2,-1){.0001}}
        \put(50,25){\line(-2,-1){35}} \put(36,18){\thicklines\vector(2,1){.0001}}
        \put(50,25){\line(-1,0){40}} \put(30,25){\thicklines\vector(1,0){.0001}}
        \put(50,25){\line(1,0){35}} \put(70,25){\thicklines\vector(1,0){.0001}}
    \end{picture}
    \caption{The oriented contour $\Gamma$. }
        \label{figure: Gamma2}
    \end{center}
\end{figure}

\subsubsection*{RH problem for $\Phi_{\rm Ai}$:}
\begin{itemize}
    \item[(a)] $\Phi=\Phi_{\rm Ai}$ is analytic in $\mathbb{C}\setminus \left(\mathbb R\cup e^{\pm \frac{2i\pi}{3}}\mathbb R^+\right)$.
    \item[(b)] $\Phi$ satisfies the following jump relations on $\Gamma$,
        \begin{align}
            \label{RHP Psi: b1}
            \Phi_+(z) &= \Phi_-(z)
                \begin{pmatrix}
                    1 & 1\\
                    0 & 1
                \end{pmatrix},
                &\mbox{for $z\in (0,+\infty)$,} \\[1ex]
            \label{RHP Psi: b2}
            \Phi_+(z) &= \Phi_-(z)
                \begin{pmatrix}
                    1 & 0\\
                    1 & 1
                \end{pmatrix},
                &\mbox{for $z\in e^{\pm \frac{2i\pi}{3}}\mathbb R^+$,} \\[1ex]
            \label{RHP Psi: b3}
            \Phi_+(z) &= \Phi_-(z)
                \begin{pmatrix}
                    0 & 1\\
                    -1 & 0
                \end{pmatrix},
                &\mbox{for $z\in (-\infty,0)$.}
        \end{align}
    \item[(c)] $\Phi$ has the following behavior at infinity,
        \begin{equation}\label{RHP Psi: c}
            \Phi(z)=
                z^{-\frac{1}{4}\sigma_3}A\left(I+\bigO(z^{-3/2})\right)e^{-\frac{2}{3}z^{3/2}\sigma_3}, \qquad
                \mbox{as $z\to\infty$,} 
        \end{equation}
        uniformly for $z\to\infty$ and not on the jump contour,
                where
                \begin{equation}
                A=\frac{1}{\sqrt{2}}\begin{pmatrix}1&i\\i&1\end{pmatrix}.
                \end{equation}
     \item[(d)] $\Phi$ is bounded at zero.
\end{itemize}

Define $y_j$ in terms of the Airy function:
\[
    y_j=y_j(z;r)=\omega^j{\rm Ai}(\omega^j z),\qquad j=0,1,2,
\]
with $\omega=e^{\frac{2\pi i}{3}}$.
The unique solution to the RH problem is given by
\begin{equation}
    \Phi(z)=\sqrt{2\pi}\times 
    \begin{cases}
        \begin{pmatrix}
            y_0 & -y_2\\
            -iy_0' & iy_2'
        \end{pmatrix}, & \quad\mbox{for $z\in {\rm I}$,}
        \\[3ex]
        \begin{pmatrix}
            -y_1 & -y_2\\
            iy_1' & iy_2'
        \end{pmatrix}, & \quad\mbox{for $z\in {\rm II}$,}
        \\[3ex]
        \begin{pmatrix}
            -y_2 & y_1\\
            iy_2' & -iy_1'
        \end{pmatrix}, & \quad\mbox{for $z\in {\rm III}$,}
        \\[3ex]
        \begin{pmatrix}
            y_0 & y_1\\
            -iy_0' & -iy_1'
        \end{pmatrix}, & \quad\mbox{for $z\in {\rm IV}$.}
    \end{cases}
\end{equation}

\section{The confluent hypergeometric model RH problem}\label{section:appendixCFH}
In this appendix we review some basic facts about the hypergeometric model RH problem. For further details, we refer the reader e.g. to \cite{DIK11}, though note that the construction there is more general and the conventions are slightly different. The model RH problem of relevance to us is the following.

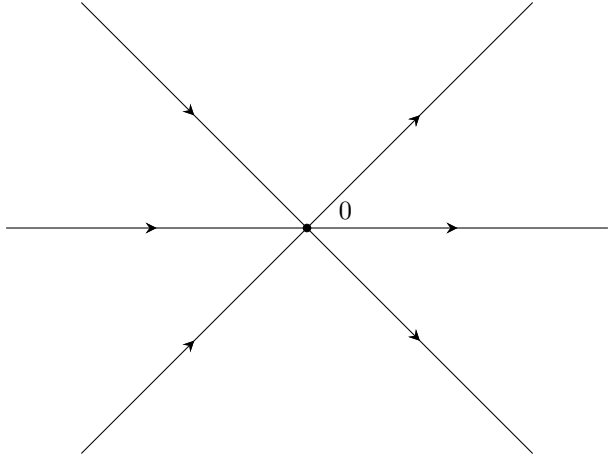
\begin{figure}
\begin{center}
\begin{tikzpicture}
\draw [decoration={markings, mark=at position 0.25 with {\arrow[thick]{>}}},
        postaction={decorate}, decoration={markings, mark=at position 0.75 with {\arrow[thick]{>}}},
        postaction={decorate}] (-1,0) -- (7,0);
\draw [decoration={markings, mark=at position 0.25 with {\arrow[thick]{>}}},
        postaction={decorate},decoration={markings, mark=at position 0.75 with {\arrow[thick]{>}}},
        postaction={decorate}] (0,3) -- (6,-3);        
\draw [decoration={markings, mark=at position 0.25 with {\arrow[thick]{>}}},
        postaction={decorate},decoration={markings, mark=at position 0.75 with {\arrow[thick]{>}}},
        postaction={decorate}] (0,-3) -- (6,3);
\node [above right] at (3.3,0) {$0$}; 
\draw[fill] (3,0) circle [radius=0.05];  
\end{tikzpicture}
\end{center}
\caption{The jump contour for $\Phi_{\rm HG}$}\label{fig:ContourCHG}
\end{figure}
 \subsubsection*{RH problem for $\Phi_{\rm HG}$}
\begin{itemize}
\item[(a)]
$\Phi=\Phi_{\rm HG}$ is analytic for $z\in \mathbb C\setminus \left(\mathbb R\cup  e^{\frac{i\pi}{4}}\mathbb R\cup e^{\frac{-i\pi}{4}}\mathbb R\right)$,  with the orientation chosen as in Figure \ref{fig:ContourCHG}. 
\item[(b)]On the jump contour, $\Phi$ has the jumps
\begin{equation}\label{eq:HGjump}
\begin{aligned}
 \Phi_+(z)&=\Phi_-(z) \begin{pmatrix}
1&0\\e^{\pi i \beta}&1
\end{pmatrix}	&& \textrm{for } z\in e^{\pm \frac{i\pi}{4}}\mathbb R^+,
\\
\Phi_+(z)&=\Phi_-(z) \begin{pmatrix}
1&0\\e^{-\pi i \beta}&1
\end{pmatrix}	&& \textrm{for } z\in e^{\pm \frac{3i\pi}{4}}\mathbb R^+,
\\
\Phi_+(z)&=\Phi_-(z) \begin{pmatrix}
0&e^{\pi i \beta}\\-e^{-\pi i \beta}&0
\end{pmatrix}	&& \textrm{for } z\in (-\infty,0),
\\
\Phi_+(z)&=\Phi_-(z) \begin{pmatrix}
0&e^{-\pi i \beta}\\-e^{\pi i \beta}&0
\end{pmatrix}	&& \textrm{for } z\in (0,\infty).
\end{aligned}
\end{equation}
\item[(c)]
As $z\to \infty$,
\begin{equation}\label{eq:HGasy}
\Phi(z)=e^{-\frac{\pi i}{2}\beta \sigma_3}(I+\mathcal O(z^{-1}))(z)^{-\beta \sigma_3}e^{-i\frac{z}{2}\sigma_3}\chi(z),
\end{equation}
where $\chi(z)$ is constant on each quarter of the plane:
\begin{equation*}\chi(z)=\begin{cases}
e^{\pi i\beta\sigma_3}, &\arg z\in(0,\pi),\\
\begin{pmatrix}
0&-1\\1&0
\end{pmatrix},&\arg z\in(\pi,2\pi).
\end{cases}
\end{equation*}
\item[(d)] As $z\to 0$, 
\begin{equation}\label{eq:PhiHG-log}
\Phi(z)=\mathcal O(|\log z|)
\end{equation}
\end{itemize}

The solution to this problem can be constructed in terms of confluent hypergeometric functions, see for instance \cite{ItsKrasovsky, DIK11, FoulquieMartinezSousa} or the appendix of \cite{Charlier} for various statements of this flavor --  for this exact formulation, see \cite[formula (4.9)]{CK}. The only thing we need to know here, is that
we have for $\arg z\in (\pi/4, \pi/2)$,
\begin{equation}
\Phi_{\rm HG}(z)=
\Bigg(\begin{matrix}
\Gamma(1-\beta)M(\beta,1,e^{\pi i/2}z)
\\ \Gamma(1+\beta)M(1+\beta,1,e^{\pi i/2}z)
\end{matrix}
\\
\begin{matrix}
-\frac{\Gamma(1-\beta)}{\Gamma(\beta)}U(1-\beta,1,e^{-\pi i/2}z)
\\U(-\beta,1,e^{-\pi i/2}z)
\end{matrix}\Bigg){
e^{-i\frac{z}{2}\sigma_3}},
\end{equation}
where $M(a,b,z)=1+\sum_{j=1}^\infty \frac{(a)_j}{(b)_jj!}z^j$ is Kummer's confluent hypergeometric function, and $U$ is the confluent hypergeometric function of the second kind -- we do not need the precise definition here, but for further details, see e.g. \cite[Section 13.2]{NIST}. 

\begin{lemma}\label{Lemma1}
For $\Re \beta=0$, if we take the limit where $z$ approaches $0$ with $\arg z\in (\pi/4,\pi/2)$, we have
\begin{equation}
\lim_{z\to 0}\Phi_{{\rm HG}, 11}(z)=
\Gamma(1-\beta),\qquad \lim_{z\to 0}\Phi_{{\rm HG}, 21}(z)=
\Gamma(1+\beta),
\end{equation}
and
\begin{equation}
\lim_{z\to 0}\left(\Phi_{\rm HG}^{-1}(z)\frac{d}{dz}\Phi_{\rm HG}(z)\right)_{21}={ -\frac{2\pi  \beta}{e^{\pi i \beta}-e^{-\pi i \beta}}}.
\end{equation}
\end{lemma}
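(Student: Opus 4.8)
The plan is to prove Lemma~\ref{Lemma1} by direct computation from the explicit representation of $\Phi_{\mathrm{HG}}$ in terms of Kummer functions, valid for $\arg z\in(\pi/4,\pi/2)$, together with classical formulae for $M(a,b,z)$ and $U(a,b,z)$ near $z=0$ and for their derivatives. The two limits of the entries of the first column are immediate: since $M(a,1,w)=1+\mathcal{O}(w)$ as $w\to 0$, we have $\Phi_{\mathrm{HG},11}(z)\to\Gamma(1-\beta)$ and $\Phi_{\mathrm{HG},21}(z)\to\Gamma(1+\beta)$ upon sending $z\to 0$ (the $e^{-iz/2}$ factor tending to $I$), which is exactly the first display. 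Note that the second column of $\Phi_{\mathrm{HG}}$ involves $U(1-\beta,1,\cdot)$ and $U(-\beta,1,\cdot)$, which have logarithmic singularities at the origin, consistent with \eqref{eq:PhiHG-log}; but since $\det\Phi_{\mathrm{HG}}\equiv 1$, the $(2,1)$-entry of $\Phi_{\mathrm{HG}}^{-1}\Phi_{\mathrm{HG}}'$ can be written purely in terms of the first column and its derivative, so the logarithmic column never enters.

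Concretely, first I would record that for a $2\times2$ matrix $\Phi$ with $\det\Phi=1$, writing $\Phi=\begin{pmatrix} a & b \\ c & d\end{pmatrix}$, one has $\bigl(\Phi^{-1}\Phi'\bigr)_{21} = -c'a + c\,a' $ wait --- more carefully $\Phi^{-1}=\begin{pmatrix} d & -b \\ -c & a\end{pmatrix}$, so $\bigl(\Phi^{-1}\Phi'\bigr)_{21} = -c\,a' + a\,c'$. Thus only $a=\Phi_{\mathrm{HG},11}$ and $c=\Phi_{\mathrm{HG},21}$ and their $z$-derivatives are needed. From the explicit formula, $a(z)=\Gamma(1-\beta)\,M(\beta,1,e^{\pi i/2}z)\,e^{-iz/2}$ and $c(z)=\Gamma(1+\beta)\,M(1+\beta,1,e^{\pi i/2}z)\,e^{iz/2}$. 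Using $\frac{d}{dw}M(\alpha,1,w)=\alpha\,M(\alpha+1,2,w)$ and $M(\cdot,\cdot,0)=1$, $M(\cdot,2,0)=1$, I would expand $a$ and $c$ to first order in $z$ as $z\to 0$: $a(z)=\Gamma(1-\beta)\bigl(1+\tfrac{e^{\pi i/2}z}{1}\beta - \tfrac{iz}{2}+\mathcal{O}(z^2)\bigr)$ and $c(z)=\Gamma(1+\beta)\bigl(1+e^{\pi i/2}z(1+\beta)+\tfrac{iz}{2}+\mathcal{O}(z^2)\bigr)$, so that $a(0)=\Gamma(1-\beta)$, $c(0)=\Gamma(1+\beta)$, $a'(0)=\Gamma(1-\beta)(i\beta-i/2)$, $c'(0)=\Gamma(1+\beta)(i(1+\beta)+i/2)$. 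Then $\bigl(\Phi_{\mathrm{HG}}^{-1}\Phi_{\mathrm{HG}}'\bigr)_{21}\big|_{z\to0} = -c(0)a'(0)+a(0)c'(0) = \Gamma(1-\beta)\Gamma(1+\beta)\bigl[-(i\beta-i/2)+(i(1+\beta)+i/2)\bigr] = \Gamma(1-\beta)\Gamma(1+\beta)\cdot i$. Finally I invoke the reflection formula $\Gamma(1-\beta)\Gamma(1+\beta)=\Gamma(1-\beta)\Gamma(\beta)\beta=\dfrac{\pi\beta}{\sin\pi\beta}=\dfrac{2\pi i\beta}{e^{\pi i\beta}-e^{-\pi i\beta}}$, which gives $\bigl(\Phi_{\mathrm{HG}}^{-1}\Phi_{\mathrm{HG}}'\bigr)_{21}\to i\cdot\dfrac{2\pi i\beta}{e^{\pi i\beta}-e^{-\pi i\beta}} = -\dfrac{2\pi\beta}{e^{\pi i\beta}-e^{-\pi i\beta}}$, as claimed. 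Here $\Re\beta=0$ is used to ensure $M(\alpha,1,e^{\pi i/2}z)$ has no issues and the reflection formula is applied away from poles of $\Gamma$.

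The main technical point to be careful about is the treatment of the limit: the formula for $\Phi_{\mathrm{HG}}$ is given on the open sector $\arg z\in(\pi/4,\pi/2)$, and one must send $z\to0$ within this sector, so all the Taylor expansions of $M$ are genuine analytic expansions at $0$ and there is no branch ambiguity --- the logarithmic terms live only in the $U$-functions of the second column, which we never differentiate. I would spell out that the entrywise asymptotics $M(\alpha,1,w)=1+\alpha w+\mathcal{O}(w^2)$ and $M(\alpha,2,w)=1+\mathcal{O}(w)$ hold uniformly as $w\to0$ (these are convergent power series), and that multiplication by the entire function $e^{\pm iz/2}=1\pm iz/2+\mathcal O(z^2)$ preserves this. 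A secondary bookkeeping point is simply getting the signs right in $\bigl(\Phi^{-1}\Phi'\bigr)_{21}$ and in the reflection/duplication identities for $\Gamma$ and $\sin$; these are routine but worth doing explicitly since an overall sign in the conclusion feeds directly into the edge-regime differential identity \eqref{eq:quantityS2}. I do not anticipate a genuine obstacle here --- the lemma is a finite computation --- so the write-up will be short, essentially consisting of the first-order expansions above followed by the Gamma-function identity.
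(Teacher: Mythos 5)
Your overall strategy is exactly the one the paper uses (expand the first column of $\Phi_{\rm HG}$ to first order at $z=0$, use $\det\Phi_{\rm HG}=1$ to reduce $(\Phi^{-1}\Phi')_{21}$ to the first column alone, and then apply the reflection formula), and the final answer is correct, but there are two sign slips in your intermediate computation that happen to cancel. First, the matrix $e^{-i\frac{z}{2}\sigma_3}$ is applied on the \emph{right}, and with $\sigma_3=\mathrm{diag}(1,-1)$ this scales the \emph{first column} (i.e.\ both the $(1,1)$ and the $(2,1)$ entry) by $e^{-iz/2}$, and the second column by $e^{+iz/2}$. So the correct formula for the $(2,1)$ entry is $c(z)=\Gamma(1+\beta)\,M(1+\beta,1,iz)\,e^{-iz/2}$, not with $e^{+iz/2}$ as you wrote. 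Expanding, $c(z)=\Gamma(1+\beta)\bigl(1+iz(\beta+\tfrac12)+\mathcal O(z^2)\bigr)$, hence $c'(0)=\Gamma(1+\beta)\,i(\beta+\tfrac12)$, not $\Gamma(1+\beta)\,i(\beta+\tfrac32)$. Second, even accepting your (incorrect) $c'(0)$, the bracket $-(i\beta-i/2)+(i(1+\beta)+i/2)$ evaluates to $2i$, not $i$, so your arithmetic there is off by a factor of $2$. These two mistakes conveniently cancel. With the corrected $c'(0)=\Gamma(1+\beta)\,i(\beta+\tfrac12)$, you get $-c(0)a'(0)+a(0)c'(0)=\Gamma(1-\beta)\Gamma(1+\beta)\bigl[-i(\beta-\tfrac12)+i(\beta+\tfrac12)\bigr]=i\,\Gamma(1-\beta)\Gamma(1+\beta)$, and then your reflection-formula step gives the stated $-\frac{2\pi\beta}{e^{\pi i\beta}-e^{-\pi i\beta}}$. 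Please fix both slips so the argument is internally consistent rather than accidentally correct.
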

\begin{proof}
By the definition of $M$,
\begin{equation}
\Phi_{\rm HG}(z)=
\begin{pmatrix}
\Gamma(1-\beta)&*\\
\Gamma(1+\beta)
&*
\end{pmatrix}
+iz{
\begin{pmatrix}
 (\beta-1/2)\Gamma(1-\beta)&*\\
(\beta+1/2)\Gamma(1+\beta)
&*
\end{pmatrix}}+\mathcal O(z^2)
\end{equation}
as $z\to 0$
 such that  $\arg z\in (\pi/4,\pi/2)$, and the result follows from the identity $\Re \beta=0$, $\Gamma(\beta)\Gamma(-\beta)={ -\frac{2\pi i}{\beta(e^{\pi i \beta}-e^{-\pi i \beta})}}$.
\end{proof}

We use this model RH problem in Section \ref{sec:model} in the case $\beta=\frac{\gamma}{{\sqrt{2}} i}\in i\mathbb R$. We should emphasize also that we use in Section \ref{sec:model} that the error term in \eqref{eq:HGasy} is uniform for $\gamma\in[-\Gamma,\Gamma]$, or in other words for $\beta$ in a compact subset of the imaginary line. This can be verified via the $z\to\infty$ asymptotics of Kummer's confluent hypergeometric function (see e.g. \cite[formulas 13.2.4 and 13.7.4-5]{NIST}), which are uniform in $a$.


\begin{thebibliography}{}

\bibitem{AEK17}
O. H. Ajanki, L. Erd\H{o}s, and T. Kr\"uger. Universality for general Wigner-type matrices, 
Probab. Theory  Related Fields 169, no. 3--4,  667--727  (2017)


\bibitem{AGZ}
G. W. Anderson, A. Guionnet, and O. Zeitouni. An introduction to random matrices. Cambridge Studies in Advanced Mathematics, 118. Cambridge University Press, Cambridge, 2010.

\bibitem{ABB16}
L.-P. {Arguin}, D.~{Belius}, and P.~{Bourgade}.
\newblock {Maximum of the Characteristic Polynomial of Random Unitary
  Matrices}.
\newblock {Comm. Math. Phys.}, 349:703--751, 2017.


\bibitem{ABBRS18}
L. P. Arguin, D. Belius, P. Bourgade, M. Radziwi\l\l, and K. Soundararajan. 
Maximum of the Riemann zeta function on a short interval of the critical line.  Comm. Pure Appl. Math. 72 (2019), no. 3, 500–535.

\bibitem{BB13}
G. Ben Arous and P. Bourgade,
Extreme gaps between eigenvalues of random matrices
Ann. Probab. 41(4) (2013), 2648--2681.

\bibitem{AJKS}  K. Astala,  P. Jones,  A. Kupiainen,  and E. Saksman.  Random conformal weldings. Acta Math. 207 (2011), no. 2, 203 -- 254.

\bibitem{BLS}
F. Bekerman, T. Lebl\'e, and S. Serfaty.
CLT for fluctuations of beta-ensembles with general potential.
Electron. J. Probab. 23, no. 115, 31 pp, 2018.

\bibitem{Berestycki15}
N.~Berestycki.
\newblock An elementary approach to {G}aussian multiplicative chaos.
\newblock {Electron. Commun. Probab.}, 22(27):12pp, 2017.


\bibitem{BWW18}
N.~Berestycki, C.~Webb, and M.~D. Wong.
\newblock Random {H}ermitian {M}atrices and {G}aussian {M}ultiplicative
  {C}haos.
\newblock Probab. Theory Related Fields 172 (2018), no. 1-2, 103--189.

\bibitem{BCI16}
A. Bogatskiy, T. Claeys, and A. Its. Hankel determinant and orthogonal polynomials for a Gaussian weight with a discontinuity at the edge. Comm. Math. Phys. 347 (2016), no. 1, 127--162. 





\bibitem{BG13}
G. Borot and A. Guionnet.
\newblock Asymptotic expansion of $\beta$-matrix models in the one--cut regime.
\newblock { Comm. Math. Phys.}, 317(2):447--483, 2013.


\bibitem{BEY14} P. Bourgade, L. Erd\H{o}s, and H.-T. Yau. Edge universality of beta ensembles. Comm. Math. Phys. 332 (2014), 261--353.

\bibitem{BEYY16}
 P. Bourgade, L. Erd\H{o}s, H.-T. Yau, and  J. Yin. Fixed energy universality for generalized Wigner matrices. Comm. Pure Appl. Math. 69 (2016), no. 10, 1815--1881. 

\bibitem{BD17}
J.~Breuer and M.~Duits. Central limit theorems for biorthogonal ensembles and asymptotics of recurrence coefficients. J. Amer. Math. Soc. 30 (2017), no. 1, 27--66. 

\bibitem{Charlier}
C. Charlier.  Asymptotics of Hankel Determinants With a One--Cut Regular Potential and Fisher--Hartwig Singularities, International Mathematics Research Notices, 2018.  eprint = \texttt{ arXiv:1706.03579}

\bibitem{CMN16}
R.~Chhaibi, T.~Madaule and J.~Najnudel.
\newblock On the maximum of the {C}$\beta${E} field.
\newblock Duke Math. J. 167, no. 12 (2018), 2243-2345.


\bibitem{CN19} R. Chhaibi and J. Najnudel. On the circle, $GMC^\gamma=C\beta E_\infty$ for $\gamma=\sqrt{\frac{2}{\beta}}$, $(\gamma\leq 1)$.  \newblock eprint = \texttt{arXiv:1904.00578}, 2019.

\bibitem{ClaeysDoeraene} T. Claeys, and A. Doeraene. The generating function for the Airy point process and a system of coupled Painlev\'e II equations. Stud. Appl. Math. 140 (2018), no. 4, 403-437.


\bibitem{CF16}
T. Claeys and B.  Fahs. Random matrices with merging singularities and the Painlev\'e V equation. 
SIGMA Symmetry Integrability Geom. Methods Appl. 12 (2016), Paper No. 031, 44 pp. 

\bibitem{CIK}
 T. Claeys, A. Its, and I. Krasovsky. Emergence of a singularity for Toeplitz determinants and Painlev\'e V. Duke Math. J. 160, no. 2 (2011), 207--262.


\bibitem{CK}
T. Claeys and I. Krasovsky. Toeplitz determinants with merging singularities, Duke Math. J. 164 (2015), 2897--2987.


\bibitem{CZ}
N. Cook and O. Zeitouni. Maximum for the characteristic polynomial of a random permutation matrix, 
\newblock eprint = \texttt{arXiv:1806.07549}, 2018.

\bibitem{Deift}
P. Deift. Orthogonal polynomials and random matrices: a Riemann-Hilbert approach. Courant Lecture Notes in Mathematics, 3. New York University, Courant Institute of Mathematical Sciences, New York; American Mathematical Society, Providence, RI, 1999.

\bibitem{DeiftGioev}
P. Deift and D. Gioev. Universality at the edge of the spectrum for unitary,
              orthogonal, and symplectic ensembles of random matrices. Commun. Pure Appl. Math. 60 (2007), 867--910. 


\bibitem{DIK11}
P. Deift, A. Its, and I. Krasovsky. Asymptotics of Toeplitz, Hankel, and Toeplitz plus Hankel determinants with Fisher-Hartwig singularities, Ann. Math. Vol: 174, 1243--1299 (2011).

\bibitem{DIK-Ising} P. Deift, A. Its, and I. Krasovsky. Toeplitz Matrices and Toeplitz Determinants
under the Impetus of the Ising Model: Some History and Some Recent Results.
Commun. Pure Appl. Math. 66 (2013), 1360--1438.


\bibitem{DeiftItsKrasovsky2} P. Deift, A. Its, and I. Krasovsky. On the asymptotics of a Toeplitz determinant with singularities. Random matrix theory, interacting particle systems, and integrable systems, 93-146, Math. Sci. Res. Inst. Publ., 65, Cambridge Univ. Press, New York, 2014.


\bibitem{DKML98}
P. Deift, T. Kriecherbauer, and K.T-R McLaughlin. New results on the equilibrium measure
for logarithmic potentials in the presence of an external field. J. Approx. Theory 95 (1998),
388-475.

\bibitem{DKMVZ2}  P.  Deift,  T.  Kriecherbauer,  K.  T-R  McLaughlin,  S.  Venakides,  X.  Zhou.
Uniform asymptotics for polynomials orthogonal with respect to
              varying exponential weights and applications to universality
              questions in random matrix theory.
Comm. Pure Appl. Math. 52 (1999), no. 11, 1335--1425.


 \bibitem{DKMVZ}  P.  Deift,  T.  Kriecherbauer,  K.  T-R  McLaughlin,  S.  Venakides,  X.  Zhou.
Strong asymptotics of orthogonal polynomials with respect to exponential weights.
Comm. Pure Appl. Math. 52 (1999), no. 12, 1491--1552.



\bibitem{DS11}
B. Duplantier and S. Sheffield. Liouville quantum gravity and KPZ. Invent. Math. 185 (2011), no. 2, 333--393.

\bibitem{DRSV17}
B. Duplantier, R. Rhodes, S. Sheffield, and V. Vargas.
\newblock Log-correlated Gaussian Fields: An Overview. In Geometry, Analysis and Probability. Progress in Mathematics, vol 310. Birkhauser, Cham, 191--216,  2017.




\bibitem{EKYY13a}
 L. Erd\H{o}s, A. Knowles, H.-T. Yau and  J. Yin. Spectral statistics of Erd\H{o}s-R\'enyi graphs I: Local semicircle law. Ann. Probab. 41 (2013), no. 3B, 2279--2375. 

\bibitem{ESY09}
 L. Erd\H{o}s, B. Schlein and H.-T. Yau. Semicircle law on short scales and delocalization of eigenvectors for Wigner random matrices. Ann. Probab. 37 (2009), no. 3, 815--852.
 
 \bibitem{ESY11}
 L. Erd\H{o}s, B. Schlein and H.-T. Yau. Universality of random matrices and local relaxation flow. Invent. Math. 185 (2011), no. 1, 75--119.
 

\bibitem{EYY}
L. Erd\H{o}s, H.-T. Yau, and J. Yin. Rigidity of eigenvalues of generalized Wigner matrices. Adv. Math. 229 (2012), no. 3, 1435--1515.

\bibitem{FokasItsKitaev}
    A.S. Fokas, A.R. Its, and A.V. Kitaev.
    The isomonodromy approach to matrix models in 2D quantum gravity,
    {Comm. Math. Phys.} { 147} (1992), 395--430.

\bibitem{FIKN} A.S. Fokas, A.R. Its, A. A. Kapaev, and V. Y. Novokshenov. Painlev\'e transcendents: the Riemann-Hilbert approach. Volume 128 of Mathematical Surveys and Monographs. American Mathematical Society, Providence, RI, 2006.


\bibitem{FF}
P.J. Forrester and N.E. Frankel. Applications and generalizations of Fisher-Hartwig asymptotics. J. Math. Phys.
45 (2004), 2003--2028.

\bibitem{FFGW}
P. J. Forrester, N. E. Frankel, T. M. Garoni, and N. S. Witte. Finite one-dimensional
impenetrable Bose systems: Occupation numbers. Phys. Rev A 67
043607 (2003).

\bibitem{ForresterWitte}P.J. Forrester and N.S. Witte, Application of the $\tau$-function theory of Painlev\'e equations to random matrices: PV, PIII, the LUE, JUE, and CUE, {Comm. Pure Appl. Math.} {55} (2002), 679--727.

\bibitem{FoulquieMartinezSousa} A. Foulquie Moreno, A. Martinez-Finkelshtein, and V. L. Sousa. Asymptotics of orthogonal polynomials for a weight with a jump on [-1,1]. Constr. Approx. 33 (2011), 219-263. 


\bibitem{FB08}
Y.~V. Fyodorov and J-P. Bouchaud.
\newblock Freezing and extreme-value statistics in a random energy model with
  logarithmically correlated potential.
\newblock {J. Phys. A}, 41(37):372001, 12, 2008.

\bibitem{FHK12}
Y.~V. Fyodorov, G.~A. Hiary, and J.~P. Keating.
\newblock Freezing transition, characteristic polynomials of random matrices,
  and the {R}iemann zeta function.
\newblock {Phys. Rev. Lett.}, 108:170601, 2012.

\bibitem{FLR12}
Y. V. Fyodorov, P. Le Doussal, and A. Rosso.
Counting function fluctuations and extreme value threshold in multifractal patterns: the case study of an ideal 1/f noise, J. Stat. Phys. 149 (2012), no. 5, 898--920.

\bibitem{FK} Y.~V. Fyodorov and J.~P. Keating. Freezing transitions and extreme values: random
matrix theory, $\zeta(1/2 + it)$, and disordered landscapes. Phil. Trans. R. Soc. A
372 (2014), 20120503.

\bibitem{FKS16}
Y.~V. Fyodorov, B.~A. Khoruzhenko, and N.~J. Simm.
\newblock Fractional {B}rownian motion with {H}urst index {$H=0$} and the
  {G}aussian {U}nitary {E}nsemble.
\newblock {Ann. Probab.}, 44(4):2980--3031, 2016.

\bibitem{FS16}
Y.~V. Fyodorov and N.~J. Simm.
\newblock On the distribution of the maximum value of the characteristic
  polynomial of {GUE} random matrices.
\newblock {\em Nonlinearity}, 29:2837--2855, 2016.


\bibitem{Gustavsson} J. Gustavsson. Gaussian fluctuations of eigenvalues in the GUE. Ann. Inst. H. Poincare Probab. Statist. 41 (2005), 151178.

\bibitem{GT} F.  G\"otze and A. Tikhomirov. On the Rate of Convergence to the Semi-circular Law. In Houdr\'e C., Mason D., Rosi\'nski J., Wellner J. (eds) High Dimensional Probability VI. Progress in Probability, vol 66. Birkh\"auser, Basel. (2013)

\bibitem{HP18}
D. Holcomb and E. Paquette.
 The maximum deviation of the Sine--$\beta$ counting process 
 \newblock eprint = \texttt{arXiv:1801.08989}, 2018.

 
\bibitem{HKC01}
C.~P. Hughes, J.~P. Keating, and N.~O'Connell.
\newblock On the characteristic polynomial of a random unitary matrix.
\newblock { Comm. Math. Phys.}, 220(2):429--451, 2001.

\bibitem{ItsKrasovsky}
A.R. Its and I. Krasovsky.
\newblock Hankel determinant and orthogonal polynomials for the Gaussian weight with a jump
\newblock {Integrable systems and random matrices.} Vol 458. {Contemp. Math. Amer. Math. Soc.}, Providence, RI, 2008, 215--247

\bibitem{Johansson}
K. Johansson. On fluctuations of eigenvalues of random Hermitian matrices. Duke Math. J. 91 (1998), no. 1, 151--204.

\bibitem{JSW18}
J. Junnila,  E. Saksman, and C. Webb. Imaginary multiplicative chaos: Moments, regularity and connections to the Ising model. eprint = \texttt{arXiv:1806.02118}.
 

\bibitem{Kahane}
J.-P. Kahane. Sur le chaos multiplicatif. Ann. Sci. Math. Qu\'ebec 9 (1985), no. 2, 105--150.

\bibitem{Kallenberg02}
O. Kallenberg. Foundations of Modern Probability. Second edition. Probability and its Applications (New York). Springer-Verlag, New York, 2002. xx+638 pp.

\bibitem{Kallenberg17}
O. Kallenberg. Random measures, theory and applications. Probability Theory and Stochastic Modelling, 77. Springer, Cham, 2017. xiii+694 pp.

\bibitem{Kenyon}  R. Kenyon: Dominos and the Gaussian free field. Ann. Probab. 29(3), 1128--1137 (2001)

\bibitem{Kistler}
N. Kistler. Derrida's random energy models. From spin glasses to the extremes of correlated random fields. In Correlated random systems: five different methods, vol. 2143 of Lecture Notes in Math., 71--120. Springer, Cham, 2015.


\bibitem{Kuijlaars}
A. Kuijlaars. Riemann-Hilbert analysis for orthogonal polynomials. In Orthogonal polynomials and special functions (Leuven, 2002), volume 1817 of Lecture Notes in Math., pages
167--210. Springer, Berlin, 2003.


\bibitem{KM} A. Kuijlaars and K. T-R McLaughlin. Generic behavior of the density of states in random matrix theory and equilibrium problems in the presence of real analytic external fields. Comm. Pure Appl. Math. 53 (2000), 736--785.

\bibitem{KRV}  A. Kupiainen,  R. Rhodes,  and V. Vargas.  Integrability of Liouville theory:  proof of the DOZZ Formula. eprint \texttt{arXiv:1707.08785}.

\bibitem{L18a}
G. Lambert. Limit theorems for biorthogonal ensembles and related combinatorial identities. Adv. Math. 329 (2018), 590--648.

\bibitem{L18b} G. Lambert. Mesoscopic  fluctuations for unitary invariant ensembles. Electron. J. Probab. vol.~23, no. 7 (2018), 33~pp. 


\bibitem{L19a} G. Lambert. The law of large numbers for the maximum of the characteristic polynomial of the Ginibre ensemble. \newblock eprint = \texttt{arXiv:1902.01983}, 2019.

\bibitem{L19b} G. Lambert. Mesoscopic central limit theorem for the circular beta-ensembles and applications. \newblock eprint = \texttt{arXiv:1902.06611}, 2019.	


\bibitem{LLW}
G. Lambert, M.~Ledoux and C.~Webb.
Stein's method for normal approximation of linear statistics of beta-ensembles. eprint = \texttt{arXiv:1706.10251}. To appear in Ann. Probab.

\bibitem{LOS18}
G. Lambert, D. Ostrovsky and N. Simm.  Subcritical Multiplicative Chaos for Regularized Counting Statistics from Random Matrix Theory. Comm. Math. Phys. 360 (2018), no. 1, 1--54


\bibitem{LP18}
 G. Lambert and E. Paquette.  The law of large numbers for the maximum of almost Gaussian log-correlated random fields coming from random matrices. 
 Probab. Theory Related Fields 173 (2019), no. 1-2, 157--209.

\bibitem{YLi}
Y. Li.
Rigidity of Eigenvalues for beta Ensemble in Multi-Cut Regime. 
PhD thesis Brandeis University. 2017. ISBN: 978-0355-43574-0. 


\bibitem{Najnudel}
J. Najnudel. On the extreme values of the Riemann zeta function on random intervals of the critical line.
 \newblock Probab. Theory Related Fields. 172, no. 1--2, 387-452 (2018).


\bibitem{Webb18}
M. Nikula, E. Saksman and C. Webb. 
Multiplicative chaos and the characteristic polynomial of the CUE: the $L^1$-phase.
\newblock eprint = \texttt{arXiv:1806.01831}.

\bibitem{NIST}
F. W. J. Olver, D. W. Lozier, R. F. Boisvert, and C. W. Clark. N{IST} handbook of mathematical functions. U.S. Department of Commerce, National Institute of Standards
              and Technology, Washington, DC. Cambridge University Press,
              Cambridge, 2010.

\bibitem{PS11}
L. Pastur and M. Shcherbina. 
Eigenvalue distribution of large random matrices. 
Mathematical Surveys and Monographs 117. American Mathematical Society 2011.


\bibitem{PZ16}
E.~Paquette and O.~Zeitouni.
\newblock The maximum of the {C}{U}{E} field.
\newblock {Int. Math. Res. Not. 16, 5028-5119 (2018)}.




\bibitem{RV14}
R.~Rhodes and V.~Vargas.
\newblock Gaussian multiplicative chaos and applications: a review.
\newblock {Probab. Surv.}, 11:315--392, 2014.

\bibitem{RV10}
R. Robert and V. Vargas. 
\newblock Gaussian multiplicative chaos revisited.
\newblock {Ann. Probab.}, 38(2):605--631, 2010.

\bibitem{SW16}
E.~Saksman and C.~Webb.
\newblock The {R}iemann zeta function and {G}aussian multiplicative chaos:
  statistics on the critical line.
\newblock eprint = \texttt{arXiv:1609.00027}, 2016.


\bibitem{Shcherbina13}
 M. Shcherbina. Fluctuations of linear eigenvalue statistics of $\beta$--matrix models in the multi-cut regime. J. Stat. Phys. 151 (2013), no. 6, 1004--1034. 
 
 
\bibitem{Sheffield}   S. Sheffield:  Conformal weldings of random surfaces:  SLE and the quantum gravity zipper. Ann. Probab. 44(5), 3474--3545 (2016) 
 
\bibitem{VT}
C. A. Tracy and H. G. Vaidya. One particle reduced density matrix of impenetrable
bosons in one dimension at zero temperature. J. Math. Phys. 20 (1979), 2291--2312.


\bibitem{TW} C. A. Tracy and H. Widom. Level-spacing distributions and the Airy kernel. Comm. Math. Phys. 159 (1994), no. 1, 151--174. 
 
\bibitem{Webb15}
C.~Webb.
\newblock The characteristic polynomial of a random unitary matrix and
  {G}aussian multiplicative chaos---the {$L^2$}-phase.
\newblock {Electron. J. Probab.}, 20:no. 104, 21, 2015.

\bibitem{WMTB}
T.T. Wu, B.M. McCoy, C.A. Tracy, and E. Barouch. Spin-spin correlation functions for the two-dimensional
Ising model: exact theory in the scaling region. Phys. Rev. B 13 (1976), 316--374.

\bibitem{XuDai} S.-X. Xu and D. Dai. 
Tracy-Widom distributions in critical unitary random matrix ensembles and the coupled Painlev\'e II system. eprint = \texttt{arxiv:1708.06113}.


\bibitem{XuZhao}
S.-X. Xu and Y.-Q. Zhao. Painlev\'e XXXIV asymptotics of orthogonal polynomials for the Gaussian weight with
a jump at the edge. Stud. Appl. Math., 127:1 (2011), 67--105.

\end{thebibliography}
\end{document}